\tikzset{arrow data/.style 2 args={%
      decoration={%
         markings,
         mark=at position #1 with \arrow{#2}},
         postaction=decorate}
      }%
\DeclareMathAlphabet{\mathpzc}{OT1}{pzc}{m}{it}
\newcommand{\mc}[1]{\mathcal{#1}}
\newcommand{\mbf}[1]{\mathbf{#1}}
\DeclareMathOperator{\range}{Ran}
\newcommand{\lp}{\langle}
\newcommand{\rp}{\rangle}
\newcommand{\ve}{\varepsilon}
\DeclareMathOperator{\hess}{Hess}
\DeclareMathOperator*{\argmin}{arg\,min}
\def\div{{\rm div}\,}
\def\Hess{{\rm Hess\,}}
\def\supp{\mathop{\rm supp} \nolimits} 
\def\Ran{{\rm Ran}}
\def \Sp{{\rm  Sp\;}}
\def\and {{\rm  and \;}}
\def\Ker {{\rm  Ker  \;}}
\def\Im {{\rm  Im\;}}
\def\dim {{\rm  dim  \;}}
\newcommand{\ft}[1]{\mathsf{#1}} 
\newcommand {\pa}{\partial}
\newtheorem{theorem}{Theorem}
\newtheorem*{theorem*}{Theorem}
\newtheorem{proposition}{Proposition}
\newtheorem{definition}[proposition]{Definition}
\newtheorem*{definition*}{Definition}
\newtheorem{lemma}[proposition]{Lemma}
\newtheorem{corollary}[proposition]{Corollary}
\newtheorem{remark}[proposition]{Remark}
\newtheorem*{assumption*}{Assumption}
\newenvironment{manualassumption}[1]{%
  \manualassumptioninner
}{\endmanualassumptioninner}
\date{}
\titleformat*{\section}{\Large\bfseries}
\titleformat*{\subsection}{\large\bfseries}
\titleformat*{\subsubsection}{\normalsize\bfseries}
\titleformat*{\paragraph}{\normalsize\bfseries}
\titleformat*{\subparagraph}{\normalsize\bfseries}
\titleformat*{\section}{\large\bfseries}
\titleformat*{\subsection}{\normalsize\bfseries}
\titleformat*{\subsubsection}{\normalsize\bfseries}
\titleformat*{\paragraph}{\normalsize\bfseries}
\titleformat*{\subparagraph}{\normalsize\bfseries}
\title{Eyring-Kramers exit rates for the overdamped Langevin dynamics:  the case with  saddle points on the boundary}
\author[1]{Tony Lelièvre}
\author[2]{Dorian Le Peutrec}
\author[3]{Boris Nectoux}
\affil[1]{\footnotesize Ecole des Ponts ParisTech, CERMICS and Inria, France.  Email address: \texttt{tony.lelievre@enpc.fr}}
\affil[2]{\footnotesize  Institut Denis Poisson, Université d'Orléans,
  France. Email address: \texttt{dorian.le-peutrec@univ-orleans.fr}}
\affil[3]{\footnotesize Université Clermont Auvergne, LMBP, France. Email address: \texttt{boris.nectoux@uca.fr}}
\begin{document} 

%

 \maketitle

\abstract{ Let $(X_t)_{t\ge 0}$ be the stochastic process solution to
  the overdamped
  Langevin dynamics
  $$dX_t=-\nabla f(X_t) \, dt +\sqrt h \, dB_t$$
and let $\Omega \subset  \mathbb R^d $ be the basin of attraction of a local
minimum of $f: \mathbb R^d \to \mathbb R$. Up to a small perturbation of $\Omega$ to make it
smooth, we prove that the exit rates of $(X_t)_{t\ge 0}$ from $\Omega$
through each of the saddle points of $f$ on $\partial \Omega$
can be parametrized by the celebrated Eyring-Kramers laws, in the
limit $h \to 0$. 
This result provides firm mathematical grounds to jump Markov
models which are used to model the evolution of molecular systems, as
well as to some numerical methods which use these underlying jump
Markov models to efficiently sample metastable trajectories of the  overdamped
  Langevin dynamics.

\medskip

 \textbf{Keywords}: Overdamped Langevin, Eyring-Kramers law, the exit
 problem, semi-classical analysis. 
}



\tableofcontents

\section{Motivation and statements of the main results}\label{sec:intro}

\subsection{An informal presentation of the results}\label{sec:informal}

Let us first present in this section the motivation for this work,
namely the modelling and the efficient simulation of metastable
stochastic dynamics which are used in molecular dynamics, as well as
an informal statement of the main results.

\medskip
\noindent
\textbf{Overdamped Langevin dynamics and metastable exit.}  Let us consider a
potential energy function  
 $$f:\mathbb R^d \to \mathbb R,$$
which is assumed to be smooth and with non-degenerate critical points.
A prototypical dynamics to describe the evolution of a molecular
system in the energy landscape~$f$ at a fixed temperature is the
overdamped Langevin dynamics:
\begin{equation}\label{eq.langevin}
dX_t=-\nabla f(X_t) \, dt +\sqrt h \, dB_t,
\end{equation}
where $(X_t)_{t\ge 0}$ gives the positions of the atoms as a function
of time, $h>0$ is (proportional to) the temperature,
 and $(B_t)_{t\ge 0}$ is a $d$-dimensional standard Brownian
 motion. Let us consider $\Omega \subset \mathbb R^d$ a basin of
 attraction\footnote{Actually, as will be discussed below, since we
   require $\Omega$ to be a smooth bounded domain, one may need to
   consider a small perturbation of a basin of attraction of $f$ to
   apply our results, see Remark~\ref{rem:smooth_basin}.} of a local
 minimum of $f$. In many cases of interest, the process spends a lot
 of time within $\Omega$ before leaving it, typically because
 the temperature $h$ is
 small compared to the energy barriers which have to be overcome to
 leave $\Omega$: this phenomenon is called metastability, and an exit
 which occurs after a long relaxation time within $\Omega$ is called a
 metastable exit (this will be formalized below using the notion of
 quasi-stationary distribution). We are interested in the so-called exit
 problem~\cite{FrWe} which consists in
 precisely describing the exit event from $\Omega$ in such a
 situation, namely the law of the couple of random
 variables\footnote{In all this work, $\Omega$ is a fixed domain, and
   we therefore do not indicate explicitely the dependency of $\tau$
   on $\Omega$.} $(\tau,X_{\tau})$
where 
\begin{equation}\label{eq:tau}
\tau=\inf\{t\ge 0, \, X_t\notin \Omega\}
\end{equation}
is the first exit time from $\Omega$, and $X_\tau$ is thus the first
exit point.
More precisely, we will show that for a metastable exit, in the limit
$h \to 0$, the law of $(\tau,X_{\tau})$ can be
approximated using a simple jump Markov model with exit rates from
$\Omega$ parametrized by the celebrated Eyring-Kramers laws, a model
which is sometimes called kinetic Monte Carlo in the physics
literature~\cite{voter-05}. These exit rates are associated with the
local minima of $f$ on $\partial \Omega$, which are saddle points of
$f$ (namely critical points of $f$ of index $1$) since $\Omega$ is a
basin of attraction. These points are on the most probable exit pathways from $\Omega$.

Before providing more details on this kinetic Monte Carlo model in the
next paragraph, let us emphasize that this question is both important in terms
of modelling, and in terms of numerical simulation of~\eqref{eq.langevin}. In terms of
modelling, it gives a rigorous framework to prove that a
coarse-grained version of the overdamped Langevin dynamics is indeed
the kinetic Monte Carlo dynamics (a.k.a. Markov State Model)
parametrized by Eyring-Kramers laws. Actually, if the states form a partition of~$\mathbb R^d$ (which is indeed the case, up to a null set, if one defines the states as
the basin of attractions of the local minima of $f$) and if all the exits are assumed to be metastable, one can even use a kinetic Monte Carlo model not only to
sample the exit from a metastable state, but to actually describe the
full evolution of the system, see for
example~\cite{cameron-14b,schuette-98, schuette-sarich-13,
  voter-05,wales-03, pande2010everything}. 
In tems of numerical simulations, metastability implies that the direct numerical simulation
of~\eqref{eq.langevin} is prohibitive, because a lot of computational time is wasted
in metastable states: using the simpler underlying kinetic Monte Carlo
model, one can then accelerate the sampling of the exit event when the
process $(X_t)_{t \ge 0}$ remains trapped in a metastable state. This
is the cornerstone of the so-called accelerated dynamics algorithms
such as {temperature accelerated
  dynamics}~\cite{sorensen-voter-00} or
{hyperdynamics}~\cite{voter-97,tiwary2013metadynamics},
see~\cite{di-gesu-lelievre-le-peutrec-nectoux-17} for more details. These  algorithms are widely
used in practice with applications in material science,
see for instance
\cite{bai2010efficient,PhysRevLett87126101,PhysRevB66205415,fan-yip-yildiz-14}. In
this context, the states are very often defined as basins of attractions of local
minima of $f$: this is indeed numerically convenient since a simple
steepest decent algorithm can be used to identify in which state the
system is.

\medskip
\noindent
\textbf{Kinetic Monte Carlo and Eyring Kramers law.}
Let us recall that $\Omega$ is a basin of attraction of a local
minimum of $f$. Thus, $f$ contains a unique critical point
 in $\Omega$, which is also the global minimum of $f$ in $\Omega$,
 denoted by $x_0$. Moreover, the local minima of $f$ on $\partial
 \Omega$ are saddle points of $f$, that we denote by $\{z_1, \ldots, z_n\} \subset \partial \Omega$. The kinetic
Monte Carlo algorithm models the exit event from $\Omega$ through a
couple of random variables $(\tau_{\mathsf {kMC}},  \mathsf Y_{\mathsf
  {kMC}})$, where $\tau_{\mathsf{kMC}}$ is the exit time and $\mathsf
Y_{\mathsf {kMC}} \in \{z_1, \ldots, z_n\}$ is equal to $z_i$ if the
process exits $\Omega$ through a neighborhood of $z_i$ in $\partial
\Omega$. The law of $(\tau_{\mathsf {kMC}},  \mathsf Y_{\mathsf
  {kMC}})$ requires a collection of rates $( \mathsf k_{z})_{z \in \{z_1,
  \ldots, z_n\}}$ associated with
the saddle points, and is defined by the following three properties:
(i) the time $ \tau_{\mathsf {kMC}}$  is exponentially distributed
with parameter  $\sum_{z \in \{z_1,
  \ldots, z_n\}}  \mathsf k_{z}$
\begin{equation}\label{eq.k_temps}
 \tau_{\mathsf {kMC}} \sim\mathcal E\Big( \sum_{z \in \{z_1,
  \ldots, z_n\}}  \mathsf k_{z}\Big);
\end{equation}
(ii) $\tau_{\mathsf {kMC}}$ is  independent of $\mathsf Y_{\mathsf
  {kMC}}$ ; and (iii) for all $z \in \{z_1, \ldots z_n\}$,
\begin{equation}\label{eq.YY}
\mathbb P[\mathsf Y_{\mathsf {kMC}}=z]= \frac{\mathsf k_{z}}{\sum_{z \in \{z_1,
  \ldots, z_n\}} \mathsf  k_{z}}.
\end{equation}
Moreover, in the setting of the so-called harmonic transition state theory, the
rates are defined using the famous Eyring-Kramers
formula~\cite{hanggi-talkner-barkovec-90,voter-05}: for any $z \in
\{z_1, \ldots,z_n\}$,
\begin{equation}\label{eq.ek}
 \mathsf k_{z}=\mathsf P_{z}\, e^{-\frac 2h   (f(z)-f(x_0) )}, 
\end{equation}
where, we recall, $x_0\in \Omega$ is the global minimum of $f$ in
$\Omega$ and the prefactor $\mathsf P_{z}$ is 
\begin{equation}\label{eq.prefactor}
\mathsf P_{z}=\frac{\vert \mu_{z}\vert}{\pi }\frac{  \sqrt{ {\rm det\,  Hess}\, f (x_0) }    }{   \sqrt{\vert {\rm det \,Hess}\, f(z)  \vert  }   },
 \end{equation} 
where  $\mu_{z}$ is the   negative eigenvalue of $\hess f(z)$.

\begin{remark}  The Eyring-Kramers formulas are sometimes defined with
  a prefactor which is the half of the right-hand-side in~\eqref{eq.prefactor}. This depends
  whether one considers exit rates (as in this work) or transition
  rates (as for example in the works~\cite{BEGK,BGK} where eigenvalues
  of the infinitesimal generator of the process $(X_t)_{t \ge 0}$ are identified with transition rates). The transition rates are half of the exit rates since
once the process reaches a saddle point $z$, it has a probability
$1/2$ to immediately come back to $\Omega$, and a probability $1/2$ to actually
transition to the neighbooring state (see e.g. \cite[Remark~8]{IHPLLN}
for further discussions).  
\end{remark}


The objective of this work is to show that, for a metastable exit, in the limit $h \to 0$,
the law of $(\tau_{\mathsf {kMC}},  \mathsf Y_{\mathsf
  {kMC}})$ indeed approximates the law of $(\tau,  X_\tau)$, in a
sense that will be made precise in the next paragraph. We will use the
quasi-stationary distribution approach to metastability, which appears
to be very useful to study the exit problem~\cite{le2012mathematical,DLLN,bianchi2020soft}.

\medskip
\noindent
{\bf The quasi-stationary distribution approach to metastability.}
As explained above, we will study metastable exits, namely exits which
occur after the stochastic process $(X_t)_{t \ge 0}$ solution
to~\eqref{eq.langevin} relaxes within $\Omega$. The notion of
quasi-stationary distribution gives a way to formalize mathematically
this idea. Let us recall standard facts on the existence and uniqueness of a
quasi-stationary distribution for a diffusion process (see for
example~\cite{cattiaux-collet-lambert-martinez-meleard-san-martin-09,collet-martinez-san-martin-13}
for more details).
\begin{definition}  Let us denote by $\mathcal P(\Omega)$ the set of
  probability measures supported in $\Omega$. A quasi-stationary
   distribution  in $\Omega \subset \mathbb R^d$
   for a Markov process $(X_t)_{t \ge 0}$ with values in $\mathbb R^d$
   is a probability measure $\mu \in \mathcal P(\Omega)$
such that:
$$
\forall t\ge 0, \forall \text{ measurable }A\subset\Omega, \quad
\mu (A)=\frac{ \mathbb P_\mu \left[X_t \in A,   t<\tau\right]  }{  \mathbb P_\mu \left[t<\tau \right]},
$$
where $\tau = \inf\{t >0, \, X_t \not \in \Omega\}$, and the subscript
$\mu$ in $\mathbb P_\mu$ indicates that $X_0 \sim \mu$.
\end{definition}
It is well-known (see for
example~\cite{le2012mathematical,ChampagnatCoulibaly}) that for a
smooth potential $f$ and a bounded smooth domain $\Omega$,  the process
$(X_t)_{t \ge0}$ soution to~\eqref{eq.langevin} admits a unique
quasi-stationary distribution on $\Omega$, denoted by
$\nu_h$ in the following. Moroever, the
previously cited works also show the following exponential convergence
result: $ \exists c>0, \forall \mu \in \mathcal P(\Omega), \exists C(\mu)>0, \exists t(\mu)>0$, 
\begin{equation}\label{eq.con-c}
\forall t\ge t(\mu), \forall \text{ measurable }A\subset\Omega, \quad \big \vert \mathbb P_\mu\big[X_t\in A\big | t<\tau   \big] -\nu_h(A)      \big \vert \le  C(\mu)e^{-ct}.
\end{equation}
Therefore, if the process $(X_t)_{t \ge0}$ remains trapped in $\Omega$
for a long-time, then $X_t$ is approximately distributed according to the
quasi-stationary distribution $\nu_h$, which can thus be seen as a
local equillibrium within $\Omega$. A metastable exit is then an exit
which occurs after this local equilibrium has been reached, namely
(using the Markov property)
an exit for  the process $(X_t)_{t \ge0}$ with initial condition $X_0
\sim \nu_h$. 

If $X_0\sim \nu_h$, the exit event satisfies the two fundamental properties (see
for example \cite[Proposition 2.4]{le2012mathematical}): 
\begin{equation}\label{eq.-indep} 
\tau\sim\mathcal E(\lambda_h) \text{ and } \tau \text{ is independent of  } X_{\tau}.
\end{equation}
With these two properties, one can use a kinetic Monte Carlo model to
exactly sample the exit event. Indeed, assume again for simplicity that $\Omega$ is
the basin of attraction of a local minimum of~$f$, and let us denote
by $\mathsf W_+^z \subset \partial \Omega$ the stable manifold of the
saddle point $z \in \{z_1, \dots, z_n\}$ (see~\eqref{eq.stablem} below
for a
precise definition). Up to a null set, the sets
$(W_+^z)_{z \in \{z_1, \ldots, z_n\}}$ form a partition of the
boundary $\partial \Omega$ of the  basin
of attraction. Let us now introduce the rates: for any $z \in
\{z_1, \ldots, z_n\}$,
\begin{equation}\label{eq:rates}
\mathsf k_{z}^{o\ell} :=\frac{\mathbb P_{\nu_{h}}\big [
  X_{\tau} \in W^+_{z} \big]}{\mathbb E_{\nu_{h}}\big [ \tau \big]}.
\end{equation}
where the superscript $o\ell$ indicates that we consider the
overdamped Langevin dynamics~\eqref{eq.langevin}. Then the kinetic
Monte Carlo model parametrized with these rates generate an exit event
$(\tau_{\mathsf {kMC}},  \mathsf Y_{\mathsf  {kMC}})$ which is exactly
consistent with the exit event $(\tau,X_\tau)$ of the original
dynamics~\eqref{eq.langevin}. Indeed, using~\eqref{eq.k_temps}--\eqref{eq.YY}
and~\eqref{eq.-indep}, one has: (i) $\tau_{\mathsf {kMC}}$ has the same law as $\tau$,
(ii) $\tau_{\mathsf {kMC}}$ and $\mathsf Y_{\mathsf  {kMC}}$ are
independent, which is also the case for $\tau$ and  $X_\tau$,  and finally (iii)
$\mathbb P(Y_{\mathsf  {kMC}}=z)=\mathbb P(X_{\tau} \in W^+_{z})$. The
mathematical question, which is the focus of this work, is now to
prove that the rates $\mathsf k_{z}^{o\ell}$ can indeed be accurately
approximated by the Eyring-Kramers formulas~\eqref{eq.ek}. 

As already mentioned above (see footnote 1 and
Remark~\ref{rem:smooth_basin} below), we will need to assume that $\Omega$ is
smooth and bounded. The smoothness assumption may require to slightly modify the basin of
attraction in the neighborhoods of the boundaries of $W^+_z$ where
$\partial \Omega$ is not necessarily smooth (these are anyway typically high
energy points which are thus visited with an exponentially small
probability when $h \to 0$). Therefore, we will
not consider exactly $\mathsf k_{z}^{o\ell}$ but the
following rates: for any $z \in \{z_1, \ldots, z_n\}$,
 \begin{equation}\label{kijl}
\mathsf k_{z}^{o\ell}(\Sigma_{z}) :=\frac{\mathbb P_{\nu_{h}}\big [
  X_{\tau} \in \Sigma_{z} \big]}{\mathbb E_{\nu_{h}}\big [ \tau \big]},
\end{equation}
where $\Sigma_{z}$ is an open  neighborhood of $z$ in $\partial
\Omega$ which is positively stable for the gradient dynamics $\dot{x}
= -\nabla f(x)$ and can be chosen arbitrarily large in $ \partial \Omega \cap W^+_z$. We will prove that, under
some geometric assumptions, these rates can indeed be accurately approximated by
the Eyring-Kramers formulas in the small temperature regime $h \to 0$, see Corollary~\ref{co.ek} below.  This requires sharp estimates  of the
probabilities that $(X_t)_{t \ge 0}$ exits $\Omega$ through the
neighborhoods $\Sigma_z$
of the saddle points $z \in \{z_1, \ldots, z_n\}$. These
precise approximations of the exit rates are used in particular in the
temperature accelerated dynamics algorithm~\cite{sorensen-voter-00} to extrapolate exit events
observed at high temperature to low temperature
(see Remark~\ref{rem:TAD} for a discussion underlying the similarities between our mathematical
analysis and this algorithm). Let us now
leave this informal presentation and present the precise setting and
the main mathematical results of this work.

\subsection{Mathematical setting and statements of the main results}

\subsubsection{Notation and definition}

In the following, $\Omega$ is a smooth bounded  domain of $\mathbb R^d$. The function $f:\overline \Omega\to \mathbb
R$ is assumed to be a $\mathcal C^\infty$  function, i.e. it is the restriction
to~$\overline \Omega$ of a smooth function  defined on $\mathbb R^d$. We still
denote by $f$ a smooth   extension of $f:\overline \Omega\to \mathbb
R$ to~$\mathbb R^d$. Since the quantities of interest in this work
only depends on the values of $f$ in the bounded set
$\overline{\Omega}$,  we assume  throughout this work without loss of
generality  that the extension of $f$ is such that: 
\begin{equation}\label{eq.DB}
\sup_{x\in \mathbb R^d} |\nabla f(x)|+ \sup_{x\in \mathbb R^d} |\text{Hess } f(x)|<+\infty,
\end{equation}
where $\hess f(x)$ denotes the Hessian matrix of $f$ at $x\in \mathbb R^d$.

\medskip
\noindent
\textbf{Basic notation.}  The open ball of radius $r>0$ centred at $x\in \mathbb R^d$ is denoted by $\mathsf B(x,r)$. The unit outward normal to $\Omega$ at  $z\in \partial \Omega$ is denoted by $\mathsf n_\Omega(z)$. The normal derivative on $\partial \Omega$ of a smooth function $f:\overline \Omega\to \mathbb R$ is denoted by $\partial_{\mathsf n_\Omega}f$. Its tangental gradient on $\partial \Omega$ is denoted by $\nabla_{\mbf T}f$. We will simply write  $\{f<a\}$ for the set $\{x\in \overline \Omega, f(x)<a\}$.  

\medskip
\noindent
\textbf{Index of a critical point.} A point $x \in \overline{\Omega}$ is a critical
point of $f$ if $|\nabla f(x)|=0$. The critical point $x$ is non-degenerate
if furthermore  $\hess f(x)$ is invertible.  The function $f$ is a Morse function if all its critical points in $\overline \Omega$ are non degenerate.   The non-degenerate critical
point $x$ is of index $p\in \{0,\ldots,d\}$ if $\hess f(x)$ admits $p$
negative eigenvalues. A saddle point is a non
 degenerate critical point with index 1. Notice that the
 index of a critical point on $\partial \Omega$ does not depend on the
 extension of $f$ outside $\overline{\Omega}$.

\medskip
\noindent
 \textbf{Stable and unstable manifolds.} 
  Let $x\in \mathbb R^d$ and denote by~$\varphi_x(t)$ the maximal
  solution    to the ordinary differential equation (which is defined
  for all $t \in \mathbb R$ by~\eqref{eq.DB}):
\begin{equation}\label{eq.hbb}
  \frac{d}{dt}\varphi_x(t)=-\nabla f(\varphi_x(t)) \text{ with } \varphi_x(0)=x.
  \end{equation}     
When~$z\in\mathbb R^d$ is a saddle point of~$f$,   we denote by~$\mathsf W_{+}^z$ and~$\mathsf W_{-}^z$   respectively the stable and unstable manifolds of~$z$ for  the dynamics~\eqref{eq.hbb}, i.e.  
\begin{equation}\label{eq.stablem}
\mathsf W_{\pm}^z=\Big \{x\in \mathbb R^d,\, \lim_{t\to \pm \infty}\varphi_x(t)= z  \Big \}.
\end{equation}
Let us recall the stable manifold theorem (see~\cite[Corollary 6.4.1]{jost2008riemannian}). 
 
 \begin{theorem*}[Stable Manifold Theorem]
Let   $f: \mathbb R^d\to \mathbb R$  satisfying \eqref{eq.DB}, and let $z$
be a saddle point of~$f$. Then,    $\mathsf W_{+}^z $ and  $\mathsf
W_{-}^z $ are   $\mathcal C^\infty$ embedded manifolds, with
dimensions $d-1$ and $1$ respectively. 
Moreover,  the tangent spaces of  $\mathsf W_{+}^z $ and  $\mathsf
W_{-}^z $ at point $z$ satisfy
 $$
  T_z\mathsf W_{+}^z ={\rm Span} (\mathsf e_1\ldots,\mathsf e_{d-1})
  \text{ and }  T_z \mathsf W_{-}^z =  {\rm Span}(\mathsf e_d), 
  $$
 where  $(\mathsf e_1,\ldots,\mathsf e_{d-1})$ is a basis of eigenvectors associated with the $d-1$ positive eigenvalues of $\hess f(z)$ and $\mathsf e_d$ is an eigenvector associated with the negative eigenvalue of   $\hess f(z)$. 
 \end{theorem*}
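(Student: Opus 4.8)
The plan is to reduce the global statement to the classical \emph{local} stable manifold theorem for a hyperbolic fixed point, and then to propagate the local manifolds by the flow $\varphi$, which is complete thanks to \eqref{eq.DB}. First I would record that $z$ is a hyperbolic fixed point of the $\mathcal C^\infty$ vector field $b:=-\nabla f$: its linearization at $z$ is $Db(z)=-\hess f(z)$, and since $z$ is a saddle point, $\hess f(z)$ has the single negative eigenvalue $\mu_z$ and $d-1$ positive ones; hence $-\hess f(z)$ has exactly one eigenvalue with positive real part (namely $-\mu_z>0$, with eigenvector $\mathsf e_d$) and $d-1$ with negative real part (with eigenvectors $\mathsf e_1,\dots,\mathsf e_{d-1}$). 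Writing $E^s:=\mathrm{Span}(\mathsf e_1,\dots,\mathsf e_{d-1})$ and $E^u:=\mathrm{Span}(\mathsf e_d)$ for the stable and unstable subspaces of $Db(z)$, the local stable manifold theorem (Hadamard--Perron, see e.g. \cite[Cor.~6.4.1]{jost2008riemannian}) provides $r_0>0$ together with $\mathcal C^\infty$ embedded disks $W^s_{\mathrm{loc}}$ and $W^u_{\mathrm{loc}}$ contained in $\mathsf B(z,r_0)$ and passing through $z$, of dimensions $d-1$ and $1$, with $T_zW^s_{\mathrm{loc}}=E^s$, $T_zW^u_{\mathrm{loc}}=E^u$, and characterised by: $x\in W^s_{\mathrm{loc}}$ iff $\varphi_x(t)\in \mathsf B(z,r_0)$ for all $t\ge 0$ (and then $\varphi_x(t)\to z$ exponentially), with the time-reversed characterisation for $W^u_{\mathrm{loc}}$. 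This local step is the technical heart of the matter, but it is entirely classical, so one may simply invoke the cited reference.

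Next I would globalize. By \eqref{eq.DB} the field $b$ is globally Lipschitz, so for each $t$ the time-$t$ flow map $\Phi_t:x\mapsto\varphi_x(t)$ is a well-defined $\mathcal C^\infty$ diffeomorphism of $\mathbb R^d$ with inverse $\Phi_{-t}$. Using the semigroup property together with the local characterisation one checks that
\[
\mathsf W_+^z \;=\; \bigcup_{t\ge 0}\Phi_{-t}\big(W^s_{\mathrm{loc}}\big).
\]
The inclusion $\supseteq$ is immediate, while for $\subseteq$ one notes that $\varphi_x(t)\to z$ forces $\varphi_x(t)\in\mathsf B(z,r_0)$ for all $t$ beyond some $T\ge 0$, whence $\Phi_T(x)\in W^s_{\mathrm{loc}}$. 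Since $W^s_{\mathrm{loc}}$ is forward invariant under $\varphi$, the family $\big(\Phi_{-t}(W^s_{\mathrm{loc}})\big)_{t\ge 0}$ is increasing with each member relatively open in the next (invariance of domain, both being $(d-1)$-manifolds), so the union carries a natural $\mathcal C^\infty$ structure making $\mathsf W_+^z$ a $(d-1)$-dimensional manifold immersed in $\mathbb R^d$ with $T_z\mathsf W_+^z=T_zW^s_{\mathrm{loc}}=E^s$. Applying the same reasoning to the reversed flow $\Phi_{-t}$ — equivalently, to the gradient-descent vector field $-\nabla(-f)=+\nabla f$, for which $z$ is a critical point of index $d-1$ so that the associated stable subspace is the line $E^u$ — yields $\mathsf W_-^z=\bigcup_{t\ge 0}\Phi_t(W^u_{\mathrm{loc}})$, a $\mathcal C^\infty$ immersed $1$-manifold with $T_z\mathsf W_-^z=E^u$.

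Finally, to upgrade ``immersed'' to ``embedded'' I would exploit that $\varphi$ is a gradient flow: $t\mapsto f(\varphi_x(t))$ is strictly decreasing along every non-constant trajectory, so no trajectory is recurrent. Concretely, given $x\in\mathsf W_+^z$, choose $t$ large enough that $\Phi_t(x)\in\mathsf B(z,r_0)$, where $\mathsf W_+^z$ coincides with the embedded disk $W^s_{\mathrm{loc}}$; transporting a small flow-box neighbourhood of $x$ by the diffeomorphism $\Phi_t$ shows that $\mathsf W_+^z$ meets that neighbourhood in a single $(d-1)$-disk, the absence of recurrence excluding any other sheet of $\mathsf W_+^z$ from re-entering it. The one-dimensional $\mathsf W_-^z$ — which is $\{z\}$ together with the two half-orbits issued from $z$ along $\pm\mathsf e_d$ — is treated identically. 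I expect the only genuinely delicate points to be the construction of $W^s_{\mathrm{loc}}$ (the contraction/graph-transform argument, the $\mathcal C^\infty$ bootstrap, and the tangency $T_zW^s_{\mathrm{loc}}=E^s$), which being standard is quoted from \cite{jost2008riemannian}, and this last embeddedness step, where the gradient structure is essential.
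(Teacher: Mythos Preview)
The paper does not prove this theorem at all: it is stated as a classical result and simply cited to \cite[Corollary~6.4.1]{jost2008riemannian}. Your sketch is the standard argument (local Hadamard--Perron plus globalization by the complete flow, with the gradient/Lyapunov structure yielding embeddedness), and it is correct in outline; there is nothing to compare against on the paper's side.

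One small caution: your embeddedness paragraph is a bit quick. Showing that $\mathsf W_+^z$ is an injective $\mathcal C^\infty$ immersion is exactly what you wrote, but ``embedded'' also requires that the intrinsic manifold topology agree with the subspace topology from $\mathbb R^d$. The clean way to get this from the gradient structure is to observe that for a regular value $c\in(f(z),f(x))$ the level set $\{f=c\}$ is transverse to the flow, so $\mathsf W_+^z\cap\{f>c\}$ is $\mathcal C^\infty$-diffeomorphic, via the first-hitting map to $\{f=c\}$, to an open subset of a slice of $W^s_{\mathrm{loc}}$; this gives a neighbourhood of $x$ in $\mathsf W_+^z$ that is simultaneously open in the subspace topology. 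Your flow-box idea is essentially this, but making the level-set transversality explicit closes the gap cleanly.
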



\noindent
\textbf{Agmon distance.} Let us introduce the Agmon distance on $\overline \Omega$ which will be used to state our main results below. 

\begin{definition} \label{de.da} Let $f:\overline \Omega\to \mathbb R$ be a $\mathcal C^\infty$  function.  
The Agmon pseudo-distance between two points $x\in \overline \Omega$ and $y \in \overline \Omega$ is defined by:
$$
\mathsf d_a\left(x,y\right)=\inf _{\gamma\in \mathcal C^1(x,y)}  \int_0^1\vert \nabla f\vert    (\gamma(t)) \left\vert \gamma'(t) \right\vert  dt,
$$
where $\mathcal C^1\left(x,y\right)$ is the set of  curve $\gamma :[0,1] \to \overline \Omega$ which are $\mathcal C^1$ with $\gamma(0)=x$, $\gamma(1)=y$.  
\end{definition}
\noindent Since $f$ has a finite number of critical points in
$\overline \Omega$ (which is indeed the case if $f$ is a Morse function on
$\overline \Omega$), $\mathsf d_a$ is a distance since for all $x,y\in
\overline \Omega$, $\mathsf d_a(x,y)=0$ if and only if $x=y$.

\subsubsection{Assumptions}

 Let us now gather
in the following assumption all the geometric requirements on $\Omega$
and $f$.

\begin{manualassumption}{\bf($\Omega$-$f$)}\label{A}
The set $\Omega$ is a  $\mathcal C^\infty$   bounded  domain of~$\mathbb R^d$. The functions $f:\overline \Omega\to \mathbb R$ and  $f|_{\partial \Omega}$ 
 are $\mathcal C^\infty$ Morse functions.  Moreover:
 \begin{enumerate}
 \item The domain $\Omega$ is positively stable for~\eqref{eq.hbb}:
   $\forall x\in \Omega, \, \forall t\ge 0, \, \varphi_x(t)\in
   \Omega$. Moreover, there exists $x_0\in \Omega$ such that for all $x\in \Omega$,  $\lim_{t\to +\infty}\varphi_x(t)=x_0$. 

 \item  For any critical point $z\in \partial \Omega$ of
   $f$, there exists an open 
subset $\Gamma_z$ of $\partial
   \Omega$ containing $z$ and satisfying the following: 
 \begin{itemize}
\item[a.]  If $z$ is a saddle point of $f$, then
\begin{equation}\label{eq.incluWW}
 \overline{\Gamma_{z} }\subset \mathsf W_+^z,
  \end{equation}
  and  $\Gamma_z$ is positively stable for the
  dynamics~\eqref{eq.hbb}: $\forall x\in \Gamma_{z}, \forall t\ge 0,
  \, \varphi_x(t)\in \Gamma_{z}$. 

\item[b.] If $z$ is not a saddle point of $f$, then $\partial_{\mathsf n_\Omega} f=0$ on $\Gamma_z$. 
 
 \end{itemize} 
 
\item All the local minima of $f|_{\partial \Omega}$ are saddle points of $f$. 
   \end{enumerate}
\end{manualassumption}
\noindent
Assumption \autoref{A} has   simple consequences that will be used
many times in the following (the proofs are standard, and provided in
Section~\ref{sec:le.start} for completeness).
\begin{lemma}\label{le.start}
The following holds:\begin{enumerate}
\item Assume that item 1 in \autoref{A} is satisfied. Then $\partial_{\mathsf n_\Omega}f\ge 0$ on $\partial \Omega$ and  $x_0$ is the only critical point of  the function $f$ in $\Omega$. There is no local minimum of $f$ on $\partial \Omega$. Furthermore, 
 $f(x_0)=\min_{\overline \Omega}f<\min_{\partial \Omega}f$,  $\{f< \min_{\partial \Omega}f\}$ is connected and $\pa \{f< \min_{\partial \Omega}f\}\cap \partial \Omega =\argmin_{\partial \Omega}f$.
\item Assume that \autoref{A} is satisfied. For all $z\in \partial \Omega$ such that $\vert \nabla f\vert (z)=0$, $\mathsf n_\Omega(z)$ is an eigenvector of $\hess f(z)$ associated with a negative eigenvalue. 
 \end{enumerate}
\end{lemma}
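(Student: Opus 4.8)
The plan is to rely on just two ingredients: the positive stability of $\Omega$ along the flow \eqref{eq.hbb} (item~1 of \autoref{A}), and the non-degeneracy of the critical points of $f$. First I would prove that $\partial_{\mathsf n_\Omega} f \ge 0$ on $\partial\Omega$: if $\partial_{\mathsf n_\Omega} f(z) < 0$ for some $z \in \partial\Omega$, then $-\nabla f$ points strictly outward near $z$, so a trajectory of \eqref{eq.hbb} started just inside $\Omega$ close to $z$ reaches $\partial\Omega$ in finite time (the outward drift wins because $|\nabla f|$ is bounded by \eqref{eq.DB}), contradicting positive stability; equivalently, this is the Nagumo-type criterion for invariance of a smooth domain. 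Then, for every $x \in \Omega$, the function $t \mapsto f(\varphi_x(t))$ is non-increasing and converges to $f(x_0)$ by item~1 of \autoref{A}, so $f \ge f(x_0)$ on $\overline\Omega$, $\nabla f(x_0) = 0$, and no other point of $\Omega$ can be critical, since a critical point in $\Omega$ would be a fixed point of \eqref{eq.hbb} and so could not converge to $x_0$.

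Next I would rule out a boundary point $z \in \partial\Omega$ that is a local minimum of $f$ on $\overline\Omega$. Such a $z$ satisfies $\nabla_{\mbf T} f(z) = 0$, and comparing $f(z)$ with the values $f(z - t\,\mathsf n_\Omega(z))$ for small $t > 0$ gives $\partial_{\mathsf n_\Omega} f(z) \le 0$; together with $\partial_{\mathsf n_\Omega} f \ge 0$ this forces $\nabla f(z) = 0$. Then $\hess f(z)$ is positive semidefinite in every inward direction, hence everywhere (a closed half-space spans $\mathbb R^d$ and $v \mapsto v \cdot \hess f(z) v$ is even), hence positive definite by non-degeneracy; so $z$ is a non-degenerate local minimum of $f$ on $\mathbb R^d$, a neighborhood of which is attracted to $z$ under \eqref{eq.hbb}. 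Intersecting this neighborhood with $\Omega$ contradicts item~1 of \autoref{A}. The strict inequality $f(x_0) = \min_{\overline\Omega} f < \min_{\partial\Omega} f =: a$ then follows immediately, since a minimizer of $f|_{\partial\Omega}$ with value $f(x_0)$ would be a global, hence local, minimum of $f$ on $\overline\Omega$.

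For the last two claims, observe that $f \ge a$ on $\partial\Omega$ forces $\{f < a\} \subset \Omega$, so $\{f < a\}$ is open in $\mathbb R^d$ and contains $x_0$. For connectedness, given $x \in \{f < a\}$ the trajectory $\varphi_x(\cdot)$ stays in $\{f < a\}$ (since $f\circ\varphi_x$ is non-increasing) and, completed by its limit $x_0$, is a path inside $\{f < a\}$ from $x$ to $x_0$, so $\{f < a\}$ is path-connected. For the boundary identity: if $y \in \partial\{f < a\} \cap \partial\Omega$, then $f(y) \le a$ because $y \in \overline{\{f < a\}}$ and $f(y) \ge a$ because $y \in \partial\Omega$, so $y \in \argmin_{\partial\Omega} f$; conversely, if $y \in \argmin_{\partial\Omega} f$, then $f(y) = a$, $y$ is not a local minimum of $f$ on $\overline\Omega$ by the previous paragraph, so $y$ is a limit of points where $f < a$, and such points must lie in $\Omega$ (because $f \ge a$ on $\partial\Omega$), hence in $\{f < a\}$, giving $y \in \partial\{f < a\}$.

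For item~2, fix $z \in \partial\Omega$ with $\nabla f(z) = 0$ and take $\Gamma_z$ as in item~2 of \autoref{A}. If $z$ is a saddle point of $f$, then item~2a makes $\Gamma_z$ an open piece of $\partial\Omega$ contained in the $(d-1)$-dimensional manifold $\mathsf W_+^z$, so $T_z\partial\Omega = T_z\mathsf W_+^z = \mathrm{Span}(\mathsf e_1,\dots,\mathsf e_{d-1})$ by the Stable Manifold Theorem, and $\mathsf n_\Omega(z)$, being orthogonal to $T_z\partial\Omega$, is collinear with $\mathsf e_d$, i.e.\ an eigenvector of $\hess f(z)$ for its negative eigenvalue. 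If $z$ is not a saddle point, then item~2b gives $\partial_{\mathsf n_\Omega} f \equiv 0$ on $\Gamma_z$; writing $\partial\Omega$ locally as a graph and differentiating $\nabla f \cdot \mathsf n_\Omega = 0$ tangentially at $z$ (where $\nabla f$ vanishes) yields $(\hess f(z)\,\mathsf n_\Omega(z)) \cdot w = 0$ for all $w \in T_z\partial\Omega$, so $\mathsf n_\Omega(z)$ is an eigenvector of $\hess f(z)$ with some eigenvalue $\lambda \neq 0$; and $\lambda > 0$ would place $\mathsf n_\Omega(z)$ in $T_z\mathsf W_+^z$, so that $\mathsf W_+^z$ contains a $\mathcal C^1$ curve through $z$ with velocity $-\mathsf n_\Omega(z)$, hence a point of $\Omega$ whose forward trajectory converges to $z \neq x_0$, contradicting item~1 of \autoref{A}; therefore $\lambda < 0$. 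The step I expect to be the main obstacle is precisely this treatment of critical points on $\partial\Omega$: one has to combine the sign of $\partial_{\mathsf n_\Omega} f$, non-degeneracy, and the position of $\mathsf W_+^z$ relative to $\partial\Omega$ (through items~2a--2b of \autoref{A}) to obtain both that $\mathsf n_\Omega(z)$ is an eigenvector and that the corresponding eigenvalue is negative; the rest is routine bookkeeping with $f$ as a Lyapunov function.
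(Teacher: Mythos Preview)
Your proof is correct and, for item~1, essentially identical to the paper's; in fact your path-connectedness argument via flow trajectories and your explicit check that a boundary local minimum must have positive-definite Hessian (via evenness of the quadratic form on a half-space) are slightly more detailed than the paper's corresponding one-line justifications.

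For item~2 there is a small organizational difference worth noting. The paper treats both cases of item~2 in \autoref{A} uniformly: in the saddle case one also has $\partial_{\mathsf n_\Omega} f \equiv 0$ on $\Gamma_z$ (because $\Gamma_z \subset \mathsf W_+^z$ and $\nabla f$ is tangent to $\mathsf W_+^z$), so a single tangential differentiation of $\nabla f \cdot \mathsf n_\Omega = 0$ at $z$ shows in one stroke that $\mathsf n_\Omega(z)$ is an eigenvector of $\hess f(z)$, with the sign then forced by item~1 of \autoref{A}. You instead split cases and, for saddles, read off $T_z\partial\Omega = T_z\mathsf W_+^z$ directly from the Stable Manifold Theorem, bypassing the differentiation; this makes the saddle case more geometric at the cost of handling the two cases separately. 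Your justification of the negative sign (a curve in $\mathsf W_+^z$ tangent to $-\mathsf n_\Omega(z)$ would enter $\Omega$ and converge to $z \neq x_0$) spells out what the paper leaves as ``item~1 in \autoref{A} then implies''.
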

Other simple consequences of Assumption \autoref{A}  are the following. If $z\neq x$  are saddle
points of~$f$, then $\Gamma_z\cap \Gamma_x=\emptyset$. For  $z$ a
saddle point of $f$, the  existence of a set $\Gamma_z$   whose
closure is arbitrarily large in $\mathsf W_+^z$  and which  is
positively stable for the dynamics~\eqref{eq.hbb} is ensured
by~\cite[Proposition~80]{DLLN}, and $\Omega$ can then be defined such
that $\Gamma_z \subset \partial \Omega$, see Remark~\ref{rem:smooth_basin}.  If   $z$ is a saddle point of $f$ one can check that
 $\partial_{\mathsf n_\Omega} f=0\text{ on } \Gamma_z$  (since $\overline{\Gamma_z}\subset \mathsf W_+^z\cap \partial \Omega$). 
 Finally, all the saddle points of $f$ in
$\overline{\Omega}$ necessarily  belong to $\partial \Omega$, and
coincide with the local minima of $f$ on $\partial \Omega$.

\begin{remark}\label{rem:smooth_basin}  Let $\mathcal A(x_0)$ be the basin of attraction of $x_0$ for the
 dynamics \eqref{eq.hbb}. As explained in the
  introduction, practitioners typically use as a definition
  of a bounded metastable domain the whole basin of attraction $\mathcal
  A(x_0)$, which indeed naturally satisfies all the Assumptions~\autoref{A},
  except in some cases the smoothness assumption ($\Omega$ is indeed assumed
  to be $\mathcal C^\infty$ in~\autoref{A}). More precisley, $\pa \mathcal A(x_0)$ is
  smooth on $\mathsf W_+^z$ for all $z \in
    \{z_1,\ldots,z_n\}$, $\pa \mathcal A(x_0)=\cup_{z \in
    \{z_1,\ldots,z_n\}} \overline{\mathsf W_+^z}$ (see~\cite[Theorem~B.13]{menz-schlichting-14}), but singularities may occur on the
  boundaries of $\mathsf W_+^z$.
 In such a case, a domain $\Omega \subset \mathcal
  A(x_0)$ satisfying  \autoref{A} can typically be obtained from    $\mathcal
  A(x_0)$ by slightly modifying  it in neighborhoods
  of the points of the boundary $\pa \mathcal A(x_0)$ where $\pa \mathcal
  A(x_0)$ is not smooth, see for example Figure~\ref{fig:ex_basin} for
  a schematic illustration in dimension 2. This modification typically only
  concerns high energy points, which are anyway visited with an
  exponentially small probability by the dynamics~\eqref{eq.langevin}
  in the regime $h \to 0$.  
 
 \begin{figure}
\begin{center}
\begin{tikzpicture}[scale=0.6]
\tikzstyle{vertex1}=[draw,circle,fill=black,minimum size=5pt,inner sep=0pt]
\tikzstyle{vertex}=[draw,circle,fill=black,minimum size=5pt,inner sep=0pt]
\tikzstyle{ball}=[circle, dashed, minimum size=1cm, draw]
\tikzstyle{point}=[circle, fill, minimum size=.01cm, draw]

 \draw[dashed] [rounded corners=5pt] (-3,0)   -- (0,3) -- (3,0) -- (0,-3)   --cycle;
 \draw[dashed] [rounded corners=11pt] (-2.3,0)  -- (0,2.3) -- (2.3,0) -- (0,-2.3)   --cycle;
\draw (-3,3)-- (3,3);
\draw (-3,-3)-- (3,-3);
\draw (-3,3)-- (-3,-3);
\draw (3,3)-- (3,-3);

  \draw (-3,-3) node[vertex,label= south west : {$m_4$}](v){};
  \draw (-3,3) node[vertex,label= north west: {$m_1$}](v){};
  \draw (3,-3) node[vertex,label= south east: {$m_3$}](v){};
  \draw (3,3) node[vertex,label= north east: {$m_2$}](v){};
  
\draw[ultra thick] (-1.45,3)-- (1.45,3);
\draw[ultra thick] (-1.45,-3)-- (2,-3);
\draw[ultra thick]  (-3,1.45)-- (-3,-1.45);
\draw[ultra thick]  (3,1.45)-- (3,-1.45);

     \draw  (2,-2.3) node[]{$\Omega$};
  
  \draw (-3,0) node[vertex,label= west: {$z_1$}](v){};
  \draw (3,0) node[vertex,label= east: {$z_3$}](v){};
  \draw (0,3) node[vertex,label= north: {$z_2$}](v){};
  \draw (0,-3) node[vertex,label= south: {$z_4$}](v){};
  
\draw (0,0) node[vertex1,label={$x_0$}](v){};
  \draw[ultra thick] (-1.45,3)  ..controls   (-2.8,2.8)  .. (-3,1.45) ;
    \draw[ultra thick] (1.45,3)  ..controls   (2.8,2.8)  .. (3,1.45) ;
      \draw[ultra thick] (-1.45,-3)  ..controls   (-2.8,-2.8)  .. (-3,-1.45) ;
      \draw[ultra thick] (1.87,-3)  ..controls   (2.8,-2.9)  .. (3,-1.45) ;

 \draw[dashed]   (0,0) circle (0.4cm);

 \draw[dashed]   (0,0) circle (0.9cm);
  \draw[dashed]   (0,0) circle (1.3cm);
 
\end{tikzpicture}

\caption{The basin of attraction $\mathcal A(0)=(-1,1)^2$ (for the dynamics~\eqref{eq.hbb}) of the local
  minimum  $0\in \mathbb R^2$ of the Morse function  $f(x,y)=-\cos(\pi
  x)-\cos(\pi y)$. There are  8 critical points on $\pa \mathcal
  A(0)$:  four saddle points ($z_1,z_2,z_3,z_4$) and  four local
  maxima ($m_1,m_2,m_3,m_4$). Each edge of the square $(-1,1)^2$ is  the stable manifold of the saddle point it contains. In thick lines, a  domain $\Omega$ satisfying~\autoref{A}. In dashed lines,     the level sets of $f$. 
}

\label{fig:ex_basin}

 \end{center}
\end{figure}
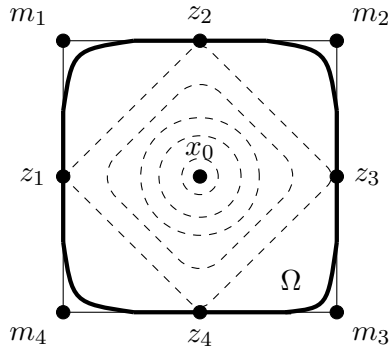
\end{remark}

\begin{definition}\label{eq.de-introd}
When \autoref{A} holds, the  saddle points of $f$ in $\overline{\Omega}$ are denoted by $\{z_1,\ldots,z_n\} \subset
\partial \Omega$  and ordered such that 
\begin{equation}\label{eq.n0}
\min_{\partial \Omega}f=f(z_1)=\ldots
= f(z_{n_0}) < f(z_{n_0+1}) \le \ldots \leq f(z_n).
\end{equation}
The cardinal of $\arg\min f|_{\partial \Omega}$ is thus $n_0 \in \{1,
\ldots, n\}$. For all $k\in \left\{1,\ldots,n\right\}$,  $\mu_{z_k}$ is the negative eigenvalue of  ${\rm Hess }f (z_k)$. 
For all $k\in \left\{1,\ldots,n\right\}$, we denote by  
$\Sigma_{z_k} \subset \partial \Omega$ an open set such that 
\begin{equation}\label{Sigma_k}
z_k \in  \Sigma_{z_k}  \ \text{ and } \ \overline{\Sigma_{z_k} }\subset  \Gamma_{z_k}.
\end{equation} 
\end{definition}
 \noindent
A schematic representation of $\Omega$, $x_0$, $\{z_1,\ldots,z_n\}$, and $\{\Sigma_{z_1},\ldots, \Sigma_{z_n}\}$ is given in Figure~\ref{fig:dom_omega} when $n=4$.
 
\subsubsection{From probability to partial differential equations}  
\label{sec.diri}

In order to give sharp asymptotic estimates of the rates~\eqref{kijl}
when $h \to 0$, we
will rewrite the law of the random variable $(\tau,X_\tau)$ using the
first eigenvalue and eigenvector of the infinitesimal generator of the
process~\eqref{eq.langevin} with homogeneous Dirichlet boundary
conditions on $\Omega$. The small temperature regime then consists in
analyzing the semi-classical limit of this eigenstate.

Let us   denote by  
\begin{equation}\label{eq:inf_gen} 
 \mathsf L_{f,h}^{(0)} =-\frac h2 \Delta+\nabla f\cdot \nabla
\end{equation}
the opposite of the infinitesimal generator of the process~\eqref{eq.langevin}.
Let $ H^1_0(\Omega,  e^{-\frac 2hf}dx)$ be the set of functions $g\in  H^1(\Omega,  e^{-\frac 2hf}dx)$ such that $g=0$ on $\partial \Omega$. 
The operator $ \mathsf L_{f,h}^{(0)}$    on $L^2(\Omega, e^{-\frac 2h f}dx)$ with domain 
$$H^2(\Omega, e^{-\frac 2h f}dx)\cap H^1_0(\Omega, e^{-\frac 2h f}dx)=\big \{w\in H^2(\Omega, e^{-\frac 2h f}dx), \, w=0 \text{ on } \partial \Omega   \big\}.$$ 
  is denoted by $ \mathsf L^{\mathsf{Di},(0)}_{f,h}(\Omega)$. The
  superscripts $\mathsf{Di}$ and $(0)$ respectively indicate that the
  operator is supplemented with Dirichlet
  boundary conditions, and acts on functions, namely $0$-forms (operators on $1$-forms will be also considered, see Section~\ref{sec.seclienwitten}). 
The operator $ \mathsf L^{\mathsf{Di},(0)}_{f,h}(\Omega) $ is the Friedrichs extension (see for instance~\cite[Section 4.3]{helffer2013spectral}) on $L^2(\Omega,  e^{-\frac 2hf}dx)$ of the closed quadratic form    
 \begin{equation}\label{eq.Q}
\psi \in H^1_0(\Omega,  e^{-\frac 2hf}dx)\mapsto \frac h2 \int_\Omega \vert \nabla \psi  \vert^2\, e^{-\frac 2hf}.
 \end{equation}
The operator $ \mathsf L^{\mathsf{Di},(0)}_{f,h}(\Omega)$ is thus a
positive self-adjoint operator  on $L^2(\Omega, e^{-\frac 2h
  f}dx)$. In addition, it has a compact resolvent (from the compact injection $H^1_0(\Omega, e^{-\frac 2h f}dx)\subset  L^2(\Omega, e^{-\frac 2h f}dx)$). 
Then, by standard results on elliptic operators, its smallest eigenvalue   $ \lambda_{h}$ is simple and any associated eigenfunction $u_{h}$ is $\mathcal C^\infty$ on $\overline \Omega$ and has a sign on $\Omega$ (see for instance~\cite[Sections~6.3 and~6.5]{Eva}). 
Without  \label{page.lambdah}
 loss of
 generality, let us assume that:
\begin{equation}\label{eq.u_norma0}
u_h > 0 \text{ on } \Omega \ \text{ and }  \int_{\Omega} u_h^2\ e^{-\frac{2}{h}f}  =1.
\end{equation}
Then, by the   Hopf Lemma (see for instance~\cite[Section
6.4.2]{Eva}), one has $\partial_{\mathsf n_\Omega}u_h>0$ on $\partial
\Omega$.

Let us now go back to the probabilistic setting introduced
in Section~\eqref{sec:informal} and rewrite the rate~\eqref{kijl} in
terms of $(\lambda_h,u_h)$ (see for example~\cite{le2012mathematical}
for proofs of these results). The unique quasi-stationary distribution $\nu_h$ of the
process $(X_t)_{t \ge 0}$ in $\Omega$ can be written in terms of $u_h$
as follows:
\begin{equation} \label{eq.expQSD}
\nu_h(dx)=\frac{  u_{h}(x) e^{-\frac{2}{h}  f(x)}}{  \int_{\Omega}u_{h}(y) e^{-\frac{2}{h}  f(y)}dy}\,  dx.
\end{equation}
Moreover, if $X_0 \sim \nu_h$ the parameter of the exponential random
variable $\tau$ is $\lambda_h$ (in particular $\mathbb E_{\nu_h}(\tau)=\lambda_h^{-1}$), and the law of $X_\tau$  can be
written in terms of $(\lambda_h,u_h)$ as follows: for any bounded measurable test function
$\varphi: \partial \Omega \to \mathbb R$,
\begin{equation}\label{eq.dens}
\mathbb E [\varphi (X_\tau)]=- \frac{h}{2\lambda_{h}}
\frac{\int_{\partial \Omega}\varphi (x)  \partial_{\mathsf n_\Omega}
  u_{h}(x) e^{-\frac{2}{h} f(x)} \sigma(dx) }{ \int_{\Omega} u_{h}(y) e^{-\frac{2}{h} f(y)} dy}
\end{equation}
where $\sigma$ is the Lebesgue measure on $\partial \Omega$.
Using these properties, the rate~\eqref{kijl} can thus be written in terms
of $u_h$: for all $z \in \{z_1, \ldots, z_n\}$,
 \begin{equation}\label{eq.expk}
\mathsf k_{z}^{o\ell}(\Sigma_{z})=-\frac {h}{2} \frac{ \int_{\Sigma_{z}}   \partial_{\mathsf n_\Omega}
  u_h e^{-\frac{2}{h} f} d\sigma }{  \int_\Omega u_h e^{-\frac{2}{h} f}}.
\end{equation}
Proving that the transition rates~\eqref{kijl} are accurately
approximated by the Eyring-Kramers laws~\eqref{eq.ek} in the limit $h
\to 0$ thus requires in particular to get precise estimates of $\partial_{\mathsf n_\Omega}
  u_h$ on each $\Sigma_{z}$. 

\subsubsection{Main results}
\label{ssec.mainresu}
We are now in position to precisely state our main results. Theorem~\ref{thm1} and  Proposition~\ref{pr.LP-N}  give precise
asymptotic estimates on $(\lambda_h,u_h)$ in the limit $h \to 0$.
\begin{theorem}\label{thm1}
 Let us assume that the assumption \autoref{A} is satisfied. Then, for all $k\in \left\{1,\ldots,n_0\right\}$  it holds in the limit $h\to 0$
\begin{equation}\label{eq.dnuh_asymptot}
\int_{\Sigma_{z_k}}   \partial_{\mathsf n_\Omega} u_h \,   e^{- \frac{2}{h}  f} d\sigma=  \frac{ 2\vert \mu_{z_k}\vert \big( {\rm det \ Hess } f   (x_0)   \big)^{\frac14}              }{  \pi ^{\frac{3d}{4}}\big \vert   {\rm det  \,  Hess }f (z_k) \big \vert   ^{\frac 12}}  \, h^{\frac d4 -1} \,e^{-\frac{1}{h}(2f(z_1)-f(x_0))}\big( 1+ O({\sqrt h}) \big),
\end{equation}
 where $u_h$ is the principal eigenfunction of $\mathsf L^{\mathsf{Di}
   ,(0)}_{f,h}(\Omega)$ with the normalization~\eqref{eq.u_norma0}.  In addition, there exits $c>0$ such that, when $h\to 0$
 \begin{equation}\label{eq.dnuh_asymptot2}
\int_{\partial \Omega \setminus \cup_{k=1}^{n_0}\Sigma_{z_k}}   \partial_{\mathsf n_\Omega} u_h \,   e^{- \frac{2}{h}  f} d\sigma= O\big(e^{-\frac{1}{h}(2f(z_1)-f(x_0)+c)} \big).
\end{equation}
 Moreover, assume that:  
 \begin{equation} 
 \label{hypo1}
 \forall k\in \left\{1,\ldots,n\right\}, \ \inf_{z\in \partial \Omega\setminus \Gamma_{z_k}} \mathsf d_a(z,z_k) >\max[f(z_n)-f(z_k),f(z_k)-f(z_1)],
 \end{equation}
 and 
 \begin{equation} 
 \label{hypo2} 
 f(z_1)-f(x_0)>f(z_n)-f(z_1).  
 \end{equation}
Then, for all $k\in \left\{n_0+1,\ldots,n\right\}$,  it holds in the limit $h\to 0$:
\begin{equation}\label{eq.dnuh_asymptot3}
\int_{\Sigma_{z_k}}   \partial_{\mathsf n_\Omega} u_h \,   e^{- \frac{2}{h}  f} d\sigma=   \frac{ 2\vert \mu_{z_k}\vert \big( {\rm det \ Hess } f   (x_0)   \big)^{\frac14}              }{  \pi ^{\frac{3d}{4}}\big \vert   {\rm det  \,  Hess }f (z_k) \big \vert   ^{\frac 12}}  \, h^{\frac d4 -1}  \,e^{-\frac{1}{h}(2f(z_k)-f(x_0))}\big( 1+ O( {\sqrt h}) \big).
\end{equation}
 \end{theorem}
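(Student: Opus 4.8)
The plan is to reduce the boundary integral estimates to a WKB-type analysis of the principal eigenfunction $u_h$ near each saddle point $z_k$, building on the semiclassical/Witten-complex machinery whose notation (operators on $0$- and $1$-forms) is announced in Section~\ref{sec.seclienwitten}. First I would establish a sharp global estimate on $\lambda_h$, the principal eigenvalue of $\mathsf L^{\mathsf{Di},(0)}_{f,h}(\Omega)$, of the form $\lambda_h \sim \mathsf C\, h^{d/2-?}\, e^{-\frac2h(f(z_1)-f(x_0))}$; this is the classical Eyring--Kramers eigenvalue asymptotics and it controls the normalization $\mathbb E_{\nu_h}(\tau)=\lambda_h^{-1}$. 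The standard route is a min-max/variational upper bound using a quasimode concentrated near $x_0$ (a suitably cut-off $e^{-\frac1h(f-f(x_0))}$-type function, or the first Witten-Laplacian quasimode), together with a matching lower bound coming either from Agmon estimates showing $u_h$ is exponentially small away from $x_0$, or from the supersymmetric structure $\mathsf L^{(0)}_{f,h} = \mathrm{d}^* \mathrm{d}$ and the identification of $\lambda_h$ with (the square of) the smallest singular value of $\mathrm{d}$ acting between the relevant quasimode spaces. Since the eigenfunction is normalized by $\int_\Omega u_h^2 e^{-\frac2h f}=1$ and $u_h>0$, Laplace's method gives $\int_\Omega u_h e^{-\frac2h f}\,dx \sim u_h(x_0)\,(\pi h)^{d/4}(\det\hess f(x_0))^{-1/4}$ up to the leading order, which pins down $u_h$ near $x_0$.

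Next I would analyze $\partial_{\mathsf n_\Omega}u_h$ on $\Sigma_{z_k}$. The key geometric input from Assumption~\ref{A} is that $\overline{\Gamma_{z_k}}\subset \mathsf W^+_{z_k}$ and $\partial_{\mathsf n_\Omega}f=0$ on $\Gamma_{z_k}$, so near $z_k$ the boundary $\partial\Omega$ is (to leading order) the stable manifold, $f$ restricted to $\partial\Omega$ has a nondegenerate minimum at $z_k$ among the $\Sigma$-neighborhoods under consideration, and the outward normal direction is the unstable direction with eigenvalue $\mu_{z_k}<0$. The heart of the argument is a precise description of $u_h$ in a neighborhood of the saddle $z_k$: one constructs a local WKB approximation $u_h(x) \approx a_h(x)\, e^{-\frac1h \phi_{z_k}(x)}$ where $\phi_{z_k}$ solves the eikonal equation $|\nabla\phi|^2 = |\nabla f|^2$ with the Agmon-distance boundary behavior, so that near $z_k$ one has $\phi_{z_k}(x) = f(z_k)-f(x_0) + (\text{quadratic form in the unstable direction}) + \dots$, and the prefactor $a_h$ is matched to the value of $u_h$ in the "well" region via the transport equation and the one-dimensional saddle crossing. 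Differentiating in the normal direction and integrating over $\Sigma_{z_k}$ by a Laplace/stationary-phase computation in the $d-1$ tangential variables then produces the stated prefactor $\frac{2|\mu_{z_k}|(\det\hess f(x_0))^{1/4}}{\pi^{3d/4}|\det\hess f(z_k)|^{1/2}}\,h^{d/4-1}$ and the exponential weight $e^{-\frac1h(2f(z_k)-f(x_0))}$, once $\lambda_h$ has been divided out. For $k\le n_0$ (the global minima on $\partial\Omega$, all at height $f(z_1)$) this is the genuine leading contribution, and \eqref{eq.dnuh_asymptot2} is a rougher Agmon/Saint-Venant estimate: away from $\cup_{k\le n_0}\Sigma_{z_k}$ the weighted normal derivative is controlled by $e^{-\frac1h(2f(z_1)-f(x_0))}$ times an extra $e^{-c/h}$ gain coming from the fact that any such boundary point is at Agmon distance $>0$ beyond the minimal saddle height.

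For the higher saddles $k\in\{n_0+1,\dots,n\}$ one needs the two extra hypotheses \eqref{hypo1}--\eqref{hypo2}. Hypothesis \eqref{hypo1} guarantees that the only way for the concentration profile of $u_h$ to "reach" $\Sigma_{z_k}$ cheaply is through $z_k$ itself (no competing path of lower Agmon cost enters $\Gamma_{z_k}$ from elsewhere on $\partial\Omega$), so the WKB phase on $\Sigma_{z_k}$ is still governed by a local model at $z_k$ and the prefactor computation goes through verbatim, giving the weight $e^{-\frac1h(2f(z_k)-f(x_0))}$. Hypothesis \eqref{hypo2} ensures that the barrier from $x_0$ up to the lowest saddle dominates the spread $f(z_n)-f(z_1)$ among saddle heights, which is what is needed for the remainder terms $O(\sqrt h)$ in the quasimode/transport expansions to be genuinely subleading and for $\lambda_h$ to retain its clean leading asymptotics without interference from the higher saddles. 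The main obstacle, and the technically heaviest part, will be the rigorous construction and error control of the WKB/quasimode approximation of $u_h$ near each saddle point on the boundary $\partial\Omega$ — in particular handling the Dirichlet boundary condition together with the condition $\partial_{\mathsf n_\Omega}f=0$ on $\Gamma_{z_k}$, matching the local model to the global eigenfunction through the metastable well, and propagating Agmon estimates up to and across the saddle with $O(\sqrt h)$ relative precision; this is where the supersymmetric (Witten $1$-form) structure and the results to be developed in the later sections of the paper do the real work.
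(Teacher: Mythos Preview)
Your proposal has a genuine gap at its core: the WKB approach you sketch for approximating $u_h$ (or equivalently the associated $1$-form) near a boundary saddle $z_k$ is precisely what \emph{fails} in this setting, and the paper devotes an appendix (Section~\ref{sec:WKB}) to explaining why. The issue is that $z_k$ lies on $\partial\Omega$ \emph{and} is a critical point of $f$, so the standard Helffer--Sj\"ostrand WKB ansatz $\sum_j h^j\omega_j\, e^{-\frac1h\mathsf d_a(\cdot,z_k)}$ does not satisfy the tangential Dirichlet boundary condition beyond the leading term $\omega_0$. Keeping only $\omega_0$ gives a quasi-mode whose energy is merely $O(h^{3/2})$, hence only $O(h^{1/4})$-close to the true eigenform in $L^2$; this is not enough to control the $H^1$ (trace) error needed to compute $\int_{\Sigma_{z_k}}\partial_{\mathsf n_\Omega}u_h\,e^{-\frac2h f}$ with $O(\sqrt h)$ relative precision. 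Your ``differentiating in the normal direction and integrating by Laplace'' step therefore cannot be justified.

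The paper circumvents this entirely differently. It works on the $1$-form side: $\nabla u_h$ lies in the $n$-dimensional range of $\pi_h^{(1)}$, and one builds a basis of quasi-modes $\mathsf f_k^{(1)}$ not from WKB but from the \emph{exact} principal $1$-eigenform $\mathsf u_k^{(1)}$ of an auxiliary Witten Laplacian with mixed Dirichlet--Neumann boundary conditions on a carefully constructed subdomain $\Omega_k^{\textsc{M}}\subset\Omega$ containing $x_0$ and with $z_k$ as its only boundary saddle. The needed boundary integral $\int_{\Gamma_{k,\mathbf D}^{\textsc{M}}}\mathsf u_k^{(1)}\cdot\mathsf n\,e^{-\frac1h f}$ is then read off from the sharp asymptotics of the principal eigenvalue $\lambda(\Omega_k^{\textsc{M}})$, which in turn is obtained by testing against an explicit $0$-form quasi-mode $\varphi_k^{\textsc{M},(0)}$ (built from an error-function-type profile in the normal variable $x_d$) for which both $\langle\varphi,\Delta\varphi\rangle$ and $\|\Delta\varphi\|$ can be computed by Laplace's method. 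Agmon estimates on $\mathsf u_k^{(1)}$ (Proposition~\ref{pr.Agmon}) then localize everything to a neighborhood of $z_k$. The abstract reduction to Propositions~\ref{ESTIME} and~\ref{ESTIME2} is where hypotheses~\eqref{hypo1}--\eqref{hypo2} enter, controlling the off-diagonal interactions between quasi-modes at different saddles; your reading of their role is qualitatively right, but the mechanism is linear algebra on $\mathrm{Ran}\,\pi_h^{(1)}$ rather than WKB matching.
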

  \begin{proposition}\label{pr.LP-N}
 Let us assume that the assumption \autoref{A} is satisfied.
\begin{equation}\label{eq.moyenn-uh}
\int_\Omega u_h\, e^{-\frac 2h f}=  (\pi h)^{\frac d4}   \big(  {\rm det  \,  Hess }f(x_0)\big)^{-\frac 14}  \,  e^{-\frac{1}{h}\min_{\overline{\Omega}} f} (1+O(h)) .
\end{equation}
Moreover, it holds in the limit $h\to 0$:
\begin{equation}\label{eq.lambda_h-Equiv}
\lambda_h=   \sum\limits _{ \ell=1 }^{n_0}     \frac{ \vert \mu_{z_\ell}\vert \big( {\rm det \ Hess } f   (x_0)   \big)^{\frac12}              }{  \pi \big \vert   {\rm det  \,  Hess }f (z_\ell) \big \vert   ^{\frac 12}        } e^{-\frac 2h (f(z_1)-f(x_0))} (1+O(\sqrt h)).
\end{equation}
\end{proposition}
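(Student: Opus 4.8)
The statement splits into two estimates, which I would prove separately: \eqref{eq.moyenn-uh} is a Laplace's-method computation localized at the global minimum $x_0$, while \eqref{eq.lambda_h-Equiv} then follows from \eqref{eq.moyenn-uh}, Theorem~\ref{thm1}, and the elementary identity obtained by integrating the eigenvalue equation against the weight $e^{-\frac{2}{h}f}$. Throughout, I use that by Lemma~\ref{le.start}, $x_0$ is the unique critical point of $f$ in $\Omega$ and $f(x_0)=\min_{\overline\Omega}f<\min_{\partial\Omega}f$, so that $e^{-\frac{2}{h}f}$ concentrates, as $h\to 0$, at the non-degenerate interior minimum $x_0$ and nowhere else in $\overline\Omega$.

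\emph{Proof of \eqref{eq.moyenn-uh}.} The key input is the local behavior of the normalized principal eigenfunction near $x_0$: there is a fixed neighborhood $V\subset\subset\Omega$ of $x_0$ and a constant $\mathsf c_h>0$ with $u_h=\mathsf c_h\,(1+O(h))$ on $V$ (in $\mathcal C^1(V)$, say), together with Agmon-type decay estimates ensuring that $\int_{\Omega\setminus V}u_h^2\,e^{-\frac{2}{h}f}$, and hence by Cauchy--Schwarz $\int_{\Omega\setminus V}u_h\,e^{-\frac{2}{h}f}$, are exponentially subdominant with respect to the corresponding integrals over $V$. Both facts come from the semiclassical analysis of the principal eigenpair of $\mathsf L^{\mathsf{Di},(0)}_{f,h}(\Omega)$ (construction of a quasimode supported near $x_0$, spectral-gap/resolvent estimates, and Agmon localization). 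Granting them, Laplace's method at the non-degenerate minimum $x_0$ gives $\int_V e^{-\frac{2}{h}f}=(\pi h)^{\frac d2}\big(\det\hess f(x_0)\big)^{-\frac12}e^{-\frac{2}{h}f(x_0)}(1+O(h))$; the normalization $\int_\Omega u_h^2\,e^{-\frac{2}{h}f}=1$ from \eqref{eq.u_norma0} then fixes $\mathsf c_h=(\pi h)^{-\frac d4}\big(\det\hess f(x_0)\big)^{\frac14}e^{\frac{1}{h}f(x_0)}(1+O(h))$, and inserting this into $\int_\Omega u_h\,e^{-\frac{2}{h}f}=\mathsf c_h\int_V e^{-\frac{2}{h}f}\,(1+\text{negligible})$ yields \eqref{eq.moyenn-uh}.

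\emph{Proof of \eqref{eq.lambda_h-Equiv}.} Since $(\mathsf L_{f,h}^{(0)}u_h)\,e^{-\frac{2}{h}f}=-\frac h2\,\diver\big(e^{-\frac{2}{h}f}\nabla u_h\big)$ and $\mathsf L_{f,h}^{(0)}u_h=\lambda_h u_h$, integrating over $\Omega$ and using the divergence theorem yields
\begin{equation*}
\lambda_h\int_\Omega u_h\,e^{-\frac{2}{h}f}=-\frac h2\int_{\partial\Omega}\partial_{\mathsf n_\Omega}u_h\,e^{-\frac{2}{h}f}\,d\sigma ,
\end{equation*}
which is \eqref{eq.dens} with $\varphi\equiv1$. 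I would then split $\partial\Omega=\big(\bigcup_{k=1}^{n_0}\Sigma_{z_k}\big)\sqcup\big(\partial\Omega\setminus\bigcup_{k=1}^{n_0}\Sigma_{z_k}\big)$: by \eqref{eq.n0} the points $z_1,\dots,z_{n_0}$ all carry the value $f(z_1)$, so \eqref{eq.dnuh_asymptot} evaluates $\sum_{k=1}^{n_0}\int_{\Sigma_{z_k}}\partial_{\mathsf n_\Omega}u_h\,e^{-\frac{2}{h}f}\,d\sigma$ as an explicit multiple of $h^{\frac d4-1}e^{-\frac1h(2f(z_1)-f(x_0))}(1+O(\sqrt h))$, while \eqref{eq.dnuh_asymptot2} bounds the remaining contribution by $O\big(e^{-\frac1h(2f(z_1)-f(x_0)+c)}\big)$, i.e. by $O(\sqrt h)$ times the main term. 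Dividing the displayed identity by \eqref{eq.moyenn-uh}, the powers of $h$ combine to $h^0$, the powers of $\pi$ and of $\det\hess f(x_0)$ recombine, and the exponentials combine into $e^{-\frac2h(f(z_1)-f(x_0))}$; a direct computation then produces \eqref{eq.lambda_h-Equiv}, the relative error $O(\sqrt h)$ being inherited from \eqref{eq.dnuh_asymptot}. In particular only \eqref{eq.dnuh_asymptot}--\eqref{eq.dnuh_asymptot2} are needed, so Assumption~\autoref{A} alone suffices, consistently with the hypotheses of Proposition~\ref{pr.LP-N}.

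\emph{Main obstacle.} Relative to what Theorem~\ref{thm1} already supplies, the only substantive step is the one invoked at the start of the proof of \eqref{eq.moyenn-uh}: that the principal eigenfunction $u_h$ of $\mathsf L^{\mathsf{Di},(0)}_{f,h}(\Omega)$, once normalized by \eqref{eq.u_norma0}, coincides on a fixed neighborhood of $x_0$ with a single constant $\mathsf c_h$ up to relative error $O(h)$, and is exponentially small away from $x_0$ in the weighted norm. This is the semiclassical/WKB description of the ground state (quasimode near $x_0$, spectral gap, Agmon estimates), which is in any case part of the toolbox built to prove Theorem~\ref{thm1}; once it is available, Proposition~\ref{pr.LP-N} is just Laplace's method together with the divergence-theorem identity, and no further idea is required.
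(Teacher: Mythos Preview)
Your argument for \eqref{eq.lambda_h-Equiv} via the divergence identity
\[
\lambda_h\int_\Omega u_h\,e^{-\frac{2}{h}f}=-\frac h2\int_{\partial\Omega}\partial_{\mathsf n_\Omega}u_h\,e^{-\frac{2}{h}f}\,d\sigma
\]
together with \eqref{eq.dnuh_asymptot}--\eqref{eq.dnuh_asymptot2} and \eqref{eq.moyenn-uh} is correct and is in fact a clean alternative to what the paper does. The paper does \emph{not} derive \eqref{eq.lambda_h-Equiv} from Theorem~\ref{thm1}; it obtains $\lambda_h$ directly from the quasi-mode framework (Proposition~\ref{ESTIME}), where $\lambda_h=\tfrac12\,h^{2\mathsf p+1}\sum_{k=1}^{n_0}\mathsf K_k^2\,e^{-\frac2h(f(z_1)-f(x_0))}(1+O(\sqrt h))$ with $\mathsf K_k=\sqrt{\mathsf A_{x_0,z_k}}$ and $\mathsf p=-1/2$. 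Your route is more transparent once Theorem~\ref{thm1} is in hand; the paper's route avoids any reference to Theorem~\ref{thm1} because in its logical order both results are parallel corollaries of the same quasi-mode estimates.

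For \eqref{eq.moyenn-uh}, however, you are invoking a stronger input than the paper establishes or needs. The pointwise statement ``$u_h=\mathsf c_h(1+O(h))$ on a fixed neighbourhood of $x_0$ in $\mathcal C^1$'' is \emph{not} part of the toolbox used to prove Theorem~\ref{thm1} here: the paper's machinery is entirely $L^2_w$-based and never produces pointwise control of $u_h$. What the paper actually does (Lemma~\ref{de1}) is simpler. Take the explicit quasi-mode $\mathsf u^{(0)}=\chi_r/\Vert\chi_r\Vert_{L^2_w}$ with $\chi_r\in\mathcal C^\infty_c(\Omega)$, $\chi_r\equiv 1$ on $\{f<\min_{\partial\Omega}f-r\}$. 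The spectral gap (Corollary~\ref{thm-pc}) and the elementary energy bound on $\nabla\chi_r$ give
\[
\big\Vert u_h-\mathsf u^{(0)}\big\Vert_{L^2_w(\Omega)}\le C\,h^{-\frac d4+\frac12}\,e^{-\frac1h(f(z_1)-f(x_0)-r)}.
\]
Then a single Cauchy--Schwarz, $\big|\int_\Omega(u_h-\mathsf u^{(0)})e^{-\frac2h f}\big|\le\Vert u_h-\mathsf u^{(0)}\Vert_{L^2_w}\,\Vert 1\Vert_{L^2_w}$, makes this correction exponentially negligible, and Laplace's method on the \emph{explicit} $\int_\Omega\chi_r\,e^{-\frac2h f}$ gives \eqref{eq.moyenn-uh} with the stated $O(h)$ error. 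So the ``main obstacle'' you identify is not needed: replace your pointwise $\mathcal C^1$ claim by this $L^2_w$ comparison and the proof goes through with no additional semiclassical input beyond the spectral gap.
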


 
Using the expression~\eqref{eq.dens} for the law of the exit point $X_\tau$, Theorem~\ref{thm1}  and Proposition~\ref{pr.LP-N} yield the following
sharp estimate of this law:
 \begin{theorem}\label{thm2}
  Let us assume that the assumption \autoref{A} is satisfied. Then, for all $k\in \left\{1,\ldots,n_0\right\}$,   it holds in the limit $h\to 0$:
\begin{equation}\label{eq.puh1}
\mathbb P_{\nu_h}[X_{\tau}\in \Sigma_{z_k}]=   \frac{ \vert \mu_{z_k}\vert }{\sqrt{\vert  {\rm det \, Hess } f   (z_k) \vert } }  
\left ( \sum_{\ell=1}^{n_0} \frac{ \vert \mu_{z_\ell}\vert }{\sqrt{ \vert {\rm det \, Hess } f   (z_\ell) \vert } } \right)^{-1}    + O({\sqrt h}),
\end{equation}
In addition, there exists $c>0$ such that in the limit $h\to 0$:
 \begin{equation}\label{eq.puh2}
\mathbb P_{\nu_h}[X_{\tau}\in \partial \Omega \setminus  \cup_{k=1}^{n_0}\Sigma_{z_k}]\le e^{-\frac{c}{h}}.
\end{equation}
Finally, if~\eqref{hypo1} and~\eqref{hypo2} are satisfied, it holds  for all $k\in \left\{n_0+1,\ldots,n\right\}$,    in the limit $h\to 0$:
\begin{equation}\label{eq.puh3}
\mathbb P_{\nu_h}[X_{\tau}\in  \Sigma_{z_k}]=\frac{ \vert \mu_{z_k}\vert }{\sqrt{\vert  {\rm det \, Hess } f   (z_k) \vert } }  
\left ( \sum_{\ell=1}^{n_0} \frac{ \vert \mu_{z_\ell}\vert }{\sqrt{ \vert {\rm det \, Hess } f   (z_\ell) \vert } } \right)^{-1} \,e^{-\frac{2}{h}(f(z_k)-f(z_1))}\big( 1+ O( {\sqrt h}) \big).
\end{equation}
  \end{theorem}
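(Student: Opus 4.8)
The plan is to combine the exact probabilistic representation~\eqref{eq.dens} of the law of the exit point with the sharp asymptotics already recorded in Theorem~\ref{thm1} and Proposition~\ref{pr.LP-N}; no new semi-classical analysis is needed, only a careful bookkeeping of the exponential rates, of the powers of~$h$, and of the multiplicative error terms. Taking $\varphi=\mathbf 1_{\Sigma_{z_k}}$ (resp.\ $\varphi=\mathbf 1_{\partial\Omega\setminus\cup_{k=1}^{n_0}\Sigma_{z_k}}$) in~\eqref{eq.dens} and using $\mathbb E_{\nu_h}(\tau)=\lambda_h^{-1}$, one gets for every~$k$
\[
\mathbb P_{\nu_h}[X_\tau\in\Sigma_{z_k}]=-\frac{h}{2\lambda_h}\,\frac{\displaystyle\int_{\Sigma_{z_k}}\partial_{\mathsf n_\Omega}u_h\,e^{-\frac2h f}\,d\sigma}{\displaystyle\int_\Omega u_h\,e^{-\frac2h f}},
\]
and likewise for the complementary set. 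Everything then reduces to dividing the estimates of Theorem~\ref{thm1} by $\lambda_h\int_\Omega u_h\,e^{-\frac2h f}$, which is controlled through~\eqref{eq.moyenn-uh} and~\eqref{eq.lambda_h-Equiv}.

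For $k\in\{1,\dots,n_0\}$ I would substitute~\eqref{eq.dnuh_asymptot} in the numerator and~\eqref{eq.moyenn-uh}, \eqref{eq.lambda_h-Equiv} in the denominator. The key observation is that, because $f(z_1)=\cdots=f(z_{n_0})=\min_{\partial\Omega}f$, the numerator carries the exponential weight $e^{-\frac1h(2f(z_1)-f(x_0))}$ for \emph{all} such~$k$, and this is exactly the weight of $\lambda_h\int_\Omega u_h\,e^{-\frac2h f}$ (namely $e^{-\frac2h(f(z_1)-f(x_0))}\cdot e^{-\frac1h f(x_0)}$); similarly the prefactors in~$h$ match ($h\cdot h^{\frac d4-1}$ in the numerator against $(\pi h)^{d/4}$ from~\eqref{eq.moyenn-uh}). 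After these cancellations the $\pi$- and Hessian-dependent constants collapse to $\frac{|\mu_{z_k}|}{\sqrt{|\det\hess f(z_k)|}}\big(\sum_{\ell=1}^{n_0}\frac{|\mu_{z_\ell}|}{\sqrt{|\det\hess f(z_\ell)|}}\big)^{-1}$, and the remainders combine as $(1+O(\sqrt h))\big/\big((1+O(\sqrt h))(1+O(h))\big)=1+O(\sqrt h)$, giving~\eqref{eq.puh1}.

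The other two estimates follow the same computation with a different input of Theorem~\ref{thm1}. For $\partial\Omega\setminus\cup_{k=1}^{n_0}\Sigma_{z_k}$ one uses~\eqref{eq.dnuh_asymptot2}: its weight $e^{-\frac1h(2f(z_1)-f(x_0)+c)}$ beats the weight $e^{-\frac1h(2f(z_1)-f(x_0))}$ of the denominator, which by~\eqref{eq.moyenn-uh}--\eqref{eq.lambda_h-Equiv} is bounded below by $C\,h^{d/4}e^{-\frac1h(2f(z_1)-f(x_0))}$ for some $C>0$; hence the probability is $O(h^{1-d/4}e^{-c/h})\le e^{-c'/h}$ for $h$ small and any $0<c'<c$, which after relabelling is~\eqref{eq.puh2}. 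For $k\in\{n_0+1,\dots,n\}$ one uses~\eqref{eq.dnuh_asymptot3}, which differs from~\eqref{eq.dnuh_asymptot} only by the weight $e^{-\frac1h(2f(z_k)-f(x_0))}$; dividing by the denominator leaves the same ratio of Hessians, the same $O(\sqrt h)$ relative error, and an additional factor $e^{-\frac2h(f(z_k)-f(z_1))}$, i.e.~\eqref{eq.puh3}. The assumptions~\eqref{hypo1}--\eqref{hypo2} enter here only insofar as Theorem~\ref{thm1} requires them to produce~\eqref{eq.dnuh_asymptot3}.

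The one point deserving genuine care --- and the closest thing to an obstacle --- is to use~\eqref{eq.lambda_h-Equiv} as a \emph{two-sided} estimate: one needs not only that $\lambda_h=O(e^{-\frac2h(f(z_1)-f(x_0))})$ but also that $\lambda_h$ is bounded below by a positive multiple of it, with the multiplicative constant being precisely $\sum_{\ell=1}^{n_0}\frac{|\mu_{z_\ell}|(\det\hess f(x_0))^{1/2}}{\pi|\det\hess f(z_\ell)|^{1/2}}$. This is what guarantees that the saddle points $z_k$ with $k>n_0$ and the remainder of $\partial\Omega$ contribute only at exponentially smaller order to $\lambda_h$, so that the normalising sum appearing in~\eqref{eq.puh1} and~\eqref{eq.puh3} runs only over $\ell\in\{1,\dots,n_0\}$; beyond this, the argument is routine manipulation of prefactors and of the error bounds.
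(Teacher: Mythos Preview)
Your proposal is correct and follows exactly the route the paper takes: the paper simply states that Theorem~\ref{thm2} is a consequence of Theorem~\ref{thm1} and Proposition~\ref{pr.LP-N} together with~\eqref{eq.dens}, and you have spelled out precisely this computation. Your bookkeeping of the exponential weights, powers of~$h$, and prefactors is accurate, and the point you flag about needing~\eqref{eq.lambda_h-Equiv} as a two-sided estimate is exactly what makes the normalising sum collapse to $\ell\in\{1,\dots,n_0\}$.
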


 As a corollary of  Theorem~\ref{thm2} and  Proposition~\ref{pr.LP-N},
one immediately gets the following sharp estimates of the exit rates defined in~\eqref{kijl}:
     \begin{corollary}\label{co.ek}
      Let us assume that the assumption \autoref{A} is satisfied. 
     Then, for all $k\in \left\{1,\ldots,n_0\right\} $,   it holds in the limit $h\to 0$:
\begin{equation}\label{eq.k*1}
 \mathsf k_{z_k}^{o\ell}(\Sigma_k)= \frac{\vert \mu_{z_k}\vert}{\pi }\frac{  \sqrt{ {\rm det\,  Hess}\, f (x_0) }    }{   \sqrt{\vert {\rm det \,Hess}\, f(z_k)  \vert  }   }   \,e^{-\frac{2}{h}(f(z_1)-f(x_0))}\big( 1+ O( {\sqrt h}) \big).
\end{equation}
In addition, if~\eqref{hypo1} and~\eqref{hypo2} are satisfied, it holds  for all $k\in \left\{n_0+1,\ldots,n\right\}$,    in the limit $h\to 0$:
\begin{equation}\label{eq.k*2}
 \mathsf k_{z_k}^{o\ell}(\Sigma_k)= \frac{\vert \mu_{z_k}\vert}{\pi }\frac{  \sqrt{ {\rm det\,  Hess}\, f (x_0) }    }{   \sqrt{\vert {\rm det \,Hess}\, f(z_k)  \vert  }   }    \,e^{-\frac{2}{h}(f(z_k)-f(x_0))}\big( 1+ O( {\sqrt h}) \big).
\end{equation}
\end{corollary}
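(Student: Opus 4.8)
The plan is to obtain the corollary by combining the two estimates already established — the asymptotics of $\lambda_h$ in Proposition~\ref{pr.LP-N} and those of the exit probabilities $\mathbb P_{\nu_h}[X_\tau\in\Sigma_{z_k}]$ in Theorem~\ref{thm2} — through the elementary identity $\mathbb E_{\nu_h}[\tau]=\lambda_h^{-1}$ (recall $\tau\sim\mathcal E(\lambda_h)$ under $\mathbb P_{\nu_h}$, see~\eqref{eq.-indep}); no new analytic input is needed, and the computation is essentially one line. First I would rewrite~\eqref{kijl} as
\begin{equation*}
\mathsf k_{z_k}^{o\ell}(\Sigma_{z_k})=\frac{\mathbb P_{\nu_h}[X_\tau\in\Sigma_{z_k}]}{\mathbb E_{\nu_h}[\tau]}=\lambda_h\,\mathbb P_{\nu_h}\big[X_\tau\in\Sigma_{z_k}\big],\qquad k\in\{1,\dots,n\}.
\end{equation*}
(One could also avoid the probabilistic quantities and feed the PDE estimates~\eqref{eq.dnuh_asymptot}--\eqref{eq.dnuh_asymptot3} and~\eqref{eq.moyenn-uh} directly into the identity~\eqref{eq.expk}; the bookkeeping is identical.) I would then substitute: for $k\in\{1,\dots,n_0\}$, the expansion~\eqref{eq.lambda_h-Equiv} of $\lambda_h$ together with~\eqref{eq.puh1}; for $k\in\{n_0+1,\dots,n\}$ — where Assumptions~\eqref{hypo1}--\eqref{hypo2} enter, precisely because they are what make the inputs~\eqref{eq.dnuh_asymptot3} and~\eqref{eq.puh3} valid at the higher saddles — the same expansion~\eqref{eq.lambda_h-Equiv} together with~\eqref{eq.puh3}.

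The only point needing a moment's care is the cancellation of the common factor. Set $S:=\sum_{\ell=1}^{n_0}|\mu_{z_\ell}|\,\big|{\rm det}\,\hess f(z_\ell)\big|^{-1/2}$. By~\eqref{eq.lambda_h-Equiv}, $\lambda_h=\pi^{-1}\big({\rm det}\,\hess f(x_0)\big)^{1/2}\,S\,e^{-\frac2h(f(z_1)-f(x_0))}\,(1+O(\sqrt h))$, whereas by Theorem~\ref{thm2}, $\mathbb P_{\nu_h}[X_\tau\in\Sigma_{z_k}]=|\mu_{z_k}|\,\big|{\rm det}\,\hess f(z_k)\big|^{-1/2}\,S^{-1}\,(1+O(\sqrt h))$ for $k\le n_0$, and the same leading factor times $e^{-\frac2h(f(z_k)-f(z_1))}(1+O(\sqrt h))$ for $k>n_0$. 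In the product $\lambda_h\,\mathbb P_{\nu_h}[X_\tau\in\Sigma_{z_k}]$ the factor $S$ cancels, leaving exactly the Eyring--Kramers prefactor $\mathsf P_{z_k}$ of~\eqref{eq.prefactor}, while the exponential weights add in the exponent: $e^{-\frac2h(f(z_1)-f(x_0))}$ for $k\le n_0$, and $e^{-\frac2h(f(z_1)-f(x_0))}\cdot e^{-\frac2h(f(z_k)-f(z_1))}=e^{-\frac2h(f(z_k)-f(x_0))}$ for $k>n_0$. Since neither factor being multiplied carries a polynomial power of $h$ (those powers are present in~\eqref{eq.dnuh_asymptot} and~\eqref{eq.moyenn-uh} but cancel in the ratio~\eqref{eq.expk}), nothing of the form $h^{\alpha}$ remains in front, and one arrives at $\mathsf k_{z_k}^{o\ell}(\Sigma_{z_k})=\mathsf P_{z_k}\,e^{-\frac2h(f(z_k)-f(x_0))}\,(1+O(\sqrt h))$, i.e.\ \eqref{eq.k*1} when $k\le n_0$ (where $f(z_k)=f(z_1)$) and~\eqref{eq.k*2} when $k>n_0$ — which is precisely the Eyring--Kramers rate~\eqref{eq.ek}.

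It then remains only to tidy the remainders: the limiting coefficient $\mathsf P_{z_k}$ being a fixed nonzero constant, the additive $O(\sqrt h)$ in~\eqref{eq.puh1} is absorbed into a multiplicative $(1+O(\sqrt h))$ in front of $\mathsf P_{z_k}e^{-\frac2h(f(z_k)-f(x_0))}$, and $(1+O(\sqrt h))(1+O(\sqrt h))=1+O(\sqrt h)$; the case $k>n_0$ is identical using~\eqref{eq.puh3}. I do not anticipate any real obstacle at this stage: the argument is a substitution followed by the single observation that the two copies of $S$ cancel. The whole difficulty of the statement sits upstream — in the semiclassical analysis of the pair $(\lambda_h,u_h)$ proving Theorem~\ref{thm1}, Theorem~\ref{thm2} and Proposition~\ref{pr.LP-N} — which here I take as given.
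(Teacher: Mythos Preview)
Your proposal is correct and matches the paper's own argument: the paper states that Corollary~\ref{co.ek} is an immediate consequence of Theorem~\ref{thm2} and Proposition~\ref{pr.LP-N} (equivalently, of Theorem~\ref{thm1}, Proposition~\ref{pr.LP-N}, and~\eqref{eq.expk}), which is exactly the substitution-and-cancellation you carry out. Your observation that the common factor $S$ cancels and that the $O(\sqrt h)$ remainders combine multiplicatively is precisely the one-line bookkeeping the paper leaves implicit.
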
 
As discussed in Section~\ref{sec:informal},  Corollary~\ref{co.ek}
thus justifies the approximation of metastable exits of the overdamped Langevin
  dynamics~\eqref{eq.langevin} by a kinetic Monte Carlo model
  parametrized with the Eyring-Kramers formulas.

 We will also show that Theorem~\ref{thm2} extends to deterministic initial conditions $x\in
  \Omega$ as follows (the subscript $x$ in $\mathbb P_{x}$ indicates
  that $X_0=x$).
   \begin{theorem}\label{thm3}
  Let us assume that the assumption \autoref{A} is satisfied. Let $\mathsf K$ be a compact subset of~$\Omega$. Then, for all $k\in \left\{1,\ldots,n_0\right\}$,   it holds in the limit $h\to 0$:
\begin{equation}\label{eq.px1}
\mathbb P_{x}[X_{\tau}\in \Sigma_{z_k}]=   \frac{ \vert \mu_{z_k}\vert }{\sqrt{\vert  {\rm det \, Hess } f   (z_k) \vert } }  
\left ( \sum_{\ell=1}^{n_0} \frac{ \vert \mu_{z_\ell}\vert }{\sqrt{ \vert {\rm det \, Hess } f   (z_\ell) \vert } } \right)^{-1}   + O({\sqrt h}), 
\end{equation}
 uniformly in $x\in \mathsf K$. 
In addition, there exists $c>0$ such that in the limit $h\to 0$:
 \begin{equation}\label{eq.px2}
\sup_{x\in \mathsf K}\mathbb P_{x}[X_{\tau}\in \partial \Omega \setminus\cup_{k=1}^{n_0}\Sigma_{z_k}]\le e^{-\frac{c}{h}}.
\end{equation}
Let us assume that~\eqref{hypo1} and~\eqref{hypo2} are satisfied.  Assume in addition there exists $\ell_0\in \{n_0+1,\ldots,n\}$ such
that 
\begin{equation}\label{eq:hypo2_bis}
2( f(z_{\ell_0})-f(z_1))<f(z_1)-f(x_0).
\end{equation}
 Let $k_0\in \{ n_0+1,\ldots,\ell_0\}$ and $ \alpha_* \in \mathbb R$ be such
that  $f(x_0) <  \alpha_*< 2 f(z_1)-f(z_{k_0})$  (notice that
necessarily $\alpha_*< f(z_1)=\min_{\partial \Omega}f$). 
Then, it holds for $k\in
\{  n_0+1,\ldots,k_0\}$    in the limit $h\to 0$:
\begin{equation}\label{eq.px3}
 \mathbb P_{x}[X_{\tau}\in  \Sigma_{z_k}]=      \frac{ \vert \mu_{z_k}\vert }{\sqrt{\vert  {\rm det \, Hess } f   (z_k) \vert } }  
\left ( \sum_{\ell=1}^{n_0} \frac{ \vert \mu_{z_\ell}\vert }{\sqrt{ \vert {\rm det \, Hess } f   (z_\ell) \vert } } \right)^{-1}      \,e^{-\frac{2}{h}(f(z_k)-f(z_1))}\big( 1+ O( {\sqrt h}) \big),
\end{equation}
 uniformly in $x   \in \, {\{ f\le \alpha_*\}}$.
  \end{theorem}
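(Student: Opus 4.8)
\medskip
\noindent\emph{Idea of the proof.}
The plan is to deduce the statements for deterministic initial conditions from the quasi-stationary estimates of Theorem~\ref{thm2}, by proving that on suitable sublevel sets of $f$ the function $x\mapsto\mathbb P_x[X_\tau\in\Gamma]$ equals the constant $\mathbb P_{\nu_h}[X_\tau\in\Gamma]$ up to error terms exponentially small in $1/h$, with an exponent large enough to be negligible against the (possibly exponentially small) main terms of Theorem~\ref{thm2}. Throughout, for $\Gamma\subset\partial\Omega$ I write $w_\Gamma(y):=\mathbb P_y[X_\tau\in\Gamma]$: it satisfies $0\le w_\Gamma\le1$, $\mathsf L_{f,h}^{(0)}w_\Gamma=0$ in $\Omega$, $w_\Gamma=\mathbf 1_\Gamma$ on $\partial\Omega$, and, conditioning on $X_0\sim\nu_h$, $\mathbb P_{\nu_h}[X_\tau\in\Gamma]=\int_\Omega w_\Gamma\,d\nu_h$.

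First I would record two standard small-noise inputs for~\eqref{eq.langevin} (Freidlin--Wentzell estimates, or the quasi-stationary distribution approach; see the references of the introduction): (a) for any compact $\mathsf K\subset\Omega$ and any $\alpha\in(f(x_0),f(z_1))$, there are $T,c_1>0$ with $\sup_{x\in\mathsf K}\mathbb P_x[\tau\le T\text{ or }X_t\notin\{f\le\alpha\}\ \forall\,t\le T]\le e^{-c_1/h}$ for $h$ small, using that the flow~\eqref{eq.hbb} carries $\mathsf K$ into $\{f\le\alpha\}$ in uniformly bounded time while remaining in $\Omega$ (item~1 of~\autoref{A}); and (b) for any $\alpha\in(f(x_0),f(z_1))$ and $\varepsilon>0$, the probability that $(X_t)_{t\ge0}$ started anywhere in $\{f\le\alpha\}$ exits $\Omega$ before having relaxed to a small fixed neighbourhood of $x_0$ is $\le e^{-\frac1h(2(f(z_1)-\alpha)-\varepsilon)}$, since reaching $\partial\Omega$ (where $f\ge f(z_1)=\min_{\partial\Omega}f$) from $\{f\le\alpha\}$ costs at least $2(f(z_1)-\alpha)$ as $h\to0$. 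Using (b), the strong Markov property at the relaxation time, and a contracting coupling of two copies of~\eqref{eq.langevin} started near $x_0$ (possible since $\nabla f(x_0)=0$), one shows that $w_\Gamma$ is constant on $\{f\le\alpha\}$ up to $O(e^{-\frac1h(2(f(z_1)-\alpha)-\varepsilon)})$. Moreover $\nu_h$ concentrates at $x_0$: $\nu_h(\Omega\setminus\{f<\beta\})=O(h^{-d/2}e^{-\frac2h(\beta-f(x_0))})$ for $\beta<f(z_1)$, by~\eqref{eq.expQSD}, \eqref{eq.moyenn-uh} and semiclassical estimates on $u_h$; comparing $\int_\Omega w_\Gamma\,d\nu_h$ with $w_\Gamma(x_0)$ (using the previous bound with $\alpha=\beta$) and optimising over $\beta$ gives $\mathbb P_{\nu_h}[X_\tau\in\Gamma]=w_\Gamma(x_0)+O(e^{-\frac1h(f(z_1)-f(x_0)-\varepsilon)})$. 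Altogether, for any $\varepsilon>0$ and any $\alpha\in(f(x_0),f(z_1))$,
\[
\mathbb P_x[X_\tau\in\Gamma]=\mathbb P_{\nu_h}[X_\tau\in\Gamma]+O\!\big(e^{-\frac1h(\min(2(f(z_1)-\alpha),\,f(z_1)-f(x_0))-\varepsilon)}\big)\quad\text{uniformly in }x\in\{f\le\alpha\},
\]
and, inserting (a) into the Markov property, the same identity with $O(e^{-c_1/h})$ (for some, possibly small, $c_1>0$) uniformly in $x$ in a compact $\mathsf K\subset\Omega$.

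It then remains to substitute Theorem~\ref{thm2} (with $\Gamma=\Sigma_{z_k}$ or $\Gamma=\partial\Omega\setminus\cup_{k=1}^{n_0}\Sigma_{z_k}$). For $k\le n_0$, the main term in~\eqref{eq.puh1} is $O(1)$, so the $O(e^{-c_1/h})$ error is absorbed into $O(\sqrt h)$ and~\eqref{eq.px1} follows uniformly in $x\in\mathsf K$; likewise~\eqref{eq.puh2} yields~\eqref{eq.px2}. For~\eqref{eq.px3}, since~\eqref{hypo1}--\eqref{hypo2} hold, the estimate~\eqref{eq.puh3} applies and the main term has order $e^{-\frac2h(f(z_k)-f(z_1))}$; one thus needs $\min(2(f(z_1)-\alpha_*),\,f(z_1)-f(x_0))>2(f(z_k)-f(z_1))$ for every $k\in\{n_0+1,\dots,k_0\}$. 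The first inequality, $\alpha_*<2f(z_1)-f(z_k)$, holds for all such $k$ because $\alpha_*<2f(z_1)-f(z_{k_0})$ and $f(z_k)\le f(z_{k_0})$; the second, $f(z_1)-f(x_0)>2(f(z_k)-f(z_1))$, is exactly~\eqref{eq:hypo2_bis}, via $f(z_k)\le f(z_{k_0})\le f(z_{\ell_0})$ (which is moreover what makes the interval $(f(x_0),2f(z_1)-f(z_{k_0}))$ for $\alpha_*$ non-empty). Hence the error is $o(e^{-\frac2h(f(z_k)-f(z_1))})$, is absorbed into the $(1+O(\sqrt h))$ of~\eqref{eq.puh3}, and~\eqref{eq.px3} follows uniformly in $x\in\{f\le\alpha_*\}$.

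The main obstacle is the reduction in the second paragraph: obtaining the comparison between $\mathbb P_x[X_\tau\in\Gamma]$ and $\mathbb P_{\nu_h}[X_\tau\in\Gamma]$ with a \emph{sharp} exponentially small error. The abstract convergence~\eqref{eq.con-c} to $\nu_h$ only provides a polynomial-in-$h$ rate (the spectral gap of $\mathsf L^{\mathsf{Di},(0)}_{f,h}(\Omega)$ is of order $h$), so one must instead argue pathwise — relaxation to the local equilibrium near $x_0$ in time $O(1)$, during which the process leaves $\Omega$ only across the full barrier $f(z_1)-f(x_0)$, together with the contracting coupling near $x_0$ — uniformly in the starting region; establishing (b) with the exponent $2(f(z_1)-\alpha)$, and the balance producing the exponent $f(z_1)-f(x_0)$ (which is where~\eqref{eq:hypo2_bis} originates) in the $w_\Gamma(x_0)$-versus-$\mathbb P_{\nu_h}$ comparison, is where the bulk of the technical work lies.
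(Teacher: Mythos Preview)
Your overall strategy is correct and matches the paper's: reduce to Theorem~\ref{thm2} via a leveling result showing that $w_\Gamma(x)=\mathbb P_x[X_\tau\in\Gamma]$ is, on a sublevel set $\{f\le\alpha\}$, equal to $\mathbb P_{\nu_h}[X_\tau\in\Gamma]$ up to an exponentially small error whose exponent is sharp enough to be negligible; then extend to general compacts using the deterministic flow. Your arithmetic on the constraints $\alpha_*<2f(z_1)-f(z_{k_0})$ and $f(z_1)-f(x_0)>2(f(z_k)-f(z_1))$ is exactly the one the paper carries out (see~\eqref{eq:hyp_alpha}).

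The difference lies in the technical tools for the leveling step, and here you should be more careful. You label (b) as a ``standard'' Freidlin--Wentzell input, but the paper explicitly states (Section~\ref{sec.strategy1} and the remark before Corollary~\ref{co.Px}) that the large-deviation leveling result of~\cite{eizenberg-90}, used in earlier work, does \emph{not} apply when $f$ has critical points on $\partial\Omega$. The paper therefore proves the committor estimate---your (b)---by a potential-theoretic route: Proposition~\ref{pr.upper_boundeCF} bounds $\mathbb P_x[\tau_{\Omega^c}<\tau_{\overline{\mathsf B}(x_0,h)}]\le C_{\mathsf K}h^{-d}e^{-\frac2h(f(z_1)-f(x))}$ on compacts of $\{f<f(z_1)\}$ via the capacity variational principle~\eqref{eq.mini}, following~\cite{BEGK,landim2017dirichlet}. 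For the behaviour near $x_0$ (where your coupling would live), the paper uses a PDE/Harnack argument, Lemma~\ref{le.upper_boundeF}, yielding $\sup_{\mathsf B(x_0,h)}|w_h-w_h(x_0)|\le C\sqrt h\,w_h(x_0)$. These two ingredients are then combined via the strong Markov property at $\tau_{\overline{\mathsf B}(x_0,h)}$ to give Corollary~\ref{co.Px}, and the comparison with $\mathbb P_{\nu_h}$ is obtained by splitting $\int_\Omega w_h\,u_h\,e^{-2f/h}$ at level~$\alpha$ and using~\eqref{eq.moyenn-uh} together with~\eqref{eq.QM0-}.

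So your proposal is structurally sound and your error accounting is right, but the step you flag as ``the main obstacle'' is indeed one, and your suggested FW/coupling route is precisely what the paper chose to avoid in the presence of characteristic boundary; if you pursue it you would need to justify (b) without invoking off-the-shelf large-deviation results, or else switch to the capacity argument the paper uses.
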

   \noindent

Before precisely discussing related results in the literature, let us
provide some preliminary comments on the statements presented in this
section. First,  Equations~\eqref{eq.puh1}--\eqref{eq.puh2} and
\eqref{eq.px1}--\eqref{eq.px2} show that  the most probable places of
exit from $\Omega$ as $h\to 0$ are $\{z_1,\ldots,z_{n_0}\}$, and they
provide the relative probabilities of exiting
through (neighborhoods of) these points. Moreover,
Equations~\eqref{eq.puh2} and~\eqref{eq.px3} give precise asymptotic
estimate of the probability to leave through higher energy saddle points.
All these results can be seen as generalizations of those
previously obtained in~\cite{DLLN}
and of some results in \cite{DLLN-saddle1}, where it is
assumed that $\partial_{\mathsf n_\Omega}f>0$ on $\partial \Omega$. In
this case, the local minima of $f$ on $\partial \Omega$ play the role
of saddle points, and different prefactors than~\eqref{eq.prefactor} appear in the
asymptotic rates, for example. Let us finally emphasize that, as will become clear from the proofs,
all the  error terms $O( {\sqrt h})$ follow from the
Laplace's method applied to integrals on $\mathbb R_-^d$ and are optimal
(see also~\cite[Remarks 25 and 39]{DoNe2} for more details).

 \begin{figure}
\begin{center}
\begin{tikzpicture}[scale=0.6]
\tikzstyle{vertex1}=[draw,circle,fill=black,minimum size=5pt,inner sep=0pt]
\tikzstyle{vertex}=[draw,circle,fill=black,minimum size=5pt,inner sep=0pt]
\tikzstyle{ball}=[circle, dashed, minimum size=1cm, draw]
\tikzstyle{point}=[circle, fill, minimum size=.01cm, draw]
\draw [rounded corners=10pt] (1,0.5) -- (-0.25,2.5) -- (1,5) -- (5,6.5) -- (7.6,3.75) -- (6,1) -- (4,0) -- (2,0) --cycle;

     \draw  (5,2.5) node[]{$\Omega$};
   \draw[ultra thick] (-0.05,2.7)  ..controls  (0,1.8) and  (0.38,1.2)  .. (1.4,0.3)  ;
    \draw[]  (0.1,0.3) node[]{$\Sigma_{z_1}$};
    \draw[ultra thick] (2.5,0)..controls (3.9,-0.1) and (5.5,0.2)   .. (6.5,1.9) ;
    \draw[]  (5.5,-0.3)  node[]{$\Sigma_{z_2}$};
    \draw[ultra thick] (5.36,6.1) to (7.16,4.2)  ;
     \draw[]  (7.7,4.9)   node[]{$\Sigma_{z_3}$};
         \draw[ultra thick] (0.5,4) ..controls  (1.5,5.34) and  (0.3,4.8) .. (3.8,6)   ;
         \draw[] (0.2,5.2) node[]{$\Sigma_{z_4}$};
         
 \draw (1.7,5.3) node[vertex,label={$z_4$}](v){};
\draw (3.4 ,3.2) node[vertex1,label={$x_0$}](v){};
\draw (4.2,0.1) node[vertex,label= {$z_2$}](v){};
\draw (0.38,1.45) node[vertex,label={}](v){};
 \draw  (-0.5,1.45) node[]{$z_{1}$};
\draw (6.2,5.2) node[vertex,label= {$z_3$}](v){};
%
%
\end{tikzpicture}

\caption{Example of a domain  $\Omega$ with $n=4$ saddle points $\{z_1,\ldots,z_4\}$.}

\label{fig:dom_omega}

 \end{center}
\end{figure}
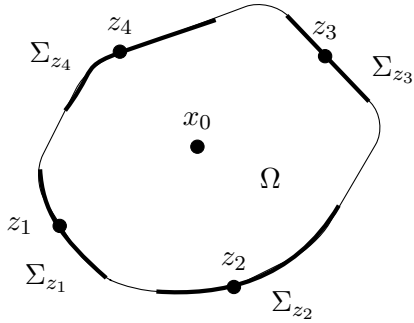

\subsubsection{A short review on mathematical approaches to metastability}

In this section, we succinctly present two aspects of metastability
which have received attention from the mathematical community: the
exit problem (which is the focus of this work) and the spectral
analysis of the infinitesimal generator.

\medskip
\noindent
\textbf{On the exit problem.}    
Even though the exit problem from a basin of attraction of a local
minimum of $f$ is a very natural question, this setting has not been
considered up to now in the literature, at least to the best of our
knowledge. This is essentially because of the
mathematical difficulties induced by the presence of critical points
of $f$ on the boundary. Let us recall the main results which have been
obtained.

Let us first mention that early inspiring formal computations were conducted
by Z. Schuss and co-workers~\cite{MS77,schuss90, MaSc}. In terms of
rigorous proofs, two techniques have then been developed, based on large
deviations or the analysis of partial differential equations
associated to the stochastic process.

From a probabilistic viewpoint, the exit problem has been studied a
lot using large deviation techiques, pioneered by M.I Friedlin and
A.D. Wentzell~\cite{FrWe}. 
 Typically, results are only obtained on $h$-log
limits of the  mean exit time $\tau$ and of the law of the exit
location  $X_{\tau}$, under the assumption that~$f$ does not have
critical point on $\partial \Omega$, see also the developments by
M.V. Day and
M. Sugiura~\cite{Day2,Day4,day1987r,day1984a,sugiura2001,sugiura1996}).
A
noteworthy exception is the work~\cite{Day1} by M.V. Day where large
deviations principles are given for some conormally  reflected
processes with attractors on the boundary. 

Techniques based on parabolic or elliptic partial diffferential
equations associated with the stochastic process
have also been developed in particular by S. Kamin~\cite{kamin-78,kamin1979elliptic}, H.~Ishii and
P.E.~Souganidis~\cite{souga1,souga2},
B.~Perthame~\cite{perthame-90}, and more recently D. Borisov and
D. Sultanov~\cite{borisov2}. In particular,  these articles study
the concentration of the  law of $X_{\tau}$ on the global minima of $f$ on
$\partial \Omega$ in the limit  $h\to 0$. In these
works, it is again assumed
that $f$ does not have critical points on the boundary.
Let us mention~\cite{mathieu1995spectra} for early results on the $h$-log limits of the  smallest eigenvalues of  and~\cite{NePC} for sharp asymptotic equivalents on the mean
exit time $\mathbb E[\tau]$ when~$f$ has critical points on $\partial
\Omega$.


Notice that $h$-log limits cannot be used to compute
the relative probabilities of exits through the lowest saddle points
$\{z_1, \ldots, z_{n_0}\}$.  Moreover, Equations~\eqref{eq.px1}
and~\eqref{eq.px2} extend the results of~\cite[Theorem 2.1]{FrWe}
and~\cite{kamin1979elliptic,kamin-78,perthame-90,day1984a,day1987r,DLLN-saddle2} to
the case when $f$ has critical points on $\partial \Omega$.  Let us however acknowledge
that even if the techniques mentioned above seem inherently limited to $h$-log limits, some of them are robust enough to apply to non
reversible elliptic processes, or quasilinear parabolic equations
(see~\cite{FrWe,Day2,Day4,day1987r,day1984a,souga1,souga2}), whereas we
only consider reversible dynamics here.


\medskip
\noindent
\textbf{Spectral problem and the Eyring-Kramers laws.} 
The focus of the present work is on the exit problem (exit time, and
first exit points), and we prove that
Eyring-Kramers laws precisely describe the exit rates from a basin of
attraction of the potential energy function. In the mathematical literature,
Eyring-Kramers laws have also been obtained in a different context,
namely when studying the smallest eigenvalues of the infinitesimal
generator (seen as an
operator on $\mathbb R^d$) of the overdamped
Langevin process, see the defintion~\eqref{eq:inf_gen} of $\mathsf L_{f,h}^{(0)}$. Two variational techniques have been used, based either on
tools from potential theory or from spectral theory (see~\cite{Ber} for a
nice review). 


Let us first mention that sharp lower and upper bounds on the small
eigenvalues were obtained in the pioneering
works~\cite{miclo-95,holley-kusuoka-stroock-89}. Then,
A. Bovier and collaborators
 developed in~\cite{BEGK,BGK} a potential theoretic approach~\cite{bovier2016metastability} to
obtain precise equivalents of the $n_p$ smallest eigenvalues of
$\mathsf L_{f,h}^{(0)}$, $n_p$ being the number of local minima of $f$
in $\mathbb R^d$. It is also proven that the non-zero eigenvalues coincide with
the inverses of mean transition times to go from one local minimum of $f$
to any of the other local minima with smaller energies. This
potential theoretic approach have then been further developed by
N. Beglund and co-workers~\cite{Berglund_Gentz_MPRF,BD15}, and by
C. Landim and I. Seo~\cite{landim2017dirichlet, lee2020non}, in
particular to non-reversible diffusions.

Using tools developed to analyze the semi-classical limit of the
Schr\"odinger operator, similar results on the low-lying spectrum have been derived by
B. Helffer, M. Klein and F. Nier in~\cite{HKN}. See also the recent
works~\cite{dorianmichel,michel2017small,peutrec2020bar,le-peutrec_nier_viterbo-13}
for generalizations, and~\cite{herau-hitrick-sjostrand-11} for
asymptotic equivalents of the smallest eigenvalues of the kinetic
Langevin operator. Let us mention the nice
work~\cite{menz-schlichting-14} where it s proven that Poincaré and
Logarithmic-Sobolev inequalities constants asymptotically satisfy an
Eyring-Kramers law in the limit $h \to 0$.
 
Let us finally emphasize that the two problems we have discussed up to
now in this section (the exit problem, and the low-lying spectrum of
$\mathsf L_{f,h}^{(0)}$ in $\mathbb R^d$) are different in nature. In
particular, the exit problem requires to precisely study the law of the
first exit point in order to estimate all the the exit rates.


%

 \subsubsection{Strategy of the proofs  and mathematical novelties}
  \label{sec.strategy1}
 
\medskip
\noindent
\textbf{Strategy of the proofs and organization of the article.}
Let us provide a concise presentation of the strategy of the proofs,
together with an outline of this work.
 In view of Theorem~\ref{thm1} and~\eqref {eq.expk}, one needs precise
 asymptotic estimates of of~$\nabla u_h\cdot \mathsf
 n_\Omega$  on~$\pa\Omega$, as $h\to 0$. Recall that $u_h$ is the
 principal eigenfunction of $\mathsf
 L^{\mathsf{Di},(0)}_{f,h}(\Omega)$: $\mathsf L^{\mathsf{Di},(0)}_{f,h}(\Omega)\, u_h  =  \lambda_h u_h$. The cornerstone of the proof of
 Theorem~\ref{thm1} is that  $\nabla u_h$ also satisfies an eigenvalue
 problem (with the same exponentially small eigenvalue $\lambda_h$), obtained by differentiating the previous equation: 
\begin{equation}\label{eq:L1-intro}
\left\{
\begin{aligned}
\mathsf L^{(1)}_{f,h} \nabla u_h &= \lambda_h \nabla u_h \text{ in $\Omega$,}\\
\nabla_{\mbf T} u_h& = 0 \text{ on $\partial \Omega$,}\\
\Big(-\frac{h}{2} {\rm div} + \nabla f \cdot \Big) \nabla u_h & = 0 \text{ on $\partial \Omega$,}\\
\end{aligned}
\right.
\end{equation}
where  $
\mathsf L^{(1)}_{f,h}=- \frac{h}{2} \Delta + \nabla f \cdot \nabla + {\rm
  Hess} \, f$ 
is an operator acting on vector fields, i.e. $1$-forms. In the
following, the operator $\mathsf L^{(1)}_{f,h}$ with tangential Dirichlet
boundary conditions, as introduced in~\eqref{eq:L1-intro}, is denoted
by~$\mathsf L^{\mathsf{Di},(1)}_{f,h}(\Omega)$.
For $q\in\{0,1\}$, let us denote, by  $ \pi_h^{(q)}$  the orthogonal
projector  of $\mathsf L^{\mathsf{Di},(q)}_{f,h}(\Omega)$ on the
eigenspace associated with the eigenvalues of $\mathsf
L^{\mathsf{Di},(q)}_{f,h}(\Omega)$ smaller than a constant $\mathsf
c_0$ independent of~$h$. From~\eqref{eq:L1-intro}, it holds, in the
limit $h \to 0$,
\begin{equation}\label{eq.nu-in}
\nabla u_h\in \Ran\, \pi_h^{(1)}. 
\end{equation}
The first step of the analysis thus consists in studying the spectrum
of the operators $\mathsf
L^{\mathsf{Di},(q)}_{f,h}(\Omega)$, $q\in\{0,1\}$. This is done in
Section~\ref{sec.comptevp} in a rather general setting (in particular
without assuming that all the local minima of $f$ on the boundary are
necessary saddle points of $f$), since this
study has its own interest.  We will prove in
particular 
(see Theorem~\ref{thm.main1} and Corollary~\ref{thm-pc}) that  for
some~$\mathsf
c_0$, and for all $h$ sufficiently small,
\begin{equation}\label{eq.dim-no}
\Ran \, \pi_h^{(0)} =  {\rm Span}\, u_h  \text{ and}  \   {\rm dim} \,  \Ran\, \pi_h^{(1)} =n.
\end{equation} 

Then, in order to study the asymptotic behaviour of   $u_h$ and
$\nabla u_h$ when $h\to 0$, we construct in Section~\ref{sec.zk-QM} a suitable
orthonormal basis of  $\Ran\, \pi_h^{(1)} $    using so-called
quasi-modes $\{\mathsf f_1^{(1)} , \ldots, \mathsf f_n^{(1)}\} $ (see
in particular
Propositions~\ref{ESTIME} and~\ref{ESTIME2}). These quasi-modes $\{\mathsf f_1^{(1)} , \ldots, \mathsf
f_n^{(1)}\} $ are built such that  for each $k\in \{1,\ldots,n\}$,
$\mathsf f_k^{(1)} $ is  essentially the principal eigenform of the
operator $\mathsf L^{(1)}_{f,h}$ defined on a domain 
 $\Omega_k^{\textsc{M}} \subset \Omega$,
with mixed Dirichlet-Neumann boundary conditions (the superscript
${\textsc{M}}$ refers to the fact that mixed  Dirichlet-Neumann
boundary conditions will be considered on $\partial
\Omega_k^{\textsc{M}}$). The only critical points of $f$ in
$\overline{\Omega_k^{\textsc{M}}}$  are $x_0$ and  $z_k$, so that
$\mathsf f_k^{(1)}$ gather information on the exit through~$z_k$.
In particular, we define $\{\mathsf f_1^{(1)} , \ldots, \mathsf f_n^{(1)}\} $ such that for all $k\in \{1,\ldots,n\}$:
\begin{equation}\label{eq.B-ff}
\int_{\Sigma_{z_k}}   \partial_{\mathsf n_\Omega}
  u_h \ e^{-\frac{2}{h} f} d\sigma 
  \sim \int_{\Sigma_{z_k}}    \mathsf f_k^{(1)} \cdot \mathsf n_\Omega  \   e^{- \frac{2}{h} f}  d\sigma, \  \text{ as $h\to 0$,}
\end{equation}
and such that $\int_{\Sigma_{z_k}}    \mathsf f_k^{(1)} \cdot \mathsf
n_\Omega  \   e^{- \frac{2}{h} f}  d\sigma$ has the expected
asymptotic equivalent leading to Theorem~\ref{thm1}. More precisely, we construct $\Omega_k^{\textsc{M}}$ in a way that allows us to   compute the asymptotic equivalent of the principal eigenvalue $\lambda(\mathsf
\Omega_k^{\textsc{M}})$ of the operator $\mathsf L^{(1)}_{f,h}$ defined with mixed Dirichlet-Neumann boundary conditions on  
 $\Omega_k^{\textsc{M}}$, with techniques recently used in~\cite{DoNe2}. 
We then show that the  asymptotic equivalent of $\lambda(\mathsf
\Omega_k^{\textsc{M}})$  provides the required asymptotic equivalent
of the right-hand side in~\eqref{eq.B-ff}, for each $k\in
\{1,\ldots,n\}$. This method to estimate $\int_{\Sigma_{z_k}}   \partial_{\mathsf n_\Omega}
  u_h \ e^{-\frac{2}{h} f} d\sigma$
 is probably the main difference with
 the approach used previously in~\cite{DLLN}. 

Finally, Section~\ref{sec.mainresu} builds on the two previous sections
to prove the main results stated in Section~\ref{ssec.mainresu}: Section~\ref{sec.proof0} is devoted to the
proofs of Theorem~\ref{thm1}, Proposition~\ref{pr.LP-N},
Theorem~\ref{thm2}, and Corollary~\ref{co.ek}; Section~\ref{sec.proof}
contains the proof of Theorem~\ref{thm3}. 

 The appendix gathers various technical results and additional comments.

\begin{remark}\label{rem:TAD}
Interestingly enough, in
 the Temperature Accelerated Dynamics
 algorithm~\cite{sorensen-voter-00,aristoff-lelievre-14,di-gesu-lelievre-le-peutrec-nectoux-17}, the numerical method 
 consists in sampling successive exits through the saddle points
 $(z_k)_{1 \le k \le n}$ at high temperature by imposing reflecting boundary conditions on
 the already visited transition pathways, and then to infer the
 exit event that would have been observed at low temperature using the
 Eyring-Kramers laws (see Corollary~\ref{co.ek}). Imposing reflecting boundary conditions on
 the dynamics is
 equivalent to introducing Neumann boundary conditions on the
 infinitesimal generator, and the sampled exits are thus very much
 related to the principal eigenforms $(\mathsf f_k^{(1)})_{1 \le k \le n}$ that we use
 as quasi-modes. For example, in the procedure outlined above, the exit through $z_k$ is exponentially
 distributed with parameter $\lambda(\mathsf
\Omega_k^{\textsc{M}})$ (in the regime~$h \to 0$).
\end{remark}

\medskip
\noindent
{\bf Mathematical novelties.} Let us finally emphasize the main
mathematical novelties and difficulties of the present work, which
is the first to precisely analyze the exit problem from a domain $\Omega$ when the
local minima of $f$ on $\partial \Omega$ are saddle points of $f$.
We actually
studied a similar problem in~\cite{DLLN}, but
under the less natural assumption that  $\partial_{\mathsf
  n_\Omega}f>0$ on~$\partial \Omega$. 
 The presence of critical points
of $f$ on $\partial \Omega$ implies substantial difficulties from a
mathematical viewpoint. First,  to prove ${\rm dim} \,  \Ran\,
  \pi_h^{(1)} =n$, we extend the analysis
of~\cite{helffer-nier-06} (see Remark~\ref{rem:numb-small} for more details), which is of independent interest. This is the purpose of 
Section~\ref{sec.comptevp}   on the Witten complex, see more precisely
Theorem~\ref{thm.main1}. Second, we develop a new
approach to compute the asymptotic equivalents as $h\to 0$ of  the
right-hand side of~\eqref{eq.B-ff}
    without relying on
WKB approximations which were used for example in~\cite{DLLN}. Though WKB approximations
are  very powerful and central tools    on which rely many works in
semi-classical analysis  (see for
instance~\cite{helffer-sjostrand-85,He2,DiSj,HKN,helffer-nier-06}), the fact that both $z_k\in
\partial \Omega_k^{\textsc{M}} $ and  $z_k$ is a critical point of $f$
prevent us from using   previously constructed WKB approximations for
Witten Laplacians~\cite{helffer-sjostrand-85,helffer-nier-06} (this is explained in more
details in Section~\ref{sec:WKB}). Third,  the proof of Theorem~\ref{thm3} uses other
arguments than the one made to prove~\cite[Corollary 16]{DLLN}
especially because the results of~\cite{eizenberg-90} (based on
techniques from the large deviation theory) do not hold when $f$ has
critical points on $\partial \Omega$ (see  the
discussion after Corollary~\ref{co.Px}).

 \section{Number of small eigenvalues of the Witten Laplacian}
 \label{sec.comptevp}

In all this section, the following general setting is assumed:
  \begin{manualassumption}{\bf($\mathsf M$-$f$)}\label{B}
Let $\overline{\mathsf M}$ be a $\mathcal C^{\infty}$ oriented compact
and  connected Riemannian manifold of dimension $d$, with boundary
$\partial  \mathsf M$ and interior $\mathsf M$.   The metric tensor on
$\overline{\mathsf M}$ is denoted by~$\mbf g_{\mathsf M}$. Let $f :
\overline{\mathsf M} \to \mathbb R$ be a $\mathcal C^{\infty}$
function. The functions~$f: \overline{\mathsf M} \to \mathbb R$
and~$f|_{\partial \mathsf M}$ are assumed to be Morse functions. Finally, for all~$x\in \pa \mathsf M$ such that $\vert \nabla f(x)\vert =0$, there exists a neighborhood $\mathsf V_x^{\pa \mathsf M}$ of~$x$ in $\pa \mathsf M$ such that:
$$\forall y\in \mathsf V_x^{\pa \mathsf M},  \  \partial_{\mathsf n_{ \mathsf M}}f(y)=0.$$
  \end{manualassumption}
 \noindent
Notice that~\autoref{B}  implies that $f$ and~$f|_{\partial \mathsf M}$  have a finite number of
critical points. Since the normal derivative $\partial_{\mathsf n_{ \mathsf M}}f$ is zero around critical
points on  $\pa \mathsf M$,   $\pa \mathsf M$ is said
to be characteristic (for the function~$f$) in these regions. Let us
recall that this condition is in particular natural when $\mathsf M\subset \mathbb R^d$ is the basin of attraction of a local minimum of $f$.

The objective of this section is to relate the number of critical
points of index $p$ of $f$, to the number of small
eigenvalues of  the Witten Laplacian acting on $p$-forms with
tangential Dirichlet boundary conditions on $\pa \mathsf M$, see
Theorem~\ref{thm.main1} below. This result is standard for manifolds
without
boundary~\cite{witten-82,helffer-sjostrand-85,simon1983semiclassical,herau-hitrick-sjostrand-11},
and has been proven in~\cite[Theorem 3.2.3]{helffer-nier-06} for manifolds with
boundaries but when $f$ does not have critical points on $\pa \mathsf M$ (see
also~\cite{laudenbach2011morse,le-peutrec-10}). This section is organized
as follows. The Witten Lapacian is introduced in
Section~\ref{sec:Witt}. The main result is stated in
Section~\ref{sec.mainresult_petitevp} and proved in
Section~\ref{sec.numb-small}, after the study of model problems on the
half space $\mathbb R^d_-$ in Section~\ref{sec.Rdmoins}. Finally, consequences of these results
to the particular problem of interest in this work are detailed in
Section~\ref{sec.seclienwitten}, with in particular the proof of~\eqref{eq.dim-no}.

\subsection{Witten Laplacian with tangential Dirichlet boundary conditions}\label{sec:Witt}

 \subsubsection{Notation for Sobolev spaces}
 
Let us introduce standard notation for Sobolev spaces on manifolds
with boundaries (see~\cite{GSchw} for details).
For $q\in\{0,\ldots,d\}$, one denotes by
$\Lambda^q\mathcal C^{\infty}(\overline{\mathsf M})$ (respectively $\Lambda^q\mathcal C^\infty_c( \mathsf M)$) the space of $\mathcal C^{\infty}$ $q$-forms on
$\overline{\mathsf M}$ (respectively on $\mathsf M$ and with compact support in $\mathsf M$). 
Moreover, the set~$\Lambda^q\mathcal C^{\infty}_T(\overline{\mathsf M})$ is the set of
$\mathcal C^{\infty}$ $q$-forms $v$ such that $\mbf tv=0$ on $\partial
\mathsf M$, where $\mathbf t$ denotes the tangential trace on forms. 
For $q\in \{0, \ldots,d\}$, $\Lambda^qL^2(\mathsf M,\mbf g_{\mathsf M})$ is the completion of the space $\Lambda^q\mathcal C^{\infty}(\overline{\mathsf M} )$ for the norm 
$$w\in \Lambda^q\mathcal C^{\infty}(\overline{\mathsf M})\mapsto
\left( \int_{\mathsf M} \vert w\vert^2 \right)^{1/2}.$$ For $m\ge 0$, one denotes by
$\Lambda^qH^m(\mathsf M,\mbf g_{\mathsf M})$ the Sobolev spaces of $q$-forms with
regularity index~$m$: $v\in \Lambda^qH^m(\mathsf M,\mbf g_{\mathsf
  M})$ if and only if for all multi-index $\alpha$ with $\vert \alpha
\vert \le m$, the $\alpha$ derivative of $v$ is
in~$\Lambda^qL^2(\mathsf M,\mbf g_{\mathsf M})$. Let us recall for a
multi-index $\alpha=(\alpha_1,\ldots,\alpha_d) \in \mathbb N^d$, $\vert \alpha
\vert= \sum_{i=1}^d \alpha_i$ and $\partial^\alpha v= \,
^t(\partial_{x_1}^{\alpha_1} v, \ldots, \partial_{x_d}^{\alpha_d} v)$. We will denote by~$\Vert . \Vert_{H^m(\mathsf M,\mbf g_{\mathsf M})}$  the norm on the  space $\Lambda^qH^m
(\mathsf M)$. Moreover $\langle
\cdot , \cdot\rangle_{L^2(\mathsf M,\mbf g_{\mathsf M})}$ denotes the scalar product in~$\Lambda^qL^2 (\mathsf M,\mbf g_{\mathsf M})$.
For $q\in\{0,\ldots,d\}$ and $m> \frac{1}{2}$, the set $\Lambda^qH^m_{\mbf T}(\mathsf M,\mbf g_{\mathsf M})$ is defined by 
$$\Lambda^qH^m_{\mbf T}(\mathsf M,\mbf g_{\mathsf M}):= \left\{v\in \Lambda^qH^m(\mathsf M,\mbf g_{\mathsf M})
  \,|\,  \mathbf  tv=0 \ {\rm on} \ \partial \mathsf M\right\}.$$
We will always explicitely indicate the dependency
on the metric $\mbf g_{\mathsf M}$ in  the notation of the Witten
Laplacians or associated quadratic forms, but often omit it in  the notation of the Sobolev
spaces and associated norms, to ease the notation.

\subsubsection{Tangential Dirichlet boundary conditions}
\label{sec:LD}
In this section, we  introduce the tangential Dirichlet Witten Laplacian and recall some of its properties. 
For $q\in\{0,\dots,d\}$,   one defines  the so-called distorted  exterior
derivative \textit{\`a la Witten}   $\mathsf d_{f,h}^{(q)}:\Lambda^{q}
\mathcal C^{\infty}(\mathsf M)\to \Lambda^{q+1}  \mathcal
C^{\infty}(\mathsf M)$  and   its formal adjoint 
$\mathsf d_{f,h}^{(q)} \, ^{*}:\Lambda^{q+1}  \mathcal
C^{\infty}(\mathsf M)\to \Lambda^{q}  \mathcal C^{\infty}(\mathsf M)$ by 
$$\mathsf d_{f,h}^{(q)}  := e^{-\frac{1}{h}f}\, h\, \mathsf d^{(q)} \, e^{\frac{1}{h}f} \, \text{ and }  \mathsf d_{f,h}^{(q)}\, ^{*} := e^{\frac{1}{h}f}\,h\,\mathsf d^{(q)}\, ^{*}\,e^{-\frac{1}{h}f},$$
where $\mathsf d^{(q)}$ is the differential  operator on $\mathsf M$
and $\mathsf d^{(q)}\, ^{*}$ is the co-differential operator on the
manifold~$\mathsf M$  equipped with the metric tensor $\mbf g_{\mathsf
  M}$. We may drop the superscript $(q)$ when the index of the form is
explicit from the context. 
The Witten Laplacian, firstly introduced in \cite{witten-82},
is then defined similarly as the Hodge Laplacian $\Delta^{(q)}_{\mbf
  H}(\mathsf M,\mbf g_{\mathsf M}):=(\mathsf d+\mathsf
d^*)^2:\Lambda^{q}  \mathcal C^{\infty}(\mathsf M )\to \Lambda^{q}
\mathcal C^{\infty}(\mathsf M )$ by
$$
\Delta_{f,h}^{(q)} (\mathsf M,\mbf g_{\mathsf M}):= (\mathsf d_{f,h}+ \mathsf d_{f,h}^*)^2= \mathsf d_{f,h}\mathsf d^{*}_{f,h}+\mathsf d^{*}_{f,h}\mathsf d_{f,h}\ :\ \Lambda^{q}  \mathcal C^{\infty}(\mathsf M )\to \Lambda^{q}  \mathcal C^{\infty}(\mathsf M).
$$
Equivalently, one has
\begin{equation}\label{dec.delta}
\Delta_{f,h}^{(q)} (\mathsf M,\mbf g_{\mathsf M})=h^2\Delta_{\mbf H}^{(q)}(\mathsf M,\mbf g_{\mathsf M})+\vert \nabla f \vert_{\mbf g_{\mathsf M}} ^2+h(\mathcal   L_{\nabla f} + \mathcal    L_{\nabla f}^*),
\end{equation}
where $\mathcal   L_{\nabla f}$ is the Lie derivative associated with
the vector field ${\nabla f}$. Here and in the following $\vert
. \vert_{\mbf g_{\mathsf M}} $   stands for the  norm in the
  tangent space associated with the metric tensor $\mbf g_{\mathsf M}$. 
Let us now introduce the   Dirichlet   realization of
$\Delta_{f,h}^{(q)}(\mathsf M,\mbf g_{\mathsf M})$ on $\Lambda^{q}
L^{2}(\mathsf M)$, following~\cite[Section 2.4]{helffer-nier-06}. 
\begin{proposition}\label{pr.defDfhD}
Let us assume that~\autoref{B} is satisfied. Let $q\in \{0,\ldots,d\}$ and $h>0$. The Friedrichs extension of the quadratic form 
  $$Q_{f,h}^{\mathsf{Di},(q)}( \mathsf M,\mbf g_{\mathsf M} ): w\in  \Lambda^qH^1_{\mbf T}(\mathsf M)\mapsto  \Vert  \mathsf d_{f,h}w \Vert_{L^2( \mathsf M)} ^2  +   \Vert\mathsf   d_{f,h}^*w  \Vert_{L^2( \mathsf M)}^2$$
  on $\Lambda^qL^2( \mathsf M )$
is denoted by $\Delta_{f,h}^{\mathsf{Di},(q)}(\mathsf M,\mbf g_{\mathsf M})$. Its domain is  
$$\mathcal D \big(\Delta_{f,h}^{\mathsf{Di},(q)}( \mathsf M ,\mbf g_{\mathsf M} )\big)=\big\{ w\in \Lambda^qH^1_{\mbf T}(\mathsf M)\cap\Lambda^qH^2( \mathsf M), \    \mbf
td^*_{f,h} w=0 \text{ on } \partial \mathsf M  \big\}.$$ 
Moreover,  $\Delta^{\mathsf{Di},(q)}_{f,h}(\mathsf M,\mbf g_{\mathsf
  M})$ is a self-adjoint operator, with compact resolvent. Finally it holds, for all Borel set $E\subset \mathbb R$ and $u\in \Lambda^qH^1_{\mbf T}(\mathsf M)$, 
 \begin{equation}\label{eq.complexeMM1}
 \pi_{E}\big (\Delta_{f,h}^{\mathsf{Di},(q+1)}(\mathsf M,\mbf g_{\mathsf M})\big )\, \mathsf  d_{f,h}u=\mathsf  d_{f,h}\, \pi_{E}\big (\Delta_{f,h}^{\mathsf{Di},(q)}(\mathsf M,\mbf g_{\mathsf M})\big ) \, u
   \end{equation}
and
 \begin{equation}\label{eq.complexeMM2}
 \pi_{E}\big (\Delta_{f,h}^{\mathsf{Di},(q-1)}(\mathsf M,\mbf g_{\mathsf M})\big )\, \mathsf  d_{f,h}^*u=\mathsf  d_{f,h}^*\, \pi_{E}\big (\Delta_{f,h}^{\mathsf{Di},(q)}(\mathsf M,\mbf g_{\mathsf M}) \big )\, u.
   \end{equation}
\end{proposition}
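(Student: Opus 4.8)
The plan is to follow the classical construction of the Dirichlet realization of the Witten Laplacian exactly as in the boundaryless-analogue, but carefully tracking the boundary terms produced by integration by parts (Green's formula for the codifferential), checking that under \autoref{B} nothing new happens compared to~\cite[Section 2.4]{helffer-nier-06}. Concretely, I would first recall that on a compact Riemannian manifold with boundary, for $\omega\in\Lambda^q\mathcal C^\infty(\overline{\mathsf M})$ and $\eta\in\Lambda^{q+1}\mathcal C^\infty(\overline{\mathsf M})$ one has the Green formula $\langle \mathsf d^{(q)}\omega,\eta\rangle_{L^2(\mathsf M)}=\langle \omega,\mathsf d^{(q)\,*}\eta\rangle_{L^2(\mathsf M)}+\int_{\partial\mathsf M}\mathbf t\omega\wedge\star\mathbf n\eta$, and its conjugated-by-$e^{-f/h}$ version for $\mathsf d_{f,h}$; the key point is that the boundary integral vanishes as soon as $\mathbf t\omega=0$. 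Hence on $\Lambda^qH^1_{\mathbf T}(\mathsf M)$ the quadratic form $Q_{f,h}^{\mathsf{Di},(q)}$ is well-defined, nonnegative, and closed (it is the graph norm of the closed operator $(\mathsf d_{f,h}\oplus\mathsf d_{f,h}^*)$ restricted to the closed subspace $\Lambda^qH^1_{\mathbf T}$, which is closed in $\Lambda^qH^1$ since the tangential trace is continuous from $H^1$ to $H^{1/2}(\partial\mathsf M)$).

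Next I would invoke the abstract theory of Friedrichs extensions (as in~\cite[Section 4.3]{helffer2013spectral}) to get a unique self-adjoint operator $\Delta_{f,h}^{\mathsf{Di},(q)}(\mathsf M,\mbf g_{\mathsf M})$ associated with this closed form. For the identification of the domain, I would argue in two inclusions. For the easy inclusion, if $w\in\Lambda^qH^1_{\mathbf T}\cap\Lambda^qH^2$ with $\mathbf t\,\mathsf d_{f,h}^*w=0$ on $\partial\mathsf M$, then integrating by parts twice (using Green's formula for both $\mathsf d_{f,h}$ and $\mathsf d_{f,h}^*$, with the boundary terms killed respectively by $\mathbf t v=0$ — valid for all test $v\in\Lambda^qH^1_{\mathbf T}$ — and by $\mathbf t\,\mathsf d_{f,h}^*w=0$) shows $Q_{f,h}^{\mathsf{Di},(q)}(w,v)=\langle \Delta_{f,h}^{(q)}w,v\rangle_{L^2}$ for all $v$ in the form domain, so $w$ is in the operator domain. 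For the reverse inclusion, the standard boundary elliptic regularity argument: $(\mathsf d_{f,h},\mathsf d_{f,h}^*)$ together with the boundary condition $\mathbf t w=0$ (a normal-type, i.e. Lopatinskii–Shapiro, boundary condition for the Laplace-type operator $\Delta_{f,h}^{(q)}$ — this is classical for the absolute/relative boundary conditions, see~\cite{GSchw}) gives elliptic a priori estimates, so any $w$ in the operator domain lies in $H^2$, and then the variational identity forces the natural boundary condition $\mathbf t\,\mathsf d_{f,h}^*w=0$. Compactness of the resolvent then follows from the compact embedding $\Lambda^qH^1(\mathsf M)\hookrightarrow\Lambda^qL^2(\mathsf M)$ (Rellich), and self-adjointness is automatic for Friedrichs extensions.

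For the intertwining relations~\eqref{eq.complexeMM1}--\eqref{eq.complexeMM2}, the point is that $\mathsf d_{f,h}$ maps the form domain $\Lambda^qH^1_{\mathbf T}(\mathsf M)$ into $\Lambda^{q+1}H^0$ and, more to the point, that $\mathsf d_{f,h}\,\mathcal D(\Delta_{f,h}^{\mathsf{Di},(q)})\subset\mathcal D(\Delta_{f,h}^{\mathsf{Di},(q+1)})$ with $\Delta_{f,h}^{(q+1)}\mathsf d_{f,h}=\mathsf d_{f,h}\Delta_{f,h}^{(q)}$ on that domain (using $\mathsf d_{f,h}^2=0$, $(\mathsf d_{f,h}^*)^2=0$, and the fact that $\mathbf t w=0\Rightarrow\mathbf t\,\mathsf d_{f,h}w=0$ since $\mathbf t$ commutes with $\mathsf d$). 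This gives a commutation on domains; the statement about arbitrary Borel spectral projectors $\pi_E$ then follows by the usual functional-calculus argument — the bounded operator $\mathsf d_{f,h}(\Delta_{f,h}^{\mathsf{Di},(q)}+1)^{-1}=(\Delta_{f,h}^{\mathsf{Di},(q+1)}+1)^{-1}\mathsf d_{f,h}$ intertwines the resolvents, hence intertwines $g(\Delta_{f,h}^{\mathsf{Di},(q)})$ and $g(\Delta_{f,h}^{\mathsf{Di},(q+1)})$ for all bounded Borel $g$, in particular for $g=\mathbf 1_E$, on the form domain. I expect the main obstacle to be the verification that the tangential Dirichlet (relative) boundary condition is elliptic (Lopatinskii–Shapiro) for the Laplace-type operator $\Delta_{f,h}^{(q)}$ uniformly enough to get $H^2$-regularity of the operator domain; this is where one genuinely needs to work locally near $\partial\mathsf M$ in boundary normal coordinates and decompose forms into tangential and normal parts, but since the zeroth-order terms $|\nabla f|^2$ and $h(\mathcal L_{\nabla f}+\mathcal L_{\nabla f}^*)$ in~\eqref{dec.delta} do not affect the principal symbol or the boundary symbol, this reduces verbatim to the pure Hodge-Laplacian case treated in~\cite{GSchw} and~\cite[Section 2.4]{helffer-nier-06}, and \autoref{B} (in particular the characteristic condition $\partial_{\mathsf n_{\mathsf M}}f=0$ near critical points on $\partial\mathsf M$) plays no role at this stage — it will only be needed later for the counting of small eigenvalues. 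Everything else is routine once this elliptic-regularity input is in place.
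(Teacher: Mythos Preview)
Your proposal is correct and follows precisely the approach the paper intends: the paper does not give its own proof of this proposition but simply states it ``following~\cite[Section 2.4]{helffer-nier-06}'', and your sketch is an accurate expansion of that reference's argument (closed form on $\Lambda^qH^1_{\mathbf T}$, Friedrichs extension, elliptic regularity for the relative boundary condition reducing to the pure Hodge case via~\cite{GSchw}, and the resolvent-intertwining argument for the spectral projectors). Your observation that \autoref{B} plays no role here---the zeroth-order perturbation $|\nabla f|^2+h(\mathcal L_{\nabla f}+\mathcal L_{\nabla f}^*)$ does not affect the principal or boundary symbol---is exactly right and is implicit in the paper's treatment.
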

\noindent
Here and in the following, for a Borel set $E\subset \mathbb R$ and
$\mathsf T$ a non negative self-adjoint operator on a Hilbert
Space,~$\pi_{E}(\mathsf T)$ denotes the spectral projector associated
with $\mathsf T$ and $E$.
\noindent
The following standard lemma will be    used several times throughout this work.
\begin{lemma} \label{quadra}
Let $(\mathsf T,D\left (\mathsf T\right ))$ be a non negative self-adjoint operator on a Hilbert Space $\left(\mc H, \Vert\cdot\Vert\right)$ with associated quadratic form $q_{\mathsf T}(x)=(x,\mathsf Tx)$ whose domain is $Q\left (\mathsf T\right )$. It then holds:  
$$\forall b>0, \, \forall u\in Q\left (\mathsf T\right ), \quad \left\Vert \pi_{[b,+\infty)} (\mathsf T) \, u\right\Vert^2 \leq \frac{q_{\mathsf T}(u)}{b}.$$
\end{lemma}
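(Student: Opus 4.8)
The plan is to reduce everything to the spectral theorem. Since $\mathsf T$ is a non-negative self-adjoint operator on $\mc H$, it admits a spectral resolution $\mathsf T = \int_{[0,+\infty)} \lambda \, dE_\lambda$; for $u \in \mc H$, I would introduce the finite positive Borel measure $\mu_u$ on $[0,+\infty)$ defined by $\mu_u(A) = \Vert \pi_{A}(\mathsf T)\, u \Vert^2$ for every Borel set $A$, so that $\mu_u([0,+\infty)) = \Vert u \Vert^2$. The relevant fact to recall is the standard description of the form domain $Q(\mathsf T)$ and of the quadratic form $q_{\mathsf T}$ in these terms: $u \in Q(\mathsf T)$ precisely when $\int_{[0,+\infty)} \lambda \, d\mu_u(\lambda) < +\infty$, and in that case $q_{\mathsf T}(u) = \int_{[0,+\infty)} \lambda \, d\mu_u(\lambda)$ (see e.g.~\cite[Chapter 4]{helffer2013spectral}).

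With this in hand the estimate is immediate. Fix $b > 0$ and $u \in Q(\mathsf T)$; I would simply bound
$$
q_{\mathsf T}(u) = \int_{[0,+\infty)} \lambda \, d\mu_u(\lambda) \ \ge\ \int_{[b,+\infty)} \lambda \, d\mu_u(\lambda) \ \ge\ b \int_{[b,+\infty)} d\mu_u(\lambda) = b\, \mu_u\big([b,+\infty)\big) = b\, \big\Vert \pi_{[b,+\infty)}(\mathsf T)\, u \big\Vert^2,
$$
using in the first inequality that the integrand is non-negative and in the second that $\lambda \ge b$ on $[b,+\infty)$. Dividing by $b$ gives the claimed inequality.

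There is no genuine obstacle here; the only point deserving care is the identification of $Q(\mathsf T)$ and $q_{\mathsf T}$ with $\{u : \int_{[0,+\infty)} \lambda\, d\mu_u < +\infty\}$ and $\int_{[0,+\infty)} \lambda\, d\mu_u$, which is exactly the functional-calculus description of the quadratic form of a non-negative self-adjoint operator and is what makes $q_{\mathsf T}(u)$ meaningful on the form domain (a priori larger than $D(\mathsf T)$) in the first place. If one prefers to avoid invoking the form description directly, an equivalent route is to decompose $u = \pi_{[0,b)}(\mathsf T)u + \pi_{[b,+\infty)}(\mathsf T)u$, note that this decomposition is orthogonal both in $\mc H$ and for the form $q_{\mathsf T}$, and combine $q_{\mathsf T}\big(\pi_{[b,+\infty)}(\mathsf T)u\big) \ge b\, \Vert \pi_{[b,+\infty)}(\mathsf T)u \Vert^2$ with $q_{\mathsf T}(u) \ge q_{\mathsf T}\big(\pi_{[b,+\infty)}(\mathsf T)u\big)$; this first reduces the claim to $u \in D(\mathsf T)$, where $q_{\mathsf T}(u) = (u,\mathsf T u)$, and then extends it by density of $D(\mathsf T)$ in $Q(\mathsf T)$ for the form norm.
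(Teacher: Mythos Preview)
Your proof is correct and is the standard spectral-theorem argument. The paper does not actually give a proof of this lemma: it is stated as a ``standard lemma'' and left without proof, so there is nothing to compare against beyond noting that your argument is exactly the kind of routine verification the authors had in mind.
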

%
Generally speaking, a $\mathcal H$-normalized element $u\in D\left (\mathsf T\right )$
such that $\Vert \pi_{[b,+\infty)} (\mathsf T) \, u \Vert$ is small is
called a quasi-mode for the spectrum   in $[0,b]$ of $\mathsf T$.

 The objective of this section is to count
the number of  eigenvalues smaller than $ch$ (for some $c>0$)  of
$\Delta_{f,h}^{\mathsf{Di},(q)}( \mathsf M,\mbf g_{\mathsf M} )$,
namely to identify the dimension of the range of $\pi_{[0,\mathsf
  ch]}\big (\Delta_{f,h}^{\mathsf{Di},(q)}( \mathsf M,\mbf g_{\mathsf
  M} ) \big)$, for $h$ sufficiently small.

\subsection{Number of small eigenvalues of
  $\Delta_{f,h}^{\mathsf{Di},(q)}( \mathsf M,\mbf g_{\mathsf M} )$}
\label{sec.mainresult_petitevp}

Before stating the main result of Section~\ref{sec.comptevp}, let us
introduce a few more notation.  Let us assume that \autoref{B} holds. Let $z\in \pa \mathsf M$ be a critical point of $f$ (i.e. $\vert \nabla f(z)\vert =0$). Then, $z$ is a critical point of   $f|_{\pa \mathsf M}$ and  
  the unit outward normal~$\mathsf n_{\mathsf M}(z)$  to $ \mathsf M$
  at $z$  (see item 2 in  Lemma~\ref{le.start}) is an eigenvector with
  the associated eigenvalue:
  \begin{equation}\label{eq.lambdad}
  \mu_z = \, ^t \mathsf n_{\mathsf M}(z) \text{ Hess } f(z) \, \mathsf n_{\mathsf M}(z).
    \end{equation}
Let us now  introduce  the  set of so-called  generalized critical
points of~$f$ for the operator $\Delta_{f,h}^{\mathsf{Di}}( \mathsf
M,\mbf g_{\mathsf M} )$, which can be seen intuitively as critical
points for the function $f$ extended by $-\infty$ outside
$\overline{\mathsf M}$. 
For $q\in \{0,\ldots,d\}$, the standard critical points with index $q$
in $\mathsf M$ are:
 $$
 \mathsf U_q^{\mathsf M}= \big \{x \in  \mathsf M, \,  x \text{ is a
   critical point of  } f   \text{ of index } q \big \}
$$
with cardinal $\mathsf m_{q}^{\mathsf M}={\rm Card}\big (\mathsf
U_q^{\mathsf M}\big)$. Two additional sets of generalized critical
points with index $q$ on~$\partial \mathsf M$ should be considered. First, let us introduce
\begin{equation}\label{eq.PSGENE}
 \mathsf U_q^{\pa  \mathsf M,1}= \big \{z \in \pa \mathsf M, \,  z \text{ is a critical point of  } f|_{\pa \mathsf M} \text{ of index } q-1 \, \text{ and }  \,  \partial_{\mathsf n_{ \mathsf M}}f(z)>0\big \},
\end{equation}
with cardinal $
 \mathsf m_q^{\pa  \mathsf M,1}={\rm Card}\big (  \mathsf U_q^{\pa
   \mathsf M,1} \big)$, and
 with the convention that $
\mathsf  U_0^{\pa \mathsf M,1}=\emptyset$
 for $q=0$. Second, one defines,
\begin{equation}\label{eq.PS-nouveau}
 \mathsf U_q^{\pa  \mathsf M,2}=\big \{z \in \pa \mathsf M, \, \vert \nabla f(z)\vert =0,  \,  z \text{ is a critical point of  } f|_{\pa \mathsf M} \text{ of index } q-1 \text{ and }  \mu_z  <0\big \},
\end{equation}
with cardinal $\mathsf  m_q^{\pa  \mathsf M,2}={\rm Card}\big (
\mathsf U_q^{\pa  \mathsf M,2}\big)$, and  with again the convention that $
 \mathsf U_0^{\pa  \mathsf M,2}=\emptyset$ for $q=0$. 
 Finally, one defines the total number
 of generalized critical points with index $q$:
\begin{equation}\label{numb-generalize}
 \mathsf m_q=\mathsf  m_q^{ \mathsf M}+\mathsf m_q^{\pa \mathsf M,1}+\mathsf m_q^{\pa  \mathsf M,2}.
\end{equation}

\noindent
Let us now state the  main result of this section. 
\begin{theorem}\label{thm.main1}
Let us assume that~\autoref{B} holds. Then, for all $q\in \{0,\ldots,d\}$, there exists $\mathsf c>0$ and $h_0>0$ such that for all $h\in (0,h_0)$: 
$$\dim \Ran \, \pi_{[0,\mathsf ch]}\big (\Delta_{f,h}^{\mathsf{Di},(q)}( \mathsf M,\mbf g_{\mathsf M} )\big )=\mathsf  m_q  \ \text{ where $\mathsf m_q$ is defined by~\eqref{numb-generalize}}.$$
\end{theorem}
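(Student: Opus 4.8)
The plan is to follow the classical Witten-complex strategy (as in \cite{witten-82,helffer-sjostrand-85,simon1983semiclassical} for closed manifolds and \cite{helffer-nier-06} for manifolds with boundary but no boundary critical points), but to incorporate the two extra families of generalized critical points $\mathsf U_q^{\pa \mathsf M,1}$ and $\mathsf U_q^{\pa \mathsf M,2}$ coming from the characteristic part of $\partial \mathsf M$. The argument splits into a lower bound $\dim \Ran\,\pi_{[0,\mathsf ch]}(\Delta_{f,h}^{\mathsf{Di},(q)}) \ge \mathsf m_q$ and an upper bound $\le \mathsf m_q$. The lower bound is obtained by constructing $\mathsf m_q$ approximately orthonormal quasi-modes, one attached to each generalized critical point of index $q$, using Lemma \ref{quadra} to guarantee that each quasi-mode has exponentially small spectral mass above level $\mathsf ch$; near-orthogonality then forces the dimension of the low-lying spectral space to be at least $\mathsf m_q$. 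The upper bound is the harder half and will be obtained through a partition-of-unity / IMS-type localization reducing the global problem to model problems on $\mathbb R^d$ (interior critical points), on the half-space $\mathbb R^d_-$ with tangential Dirichlet conditions near non-characteristic boundary critical points, and on $\mathbb R^d_-$ with \emph{mixed} boundary behaviour near characteristic boundary critical points, the last of these being exactly the model problems analyzed in Section \ref{sec.Rdmoins}.

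For the lower bound, I would build the quasi-modes as follows. For $x \in \mathsf U_q^{\mathsf M}$, take the usual Gaussian-type $q$-form $\chi\, e^{-\frac1h(f - f(x))}\, \omega_x$ localized by a cutoff $\chi$ supported near $x$, where $\omega_x$ is a constant $q$-form spanned by the unstable directions of $\hess f(x)$; the quadratic form $Q_{f,h}^{\mathsf{Di},(q)}$ evaluated on it is $O(h^\infty)$ times its $L^2$-norm squared because $\mathsf d_{f,h}$ and $\mathsf d_{f,h}^*$ annihilate the leading Gaussian up to the cutoff error, which lives away from $x$ and is therefore exponentially suppressed. For $z \in \mathsf U_q^{\pa \mathsf M,1}$ (so $\partial_{\mathsf n_{\mathsf M}}f(z)>0$, i.e. $f$ is decreasing into $\mathsf M$), the relevant model is the half-space with tangential Dirichlet conditions; the quasi-mode is again a localized Witten-Gaussian but now built from the $(q-1)$ unstable directions of $f|_{\pa \mathsf M}$ \emph{plus} the inward normal direction, which contributes the extra unit of index while being compatible with the $\mathbf t w = 0$ boundary condition (the normal component survives, the tangential part vanishes). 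For $z \in \mathsf U_q^{\pa \mathsf M,2}$ (characteristic boundary point, $\mu_z<0$), one exploits that near $z$ the boundary is characteristic so $f|_{\pa\mathsf M}$ and $f$ have the same critical behaviour in the tangential variables, while the normal variable is a genuine unstable direction of $\hess f(z)$ contributing the extra index; the quasi-mode is the pullback of the corresponding model eigenform on $\mathbb R^d_-$ constructed in Section \ref{sec.Rdmoins}, cut off near $z$. In all three cases Lemma \ref{quadra} gives $\|\pi_{[\mathsf ch,\infty)}(\Delta_{f,h}^{\mathsf{Di},(q)})\, u\|^2 \le \frac{Q_{f,h}^{\mathsf{Di},(q)}(u)}{\mathsf ch} = O(h^\infty)$ or at worst $O(e^{-\varepsilon/h})$, and the quasi-modes attached to distinct generalized critical points have disjoint (or exponentially-tailed) supports, hence are almost orthogonal; a standard linear-algebra lemma (Gram matrix close to identity) then yields $\dim \Ran\,\pi_{[0,\mathsf ch]} \ge \mathsf m_q$.

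For the upper bound, the main obstacle — and the genuinely new difficulty compared with \cite{helffer-nier-06} — is the analysis near the characteristic boundary critical points $z$ with $\mu_z \ne 0$, where the usual Dirichlet model on $\mathbb R^d_-$ is replaced by an operator whose normal boundary behaviour is governed by the sign of $\mu_z$; one must show that the model operator on $\mathbb R^d_-$ has exactly $\mathbf 1_{\mu_z<0}$ small eigenvalues on $q$-forms (and this is precisely the content of the half-space analysis deferred to Section \ref{sec.Rdmoins}). Granting that model count, the global upper bound follows by the standard IMS localization formula: choose a partition of unity $\sum_j \theta_j^2 = 1$ with each $\theta_j$ supported in a chart around one (generalized) critical point or in a region where $f$ has no critical point, write $Q_{f,h}^{\mathsf{Di},(q)}(u) = \sum_j Q_{f,h}^{\mathsf{Di},(q)}(\theta_j u) - \sum_j \|\,|\nabla \theta_j|\, u\|^2$, observe that away from all critical points $|\nabla f|^2$ is bounded below so the harmonic-oscillator-type lower bound $Q_{f,h}^{\mathsf{Di},(q)}(\theta_j u) \ge (C - O(h))\|\theta_j u\|^2$ holds with $C>0$ fixed, while near each generalized critical point the number of eigenvalues below $\mathsf c h$ of the corresponding Dirichlet/mixed model is the prescribed local count; summing and using that the localization error $\sum_j \||\nabla\theta_j| u\|^2 = O(1)\|u\|^2$ is absorbed, one concludes that $\Delta_{f,h}^{\mathsf{Di},(q)}$ has at most $\mathsf m_q$ eigenvalues below $\mathsf c h$ for a suitable $\mathsf c$ and all small $h$. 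Combining the two bounds gives the exact equality $\dim \Ran\,\pi_{[0,\mathsf ch]}(\Delta_{f,h}^{\mathsf{Di},(q)}(\mathsf M,\mbf g_{\mathsf M})) = \mathsf m_q$.
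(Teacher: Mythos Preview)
Your strategy is essentially the paper's: IMS localization reducing to model problems on $\mathbb R^d$ (interior) and on $\mathbb R^d_-$ (boundary, handled via \cite{helffer-nier-06} in the non-characteristic case and via Proposition~\ref{pr.prop-loca1-case1} in the characteristic case), together with quasi-modes built from the kernels of those model operators for the lower bound and a distance-to-span argument for the upper bound.

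Two technical corrections are needed. First, the quasi-mode energy is $O(h^2)$, not $O(h^\infty)$: on a curved manifold, or already for non-quadratic $f$, the explicit Gaussian $e^{-f/h}\omega_x$ is \emph{not} annihilated by $\mathsf d_{f,h}^*$; the paper therefore uses cut-off exact kernel elements of the model operators $\mathsf T_y$ instead (see~\eqref{eq.span=12},~\eqref{eq.omega-z2},~\eqref{eq.span=11},~\eqref{eq.norme-case1-2}), and $O(h^2)$ is all one gets and all one needs. Second, the IMS localization error carries an $h^2$ factor, i.e.\ $h^2\sum_j\|\,|\nabla\theta_j|\,u\|^2=O(h^2)\|u\|^2$; without this $h^2$ (you wrote $O(1)$) the error could not be absorbed into the $Ch$ spectral gap and the upper-bound argument would not close. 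Finally, be aware that the partition must cover \emph{every} critical point of $f$ on $\pa\mathsf M$ (the set $\mathsf B^{\pa\mathsf M,2}$ of~\eqref{eq.defB}), including those with $\mu_z>0$ or with the wrong tangential index: near such points $|\nabla f|$ is not bounded below, and one still needs the half-space analysis (cases~\eqref{eq.last1-case1}--\eqref{eq.last2-case1} of Proposition~\ref{pr.prop-loca1-case1}) to obtain the local lower bound $Q\ge Ch\|\cdot\|^2$.
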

\noindent
Let us mention that this result is proved in~\cite{DoNe2} for $q=0$ under a weaker assumption than~\autoref{B}. 
%



The proof of Theorem~\ref{thm.main1}, inspired from~\cite[Section
3]{helffer-nier-06} and~\cite{CFKS}, consists in finding where the $L^2(\mathsf
M,\mbf g_{\mathsf M})$-norms of  eigenforms associated with eigenvalues
of order $o(h)$  concentrate in $\overline{\mathsf M}$,  and in
determining a finite dimensional linear space close to them. We first
study in
Section~\ref{sec.Rdmoins} model problems on $\mathbb R^{d}_-$ where $$\mathbb
R^d_-=\{x=(x',x_d),\  \ x'=(x_1,\ldots,x_{d-1})\in \mathbb R^{d-1}, \,
x_d\in \mathbb R, \, x_d<0\},$$
 before providing the proof of Theorem~\ref{thm.main1} in Section~\ref{sec.numb-small}.  

\begin{remark}\label{rem:numb-small}
Let us mention that the main difference with~\cite[Chapter 3]{helffer-nier-06}
is that we cannot use a block-diagonalization of the metric $\mbf
g_{\mathsf M}$ and of the function $f$ near the critical points 
in~$\pa \mathsf M$, which would lead to an exact tensorization into a Witten Laplacian in a variable $x'\in \pa \mathsf M$ and  a Witten
Laplacian in a  variable $x_d\in \mathbb R_-$.  
We actually only decompose the metric $\mbf g_{\mathsf
  M}$  in a  local system of coordinates near the critical point,
constructed with the   geodesic distance to the boundary. Then, using the
fact that $\partial_{\mathsf n_{\mathsf
    M}}f=0$ near critical points on $\partial \mathsf M$,  it appears that a local
asymptotic expansion of $f$ in these coordinates is precise enough to  count the number of small eigenvalues.
%
 \end{remark}

\begin{remark}\label{rem:Morse}
A simple consequence of the above results is the following finite
dimensional Dirichlet complex strutures for Witten Laplacians on
bounded domains under the assumption~\autoref{B}:
$$\{0\} \longrightarrow
\Ran\,\pi_{[0,ch]}(\Delta^{\mathsf{Di},(0)}_{f,h}(\mathsf M,\mbf
g_{\mathsf M}))  \xrightarrow{ \mathsf d_{f,h}  } \,\cdots\,
\xrightarrow{ \mathsf d_{f,h}  } \Ran\,\pi_{[0,ch]}(\Delta^{\mathsf{Di},(d)}_{f,h}(\mathsf M,\mbf g_{\mathsf M})) \xrightarrow{ \mathsf d_{f,h} }\{0\},$$
$$\{0\} \xleftarrow{\mathsf d_{f,h}^* }
\Ran\,\pi_{[0,ch]}(\Delta^{\mathsf{Di},(0)}_{f,h}(\mathsf M,\mbf g_{\mathsf
  M}))  \xleftarrow{\mathsf d_{f,h} ^*} \cdots\xleftarrow{\mathsf
  d_{f,h} ^*} \Ran\,\pi_{[0,ch]}(\Delta^{\mathsf{Di},(d)}_{f,h}(\mathsf
M,\mbf g_{\mathsf M})) \longleftarrow \ \{0\},$$
which, combined with Theorem~\ref{thm.main1}, yields strong Morse
inequalities. This generalizes standard results for the Witten Laplacians in the
full domain~\cite{helffer-sjostrand-85,CFKS,witten-82} or on bounded domain without critical points on
the
boundary~\cite{helffer-nier-06,le-peutrec-10} (see also~\cite{laudenbach2011morse,CL}).
\end{remark}

 \subsection{Number of small eigenvalues of Witten Laplacians in  $\mathbb R^d_-$}\label{sec.Rdmoins}

The goal  of this section is to
 count the number of small eigenvalues of $\Delta_{f,h}^{\mathsf{Di},(q)} (
 \mathbb R^d_-,\mbf g )$ in a simple geometric setting (in particular
 $f$ has a single critical point, located at $0$). The main result
 (Proposition~\ref{pr.prop-loca1-case1}) is
 stated in Section~\ref{sec.Rdmoins_main}. The proof is done in three
 steps: we first recall well-known results for Witten Laplacians in
 $\mathbb R^{d-1}$ in Section~\ref{sec.Rdmoins_d-1}; then we prove Proposition~\ref{pr.prop-loca1-case1}
in a simplified setting in Sections~\ref{sec.Rdmoins-simplified}
and~\ref{sec:prop-loca1-proof}; and we finally conclude with the proof
of Proposition~\ref{pr.prop-loca1-case1} in Section~\ref{sec.prop-loca1-case1}.


\subsubsection{Witten Laplacian in $\mathbb R^d_-$ with tangential
  Dirichlet boundary conditions}\label{sec.Rdmoins_main}
Let us first introduce  the tangential
  Dirichlet Witten
Laplacian~$\Delta_{f,h}^{(q)} ( \mathbb R^d_- ,\mbf g)$ in $ \mathbb
R^d_-$, under two sets of assumptions. 
 \begin{manualassumption}{\bf(Metric-$\mathbb R^{d}_-$)}\label{MetricRd-}
 The space $\overline{\mathbb R^d_-}$  is endowed with  a  metric
 tensor $\mbf g$  satisfying the following:
 \begin{enumerate}
\item [(i)] $\mbf g$ writes, for some  $\mathcal C^\infty$ function
  $\mbf G$ on  $\overline{\mathbb R^d_-}$,
\begin{equation}\label{eq.outc2}
\mbf g(x) = {\mbf G}(x',x_d)dx'^2+dx_d^2 \,,
\end{equation}
with ${\mbf G}(0,0)$ the identity matrix.
 \item [(ii)] ${\mbf G}$ and all its derivatives are bounded over $\overline{\mathbb R^d_-}$.
\item [(iii)] ${\mbf G}$ is uniformly elliptic over $\overline{\mathbb R^d_-}$.  
\end{enumerate}

 \end{manualassumption}
\noindent
To ease the notation, we will not indicate explicitely the metric ${\mbf G}$ in the
functional spaces nor in the associated norm: we will simply write $ \Lambda^qH^k( \mathbb
R^{d}_-)$ (resp. $\Lambda^qH^1_{\mbf T}( \mathbb R^d_-)$) for
$\Lambda^qH^k( \mathbb R^{d}_-,\mbf g )$ (resp. $\Lambda^qH^1_{\mbf
  T}( \mathbb R^d_-,\mbf g )$), and denote by  $ \Vert  . \Vert_{H^k(
  \mathbb R^{d}_-)}$  the associated norm.

Notice that under \autoref{MetricRd-},  the norm on  $(\mathbb R^d_-,
\mbf g)$ is uniformly equivalent to the norm on $(\mathbb R^d_-,
\mathsf I_{d}  \,  dx^2)$  (where
$\mathsf I_{d}$ is the identity matrix of size $d$), which is simply denoted by $\vert x \vert$: $\vert x\vert^2=\sum_{i=1}^{d}x_i^2$. 
Moreover, for all $q\in \{0,\ldots,d\}$ and $k\ge 0$,  the norm on $\Lambda^qH^k( \mathbb R^d_-,\mbf g )$ is equivalent to the norm on $\Lambda^qH^k( \mathbb R^d_-, \mathsf I_d dx^2)$, and 
$\Lambda^qH^1_{\mbf T}( \mathbb R^d_-,\mbf g )=\Lambda^qH^1_{\mbf T}( \mathbb R^d_-, \mathsf I_{d}  \,  dx^2).$
 \begin{manualassumption}{\bf(Potential-$\mathbb R^{d}_-$)}\label{PotentialRd-}
 The function $f:\overline{\mathbb R^{d}_-}\to  \mathbb R$ satisfies:  \begin{enumerate}
 \item [(i)] $f$ is a $\mathcal C^\infty$ function such that for all multi-index $\alpha \in \mathbb N^d$ with  $\vert \alpha\vert \ge 1$,  $\sup_{\overline{\mathbb R^d_-}}\vert \partial_x^\alpha f\vert<+\infty$. 
 \item [(ii)]  The point $0$ is the only critical point of $f$ in $\overline{\mathbb R^d_-}$ and    is a non degenerate critical point of $f$  (this condition is independent of the metric tensor  on $\mathbb R^{d}_-$). Moreover,  there exist $R>0$ and $c>0$ such that:
 \begin{equation}\label{eq.grad>0}
\forall x \in \mathbb R^d_-, \, \vert x \vert \ge R \Longrightarrow  \vert \nabla f\vert (x)\ge c.
      \end{equation}

\item[(iii)] It holds:
 \begin{equation}\label{eq.stable}
 \forall x'\in \mathbb R^{d-1}, \  \partial_{\mathsf n_{\mathbb R^d_-}}f(x',0)=0.  
  \end{equation}
\end{enumerate}

  \end{manualassumption}
  \noindent
Notice that thanks to~\eqref{eq.outc2}, for any $\phi \in
\Lambda^0\mathcal C^1(\overline{\mathbb R^d_-})$, one has:
\begin{equation}\label{eq.dec-gradient-phi}
\forall  x'\in   \mathbb R^{d-1},\       \partial_{\mathsf n_{\mathbb R_-^d} }\phi (x',0)=\partial_{x_d}\phi(x',0).
\end{equation} 
Moreover, under the above assumptions, up to an orthogonal transformation
on $x'$ (which preserves the fact that $\mbf G(0,0)$ is the identity matrix), one can assume that the Hessian
matrix of $f|_{\pa \mathbb R^{d}_-}$ at $0\in \mathbb R^{d-1}$ is
diagonal. As a consequence,  there exists   a neighborhood $\mathsf V_0$  of $0$ in $\overline{\mathbb R^d_-}$ and  $(\mu_1,\ldots,\mu_{d})\in (\mathbb R^*)^d$ such that:
\begin{equation}\label{eq.decf}
 \forall x=(x_1,\ldots,x_d)\in \mathsf V_0,\ \,  f(x)= f(0)+ 
 \sum_{i=1}^d \frac{\mu_i}{2} x_i^2 +O(|x|^3)
\end{equation}
where $(\mu_1, \ldots, \mu_d)$ are the eigenvalues of $\hess
f(0)$. More precisely, $\mu_d=\partial_{x_d,x_d} f(0)$, and
$(\mu_1,\ldots,\mu_{d-1})$ are the eigenvalues of $\hess f|_{\pa \mathbb R^{d}_-}(0)$.

We will need the following standard results on the operator
$\Delta_{f,h}^{\mathsf{Di},(q)}( \mathbb R^d_- ,\mbf g)$.
 \begin{proposition}\label{pr.pr1-d} 
  Let us assume that~\autoref{MetricRd-}  and item $(i)$ in
  \autoref{PotentialRd-} are  satisfied.  Let $q\in \{0,\ldots,d\}$
  and $h>0$ be fixed.  The
Friedrichs extension of the quadratic form  
\begin{equation}\label{eq.fried-e}
Q_{f,h}^{\mathsf{Di},(q)}( \mathbb R^d_-,\mbf g ): w\in \Lambda^qH^1_{\mbf T}( \mathbb R^d_- )
 \mapsto    \Vert  \mathsf  d_{f,h}w \Vert_{L^2( \mathbb R^d_-  )} ^2  +    \Vert  \mathsf  d_{f,h}^*w \Vert_{L^2( \mathbb R^d_-  )}^2.
 \end{equation}
on $\Lambda^qL^2( \mathbb R^d_-)$ is denoted by
$\Delta_{f,h}^{\mathsf{Di},(q)}( \mathbb R^d_- ,\mbf g)$. It is a
self-adjoint operator with domain
  $$\mathcal D \big(\Delta_{f,h}^{\mathsf{Di},(q)}( \mathbb R^d_- ,\mbf
g) \big)=\big\{w\in  \Lambda^qH^1_{\mbf T}( \mathbb R^d_- )\cap  \Lambda^qH^2( \mathbb R^d_- ), \ \mbf t \mathsf d_{f,h}^*w=0 \text{ on }  \pa \mathbb R^d_-\big\}.$$
Moreover, it holds, for all Borel set $E\subset \mathbb R$ and $u\in
\Lambda^qH^1_{\mbf T}( \mathbb R^d_- )$,
 \begin{equation}\label{eq.complexe1}
 \pi_{E}\big (\Delta_{f,h}^{\mathsf{Di},(q+1)}( \mathbb R^d_- ,\mbf g )\big )\, \mathsf  d_{f,h}u=\mathsf  d_{f,h}\, \pi_{E}\big (\Delta_{f,h}^{\mathsf{Di},(q)}( \mathbb R^d_- ,\mbf g )\big ) \, u
   \end{equation}
and
 \begin{equation}\label{eq.complexe2}
 \pi_{E}\big (\Delta_{f,h}^{\mathsf{Di},(q-1)}( \mathbb R^d_- ,\mbf g )\big )\, \mathsf  d_{f,h}^*u=\mathsf  d_{f,h}^*\, \pi_{E}\big (\Delta_{f,h}^{\mathsf{Di},(q)}( \mathbb R^d_- ,\mbf g )\big ) \, u.
   \end{equation}
 \end{proposition}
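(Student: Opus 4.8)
The plan is to reduce the statement to the analogous result for the Witten Laplacian on a compact manifold with boundary, namely Proposition~\ref{pr.defDfhD}, which already provides the form of the domain and the commutation identities~\eqref{eq.complexeMM1}--\eqref{eq.complexeMM2}; the only genuinely new feature here is that the underlying space $\overline{\mathbb R^d_-}$ is non-compact, so one must replace compactness-of-the-resolvent arguments by direct elliptic estimates that exploit the coercivity afforded by item~$(i)$ of \autoref{PotentialRd-}. First I would check that $Q_{f,h}^{\mathsf{Di},(q)}(\mathbb R^d_-,\mbf g)$ is a densely defined, non-negative, closed quadratic form on $\Lambda^qL^2(\mathbb R^d_-)$ with form domain $\Lambda^qH^1_{\mbf T}(\mathbb R^d_-)$. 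Non-negativity and density are immediate (smooth compactly supported $q$-forms with vanishing tangential trace are dense in $\Lambda^qH^1_{\mbf T}$); closedness follows from the identity
\begin{equation*}
Q_{f,h}^{\mathsf{Di},(q)}(\mathbb R^d_-,\mbf g)(w) + \|w\|_{L^2}^2 \;\simeq\; \|w\|_{H^1}^2,
\end{equation*}
which is the standard Gaffney--Friedrichs a priori estimate: using the Bochner--Weitzenböck decomposition~\eqref{dec.delta} of $\Delta_{f,h}^{(q)}$ together with the Gaffney inequality on $(\mathbb R^d_-,\mbf g)$ (valid here because the boundary $\{x_d=0\}$ is smooth and totally geodesic for the product-type metric~\eqref{eq.outc2}, and $\mbf G$ is uniformly elliptic with bounded derivatives by \autoref{MetricRd-}), one controls $\|\nabla w\|_{L^2}$ by $Q_{f,h}^{\mathsf{Di},(q)}(w)$, the lower-order terms coming from $|\nabla f|^2$ and $h(\mathcal L_{\nabla f}+\mathcal L_{\nabla f}^*)$ being absorbed since $\nabla f$ and its derivatives are bounded on $\overline{\mathbb R^d_-}$ by item~$(i)$ of \autoref{PotentialRd-}. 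The Friedrichs extension $\Delta_{f,h}^{\mathsf{Di},(q)}(\mathbb R^d_-,\mbf g)$ is then automatically a self-adjoint operator.

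Next I would identify the operator domain. By general theory the Friedrichs-extension domain consists of those $w\in\Lambda^qH^1_{\mbf T}(\mathbb R^d_-)$ for which the form is represented by an $L^2$ element; testing against $\Lambda^q\mathcal C^\infty_c(\mathbb R^d_-)$ shows that $\Delta_{f,h}^{(q)}w\in\Lambda^qL^2$ in the distributional sense, and then local elliptic regularity up to the boundary (the operator $\Delta_{f,h}^{(q)}$ has the same principal symbol $h^2$ times the Hodge Laplacian, so the Dirichlet-type boundary value problem $\mbf t w=0$, $\mbf t\mathsf d_{f,h}^* w = 0$ is elliptic in the sense of Agmon--Douglis--Nirenberg) upgrades this to $w\in\Lambda^qH^2_{\mathrm{loc}}$; a cutoff/translation argument combined with the coercivity estimate above near spatial infinity (where $|\nabla f|\ge c$ by~\eqref{eq.grad>0}) promotes $H^2_{\mathrm{loc}}$ to $H^2(\mathbb R^d_-)$, and integrating by parts twice reveals the natural (Neumann-type) boundary condition $\mbf t\mathsf d_{f,h}^* w=0$. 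This yields exactly
\begin{equation*}
\mathcal D\big(\Delta_{f,h}^{\mathsf{Di},(q)}(\mathbb R^d_-,\mbf g)\big)=\big\{w\in\Lambda^qH^1_{\mbf T}(\mathbb R^d_-)\cap\Lambda^qH^2(\mathbb R^d_-),\ \mbf t\mathsf d_{f,h}^*w=0\text{ on }\partial\mathbb R^d_-\big\}.
\end{equation*}

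Finally, for the intertwining identities~\eqref{eq.complexe1}--\eqref{eq.complexe2} I would argue exactly as in the compact case (following~\cite[Section~2.4]{helffer-nier-06}): the key algebraic facts are $\mathsf d_{f,h}^{(q+1)}\mathsf d_{f,h}^{(q)}=0$, $\mathsf d_{f,h}\Delta_{f,h}^{(q)}=\Delta_{f,h}^{(q+1)}\mathsf d_{f,h}$ at the level of smooth forms, and the compatibility of $\mathsf d_{f,h}$ with the boundary conditions ($\mathsf d_{f,h}$ maps $\Lambda^qH^1_{\mbf T}$ into $\Lambda^{q+1}H^0_{\mbf T}$ and, on the operator domains, preserves the Neumann-type condition). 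From these one deduces that $\mathsf d_{f,h}$ conjugates the two self-adjoint operators on the appropriate domains, hence commutes with any bounded Borel function of them, in particular with the spectral projectors $\pi_E$; the adjoint identity follows by taking adjoints, or symmetrically with $\mathsf d_{f,h}^*$. I expect the main obstacle to be the passage from $H^2_{\mathrm{loc}}$ to global $H^2(\mathbb R^d_-)$ and the rigorous justification that the Friedrichs-extension domain really carries the second boundary condition $\mbf t\mathsf d_{f,h}^* w=0$ rather than being larger: both require care because of the non-compactness, and the remedy is to combine the interior/boundary elliptic estimates with the global coercivity coming from~\eqref{eq.grad>0} via a partition of unity separating a neighborhood of $0$ from the region $\{|x|\ge R\}$. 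Since all of this is classical and is stated in the excerpt as a ``standard result,'' I would keep the write-up to a reference-and-sketch of these four points.
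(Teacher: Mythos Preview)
The paper does not give a proof of this proposition; it is introduced only as ``standard results on the operator $\Delta_{f,h}^{\mathsf{Di},(q)}(\mathbb R^d_-,\mbf g)$'' and stated without argument. So there is no paper proof to compare to, and your overall architecture (closedness of the form via Gaffney, Friedrichs extension, domain identification by elliptic regularity, intertwining via the Hodge--Witten complex structure) is the natural one and is essentially what the references you cite (Helffer--Nier) do in the compact case.

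There is, however, a genuine slip in your treatment of the passage from $H^2_{\mathrm{loc}}$ to global $H^2(\mathbb R^d_-)$. You invoke ``the global coercivity coming from~\eqref{eq.grad>0}'', but \eqref{eq.grad>0} is part of item~(ii) of \autoref{PotentialRd-}, whereas the proposition assumes only item~(i). You are therefore using a hypothesis that is not available. Moreover, coercivity of $|\nabla f|$ at infinity is not what produces $H^2$ regularity anyway; it is relevant for discreteness of the spectrum, not for the domain description.

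The fix is simpler than your proposed route and uses only item~(i). Since all derivatives of $f$ of order $\ge 1$ are bounded, the potential $|\nabla f|^2$ and the zeroth-order operator $h(\mathcal L_{\nabla f}+\mathcal L_{\nabla f}^*)$ are bounded on $\Lambda^qL^2(\mathbb R^d_-)$ (the paper uses exactly this boundedness later, e.g.\ in the proof of Lemma~\ref{le.IMS-loca}). Hence $\Delta_{f,h}^{(q)}=h^2\Delta_{\mbf H}^{(q)}+(\text{bounded})$ as operators, and the Friedrichs extension of $Q_{f,h}^{\mathsf{Di},(q)}$ has the \emph{same} domain as the Friedrichs extension of $w\mapsto h^2(\|\mathsf d w\|^2+\|\mathsf d^* w\|^2)$ on $\Lambda^qH^1_{\mbf T}$. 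For the Hodge Laplacian on the half-space with the bounded-geometry metric provided by \autoref{MetricRd-}, global $H^2$ regularity and the boundary condition $\mbf t\,\mathsf d^* w=0$ are standard (uniform ellipticity plus uniformly bounded coefficients allow the usual tangential-difference-quotient argument globally, not just locally). No partition of unity isolating $\{|x|\ge R\}$ is needed. A minor side remark: the boundary $\{x_d=0\}$ is not in general totally geodesic for the metric~\eqref{eq.outc2}, since $\mbf G$ may depend on $x_d$; but Gaffney's inequality does not require this, so the claim is harmless.
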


The following Green
formula will be used many times in the sequel (it can be proven as in the compact case~\cite[Lemma
2.3.2]{helffer-nier-06} by density of $\Lambda^q\mathcal C^\infty_c( \mathbb R^d_-)$ in
 $\Lambda^qH^1(\mathbb R^d_-)$).
\begin{lemma}\label{le.Green}
Let $q\in \{0,\ldots,d\}$. Let us assume that~\autoref{MetricRd-} is satisfied. 
Then, for all $w\in \Lambda^qH^1_{\mbf T}(\mathbb R^d_-)$, it holds:
\begin{align*}
  \Vert  \mathsf  d_{f,h}w  \Vert_{L^2( \mathbb R^d_-   )} ^2  +    \Vert  \mathsf  d_{f,h}^*w  \Vert_{L^2( \mathbb R^d_- )}^2&=h^2  \Vert \mathsf  d \,  w   \Vert_{L^2( \mathbb R^d_-  )} ^2  +  h^2  \Vert  \mathsf d ^*\, w   \Vert_{L^2( \mathbb R^d_-   )}^2 \\
    &\quad+\big  \lp  w, \big(\vert \nabla f\vert_{\mbf g}^2 +h (\mathcal L_{\nabla f}+\mathcal L_{\nabla f}^* )\big)\, w\big \rp_{L^2( \mathbb R^d_-  )}\\
    &\quad -h\int_{\pa \mathbb R^d_-}\langle w,w\rangle _{T_{(x',0)}\pa \mathbb R^d_-}\partial_{\mathsf n_{\mathbb R^d_-}}f(x',0) \lambda (dx') 
    \end{align*}
    where   $\lambda (dx')$ is of course the volume form on $\pa \mathbb R^d_-$ induced by the  metric tensor $\mbf g$. 
    \end{lemma}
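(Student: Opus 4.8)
\emph{Proof plan.} The plan is to reduce the identity to an elementary integration by parts on smooth forms and then conclude by density. The starting point is the algebraic decomposition of the distorted differentials $\mathsf d_{f,h}^{(q)}=h\,\mathsf d^{(q)}+\mathsf d f\wedge$ and $\mathsf d_{f,h}^{(q)\,*}=h\,\mathsf d^{(q)\,*}+\iota_{\nabla f}$, where $\iota_{\nabla f}$ denotes the interior product with $\nabla f$, which is the pointwise adjoint of $\mathsf d f\wedge$. Expanding $\Vert\mathsf d_{f,h}w\Vert_{L^2(\mathbb R^d_-)}^2+\Vert\mathsf d_{f,h}^* w\Vert_{L^2(\mathbb R^d_-)}^2$ and using that the $L^2(\mathbb R^d_-)$ scalar product is symmetric on (real) forms, one obtains the sum of $h^2\Vert\mathsf d w\Vert^2+h^2\Vert\mathsf d^* w\Vert^2$, of the zeroth order term $\Vert\mathsf d f\wedge w\Vert^2+\Vert\iota_{\nabla f}w\Vert^2$, and of the cross term $2h\big(\lp\mathsf d w,\mathsf d f\wedge w\rp+\lp\mathsf d^* w,\iota_{\nabla f}w\rp\big)$. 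The zeroth order term equals $\lp w,\vert\nabla f\vert_{\mbf g}^2\, w\rp$ since $(\mathsf d f\wedge)\,\iota_{\nabla f}+\iota_{\nabla f}\,(\mathsf d f\wedge)=\vert\nabla f\vert_{\mbf g}^2\,\mathrm{Id}$ as an endomorphism of $\Lambda^q$, so everything reduces to identifying the cross term.

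For the cross term I would first argue for $w$ smooth with bounded support in $\overline{\mathbb R^d_-}$ and satisfying $\mathbf t w=0$ on $\pa\mathbb R^d_-$. Using the pointwise adjunction between $\mathsf d f\wedge$ and $\iota_{\nabla f}$, Cartan's formula $\mathcal L_{\nabla f}=\mathsf d\,\iota_{\nabla f}+\iota_{\nabla f}\,\mathsf d$, and the Green formula $\lp\mathsf d\alpha,w\rp-\lp\alpha,\mathsf d^* w\rp=\int_{\pa\mathbb R^d_-}\lp\mathbf t\alpha,\iota_{\mathsf n_{\mathbb R^d_-}}w\rp\,\lambda(dx')$ (valid for $\alpha\in\Lambda^{q-1}H^1(\mathbb R^d_-)$ and $w\in\Lambda^qH^1(\mathbb R^d_-)$, with the same proof by density of $\Lambda^\bullet\mathcal C^\infty_c$ in $\Lambda^\bullet H^1$ as in the compact case, see~\cite[Lemma~2.3.2]{helffer-nier-06}), applied with $\alpha=\iota_{\nabla f}w$, one gets
\[
\lp\mathsf d w,\mathsf d f\wedge w\rp+\lp\mathsf d^* w,\iota_{\nabla f}w\rp=\lp\mathcal L_{\nabla f}w,w\rp-\int_{\pa\mathbb R^d_-}\big\langle\mathbf t(\iota_{\nabla f}w),\iota_{\mathsf n_{\mathbb R^d_-}}w\big\rangle\,\lambda(dx').
\]

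It then remains to compute the two remaining pieces. Since $\mbf g$ is block diagonal (item $(i)$ of~\autoref{MetricRd-}), one has $\mathsf n_{\mathbb R^d_-}=\pa_{x_d}$, and decomposing $w=\mathbf t w+\mathsf n_{\mathbb R^d_-}^\flat\wedge\iota_{\mathsf n_{\mathbb R^d_-}}w$ near the boundary together with $\mathbf t w=0$ shows that, on $\pa\mathbb R^d_-$, $\mathbf t(\iota_{\nabla f}w)=(\pa_{\mathsf n_{\mathbb R^d_-}}f)\,\iota_{\mathsf n_{\mathbb R^d_-}}w$ and $\vert\iota_{\mathsf n_{\mathbb R^d_-}}w\vert=\vert w\vert$; hence the boundary integral above equals $\int_{\pa\mathbb R^d_-}(\pa_{\mathsf n_{\mathbb R^d_-}}f)\,\langle w,w\rangle\,\lambda(dx')$. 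On the other hand, writing $\mathcal L_{\nabla f}=\nabla_{\nabla f}+(\text{a zeroth order operator})$ and integrating $\nabla f\cdot\nabla\vert w\vert^2$ by parts yields $\lp\mathcal L_{\nabla f}w,w\rp=\tfrac12\lp(\mathcal L_{\nabla f}+\mathcal L_{\nabla f}^*)w,w\rp+\tfrac12\int_{\pa\mathbb R^d_-}(\pa_{\mathsf n_{\mathbb R^d_-}}f)\,\langle w,w\rangle\,\lambda(dx')$, the point being that $\mathcal L_{\nabla f}+\mathcal L_{\nabla f}^*$ is of order zero (cf.~\eqref{dec.delta}). Substituting these two identities, the cross term becomes $\tfrac12\lp(\mathcal L_{\nabla f}+\mathcal L_{\nabla f}^*)w,w\rp-\tfrac12\int_{\pa\mathbb R^d_-}(\pa_{\mathsf n_{\mathbb R^d_-}}f)\,\langle w,w\rangle\,\lambda(dx')$, which multiplied by $2h$ and combined with the first step gives the claimed formula for such $w$.

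The general case $w\in\Lambda^qH^1_{\mbf T}(\mathbb R^d_-)$ would then follow by density: smooth forms with bounded support and vanishing tangential trace are dense in $\Lambda^qH^1_{\mbf T}(\mathbb R^d_-)$ (truncation, plus a mollification preserving the boundary condition since the boundary is flat), and every term in the identity is continuous for the $\Lambda^qH^1(\mathbb R^d_-)$ topology — the interior terms because the coefficients $\mbf G$, $\nabla f$ and $\mathrm{Hess}\,f$ are bounded, and the boundary term because $w\mapsto\int_{\pa\mathbb R^d_-}\langle w,w\rangle\,\lambda$ is continuous on $\Lambda^qH^1(\mathbb R^d_-)$ by the trace theorem. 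I expect the only genuinely delicate points to be bookkeeping rather than conceptual: fixing the orientation and the sign in the Green formula, and verifying the pointwise boundary identity $\mathbf t(\iota_{\nabla f}w)=(\pa_{\mathsf n_{\mathbb R^d_-}}f)\,\iota_{\mathsf n_{\mathbb R^d_-}}w$, where the condition $\mathbf t w=0$ is used essentially; the non-compactness of $\mathbb R^d_-$ enters only through the routine truncation in the density step.
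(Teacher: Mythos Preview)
Your proposal is correct and follows essentially the same approach as the paper, which does not give a proof but simply refers to the compact case~\cite[Lemma~2.3.2]{helffer-nier-06} and notes that the non-compactness of $\mathbb R^d_-$ is handled by density of $\Lambda^q\mathcal C^\infty_c(\mathbb R^d_-)$ in $\Lambda^qH^1(\mathbb R^d_-)$. Your write-up is precisely a fleshing out of that reference: the algebraic expansion of $\mathsf d_{f,h}$ and $\mathsf d_{f,h}^*$, the identification of the cross term via Cartan's formula and the Green formula, the boundary computation using $\mathbf t w=0$ and the block structure of $\mbf g$, and the density step are exactly what the cited compact-case argument consists of.
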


\noindent
Let us now state the main result of this section (recall the
definition~\eqref{eq.decf} of $\mu_d$). 
   \begin{proposition}\label{pr.prop-loca1-case1}
  Let us assume that~\autoref{MetricRd-} and \autoref{PotentialRd-}
  hold.  Let  $q\in \{0,\ldots,d\}$.  Then, there exist $C>0$, $c>0$,
  and $h_0>0$ such that for all $h\in (0,h_0)$, the following holds:
  \begin{enumerate}
  \item[(i)] 
  If $q=0$, then:
  \begin{equation}\label{eq.last1-case1}
\forall w\in \Lambda^0 H^1_{\mbf T}( \mathbb R^d_-), \ \,  Q_{f,h}^{\mathsf{Di},(0)}( \mathbb R^d_- ,\mbf g)( w)\ge Ch\,  \Vert w  \Vert_{L^2(\mathbb R^d_-)}^2.
\end{equation}
Let  $q\in \{1,\ldots,d\}$. If the index of $0$ as a critical point of
$f|_{\pa \mathbb R^d_-}$ is not $q-1$ or if  $\mu_{d}>0$, then:
 \begin{equation}\label{eq.last2-case1}
  \forall w\in \Lambda^q H^1_{\mbf T}( \mathbb R^d_- ), \ \,  Q_{f,h}^{\mathsf{Di},(q)}( \mathbb R^d_- ,\mbf g)( w)\ge Ch\,  \Vert w  \Vert_{L^2(\mathbb R^d_-)}^2.
 \end{equation}
If the index of $0$ as a critical point of $f|_{\pa   \mathbb R^d_-}$
is $q-1$ and $\mu_{d}<0$, then:
 \begin{equation}\label{eq.last3-case1}
  \Ran \, \pi_{[0,ch]}\big (\Delta_{f,h}^{\mathsf{Di},(q)}( \mathbb R_-^d,\mbf g )\big )=\Ker \Delta_{f,h}^{\mathsf{Di},(q)}( \mathbb R_-^d,\mbf g ) \text{ has dimension  } 1.
 \end{equation}
   \item[(ii)]  Assume that the index of $0$ as a critical point of $f|_{\pa   \mathbb R^d_-}$
is $q-1$ and $\mu_{d}<0$. Let  $\chi: \overline{\mathbb R^{d}_-}\to [0,1]$ be a $\mathcal C^\infty$ function supported in a neighborhood of $0$ which equals~$1$ in a neighborhood of $0$. 
Let  $\Psi_h\in \Ker \Delta_{f,h}^{\mathsf{Di},(q)}( \mathbb R_-^d,\mbf g )$ such that $ \Vert \Psi_h \Vert_{L^2(\mathbb R^d_-)}=1$. Then, 
 in the limit $h\to 0$, it holds:
 \begin{equation}\label{eq.norme-case1}
  \ \Vert \chi \Psi_h  \Vert_{L^2(\mathbb R^d_-)} =1+O  (h^{2}  ) \ \text{ and } \  Q_{f,h}^{\mathsf{Di},(q)}( \mathbb R^d_- ,\mbf g)( \chi \Psi_h)=O(h^{2}).
  \end{equation}
  \end{enumerate}
 \end{proposition}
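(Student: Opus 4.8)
\emph{Overall strategy.} The plan is to localize near the single critical point $0$ and, there, to reduce to an explicitly solvable tensorized model. As a preliminary step I would settle the \emph{product model case}: $\mbf g=\mathsf I_d\,dx^2$ and $f(x)=f(0)+f'(x')+\tfrac{\mu_d}{2}x_d^2$ with $f'$ a Morse function on $\mathbb R^{d-1}$ whose only critical point is $0$, with the same index as $0$ for $f|_{\pa\mathbb R^d_-}$, and with $|\nabla f'|$ bounded below at infinity. Then the Witten Laplacian tensorizes along $\Lambda^q(\mathbb R^d_-)=\bigoplus_{p+p_d=q}\Lambda^p(\mathbb R^{d-1})\,\widehat\otimes\,\Lambda^{p_d}(\mathbb R_-)$ into $\Delta_{f',h}^{(p)}(\mathbb R^{d-1})\otimes 1+1\otimes\Delta_{f_d,h}^{\mathsf{Di},(p_d)}(\mathbb R_-)$, $f_d=\tfrac{\mu_d}{2}x_d^2$ (the tangential Dirichlet conditions of Proposition~\ref{pr.pr1-d} split correctly: a component without $dx_d$ gets a Dirichlet condition at $x_d=0$, the $dx_d$-component gets $u'(0)=0$), so the spectrum is the sumset of the two spectra. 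On $\mathbb R^{d-1}$ the classical analysis recalled in Section~\ref{sec.Rdmoins_d-1} gives a one-dimensional kernel of $\Delta_{f',h}^{(p)}(\mathbb R^{d-1})$ iff $p$ is the index of $0$, with a gap $\geq ch$ otherwise. On the half-line, the $p_d=0$ factor is $-h^2\partial_{x_d}^2+\mu_d^2x_d^2-h\mu_d$ with Dirichlet data (ground energy $\geq 2h|\mu_d|$), while the $p_d=1$ factor acts on the $dx_d$-coefficient as $-h^2\partial_{x_d}^2+\mu_d^2x_d^2+h\mu_d$ with Neumann data, of ground energy $h|\mu_d|+h\mu_d$, which is $0$ exactly when $\mu_d<0$ (kernel spanned by $e^{\mu_d x_d^2/2h}\,dx_d$) and $\geq 2h\mu_d$ when $\mu_d>0$. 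Multiplying the two factors yields exactly~\eqref{numb-generalize}: a one-dimensional kernel when the index of $f|_{\pa\mathbb R^d_-}$ at $0$ is $q-1$ and $\mu_d<0$, and $Q_{f,h}^{\mathsf{Di},(q)}\geq ch$ otherwise, with in all cases a spectral gap $\geq c_0h$ above the (possibly trivial) kernel.

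\emph{Localization for general $\mbf g,f$.} Fix $\delta>0$ small and a smooth partition $\chi_0^2+\chi_\infty^2\equiv 1$ with $\chi_0$ supported in $\mathsf B(0,2\delta)$ and $\equiv 1$ on $\mathsf B(0,\delta)$. Since $\mbf t(\chi_j w)=\chi_j\,\mbf t w=0$, the Leibniz rule for $\mathsf d_{f,h},\mathsf d_{f,h}^*$ (the cross terms cancel because $\sum_j\chi_j\nabla\chi_j=0$) gives the IMS identity $Q_{f,h}^{\mathsf{Di},(q)}(w)=Q_{f,h}^{\mathsf{Di},(q)}(\chi_0 w)+Q_{f,h}^{\mathsf{Di},(q)}(\chi_\infty w)-h^2\big(\|\,|\nabla\chi_0|\,w\|_{L^2}^2+\|\,|\nabla\chi_\infty|\,w\|_{L^2}^2\big)$. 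On $\supp\chi_\infty$ one has $|\nabla f|\geq c(\delta)>0$ (using item $(ii)$ of \autoref{PotentialRd-} and that $0$ is the only critical point of $f$), and the boundary integral in the Green formula of Lemma~\ref{le.Green} \emph{vanishes} since $\pa_{\mathsf n_{\mathbb R^d_-}}f=0$ on $\pa\mathbb R^d_-$ by~\eqref{eq.stable}; hence $Q_{f,h}^{\mathsf{Di},(q)}(\chi_\infty w)\geq(c(\delta)^2-Ch)\|\chi_\infty w\|^2$. Thus any $w$ with $Q_{f,h}^{\mathsf{Di},(q)}(w)\leq c_1h\|w\|^2$ concentrates near $0$: $\|\chi_\infty w\|^2=O(h)\|w\|^2$.

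\emph{Comparison with the model; proof of (i).} On $\mathsf B(0,2\delta)\cap\overline{\mathbb R^d_-}$, $\mbf g=\mbf G(x',x_d)dx'^2+dx_d^2$ with $\mbf G=\mathsf I_{d-1}+O(\delta)$ and $f=f(0)+\tfrac12\langle\hess f(0)x,x\rangle+O(|x|^3)$, $\hess f(0)$ diagonal. Rewriting $Q_{f,h}^{\mathsf{Di},(q)}(\chi_0 w)$ by Lemma~\ref{le.Green} and comparing term by term with the product model $Q^{\mathrm{mod}}$ built from $\mathsf I_d\,dx^2$ and the quadratic part of $f$, every discrepancy (potential, $h(\mathcal L+\mathcal L^*)$, $h^2$ Hodge part) is $O(\delta)\,Q^{\mathrm{mod}}(\chi_0 w)+O(\delta h)\|\chi_0 w\|^2$, using $|\nabla f^{\mathrm{mod}}|\gtrsim|x|$ to absorb the cubic term $O(|x|^3)\leq O(\delta)(|\nabla f^{\mathrm{mod}}|^2+h)$ on the effective support. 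When $\mathsf m_q=0$ (equations~\eqref{eq.last1-case1}--\eqref{eq.last2-case1}) the model gap gives $Q_{f,h}^{\mathsf{Di},(q)}(\chi_0 w)\geq(c_0h-O(\delta h))\|\chi_0 w\|^2$, and combining with the localization step (choosing $\delta$, then $h$, small) yields $Q_{f,h}^{\mathsf{Di},(q)}(w)\geq Ch\|w\|^2$. When $\mathsf m_q=1$ (equation~\eqref{eq.last3-case1}): $\dim\Ran\,\pi_{[0,ch]}\geq 1$ follows from min--max applied to the quasi-mode $\chi_0\Psi^{\mathrm{mod}}$ ($\Psi^{\mathrm{mod}}$ the Gaussian kernel element of $\Delta^{\mathrm{mod}}$), whose Rayleigh quotient is $o(h)$; $\dim\Ran\,\pi_{[0,ch]}\leq 1$ follows by taking $w\in\Ran\,\pi_{[0,ch]}$, using the localization to bring it (up to $O(\sqrt h)$ in $L^2$) into $\mathsf B(0,2\delta)$, using an Agmon-type exponential-localization estimate so that the cubic and metric errors become genuinely $o(h)\|w\|^2$, and deducing that $\chi_0 w$ is $o(1)$-close to the one-dimensional $\Ker\,\Delta^{\mathrm{mod}}$, which forbids two independent such $w$ by almost-orthogonality. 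Running the two previous paragraphs in every degree gives $\dim\Ran\,\pi_{[0,ch]}^{(p)}=\mathsf m_p$ for all $p$; since $0$ is the only critical point, the finite-dimensional complex $\big(\bigoplus_p\Ran\,\pi_{[0,ch]}^{(p)},\mathsf d_{f,h}\big)$ (well defined by Proposition~\ref{pr.pr1-d}) is concentrated in degree $q$ with vanishing differentials, so its cohomology — equal to the harmonic elements, i.e.\ to $\Ker\,\Delta_{f,h}^{\mathsf{Di},(q)}(\mathbb R^d_-)$ — is all of $\Ran\,\pi_{[0,ch]}^{(q)}$; this is~\eqref{eq.last3-case1}.

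\emph{Part (ii), and the main obstacle.} For~\eqref{eq.norme-case1}, let $\Psi_h\in\Ker\,\Delta_{f,h}^{\mathsf{Di},(q)}(\mathbb R^d_-)$, $\|\Psi_h\|=1$; replacing $\chi$ by another admissible cut-off alters the claims by $O(e^{-c/h})$, so one may take $\chi^2+\psi^2\equiv 1$ with $\psi$ smooth and $\equiv 0$ near $0$. An Agmon estimate for this eigenform (eigenvalue $0$) gives $\|e^{(1-\varepsilon)\mathsf d_a(\cdot,0)/h}\Psi_h\|_{L^2}\leq C_\varepsilon$, hence $\|\psi\Psi_h\|_{L^2}=O(e^{-c/h})$ since $\psi$ vanishes near $0$; thus $\|\chi\Psi_h\|^2=1-\|\psi\Psi_h\|^2=1+O(h^2)$, and the IMS identity together with $Q_{f,h}^{\mathsf{Di},(q)}(\Psi_h)=0$, $Q_{f,h}^{\mathsf{Di},(q)}(\cdot)\geq 0$, and $\nabla\chi$ being supported where $\Psi_h=O(e^{-c/h})$, gives $0\leq Q_{f,h}^{\mathsf{Di},(q)}(\chi\Psi_h)\leq O(e^{-c/h})=O(h^2)$. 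The hard part is the upper bound $\dim\Ran\,\pi_{[0,ch]}\leq\mathsf m_q$ in the general, non-tensorizable setting: because $\mbf G(x',x_d)dx'^2+dx_d^2$ does not block-diagonalize, the reduction to the model is only approximate and the cubic part of $f$ is a priori $O(\delta^3/h)$ after the rescaling $x=\sqrt h\,y$ — too large to neglect — so the IMS localization must be interleaved with exponential decay of the low-lying forms. As stressed in Remark~\ref{rem:numb-small}, what makes this fit is precisely that $\pa_{\mathsf n_{\mathbb R^d_-}}f=0$ on the boundary, so the Green formula carries no boundary term and the normal expansion of $f$ is a genuine $\tfrac{\mu_d}{2}x_d^2+O(|x|^3)$; correctly identifying the Neumann condition $u'(0)=0$ on the $dx_d$-component of the $1$D model is a further point needing care.
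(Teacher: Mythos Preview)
Your overall architecture is right—localize, compare with a tensorized model, use the complex structure—and your identification of the ``hard part'' is exactly where the work lies. But the way you propose to handle it diverges from the paper and leaves a genuine gap.

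\textbf{The scale of localization.} You localize at a \emph{fixed} radius $\delta$ and then try to make the cubic remainder of $f$ and the metric defect negligible by invoking ``an Agmon-type exponential-localization estimate'' for $w\in\Ran\,\pi_{[0,ch]}$. This Agmon step is never carried out, and it is precisely what the paper avoids. The paper instead localizes at the \emph{$h$-dependent} scale $h^{2/5}$ (Lemma~\ref{le.IMS-loca}): on $\mathsf B(0,h^{2/5})$ the cubic error in~\eqref{eq.decf} contributes $O(h^{6/5})$ to the quadratic form, and outside this ball $|\nabla f|^2\ge Ch^{4/5}$, so the IMS remainder $O(h^{6/5})$ is harmless. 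This single choice of scale makes \emph{all} error terms $o(h)$ uniformly, with no need for Agmon estimates at this stage. For your fixed-$\delta$ approach, the claim that the errors are $O(\delta)Q^{\mathrm{mod}}+O(\delta h)\|w\|^2$ is plausible for the potential term via your inequality $|x|^3\le C\delta|\nabla f^{\mathrm{mod}}|^2$, but the Hodge part $h^2(\|dw\|^2+\|d^*w\|^2)$ involves the codifferential, whose difference between $\mbf g$ and $\mathsf I_d$ contains zeroth-order terms with coefficients that are $O(|x|)$ but not obviously controllable by $Q^{\mathrm{mod}}$ alone; you have not justified this.

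\textbf{The intermediate reduction.} The paper does not go directly from $\mbf g$ to the Euclidean model. It first freezes the metric in the normal direction, $\tilde{\mbf g}(x)=\mbf G(x',0)\,dx'^2+dx_d^2$, proves the full statement for $\tilde{\mbf g}$ (Proposition~\ref{pr.prop-loca1}, using the splitting Lemma~\ref{le.splitting} which separates the $x'$- and $x_d$-contributions up to $O(h^{6/5})$), and then transfers to $\mbf g$ via Lemma~\ref{le-3.3.7}: since $\mbf g$ and $\tilde{\mbf g}$ agree at $x_d=0$, they differ by $O(h^{2/5})$ on $\mathsf B(0,h^{2/5})$, giving $Q_{\mbf g_1}(w)\ge C\,Q_{\mbf g_2}(w)-Ch^{7/5}\|w\|^2$. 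This two-step reduction is what makes the metric comparison clean.

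\textbf{The dimension upper bound for~\eqref{eq.last3-case1}.} In the paper, once~\eqref{eq.last1-case1}--\eqref{eq.last2-case1} are established in degrees $q\pm1$, the commutation relations~\eqref{eq.complexe1}--\eqref{eq.complexe2} force any $\phi_h\in\Ran\,\pi_{[0,ch]}^{(q)}$ to satisfy $\mathsf d_{f,h}\phi_h=\mathsf d_{f,h}^*\phi_h=0$, i.e.\ $Q(\phi_h)=0$ \emph{exactly}. With $Q(\phi_h)=0$ in hand, the IMS at scale $h^{2/5}$ gives $Q_{\tilde{\mbf g}}(\chi_1(h^{-2/5}\cdot)\phi_h)=O(h^{6/5})$, so $\chi_1\phi_h$ is at distance $O(h^{1/10})$ from the one-dimensional $\Ker\,\Delta_{f,h}^{\mathsf{Di},(q)}(\mathbb R^d_-,\tilde{\mbf g})$ already known from Proposition~\ref{pr.prop-loca1}. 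This is how the paper gets $\dim\le 1$ without Agmon. Your ordering (first $\dim\le1$ via Agmon, then the complex) misses this simplification.

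\textbf{Part (ii).} No Agmon is needed here either. With $\tilde\chi=\sqrt{1-\chi^2}$, one has $|\nabla f|\ge c>0$ on $\supp\tilde\chi$, so the Green formula (boundary term zero by~\eqref{eq.stable}) gives $Q(\tilde\chi\Psi_h)\ge(c^2-Ch)\|\tilde\chi\Psi_h\|^2$. IMS with $Q(\Psi_h)=0$ then yields $\|\tilde\chi\Psi_h\|^2=O(h^2)$ and $Q(\chi\Psi_h)=O(h^2)$ directly.
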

%
%

The next Lemma shows that is it enough to
prove~\eqref{eq.last1-case1}--\eqref{eq.last2-case1}   in
Proposition~\ref{pr.prop-loca1-case1} for   forms $w$ supported in a
ball $\mathsf B(0,h^{2/5})$.
 \begin{lemma}\label{le.IMS-loca}
 Let us assume that~\autoref{MetricRd-} and \autoref{PotentialRd-} are
 satisfied. Let us assume that there  exist $C>0$ and $h_0>0$ such
 that for all $h\in (0,h_0)$ and for all $v\in \Lambda^q H^1_{\mbf T}( \mathbb R^d_-  )$ supported in $\mathsf B(0,h^{2/5})$, 
  \begin{equation}\label{eq.cond1Q}
  Q_{f,h}^{\mathsf{Di},(q)}( \mathbb R^d_-,\mbf g )( v)\ge Ch\,    \Vert v \Vert_{L^2(\mathbb R^d_- )}^2.
  \end{equation}
    Then, there exist $c>0$ and $h_0>0$ such that for all $h\in
    (0,h_0)$ and for all  $w\in \Lambda^q H^1_{\mbf T}( \mathbb R^d_- )$,
        \begin{align*} 
    Q_{f,h}^{\mathsf{Di},(q)}(\mathbb R^d_-,\mbf g)(   w )\ge Ch  \Vert   w  \Vert_{L^2( \mathbb R_-^d )}^2.
        \end{align*}
 
 \end{lemma}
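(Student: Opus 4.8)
The plan is a standard IMS localization argument: split $\mathbb R^d_-$ into a neighborhood of the critical point $0$, where the hypothesis~\eqref{eq.cond1Q} applies, and its complement, where $|\nabla f|_{\mbf g}$ is bounded below by a positive power of $h$ and Lemma~\ref{le.Green} gives a direct lower bound on the quadratic form.

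First I would fix cut-off functions $\chi_1,\chi_2\in\mathcal C^\infty(\overline{\mathbb R^d_-},[0,1])$ with $\chi_1^2+\chi_2^2=1$ on $\overline{\mathbb R^d_-}$, $\chi_1\equiv 1$ on $\mathsf B(0,h^{2/5}/2)$ and $\supp\chi_1\subset\mathsf B(0,h^{2/5})$; then $\chi_2=\sqrt{1-\chi_1^2}$ is smooth, $\supp\chi_2\subset\{|x|\ge h^{2/5}/2\}$, and $|\nabla\chi_1|_{\mbf g}+|\nabla\chi_2|_{\mbf g}\le Ch^{-2/5}$. Writing $\mathsf d_{f,h}(\chi_i w)=\chi_i\mathsf d_{f,h}w+h\, d\chi_i\wedge w$ and $\mathsf d_{f,h}^*(\chi_i w)=\chi_i\mathsf d_{f,h}^*w-h\,\iota_{\nabla\chi_i}w$, using the pointwise identity $|d\chi_i\wedge w|^2+|\iota_{\nabla\chi_i}w|^2=|\nabla\chi_i|_{\mbf g}^2|w|^2$, and noting that the cross terms cancel since $\sum_i\chi_i\, d\chi_i=\tfrac12 d(\sum_i\chi_i^2)=0$, one gets the IMS localization formula
\begin{equation*}
Q_{f,h}^{\mathsf{Di},(q)}(\mathbb R^d_-,\mbf g)(w)=\sum_{i=1}^{2}Q_{f,h}^{\mathsf{Di},(q)}(\mathbb R^d_-,\mbf g)(\chi_i w)-h^2\sum_{i=1}^{2}\big\||\nabla\chi_i|_{\mbf g}\,w\big\|_{L^2(\mathbb R^d_-)}^2 .
\end{equation*}
Each $\chi_i w$ still lies in $\Lambda^qH^1_{\mbf T}(\mathbb R^d_-)$ (the tangential trace is multiplied by $\chi_i|_{\partial\mathbb R^d_-}$), and the last sum is bounded below by $-Ch^{6/5}\|w\|_{L^2(\mathbb R^d_-)}^2$, so it only remains to bound the two localized energies from below.

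For $i=1$, the form $\chi_1 w$ is supported in $\mathsf B(0,h^{2/5})$, so the assumption~\eqref{eq.cond1Q} gives $Q_{f,h}^{\mathsf{Di},(q)}(\mathbb R^d_-,\mbf g)(\chi_1 w)\ge Ch\|\chi_1 w\|_{L^2(\mathbb R^d_-)}^2$. For $i=2$, I would first note that item (ii) in \autoref{PotentialRd-} (non-degeneracy of $0$, the only critical point, together with $|\nabla f|\ge c$ outside $\mathsf B(0,R)$) combined with the uniform ellipticity of $\mbf g$ implies $|\nabla f|_{\mbf g}(x)\ge c_1\min(|x|,1)$ on $\overline{\mathbb R^d_-}$, hence $|\nabla f|_{\mbf g}^2\ge \tfrac{c_1^2}{4}h^{4/5}$ on $\supp\chi_2$ for $h$ small. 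Applying Lemma~\ref{le.Green} to $v=\chi_2 w$: the boundary integral vanishes because $\partial_{\mathsf n_{\mathbb R^d_-}}f(x',0)=0$ (item (iii) in \autoref{PotentialRd-}), the terms $h^2\|\mathsf d v\|^2+h^2\|\mathsf d^*v\|^2$ are nonnegative, and $\big|\langle v,(\mathcal L_{\nabla f}+\mathcal L_{\nabla f}^*)v\rangle\big|\le C\|v\|_{L^2(\mathbb R^d_-)}^2$ with $C$ uniform in $h$ (this is a zeroth-order symmetric endomorphism on $q$-forms, controlled by $\sup_{\overline{\mathbb R^d_-}}|\partial_x^\alpha f|$ for $1\le|\alpha|\le 2$ and by the bounds on $\mbf g$ and its derivatives). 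Therefore
\begin{equation*}
Q_{f,h}^{\mathsf{Di},(q)}(\mathbb R^d_-,\mbf g)(\chi_2 w)\ge\Big(\tfrac{c_1^2}{4}h^{4/5}-Ch\Big)\|\chi_2 w\|_{L^2(\mathbb R^d_-)}^2\ge\tfrac{c_1^2}{8}h^{4/5}\|\chi_2 w\|_{L^2(\mathbb R^d_-)}^2
\end{equation*}
for $h$ small. Plugging the two bounds into the IMS formula, using $\|\chi_1 w\|_{L^2}^2+\|\chi_2 w\|_{L^2}^2=\|w\|_{L^2}^2$ and $h^{4/5}\ge Ch$ for $h$ small, yields $Q_{f,h}^{\mathsf{Di},(q)}(\mathbb R^d_-,\mbf g)(w)\ge(Ch-C'h^{6/5})\|w\|_{L^2(\mathbb R^d_-)}^2\ge\tfrac{C}{2}h\|w\|_{L^2(\mathbb R^d_-)}^2$ for $h$ small, which is the claim (with $C/2$ in place of $C$).

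The only delicate point is the choice of localization scale: it must be small enough that the localization error $h^2|\nabla\chi_i|_{\mbf g}^2\lesssim h^{6/5}$ is $o(h)$, and large enough that the off-critical-point gap $|\nabla f|_{\mbf g}^2\gtrsim h^{4/5}$ still dominates $h$; any exponent in $(0,\tfrac12)$ works, and $\tfrac25$ is the convenient choice used consistently in the rest of the paper. Beyond this bookkeeping and the (routine) verification of the IMS identity and of the uniform bound on $\mathcal L_{\nabla f}+\mathcal L_{\nabla f}^*$, no real obstacle arises.
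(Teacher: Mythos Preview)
Your proof is correct and follows essentially the same approach as the paper's: IMS localization at scale $h^{2/5}$, applying the hypothesis~\eqref{eq.cond1Q} on the inner piece and using Lemma~\ref{le.Green} together with $\partial_{\mathsf n_{\mathbb R^d_-}}f=0$ and the lower bound $|\nabla f|_{\mbf g}^2\gtrsim h^{4/5}$ on the outer piece. The only cosmetic difference is that the paper builds the cut-offs by rescaling a fixed partition $\chi_i(h^{-2/5}\cdot)$ rather than constructing $h$-dependent cut-offs directly, and is looser with the final constant; your remark that one obtains $C/2$ rather than $C$ is accurate.
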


 \begin{proof}
  Let us   consider a partition of unity $(\chi_1,\chi_2)$  such that
  $\chi_1\in \mathcal C_c^\infty (\overline {\mathbb R_-^d})$,
  $\chi_1=1$ on $\mathsf B(0,1/2)$, supp$\chi_1\subset  \mathsf
  B(0,1)$ and $\chi_1^2+\chi_2^2=1$. The IMS
  formula~\cite{CFKS,helffer-nier-06} yields: for all $  w\in \Lambda^qH_{\mbf T}^1(\mathbb R^d_-)$,
    \begin{align}
\label{eq.IMS}
Q_{f,h}^{\mathsf{Di},(q)}(\mathbb R^d_-,\mbf g )(   w ) &=\sum_{k=1}^2 Q_{f,h}^{\mathsf{Di},(q)}( \mathbb R^d_- ,\mbf g )(\chi_k(h^{-2/5  }.)w)    -   h^2\,\big \Vert \nabla\big[ \chi_k(h^{-2/5  }.) \big]w\big \Vert_{L^2( \mathbb R_-^d)}^2.
\end{align}
Using Lemma~\ref{le.Green}, $\partial_{\mathsf n_{\mathbb
    R^{d}_-}}f=0$ on $\partial \mathbb R^{d}_-$ (see~\eqref{eq.stable}) and $\mbf t   (\chi_k(h^{-2/5  }.)w)(x',0)=0$, one has:
    \begin{align*}
    Q_{f,h}^{\mathsf{Di},(q)}( \mathbb R^d_-,\mbf g  )(\chi_2(h^{-2/5  }.)w) &=h^2\big \Vert   \mathsf  d \big(\chi_2(h^{-2/5  }.)w\big)\big \Vert_{L^2( \mathbb R^d_-)} ^2  +  h^2\big \Vert  \mathsf d ^*\big(\chi_2(h^{-2/5  }.)w\big)\big \Vert_{L^2( \mathbb R^d_-  )}^2 \\
    &\quad+\big  \lp \chi_2(h^{-2/5  }.)w, \Big(\vert \nabla f\vert^2   -h\big (\mathcal L_{\nabla f}+\mathcal L_{\nabla f}^*\big)\Big)\chi_2(h^{-2/5  }.)w\big \rp_{L^2( \mathbb R^d_-  )}.
        \end{align*} 
Moreover, using \autoref{MetricRd-} and item $(ii)$ in~\autoref{PotentialRd-},  there exists $C>0$ such that,
\begin{equation}\label{eq.gradientup}
\forall x\in \mathbb R^d_-\setminus \mathsf B(0,h^{2/5  }/2), \ \vert \nabla f(x)\vert^2_{{\mbf g}}   \ge Ch^{\frac 45},
\end{equation}
 for some $C>0$ independent of $h$. Thus, using the fact that $\mathcal L_{\nabla f}+\mathcal L_{\nabla f}^*$ is a $0$th order operator and ${\rm supp}\, \chi_2(h^{-2/5  }.)\subset \mathbb R^d\setminus \mathsf  B(0,h^{2/5  }/2)$, one obtains that there exist $c>0$ and $h_0>0$ such that for all $h\in (0,h_0)$: 
\begin{equation}\label{eq.chi_2-Q}
Q_{f,h}^{\mathsf{Di},(q)}(\mathbb R^d_-,\mbf g )(  \chi_2(h^{-2/5  }.)w)\ge Ch^{4/5}    \Vert \chi_2(h^{-2/5  }.) w  \Vert_{L^2( \mathbb R_-^d)}^2.
\end{equation}
  This implies that there exist $c>0$ and  $C>0$ such that 
    \begin{align*}
    Q_{f,h}^{\mathsf{Di},(q)}(\mathbb R^d_-,\mbf g )(   w )&\ge Q_{f,h}^{\mathsf{Di},(q)}(\mathbb R^d_-,\mbf g )(  \chi_1(h^{-2/5  }.)w)+Ch^{4/5}    \Vert \chi_2(h^{-2/5  }.) w  \Vert_{L^2( \mathbb R_-^d)}^2  \\
    &\quad-Ch^{6/5}   \Vert   w  \Vert_{L^2( \mathbb R_-^d)}^2.
    \end{align*}
    If~\eqref{eq.cond1Q} holds, 
    one obtains (taking $v=\chi_1(h^{-2/5  }.)  w$ in \eqref{eq.cond1Q}) for all $h$ small enough:
    \begin{align*} 
    Q_{f,h}^{\mathsf{Di},(q)}(\mathbb R^d_-,\mbf g )(   w )&\ge C\Big(  h \Vert \chi_1(h^{-2/5  }.)  w  \Vert_{L^2( \mathbb R_-^d )}^2\!\!+ h^{4/5}   \Vert   \chi_2(h^{-2/5  }.)  w  \Vert_{L^2( \mathbb R_-^d  )}^2 \!\!-  h^{6/5}   \Vert   w  \Vert_{L^2( \mathbb R_-^d  )}^2\Big).
        \end{align*}
        Thus, $Q_{f,h}^{\mathsf{Di},(q)}(\mathbb R^d_-,\mbf g )(   w )\ge Ch  \Vert   w  \Vert_{L^2( \mathbb R_-^d)}^2$. This ends the proof of Lemma~\ref{le.IMS-loca}.
 \end{proof}
The proof of Proposition~\ref{pr.prop-loca1-case1} will be done in
Section~\ref{pr.prop-loca1-case1}, after considering successively
model problems on $\mathbb R^{d-1}$, and on $\mathbb R^d_-$ in a
simplified setting.

\subsubsection{Witten Laplacian in $\mathbb R^{d-1}$}\label{sec.Rdmoins_d-1}
Let us first recall standard results   on the number of  eigenvalues of
order $o(h)$ for the  Witten Laplacian on  $\mathbb R^{d-1}$ associated with a function~$f_+:\mathbb R^{d-1}\to \mathbb R$ which has only one critical point in~$\mathbb R^{d-1}$. 
Let us introduce the two sets of assumptions used to state this result.
\begin{manualassumption}{\bf(Metric-$\mathbb R^{d-1}$)}\label{MetricRd-1}
The space  ${\mathbb R^{d-1}}$ is endowed with a~$\mathcal C^\infty$   metric tensor  denoted by $x'\in \mathbb R^{d-1} \mapsto    \tilde {\mbf G}(x')\, dx'^2$. In addition,
\begin{enumerate}
\item [(i)] $\tilde {\mbf G}$ and all its derivatives are bounded over ${\mathbb R^{d-1}}$. 
\item [(ii)] $ \tilde {\mbf G}$ is uniformly elliptic over $ {\mathbb
    R^{d-1}}$, i.e. $ \tilde {\mbf G}\ge c$  over $\mathbb R^{d-1}$, for some $c>0$.
\end{enumerate}
 \end{manualassumption}
 \noindent
Again, we will not indicate explicitely the metric $\tilde {\mbf G}$ in the
functional spaces nor in the associated norm: we will simply write $\Lambda^q H^k( \mathbb
R^{d-1})$ for $\Lambda^qH^k( \mathbb R^{d-1}, \tilde {\mbf G} \, dx'^2
)$ and denote $ \Vert  \cdot  \Vert_{H^k(  \mathbb R^{d-1})}$ the
associated norm.

Notice that, as above, under \autoref{MetricRd-1},  the norm on  $(\mathbb
R^{d-1},   \tilde {\mbf G} \, dx'^2)$ is uniformly equivalent to the
norm on $(\mathbb R^{d-1},   \mathsf I_{d-1}  \,  dx'^2)$, the latter
being simply denoted $\vert x'\vert$: $\vert x'\vert^2=\sum_{i=1}^{d-1}x_i^2$.
In addition,  for all $q\in \{0,\ldots,d-1\}$ and $k\ge 0$,  the norm
on  $\Lambda^qH^k( \mathbb R^{d-1},  \tilde {\mbf G} \, dx'^2 )$ is
equivalent to the norm on   $\Lambda^qH^k( \mathbb R^{d-1}, \mathsf I_{d-1}  \,  dx'^2)$.  
 \begin{manualassumption}{\bf(Potential-$\mathbb R^{d-1}$)}\label{PotentialRd-1}
 The function $f_+:  {\mathbb R^{d-1}}\to \mathbb R$ satisfies: 
 \begin{enumerate}
 \item[(i)] $f_+$ is a $\mathcal C^\infty$ function such that for all multi-index
 $\beta\in \mathbb N^{d-1}$ with  $\vert \beta\vert \ge 1$,   $\sup_{\mathbb R^{d-1}}\vert \partial_x^\beta f_+\vert<+\infty$.
 \item[(ii)] 
The point  $0$ is the only critical point of $f_+$ in ${\mathbb
  R^{d-1}}$ and is a non degenerate critical point, with an index
denoted by $p\in
\{0,\ldots,d-1\}$ (the non-degeneracy and the index do not depend on the metric tensor
on $\mathbb R^{d-1}$). Moreover, there exist  $R>0$ and $c>0$ such that:
 $$ 
\forall x'\in  \mathbb R^{d-1},  \, \vert x'\vert \ge R \Longrightarrow \vert \nabla f_+(x')\vert  \ge c .$$
 \end{enumerate}

  \end{manualassumption}

Under these assumptions, the following result holds (see~\cite[Propositions 3.3.2 and
 3.3.3]{helffer-nier-06} and\cite[Proposition 2.2]{HKN} for proofs of very similar results).
 \begin{proposition}\label{pr.case-d-1}
  Let $d\ge 2$ and   assume that~\autoref{MetricRd-1} and
  \autoref{PotentialRd-1} hold.     Let $q\in \{0,\ldots,d-1\}$ and $h
  > 0$. The Friedrichs extension of the quadratic form
$$Q_{f_+,h}^{(q)}( \mathbb R^{d-1},\tilde {\mbf G}\, dx'^2 ):  w\in
\mathcal D\big(\Delta_{f_+,h}^{(q)}( \mathbb R^{d-1} , \tilde {\mbf
  G}\, dx'^2) \big ) \mapsto    \Vert  \mathsf  d_{f,h}w \Vert_{L^2(
  \mathbb R^{d-1} )} ^2  +    \Vert  \mathsf  d_{f,h}^*w \Vert_{L^2(
  \mathbb R^{d-1})}^2$$
on $\Lambda^q L^2( \mathbb R^{d-1})$ is
denoted by $\Delta_{f_+,h}^{(q)}( \mathbb R^{d-1},  \tilde {\mbf
  G}dx'^2 )$. It is a self-adjoint operator with domain 
$$\mathcal D\big(\Delta_{f_+,h}^{(q)}( \mathbb R^{d-1},  \tilde {\mbf
  G}dx'^2 )\big)=\Lambda^q H^2(\mathbb R^{d-1}).$$
  Moreover,   there exist $C>0$, $c>0$  and $h_0>0$ such that for all $h\in (0,h_0)$:
      \begin{itemize}  
   \item[(i)] $\inf\sigma_{\text{ess}}\big(\Delta_{f_+,h}^{(q)}( \mathbb R^{d-1},  \tilde {\mbf G}\, dx'^2 )\big)\ge C$. 
    \item[(ii)] When $p\neq q$, 
$\dim \Ran \, \pi_{[0,ch]}\big (\Delta_{f_+,h}^{(q)}( \mathbb R^{d-1},  \tilde {\mbf G}\, dx'^2 )\big )=0.$\\
When $p=q$, $\Ran \, \pi_{[0,ch]}\big (\Delta_{f_+,h}^{(q)}( \mathbb R^{d-1},  \tilde {\mbf G}\, dx'^2 )\big )=\Ker \Delta_{f_+,h}^{(q)}( \mathbb R^{d-1},  \tilde {\mbf G}\, dx'^2 )$ has dimension~$1$. 
    \end{itemize}
 \end{proposition}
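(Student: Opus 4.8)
\medskip
\noindent\textbf{Proof strategy.}
The plan is to follow the harmonic approximation strategy of~\cite{helffer-nier-06,HKN}, of which this statement is essentially a special case. First I would record the functional-analytic facts: under~\autoref{MetricRd-1} and item $(i)$ of~\autoref{PotentialRd-1}, $\Delta_{f_+,h}^{(q)}(\mathbb R^{d-1},\tilde{\mathbf G}dx'^2)$ is, for fixed $h>0$, a uniformly elliptic second order differential operator on the complete manifold $\mathbb R^{d-1}$ whose coefficients are bounded together with all their derivatives and whose potential part $|\nabla f_+|_{\tilde{\mathbf G}}^2+h(\mathcal L_{\nabla f_+}+\mathcal L_{\nabla f_+}^*)$ is bounded from below; it is therefore essentially self-adjoint on $\Lambda^q\mathcal C_c^\infty(\mathbb R^{d-1})$, and global elliptic regularity identifies the domain of its Friedrichs extension with $\Lambda^q H^2(\mathbb R^{d-1})$. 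The intertwining relations of the type~\eqref{eq.complexe1}--\eqref{eq.complexe2} then hold by the very same proof. For item $(i)$, I would use the decomposition~\eqref{dec.delta}: since $|\nabla f_+|_{\tilde{\mathbf G}}^2\ge c^2$ outside $\mathsf B(0,R)$ while $h(\mathcal L_{\nabla f_+}+\mathcal L_{\nabla f_+}^*)$ is a bounded zeroth order term, every $w\in\Lambda^q H^1(\mathbb R^{d-1})$ supported in $\mathbb R^{d-1}\setminus\mathsf B(0,R)$ satisfies $Q_{f_+,h}^{(q)}(w)\ge(c^2-Ch)\,\|w\|_{L^2}^2$; a Persson-type localization argument then gives $\inf\sigma_{\mathrm{ess}}\ge c^2/2$ for $h$ small enough, which is $(i)$.

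For item $(ii)$ I would argue by harmonic approximation near the unique critical point $0$. An IMS localization in the spirit of Lemma~\ref{le.IMS-loca} shows that every normalized $w$ with $Q_{f_+,h}^{(q)}(w)\le ch\|w\|_{L^2}^2$ (for a suitably small fixed $c>0$) concentrates, in $L^2$-norm, in $\mathsf B(0,h^{2/5})$; hence the number of eigenvalues $\le ch$ can be computed after restricting to forms supported in an arbitrarily small ball around $0$. On such a ball, after a linear change of coordinates normalizing $\tilde{\mathbf G}(0)=\mathsf I_{d-1}$ and $\hess f_+(0)=\mathrm{diag}(\mu_1,\dots,\mu_{d-1})$, the operator $\Delta_{f_+,h}^{(q)}$ is close — after the rescaling $x'=\sqrt h\,y'$ and modulo a relatively $o(1)$ perturbation as $h\to0$, controlled using the expansion~\eqref{eq.decf} and the boundedness of the third derivatives of $f_+$ — to the model Witten Laplacian of the quadratic potential $\tfrac12\sum_i\mu_ix_i^2$ on $(\mathbb R^{d-1},\mathsf I_{d-1}dx'^2)$. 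The latter tensorizes into one-dimensional operators $-h^2\partial_{x_j}^2+\mu_j^2x_j^2\pm h\mu_j$, whose spectra are readily computed to be $\{h|\mu_j|(2n+1)\pm h\mu_j:n\in\mathbb N\}$; one deduces that the model Witten Laplacian on $q$-forms has a one-dimensional kernel — spanned by a Gaussian times $dx_{i_1}\wedge\cdots\wedge dx_{i_p}$ over the $p$ directions with $\mu_j<0$ — precisely when $q=p$, and is bounded from below by $2h\min_j|\mu_j|>0$ on the orthogonal complement of that kernel (and on all $q$-forms if $q\ne p$). Combining this $h$-order spectral gap with the min-max principle and the localization estimate gives $\dim\Ran\pi_{[0,ch]}(\Delta_{f_+,h}^{(q)})=0$ when $q\ne p$ and $\le 1$ when $q=p$; the reverse inequality $\ge 1$ when $q=p$ follows by inserting a truncated Gaussian quasi-mode into the min-max. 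Finally, when $q=p$ the unique small eigenvalue is in fact $0$: if $\phi_h$ denotes an associated eigenform, then by the $\mathbb R^{d-1}$ analogue of~\eqref{eq.complexe1}--\eqref{eq.complexe2} one has $\mathsf d_{f_+,h}\phi_h\in\Ran\pi_{[0,ch]}(\Delta^{(q+1)}_{f_+,h})$ and $\mathsf d_{f_+,h}^*\phi_h\in\Ran\pi_{[0,ch]}(\Delta^{(q-1)}_{f_+,h})$, and both of these ranges are $\{0\}$ since $q\pm1\ne p$; hence $\phi_h\in\Ker(\mathsf d_{f_+,h}+\mathsf d_{f_+,h}^*)=\Ker\Delta^{(q)}_{f_+,h}$, so that $\Ran\pi_{[0,ch]}(\Delta^{(q)}_{f_+,h})=\Ker\Delta^{(q)}_{f_+,h}$, which is one-dimensional.

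The step I expect to be the main obstacle is the harmonic approximation itself: quantifying, uniformly in $h$, the error between $\Delta_{f_+,h}^{(q)}$ and its quadratic model after localization and rescaling, and making sure the IMS cutoffs do not generate spurious eigenvalues inside $[0,ch]$. This is where the hypotheses that $\tilde{\mathbf G}$ and $f_+$ have all derivatives bounded, and that $0$ is the \emph{only} critical point and is non-degenerate, enter in an essential way. These estimates are nonetheless entirely standard and can be imported from~\cite{helffer-sjostrand-85,CFKS,helffer-nier-06,HKN}, which is why I would ultimately state this proposition with a reference rather than a full proof.
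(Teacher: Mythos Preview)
Your proposal is correct and aligns with the paper's treatment: the paper does not give a proof of this proposition at all but simply cites \cite[Propositions~3.3.2 and~3.3.3]{helffer-nier-06} and \cite[Proposition~2.2]{HKN}, exactly as you anticipate in your final paragraph. Your sketch (essential self-adjointness and domain via elliptic regularity, Persson-type bound on the essential spectrum, IMS localization plus harmonic approximation at the unique critical point, explicit tensorized model spectrum, and the complex-structure argument to identify the small eigenvalue with~$0$) is precisely the content of those references, so there is nothing to correct or contrast.
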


\subsubsection{A  simplified model  in $\mathbb R^d_-$}\label{sec.Rdmoins-simplified}
We will first prove item $(i)$ of Proposition~\ref{pr.prop-loca1-case1} in the special
case when $\mbf G(x',x_d)$ in item $(i)$ of \autoref{MetricRd-}  is independent of the variable $x_d$.
%
  \begin{proposition}\label{pr.prop-loca1}
  Assume that~\autoref{MetricRd-} and \autoref{PotentialRd-}  are satisfied. 
  Assume in addition that $\mbf G$ is independent of $x_d$:
\begin{equation}\label{eq.metric-indep}
\forall (x',x_d) \in \overline{ \mathbb R^d_-},\,  \mbf G(x',x_d)=\tilde {\mbf G}(x').
  \end{equation}
for some $\mathcal C^\infty$ function $\tilde{\mbf G}$ defined on $\mathbb R^{d-1}$.
  Then, item (i) in Proposition~\ref{pr.prop-loca1-case1}   is satisfied. 
 \end{proposition}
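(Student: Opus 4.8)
The plan is to use the extra hypothesis~\eqref{eq.metric-indep}, which turns $(\overline{\mathbb R^d_-},\mbf g)$ into a genuine Riemannian product $(\mathbb R^{d-1},\tilde{\mbf G}\,dx'^2)\times(\overline{\mathbb R_-},dx_d^2)$, and to reduce $\Delta_{f,h}^{\mathsf{Di},(q)}(\mathbb R^d_-,\mbf g)$ to a \emph{tensorized} model. Since $f$ itself does not split as a sum of a function of $x'$ and a function of $x_d$, I would first replace it near $0$ by the split function $\tilde f(x',x_d):=f(x',0)+\tfrac12\mu_d\,x_d^2$, with $\mu_d=\partial_{x_d}^2 f(0)$ as in~\eqref{eq.decf}. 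Because $\partial_{\mathsf n_{\mathbb R^d_-}}f=\partial_{x_d}f(\cdot,0)\equiv 0$ on $\partial\mathbb R^d_-$ (see~\eqref{eq.stable} and~\eqref{eq.dec-gradient-phi}), the matrix $\hess f(0)$ is diagonal and $f-\tilde f$ vanishes to order three at $0$, so on $\mathsf B(0,h^{2/5})$ one has $\nabla(f-\tilde f)=O(h^{4/5})$, $\hess(f-\tilde f)=O(h^{2/5})$ and $\nabla f=\nabla\tilde f+O(h^{4/5})=O(h^{2/5})$. Applying the Green formula of Lemma~\ref{le.Green} to both $f$ and $\tilde f$ (the boundary terms vanish, since $\partial_{\mathsf n_{\mathbb R^d_-}}f=\partial_{\mathsf n_{\mathbb R^d_-}}\tilde f=0$) and noting that the $\mathsf d$- and $\mathsf d^*$-contributions are identical, one obtains
\[
\bigl|\,Q_{f,h}^{\mathsf{Di},(q)}(\mathbb R^d_-,\mbf g)(w)-Q_{\tilde f,h}^{\mathsf{Di},(q)}(\mathbb R^d_-,\mbf g)(w)\,\bigr|=O(h^{6/5})\,\|w\|_{L^2(\mathbb R^d_-)}^2=o(h)\,\|w\|_{L^2(\mathbb R^d_-)}^2
\]
for every $w\in\Lambda^qH^1_{\mbf T}(\mathbb R^d_-)$ supported in $\mathsf B(0,h^{2/5})$. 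By Lemma~\ref{le.IMS-loca} it therefore suffices to analyze the split model operator, and to transfer the conclusions back to $f$ through this $O(h^{6/5})$ perturbation.

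Next I would diagonalize the model. Writing a $q$-form on the product as $\omega=\alpha+\beta\wedge dx_d$ with $\alpha\in\Lambda^q\mathbb R^{d-1}$, $\beta\in\Lambda^{q-1}\mathbb R^{d-1}$, the tangential Dirichlet condition $\mathbf t\omega=0$ amounts to a Dirichlet condition on $\alpha$ at $x_d=0$, while $\mathbf t\mathsf d^*_{\tilde f,h}\omega=0$ (recall $\partial_{x_d}\tilde f(\cdot,0)\equiv 0$) amounts to a Neumann condition on $\beta$ at $x_d=0$; since $\tilde f=f_+(x')+\tfrac12\mu_d x_d^2$ with $f_+:=f(\cdot,0)$, a direct computation gives the orthogonal decomposition
\[
\Delta_{\tilde f,h}^{\mathsf{Di},(q)}(\mathbb R^d_-,\mbf g)\cong\Bigl(\Delta_{f_+,h}^{(q)}(\mathbb R^{d-1},\tilde{\mbf G}\,dx'^2)\otimes\mathrm{Id}+\mathrm{Id}\otimes A_0\Bigr)\ \oplus\ \Bigl(\Delta_{f_+,h}^{(q-1)}(\mathbb R^{d-1},\tilde{\mbf G}\,dx'^2)\otimes\mathrm{Id}+\mathrm{Id}\otimes A_1\Bigr),
\]
where the first (resp. second) summand acts on the $\alpha$-part (resp. the $\beta\wedge dx_d$-part), $A_0=-h^2\partial_{x_d}^2+\mu_d^2x_d^2-h\mu_d$ with Dirichlet condition at $0$, and $A_1=-h^2\partial_{x_d}^2+\mu_d^2x_d^2+h\mu_d$ with Neumann condition at $0$. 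Now $0$ is the unique critical point of $f_+$ in $\mathbb R^{d-1}$ (any critical point of $f(\cdot,0)$ gives, via~\eqref{eq.stable}, a critical point of $f$ in $\overline{\mathbb R^d_-}$, hence equals $0$), with index $p$, and $f_+$ satisfies~\autoref{PotentialRd-1} by~\eqref{eq.grad>0}; by Proposition~\ref{pr.case-d-1}, $\Delta_{f_+,h}^{(q')}(\mathbb R^{d-1},\tilde{\mbf G}\,dx'^2)$ has bottom of spectrum $\ge ch$ unless $q'=p$, in which case it has a one-dimensional kernel and a gap of order $h$. An elementary harmonic-oscillator computation shows that $A_0$ always has bottom $\ge ch$, while $A_1$ has bottom $\ge ch$ if $\mu_d>0$ and, if $\mu_d<0$, a one-dimensional kernel (spanned by $e^{-|\mu_d|x_d^2/2h}$, which satisfies the Neumann condition) with a gap of order $h$. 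Combining, the $\alpha$-summand always has bottom $\ge ch$, and the $\beta\wedge dx_d$-summand has bottom $\ge ch$ unless $p=q-1$ and $\mu_d<0$, in which case it has a one-dimensional kernel and a gap of order $h$ — exactly the trichotomy in Proposition~\ref{pr.prop-loca1-case1}(i).

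When the model has bottom $\ge c_0h$ ($q=0$, or $q\ge1$ with the index of $0$ for $f|_{\partial\mathbb R^d_-}$ not equal to $q-1$, or $\mu_d>0$), the comparison estimate together with Lemma~\ref{le.IMS-loca} yields~\eqref{eq.last1-case1}--\eqref{eq.last2-case1}. In the remaining case $p=q-1$, $\mu_d<0$, the model satisfies $\dim\Ran\,\pi_{[0,c_0h]}\Delta_{\tilde f,h}^{\mathsf{Di},(q)}(\mathbb R^d_-,\mbf g)=1$ with a gap of order $h$; transferring this through the $O(h^{6/5})$ perturbation and the IMS localization of Lemma~\ref{le.IMS-loca} (using a cutoff of the model kernel element as trial function, and coercivity outside $\mathsf B(0,h^{2/5})$ on its orthogonal complement) gives $\dim\Ran\,\pi_{[0,ch]}\Delta_{f,h}^{\mathsf{Di},(q)}(\mathbb R^d_-,\mbf g)=1$ for $h$ small. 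To identify this space with $\Ker\Delta_{f,h}^{\mathsf{Di},(q)}(\mathbb R^d_-,\mbf g)$, I would observe that the hypotheses of the already-treated cases automatically hold at degrees $q\pm1$ (since the index $q-1$ is neither $q-2$ nor $q$), hence $\Ran\,\pi_{[0,ch]}\Delta_{f,h}^{\mathsf{Di},(q\pm1)}(\mathbb R^d_-,\mbf g)=\{0\}$; by the intertwining identities~\eqref{eq.complexe1}--\eqref{eq.complexe2}, any eigenform of $\Delta_{f,h}^{\mathsf{Di},(q)}(\mathbb R^d_-,\mbf g)$ with eigenvalue $\le ch$ is then annihilated by $\mathsf d_{f,h}$ and by $\mathsf d^*_{f,h}$, so its eigenvalue is $0$, which proves~\eqref{eq.last3-case1}. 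Finally, for item (ii), once $\Ker\Delta_{f,h}^{\mathsf{Di},(q)}(\mathbb R^d_-,\mbf g)$ is known to be one-dimensional, a standard Agmon estimate — using $\mathsf d_{f,h}\Psi_h=\mathsf d^*_{f,h}\Psi_h=0$ and the coercivity $|\nabla f|\ge c$ outside a ball from~\eqref{eq.grad>0} — gives that $\Psi_h$ is exponentially small in $H^1$ outside any fixed neighborhood of $0$, whence $\|\chi\Psi_h\|_{L^2}=1+O(e^{-c/h})=1+O(h^2)$ and, by Lemma~\ref{le.Green}, $Q_{f,h}^{\mathsf{Di},(q)}(\mathbb R^d_-,\mbf g)(\chi\Psi_h)=O(e^{-c/h})=O(h^2)$.

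\textbf{Main obstacle.} The hard part is that $f$ is not a product, so the explicit tensorization — the only route I see to pin down the exact kernel dimension and the size of the spectral gap — is available only for the auxiliary function $\tilde f$. The error this introduces is $O(h^{6/5})=o(h)$, harmless in principle, but it forces the $h^{2/5}$ localization scale and a somewhat delicate IMS bookkeeping when passing from compactly supported forms to all of $\Lambda^qH^1_{\mbf T}(\mathbb R^d_-)$. A related subtlety is that, unlike for the model, the kernel of the true operator has no closed-form description, so its non-triviality must be extracted indirectly from the Witten complex structure~\eqref{eq.complexe1}--\eqref{eq.complexe2} together with the vanishing of the small-eigenvalue spaces in the adjacent degrees $q\pm1$.
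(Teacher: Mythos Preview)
Your proposal is correct and follows essentially the same route as the paper: replace $f$ near $0$ by a split potential, control the error at order $O(h^{6/5})$ on $\mathsf B(0,h^{2/5})$, analyze the split model by tensorization (Proposition~\ref{pr.case-d-1} in $x'$ together with a one-dimensional half-line analysis in $x_d$), and use the intertwining relations~\eqref{eq.complexe1}--\eqref{eq.complexe2} to identify the small-eigenvalue space with the kernel. The only differences are cosmetic: the paper takes the fully quadratic split $f_+-f_-$ of Definition~\ref{de.pre1} (rather than your $f(\cdot,0)+\tfrac12\mu_d x_d^2$), records the tensorization as the quadratic-form lower bound of Lemma~\ref{le.splitting} rather than an operator-level K\"unneth identity, and does not address item~(ii), which is not part of Proposition~\ref{pr.prop-loca1}.
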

Before providing the proof of this proposition in
Section~\ref{sec:prop-loca1-proof}, let us conclude
this section with a few preliminary results. Notice first that when \autoref{MetricRd-} and~\eqref{eq.metric-indep}, are
satisfied, $\tilde{\mbf G}(x') dx'^2$ satisfies
\autoref{MetricRd-1}. Moreover, we will need the following decomposition of the
function $f$.
 \begin{definition}\label{de.pre1}
Assume that \autoref{PotentialRd-} is satisfied, and recall the
expansion~\eqref{eq.decf} of $f$ around~$0$.
  Let us define $f_+$ and $f_-$  by:
\begin{equation}\label{eq.plusmoins}
\forall x=(x',x_d)\in \mathsf V_0,\, f_+(x')=  \sum_{i=1}^{  d-1}\frac{\mu_i}{2} \, x_i^2 \ \text{ and } \ f_-(x_d)=-\frac{\mu_{d}}{2} x_d^2.
\end{equation}
Let us then extend the function $f_+$ (resp. $f_-$) to a $C^\infty$
function over $ {\mathbb R^{d-1}}$ (resp. $\overline{\mathbb R_-}$)
such that:
\begin{enumerate}
\item All the derivatives of $f_+$ (resp. $f_-$) of order at least $1$
 are  bounded over   $ {\mathbb R^{d-1}}$  (resp. $\overline{\mathbb R_-}$) .
 \item  The point  $0$ is the only critical point of $f_+$ (resp. $f_-$) on $  {\mathbb R^{d-1}}$ (resp. $\overline{\mathbb R_-}$), and for some $c>0$, $\vert \nabla f_+\vert \ge c$ (resp. $\vert \nabla f_-\vert \ge c$) outside a compact set of   $ {\mathbb R^{d-1}}$  (resp. of $\overline{\mathbb R_-}$) .
 \end{enumerate}
 In other words, $f_+$ satisfies \autoref{PotentialRd-1}, and $f_-$ satisfies \autoref{PotentialRd-} for $d=1$. 

\end{definition}
\noindent
It is easy to check that,  when $\mu_{d}<0$, $f_-$  satisfies 
\begin{equation}\label{eq.f-ext}
\forall h>0,\ e^{-\frac 2h f_-}\in \Lambda^0H^2(\mathbb R_-). 
\end{equation}
The following result  is the key point to prove Proposition~\ref{pr.prop-loca1}. 
It allows us to separate the variables $x'$ and $x_d$ in  the Witten
Laplacian $\Delta_{f,h}^{\mathsf{Di},(q)}(\mathbb R^d_-, \mbf g)$, up
to remainder terms of order $h^{6/5}$.

 \begin{lemma}\label{le.splitting} 
Assume that~\autoref{MetricRd-},\autoref{PotentialRd-}
and~\eqref{eq.metric-indep} are satisfied. Let u consider the
functions $f_+$ and $f_-$ as  introduced in Definition~\ref{de.pre1}.
Let  $q\in \{0,\ldots,d\}$  and $w\in \mathcal  D(\Delta^{\mathsf{Di},(q)}_{f,h}(\mathbb R_-^d, \mbf g))$.
Write $w=\mathsf a \wedge dx_d+ \mathsf b$ 
where
$$\mathsf b  = \sum \limits_{\substack{\mathsf J=\{j_1,\ldots,j_q\},\\ \, j_1<\ldots<j_q ,  \, d\notin \mathsf  J} } \mathsf b_{\mathsf J}\, dx_{\mathsf J}  \  \text{ and }\  \mathsf a =\sum \limits_{\substack{\mathsf I =\{i_1,\ldots,i_{q-1}\},\\ \, i_1<\ldots<i_{q-1},\,d\notin \mathsf  I }} \mathsf a_{\mathsf I}\, dx_{\mathsf I}.$$
It then holds, for some $\mathsf c_1>0$ and  $\mathsf c_2>0$
independent of $h>0$ and of $w$, 
\begin{align}
\nonumber
 Q^{\mathsf{Di},(q)}_{f,h}(\mathbb R_-^d, \mbf g)(w) &\ge \mathsf c_1 \sum \limits_{\mathsf I }   \int_{x'\in \mathbb R^{d-1} } Q^{\mathsf{Di},(1)}_{-f_-,h}(\mathbb R_-, dx_d^2)\big (\mathsf a_{\mathsf I}(x',.)dx_d  \big ) \mu(dx')\\
\nonumber
&\quad + \mathsf c _1\sum \limits_{\mathsf J  } \int_{x'\in \mathbb R^{d-1} }  Q^{\mathsf{Di},(0)}_{-f_-,h}(\mathbb R_-, dx_d^2)\big (\mathsf b_{\mathsf J}(x',.) \big ) \mu(dx')\\
\nonumber
&\quad +  \int_{x_d\in \mathbb R_- }   Q^{(q-1)}_{f_+,h}(\mathbb R^{d-1},  \tilde {\mbf G}dx'^2)\big (\mathsf a(.,x_d)\big ) dx_d \\
\nonumber
&\quad +  \int_{x_d\in \mathbb R_- }   Q^{(q)}_{f_+,h}(\mathbb
            R^{d-1},  \tilde {\mbf G}dx'^2)\big (\mathsf b(.,x_d) \big
            ) dx_d - \mathsf  e(h,w)
 \end{align}
 where $\vert \mathsf   e(h,w)\vert \le \mathsf c_2h^{6/5}\,  \Vert w
 \Vert_{L^2(\mathbb R^d_-  )}^2$ if supp $w\subset \mathsf B(0,h^{2/5})$. 
 The measure $\mu(dx')$   is   the measure $\sqrt{ {\rm det}\  \tilde {\mbf G}(x')} \, dx'$, where $dx'$ is  the Lebesgue measure on $\mathbb R^{d-1}$, and the measure $dx_d$  is the Lebesgue measure on $\mathbb R_-$.
 \end{lemma}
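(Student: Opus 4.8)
The plan is to replace $f$ by the model potential $\mathsf G(x):=f_+(x')+(-f_-)(x_d)$, which near $0$ coincides — up to the constant $f(0)$ — with the quadratic part of $f$, at the price of an error of order $h^{6/5}\|w\|_{L^2(\mathbb R^d_-)}^2$ on the support ball, and then to perform an exact separation of variables for the pair consisting of the product metric $\mbf g=\tilde{\mbf G}(x')dx'^2+dx_d^2$ and the additively separated potential $\mathsf G$. I keep throughout the decomposition $w=\mathsf b+\mathsf a\wedge dx_d$ from the statement; since $\mbf g$ is block-diagonal and $dx_d$ is of unit norm orthogonal to the $x'$-block, this decomposition is $L^2$-orthogonal, the condition $\mathbf t w=0$ is equivalent to $\mathsf b_{\mathsf J}(\cdot,0)=0$ for every $\mathsf J$ (and puts no condition on $\mathsf a$), and the Sobolev regularity of the coefficients needed to make sense of the one-dimensional Witten forms on the right-hand side follows from $w\in\Lambda^qH^1_{\mbf T}(\mathbb R^d_-)$ together with Fubini — only this membership (not the full operator domain) is used.

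For the first step, I would apply the Green formula of Lemma~\ref{le.Green} to both $f$ and $\mathsf G$. The boundary terms vanish: for $f$ by~\eqref{eq.stable}, and for $\mathsf G$ because $\partial_{\mathsf n_{\mathbb R^d_-}}\mathsf G(x',0)=\partial_{x_d}\mathsf G(x',0)=(-f_-)'(0)=0$ by~\eqref{eq.dec-gradient-phi} and~\eqref{eq.plusmoins}. Since the $h^2\big(\|\mathsf d\,w\|^2+\|\mathsf d^*w\|^2\big)$ parts then coincide,
\[
Q_{f,h}^{\mathsf{Di},(q)}(\mathbb R^d_-,\mbf g)(w)-Q_{\mathsf G,h}^{\mathsf{Di},(q)}(\mathbb R^d_-,\mbf g)(w)=\big\langle w,\big(|\nabla f|_{\mbf g}^2-|\nabla\mathsf G|_{\mbf g}^2\big)w\big\rangle+h\big\langle w,\big(\mathcal M_f-\mathcal M_{\mathsf G}\big)w\big\rangle,
\]
where $\mathcal M_\phi:=\mathcal L_{\nabla\phi}+\mathcal L_{\nabla\phi}^*$ is a zeroth-order operator with coefficients built from $\hess\phi$. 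On $\mathsf B(0,h^{2/5})$, contained in the neighbourhood $\mathsf V_0$ of~\eqref{eq.decf} for $h$ small, \eqref{eq.decf} and~\eqref{eq.plusmoins} give $f-f(0)-\mathsf G=O(|x|^3)$, hence $\nabla(f-\mathsf G)=O(|x|^2)$ and $\hess(f-\mathsf G)=O(|x|)$; since moreover $\nabla f,\nabla\mathsf G=O(|x|)$ near $0$, this yields $\big\||\nabla f|_{\mbf g}^2-|\nabla\mathsf G|_{\mbf g}^2\big\|_{L^\infty(\mathrm{supp}\,w)}=O(h^{6/5})$ and $h\,\|\mathcal M_f-\mathcal M_{\mathsf G}\|_{L^\infty(\mathrm{supp}\,w)}=O(h^{7/5})$. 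Setting $\mathsf e(h,w):=Q_{\mathsf G,h}^{\mathsf{Di},(q)}(\mathbb R^d_-,\mbf g)(w)-Q_{f,h}^{\mathsf{Di},(q)}(\mathbb R^d_-,\mbf g)(w)$, we obtain $|\mathsf e(h,w)|\le\mathsf c_2\,h^{6/5}\|w\|_{L^2(\mathbb R^d_-)}^2$ whenever $\mathrm{supp}\,w\subset\mathsf B(0,h^{2/5})$.

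For the second step — the heart of the matter — I would compute $Q_{\mathsf G,h}^{\mathsf{Di},(q)}(\mathbb R^d_-,\mbf g)(w)$ exactly. Write $\mathsf d=\mathsf d^{x'}+dx_d\wedge\partial_{x_d}$, let $D,D^*$ be the fiberwise $x'$-Witten operators $\mathsf d_{f_+,h},\mathsf d_{f_+,h}^*$ on forms over $\mathbb R^{d-1}$, and let $\delta:=h\partial_{x_d}+(-f_-)'$, $\delta^*:=-h\partial_{x_d}+(-f_-)'$ act coefficientwise in $x_d$. Because $\mbf g$ is a product metric, $\nabla_{\partial_{x_d}}dx_j=0$ for $j<d$ and $\nabla_{e_i}dx_d=0$, and one gets
\[
\mathsf d_{\mathsf G,h}w=D\mathsf b+\big((-1)^q\delta\mathsf b+D\mathsf a\big)\wedge dx_d,\qquad\mathsf d_{\mathsf G,h}^*w=\big(D^*\mathsf b+\varepsilon\,\delta^*\mathsf a\big)+\big(D^*\mathsf a\big)\wedge dx_d
\]
for some sign $\varepsilon\in\{\pm1\}$. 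Each sum is $L^2$-orthogonal (its $dx_d$-free part against its $dx_d$-part); adding the squared norms and using that $\delta$ commutes with $D,D^*$ (both metric and potential being separated), the cross terms collapse to a multiple of $\langle D^*\mathsf b,\delta^*\mathsf a\rangle$, which cancels for the correct graded sign $\varepsilon=(-1)^{q+1}$ (equivalently, the Witten Laplacian $\Delta_{\mathsf G,h}^{(q)}$ tensorizes into a direct sum of products of one-variable Witten Laplacians). By Fubini,
\[
Q_{\mathsf G,h}^{\mathsf{Di},(q)}(\mathbb R^d_-,\mbf g)(w)=\int_{\mathbb R_-}\!\Big[Q_{f_+,h}^{(q)}\big(\mathbb R^{d-1},\tilde{\mbf G}dx'^2\big)\big(\mathsf b(\cdot,x_d)\big)+Q_{f_+,h}^{(q-1)}\big(\mathbb R^{d-1},\tilde{\mbf G}dx'^2\big)\big(\mathsf a(\cdot,x_d)\big)\Big]dx_d+\|\delta\mathsf b\|_{L^2}^2+\|\delta^*\mathsf a\|_{L^2}^2,
\]
where in the last two terms the $L^2$-norm carries the Riemannian inner product on $x'$-forms, hence a factor $\tilde{\mbf G}(x')$. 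Passing to the coefficientwise norms costs a fixed factor $\mathsf c_1\in(0,1]$ by uniform ellipticity of $\tilde{\mbf G}$ (\autoref{MetricRd-1}); then $\|\delta\mathsf b_{\mathsf J}(x',\cdot)\|_{L^2(\mathbb R_-)}^2=Q_{-f_-,h}^{\mathsf{Di},(0)}(\mathbb R_-,dx_d^2)(\mathsf b_{\mathsf J}(x',\cdot))$ (using $\mathsf b_{\mathsf J}(x',0)=0$) and, by one-dimensional Hodge duality, $\|\delta^*\mathsf a_{\mathsf I}(x',\cdot)\|_{L^2(\mathbb R_-)}^2=Q_{-f_-,h}^{\mathsf{Di},(1)}(\mathbb R_-,dx_d^2)(\mathsf a_{\mathsf I}(x',\cdot)dx_d)$, so, after integrating against $\mu(dx')$, the two terms become exactly $\mathsf c_1$ times the first two sums of the statement, while the $\int_{\mathbb R_-}$-terms are the last two (with coefficient $1\ge\mathsf c_1$).

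Combining the two steps, $Q_{f,h}^{\mathsf{Di},(q)}(\mathbb R^d_-,\mbf g)(w)=Q_{\mathsf G,h}^{\mathsf{Di},(q)}(\mathbb R^d_-,\mbf g)(w)-\mathsf e(h,w)\ge\mathsf c_1\times[\text{the four sums on the right-hand side}]-\mathsf e(h,w)$, which is the claimed inequality. I expect the genuine obstacle to be the second step: tracking the signs in the graded splitting of $\mathsf d_{\mathsf G,h}$ and $\mathsf d_{\mathsf G,h}^*$, checking that all cross terms cancel (in the exterior-derivative square, in the codifferential square, and between $\mathsf a$ and $\mathsf b$), and verifying that the tangential Dirichlet condition together with the $H^1$ regularity of $w$ is compatible with the fiberwise/Fubini reductions. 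Step one is routine once Lemma~\ref{le.Green} is invoked, and the passage from Riemannian to coefficientwise norms is a soft consequence of uniform ellipticity.
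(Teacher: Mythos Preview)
Your proposal is correct and follows the same overall strategy as the paper: replace $f$ by the separable model $f_+-f_-$ at the cost of an $O(h^{6/5})\|w\|^2$ error on the support ball, and then tensorize the resulting quadratic form. The execution differs in one respect worth noting. The paper works at the operator level: it writes $Q_{f,h}(w)=\langle w,\Delta_{f,h}w\rangle$, uses the algebraic decomposition of $\Delta_{f_+-f_-,h}^{(q)}$ into pieces acting in $x'$ and in $x_d$ (their equation~\eqref{eq.dec55}), and then needs the full operator-domain membership of $w$ --- in particular the condition $\mathbf t\,\mathsf d_{f,h}^*w=0$, which via $\partial_{x_d}f(x',0)=0$ and $f_-'(0)=0$ gives $\partial_{x_d}\mathsf a_{\mathsf I}(x',0)=0$ --- to place $\mathsf a_{\mathsf I}(x',\cdot)dx_d$ in $\mathcal D\big(\Delta_{-f_-,h}^{\mathsf{Di},(1)}(\mathbb R_-)\big)$ and integrate by parts back to the one-dimensional quadratic forms. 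You instead compute $\mathsf d_{\mathsf G,h}w$ and $\mathsf d_{\mathsf G,h}^*w$ directly and expand the quadratic form; since the form domain $\Lambda^1H^1_{\mbf T}(\mathbb R_-)=\Lambda^1H^1(\mathbb R_-)$ carries no boundary condition, only $w\in\Lambda^qH^1_{\mbf T}(\mathbb R^d_-)$ is needed, as you observe. The one point you leave implicit is that the two cross terms $(-1)^q\langle\delta\mathsf b,D\mathsf a\rangle$ and $\varepsilon\langle D^*\mathsf b,\delta^*\mathsf a\rangle$ are related by an integration by parts in $x_d$ whose boundary term $h\int_{\mathbb R^{d-1}}\langle(D^*\mathsf b)(x',0),\mathsf a(x',0)\rangle\,\mu(dx')$ vanishes precisely because $\mathsf b(\cdot,0)=0$ (hence $(D^*\mathsf b)(\cdot,0)=0$, as $D^*$ acts only in $x'$); once this is made explicit the cancellation with $\varepsilon=(-1)^{q+1}$ goes through. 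Your route is marginally more elementary in that it avoids the $H^2$ and second boundary-condition checks, at the price of tracking the graded signs by hand.
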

 
 \begin{proof}
\noindent 
One has  from~\eqref{eq.decf} and~\eqref{eq.plusmoins},
in a neighborhood   $ \mathsf V_0$ of $0$ in $\overline{\mathbb R^d_-}$,  \begin{equation}\label{eq.decf2}
 \forall x=(x',x_d)\in \mathsf V_0,\ \,  f(x)=f(0)+  f_+(x')- f_-(x_d)  +O(\vert x\vert ^3)
\end{equation}
and (using~\eqref{eq.metric-indep}), 
\begin{equation}\label{eq.decf3}
 \vert \nabla f(x)\vert_{{\mbf g}}^2=  \vert \nabla_{x'}  f_+(x')\vert_{\tilde {\mbf G}dx'^2}^2 + \vert \partial_{x_d}  f_-(x_d)\vert^2  +O(\vert x\vert ^3).
 \end{equation}
Moreover, one has:
 \begin{equation}\label{eq.decf4}
  \mathcal   L_{\nabla f(x)}+ \mathcal    L_{\nabla f(x)}^*=   \mathcal    L_{\nabla (f_+(x)-f_-(x))}+  \mathcal    L_{\nabla  (f_+(x)-f_-(x))}^*+ O(\vert x \vert).
\end{equation}
Let $w\in \mathcal D\big( \Delta_{f,h}^{\mathsf{Di},(q)}( \mathbb R_-^d, \mbf g ) \big)$. 
One has  
\begin{align}
Q_{f,h}^{\mathsf{Di},(q)}(\mathbb R^d_-, \mbf g )(w)&=\lp w,
                                                      \Delta_{f,h}^{(q)}(\mathbb
                                                      R^d_-, \mbf g )
                                                      w\rp_{L^2(\mathbb
                                                      R^d_-)}
=\lp w, \Delta_{f^+-f^-,h}^{(q)}(\mathbb R^d_-, \mbf g ) w\rp_{L^2(\mathbb R^d_-)} + \mathsf  e(h,w), \label{eq.ref1} 
\end{align}
where,  owing
to~\eqref{dec.delta},~\eqref{eq.decf2},~\eqref{eq.decf3},
and~\eqref{eq.decf4}, the remainder term $\mathsf e(h,w)$ satisfies:  if  $w$ is supported in $\mathsf B(0,h^{2/5  })$, 
\begin{equation}\label{taux-erreur}
\vert  \mathsf e(h,w)\vert \le C ( h^{6/5}+ h\times h^{2/5}) \Vert w \Vert_{L^2(\mathbb R^d_-)}^2\le Ch^{6/5}\,    \Vert w \Vert_{L^2(\mathbb R^d_-)}^2.
\end{equation}
Let us now give a lower bound on  $ \lp w,\Delta^{(q)}_{f_+-f_-,h}(\mathbb R^d_-, \mbf g)w\rp_{L^2(\mathbb R^d_-)}$. 
 Algebraically, using~\eqref{eq.metric-indep}, one has
 (see~\cite[Equation (3.17)]{helffer-nier-06} or~\cite[Equation (4.3.16)]{le-peutrec-10}
 for similar computations):
\begin{align}
\nonumber
\lp w,\Delta^{(q)}_{f_+-f_-,h}(\mathbb R^d_-, \mbf g)w\rp_{L^2(\mathbb R^d_-)}
&=  \Big \lp  \sum \limits_{\mathsf I}    dx_{\mathsf I}\wedge (\mathsf a_{\mathsf I}\, dx_d), \, \sum \limits_{ \mathsf I } dx_{\mathsf I}\wedge\Delta^{(1)}_{-f_-,h}(\mathbb R_-,dx_d^2)(\mathsf a_{\mathsf I} \,   dx_d)\Big\rp_{L^2(\mathbb R^d_-)} \\
\nonumber
&\quad +  \Big \lp \sum \limits_{ \mathsf J  }   \mathsf b_{\mathsf J}\, dx_{\mathsf J} ,\, \sum \limits_{ \mathsf J }   \Delta^{(0)}_{-f_-,h}(\mathbb R_-,dx_d^2)( \mathsf b_{\mathsf J})\, dx_{\mathsf J}\Big\rp_{L^2(\mathbb R^d_-)}\\
\nonumber
&\quad +\big \lp \mathsf a \wedge dx_d ,\Delta^{(q-1)}_{f_+,h}(\mathbb R^{d-1}, \tilde{\mbf G}dx'^2)(\mathsf a )\wedge dx_d \big\rp_{L^2(\mathbb R^d_-)} \\
\label{eq.dec55}
&\quad +\lp \mathsf b,\Delta^{(q)}_{f_+,h}(\mathbb R^{d-1}, \tilde{\mbf G}dx'^2)\mathsf b\rp_{L^2(\mathbb R^d_-)}.
 \end{align} 
Since  $\mbf t w =0$ on $\pa \mathbb R^d_-$,  it holds, for all
$\mathsf J$ and for a.e.  $x'\in \mathbb R^{d-1}$, $ \mathsf
b_{\mathsf J}(x',0)=0$. Thus (see Proposition~\ref{pr.pr1-d} for the
domain of $\Delta_{-f_-,h}^{\mathsf{Di},(0)}(\mathbb R_-, dx_d^2)$ and
item 1 of Definition~\ref{de.pre1}),
for all  $\mathsf J$  and a.e.  $x'\in \mathbb R^{d-1}$,
$$
  \mathsf b_{\mathsf J}(x',.)\in  \Lambda^0H^2 (\mathbb R_-)\cap
  \Lambda^0H^1_{\mbf T} (\mathbb R_-) = \mathcal  D \big(\Delta_{-f_-,h}^{\mathsf{Di},(0)}(\mathbb R_-, dx_d^2)   \big).$$
From~\eqref{eq.metric-indep},  $\mbf t \mathsf d_{f,h}^*w =0$ on $\pa \mathbb R^d_-$ (for the metric tensor $ \mbf g$) writes:  for a.e.  $x'\in \mathbb R^{d-1}$ and all $\mathsf I$, 
\begin{equation}\label{eq.domd*}
\partial_{x_d}(e^{-\frac 1hf}\mathsf a_{\mathsf I})(x',0)=0.
\end{equation}
 Because  $\partial_{x_d} f(x',0)=0$ for all $x'\in \mathbb R^{d-1}$, see~\eqref{eq.stable} and~\eqref{eq.dec-gradient-phi}, this condition thus writes  $\partial_{x_d}\mathsf a_{\mathsf I}(x',0)=0$. On the other hand, $f_-'(0)=0$ and hence, $\partial_{x_d}(e^{-\frac 1h f_-}\mathsf a_{\mathsf I})(x',0)=0$ for a.e. $x'\in \mathbb R^{d-1}$, i.e. $\mbf t \mathsf d_{f_-,h}^* (\mathsf a_{\mathsf I}(x',x_d)dx_d)  =0$ on  $\pa \mathbb R_-$ for the metric tensor $  dx_d^2$ (recall that $\mathsf d^*(\phi dx_d)=-\partial_{x_d}\phi$ for the metric tensor $  dx_d^2$).   Thus,  because in addition $\mathsf a_{\mathsf I}(x',.) dx_d \in  \Lambda^1H^2 (\mathbb R_-)$, one has  (see Proposition~\ref{pr.pr1-d} for the domain of $\Delta_{-f_-,h}^{\mathsf{Di},(1)}(\mathbb R_-, dx_d^2)$), for all  $\mathsf I$ and a.e.  $x'\in \mathbb R^{d-1}$:
$$\mathsf a_{\mathsf I}(x',.) dx_d \in   \mathcal  D\big(\Delta_{-f_-,h}^{\mathsf{Di},(1)}(\mathbb R_-, dx_d^2)   \big).$$
Furthermore,  one has (see Proposition~\ref{pr.case-d-1} for the domain of $ \Delta_{f_+,h}^{(q-1)}(\mathbb R^{d-1},  \tilde{\mbf G}dx'^2)$), for a.e.  $x_d< 0$:
$$  \mathsf a(.,x_d) \in \Lambda^{q-1}H^2 (\mathbb R^{d-1} ) = \mathcal  D\big(\Delta_{f_+,h}^{(q-1)}(\mathbb R^{d-1},  \tilde{\mbf G}dx'^2)\big), 
$$
$$ \mathsf b(.,x_d) \in  \Lambda^{q}H^2  (\mathbb R^{d-1},  \tilde{\mbf G}dx'^2 )= \mathcal  D\big(\Delta_{f_+,h}^{(q)}(\mathbb R^{d-1},  \tilde{\mbf G}dx'^2)\big).$$
Lemma~\ref{le.splitting} then follows from~\eqref{taux-erreur} and \eqref{eq.ref1} together with  two  integration by parts in $\mathbb R^{d-1}$ and two integrations by parts in $\mathbb R_-$  in~\eqref{eq.dec55}, the constant $\mathsf c_1 >0$ being the minimum of the  smallest eigenvalues of the matrices $(\Pi_{k=1}^{q-1} \tilde{\mbf G}_{i_k,i_k'})_{\mathsf I,\mathsf I'}$ and $ (\Pi_{k=1}^{q} \tilde{\mbf G}_{j_k,j_k'})_{\mathsf J,\mathsf J'}$ on $\mathbb R^{d-1}$.
 \end{proof}

\subsubsection{Proof of Proposition~\ref{pr.prop-loca1}}\label{sec:prop-loca1-proof}
We are now in position to prove Proposition~\ref{pr.prop-loca1}.
 Let us assume that~\autoref{MetricRd-}--\autoref{PotentialRd-} and~\eqref{eq.metric-indep} are satisfied.  Let us recall that according to Lemma~\ref{le.IMS-loca},   it is enough to prove Proposition~\ref{pr.prop-loca1} for all 
 $w\in \Lambda^q H^1_{\mbf T}( \mathbb R^d_-)$ supported in $\mathsf B(0,h^{2/5  })$.  All along the proof, the constants $C>0$ and $c>0$  can change from one occurrence to another but  do not depend on $h$ and on the test function $w$. 
 The proof of Proposition~\ref{pr.prop-loca1} is divided into three
 steps: the case $d=1$, the proof of~\eqref{eq.last1-case1}
 and~\eqref{eq.last2-case1} when $d>1$, and finally the proof of~\eqref{eq.last3-case1} when $d>1$.
 \medskip
 
 \noindent
 \textbf{Step 1: The case $d=1$ (i.e.  $\mathbb R^d_-=\mathbb R_-$).}
Let us recall that according to item $(i)$ in~\autoref{MetricRd-}, the
space $\mathbb R_-=\{x_d\in \mathbb R,\ x_d<0\}$ is endowed with the
metric tensor $\mbf g(x_d)=dx_d^2$.  
From~\eqref{eq.decf}, in a neighborhood $\mathsf V_0$  of $0$ in $\mathbb R_-$, one has
 $$
 \forall x_d\in \mathsf V_0,\ \,  f(x_d)= f(0)+ \frac{\mu_{d}}{2} x_d^2 +O(|x|^3).
 $$ 
 Notice that for $w\in \Lambda H^1_{\mbf T}( \mathbb R_- )$ according to the decomposition $w=\mathsf a\wedge dx_d+\mathsf b$ in Lemma~\ref{le.splitting},  $w=\mathsf b$ when $w$ is a $0$-form and  $w=\mathsf a dx_d$ when  $w$ is a $1$-form ($\mathsf a$ is a function,  see~\eqref{eq.1-form-CL} below). 
For all $\mathsf b \in  \Lambda^0 H^1_{\mbf T}( \mathbb R_-  )$, one has from  Lemma~\ref{le.Green} and since $\mathsf b(0)=0$,
\begin{equation}\label{eq.1bb-<}
Q_{f,h}^{\mathsf{Di},(0)}( \mathbb R_- ,dx_d^2 )( \mathsf b)= h^2  \Vert \partial_{x_d} \mathsf b  \Vert_{L^2(\mathbb R_-)}^2+   \Vert \mathsf b\, \partial_{x_d} f \Vert_{L^2(\mathbb R_-)}^2-h\lp \mathsf b \, \partial_{x_d}^2 f, \, \mathsf b \rp_{L^2(\mathbb R_-)}.
\end{equation}
For all $\mathsf a \, dx_d\in  \Lambda^1 H^1_{\mbf T}( \mathbb R_- )$ where we recall that 
\begin{equation}\label{eq.1-form-CL}
\Lambda^1 H^1_{\mbf T}( \mathbb R_-   ) =\Lambda^1 H^1( \mathbb R_-  )=\big \{\mathsf a \, dx_d,\, \mathsf a \in  \Lambda^0 H^1( \mathbb R_- )\big \},
\end{equation}
 one has, since $\partial_{x_d} f(0)=0$ (the boundary term vanishes in Lemma~\ref{le.Green}):
\begin{align*}
Q_{f,h}^{\mathsf{Di},(1)}( \mathbb R_-, dx_d^2 )(\mathsf a \, dx_d )
&= h^2\big \Vert \partial_{x_d} \mathsf a \big \Vert_{L^2(\mathbb R_-)}^2+ \big \Vert \mathsf a\, \partial_{x_d} f\big \Vert_{L^2(\mathbb R_-)}^2+ h\lp \mathsf a \, \partial_{x_d}^2 f, \, \mathsf a \rp_{L^2(\mathbb R_-)}.
\end{align*}
Let us now consider the two possibilities: $\mu_d>0$ or $\mu_d < 0$.

\noindent
\textbf{Step 1a: The case $d=1$ and $\mu_{d}>0$ (i.e. $ \partial_{x_d}^2 f(0)>0$).}
Then, there exists $C>0$ such that $\partial_{x_d}^2 f\ge C$ in a neighborhood  of $0$ in $\mathbb R_-$.  
Thus, for all $\mathsf a \, dx_d\in  \Lambda^1 H^1_{\mbf T}( \mathbb R_-  )$ such that  $\mathsf a$ is supported in $\mathsf B(0,h^{2/5  })$, one has  $
Q_{f,h}^{\mathsf{Di},(1)}(\mathbb R_-, dx_d^2)(\mathsf a  \, dx_d )\ge  Ch\,    \Vert \mathsf a  \Vert_{L^2(\mathbb R_-)}^2$. 
Thanks to Lemma~\ref{le.IMS-loca}, this inequality extends for all $\mathsf a \, d x_d\in  \Lambda^1 H^1_{\mbf T}( \mathbb R_-  )$: there exists $C>0$ such that for $h$ small enough,
 \begin{equation}\label{eq.cas2a-1}
\forall \mathsf a\,  d x_d\in  \Lambda^1 H^1_{\mbf T}( \mathbb R_-), \ Q_{f,h}^{\mathsf{Di},(1)}( \mathbb R_-, dx_d^2 )(\mathsf a  \, dx_d )\ge  Ch\,   \Vert \mathsf a   \Vert_{L^2(\mathbb R_-)}^2.
\end{equation}
Let us now prove that there exists $c>0$ such that for $h$ small enough:  
 \begin{equation}\label{eq.cas2a-2}
\forall \mathsf b\in  \Lambda^0 H^1_{\mbf T}( \mathbb R_-  ),\ Q_{f,h}^{\mathsf{Di},(0)}( \mathbb R_- , dx_d^2)( \mathsf b)\ge  c \, h\, \Vert \mathsf b  \Vert_{L^2(\mathbb R_-)}^2.
\end{equation}
 It is clear that $\Ker \Delta_{f,h}^{\mathsf{Di},(0)}( \mathbb R_- , dx_d^2)=\{0\}$ since $e^{-\frac 1hf}$ is not in the domain of $\Delta_{f,h}^{\mathsf{Di},(0)}( \mathbb R_-, dx_d^2 )$. Let us  now consider   $\psi_h\in \Ran\, \pi_{[0,Ch/2]}\big (\Delta_{f,h}^{\mathsf{Di},(0)}( \mathbb R_- ,dx_d^2 )\big )$
   (where~$C$ is the constant appearing in~\eqref{eq.cas2a-1}). Then,
   $\mathsf  d_{f,h} \psi_h \in \Ran\, \pi_{[0,Ch/2]}
  \big( \Delta_{f,h}^{\mathsf{Di},(1)}( \mathbb R_- ,dx_d^2 ) \big) $  (thanks to~\eqref{eq.complexe1}). From~\eqref{eq.cas2a-1}, this implies that $\mathsf  d_{f,h}\psi_h=0$. Thus, $\Delta_{f,h}^{(0)}( \mathbb R_- , dx_d^2)\psi_h=0$ and hence,  $\psi_h=0$. This proves~\eqref{eq.cas2a-2}.
 \medskip
 
\noindent
 \textbf{Step 1b: The case $d=1$ and $\mu_{d}<0$ (i.e. $ \partial_{x_d}^2 f(0)<0$).}
Then, there exists $C>0$ such that $\partial_{x_d}^2 f\le -C$ in a neighborhood of $0$ in $\overline{\mathbb R_-}$. Thus, from~\eqref{eq.1bb-<}, for $h$ small enough, one has for all $\mathsf b\in  \Lambda^0 H^1_{\mbf T}( \mathbb R_- )$ such that  $\mathsf b$ is supported in $\mathsf B(0,h^{2/5  })$: $
Q_{f,h}^{\mathsf{Di},(0)}( \mathbb R_-, dx_d^2 )( \mathsf b)\ge  C h\,   \Vert \mathsf b \Vert_{L^2(\mathbb R_-)}^2$. 
Using Lemma~\ref{le.IMS-loca}, this inequality extends for all $\mathsf b\in  \Lambda^0 H^1_{\mbf T}( \mathbb R_- )$, i.e.  for $h$ small enough:
 \begin{equation}\label{eq.cas1a-1}
\forall \mathsf b\in  \Lambda^0 H^1_{\mbf T}( \mathbb R_-, dx_d^2 ),\ Q_{f,h}^{\mathsf{Di},(0)}( \mathbb R_- , dx_d^2)( \mathsf b)\ge  Ch\,  \Vert \mathsf b \Vert_{L^2(\mathbb R_-)}^2.
\end{equation}
Let us now prove that there exists $c>0$ such that  for $h$ small enough
\begin{equation}\label{eq.cas1a-2}
\Ran \, \pi_{[0,ch]}\big (\Delta_{f,h}^{\mathsf{Di},(1)}( \mathbb R_-, dx_d^2 )\big )=\Ker \Delta_{f,h}^{\mathsf{Di},(1)}( \mathbb R_-, dx_d^2 ) = {\rm Span}\, (e^{\frac fh}dx_d).
\end{equation}  
\noindent
From item $(ii)$ in \autoref{PotentialRd-}  and using  the same
arguments  as those to check~\eqref{eq.f-ext}, one has $f'>c$ on $[-\infty, -\ve]$ for some $\ve>0$. Hence, for $h>0$, $e^{\frac fh}\in \Lambda^0L^2( \mathbb R_-  )$ and from item $(i)$ in \autoref{PotentialRd-},  $e^{\frac fh}\in \Lambda^0H^2( \mathbb R_-  )$. Consequently  (see Proposition~\ref{pr.pr1-d}),   $e^{\frac fh}dx_d\in  \mathcal D(\Delta_{f,h}^{\mathsf{Di},(1)}( \mathbb R_-, dx_d^2 ) )$. 
Therefore, since for all $\mathsf a \, dx_d\in  \Lambda^1 H^1_{\mbf T}( \mathbb R_-  )$, $Q_{f,h}^{\mathsf{Di},(1)}( \mathbb R_-, dx_d^2 )(\mathsf a  \, dx_d )=   \Vert \mathsf d^*_{f,h} \mathsf a    \Vert_{L^2(\mathbb R_-)}^2$, it holds:
$$\Ker \Delta_{f,h}^{\mathsf{Di},(1)}( \mathbb R_-,dx_d^2 ) = {\rm Span}\, \big (e^{\frac fh}dx_d\big ).$$ 
\noindent
Let us now consider  an eigenform  $\psi_h\in \Ran\, \pi_{[0,Ch/2]}
\Delta_{f,h}^{\mathsf{Di},(1)}( \mathbb R _- ,dx_d^2 )  $  (where~$C$
is the constant appearing in~\eqref{eq.cas1a-1}). Then,  $\mathsf
d_{f,h}^* \psi_h \in \Ran\, \pi_{[0,Ch/2]}
\Delta_{f,h}^{\mathsf{Di},(0)}( \mathbb R_- ,dx_d^2 )  $  (thanks
to~\eqref{eq.complexe2}). From~\eqref{eq.cas1a-1}, this implies that
for $h$ small enough,  $\mathsf d^*_{f,h} \psi_h=0$. Thus $\psi_h\in
{\rm Span}\, (e^{\frac fh} dx_d)$. This proves~\eqref{eq.cas1a-2}.
 \medskip
 
 \noindent
 \textbf{Step 2: The case $d>1$,  proofs of
   Equations~\eqref{eq.last1-case1}  and~\eqref{eq.last2-case1}.}
 Remember that $\mathbb R^d_-$ is endowed with a metric tensor  $ \mbf
 g$ satisfying~\eqref{eq.metric-indep}.  Thanks to
 Lemma~\ref{le.IMS-loca}, it is enough to consider
 $$w\in \mathcal  D \big(\Delta^{\mathsf{Di},(q)}_{f,h}(\mathbb R_-^d,
 \mbf g)\big) \text{ with supp }  w\subset \mathsf B(0,h^{2/5}).$$
Following Lemma~\ref{le.splitting}, $w=\mathsf b + \mathsf a \wedge dx_d,$ where:$$\mathsf b =\sum \limits_{\substack{\mathsf J=\{j_1,\ldots,j_q\},\\ \, j_1<\ldots<j_q ,  \, d\notin \mathsf  J} } \mathsf b_{\mathsf J}\, dx_{\mathsf J} \text{ and } \mathsf a =\sum \limits_{\substack{\mathsf I=\{i_1,\ldots,i_{q-1}\},\\ \, i_1<\ldots<i_{q-1}, \, d\notin \mathsf I }} \mathsf a_{\mathsf I}\, dx_{\mathsf I}.$$
We will use many times that,  from \autoref{MetricRd-} and \eqref{eq.metric-indep}, 
  $\Vert w  \Vert_{L^2(\mathbb R^d_-  )}^2= \Vert \mathsf b  \Vert_{L^2(\mathbb R^d_-  )}^2+ \Vert \mathsf a \wedge dx_d  \Vert_{L^2(\mathbb R^d_-  )}^2$ (because $\mathsf b $ is orthogonal to $\mathsf a \wedge dx_d$) with  $\Vert \mathsf b  \Vert_{L^2(\mathbb R^d_-  )}^2\ge \mathsf c_1   \sum_{\mathsf I} \Vert \mathsf b_{\mathsf J}  \Vert_{L^2(\mathbb R^d_-  )}^2$ and $\Vert \mathsf a \wedge dx_d  \Vert_{L^2(\mathbb R^d_-  )}^2=\Vert \mathsf a  \Vert_{L^2(\mathbb R^d_-  )}^2  \ge \mathsf c_1 \sum_{\mathsf J} \Vert \mathsf a_{\mathsf I}  \Vert_{L^2(\mathbb R^d_-  )}^2$ (where  $\mathsf c_1 >0$  is as in Lemma~\ref{le.splitting}). 
 \medskip

\noindent
 \textbf{Step 2a: The case $d>1$ and $q= 0$, proof of~\eqref{eq.last1-case1}.}
Assume that $q= 0$ (i.e. $w=\mathsf b$ is a function). Then, using Lemma~\ref{le.splitting}, one has:
 \begin{align}\label{dec-Q2}
Q^{\mathsf{Di},(q)}_{f,h}(\mathbb R^{d}_-, \mbf g)(w)\ge \mathsf c_1\int_{x'\in \mathbb R^{d-1} }  Q^{\mathsf{Di},(0)}_{-f_-,h}(\mathbb R_-,dx_d^2)\big (\mathsf b(x',.)\big ) \mu(dx')-\mathsf c_2h^{6/5}  \Vert w \Vert_{L^2(\mathbb R^d_-  )}^2.
 \end{align}
 Equations~\eqref{eq.cas2a-2} and~\eqref{eq.cas1a-1} imply that that there exists $C>0$ (independent of $x'$) such that for all $h$ small enough and a.e. $x'\in \mathbb R^{d-1}$:
 $$ Q^{\mathsf{Di},(0)}_{-f_-,h}(\mathbb R_-,dx_d^2)\big (\mathsf b (x',.)\big ) \ge Ch  \Vert \mathsf b (x',.)  \Vert_{L^2(\mathbb R_- )}^2.$$
 Thus, using~\eqref{dec-Q2}, one obtains for all $w\in D\big( \Delta_{f,h}^{\mathsf{Di},(q)}( \mathbb R_-^d ) \big)$ supported in $\mathsf B(0,h^{2/5  })$:
 $$Q^{\mathsf{Di},(q)}_{f,h}(\mathbb R_-^d, \mbf g )(w)\ge Ch   \Vert w  \Vert_{L^2(\mathbb R^d_-  )}^2-\mathsf c_2h^{6/5}\,  \Vert w \Vert_{L^2(\mathbb R^d_-  )}^2\ge ch \Vert w  \Vert_{L^2(\mathbb R^d_-  )}^2.$$
 Together with Lemma~\ref{le.IMS-loca}, this proves~\eqref{eq.last1-case1}.
\medskip
 
\noindent
 \textbf{Step 2b: The case $d>1$, $q \ge 1$ and $\mu_{d}>0$, proof of~\eqref{eq.last2-case1}.}
The analysis above in dimension~1 (see~\eqref{eq.cas2a-1} and
\eqref{eq.cas2a-2})  implies that there exists $C>0$ (again,
independent of $x'$) such that for $h$ small enough, for all $\mathsf I$ and a.e. $x'\in \mathbb R^{d-1}$,
$$Q^{\mathsf{Di},(1)}_{-f_-,h}(\mathbb R_-,dx_d^2)  (\mathsf a_{\mathsf I}(x',.)dx_d  ) \ge Ch   \Vert \mathsf a_{\mathsf I}(x',.)   \Vert_{L^2(\mathbb R_-)}^2,$$
and for $h$ small enough, for all $\mathsf J$ and a.e. $x'\in \mathbb R^{d-1}$,
$$ Q^{\mathsf{Di},(0)}_{-f_-,h}(\mathbb R_-,dx_d^2)  (\mathsf b_{\mathsf J}(x',.)  ) \ge Ch   \Vert \mathsf b_{\mathsf J}(x',.)   \Vert_{L^2(\mathbb R_-)}^2.$$
Thus, using  Lemma~\ref{le.splitting},  for all $w\in D\big( \Delta_{f,h}^{\mathsf{Di},(q)}( \mathbb R_-^d, \mbf g ) \big)$ supported in $\mathsf B(0,h^{2/5  })$, one has:
$$Q^{\mathsf{Di},(q)}_{f,h}(\mathbb R_-^d, \mbf g )(w)\ge Ch  \Vert w   \Vert_{L^2(\mathbb R^d_-)}^2-\mathsf c_2h^{6/5}\,   \Vert w  \Vert_{L^2(\mathbb R^d_-  )}^2\ge ch \Vert w  \Vert_{L^2(\mathbb R^d_-  )}^2 .$$
Using~Lemma~\ref{le.IMS-loca}, this proves~\eqref{eq.last2-case1} when $q\ge 1$ and $\mu_{d}>0$.
\medskip
 
\noindent
 \textbf{Step 2c: The case $d>1$, $q \ge 1$, $\mu_{d}<0$ and the index
   of $0$ as a critical point of $f|_{\pa \mathbb R^{d}_-}$ is not $q-1$, proof of~\eqref{eq.last2-case1}.}
Using~\eqref{eq.cas1a-1}, there exists $C$ (again, independent of $x'$) such that for $h$ small enough, for all $\mathsf J$ and a.e. $x'\in \mathbb R^{d-1}$,
\begin{equation}\label{cas-lambda<0}
 Q^{\mathsf{Di},(0)}_{-f_-,h}(\mathbb R_-,dx_d^2)\big (\mathsf b_{\mathsf J}(x',.)\big ) \ge Ch   \Vert \mathsf b_{\mathsf J}(x',.)  \Vert_{L^2(\mathbb R_-)}^2.
 \end{equation}
Thus, using Lemma~\ref{le.splitting}, one has:
\begin{align}
\nonumber
Q^{\mathsf{Di},(q)}_{f,h}(\mathbb R_-^d, \mbf g)(w)&\ge Ch   \Vert \mathsf b    \Vert_{L^2(\mathbb R^d_-)}^2 +\mathsf c_1\int_{x_d\in \mathbb R_- }   Q^{(q-1)}_{f_+,h}(\mathbb R^{d-1}, \mbf gdx'^2)\big (\mathsf a(.,x_d)\big ) dx_d  \\
  \label{eq.cas-2}
&\quad -\mathsf c_2h^{6/5}\,   \Vert w  \Vert_{L^2(\mathbb R^d_-)}^2.
 \end{align}
Recall that $0$ is not a critical point of index $q-1$ of $f|_{\pa \mathbb R^{d}_-}$. Then, $0$ is not a critical point of index $q-1$ for $f_+$ (see~\eqref{eq.decf} and Definition~\ref{de.pre1}).  Since $\mathsf a$ is a $q-1$ form, this implies from~Proposition~\ref{pr.case-d-1} (applied  with  the metric tensor  $ \tilde {\mbf G}dx'\, ^2$), that there exists~$C$ (independent of $x_d$)  such that for $h$ small enough,
$$ Q^{(q-1)}_{f_+,h}(\mathbb R^{d-1}, \tilde {\mbf G}dx'^2)\big (\mathsf a(.,x_d)\big )  \ge Ch    \Vert \mathsf a(.,x_d)    \Vert_{L^2(\mathbb R^{d-1})}^2.$$
Therefore, using~\eqref{eq.cas-2}, for $h$ small enough, one has:
$$Q^{\mathsf{Di},(q)}_{f,h}(\mathbb R^{d}_-, \mbf g)(w)\ge Ch(    \Vert \mathsf b     \Vert_{L^2(\mathbb R^d_-)}^2+   \Vert \mathsf a     \Vert_{L^2(\mathbb R^{d}_-)}^2)-\mathsf c_2h^{6/5}\,   \Vert w  \Vert_{L^2(\mathbb R^d_-  )}^2  \ge ch   \Vert w    \Vert_{L^2(\mathbb R^d_-)}^2 .$$
Using~Lemma~\ref{le.IMS-loca}, this proves~\eqref{eq.last2-case1} when
$q\ge 1$, $\mu_{d}<0$ and the index of $0$ as a critical point of
$f|_{\pa \mathbb R^{d}_-}$ is not $q-1$.
\medskip
 
\noindent
 \textbf{Step 3: The case $d>1$, $q \ge 1$, $\mu_{d}<0$ and the index
   of  $0$ as a critical point of $f|_{\pa \mathbb R^{d}_-}$ is $q-1$, proof of~\eqref{eq.last3-case1}.}
Notice that in this case, the point~$0$ is a critical point of  $f_+$ of index $q-1$ (see Definition~\ref{de.pre1}). 

  \medskip
 
 \noindent
 \textbf{Step 3a: Proof of~\eqref{eq.last3-case1} when  $f=f_+-f_-$.}
Let us first prove~\eqref{eq.last3-case1}  for the  potential (see Definition~\ref{de.pre1}):
$$x=(x',x_d)\in \overline{\mathbb R^d_-}\mapsto  f_+(x') -f_-(x_d).$$
In view of Definition~\ref{de.pre1}  and~\eqref{eq.dec-gradient-phi}, $f_+ -f_-$   satisfies~\autoref{PotentialRd-}. Thus,  
Proposition~\ref{pr.pr1-d}, \eqref{eq.last1-case1},
and~\eqref{eq.last2-case1} are  valid for $f_+ -f_-$ and  $   \mbf
g$. Let us   consider  $\Psi_h \in \Ker \Delta_{f_+,h}^{(q-1)}( \mathbb
R^{d-1}, \tilde{\mbf G}dx'^2)$ with
$\Psi_h\neq 0$ (which exists thanks to item $(ii)$ in Proposition~\ref{pr.case-d-1}).   
Let us prove that there exist $c>0$ and $h_0>0$ such that for all $h\in (0,h_0)$,   \begin{align}
\label{f+f-}
 \Ran \, \pi_{[0,ch]}\big (\Delta_{f_+ -f_-,h}^{\mathsf{Di},(q)}( \mathbb R_-^d, { \mbf g} )\big )&=\Ker \Delta_{f_+ -f_-,h}^{\mathsf{Di},(q)}( \mathbb R_-^d, { \mbf g}  ) ={\rm Span} \,\big( \Psi_h \wedge e^{-\frac 1h f_- }dx_d\big).
\end{align}
 Let $c_0>0$ and $\phi_h\in \Ran \, \pi_{[0,c_0h]}\big(
 \Delta_{f_+-f_-,h}^{\mathsf{Di},(q)}( \mathbb R_-^d,    \mbf g)
 \big)$ where the constant $c_0$ is  strictly  smaller than the
 constants $C>0$ in~\eqref{eq.last1-case1} and~\eqref{eq.last2-case1}
 applied to $f=f_+ -f_-$. Hence, using~\eqref{eq.complexe1} and~\eqref{eq.complexe2}, one has for $h$ small enough
  $$\mathsf  d_{f,h}\phi_h= 0 \text{ and } \mathsf  d_{f,h}^*\phi_h=0.$$
 Thus, $Q_{f_+-f_-,h}^{\mathsf{Di},(q)}( \mathbb R_-^d,  \mbf g)(
 \phi_h)=0$. Using Lemma~\ref{le.splitting} with $f=f_+-f_-$ (in which
 case $\mathsf   e(h,\phi_h)=0$)  together with item $(ii)$ in
 Proposition~\ref{pr.case-d-1} and~\eqref{eq.cas1a-2} with $f=-f_-$ ?, one obtains   
\begin{equation}\label{eq.imply-Phi}
\phi_h\in {\rm Span} \,\big( \Psi_h \wedge e^{-\frac 1h f_- }dx_d\big).
\end{equation}
To prove \eqref{f+f-}, it   thus remains to show that:
\begin{equation}\label{=psi-1}
\Psi_h \wedge e^{-\frac 1h f_- }dx_d\in  \Ker \Delta_{f_+ -f_-,h}^{\mathsf{Di},(q)}( \mathbb R_-^d,   { \mbf g}  ).
\end{equation} 
It first holds, from Propositions~\ref{pr.case-d-1} and~\ref{pr.pr1-d},  and~\eqref{eq.f-ext},  $\Psi_h \wedge e^{-\frac 1h f_- }dx_d\in \mathcal  D(\Delta_{f_+ -f_-,h}^{\mathsf{Di},(q)}( \mathbb R_-^d,    \mbf g ))$ (recall that the boundary condition $\mbf t \mathsf d_{f_+-f-,h}^*w =0$ is equivalent to $\partial_{x_d}(e^{-\frac 1h (f_+-f_-)}\mathsf a_{\mathsf I})(x',0)=0$, see indeed~\eqref{eq.domd*}). 
Besided, one has:
$$\mathsf d_{f_+-f_-,h}\big( \Psi_h\wedge e^{-\frac 1h f_-}dx_d\big )=    \mathsf  d_{f_+,h}( \Psi_h)  \wedge e^{-\frac 1h f_-}dx_d=0. $$
 Moreover, from~\eqref{eq.metric-indep} (see also item $(i)$
 in~\autoref{MetricRd-}), it holds
$$\mathsf  d^{*, { \mbf g}}\big(
\Psi_h\wedge e^{-\frac 1h f_-}dx_d\big )= \mathsf
  d^{*,\tilde{ \mbf G}dx'^2}\big ( \Psi_h\big ) \wedge
e^{-\frac 1h f_-(x_d)}dx_d+  \Psi_h   \mathsf
  d^{*, dx_d^2}\big( e^{-\frac 1h f_-}dx_d\big).$$ 
where the superscript indicates in which metric the operator $d^*$ is built. 
And one can check that
\begin{align*}
\mbf i_{\nabla (f_+-f_-)} \big( \Psi_h\wedge e^{-\frac 1h f_-}dx_d\big )&= \mbf i_{\nabla_{x'} f_+} \big(\Psi_h\big)\wedge e^{-\frac 1h f_-(x_d)}dx_d-\Psi_h\wedge \mbf i_{\nabla_{x_d} f_-}\big( e^{-\frac 1h f_-}dx_d\big)\\
&=\mbf i_{\nabla_{x'} f_+} \big(\Psi_h\big)\wedge e^{-\frac 1h f_-(x_d)}dx_d+h\, \Psi_h \,  \partial_{x_d}\big( e^{-\frac 1h f_-(x_d)}\big).
   \end{align*}
   Therefore,  $  \mathsf d^*_{f_+-f_-,h}\big( \Psi_h\wedge e^{-\frac 1h f_-}dx_d\big )=0$. 
This proves~\eqref{=psi-1} and then~\eqref{f+f-}. This concludes  the proof of~\eqref{eq.last3-case1} when $f=f_+-f_-$.   
  \medskip
 
\noindent
 \textbf{Step 3b: Proof of~\eqref{eq.last3-case1} for a general
   function $f$.} 
%
Let  $c_0>0$ be  strictly smaller than the constants $C>0$ in~\eqref{eq.last1-case1} and~\eqref{eq.last2-case1}.
Assume that $\Ran \, \pi _{[0,c_0h]}\big( \Delta_{f
  ,h}^{\mathsf{Di},(q)}( \mathbb R_-^d, \mbf g  ) \big) \neq \{0\}$ and let us consider a $L^2( \mathbb R_-^d ) $-normalized   form $$\psi_h\in \Ran \, \pi _{[0,c_0h]}\big( \Delta_{f ,h}^{\mathsf{Di},(q)}( \mathbb R_-^d, \mbf g  ) \big).$$
 Then, using~\eqref{eq.complexe1} and~\eqref{eq.complexe2}, one has for $h$ small enough,
  $\mathsf  d_{f,h}\psi_h= 0$, $\mathsf  d_{f,h}^*\psi_h=0$, and thus $Q_{f ,h}^{\mathsf{Di},(q)}( \mathbb R_-^d, \mbf g  ) \psi_h=0$. 
 This proves that  for $h$ small enough:
 $$\Ran \, \pi_{[0,c_0h]}\big (\Delta_{f,h}^{\mathsf{Di},(q)}( \mathbb R_-^d,   {\mbf g}  )\big )=\Ker \Delta_{f,h}^{\mathsf{Di},(q)}( \mathbb R_-^d,   {\mbf g}   ).$$
 Let us now consider a partition of unity $(\chi_1,\chi_2)$  such that $\chi_1\in\mathcal  C_c^\infty (\overline {\mathbb R_-^d)}$, $\chi_1=1$ on $\mathsf B(0,1/2)$, supp$\chi_1\subset  \mathsf B(0,1)$,  and $\chi_1^2+\chi_2^2=1$. 
The IMS formula~\eqref{eq.IMS} 
implies that  there exists $C>0$ such that :
\begin{align}
\label{eq.ast1}
0&\ge   Q_{f,h}^{\mathsf{Di},(q)}( \mathbb R_-^d, \mbf g ) (\chi_1(h^{-\frac 25  }.)  \psi_h )+Q_{f,h}^{\mathsf{Di},(q)}(\mathbb R^d_-, \mbf g)(  \chi_2(h^{-\frac 25  }.) \psi_h ) -C h^{\frac  65}\,  \Vert   \psi_h   \Vert_{L^2( \mathbb R_-^d  )}^2.
\end{align}
From~\eqref{eq.chi_2-Q}, one has:
  $$Q_{f,h}^{\mathsf{Di},(q)}(\mathbb R^d_-, \mbf g)(  \chi_2(h^{-2/5  }.) \psi_h )\ge Ch^{4/5}  \Vert \chi_2(h^{-2/5  }.)  \psi_h  \Vert_{L^2( \mathbb R_-^d  )}^2.$$
   Thus,   it holds for $h>0$ small enough: 
\begin{equation}\label{norme-chi}
 \Vert \chi_2(h^{-2/5  }.)  \psi_h   \Vert_{L^2( \mathbb R_-^d )}^2 =O(h^{2/5  }) \text{ and }  \Vert \chi_1(h^{-2/5  }.)  \psi_h  \Vert_{L^2( \mathbb R_-^d)}^2=1+O  (h^{2/5  }  ),
\end{equation}
and then:
\begin{equation}\label{quad-chi1}
Q_{f,h}^{\mathsf{Di},(q)}(\mathbb R^d_-,   { \mbf g})  (\chi_1(h^{-2/5  }.) \psi_h )\le Ch^{\frac 65}    \Vert \chi_1(h^{-1/5  }.) \psi_h \Vert_{L^2( \mathbb R_-^d)}^2.
\end{equation}
Using~\eqref{eq.fried-e},~\eqref{eq.decf3} and~\eqref{eq.decf4}, and using twice Lemma~\ref{le.Green} (once for $f$ and once for $f_+-f_-$) together with the fact that $ \partial_{\mathsf n_{\mathbb R^d_-}}f= \partial_{\mathsf n_{\mathbb R^d_-}}(f_+-f_-)=0$ on $\pa \mathbb R^d_-$,  one has for $h$ small enough and for all $v\in \Lambda^qH^1_{\mbf T}(\mathbb R^d_-, \mbf g) $ supported in $\mathsf B(0,h^{2/5})$:
 \begin{align}
 \label{eq.=forme}
 Q_{f,h}^{\mathsf{Di},(q)}(\mathbb R^d_-,   { \mbf g})  (v)&=Q_{f_+-f_-,h}^{\mathsf{Di},(q)}(\mathbb R^d_-, { \mbf g})  (v) + O(h^{6/5})   \Vert v  \Vert_{L^2(\mathbb R^d_-)}^2.
 \end{align}
 Thus, one gets for~$h$ small enough:
\begin{align*}
Q_{f,h}^{\mathsf{Di},(q)}(\mathbb R^d_-,   { \mbf g})  ( \chi_1(h^{-1/5  }.)  \psi_h )&=Q_{f_+-f_-,h}^{\mathsf{Di},(q)}(\mathbb R^d_-,   { \mbf g})  ( \chi_1(h^{-1/5  }.)  \psi_h )  + O(h^{  6/5}) \big \Vert  \chi_1(h^{-1/5  }.)  \psi_h  \big \Vert_{L^2(\mathbb R^d_-)}^2.
\end{align*}
Then, using~\eqref{quad-chi1}, it holds for $h$ small enough:
$$Q_{f_+-f_-,h}^{\mathsf{Di},(q)}(\mathbb R^d_-, { \mbf g})  (\chi_1(h^{-2/5  }.) \psi_h  )=O(h^{6/5})\, \big\Vert \chi_1(h^{-1/5  }.)  \psi_h \big\Vert_{L^2( \mathbb R_-^d)}^2.$$
For all $(x',x_d)\in \mathbb R^d_-$, let us define (see~\eqref{f+f-}),
$$\Theta_h(x',x_d)=\kappa_h\, \Psi_h(x')\wedge e^{-\frac 1h f_-(x_d)}dx_d, \text{ where $\kappa_h= \Vert \Psi_h\wedge e^{-\frac 1h f_-}dx_d\Vert_{L^2(\mathbb R^d_-)}^{-1}$. }$$
Using Lemma~\ref{quadra} and~\eqref{f+f-} (choosing $c_0$ smaller than $c>0$ appearing in~\eqref{f+f-}), one has for $h$ small enough, 
\begin{align*} 
{\rm dist}_{L^2( \mathbb R_-^d  )}\, \big( \chi_1(h^{-2/5  }.) \psi_h ,{\rm Span}\, \Theta_h\big)&=\big  \Vert \pi_{[0,ch]}\big (\Delta_{f_+-f_-,h}^{\mathsf{Di},(q)}( \mathbb R_-^d ,   { \mbf g})\big )\big( \chi_1(h^{-2/5  }.)\phi_h \big)\big \Vert_{L^2(\mathbb R^d_-)}\\
&\le \frac{ Q_{f,h}^{\mathsf{Di},(q)}(\mathbb R^d_-,   { \mbf g})  ( \chi_1(h^{-1/5  }.)  \psi_h )^{\frac 12}}{\sqrt{ch}} \le Ch^{{1}/{10}}.
\end{align*}
Using in addition~\eqref{norme-chi}, one obtains   for $h$ small enough:
$${\rm dist}_{L^2( \mathbb R_-^d )}\, \big(  \psi_h ,{\rm Span}\, \Theta_h\big)\le   Ch^{ {1}/{10}} + C \Vert \chi_2(h^{-2/5  }.)  \psi_h   \Vert_{L^2( \mathbb R_-^d  )}^2 \le  2Ch^{ {1}/{10}}.$$ 
 Therefore, since we assumed that   $\Ran \, \pi _{[0,c_0h]}\big( \Delta_{f ,h}^{\mathsf{Di},(q)}( \mathbb R_-^d, \mbf g  ) \big) \neq \{0\}$, it holds for $h$ small enough:
$$
\dim \Ran \, \pi_{[0,c_0h]}\big (\Delta_{f,h}^{\mathsf{Di},(q)}( \mathbb R_-^d, \mbf g )\big )=1.
$$

It thus remains to prove   that $\Ran \, \pi _{[0,c_0h]}\big( \Delta_{f ,h}^{\mathsf{Di},(q)}( \mathbb R_-^d, \mbf g  ) \big) \neq \{0\}$. To this end, let us show that  $\Delta_{f,h}^{\mathsf{Di},(q)}( \mathbb R_-^d, \mbf g )$ admits an eigenvalue which is $o(h)$ when $h\to 0$. Using the IMS formula~\eqref{eq.IMS} together with the fact that 
$$Q_{f_+-f_-,h}^{\mathsf{Di},(q)}( \mathbb R_-^d , { \mbf g}) (   \Theta_h)=0,$$
 and 
 $$Q_{f_+-f_-,h}^{\mathsf{Di},(q)}( \mathbb R_-^d , { \mbf g}) ( \chi_2(h^{-2/5  }.)  \Theta_h) \ge ch^{\frac 45}\, \big\Vert  \chi_2(h^{-2/5  }.)  \Theta_h\big\Vert_{L^2( \mathbb R_-^d )}^2 ,$$
one obtains,   when $h\to 0$, 
$$Q_{f_+-f_-,h}^{\mathsf{Di},(q)}( \mathbb R_-^d , { \mbf g}) ( \chi_1(h^{-2/5  }.) \Theta_h)=O(h^{\frac 65}) \text{ and }  \big\Vert  \chi_1(h^{-2/5  }.)  \Theta_h\big\Vert_{L^2( \mathbb R_-^d)}^2=1+O(h^{2/5  }).$$
Using~\eqref{eq.=forme} and  the Min-Max principle,   $\Delta_{f,h}^{\mathsf{Di},(q)}( \mathbb R_-^d, \mbf g )$ admits an eigenvalue  of order $O(h^{\frac 65})$ when $h\to 0$. 
Therefore,   for $h$ small enough, $\dim \Ran \, \pi_{[0,c_0h]}\big (\Delta_{f,h}^{\mathsf{Di},(q)}( \mathbb R_-^d, \mbf g )\big )=1$. 
This proves~\eqref{eq.last3-case1} and concludes  the proof of Proposition~\ref{pr.prop-loca1}. 

\subsubsection{Proof of Proposition~\ref{pr.prop-loca1-case1}}\label{sec.prop-loca1-case1}


We are now in
position to prove Proposition~\ref{pr.prop-loca1-case1}.   
Let us first state a preliminary result.
 \begin{lemma}\label{le-3.3.7}
Let us assume that the space $\overline{\mathbb R^d_-}$ is endowed with a metric tensor   $ \mbf{g}$ satisfying~\autoref{MetricRd-}. Assume that $f$ satisfies item (i) in~\autoref{PotentialRd-}.  
Define for all $x'\in \mathbb R^{d-1}$, $ \tilde{\mbf G}(x')= {\mbf
  G}(x',0)$ and let us introduce the metric on  $\mathbb R^d_-$:
\begin{equation}\label{eq.m-tilde}
 \forall x=(x',x_d)\in \overline{\mathbb R^d_-},  \ \,   \tilde{\mbf g}(x)= \tilde{\mbf G}(x')dx'\, ^2+dx_d^2.
\end{equation} 
Let $ (\mbf g_1,\mbf g_2)=( \mbf g, \tilde{\mbf g})$ or $ (\mbf g_1,\mbf g_2)=(\tilde{\mbf g}, \mbf g)$. 
Then, there exist $C>0$, $c>0$,   $h_0>0$, $\eta:[0,h_0]\to\mathbb
R_+$,  such that for $h\in (0,h_0)$, $\eta(h)=O(h^{2/5  })$ and  for all $w\in \Lambda^q H^1_{\mbf T}( \mathbb R^d_- )$ such that ${\rm supp}\, w\subset \mathsf B(0,h^{2/5})$, it holds,
\begin{equation}\label{eq.3.3.7-L2}
 \Vert w  \Vert_{L^2(\mathbb R^d_-, \mbf g_2)} =    \Vert w  \Vert_{L^2(\mathbb R^d_-, \mbf g_1)} \big (1+\eta(h)\big ),
\end{equation}
and for all $q \in \{0, \ldots, d\}$,
\begin{equation}\label{eq.3.3.7-Q}
  Q_{f,h}^{\mathsf{Di},(q)}( \mathbb R^d_- ,  \mbf  g_1)( w)\ge C\,  Q_{f,h}^{\mathsf{Di},(q)}( \mathbb R^d_-, \mbf g_2 )( w)- Ch^{ 7/5}\,   \Vert w  \Vert_{L^2(\mathbb R^d_-, \mbf g_2)}^2.
\end{equation}
 \end{lemma}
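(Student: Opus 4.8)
The plan is to prove the two assertions separately, both resting on the following elementary observation. Since $\tilde{\mbf G}(x')=\mbf G(x',0)$ and $\mbf G$ is $\mathcal C^\infty$ with bounded derivatives (\autoref{MetricRd-}), one has on the ball $\mathsf B(0,h^{2/5})$ the bounds $|\mbf g-\tilde{\mbf g}|=|\mbf G(x',x_d)-\mbf G(x',0)|\le C|x_d|\le Ch^{2/5}$ and $|\nabla(\mbf g-\tilde{\mbf g})|=O(1)$ (only the $x_d$-derivative fails to be small); by uniform ellipticity the same $O(h^{2/5})$-closeness holds for the inverse metrics, the Riemannian volume densities, and the fibrewise inner products on $\Lambda^q$. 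Moreover $\mbf g$ and $\tilde{\mbf g}$ agree on $\{x_d=0\}$, so that all boundary contributions coincide for the two metrics. Finally I will use that $\nabla f=O(h^{2/5})$ on $\mathsf B(0,h^{2/5})$ (here one uses that $0$ is a critical point of $f$, as in the situations where this lemma is invoked, so $|\nabla f(x)|\le\|\hess f\|_\infty|x|$); this is precisely why the localisation scale is tied to $h^{2/5}$.

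For \eqref{eq.3.3.7-L2}: for any $q$-form $v$ supported in $\mathsf B(0,h^{2/5})$, the pointwise quantity $\langle v,v\rangle_{\mbf g(x)}\,d\mathrm{vol}_{\mbf g}(x)$ is a smooth function of the (uniformly elliptic) metric whose differential in the metric is bounded by $C\langle v,v\rangle\,d\mathrm{vol}$; hence $\bigl|\langle v,v\rangle_{\mbf g_1}d\mathrm{vol}_{\mbf g_1}-\langle v,v\rangle_{\mbf g_2}d\mathrm{vol}_{\mbf g_2}\bigr|\le C|\mbf g_1-\mbf g_2|\,\langle v,v\rangle_{\mbf g_2}d\mathrm{vol}_{\mbf g_2}\le Ch^{2/5}\langle v,v\rangle_{\mbf g_2}d\mathrm{vol}_{\mbf g_2}$ on the ball. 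Integrating gives $\bigl|\,\|v\|_{L^2(\mbf g_1)}^2-\|v\|_{L^2(\mbf g_2)}^2\,\bigr|\le Ch^{2/5}\|v\|_{L^2(\mbf g_2)}^2$, and taking square roots yields \eqref{eq.3.3.7-L2}. I keep this estimate not only for $v=w$ but also for the $(q+1)$-form $\mathsf d_{f,h}w$ and the $(q-1)$-form $\mathsf d^{*,\mbf g_i}_{f,h}w$, which are again supported in the ball since $\mathsf d_{f,h}$ and $\mathsf d^{*,\mbf g_i}_{f,h}$ are differential operators.

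For \eqref{eq.3.3.7-Q}: use $Q_{f,h}^{\mathsf{Di},(q)}(\mathbb R^d_-,\mbf g)(w)=\|\mathsf d_{f,h}w\|_{L^2(\mbf g)}^2+\|\mathsf d^{*,\mbf g}_{f,h}w\|_{L^2(\mbf g)}^2$ (Proposition~\ref{pr.pr1-d}). As $\mathsf d_{f,h}=e^{-f/h}h\mathsf d\,e^{f/h}$ does not see the metric, the previous paragraph applied to $\mathsf d_{f,h}w$ gives $\|\mathsf d_{f,h}w\|_{L^2(\mbf g_1)}^2\ge\tfrac12\|\mathsf d_{f,h}w\|_{L^2(\mbf g_2)}^2$ for $h$ small. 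For the second term write $\mathsf d^{*,\mbf g}_{f,h}=h\mathsf d^{*,\mbf g}+\mathbf i_{\nabla_{\mbf g}f}$, so that $\mathsf d^{*,\mbf g_1}_{f,h}w-\mathsf d^{*,\mbf g_2}_{f,h}w=h(\mathsf d^{*,\mbf g_1}-\mathsf d^{*,\mbf g_2})w+\mathbf i_{\nabla_{\mbf g_1}f-\nabla_{\mbf g_2}f}w$. The codifferential is first order, its top-order coefficients depending algebraically on the metric and its zeroth-order coefficients on the metric and its first derivatives; by the first paragraph, $(\mathsf d^{*,\mbf g_1}-\mathsf d^{*,\mbf g_2})w=O(h^{2/5})\nabla w+O(1)w$, while $\nabla_{\mbf g_1}f-\nabla_{\mbf g_2}f=(\mbf g_1^{-1}-\mbf g_2^{-1})\mathsf d f=O(h^{2/5})O(h^{2/5})$, whence $\mathbf i_{\nabla_{\mbf g_1}f-\nabla_{\mbf g_2}f}w=O(h^{4/5})w$. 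Thus $\mathsf d^{*,\mbf g_1}_{f,h}w-\mathsf d^{*,\mbf g_2}_{f,h}w=O(h^{7/5})\nabla w+O(h)w$. Now invoke the Gaffney–Friedrichs inequality $\|\nabla w\|_{L^2(\mbf g_2)}^2\le C\bigl(\|\mathsf d w\|_{L^2(\mbf g_2)}^2+\|\mathsf d^{*,\mbf g_2}w\|_{L^2(\mbf g_2)}^2+\|w\|_{L^2(\mbf g_2)}^2\bigr)$ for $w\in\Lambda^q H^1_{\mbf T}$ (valid uniformly on $\mathbb R^d_-$, cf.~\cite{GSchw}) together with $h^2(\|\mathsf d w\|^2+\|\mathsf d^{*,\mbf g_2}w\|^2)\le C\bigl(Q_{f,h}^{\mathsf{Di},(q)}(\mbf g_2)(w)+h^{4/5}\|w\|^2\bigr)$, which follows from $\mathsf d_{f,h}w=h\mathsf d w+\mathsf d f\wedge w$, $\mathsf d^{*,\mbf g_2}_{f,h}w=h\mathsf d^{*,\mbf g_2}w+\mathbf i_{\nabla_{\mbf g_2}f}w$ and $\|\mathsf d f\|_\infty=O(h^{2/5})$ on the ball. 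These combine to $\|\mathsf d^{*,\mbf g_1}_{f,h}w-\mathsf d^{*,\mbf g_2}_{f,h}w\|_{L^2(\mbf g_2)}^2\le Ch^{4/5}Q_{f,h}^{\mathsf{Di},(q)}(\mbf g_2)(w)+Ch^{7/5}\|w\|_{L^2(\mbf g_2)}^2$; then, using $(a-b)^2\ge\tfrac12 a^2-b^2$ and converting the $\mbf g_2$-norm to a $\mbf g_1$-norm via \eqref{eq.3.3.7-L2}, one gets $\|\mathsf d^{*,\mbf g_1}_{f,h}w\|_{L^2(\mbf g_1)}^2\ge\tfrac13\|\mathsf d^{*,\mbf g_2}_{f,h}w\|_{L^2(\mbf g_2)}^2-Ch^{4/5}Q_{f,h}^{\mathsf{Di},(q)}(\mbf g_2)(w)-Ch^{7/5}\|w\|_{L^2(\mbf g_2)}^2$. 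Adding the two summands and choosing $h$ small enough that $Ch^{4/5}Q_{f,h}^{\mathsf{Di},(q)}(\mbf g_2)(w)$ is absorbed gives \eqref{eq.3.3.7-Q} with, say, $C=\tfrac14$; the whole argument is symmetric under $\mbf g_1\leftrightarrow\mbf g_2$, so it covers both orderings in the statement.

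The main obstacle is that passing from $\mbf g$ to $\tilde{\mbf g}$ does \emph{not} perturb the codifferential by a uniformly small amount: the coefficient carrying $\partial_{x_d}\mbf G$ is $O(1)$, not $O(h^{2/5})$, precisely because $\tilde{\mbf G}$ has been frozen to be $x_d$-independent. What rescues the estimate is that this $O(1)$ discrepancy enters only at zeroth order in $w$ and is further multiplied by $h$ inside $\mathsf d^{*,\mbf g}_{f,h}=h\mathsf d^{*,\mbf g}+\mathbf i_{\nabla_{\mbf g}f}$, so it contributes only an $O(h)w$ term, i.e.\ an $O(h^2)\|w\|^2\le O(h^{7/5})\|w\|^2$ error; the complementary $O(h^{2/5})\nabla w$ term is small in coefficient but of the same differential order as $w$ and must be routed through the Gaffney inequality, the delicate point there being to keep the spurious multiple of $Q_{f,h}^{\mathsf{Di},(q)}(\mbf g_2)(w)$ of size $o(1)$ so it can be absorbed into the leading term.
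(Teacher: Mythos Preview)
Your argument is correct and follows essentially the same perturbative route as \cite[Lemma~3.3.7]{helffer-nier-06}, to which the paper simply defers: compare the two quadratic forms term by term, exploit that $\mbf g$ and $\tilde{\mbf g}$ differ by $O(h^{2/5})$ on $\mathsf B(0,h^{2/5})$ and coincide on $\{x_d=0\}$, and absorb the first-order discrepancy in the codifferential via a Gaffney-type bound on $\|\nabla w\|$.

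Two minor points. First, after adding the two contributions, the zeroth-order part of $\mathsf d^{*,\mbf g_1}_{f,h}w-\mathsf d^{*,\mbf g_2}_{f,h}w$ is $O(h^{4/5})w$, not $O(h)w$: the interior-product term $\mathbf i_{\nabla_{\mbf g_1}f-\nabla_{\mbf g_2}f}w=O(h^{4/5})w$ dominates the $O(h)w$ coming from $h(\mathsf d^{*,\mbf g_1}-\mathsf d^{*,\mbf g_2})$. This slip is harmless, since $(h^{4/5})^2=h^{8/5}\le h^{7/5}$ and your subsequent bound $Ch^{4/5}Q(\mbf g_2)(w)+Ch^{7/5}\|w\|^2$ still holds. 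Second, you use $|\mathsf d f|=O(h^{2/5})$ on the ball, i.e.\ that $0$ is a critical point of $f$; this is item (ii) of \autoref{PotentialRd-}, not item (i), so strictly speaking you are proving the lemma under a hypothesis slightly stronger than the one displayed. As you correctly observe, item (ii) is satisfied in every place where the lemma is applied in the paper (the proof of Proposition~\ref{pr.prop-loca1-case1}), so this does not affect any downstream result.
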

Equation~\eqref{eq.3.3.7-L2} is a simple consequence of the two metric
tensors are smooth and coincide at
$x_d=0$. Equation~\eqref{eq.3.3.7-Q} is easily obtained following the
proof of~\cite[Lemma 3.3.7]{helffer-nier-06}. 
 \noindent
Let us now prove Proposition~\ref{pr.prop-loca1-case1}.
\begin{proof}
Let us assume that~\autoref{MetricRd-} and \autoref{PotentialRd-} are  satisfied.
The proof is divided into three steps. 
\medskip

\noindent
 \textbf{Step 1: Proofs of~\eqref{eq.last1-case1} and~\eqref{eq.last2-case1}.} 
Let us recall that according to Lemma~\ref{le.IMS-loca}, it is sufficient to prove~\eqref{eq.last1-case1} and~\eqref{eq.last2-case1} for  forms $w\in  \Lambda^q H^1_{\mbf T}( \mathbb R^d_-,  \mbf{g} )$ supported in $\mathsf B(0,h^{2/5  })$. 
Because the metric tensor~$\tilde{\mbf g}$ defined
in~\eqref{eq.m-tilde} satisfies~\autoref{MetricRd-}
Proposition~\ref{pr.prop-loca1} implies that~\eqref{eq.last1-case1}
and \eqref{eq.last2-case1} hold
for~$\tilde{\mbf g}$ and~$f$. From those estimates
and~\eqref{eq.3.3.7-L2} and~\eqref{eq.3.3.7-Q}, one gets~\eqref{eq.last1-case1} and~\eqref{eq.last2-case1} for
 ${\mbf g}$ and $f$. 

\medskip
 
\noindent
 \textbf{Step 2: Proof of~\eqref{eq.last3-case1}.}
  Let us   assume that $0$ is  a critical point of index $q-1$ of
  $f|_{\pa \mathsf M}$ and $\mu_{d}<0$. Let $c>0$   be  strictly
  smaller than  the constants $C>0$ in~\eqref{eq.last1-case1}
  and~\eqref{eq.last2-case1}. Assume that $\Ran \, \pi_{[0,ch]}\big(
  \Delta_{f,h}^{\mathsf{Di},(q)}( \mathbb R_-^d,  \mbf g ) \big)\neq
  \{0\}$ and let us   consider a $L^2( \mathbb R_-^d,  \mbf g ) $-normalized form $\phi_h\in \Ran \, \pi_{[0,ch]}\big( \Delta_{f,h}^{\mathsf{Di},(q)}( \mathbb R_-^d,  \mbf g ) \big)$. This implies, using~\eqref{eq.complexe1},~\eqref{eq.complexe2},  and the results of {Step 1}, that
  $\mathsf  d_{f,h}\phi_h= 0 \text{ and } \mathsf d_{f,h}^*\phi_h=0$.
  Thus, it holds $ Q_{f,h}^{\mathsf{Di},(0)}( \mathbb R^d_- ,\mbf g)( \phi_h)=0$.
Consequently,   for  $h>0$ small enough, 
  $$\Ran \, \pi_{[0,ch]}\big (\Delta_{f,h}^{\mathsf{Di},(q)}( \mathbb R^d_- ,\mbf g )\big )=\Ker \Delta_{f,h}^{\mathsf{Di},(q)}( \mathbb R^d_- ,\mbf g ).$$ 
Now, let   $(\chi_1,\chi_2)$  be a partition of unity such that $\chi_1\in \mathcal C_c^\infty (\overline {\mathbb R_-^d)}$, $\chi_1=1$ on $\mathsf B(0,1/2)$, supp$\chi_1\subset  \mathsf B(0,1)$, and  $\chi_1^2+\chi_2^2=1$.
Using the IMS formula~\eqref{eq.IMS},  there exists $C>0$ such that :
$$ 
0 \ge   Q_{f,h}^{\mathsf{Di},(q)}( \mathbb R^d_- ,\mbf g) (\chi_1(h^{-2/5  }.)   \phi_h)+Q_{f,h}^{\mathsf{Di},(q)}(\mathbb R^d_- ,\mbf g)(  \chi_2(h^{-2/5  }.)  \phi_h) -C h^{6/5}\, \big \Vert   \phi_h\big \Vert_{L^2( \mathbb R^d_- ,\mbf g)}^2.
$$
 In addition, let us recall that (see indeed~\eqref{eq.chi_2-Q}),
   $$Q_{f,h}^{\mathsf{Di},(q)}(\mathbb R^d_- ,\mbf g)(  \chi_2(h^{-2/5  }.) \phi_h)\ge Ch^{4/5}  \big \Vert \chi_2(h^{-2/5  }.)  \phi_h\big \Vert_{L^2( \mathbb R^d_- ,\mbf g )}^2.$$
Therefore, one obtains in the limit $h\to 0$
\begin{equation}\label{norme-chi-b}
\big\Vert \chi_2(h^{-2/5  }.)  \phi_h\big \Vert_{L^2( \mathbb R^d_- ,\mbf g )}^2 =O (h^{2/5  }) \text{ and } \big\Vert \chi_1(h^{-2/5  }.)  \phi_h\big\Vert_{L^2( \mathbb R^d_- ,\mbf g )}^2=1+O  (h^{2/5  }  ),
\end{equation} 
and
\begin{equation}\label{quad-chi1-b}
Q_{f,h}^{\mathsf{Di},(q)}(\mathbb R^d_- ,\mbf g )  (\chi_1(h^{-2/5  }.)\phi_h)= O  (h^{6/5}  ).
\end{equation}
Then, using~\eqref{eq.3.3.7-Q} with $\mbf g_1=\mbf g$ and $\mbf g_2=\tilde{\mbf g}$, one gets for all~$h$  small enough:
$$
 h^{\frac 65} \ge   C\,  Q_{f,h}^{\mathsf{Di},(q)}( \mathbb R^d_-,\tilde{\mbf g} )(\chi_1(h^{-2/5  }.)\phi_h)- Ch^{\frac 75}\,  \big \Vert \chi_1(h^{-2/5  }.)\phi_h\big \Vert_{L^2(\mathbb R^d_-,\tilde{\mbf g})}^2.
$$
Notice that  from~\eqref{eq.3.3.7-L2} and~\eqref{norme-chi-b}, one has for $h$ small enough
$  \Vert \chi_1(h^{-2/5  }.)\phi_h\big \Vert_{L^2(\mathbb R^d_-,\tilde{\mbf g})}^2=1+O\big (h^{2/5  }\big ).$
Therefore, one obtains :
\begin{align}\label{eq.fin-preuveQ}
  Q_{f,h}^{\mathsf{Di},(q)}( \mathbb R^d_-,\tilde{\mbf g} )(\chi_1(h^{-2/5  }.)\phi_h) \le C \, h^{\frac 65}  \,  \big \Vert \chi_1(h^{-2/5  }.)\phi_h\big \Vert_{L^2(\mathbb R^d_-,\tilde{\mbf g})}^2.
\end{align} 
Recall (since $f$ and $\tilde{\mbf g}$ satisfy~\autoref{PotentialRd-} and~\autoref{MetricRd-}) that according to~Proposition~\ref{pr.prop-loca1}, there exist $c_0>0$ and $h_0>0$ such that for all $h\in (0,h_0)$, there exists a $L^2(\mathbb R^d_-,\tilde{\mbf g})$-normalized   $q$-form~$\Phi_h$ such that 
\begin{equation}\label{eq.PHIH}
\Ran \, \pi_{[0,c_0h]}\big (\Delta_{f,h}^{\mathsf{Di},(q)}( \mathbb R_-^d,\tilde{\mbf g} )\big )={\rm Span}\,  ( \Phi_h )=\Ker \Delta_{f,h}^{\mathsf{Di},(q)}( \mathbb R_-^d,\tilde{\mbf g} ).
\end{equation}
 Using Lemma~\ref{quadra} and~\eqref{eq.fin-preuveQ}, one obtains that for $h$ small enough: 
\begin{align*} 
{\rm dist}_{L^2( \mathbb R_-^d,\tilde{\mbf g} )}\, \big( \chi_1(h^{-2/5  }.)\phi_h  ,\, {\rm Span}\,  \Phi_h \big)= O(h^{\frac{1}{10}}).
\end{align*}
This implies together with~\eqref{norme-chi-b} and~\eqref{eq.3.3.7-L2}, and since we assume that $\Ran \, \pi_{[0,ch]}\big( \Delta_{f,h}^{\mathsf{Di},(q)}( \mathbb R_-^d,  \mbf g ) \big)\neq \{0\}$,   that   for $h$ small enough:
$$\dim \Ran \, \pi_{[0,ch]}\big (\Delta_{f,h}^{\mathsf{Di},(q)}( \mathbb R^d_- ,\mbf g )\big )= 1.$$

It remains to prove that  $\Ran \, \pi_{[0,ch]}\big( \Delta_{f,h}^{\mathsf{Di},(q)}( \mathbb R_-^d,  \mbf g ) \big)\neq \{0\}$. To this end, let us prove that  $
\Delta_{f,h}^{\mathsf{Di},(q)}( \mathbb R^d_- ,\mbf g )$ admits an eigenvalue of order $o(h)$ when $h\to 0$. 
Let us consider a $L^2(\mathbb R^d_-,\tilde{\mbf g})$-normalized   $q$-form $\Phi_h$ which satisfies~\eqref{eq.PHIH}.  
Recall that from~\eqref{norme-chi} and~\eqref{quad-chi1}, one has when $h\to 0$:
$$
\big\Vert \chi_2(h^{-2/5  }.) \Phi_h\big \Vert_{L^2( \mathbb R_-^d,\tilde{\mbf g} )}^2 =O(h^{2/5  }) \text{ and } \big\Vert \chi_1(h^{-2/5  }.)  \Phi_h\big\Vert_{L^2( \mathbb R_-^d,\tilde{\mbf g} )}^2=1+O\big (h^{2/5  }\big ).
$$
and 
$$Q_{f,h}^{\mathsf{Di},(q)}( \mathbb R^d_-,\tilde{\mbf g} )(\chi_1(h^{-2/5  }.)\Phi_h)\le C h^{ 6/5} \, \big \Vert \chi_1(h^{-2/5  }.)\Phi_h\big \Vert_{L^2(\mathbb R^d_-,\tilde{\mbf g})}^2.$$
From~\eqref{eq.3.3.7-L2} and~\eqref{eq.3.3.7-Q} (applied with $\mbf g_1=\tilde{\mbf g}$ and $\mbf g_2=\mbf g$), one deduces that: 
\begin{equation}\label{bes2}
  Q_{f,h}^{\mathsf{Di},(q)}( \mathbb R^d_- ,\mbf g )(\chi_1(h^{-2/5  }.)\Phi_h)\le C\,   h^{6/5}\, \big \Vert \chi_1(h^{-2/5  }.)\Phi_h\big \Vert_{L^2(\mathbb R^d_- ,\mbf g)}^2.
\end{equation}
Then, using the Min-Max principle, for $h$ small enough,  $
\Delta_{f,h}^{\mathsf{Di},(q)}( \mathbb R^d_- ,\mbf g )$ admits an eigenvalue of order $ h^{ 6/5}$ when $h\to 0$. Thus, $\Ran \, \pi_{[0,ch]}\big( \Delta_{f,h}^{\mathsf{Di},(q)}( \mathbb R_-^d,  \mbf g ) \big)\neq \{0\}$. 
This ends the proof of~\eqref{eq.last3-case1}. 
\medskip

\noindent
 \textbf{Step 3: Proof  of~\eqref{eq.norme-case1}.}  
Let  $\Psi_h\in \Ker \Delta_{f,h}^{\mathsf{Di},(q)}( \mathbb R^d_- ,\mbf g )$ such that $ \Vert \Psi_h  \Vert_{L^2(\mathbb R^d_- ,\mbf g)}=1$. Let   $\chi: \overline{\mathbb R^{d}_-}\to [0,1]$ be a $\mathcal C^\infty$ function  supported in a neighborhood of $0$ which equals~$1$ in a neighborhood of $0$ in $\overline{\mathbb R^{d}_-}$.  Let us  define $\tilde \chi=\sqrt{1-\chi^2}$. Then,  using Lemma~\ref{le.Green} (and the fact that $\partial_{\mathsf n_{\mathbb R^d_-}}f(x',0)=0$ for all $x'\in \mathbb R^{d-1}$), since there exists $c_1>0$ such that $\inf_{\text{supp}\tilde \chi}\vert \nabla f\vert \ge c_1 $,  it holds  
$$Q_{f,h}^{\mathsf{Di},(q)}( \mathbb R^d_- ,\mbf g )(\tilde \chi \Psi_h)\ge C \big \Vert \tilde \chi \Psi_h\big \Vert_{L^2(\mathbb R^d_- ,\mbf g)}^2$$
Using in addition the fact that  $Q_{f,h}^{\mathsf{Di},(q)}( \mathbb R^d_-,\mbf g )( \Psi_h)=0$ together with  the IMS formula~\eqref{eq.IMS}, one obtains~\eqref{eq.norme-case1} using a similar reasoning as in~\eqref{norme-chi-b} and~\eqref{quad-chi1-b}. This ends the proof of Proposition~\ref{pr.prop-loca1-case1}.
\end{proof}


 \subsection{Proof of Theorem~\ref{thm.main1}}\label{sec.numb-small}
Let us assume that~\autoref{B} holds. 
For a fixed $q\in \{0,\ldots,d\}$, let us  consider the operator $\Delta_{f,h}^{\mathsf{Di},(q)}(\mathsf M,\mbf g_{\mathsf M} )$. We will identify the number of   eigenvalues  smaller than $\mathsf c h$ for 
 this operator,  for some $\mathsf c>0$ and for all
 sufficiently small $h$. 
 
 According to the analysis made in~\cite[Chapter~3]{helffer-nier-06}
 and~\cite{helffer-sjostrand-85}, it is already known that one can build linearly
 independent quasi-modes associated with 
 the  (generalized) critical points in $\mathsf U_q^{\mathsf M}\cup  \mathsf U^{\pa \mathsf M,1}_q$ which thus yield 
 at least  $\mathsf m_q^{\mathsf M}+\mathsf m_q^{\pa  \mathsf M,1}$
 small   eigenvalues.  The main novelty compared
 to~\cite[Chapter~3]{helffer-nier-06}  is that we also have to consider critical
 points of $f$ located on $\partial \mathsf M$:
\begin{equation}\label{eq.defB}
 \mathsf B^{\pa \mathsf M,2}:=\big \{z \in \pa \mathsf M, \, \vert \nabla f(z)\vert =0 \}.
\end{equation}  
 In the proof, we will thus consider all the critical points in
$$ \mathsf P_q= \mathsf U_q^{\mathsf M}\cup  \mathsf U^{\pa \mathsf M,1}_q\cup  \mathsf B^{\pa \mathsf M,2}$$ 
 as potential candidates to generate small eigenvalues, and we will prove 
 that only those critical points in $\mathsf Q_q \subset \mathsf P_q$ where
$$\mathsf Q_q= \mathsf U_q^{\mathsf M}\cup  \mathsf U^{\pa \mathsf M,1}_q\cup  \mathsf U^{\pa \mathsf M,2}_q$$
 will actually contribute to the spectrum of
 $\Delta_{f,h}^{\mathsf{Di},(q)}(\mathsf M,\mbf g_{\mathsf M} )$ in
 $[0,\mathsf ch]$.
%

By assumption~\autoref{B}, for all $z\in  \mathsf B^{\pa \mathsf M,2}$, 
$ \partial_{\mathsf n_{\mathsf M}}f=0$ in a neighborhood of $z$
in~$\pa \mathsf M$. Let us thus introduce a family $(\mathsf
V_y)_{y\in \mathsf P_q}$ of neighborhoods  in
$\overline{\mathsf M}$ of $y\in  \mathsf P_q$ such that:
 \begin{itemize}
\item For all $y\in  \mathsf U^{\mathsf M}_q$, $\overline{\mathsf V_y}\subset \mathsf M$ and $y$ is the only critical point of $f$ in $\overline{\mathsf V_y}$.
\item  For all $y\in  \mathsf U^{\pa \mathsf M,1}_q$, $
  \partial_{\mathsf n_{\mathsf M}}f>0$ on $\pa \mathsf M \cap
  \overline{\mathsf V_y}$ and $y$ is the only critical point of  $f|_{\pa \mathsf M}$ in $\pa \mathsf M \cap  \overline{\mathsf V_y}$. 
\item  For all $y\in  \mathsf B^{\pa \mathsf M,2}$, $ \partial_{\mathsf n_{\mathsf M}}f=0$ on $\pa \mathsf M \cap\overline {\mathsf V_y}$ 
and $y$ is the only critical point of  $f$ in $\overline {\mathsf V_y}$. 
\item The  sets $(\overline{\mathsf V_y})_{y\in \mathsf P_q}$ are pairwise disjoint. 
\end{itemize}
The neighborhoods $(\mathsf V_y)_{y\in \mathsf P_q}$ may be shrunk in the
following in order to introduce local coordinates on $\mathsf V_y$, this will be made precise below.  
In order to use an  IMS localization formula, one now introduces a partition of unity $(\chi_y)_{y\in \mathsf P_q}\cup \tilde \chi$ which is adapted with the neighborhoods $(\mathsf V_y)_{y\in \mathsf P_q}$: for all $y\in \mathsf P_q$, $\chi_y: \overline{\mathsf M}\to [0,1]$ is $\mathcal C^\infty$, supported in $\mathsf V_y$, and $\chi_y=1$ in a neighborhood of $y$ in $\overline{\mathsf M} $. 
Then, one defines: 
$$\tilde \chi :=\sqrt{ 1-\sum_{y\in \mathsf P_q} \chi_y^2},$$
so that on $\overline{\mathsf M}$, $\tilde \chi ^2+\sum_{y\in \mathsf P_q} \chi_y^2=1.$
 Let $w\in  \Lambda^qH^1_{\mbf T}(\mathsf M,\mbf g_{\mathsf M} )$. 
The IMS formula~\cite{CFKS,helffer-nier-06}  reads:
  \begin{align}
\nonumber
Q_{f,h}^{\mathsf{Di},(q)}(\mathsf M,\mbf g_{\mathsf M} )(   w ) &=\sum_{y\in \mathsf P_q} Q_{f,h}^{\mathsf{Di},(q)}( \mathsf M ,\mbf g_{\mathsf M} )(\chi_y w)    -\sum_{y\in \mathsf P_q}  h^2\,\big \Vert w\,  \nabla \chi_y   \big \Vert_{L^2( \mathbb R_-^d,\mbf g_{\mathsf M}  )}^2\\
\nonumber
&\quad + Q_{f,h}^{\mathsf{Di},(q)}( \mathsf M ,\mbf g_{\mathsf M} )(\tilde \chi  w)    - h^2\,\big \Vert w\, \nabla  \tilde \chi   \big \Vert_{L^2( \mathbb R_-^d,\mbf g_{\mathsf M}  )}^2.
\end{align}
Thus, there exists $C>0$ such that 
\begin{align}
\label{eq.IMSfin}
Q_{f,h}^{\mathsf{Di},(q)}(\mathsf M,\mbf g_{\mathsf M} )(   w ) &\ge Q_{f,h}^{\mathsf{Di},(q)}( \mathsf M ,\mbf g_{\mathsf M} )(\tilde \chi  w)   -C\,   h^2\, \Vert w    \Vert_{L^2( \mathbb R_-^d,\mbf g_{\mathsf M}  )}^2 +\sum_{y\in \mathsf P_q} Q_{f,h}^{\mathsf{Di},(q)}( \mathsf M ,\mbf g_{\mathsf M} )(\chi_y w) .
\end{align}
\noindent
To prove Theorem~\ref{thm.main1}, we will  study   separately the quantities $Q_{f,h}^{\mathsf{Di},(q)}( \mathsf M ,\mbf g_{\mathsf M} )(\tilde \chi  w)$ and $Q_{f,h}^{\mathsf{Di},(q)}( \mathsf M ,\mbf g_{\mathsf M} )(\chi_y w)$ for $y\in \mathsf P_q$. 
The latter will be estimated using
Proposition~\ref{pr.prop-loca1-case1}, after having introduced
coordinates on $\mathsf V_y$ in which the metric has the block structure
assumed in item $(i)$ of \autoref{MetricRd-}, and $f$ satisfies~\eqref{eq.decf}. The proof of Theorem~\ref{thm.main1} is divided into four steps. 

\medskip

\noindent
\textbf{Step 1: Results from~\cite[Chapter 3]{helffer-nier-06} and~\cite{helffer-sjostrand-85}. }

 \noindent
\textbf{Step 1a: Quasi-modes associated with points in $  \mathsf
  U_q^{\mathsf M}\cup  \mathsf U^{\pa \mathsf M,1}_q$.}  Let     $y\in
\mathsf U_q^{\mathsf M}\cup  \mathsf U^{\pa \mathsf M,1}_q$. Let us
introduce the set $\mathsf E$ defined as follows:
 $$\text{$\mathsf E=\mathbb R^d$ if $ y\in  \mathsf U_q^{\mathsf
     M} \quad$ and  $\quad\mathsf E=\mathbb R^d_-$   if $y\in \mathsf U^{\pa \mathsf
     M,1}_q$  .}$$
Up to reducing the neighborhood   $\mathsf V_y$ of $y$ in $\overline{\mathsf M}$, the following results hold according to the analysis in~\cite[Chapter 3]{helffer-nier-06} (see also~\cite{helffer-sjostrand-85} and~\cite{HKN} for the case when $\mathsf E=\mathbb R^d$). There exists a   $\mathcal C^\infty$ system of coordinates  
$$v\in \mathsf V_y\mapsto   x(v) \in  \overline{\mathsf E},$$
and a metric tensor $\mbf g_y$ and a function $f_y$ on $
\overline{\mathsf E}$ which coincide on $x( \mathsf V_y)$ respectively with $\mbf
g_{\mathsf M}$ and $f$ expressed in the $x$-coordinates, 
such that the following holds:
\begin{equation}\label{eq.span=12}
\exists  \mathsf c_y>0, \exists h_0 >0, \forall h \in (0,h_0), \Ran \, \pi_{[0,\mathsf c_yh]}\big (\mathsf T_y \big )=\Ker \mathsf T_y \text{ has dimension  } 1,
\end{equation}
where $\mathsf T_y=\Delta_{f_y,h}^{(q)}( \mathbb R^d,\mbf g_y )$ if
$y\in \mathsf U_q^{\mathsf
     M}$ and $\mathsf T_y=\Delta_{f_y,h}^{\mathsf{Di},(q)}( \mathbb R^d_-,\mbf g_y )$ if $y\in \mathsf U^{\pa \mathsf
     M,1}_q$. 
 Moreover, let  $\chi: \overline{\mathsf E} \to [0,1]$ be a $\mathcal C^\infty$ function supported in  $x(\mathsf V_y)$  which equals~$1$ in a neighborhood of $0$ in $x(\mathsf V_y)$.
Let  $\Psi_h^y\in \Ker\mathsf T_y$ such that $  \Vert \Psi_h^y\Vert_{L^2(\mathsf E,\mbf g_y)}=1$, then, 
 in the limit $h\to 0$, it holds:
\begin{equation}\label{eq.omega-z2}
  \Vert \chi \Psi_h^y  \Vert_{L^2(\mathsf M,\mbf g_{\mathsf M})} =1+O  (h^{2})  \text{ and } Q_{f,h}^{\mathsf{Di},(q)}( \mathsf M ,\mbf g_{\mathsf M})( \chi \Psi_h^y)=O(h^2).
  \end{equation}  

%
%
%
%
\medskip

 \noindent
\textbf{Step 1b: Lower bound on $Q_{f,h}^{\mathsf{Di},(q)}( \mathsf M
  ,\mbf g_{\mathsf M} )(\tilde \chi  w)$.} Moreover, it is proved
in~\cite[Section 3.4]{helffer-nier-06} that there exist $\tilde C>0$  and $h_0>0$ such that for all $h\in (0,h_0)$ and all  $w\in  \Lambda^qH^1_{\mbf T}(\mathsf M,\mbf g_{\mathsf M} )$:
\begin{equation}\label{eq.tildeC}
 Q_{f,h}^{\mathsf{Di},(q)}( \mathsf M ,\mbf g_{\mathsf M} )(\tilde \chi  w)\ge  \tilde Ch\,  \Vert\tilde \chi w \Vert_{L^2(\mathsf M,\mbf g_{\mathsf M})}^2.
\end{equation}
More precisely,  by~\cite[Section 3.4]{helffer-nier-06}, one has for any $x\in \overline{\mathsf M}\setminus \mathsf P_q$:
\begin{enumerate}
\item  Either $x\in \mathsf M$ with $\vert \nabla f(x)\vert \neq 0$, in which case there exist $c>0$ and  a neighborhood $\mathsf V_x$ of $x$ in $\mathsf M$ such that $Q_{f,h}^{\mathsf{Di},(q)}( \mathsf M ,\mbf g_{\mathsf M} )( \chi_x  w)\ge c\,  \Vert \chi_x w \Vert_{L^2(\mathsf M,\mbf g_{\mathsf M})}^2$ for any smooth function $\chi_x$   supported in $\mathsf V_x$;
\item Or $x\in \mathsf M$ with  $\vert \nabla f(x)\vert = 0$  and $x\notin \mathsf U_q^{\mathsf M}$, 
 in which case there exist $c>0$ and  a neighborhood $\mathsf V_x$ of $x$ in $\mathsf M$ such that $Q_{f,h}^{\mathsf{Di},(q)}( \mathsf M ,\mbf g_{\mathsf M} )( \chi_x  w)\ge ch\,   \Vert \chi_x w \Vert_{L^2(\mathsf M,\mbf g_{\mathsf M})}^2$ for any smooth function $\chi_x$   supported in $\mathsf V_x$;
 \item Or $x\in \pa \mathsf M$ with $\vert \nabla f(x)\vert \neq 0$  and $x\notin  \mathsf U^{\pa \mathsf M,1}_q$, in which case there exist $c>0$ and  a neighborhood $\mathsf V_x$ of $x$ in $\overline{\mathsf M}$ such that  $Q_{f,h}^{\mathsf{Di},(q)}( \mathsf M ,\mbf g_{\mathsf M} )( \chi_x  w)\ge ch\,   \Vert \chi_x w \Vert_{L^2(\mathsf M,\mbf g_{\mathsf M})}^2$ for any smooth function $\chi_x$   supported in $\mathsf V_x$.
\end{enumerate}
Equation \eqref{eq.tildeC} then follows from the fact that $\tilde \chi=0$ in a neighborhood of  
all the points in  $\mathsf P_q$.
\medskip

\noindent
\textbf{Step 2: Change of coordinates near $y\in \mathsf B^{\pa \mathsf M,2}$.}
\medskip

\noindent
For $\ve>0$ small enough,  for all $v\in  \overline { \mathsf M}$ such that $\mathsf d_{\overline { \mathsf M}}(v,\pa  \mathsf M)<\ve$, there exists a unique 
point $\mathsf z(v)\in \pa \mathsf M$ such that  $$x_d(v):=-\mathsf d_{\, \overline {\mathsf M}}(v,\pa \mathsf M)=-\mathsf d_{  \overline {\mathsf M}}(v,\mathsf z(v)),$$
 where we recall  $\mathsf d_{\, \overline{\mathsf M}}$ denotes   the
 geodesic distance in $ \overline  {\mathsf M}$. Moreover the function
 $v\mapsto \mathsf d_{ \overline  {\mathsf M}}(v,\partial \mathsf M)$ is smooth on the set $\{v\in  \overline {\mathsf M}, \mathsf d_{\overline{ \mathsf M}}(v,\pa {\mathsf M})<\ve\}$. 

 Let us now consider a fixed $y\in  \mathsf B^{\pa \mathsf M,2}\subset \pa \mathsf M$ and let  $x'$ be a local system of coordinates in $\pa \mathsf M$ centered at $y$.  Then there exists a neighborhood $\mathsf U_y$ of $y$ in $\overline {\mathsf M}$ such that the mapping\begin{equation}\label{eq.norm-teng}
v\in \mathsf U_y \mapsto x(v):=(x'(\mathsf z(v)),x_d(v))\in \mathbb R^{d-1}\times \overline{\mathbb R_-}
\end{equation}
 is a system of coordinates near $y\in \pa \mathsf M$, centered at
 $y$: this is the so-called tangential-normal system of coordinates.  Then, up to choosing $\mathsf V_y$  smaller, one can assume that:
$$\mathsf U_y=\mathsf V_y.$$
It   holds, by construction of $v\mapsto x(v)$:  
$$
x(y)=0\,,\ \ \{v\in \mathsf V_y, \, x_d(v)<0\}=  \mathsf M\cap  \mathsf V_y \,,\ \ 
\{v\in \mathsf V_y,\,  x_d(v)=0\}=\pa  \mathsf M \cap \mathsf V_y,
$$
and for all $(x',0)\in x(\mathsf V_y)$, 
$$
 \pa _{x_d}v(x',0)  = \mathsf n_{\mathsf M}(v(x',0))\,.
$$
Moreover, by construction,     the metric tensor $\mbf g_{\mathsf M}$ in the $x$-coordinates   has the desired  block structure~of item $(i)$ in \autoref{MetricRd-}, i.e.: 
\begin{equation}\label{g_y}
 \forall  (x',x_d)\in x(\mathsf V_y), \ \, \mbf  g_{\mathsf M}(x',x_d)=\mbf G_{\mathsf M}(x',x_d)dx'\, ^2+dx_d^2
\end{equation}
where it is assumed, without loss of generality, that $\mbf G_{\mathsf
  M} (0,0)$ is the identity matrix.
In the following and with a slight abuse of notation, one still
denotes by $f$  the function~$f$ in the $x$-coordinates. Since $\mbf
g_{\mathsf M}(0,0)= dx'\, ^2+dx_d^2$, the
Hessian matrix of $f$ (resp. the Hessian matrix of
$f|_{\partial \mathbb R_-^d}$) at $0$ in this new coordinates is unitarily equivalent
to $\hess f(y)$ (resp.
$\hess f|_{\partial \mathsf M}(y)$). In
particular, they have the same eigenvalues.
Let us recall that   according to  \autoref{B},   $\mathsf n_{\mathsf M}(y)$ is an eigenvector of $\Hess f(y)$ for the eigenvalue $\mu_y$, see~\eqref{eq.lambdad}, also denoted by $\mu_{d}$ in the following. 
Let us denote by $\mu_1,\dots,\mu_{d-1}$  the $d-1$  remaining eigenvalues of  
$\Hess f(y)$, the associated eigenspace being $T_y\partial \mathsf M$.
These are also the eigenvalues of $\hess f|_{\partial \mathsf M}(y)$.
Let us recall that, up to an orthogonal transformation on
$x'=(x_{1},\dots,x_{d-1})$, it holds, in a neighborhood of $0$ and  in the $x$-coordinates, 
\begin{equation}\label{eq.cv-pa-omega-nablaf=04}
f(x)\ =\ f(0)+  \sum_{j=1}^d \frac{\mu_j}{2} \, x_j^2+ O(\vert x\vert^3),
\end{equation} 
which is precisely~\eqref{eq.decf}. 
\begin{remark}
Let us mention that~\eqref{eq.cv-pa-omega-nablaf=04}  only requires that   $\mathsf n_{\mathsf M}(y)$ is an eigenvector  $\Hess f(y)$.  The stronger  assumption that $\partial_{\mathsf n_{\mathsf M}}f=0$ on $\pa \mathsf M\cap \mathsf V_y$ will be necessary to use the results of   Proposition~\ref{pr.prop-loca1-case1}.
\end{remark}
\noindent
In addition, it holds:
 \begin{equation}\label{eq.stable2}
 \forall x'\in \mathbb R^{d-1} \cap x(\mathsf V_y), \  \partial_{\mathsf n_{\mathbb R^d_-}}f(x',0)=0.  
  \end{equation}
In order to use Proposition~\ref{pr.prop-loca1-case1}, we extend the function $f$ and the metric~$\mbf g_{\mathsf M}$ from $x(\mathsf V_y)$ to $\overline{\mathbb R^{d}_-}$ so that they  satisfy respectively~\autoref{PotentialRd-} and~\autoref{MetricRd-}. We denote by $f_y$ and $\mbf g_y$ these extensions, defined on $\overline{\mathbb R^d_-}$. 
Notice that it holds since $\chi_y$ is supported in $\mathsf V_y$,
$$Q_{f_y,h}^{\mathsf{Di},(q)}( \mathbb R^{d-1},\mbf g_y )(\chi_y  w)=Q_{f,h}^{\mathsf{Di},(q)}( \mathsf M ,\mbf g_{\mathsf M} )(\chi_y  w) \text{ and } \Vert \chi_y w  \Vert_{L^2(\mathbb R^{d-1},\mbf g_y)}= \Vert \chi_y w  \Vert_{L^2(\mathsf M,\mbf g_{\mathsf M})},$$
where with a slight abuse of notation $\chi_y  w$ both denotes the $q$-form defined on $\mathsf M$ and in the $x$-coordinates. These equalities will be used many times in the rest of the proof.

\medskip

\noindent
\textbf{Step 3: Contributions of the  points in $ \mathsf B^{\pa \mathsf M,2}$.}  
 \medskip

\noindent
According to Step 2, one can use Proposition~\ref{pr.prop-loca1-case1}
to study $Q_{f,h}^{\mathsf{Di},(q)}( \mathsf M ,\mbf g_{\mathsf M}
)(\chi_y  w)$ when~$y\in \mathsf B^{\pa \mathsf M,2}$, where, we
recall, $w\in  \Lambda^0H^1_{\mbf T}(\mathsf M,\mbf g_{\mathsf M} )$. There are thus three possible cases:
\begin{enumerate}
\item By~\eqref{eq.last1-case1}, if $q=0$, there exists $C>0$ such that  for all $h$ small enough:
\begin{equation}\label{eq.last1-case1-22}
Q_{f,h}^{\mathsf{Di},(0)}( \mathsf M ,\mbf g_{\mathsf M} )(\chi_y  w)\ge Ch\,    \Vert \chi_y w  \Vert_{L^2(\mathsf M,\mbf g_{\mathsf M})}^2.
\end{equation}
 \item By~\eqref{eq.last2-case1}, for $q\in \{1,\ldots,d\}$,  if the
   index of $y$ as a critical point of $f|_{\pa \mathsf M}$ is not $q-1$, or if $\mu_{d}>0$, then, there exists $C>0$ such that  for all $h$ small enough:
 \begin{equation}\label{eq.qq2}
 Q_{f,h}^{\mathsf{Di},(q)}( \mathsf M ,\mbf g_{\mathsf M} )(\chi_y  w)\ge Ch\,   \Vert\chi_y w \Vert_{L^2(\mathsf M,\mbf g_{\mathsf M})}^2.
\end{equation}

\item 
For $q\in \{1,\ldots,d\}$, if the index of $y$ as a critical point of
$f|_{\pa \mathsf M}$ is  $q-1$ and  $\mu_{d}<0$ (namely if $y \in \mathsf U^{\pa \mathsf M,2}_q$), 
from~\eqref{eq.last3-case1}  there exists $\mathsf c_y>0$   such that
for all $h$ small enough:
\begin{equation}\label{eq.span=11}
  \Ran \, \pi_{[0,\mathsf c_yh]}\big (\Delta_{f_y,h}^{\mathsf{Di},(q)}( \mathbb R_-^d,\mbf g_y )\big )=\Ker \Delta_{f_y,h}^{\mathsf{Di},(q)}( \mathbb R_-^d,\mbf g_y ) \text{ has dimension  } 1.
\end{equation}
Moreover, let  $\chi: \overline{\mathbb R^{d}_-}\to [0,1]$   be a $\mathcal C^\infty$ function supported in  $\mathsf V_y$  which equals~$1$ in a neighborhood of $y$ in $\mathsf V_y$.
 Let  $\Psi_h^y\in \Ker \Delta_{f_y,h}^{\mathsf{Di},(q)}( \mathbb R_-^d,\mbf g_y )$ such that $\Vert \Psi_h^y \Vert_{L^2(\mathbb R^d_-,\mbf g_y)}=1$, then, 
 in the limit $h\to 0$, it holds (by~\eqref{eq.norme-case1}):
\begin{equation}\label{eq.norme-case1-2}
 \Vert \chi \Psi_h^y  \Vert_{L^2(\mathsf M,\mbf g_{\mathsf M})} =1+O  (h^{2}) \text{ and } Q_{f,h}^{\mathsf{Di},(q)}( \mathsf M ,\mbf g_{\mathsf M})( \chi \Psi_h^y)=O(h^2).
  \end{equation}
\end{enumerate}
Let us insist again on the fact that in~\eqref{eq.last1-case1-22}--\eqref{eq.qq2}, the constants $C$
and the interval $(0,h_0) \ni h$ do not depend on $w$.

\medskip
\noindent
\textbf{Step 4: End of the proof of Theorem~\ref{thm.main1}.}  
\medskip

 \noindent
 Let us consider $\eta_1>0$.  
Using the Min-Max principle,
Equations~\eqref{eq.norme-case1-2},~\eqref{eq.omega-z2}  together with
the fact that the supports of $(\chi_y)_{y\in \mathsf Q_q}$ are
pairwise disjoint, one gets that $\Delta_{f,h}^{\mathsf{Di},(q)}(
\mathsf M,\mbf g_{\mathsf M} )$ admits at least~$\mathsf m_q$
eigenvalues of order $O(h^{2})$ when $h\to 0$. Thus, for $h$
sufficiently small,
 $$\dim \Ran \, \pi_{[0,\eta_1h]}\big (\Delta_{f,h}^{\mathsf{Di},(q)}( \mathsf M,\mbf g_{\mathsf M} )\big )\ge \mathsf  m_q.$$
 Let us now prove the reverse inequality holds if $\eta_1$ is  small enough.   To this end, let us consider   $w\in  \Lambda^qH^1_{\mbf T}(\mathsf M,\mbf g_{\mathsf M} )$   such that $\Vert w\Vert_{L^2(\mathsf M,\mbf g_{\mathsf M} )}=1$ and  
$$Q_{f,h}^{\mathsf{Di},(q)}(\mathsf M,\mbf g_{\mathsf M} )(   w )\le \eta_1 h,$$
and let us prove that  the distance between $w$ and $\text{Span}\, \big (\chi_y\,\Psi_h^y, \, y\in \mathsf Q_q \big ) $  (which, we recall,  is of dimension $\mathsf m_q$ because $(\chi_y)_{y\in \mathsf Q_q}$  have supports which are pairwise  disjoint) 
goes to $0$ when $h\to 0$, for a sufficiently small $\eta_1$.
Using~\eqref{eq.IMSfin}, it holds for some $C_0>0$ independent of $h$:
\begin{align}\label{eq.IMS33}
\eta_1\, h  &\ge Q_{f,h}^{\mathsf{Di},(q)}( \mathsf M ,\mbf g_{\mathsf M} )(\tilde \chi  w)   -C_0   h^2  +\sum_{y\in \mathsf P_q} Q_{f,h}^{\mathsf{Di},(q)}( \mathsf M ,\mbf g_{\mathsf M} )(\chi_y w) .
\end{align}
In the following $\tilde c>0$ is a constant  independent of $h$, $\eta_1$ and $w$, which can change from one occurrence to another. 
Then~\eqref{eq.IMS33} together with~\eqref{eq.qq2}  and
\eqref{eq.last1-case1-22} yields that   for   all $y\in \mathsf B^{\pa
  \mathsf M,2} \setminus \mathsf U_q^{\pa \mathsf M,2}= \mathsf
P_q\setminus \mathsf Q_q$, for all $h$ small enough: 
\begin{equation}\label{eq.est-norm1}
 \Vert  \chi_y w\Vert_{L^2(\mathsf M)}\le \sqrt {\eta_1h +C_0h^2 }/{\sqrt{  C h }}  \le  \tilde c \, {\sqrt{\eta_1}}.
   \end{equation}
In addition, from Equations~\eqref{eq.IMS33} and~\eqref{eq.tildeC}, one has for $h$ small enough:
\begin{equation}\label{eq.est-norm2}
  \Vert\tilde \chi w \Vert_{L^2(\mathsf M)}\le  \sqrt {\eta_1h +C_0h^2 }/{\sqrt{\tilde C h }}\le   \tilde c \, {\sqrt{\eta_1}}.
  \end{equation}
Furthermore, one deduces from~\eqref{eq.IMS33} that  for all $y\in \mathsf P_q$, for $h$  small enough:
\begin{equation}\label{eq.est-Q3}
Q_{f,h}^{\mathsf{Di},(q)}( \mathsf M ,\mbf g_{\mathsf M} )(\chi_y w) \le 2\, \eta_1\, h.
  \end{equation}
For $y\in \mathsf Q_q= \mathsf U_q^{\mathsf M}\cup  \mathsf U^{\pa
  \mathsf M,1}_q\cup  \mathsf U^{\pa \mathsf M,2}_q$, set (see the
quasi-modes introduced in~\eqref{eq.span=12}--\eqref{eq.omega-z2} and~\eqref{eq.span=11}--\eqref{eq.norme-case1-2})
\begin{equation}\label{eq.PHIY}
\Phi_h^y := \chi_y\Psi_h^y\, /\,  \Vert \chi_y \Psi_h^y  \Vert_{L^2(\mathsf M)} .
  \end{equation}
It holds:
\begin{align*}
&{\rm dist}_{L^2( \mathsf M )}\Big( w, \text{Span}\, \big ( \Phi_h^y, \, y\in \mathsf Q_q \big ) \Big)^2= \Big \Vert w- \sum \limits_{y \in \mathsf Q_q}   \lp w,  \Phi_h^y\rp _{L^2( \mathsf M )} \Phi_h^y   \Big \Vert_{L^2( \mathsf M )}^2\\
&= \sum_{z\in \mathsf P_q}\Big \Vert \chi_z\Big(  w- \sum \limits_{y \in \mathsf Q_q}   \lp w,  \Phi_h^y\rp _{L^2( \mathsf M )} \Phi_h^y\Big)  \Big \Vert_{L^2( \mathsf M )}^2 +\Big\Vert \tilde \chi\Big(  w- \sum \limits_{y \in \mathsf Q_q}   \lp w,  \Phi_h^y\rp _{L^2( \mathsf M )} \Phi_h^y\Big)   \Big\Vert_{L^2( \mathsf M )}^2.
%
%
 \end{align*}
 The first inequalities in~\eqref{eq.norme-case1-2} and \eqref{eq.omega-z2} imply that for all $y\in \mathsf Q_q$, as $h\to 0$:
 $$ \Vert (1-\tilde \chi)\chi_y \Psi_h^y  \Vert_{L^2(\mathsf M)} =1+O  (h^{2}).$$ 
 Therefore, using in addition~\eqref{eq.est-norm2}, one deduces that:
 $$  \Big\Vert \tilde \chi\Big(  w- \sum \limits_{y \in \mathsf Q_q}   \lp w,  \Phi_h^y\rp _{L^2( \mathsf M )} \Phi_h^y\Big)   \Big\Vert_{L^2( \mathsf M )}^2 \!\!\le \rho \big ( \Vert\tilde \chi w \Vert_{L^2(\mathsf M)}^2 +  \sum \limits_{y \in \mathsf Q_q}     \Vert\tilde \chi  \Phi_h^y \Vert_{L^2(\mathsf M)}^2 \big)\le \tilde c \big(  \eta_1 +h^2\big),$$
 where  $\rho>0$ is independent of $h$, $\eta_1$ and $w$,
 and  since the supports of $(\chi_y)_{y\in \mathsf P_q}$ are pairwise disjoint, 
 $$\sum_{z\in \mathsf P_q\setminus \mathsf Q_q}\Big \Vert \chi_z\Big(  w- \sum \limits_{y \in \mathsf Q_q}   \lp w,  \Phi_h^y\rp _{L^2( \mathsf M )} \Phi_h^y\Big)  \Big \Vert_{L^2( \mathsf M )}^2 \le \tilde c\, \eta_1.$$
 On the other hand, when $z\in  \mathsf Q_q$, one has since the supports of $(\chi_y)_{y\in \mathsf P_q}$ are pairwise disjoint and using  Lemma~\ref{quadra},~\eqref{eq.est-Q3},~\eqref{eq.span=11}, and~\eqref{eq.span=12},
\begin{align*}
\Big \Vert \chi_z\Big(  w- \sum \limits_{y \in \mathsf Q_q}   \lp w,  \Phi_h^y\rp _{L^2( \mathsf M )} \Phi_h^y\Big)  \Big \Vert_{L^2( \mathsf M )}^2\!\!  &=  \Big \Vert \chi_z   w   -  \lp w,  \chi_z \Psi_h^z\rp _{L^2( \mathsf M )}  \frac{\chi_z^2 \Psi_h^z }{ \Vert \chi_z\Psi_h^z  \Vert_{L^2(\mathsf M)} ^2}\Big  \Vert_{L^2( \mathsf M )}^2   \\
&\le \big \Vert (1-\pi_{[0,\mathsf c_zh]} (\mathsf T_z ) )(\chi_z  w)\big  \Vert_{L^2( \mathsf E ,\mbf g_z  )}^2 +  \tilde{c} h^2\\
&\le  \tilde{c}(    \eta_1/\mathsf c_z  +h^2 \big),
\end{align*}
 where   $\mathsf T_z=\Delta_{f,h}^{\mathsf{Di},(q)}( \mathbb R^d_-,\mbf g_z )$ if $z\in \pa \mathsf M$ and $\mathsf T_z=\Delta_{f,h}^{(q)}( \mathbb R^d,\mbf g_z )$ if $z\in \mathsf M$ (recall that $\mathsf E=\mathbb R^d_-$ if $y\in  \pa \mathsf M$ and $\mathsf E=\mathbb R^d$ if $ y\in  \mathsf M$). 
 In conclusion,   as $\eta_1\to 0$ and $h\to 0$,
 $${\rm dist}_{L^2( \mathsf M )}\Big( w, \text{Span}\, \big ( \Phi_h^y, \, y\in \mathsf Q_q \big ) \Big)\to 0.$$ 
This implies that there exist $\eta>0$ and $h_0>0$ such that for all $\eta_1\in (0,\eta)$ and $h\in (0,h_0)$,   $\dim \Ran \, \pi_{[0,\eta_1h]}\big (\Delta_{f,h}^{\mathsf{Di},(q)}( \mathsf M,\mbf g_{\mathsf M} )\big )\le \mathsf  m_q$.  This concludes the proof of Theorem~\ref{thm.main1}.

 \subsection{Application of Theorem~\ref{thm.main1} to the infinitesimal generator of the diffusion~(\ref{eq.langevin})}
 \label{sec.seclienwitten}
Let us go back to the setting introduced in Section~\ref{sec:intro}.
Recall that $\Omega$ is a smooth bounded   domain of~$\mathbb R^d$,
and let us apply the results stated above to $\mathsf M=\Omega$
endowed with the standard Euclidean metric tensor:  $\mbf
g_{\mathsf M} =(\delta_{i,j} \, dx_i \, dx_j)_{i,j=1,\ldots,d}$. For
the ease of notation, we henceforth omit the reference to the  metric tensor in the notation of the Witten Laplacian and the Sobolev spaces.

\subsubsection{Notation for weighted Sobolev spaces}

\noindent
For $q\in \{0, \ldots,d\}$ and $m\in \mathbb N$, one denotes by
$\Lambda^qH^m_w(\Omega)$\label{page.wsobolevq} the weighted Sobolev spaces of $q$-forms with
regularity index $m$, for the weight $e^{-\frac{2}{h} f(x)}dx$ on
$\Omega$ (hence the subscript $w$ in $\Lambda^qH^m_w(\Omega)$). 
We refer again  for example to~\cite{GSchw} for an introduction to Sobolev spaces on manifolds with boundaries. For $q\in\{0,\ldots,d\}$ and $m> \frac{1}{2}$, the set $\Lambda^qH^m_{w,\mbf T}(\Omega)$ is defined by 
$$\Lambda^qH^m_{w,\mbf T}(\Omega):= \left\{v\in \Lambda^qH^m_w(\Omega)
  \,|\,  \mathbf  tv=0 \ {\rm on} \ \partial \Omega\right\}.$$
The space
$\Lambda^qH^0_w(\Omega)$ is denoted by  $\Lambda^qL^2_w(\Omega)$. Let us mention that  the space $\Lambda^0H^1_{w,\mbf T}(\Omega)$ (resp. $\Lambda^0L^2_w(\Omega)$)  is
the space $H^{1}_{0}(\Omega,e^{-\frac 2h f}dx)$  (resp. $L^2(\Omega,e^{-\frac 2h f}dx)$)   that we introduced in Section~\ref{sec.diri}  to
define the domain of $\mathsf L^{\mathsf{Di},(0)}_{f,h}(\Omega)$. 
We will denote by $\Vert . \Vert_{H^q_w}$  the norm on the weighted space $\Lambda^qH^m_w
(\Omega)$ (without referring to the degree of the forms). Moreover $\langle
\cdot , \cdot\rangle_{L^2_w}$ denotes the scalar product in $\Lambda^qL^2_w (\Omega)$.  We  will  also simply denote $\Lambda^0H^q_w(\Omega)$ by  $H^q_w(\Omega)$ if there is no possibility for  confusion.

\subsubsection{Link between $\mathsf L^{\mathsf{Di}, (0)}_{f,h}(\Omega)$ and  $\Delta^{\mathsf{Di}, (0)}_{f,h}(\Omega)$, and proof of~\eqref{eq.dim-no}}

\noindent
The infinitesimal generator $-\mathsf L^{(0)}_{f,h}$ of the diffusion
\eqref{eq.langevin} (see Section~\ref{sec.diri}) is linked to the
Witten Laplacian $\Delta^{(0)}_{f,h} = \Delta^{(0)}_{\mbf H}  +\vert
\nabla f\vert^2+h\Delta^{(0)}_{\mbf H}f$ (where we recall that the
Hodge Laplacian writes here: $\Delta^{(0)}_{\mbf H}=-\div  \nabla=-\Delta$)
through the unitary transformation:
$$  \phi \in L^2_w(\Omega) \mapsto e^{-\frac{f}{h}} \phi \in L^2(\Omega).$$
Indeed, one can check that
\begin{equation}\label{eq:unitary_p0}
\Delta^{ (0)}_{f,h}  = 2\, h \, e^{-\frac{f}{h}} \, \mathsf  L^{(0)}_{f,h}  \, e^{\frac{f}{h}}.
\end{equation}
Let us now generalize this to $q$-forms, using extensions of
$\mathsf L^{(0)}_{f,h}$ to $q$-forms.
\begin{proposition} \label{Lp}
Let $q\in \{0,\ldots,d\}$. 
The Friedrichs extension of the quadratic form 
$$Q_{f,h}^{\mathsf{Di},(q)}(\Omega): v \in \Lambda^q  H^1_{w,\mbf T}(\Omega)\mapsto \frac{h}{2}
 \big\Vert \mathsf  d^{(q)}v\big \Vert_{  L^2_w(\Omega)}^2
  +\frac{h}{2}\big \Vert
   e^{\frac{2f}{h}}  \big (\mathsf  d^{(q)}\big)^* e^{-\frac{2f}{h}} v\big \Vert_{ L^2_w(\Omega)}^2 $$ 
on $\Lambda^qL^2_w(\Omega)$, is denoted
$\big(\mathsf L^{\mathsf{Di},(q)}_{f,h}(\Omega), \ \mathcal 
  D\big(L^{\mathsf{Di},(q)}_{f,h}(\Omega)\big)\big)$. The operator $\mathsf L^{\mathsf{Di},(q)}_{f,h}(\Omega)$ is a  positive unbounded self-adjoint operator on $\Lambda^qL^2_w(\Omega)$. Besides, one has 
$$\mathcal D\big(\mathsf L^{\mathsf{Di},(q)}_{f,h}(\Omega)\big)=\big\{  v \in    \Lambda^qH^2_w(\Omega) \, | \, \mbf tv=0, \ \mbf   t \mathsf d^*\big(e^{-\frac{2f}{h}}v\big)=0   \big\}.$$
\end{proposition}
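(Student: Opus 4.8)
The statement is essentially a verification that the quadratic form $Q_{f,h}^{\mathsf{Di},(q)}(\Omega)$ is densely defined, closed, nonnegative, and that its Friedrichs extension has the announced domain. The overall strategy is to reduce everything to the already-established unitary equivalence with the Witten Laplacian. First I would note that the conjugated differential $\mathsf d_{f,h}^{(q)} = e^{-f/h} h \mathsf d^{(q)} e^{f/h}$ and the weighted codifferential $e^{2f/h}(\mathsf d^{(q)})^* e^{-2f/h}$ are related to the Euclidean ones by the unitary map $U\colon \phi \in \Lambda^q L^2_w(\Omega) \mapsto e^{-f/h}\phi \in \Lambda^q L^2(\Omega)$; indeed $U$ intertwines $\Lambda^q H^1_{w,\mathbf T}(\Omega)$ with $\Lambda^q H^1_{\mathbf T}(\Omega)$ (the weight being smooth and bounded above and below on $\overline\Omega$, so the weighted and unweighted Sobolev norms are equivalent), and a direct computation — the same one that gives~\eqref{eq:unitary_p0} for $q=0$ — shows
\[
Q_{f,h}^{\mathsf{Di},(q)}(\Omega)(v) = \tfrac{1}{2h}\, Q_{f,h}^{\mathsf{Di},(q)}(\Omega,\mbf g_{\mathsf M})(Uv),
\]
where the right-hand side is the Witten Dirichlet quadratic form of Proposition~\ref{pr.defDfhD} with $\mathsf M = \Omega$ and the Euclidean metric. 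Density of the form domain and nonnegativity are then immediate, and closedness follows from closedness of $Q_{f,h}^{\mathsf{Di},(q)}(\Omega,\mbf g_{\mathsf M})$ together with the norm equivalence.

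Next, the Friedrichs extension is characterized through this unitary conjugation: $\mathsf L^{\mathsf{Di},(q)}_{f,h}(\Omega)$ is nothing but $\tfrac{1}{2h}\,U^{-1}\Delta_{f,h}^{\mathsf{Di},(q)}(\Omega,\mbf g_{\mathsf M})\,U$, so self-adjointness, positivity and compactness of the resolvent are inherited directly from Proposition~\ref{pr.defDfhD}. It remains to translate the domain. By Proposition~\ref{pr.defDfhD},
\[
\mathcal D\big(\Delta_{f,h}^{\mathsf{Di},(q)}(\Omega,\mbf g_{\mathsf M})\big) = \big\{ w \in \Lambda^q H^1_{\mbf T}(\Omega)\cap \Lambda^q H^2(\Omega),\ \mbf t\, \mathsf d^*_{f,h} w = 0 \text{ on } \partial\Omega \big\}.
\]
Applying $U^{-1}$: $w = e^{-f/h}v$ lies in $\Lambda^q H^2(\Omega)$ iff $v \in \Lambda^q H^2_w(\Omega)$ (smoothness of $f$ on $\overline\Omega$), the condition $\mbf t w = 0$ is equivalent to $\mbf t v = 0$, and the condition $\mbf t\, \mathsf d^*_{f,h}w = 0$ unwinds as follows: $\mathsf d^*_{f,h}w = e^{f/h} h (\mathsf d^{(q)})^* e^{-f/h} w = e^{f/h} h (\mathsf d^{(q)})^* (e^{-2f/h} v)$, so $\mbf t\, \mathsf d^*_{f,h}w = 0$ is equivalent to $\mbf t\,(\mathsf d^{(q)})^*(e^{-2f/h}v) = 0$, which is the stated boundary condition. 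This gives exactly
\[
\mathcal D\big(\mathsf L^{\mathsf{Di},(q)}_{f,h}(\Omega)\big) = \big\{ v \in \Lambda^q H^2_w(\Omega) \,:\, \mbf t v = 0,\ \mbf t\, \mathsf d^*\big(e^{-2f/h}v\big) = 0 \big\},
\]
and one should also record the intertwining identities~\eqref{eq.complexeMM1}--\eqref{eq.complexeMM2} transported through $U$, since they will be used repeatedly later.

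The only genuinely nontrivial input is the $H^2$ elliptic regularity underlying the identification of the domain — i.e. that elements of the form domain on which the form is represented by an $L^2_w$ element are automatically in $H^2_w$ with the correct boundary trace. This is precisely what is packaged in Proposition~\ref{pr.defDfhD} (via~\cite[Section 2.4]{helffer-nier-06}), so in the write-up I would invoke it rather than reprove it; the rest is bookkeeping with the unitary conjugation and the equivalence of weighted and unweighted Sobolev norms on the bounded smooth domain $\overline\Omega$. I do not expect any serious obstacle here beyond carefully tracking the factors of $h$ and the $2f/h$ versus $f/h$ weights in the boundary condition.
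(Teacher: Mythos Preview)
Your proposal is correct and follows exactly the approach the paper takes: the paper does not give an explicit proof of this proposition but immediately records the unitary conjugation~\eqref{eq:unitary} as its content, and all the claimed properties (self-adjointness, positivity, domain characterization, intertwining) are transported from Proposition~\ref{pr.defDfhD} via $U:\phi\mapsto e^{-f/h}\phi$, just as you outline. Your unpacking of the boundary condition $\mathbf t\,\mathsf d^*_{f,h}w=0$ into $\mathbf t\,\mathsf d^*(e^{-2f/h}v)=0$ is the right computation, and the reliance on \cite[Section~2.4]{helffer-nier-06} for the $H^2$-regularity is exactly what the paper invokes.
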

\noindent
For $p=0$,   the operator $\mathsf L^{\mathsf{Di},(0)}_{f,h}(\Omega)$
is the one introduced in Section~\ref{sec.diri}.  In particular,
for $v\in \mathcal D\big(\mathsf L^{\mathsf{Di},(0)}_{f,h}(\Omega)\big)$, 
$\mathsf L_{f,h}^{\mathsf{Di}, (0)}(\Omega)v =  \mathsf L_{f,h}^{  (0)}v$.
 For $p=1$ the operator $\mathsf L^{\mathsf{Di},(1)}_{f,h}(\Omega)$ is the one introduced in Section~\ref{sec.strategy1}. In particular,  
 for  $v\in \mathcal D\big(\mathsf L^{\mathsf{Di},(1)}_{f,h}(\Omega)\big)$,
$\mathsf L_{f,h}^{\mathsf{Di}, (1)} (\Omega)v = \mathsf L_{f,h}^{(1)}  v$
where we recall that $\mathsf L_{f,h}^{(1)} =  \frac{h}{2}
\Delta^{(1)}_{\mbf H}  +  \nabla f  \cdot \nabla   +   \hess f$,
see~\eqref{eq:L1-intro}. 

As a generalisation of~\eqref{eq:unitary_p0}, one gets:
\begin{equation}\label{eq:unitary} 
\Delta^{\mathsf{Di},(q)}_{f,h}(\Omega) = 2\, h \, e^{-\frac{f}{h}} \big( \mathsf L^{\mathsf{Di},(q)}_{f,h}(\Omega) \big)\, e^{\frac{f}{h}}.
\end{equation}
The intertwining relations~\eqref{eq.complexeMM1} and~\eqref{eq.complexeMM2}
write on $\mathsf L_{f,h}^{\mathsf{Di},(q)}(\Omega)$: for all $v\in  \Lambda^q  H^1_{w,\mbf T}(\Omega)$,
\begin{equation}\label{commutationL}
\pi_{E}\big (\mathsf L_{f,h}^{\mathsf{Di},(q+1)}(\Omega) )\mathsf dv =\mathsf  d \pi_{E}\big (\mathsf L_{f,h}^{\mathsf{Di},(q)}(\Omega))v
\end{equation}
and 
\begin{equation}\label{commutationL2}
 \pi_{E}\big (\mathsf L_{f,h}^{\mathsf{Di},(q-1)}(\Omega) \big )\mathsf d_{2f,h}^{*} v= \mathsf d^{*}_{2f,h} \pi_{E}\big (L_{f,h}^{\mathsf{Di},(q)}(\Omega)\big )v,
\end{equation}
Thanks to the relation~\eqref{eq:unitary}, the operators
$\mathsf L_{f,h}^{\mathsf{Di},(q)}(\Omega)$ and $\Delta^{\mathsf{Di},(q)}_{f,h}(\Omega)$ have the
same spectral properties. In particular the operators
$\mathsf L_{f,h}^{\mathsf{Di},(q)}(\Omega)$ and $\Delta^{\mathsf{Di},(q)}_{f,h}(\Omega)$ both have
compact resolvents, and thus a discrete spectrum (see Proposition~\ref{pr.defDfhD}). 
 \medskip

\noindent
Equation~\eqref{eq.dim-no} is a consequence of Theorem~\ref{thm.main1}  as stated in the following results, which also gives a first estimate of $\lambda_h$. 
 \begin{corollary}\label{thm-pc}
 Let us assume that \autoref{A} is satisfied. Then, there exists $\mathsf c>0$ and $h_0>0$ such that for all $h \in (0,h_0)$,
$$
\dim  \range   \pi_{[0,\mathsf c]} \big(\mathsf  L^{\mathsf{Di},(0)}_{f,h}(\Omega)\big)
=1 \text{ and } \dim  \range   \pi_{[0,\mathsf c]} \big( \mathsf  L^{\mathsf{Di},(1)}_{f,h}(\Omega)\big)  =n.
$$ 
Moreover, $\lambda_h$, the principal eigenvalue of $\mathsf  L^{\mathsf{Di},(0)}_{f,h}(\Omega)$,  is exponentially small as $h\to 0$.   
 \end{corollary}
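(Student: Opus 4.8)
The plan is to derive Corollary~\ref{thm-pc} directly from Theorem~\ref{thm.main1} by unpacking what the assumption \autoref{A} says about the generalized critical points of $f$ on $\overline\Omega$, and then transferring the statement from the Witten Laplacian $\Delta^{\mathsf{Di},(q)}_{f,h}(\Omega)$ to the operator $\mathsf L^{\mathsf{Di},(q)}_{f,h}(\Omega)$ via the unitary equivalence~\eqref{eq:unitary}.

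First I would verify that \autoref{A} implies \autoref{B} for $\mathsf M = \Omega$ with the Euclidean metric: $\Omega$ is $\mathcal C^\infty$ bounded, $f$ and $f|_{\partial\Omega}$ are Morse, and (by item~2.b of \autoref{A}, plus the remark after \autoref{A} that $\partial_{\mathsf n_\Omega}f = 0$ on $\Gamma_z$ when $z$ is a saddle point) the normal derivative $\partial_{\mathsf n_\Omega}f$ vanishes in a neighborhood in $\partial\Omega$ of every critical point of $f$ on $\partial\Omega$. Next I would count the generalized critical points with index $q=0$ and $q=1$. By Lemma~\ref{le.start}, item~1, $x_0$ is the unique critical point of $f$ in $\Omega$ and it is a local minimum (index~$0$), while there is no local minimum of $f$ on $\partial\Omega$ and all critical points of $f$ on $\partial\Omega$ are the local minima of $f|_{\partial\Omega}$, i.e. the saddle points $\{z_1,\dots,z_n\}$; by Lemma~\ref{le.start}, item~2, for each such $z_k$ the outward normal $\mathsf n_\Omega(z_k)$ is an eigenvector of $\hess f(z_k)$ for a negative eigenvalue, so $\mu_{z_k} < 0$ and $z_k$ is a critical point of $f|_{\partial\Omega}$ of index $0$ (it is a local minimum of $f|_{\partial\Omega}$). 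Consequently: $\mathsf U^\Omega_0 = \{x_0\}$, so $\mathsf m^\Omega_0 = 1$; $\mathsf U^{\partial\Omega,1}_0 = \mathsf U^{\partial\Omega,2}_0 = \emptyset$ by convention, hence $\mathsf m_0 = 1$. For $q=1$: $\mathsf U^\Omega_1 = \emptyset$ since $x_0$ is the only critical point of $f$ in $\Omega$; $\mathsf U^{\partial\Omega,1}_1 = \emptyset$ since $\partial_{\mathsf n_\Omega}f = 0$ (not $>0$) near every critical point of $f$ on $\partial\Omega$; and $\mathsf U^{\partial\Omega,2}_1 = \{z_k : z_k \text{ is a critical point of } f|_{\partial\Omega} \text{ of index } 0 \text{ and } \mu_{z_k} < 0\} = \{z_1,\dots,z_n\}$, so $\mathsf m^{\partial\Omega,2}_1 = n$ and $\mathsf m_1 = n$.

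Then I would apply Theorem~\ref{thm.main1}: there exist $\mathsf c > 0$ and $h_0 > 0$ such that for $h \in (0,h_0)$, $\dim \Ran\,\pi_{[0,\mathsf c h]}\big(\Delta^{\mathsf{Di},(0)}_{f,h}(\Omega)\big) = 1$ and $\dim \Ran\,\pi_{[0,\mathsf c h]}\big(\Delta^{\mathsf{Di},(1)}_{f,h}(\Omega)\big) = n$ (using $\mathsf c = \min$ of the constants for $q=0,1$). By the intertwining relation~\eqref{eq:unitary}, $\mathsf L^{\mathsf{Di},(q)}_{f,h}(\Omega) = \tfrac{1}{2h}\,e^{f/h}\,\Delta^{\mathsf{Di},(q)}_{f,h}(\Omega)\,e^{-f/h}$ is unitarily equivalent (up to the positive scalar $\tfrac{1}{2h}$) to $\Delta^{\mathsf{Di},(q)}_{f,h}(\Omega)$, so its spectrum is that of $\Delta^{\mathsf{Di},(q)}_{f,h}(\Omega)$ rescaled by $\tfrac{1}{2h}$; thus the eigenvalues of $\mathsf L^{\mathsf{Di},(q)}_{f,h}(\Omega)$ below $\mathsf c/2$ are exactly the images of those of $\Delta^{\mathsf{Di},(q)}_{f,h}(\Omega)$ below $\mathsf c h$. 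Taking the constant in the corollary to be $\mathsf c/2$ gives $\dim \Ran\,\pi_{[0,\mathsf c/2]}\big(\mathsf L^{\mathsf{Di},(0)}_{f,h}(\Omega)\big) = 1$ and $\dim \Ran\,\pi_{[0,\mathsf c/2]}\big(\mathsf L^{\mathsf{Di},(1)}_{f,h}(\Omega)\big) = n$.

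Finally, for the statement that $\lambda_h$ is exponentially small: $\lambda_h$ is the principal (smallest) eigenvalue of $\mathsf L^{\mathsf{Di},(0)}_{f,h}(\Omega)$, which corresponds under~\eqref{eq:unitary} to $2h$ times the smallest eigenvalue of $\Delta^{\mathsf{Di},(0)}_{f,h}(\Omega)$. The quasi-mode construction behind Theorem~\ref{thm.main1} (Step~1a/Step~3 there, in particular~\eqref{eq.omega-z2} and~\eqref{eq.norme-case1-2}) already shows that $\Delta^{\mathsf{Di},(0)}_{f,h}(\Omega)$ has an eigenvalue of order $O(h^2)$; alternatively, one may use a global quasi-mode of the form $\chi\,e^{-f/h}$ (with $\chi$ a cutoff equal to $1$ near $x_0$ and supported away from $\partial\Omega$): a direct computation with the quadratic form $Q^{\mathsf{Di},(0)}_{f,h}(\Omega)$ gives $Q^{\mathsf{Di},(0)}_{f,h}(\chi e^{-f/h}) = \tfrac{h}{2}\int_\Omega |\nabla\chi|^2 e^{-2f/h}$, which is supported where $f$ is bounded below away from $f(x_0)$, hence bounded by $C e^{-c/h}$ for some $c>0$, while $\|\chi e^{-f/h}\|^2_{L^2_w}$ is bounded below by a positive constant; the min-max principle then yields $\lambda_h \le C e^{-c/h}$. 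The routine checks are the verification that \autoref{A}$\Rightarrow$\autoref{B} and the bookkeeping of the index of each $z_k$ as a critical point of $f|_{\partial\Omega}$; the only place requiring genuine care is making sure the generalized-critical-point counts are exactly $\mathsf m_0 = 1$ and $\mathsf m_1 = n$, i.e. that no $z_k$ falls into $\mathsf U^{\partial\Omega,1}$ and that every $z_k$ does fall into $\mathsf U^{\partial\Omega,2}_1$ — both of which follow cleanly from Lemma~\ref{le.start}.
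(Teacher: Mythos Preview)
Your proof is correct and follows essentially the same route as the paper's: verify that \autoref{A}$\Rightarrow$\autoref{B}, count $\mathsf m_0=1$ and $\mathsf m_1=n$ via Lemma~\ref{le.start} and item~3 of \autoref{A}, invoke Theorem~\ref{thm.main1}, transfer to $\mathsf L^{\mathsf{Di},(q)}_{f,h}$ via~\eqref{eq:unitary}, and finish with a cutoff quasi-mode for the exponential smallness. One small slip to clean up: in the last step the quasi-mode for $\mathsf L^{\mathsf{Di},(0)}_{f,h}$ (which acts on $L^2_w$) should be $\chi$, not $\chi e^{-f/h}$, and $\|\chi\|_{L^2_w}^2$ is not bounded below by a positive constant but behaves like $h^{d/2}e^{-2f(x_0)/h}$ by Laplace's method --- it is the \emph{ratio} $\tfrac{h}{2}\int_\Omega|\nabla\chi|^2 e^{-2f/h}\big/\int_\Omega\chi^2 e^{-2f/h}$ that is $O(e^{-\delta/h})$, exactly as the paper writes it.
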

 
For ease of notation,   we set 
 \begin{equation}
 \label{eq.proj}
\pi_h^{(q)}=  \pi_{[0,\mathsf c]} \big(\mathsf  L^{\mathsf{Di},(q)}_{f,h}(\Omega)\big), \text{ for $q\in \{0,1\}$}, 
 \end{equation} 
   where $\mathsf c>0$ is the constant introduced   in Corollary \ref{thm-pc}.

 \begin{proof}
 First of all, by item 2 in \autoref{A}, for any~$x\in \partial \Omega$ such that $\vert \nabla f(x)\vert =0$, there exists a neighborhood $\mathsf V_x^{\partial \Omega}$ of~$x$ in $\partial \Omega$ such that $ \partial_{\mathsf n_{ \Omega}}f =0 \text{ on } \mathsf V_x^{\partial \Omega}$. 
Therefore, $\mathsf M=\Omega$ and $f$ satisfy~\autoref{B}.
By Theorem~\ref{thm.main1}  and~\eqref{eq:unitary}, 
for all $q\in \{0,\ldots,d\}$, there exists $\mathsf c>0$ and $h_0>0$ such that for all $h\in (0,h_0)$: 
$$\dim \Ran \, \pi_{[0,\mathsf c]}\big (\mathsf L_{f,h}^{\mathsf{Di},(q)}( \Omega )\big )=\mathsf  m_q, \,  \, 
\text{ where by~\eqref{numb-generalize}, }\, \,  
 \mathsf m_q=\text{ Card}\big (\mathsf U_q^{\Omega}\cup  \mathsf U^{\partial \Omega,1}_q\cup  \mathsf U^{\partial \Omega,2}_q\big ).$$
Let us first consider the case $q=0$. Recall that $ \mathsf
U^{\partial \Omega,1}_0\cup  \mathsf U^{\partial
  \Omega,2}_0=\emptyset$. Thus $\mathsf m_0=\text{ Card}\big (\mathsf
U_0^{\Omega})=1$, since by Lemma~\ref{le.start}, $f$ has only one
local minimum in $\Omega$ which is $x_0$. 

Let us now consider the cas $q=1$. Notice that
$\mathsf U_1^{\Omega}=\emptyset$ since the  minimum $x_0$ is the
only
critical point of $f$ in $\Omega$.  One then has
$\mathsf m_1=\text{ Card}\big ( \mathsf U^{\partial \Omega,1}_1\cup
\mathsf U^{\partial \Omega,2}_1\big )$. By item 3 in \autoref{A} and
by the definition~\eqref{eq.PSGENE} of $\mathsf U^{\partial \Omega,1}_1$, it holds $\mathsf U^{\partial \Omega,1}_1=\emptyset$. 
By~\eqref{eq.PS-nouveau},  $\mathsf U_1^{\pa  \Omega,2}$  is the set
of saddle points of $f$ on $\partial \Omega$. Thus, from
Definition~\ref{eq.de-introd}, $\mathsf U_1^{\pa
  \Omega,2}=\{z_1,\ldots,z_n\}$. In conclusion,  $\mathsf m_1= n$. 

It remains to prove that $\lambda_h$ is exponentially small when $h\to
0$. Let us recall the proof of this well-known result. Let  $\chi :\mathbb R^d\to [0,1]$ be a $\mathcal C^\infty$ function supported in $\Omega$ such that $\chi=1$ in a neighborhood of $x_0$ in $\Omega$. Then, since  $x_0$ is the only global minimum of $f$ in $\overline \Omega$ (see Lemma~\ref{le.start}), there exists $\delta>0$ such that $f\ge f(x_0)+\delta$ on supp$\nabla \chi$. In addition, because Hess$f(x_0)>0$,  
$\int_{\Omega}  \chi^2 e^{-\frac 2h f}  = (\pi h)^{\frac d2} (1+O(h) )e^{-\frac 2h f(x_0)}   /\sqrt{   \det \Hess f(x_0)}  $, in the limit $h\to 0$. Thus, for $h$ small enough, it holds:
$$\lambda_h\le \langle \mathsf L_{f,h}^{\mathsf{Di}, (0)}(\Omega)\chi, \chi\rangle_{L^2_w}= \frac h2 \frac {\int_{\Omega}  |\nabla \chi|^2 e^{-\frac 2h f}  }{\int_{\Omega}  \chi^2 e^{-\frac 2h f}}  \le Ce^{-\frac \delta h},$$ 
where $\delta>0$ is independent of $h$.
This ends the proof of   Corollary~\ref{thm-pc}.  
 \end{proof}

\section{Construction of quasi-modes associated with  $(z_k)_{k=1,\ldots,n}$}
\label{sec.zk-QM}
By  Corollary~\ref{thm-pc}, for $h$ small enough, the rank of the
spectral projector $\pi_h^{(1)}$ (defined by \eqref{eq.proj})
is the number $n$ of saddle points of $f$ and the rank of the spectral projector
$\pi_h^{(0)}$ is $1$ (the number of local minima of $f$). To prove Theorem~\ref{thm1},  we will
construct $n$ quasi-modes $\{\mathsf f_1^{(1)} , \ldots, \mathsf
f_n^{(1)}\} $ for $\mathsf L^{\mathsf{Di},(1)}_{f,h}(\Omega)$  and a
quasi-mode $\mathsf u^{(0)}$ for  $\mathsf
L^{\mathsf{Di},(0)}_{f,h}(\Omega)$ which form  respectively   a basis
of~Ran~$\pi_h^{(1)}$ and of~Ran~$\pi_h^{(0)}$. We will build
quasi-modes which satisfy appropriate  estimates, listed in Section~\ref{sec.strategy0}, in order  to
get the results of
Theorem~\ref{thm1}.

As already outlined in
Section~\ref{sec.strategy1}, the strategy to build the quasimode
$\mathsf f_k^{(1)}$ consists in   constructing   a
quasi-mode $ \mathsf v_k^{(1)}  \in \Lambda^1\mathcal C^\infty
(\overline \Omega)$ for $\Delta_{f,h}^{\mathsf{Di},(1)}(\Omega)$
associated with the saddle point $z_k\in \partial \Omega$ for each
$k\in \{1,\ldots,n\}$, from which a quasi-mode $\mathsf f_k^{(1)}= e^{\frac
  fh} \mathsf v_k^{(1)}$ for $\mathsf
L_{f,h}^{\mathsf{Di},(1)}(\Omega)$ is deduced. This quasi-mode~$ \mathsf v_k^{(1)} $ is
built  as follows. We first introduce in Section~\ref{sec:omegakM} a
subdomain~$\Omega_k^{\textsc{M}} $ of~$\Omega$ which satisfies some geometric conditions (in particular, $z_k$ is the only saddle point
of $f$ in $\overline{\Omega_k^{\textsc{M}}}$, and $\nabla f\cdot
\mathsf n_{\mathsf \Omega_k^{\textsc{M}}} \ge 0$ on $\partial \mathsf
\Omega_k^{\textsc{M}}$). Then, we introduce in Section~\ref{sec.Witten-TN}
an auxiliary Witten Laplacian on~$\Omega_k^{\textsc{M}}$ with mixed
Dirichlet-Neumann boundary conditions, and we prove that it has only one
eigenvalue $\lambda(\mathsf \Omega_k^{\textsc{M}})$ smaller than
$\mathsf ch$ when considered on functions and $1$-forms. 
 The quasi-mode $ \mathsf v_k^{(1)}$  is then defined using the
 principal $1$-eigenform of this Witten Laplacian (denoted by $
 \mathsf u_k^{(1)}$) multiplied by a suitable cut-off function, see
 Section~\ref{sec:vk1}.

Let us emphasize that since $\vert \nabla f(z_k)\vert =0$, the constructions of the quasi-mode $\mathsf
v_k^{(1)} $  are  very different from those done previously in the literature~\cite{DLLN, helffer-nier-06,DLLN-saddle1,HKN}. In particular, WKB approximations of 
$\mathsf v_k^{(1)} $ are not sufficient to prove the required
estimates (see Section~\ref{sec:WKB} for more details). Instead of using  a WKB-approximation, we will use  an asymptotic equivalent
of $\lambda(\mathsf \Omega_k^{\textsc{M}})$ in the limit $h\to 0$,
inspired by~\cite{DoNe2}.  For $\lambda(\mathsf \Omega_k^{\textsc{M}})$ to be   different from
$0$, we require in particular that   $\mathsf \Omega_k^{\textsc{M}}$
contains~$x_0$, which was not the case in \cite{DLLN}. 




 \subsection{Sufficient estimates on the quasi-modes for $\mathsf L^{\mathsf{Di},(0)}_{f,h}(\Omega)$ and $\mathsf L^{\mathsf{Di},(1)}_{f,h}(\Omega)$} 
 \label{sec.strategy0}


Let us exhibit sufficient conditions on the quasi-modes to get the
results of Theorem~\ref{thm1} (recall that~$n_0$ is the cardinal of
$\arg\min f|_{\partial \Omega}$, see~\eqref{eq.n0}).
 
 \begin{proposition} \label{ESTIME}
 Let us assume that \autoref{A} is satisfied. 
Assume that there exists a family  $\{\mathsf f_1^{(1)}, \ldots,
\mathsf f_n^{(1)}\} $ of smooth $1$-forms on $\overline \Omega$,  and
a smooth function $\mathsf u^{(0)}$ on $\overline \Omega$  such that:
 \begin{enumerate}
 \item The function $\mathsf u^{(0)}$ belongs to $H^1_w(\Omega)$ and is normalized in $L^2_w(\Omega)$. 
 For all $ k\in \left\{1,\ldots,n\right\}$, $\mathsf f_k^{(1)}$
 belongs to $\Lambda^1H^1_{w,\mbf T}(\Omega)$ and is normalized in $\Lambda^1L^2_w(\Omega)$. 
 \item
 \begin{itemize}
 \item[(a)]       There exists $\ve_1>0$  such that for all $ k\in \left\{1,\ldots,n\right\}$, in the limit $h\to 0$:
 \begin{equation}\label{eq:assump_2a_psi}
  \big \|    \big(1-\pi_h^{(1)}\big) \mathsf f_k^{(1)}   \big \|_{H^1_{w}(\Omega)}^2  \le e^{-\frac{\ve_1}{h}}.
     \end{equation} 
 \item[(b)] For any $r>0$, $\mathsf u^{(0)}$ can be chosen such that there exist $C_r>0$ such that for $h$ small enough:
 $$   \|   \nabla  \mathsf u^{(0)}\|_{L^2_w(\Omega)}^2   \le C_r e^{-\frac{2}{h}(f(z_1)-f(x_0) - r)}.$$ 
 \end{itemize}
\item   There exists $\ve_0>0$   such that  for $h$ small enough, $\forall (k,\ell)
\in \left\{1,\ldots,n\right\}^2$ with $k\neq \ell$:
 $$\big\vert \big\lp\mathsf f_k^{(1)}, \mathsf f_\ell^{(1)}\big\rp_{L^2_w(\Omega)}\big\vert  \le  e^{-\frac{\ve_0}{h}}  .$$
 \item
 \begin{itemize}
 \item[(a)] There exist constants
  $(\mathsf K_k)_{k=1,\ldots,n_0}$ and  $\mathsf p$ which do not depend on~$h$ such that for all $k\in \left\{1,\ldots,n_0\right\}$, in the limit $h\to 0$:
\begin{equation*}
  \big \lp       \nabla\mathsf u^{(0)}   ,   \mathsf f_k^{(1)}\big\rp_{L^2_w(\Omega)} =        \mathsf K_k\ h^{\mathsf p}  e^{-\frac{1}{h}(f(z_1)- f(x_0))}   \    (  1  +     O(\sqrt h )   ) ,
 \end{equation*}
  where we recall $f(z_k)=f(z_1)$ for $k=1,\dots,n_0$. 
If  $k>n_0$, it holds for $h$ small enough:
 \begin{equation*}
 \big \vert  \big \lp       \nabla\mathsf u^{(0)}   ,   \mathsf f_k^{(1)}\big\rp_{L^2_w(\Omega)} \big \vert \le  e^{-\frac{1}{h}(f(z_1)- f(x_0) +\ve)}.
 \end{equation*}
 \item[(b)] There exist  constants $(\mathsf b_k)_{k=1,\dots,n_0}$ and $\mathsf m$ which do not depend on~$h$ such that for all $ (k,\ell)
\in \left\{1,\ldots,n\right\}^2$, in the limit $h\to 0$:
 \begin{equation*}
   \int_{\Sigma_{z_\ell}}    \mathsf f_k^{(1)} \cdot \mathsf n_\Omega  \   e^{- \frac{2}{h} f}  d\sigma =\begin{cases}    0   &   \text{ if } k\neq \ell,\\
 -  \mathsf  b_k \, h^{\mathsf m }  \     e^{-\frac{1}{h} f(z_1)}  \    (  1  +     O(\sqrt h )    )   &  \text{ if } k=\ell \in \{1,\dots,n_0\}   \\
  O(e^{-\frac 1h (f(z_1) +c)}) &  \text{ if } k=\ell \in \{n_0+1,\ldots,n\}
   \end{cases} 
  \end{equation*}
 where all the $\Sigma_{z_\ell}$'s are such that \eqref{Sigma_k} holds.
  \end{itemize}
   \end{enumerate}
   Then,  in the limit $h\to 0$:
   $$\lambda_h=\frac{h^{2\mathsf p+1}}{2}   \, e^{-\frac{2}{h}(f(z_1)- f(x_0))}    \sum_{k=1}^{n_0} \mathsf K_k^2  \    (  1  +     O(\sqrt h )   ),
   $$
   where $\lambda_h$ is the principal eigenvalue of $\mathsf L^{\mathsf{Di},(0)}_{f,h}(\Omega)$.
In addition,    for all $k\in \left\{1,\ldots,n_0\right\}$, in the limit $h\to 0$:
 \begin{equation*} 
 \int_{\Sigma_{z_k}}  (\partial_{\mathsf n_\Omega} u_h)\,   e^{-\frac{2}{h}f} d\sigma = -\mathsf    K_k \mathsf b_k \ h^{\mathsf p+\mathsf m}  \,  e^{-\frac{1}{h}(2f(z_1)- f(x_0))}    \   (1+  O(\sqrt h) ),  
   \end{equation*} 
   where $u_h$ is the principal eigenfunction of $\mathsf L^{\mathsf{Di},(0)}_{f,h}(\Omega)$  which satisfies \eqref{eq.u_norma0}. Finally, there exits $c>0$ such that, when $h\to 0$
 $$
\int_{\partial \Omega \setminus \cup_{k=1}^{n_0}\Sigma_{z_k}}   \partial_{\mathsf n_\Omega} u_h \,   e^{- \frac{2}{h}  f} d\sigma= O\big(e^{-\frac{1}{h}(2f(z_1)-f(x_0)+c)} \big).
$$

\end{proposition}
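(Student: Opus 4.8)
The plan is to decompose the principal eigenfunction $u_h$ of $\mathsf L^{\mathsf{Di},(0)}_{f,h}(\Omega)$ via $\nabla u_h \in \Ran\,\pi_h^{(1)}$ and to use the quasi-modes $\{\mathsf f_1^{(1)},\dots,\mathsf f_n^{(1)}\}$ as a near-orthonormal basis of $\Ran\,\pi_h^{(1)}$. First I would record the key structural fact, coming from differentiating the eigenvalue equation $\mathsf L^{\mathsf{Di},(0)}_{f,h}(\Omega)u_h=\lambda_h u_h$ (see~\eqref{eq:L1-intro}): $\nabla u_h$ is an eigenform of $\mathsf L^{\mathsf{Di},(1)}_{f,h}(\Omega)$ with the same exponentially small eigenvalue $\lambda_h$, hence $\nabla u_h \in \Ran\,\pi_h^{(1)}$, a space of dimension $n$ by Corollary~\ref{thm-pc}. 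By assumption~2(a) each $\mathsf f_k^{(1)}$ is within $e^{-\ve_1/(2h)}$ of $\Ran\,\pi_h^{(1)}$, and by assumption~3 the Gram matrix of $\{\mathsf f_k^{(1)}\}$ is $I_n + O(e^{-\ve_0/h})$, so applying Gram--Schmidt to the projections $\pi_h^{(1)}\mathsf f_k^{(1)}$ yields an orthonormal basis $\{\mathsf e_k\}$ of $\Ran\,\pi_h^{(1)}$ with $\mathsf e_k = \mathsf f_k^{(1)} + O(e^{-c/h})$ in $\Lambda^1 H^1_w(\Omega)$ for some $c>0$.

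\textbf{Main computation.} Next I would write $\nabla u_h = \sum_{k=1}^n c_k \mathsf e_k$ with $c_k = \lp \nabla u_h, \mathsf e_k\rp_{L^2_w}$, and estimate the $c_k$ by pairing with $\mathsf u^{(0)}$. The point is that $\mathsf u^{(0)}$ is a normalized quasi-mode for $\mathsf L^{\mathsf{Di},(0)}_{f,h}(\Omega)$; since $\Ran\,\pi_h^{(0)} = \Span\, u_h$ is one-dimensional and $\lambda_h$ is exponentially small, standard quasi-mode arguments (Lemma~\ref{quadra} applied to $\mathsf L^{\mathsf{Di},(0)}_{f,h}(\Omega)$ with the bound on $\|\nabla\mathsf u^{(0)}\|_{L^2_w}$ from assumption~2(b)) give $\mathsf u^{(0)} = \pm u_h + O(e^{-c/h})$ in $H^1_w(\Omega)$, up to a sign we fix to be $+$. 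Then, for $k\le n_0$, integrating by parts,
\begin{align*}
\lp \nabla u_h, \mathsf f_k^{(1)}\rp_{L^2_w} &= \lp \nabla \mathsf u^{(0)}, \mathsf f_k^{(1)}\rp_{L^2_w} + O(e^{-c/h})\|\mathsf f_k^{(1)}\|_{H^1_w}\\
&= \mathsf K_k\, h^{\mathsf p}\, e^{-\frac1h(f(z_1)-f(x_0))}(1+O(\sqrt h)),
\end{align*}
using assumption~4(a); the remainder is absorbed because $f(z_1)-f(x_0) > 0$ is the smallest exponential scale appearing. Replacing $\mathsf f_k^{(1)}$ by $\mathsf e_k$ changes this only by $O(e^{-c'/h})$, so $c_k = \mathsf K_k h^{\mathsf p} e^{-\frac1h(f(z_1)-f(x_0))}(1+O(\sqrt h))$ for $k\le n_0$, and $|c_k| \le e^{-\frac1h(f(z_1)-f(x_0)+\ve)}$ for $k > n_0$. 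From $\lambda_h = \tfrac h2\|\nabla u_h\|_{L^2_w}^2 = \tfrac h2 \sum_k c_k^2$ (the quadratic form~\eqref{eq.Q}), the stated asymptotics $\lambda_h = \tfrac{h^{2\mathsf p+1}}2 e^{-\frac2h(f(z_1)-f(x_0))}\sum_{k=1}^{n_0}\mathsf K_k^2(1+O(\sqrt h))$ follows, the higher-saddle terms being lower order by assumption~4(a).

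\textbf{Boundary flux.} For the flux estimates I would compute, for $k\le n_0$,
$$\int_{\Sigma_{z_k}} (\partial_{\mathsf n_\Omega} u_h)\, e^{-\frac2h f}\,d\sigma = \int_{\Sigma_{z_k}} \nabla u_h\cdot \mathsf n_\Omega\, e^{-\frac2h f}\,d\sigma = \sum_{\ell=1}^n c_\ell \int_{\Sigma_{z_k}} \mathsf e_\ell\cdot\mathsf n_\Omega\, e^{-\frac2h f}\,d\sigma.$$
By assumption~4(b), $\int_{\Sigma_{z_k}} \mathsf f_\ell^{(1)}\cdot\mathsf n_\Omega\, e^{-\frac2h f}\,d\sigma$ vanishes for $\ell\ne k$, equals $-\mathsf b_k h^{\mathsf m} e^{-\frac1h f(z_1)}(1+O(\sqrt h))$ for $\ell = k$, and the exponentially small correction $\mathsf e_\ell - \mathsf f_\ell^{(1)} = O(e^{-c/h})$ contributes at most $O(e^{-\frac1h(f(z_1)+c')})$ to the boundary integral (trace estimates on $\Sigma_{z_k}\subset\subset\partial\Omega$). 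Hence only $\ell = k$ survives at leading order, giving
$$\int_{\Sigma_{z_k}} (\partial_{\mathsf n_\Omega} u_h)\, e^{-\frac2h f}\,d\sigma = -\mathsf K_k\mathsf b_k\, h^{\mathsf p+\mathsf m}\, e^{-\frac1h(2f(z_1)-f(x_0))}(1+O(\sqrt h)).$$
For the last claim, over $\partial\Omega\setminus\cup_{k=1}^{n_0}\Sigma_{z_k}$ each term $c_\ell \int \mathsf e_\ell\cdot\mathsf n_\Omega\, e^{-\frac2h f}$ is $O(e^{-\frac1h(2f(z_1)-f(x_0)+c)})$: for $\ell\le n_0$ this is the last case of assumption~4(b) (the $\Sigma_{z_\ell}$'s can be enlarged within $\Gamma_{z_\ell}$, so outside them the flux of $\mathsf f_\ell^{(1)}$ gains a factor $e^{-c/h}$), and for $\ell > n_0$ the smallness of $c_\ell$ does the job. \textbf{The main obstacle} I anticipate is controlling the passage from the abstract orthonormal basis $\{\mathsf e_k\}$ back to the explicit quasi-modes $\{\mathsf f_k^{(1)}\}$ inside the boundary integrals — i.e.\ making sure the exponentially small $H^1_w$-errors in $\mathsf e_k - \mathsf f_k^{(1)}$ really translate to an exponentially small improvement on the flux scale $e^{-f(z_1)/h}$ rather than merely on the $L^2_w$ scale — which requires a careful trace inequality with the weight $e^{-\frac2h f}$ and the fact that $f > f(z_1)$ strictly on $\partial\Omega\setminus\arg\min_{\partial\Omega} f$ except near the $z_k$'s, combined with the elliptic regularity of eigenforms of $\mathsf L^{\mathsf{Di},(1)}_{f,h}(\Omega)$.
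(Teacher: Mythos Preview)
Your overall strategy is correct and is exactly the finite-dimensional linear-algebra scheme the paper has in mind (the paper defers to \cite[Theorem~5]{DLLN-saddle1} for the details). However, there is a genuine gap in your ``main computation'' step, precisely where you pass from $\langle \nabla u_h,\mathsf f_k^{(1)}\rangle_{L^2_w}$ to $\langle \nabla \mathsf u^{(0)},\mathsf f_k^{(1)}\rangle_{L^2_w}$.

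You claim that Lemma~\ref{quadra} together with assumption~2(b) gives $\mathsf u^{(0)}=u_h+O(e^{-c/h})$ in $H^1_w(\Omega)$, and that ``the remainder is absorbed because $f(z_1)-f(x_0)>0$ is the smallest exponential scale''. But Lemma~\ref{quadra} only yields an $L^2_w$ bound, and even if you upgrade it (using $Q((1-\pi_h^{(0)})\mathsf u^{(0)})\le Q(\mathsf u^{(0)})$) you get at best
\[
\big\|\nabla\big((1-\pi_h^{(0)})\mathsf u^{(0)}\big)\big\|_{L^2_w}\le C_r^{1/2}\,e^{-\frac1h(f(z_1)-f(x_0)-r)},
\]
which is of the \emph{same} exponential scale as the leading term $\mathsf K_k h^{\mathsf p}e^{-\frac1h(f(z_1)-f(x_0))}$, not smaller. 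So your error term does not absorb.

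The correct repair is to use the intertwining relation \eqref{commutationL}: since $\mathsf u^{(0)}\in H^1_{w,\mbf T}(\Omega)$ (in practice it is compactly supported), one has $\nabla\pi_h^{(0)}\mathsf u^{(0)}=\pi_h^{(1)}\nabla\mathsf u^{(0)}$, hence
\[
\langle\nabla u_h,\mathsf f_k^{(1)}\rangle_{L^2_w}
=\frac{1}{\|\pi_h^{(0)}\mathsf u^{(0)}\|}\,\langle\nabla\mathsf u^{(0)},\pi_h^{(1)}\mathsf f_k^{(1)}\rangle_{L^2_w}
=\frac{1}{\|\pi_h^{(0)}\mathsf u^{(0)}\|}\Big(\langle\nabla\mathsf u^{(0)},\mathsf f_k^{(1)}\rangle_{L^2_w}-\langle\nabla\mathsf u^{(0)},(1-\pi_h^{(1)})\mathsf f_k^{(1)}\rangle_{L^2_w}\Big).
\]
Now the error is bounded by $\|\nabla\mathsf u^{(0)}\|_{L^2_w}\cdot\|(1-\pi_h^{(1)})\mathsf f_k^{(1)}\|_{L^2_w}\le C_r^{1/2}e^{-\frac1h(f(z_1)-f(x_0)-r)}\cdot e^{-\ve_1/(2h)}$, which for $r<\ve_1/2$ \emph{is} strictly smaller than the leading term. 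The point is that assumptions~2(a) and~2(b) must be used \emph{multiplicatively}, and the intertwining relation is what allows this. Your boundary-flux argument and your trace-inequality discussion are fine once the coefficients $c_k$ are correctly controlled; the weighted trace constant is indeed only polynomially bad in $h$, which is harmless against the exponential gains.
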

\noindent 
Let us emphasize that, even if this is not explicitely indicated, the
family $\{\mathsf f_1^{(1)} , \ldots,
\mathsf f_n^{(1)}\} $ depends on $h>0$, and the function $\mathsf u^{(0)}$ depends on $h>0$ and
$r>0$. The proof of Proposition~\ref{ESTIME} is based on finite dimensional
linear algebra computations, and is similar to the proof
of~\cite[Theorem~5]{DLLN-saddle1}. It is
therefore note reproduced here. Notice that
Equations~\eqref{eq.dnuh_asymptot} and~\eqref{eq.dnuh_asymptot2}  in
Theorem~\ref{thm1} and Equation~\eqref{eq.lambda_h-Equiv} in
Proposition~\ref{pr.LP-N} will follow from the construction of
quasi-modes  $\{\mathsf f_1^{(1)} , \ldots, \mathsf f_n^{(1)}\} $ and
$\mathsf u^{(0)}$ satisfying all the assumptions of
Proposition~\ref{ESTIME}. This construction is made in the rest of Section~\ref{sec.zk-QM}  (see
the formulas~\eqref{eq.kappa_0} and~\eqref{eq.constantK} for  the
constants $\mathsf b_k$,  $\mathsf m$,  $\mathsf K_k$, and~$\mathsf
p$, and Section~\ref{sec.proof0} for more details).

To prove  Equation~\eqref{eq.dnuh_asymptot3} in Theorem~\ref{thm1} (i.e. to get an asymptotic equivalent of $\int_{\Sigma_{z_k}}  (\partial_{\mathsf n_\Omega} u_h)\,   e^{-\frac{2}{h}f} d\sigma$ for $k>n_0$, as $h\to 0$), one needs stronger assumptions   on these quasi-modes. 

 \begin{proposition} \label{ESTIME2}
 Let us assume that \autoref{A} is satisfied.  Assume that there exists a family  $\{\mathsf f_1^{(1)} , \ldots,
\mathsf f_n^{(1)}\} $ of smooth $1$-forms on $\overline \Omega$,  and
a smooth function $\mathsf u^{(0)}$ on $\overline \Omega$  satisfying
all the assumptions of Proposition~\ref{ESTIME} with the following
additional requirements:
 \begin{enumerate}
 \item Concerning item 2(a) in Proposition~\ref{ESTIME}, there exists $\ve_2>0$  such that for all $ k\in \left\{1,\ldots,n\right\}$, in the limit $h\to 0$:
 \begin{equation}\label{eq:assump_2a_psi-estime2}
   \big \|    \big(1-\pi_h^{(1)}\big) \mathsf f_k^{(1)}   \big \|_{H^1_w(\Omega)}^2  \le  e^{-\frac{2}{h}( \max[f(z_n)-f(z_k), f(z_k)-f(z_1)] +\ve_2)}.
      \end{equation} 
 \item Concerning item 3 in  Proposition~\ref{ESTIME}, 
 there exists $\ve_3>0$   such that  $\forall (k,\ell)
 \in \left\{1,\ldots,n\right\}^2$ with $k>\ell$, in the limit $h\to 0$: 
  $$\big\vert \big\lp\mathsf f_k^{(1)}, \mathsf f_\ell^{(1)}\big\rp_{L^2_w(\Omega)}\big\vert \le  e^{-\frac{1}{h}(f(z_k)- f(z_\ell)+\varepsilon_3)} .$$

 \item Concerning  item 4(a) in Proposition~\ref{ESTIME},  there exist  $(\mathsf K_k)_{k=n_0,\ldots,n}$ and  $\mathsf p$ which do not depend on~$h$ such that for all  $k>n_0$, in the limit $h\to 0$:
\begin{equation*}
  \big \lp       \nabla\mathsf u^{(0)}   ,    \mathsf f_k^{(1)}\big\rp_{L^2_w(\Omega)} =        \mathsf K_k\ h^{\mathsf p}  e^{-\frac{1}{h}(f(z_k)- f(x_0))}   \    (  1  +     O(\sqrt h )   ) .
  \end{equation*}
 
 \item Concerning  item 4(b) in Proposition~\ref{ESTIME}, there exist  constants $(\mathsf b_k)_{k=n_0,\ldots,n}$ and $\mathsf m$ which do not depend on~$h$ such that for all $k
\in \left\{n_0+1,\ldots,n\right\}$, in the limit $h\to 0$:
 \begin{equation*}
   \int_{\Sigma_{z_k}}    \mathsf f_k^{(1)} \cdot \mathsf n_\Omega  \   e^{- \frac{2}{h} f}  d\sigma =- \mathsf  b_k \ h^{\mathsf m }  \     e^{-\frac{1}{h} f(z_k)}  \    (  1  +     O(\sqrt h )    ),
  \end{equation*}
 where all the $\Sigma_{z_k}$'s are such that \eqref{Sigma_k} holds. 
 \end{enumerate}
 \noindent
 Then,  for all $k\in \left\{ n_0+1,\ldots,n\right\}$, in the limit $h\to 0$:
 \begin{equation*} 
 \int_{\Sigma_{z_k}}  (\partial_{\mathsf n_\Omega} u_h)\,   e^{-\frac{2}{h}f} d\sigma = -\mathsf    K_k \mathsf b_k \ h^{\mathsf p+\mathsf m}  \,  e^{-\frac{1}{h}(2f(z_k)- f(x_0))}    \   (1+  O(\sqrt h) ).  
   \end{equation*} 

\end{proposition}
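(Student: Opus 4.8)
The plan is to run a finite-dimensional reduction inside $\Ran\,\pi_h^{(1)}$, which has dimension $n$ by Corollary~\ref{thm-pc}, following the scheme of Proposition~\ref{ESTIME} but keeping track of sharper exponential scales. First I would record two structural facts. Since $\Ran\,\pi_h^{(0)}=\mathrm{Span}\,u_h$ is one–dimensional and $\mathsf u^{(0)}$ is $L^2_w$–normalized with $\|(1-\pi_h^{(0)})\mathsf u^{(0)}\|_{L^2_w}^2\le \tfrac1{\mathsf c}Q_{f,h}^{\mathsf{Di},(0)}(\Omega)(\mathsf u^{(0)})=\tfrac{h}{2\mathsf c}\|\nabla\mathsf u^{(0)}\|_{L^2_w}^2$ (by Lemma~\ref{quadra}, the spectral gap, and item~2(b), this is exponentially small), one has, after fixing the sign of $u_h$, $u_h=\pi_h^{(0)}\mathsf u^{(0)}/\|\pi_h^{(0)}\mathsf u^{(0)}\|$ with $\|\pi_h^{(0)}\mathsf u^{(0)}\|_{L^2_w}=1+O(e^{-c/h})$; and, by the intertwining relation~\eqref{commutationL} with $E=[0,\mathsf c]$, $\nabla u_h$ (identified with $\mathsf d u_h$ for the Euclidean metric) equals $\|\pi_h^{(0)}\mathsf u^{(0)}\|^{-1}\,\pi_h^{(1)}\nabla\mathsf u^{(0)}\in\Ran\,\pi_h^{(1)}$.

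Next I would set $\tilde{\mathsf f}_k:=\pi_h^{(1)}\mathsf f_k^{(1)}$. Using \eqref{eq:assump_2a_psi-estime2} and item~2 of Proposition~\ref{ESTIME2}, together with the ordering~\eqref{eq.n0} (in particular $\max[f(z_n)-f(z_\ell),f(z_\ell)-f(z_1)]\ge f(z_k)-f(z_\ell)$ for $k>\ell$, which lets one absorb the $(1-\pi_h^{(1)})$–corrections into the off–diagonal scale), the Gram matrix $M=(\lp\tilde{\mathsf f}_k,\tilde{\mathsf f}_\ell\rp_{L^2_w})$ satisfies $M_{\ell\ell}=1+O(e^{-c/h})$ and $|M_{k\ell}|\le Ce^{-\frac1h(|f(z_k)-f(z_\ell)|+\varepsilon_4)}$ for $k\ne \ell$, some $\varepsilon_4>0$; hence $M$ is invertible, the $\tilde{\mathsf f}_k$ form a basis of $\Ran\,\pi_h^{(1)}$, and $\nabla u_h=\sum_k c_k\tilde{\mathsf f}_k$ with $Mc=b$, $b_\ell=\lp\nabla u_h,\tilde{\mathsf f}_\ell\rp_{L^2_w}=\lp\nabla u_h,\mathsf f_\ell^{(1)}\rp_{L^2_w}=\|\pi_h^{(0)}\mathsf u^{(0)}\|^{-1}\lp\nabla\mathsf u^{(0)},\tilde{\mathsf f}_\ell\rp_{L^2_w}$. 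By item~4(a) of Proposition~\ref{ESTIME} for $\ell\le n_0$ and item~3 of Proposition~\ref{ESTIME2} for $\ell>n_0$, together with item~2(b) (chosen with $r<\varepsilon_2$) to discard $\lp\nabla\mathsf u^{(0)},(1-\pi_h^{(1)})\mathsf f_\ell^{(1)}\rp_{L^2_w}$, one gets $b_\ell=\mathsf K_\ell\,h^{\mathsf p}e^{-\frac1h(f(z_\ell)-f(x_0))}(1+O(\sqrt h))$ uniformly in $\ell$ (with $f(z_\ell)=f(z_1)$ for $\ell\le n_0$). A bootstrap on the weighted quantities $d_\ell:=|c_\ell|h^{-\mathsf p}e^{\frac1h(f(z_\ell)-f(x_0))}$, using $|M_{k\ell}|\,e^{\frac1h(f(z_\ell)-f(z_k))}\le Ce^{-\varepsilon_4/h}$ and summing, gives first $|c_\ell|\le Ch^{\mathsf p}e^{-\frac1h(f(z_\ell)-f(x_0))}$ and then, re–injecting, $c_\ell=\mathsf K_\ell\,h^{\mathsf p}e^{-\frac1h(f(z_\ell)-f(x_0))}(1+O(\sqrt h))$; the ordering~\eqref{eq.n0} is again what makes all cross terms subdominant.

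Finally I would compute, for $k\in\{n_0+1,\dots,n\}$,
\[
\int_{\Sigma_{z_k}}(\partial_{\mathsf n_\Omega}u_h)\,e^{-\frac2hf}\,d\sigma=\sum_{\ell=1}^{n} c_\ell\int_{\Sigma_{z_k}}\tilde{\mathsf f}_\ell\cdot\mathsf n_\Omega\,e^{-\frac2hf}\,d\sigma,
\]
and split $\tilde{\mathsf f}_\ell=\mathsf f_\ell^{(1)}-(1-\pi_h^{(1)})\mathsf f_\ell^{(1)}$. The $\mathsf f_\ell^{(1)}$–part vanishes for $\ell\ne k$ and equals $-\mathsf b_k h^{\mathsf m}e^{-\frac1hf(z_k)}(1+O(\sqrt h))$ for $\ell=k$ by item~4 of Proposition~\ref{ESTIME2}, so its contribution is $-\mathsf K_k\mathsf b_k\,h^{\mathsf p+\mathsf m}e^{-\frac1h(2f(z_k)-f(x_0))}(1+O(\sqrt h))$, the claimed value. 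For the $(1-\pi_h^{(1)})$–part I would use a weighted trace inequality $\int_{\partial\Omega}|\omega|^2e^{-\frac2hf}d\sigma\le \tfrac Ch\|\omega\|_{H^1_w(\Omega)}^2$, Cauchy–Schwarz on $\Sigma_{z_k}$, and Laplace's method $\int_{\Sigma_{z_k}}e^{-\frac2hf}d\sigma=O(h^{(d-1)/2}e^{-\frac2hf(z_k)})$ (valid since $z_k$ is a non–degenerate minimum of $f|_{\partial\Omega}$ and $\overline{\Sigma_{z_k}}\subset\mathsf W_+^{z_k}$ forces $f>f(z_k)$ on $\overline{\Sigma_{z_k}}\setminus\{z_k\}$) to bound $|c_\ell|\cdot|\int_{\Sigma_{z_k}}(1-\pi_h^{(1)})\mathsf f_\ell^{(1)}\cdot\mathsf n_\Omega\,e^{-\frac2hf}d\sigma|$ by a constant times $h^{\mathsf p+(d-3)/4}\,e^{-\frac1h(f(z_\ell)-f(x_0)+f(z_k)+\max[f(z_n)-f(z_\ell),f(z_\ell)-f(z_1)]+\varepsilon_2)}$; using once more the ordering~\eqref{eq.n0} ($f(z_n)-f(z_\ell)\ge f(z_k)-f(z_\ell)$ when $\ell<k$, trivially otherwise), this exponent exceeds $2f(z_k)-f(x_0)$ by a fixed positive amount for every $\ell$, so the term is absorbed into the $O(\sqrt h)$ factor. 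Summing over $\ell$ yields the asserted asymptotics.

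The main obstacle is entirely the bookkeeping of exponential scales: one must verify that every error — the non–orthonormality corrections $M-I$, the $(1-\pi_h^{(1)})$–components appearing both in the interior pairings and in the boundary fluxes, and the $\|(1-\pi_h^{(0)})\mathsf u^{(0)}\|$ discrepancy — is exponentially smaller than the leading term $h^{\mathsf p+\mathsf m}e^{-\frac1h(2f(z_k)-f(x_0))}$ for \emph{every} index $k>n_0$. The strengthened hypotheses of Proposition~\ref{ESTIME2} are precisely calibrated so that, combined with~\eqref{eq.n0}, this holds; they are themselves supplied by the quasi-mode construction of Section~\ref{sec.zk-QM} under the geometric assumptions~\eqref{hypo1} and~\eqref{hypo2}.
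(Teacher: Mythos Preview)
Your argument is correct and follows essentially the same finite-dimensional reduction in $\Ran\,\pi_h^{(1)}$ that the paper invokes by reference to \cite[Proposition~25]{DLLN}: project the quasi-modes, control the Gram matrix, expand $\nabla u_h$, and check that every cross term and every $(1-\pi_h^{(1)})$-remainder is exponentially subdominant with respect to $h^{\mathsf p+\mathsf m}e^{-\frac1h(2f(z_k)-f(x_0))}$ using the strengthened hypotheses and the ordering~\eqref{eq.n0}. The only cosmetic point is that your weighted trace inequality should read $\int_{\partial\Omega}|\omega|^2e^{-\frac2hf}d\sigma\le Ch^{-2}\|\omega\|_{H^1_w(\Omega)}^2$ rather than $Ch^{-1}$ (one loses an extra $h^{-1}$ from the derivative of $e^{-f/h}$), but this only shifts a polynomial prefactor and leaves your exponential bookkeeping, which is what matters here, intact.
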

\noindent
Notice that the assumptions of  Proposition~\ref{ESTIME2} on the
quasi-modes are stronger than those of Proposition~\ref{ESTIME} (see indeed~\eqref{eq.n0}).
Again, the proof of Proposition~\ref{ESTIME2} is similar to the proof
of~\cite[Proposition 25]{DLLN}, and is therefore not reproduced
here. Notice that Equation~\eqref{eq.dnuh_asymptot3}  in
Theorem~\ref{thm1} will follow from the construction of quasi-modes
satisfying the assumptions of Proposition~\ref{ESTIME2}. To construct such quasi-modes, the assumptions \eqref{hypo1} on the Agmon distance and \eqref{hypo2} on $f(x_0)$ will be used. 

Let us finally mention that once Theorem~\ref{thm1} and Proposition~\ref{pr.LP-N} are proved, Theorem~\ref{thm2} and Corollary~\ref{co.ek} are direct consequences of Theorem~\ref{thm1}    together with~\eqref{eq.dens},~\eqref{eq.expk}, and Proposition~\ref{pr.LP-N}.

 \subsection{Construction of the subdomains $(\mathsf
   \Omega_k^{\textsc{M}})_{k=1,\ldots,n}$ of $\Omega$}\label{sec:omegakM}
Let us recall that $\Omega$ is a smooth bounded domain of $\mathbb R^d$. 
In this section, we construct a Lipschitz subdomain $\mathsf \Omega_k^{\textsc{M}}$ of $\Omega$ associated with each saddle point $z_k$ of $f$ in $\partial \Omega$, $k=1,\ldots,n$. This subdomain will  then be used to define in the next section  a Witten Laplacian   
with mixed Dirichlet-Neumann boundary
conditions on $\pa \mathsf \Omega_k^{\textsc{M}}$. We   construct
$\mathsf \Omega_k^{\textsc{M}}$  such that: (i) its boundary is
composed of two parts $\Gamma_{k,\mbf D}^{\textsc{M}}$ and
$\Gamma_{k,\mbf N}^{\textsc{M}}$, (ii) $\partial_{\mathsf n_{  \mathsf
    \Omega_k^{\textsc{M}} } }f=0$ on $\Gamma_{k,\mbf D}^{\textsc{M}}$
and $\partial_{\mathsf n_{  \mathsf \Omega_k^{\textsc{M}} } }f>0$ on
$\Gamma_{k,\mbf N}^{\textsc{M}}$,  (iii) $x_0\in \mathsf
\Omega_k^{\textsc{M}}$, and finally (iv) $\Gamma_{k,\mbf D}^{\textsc{M}}$ and $\Gamma_{k,\mbf N}^{\textsc{M}}$ meet at an angle strictly smaller than $\pi$ (see  Definition~\ref{de.angle} below). Conditions (ii) and (iii) will then be used to deduce in  Section~\ref{sec.Witten-TN} the number of small eigenvalues of this  Witten Laplacian on $\mathsf \Omega_k^{\textsc{M}}$, and  the condition (iv) will be necessary to have existence of traces and  subelliptic estimates for forms in the domain of this Witten Laplacian. 
\medskip

\subsubsection{Preliminary results}
Before going through the construction of   $\mathsf \Omega_k^{\textsc{M}}$ (see  Proposition~\ref{pr.omegakpoint}), we need preliminary results stated in  Propositions~\ref{pr.gammak} and~\ref{pr.omegak}. 

\begin{proposition}\label{pr.gammak}
 Let us assume that the assumption \autoref{A} is satisfied. Consider $k\in \{1,\ldots,n\}$ and $\mathsf F$ a compact subset of the open set $\Gamma_{z_k}$. Then, there exists a $\mathcal C^\infty$ simply connected subdomain~$\Gamma_{\mathsf F}$ of  $\partial \Omega$ containing~$z_k$ such that $ \overline {\Gamma_{\mathsf F}} \subset \Gamma_{z_k}$, ${\mathsf F}\subset \Gamma_{\mathsf F}$, and 
\begin{equation}\label{eq.ultrastable}
\nabla f\cdot \mathsf n_{ \Gamma_{\mathsf F} }>0  \text{ on } \pa \Gamma_{\mathsf F},
\end{equation}
 where $\mathsf n_{ \Gamma_{\mathsf F}}\in T\partial \Omega$ is the
 unit outward normal to $ \Gamma_{\mathsf F}$. 
\end{proposition}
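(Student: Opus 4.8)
The plan is to construct $\Gamma_{\mathsf F}$ by a sublevel-set argument for an auxiliary function built from the gradient flow, exploiting the positive stability of $\Gamma_{z_k}$. First I would recall the key structural facts available from \autoref{A}: on $\Gamma_{z_k}$ one has $\overline{\Gamma_{z_k}}\subset \mathsf W_+^{z_k}$ and $\Gamma_{z_k}$ is positively stable for the dynamics~\eqref{eq.hbb}, and moreover $\partial_{\mathsf n_\Omega}f = 0$ on $\Gamma_{z_k}$; also $z_k$ is a nondegenerate local minimum of $f|_{\partial\Omega}$. Since every point of $\Gamma_{z_k}$ lies on the stable manifold of $z_k$ (restricted to $\partial\Omega$, the dynamics~\eqref{eq.hbb} is tangent to $\partial\Omega$ there because $\partial_{\mathsf n_\Omega}f=0$), the flow on $\Gamma_{z_k}$ converges to $z_k$; consequently $f|_{\partial\Omega}$ has no critical point in $\Gamma_{z_k}\setminus\{z_k\}$ and $z_k=\arg\min_{\overline{\Gamma_{z_k}}}f|_{\partial\Omega}$ up to shrinking (this needs a short argument: along a flow line $t\mapsto \varphi_x(t)$ in $\Gamma_{z_k}$, $\frac{d}{dt}f(\varphi_x(t)) = -|\nabla_{\mathbf T}f|^2(\varphi_x(t))\le 0$, so $f$ decreases to $f(z_k)$, forcing $f(x)\ge f(z_k)$ on $\Gamma_{z_k}$ and ruling out interior critical points).

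Next I would pick a regular value. Let $a_0 = \max_{\mathsf F} f|_{\partial\Omega}$; since $\mathsf F$ is a compact subset of the open set $\Gamma_{z_k}$ and $f(z_k)$ is the strict minimum, $a_0 \ge f(z_k)$ and one can enlarge $\mathsf F$ slightly inside $\Gamma_{z_k}$ to assume $a_0 > f(z_k)$. By Sard's theorem pick a regular value $a\in(a_0, \min_{\partial\Gamma_{z_k}^{\mathrm{approx}}}\dots)$ — more precisely, choose $a$ with $a_0 < a < \inf\{f(y):y\in \Gamma_{z_k},\ \mathrm{dist}(y,\partial\Gamma_{z_k})<\delta\}$ for suitable $\delta$, so that the connected component $\Gamma_{\mathsf F}$ of $\{y\in\Gamma_{z_k}: f|_{\partial\Omega}(y)<a\}$ containing $z_k$ is relatively compact in $\Gamma_{z_k}$, is $\mathcal C^\infty$ (as $a$ is regular), and contains $\mathsf F$. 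Simple connectedness follows because $\Gamma_{\mathsf F}$ deformation retracts onto $z_k$ along the (reparametrized) gradient flow of $f|_{\partial\Omega}$, which is well-defined and stays in $\Gamma_{\mathsf F}$ by positive stability and the fact that $f|_{\partial\Omega}$ decreases along it. (If $d-1\ge 2$ one may need to take the component and, if necessary, fill in holes; but the retraction argument directly gives contractibility, hence simple connectedness.)

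The remaining point is the boundary inequality~\eqref{eq.ultrastable}. The outward normal $\mathsf n_{\Gamma_{\mathsf F}}$ to $\Gamma_{\mathsf F}$ inside $\partial\Omega$ at a point $y\in\partial\Gamma_{\mathsf F} = \{f|_{\partial\Omega}=a\}\cap(\text{component})$ is $\mathsf n_{\Gamma_{\mathsf F}}(y) = \nabla_{\mathbf T}f(y)/|\nabla_{\mathbf T}f(y)|$, since $a$ is a regular value and $f|_{\partial\Omega}$ increases outward. Because $\partial_{\mathsf n_\Omega}f = 0$ on $\Gamma_{z_k}\supset \partial\Gamma_{\mathsf F}$, we have $\nabla f(y) = \nabla_{\mathbf T}f(y)$ there, whence
\[
\nabla f(y)\cdot \mathsf n_{\Gamma_{\mathsf F}}(y) = \frac{|\nabla_{\mathbf T}f(y)|^2}{|\nabla_{\mathbf T}f(y)|} = |\nabla_{\mathbf T}f(y)| > 0,
\]
the positivity holding because $y\ne z_k$ is not a critical point of $f|_{\partial\Omega}$. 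This gives~\eqref{eq.ultrastable}.

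I expect the main obstacle to be purely topological bookkeeping rather than analytic: ensuring that the chosen sublevel component is genuinely simply connected and smooth while still containing the prescribed compact set $\mathsf F$ and avoiding $\partial\Gamma_{z_k}$. The retraction-along-the-flow argument handles contractibility cleanly, but one must be careful that the reparametrized gradient flow of $f|_{\partial\Omega}$ does not leave $\Gamma_{\mathsf F}$ — this is exactly where positive stability of $\Gamma_{z_k}$ (together with monotonicity of $f|_{\partial\Omega}$ along the flow) is used. The choice of the regular value $a$ via Sard, squeezed strictly between $\max_{\mathsf F}f$ and the values of $f$ near $\partial\Gamma_{z_k}$, is what makes $\overline{\Gamma_{\mathsf F}}\subset\Gamma_{z_k}$ and smoothness simultaneous.
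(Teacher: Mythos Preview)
Your sublevel-set approach has a genuine gap: nothing in Assumption~\autoref{A} guarantees that
\[
\max_{\mathsf F} f|_{\partial\Omega} \;<\; \inf\bigl\{f(y):\ y\in\Gamma_{z_k},\ \mathsf d_{\partial\Omega}(y,\partial\Gamma_{z_k})<\delta\bigr\}
\]
for some $\delta>0$. Item~2(a) of~\autoref{A} only asks that $\Gamma_{z_k}$ be an open, positively stable set with $\overline{\Gamma_{z_k}}\subset\mathsf W_+^{z_k}$; it need not be (or contain) any sublevel set of $f|_{\partial\Omega}$ large enough to cover~$\mathsf F$. For a concrete obstruction in $d=3$, take local coordinates on $\partial\Omega$ near $z_k$ with $f|_{\partial\Omega}(x,y)=x^2+y^2$. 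The radial flow makes the thin strip $\Gamma_{z_k}=\{|y|<\varepsilon,\ |x|<L\}$ positively stable, but if $\mathsf F=\{(L/2,0)\}$ then $\max_{\mathsf F}f=L^2/4$, while $\partial\Gamma_{z_k}$ contains points with $f\approx\varepsilon^2\ll L^2/4$. Any sublevel $\{f|_{\partial\Omega}<a\}$ with $a>L^2/4$ spills out of $\Gamma_{z_k}$; restricting to $\Gamma_{z_k}$ produces a set whose closure meets $\partial\Gamma_{z_k}$, its boundary is no longer a level set of $f$, and your normal computation for~\eqref{eq.ultrastable} no longer applies.

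The paper avoids this by not relying on a single sublevel set. It fixes a small sublevel $\mathsf C_{z_k}=\{f|_{\Gamma_{z_k}}<f(z_k)+r\}$ and then sweeps it outward along the \emph{backward} flow $\gamma_x(s)=\varphi_x(-s)$ up to a variable smooth time $\beta(x)$, $x\in\partial\mathsf C_{z_k}$, obtained by smoothing an upper-semicontinuous ``last exit time from a neighborhood of $\mathsf F$'' that is shown to be strictly below the (lower-semicontinuous) first exit time from $\Gamma_{z_k}$. The resulting $\Gamma_{\mathsf F}$ has boundary $\{\gamma_x(\beta(x)):x\in\partial\mathsf C_{z_k}\}$, a smooth hypersurface transverse to $\nabla f$ because $(x,t)\mapsto\gamma_x(t)$ is a diffeomorphism; this yields $\nabla f\cdot\mathsf n_{\Gamma_{\mathsf F}}>0$. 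Your observation that $\nabla f=\nabla_{\mbf T}f$ on $\Gamma_{z_k}$ is correct and is essentially what makes the final transversality work, but it only pays off once one has manufactured a domain with $\overline{\Gamma_{\mathsf F}}\subset\Gamma_{z_k}$ and boundary transverse to the flow --- which the flow-sweeping construction delivers and the sublevel-set construction does not in general.
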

Since $\Omega$ is a  stable  domain  for the dynamics~\eqref{eq.hbb},
one can prove a similar result on $x_0$ and $\Omega$, as the one
obtained in Proposition~\ref{pr.gammak} on $z_k$ and $\Gamma_{z_k}$. 

\begin{proposition}\label{pr.omegak}
 Let us assume that the assumption \autoref{A} is satisfied. Then, for any compact subset~$\mathsf K$ of $\Omega$ there exists a $\mathcal C^\infty $ simply connected   subdomain $\Omega_{\mathsf K}$ of $\Omega$ containing $x_0$ such that $\mathsf K\subset \Omega_{\mathsf K}$, $\overline{\Omega_{\mathsf K}}\subset \Omega$, and 
$$\nabla f\cdot \mathsf n_{ \Omega_{\mathsf K}}> 0  \text{ on } \partial \Omega_{\mathsf K}.$$
\end{proposition}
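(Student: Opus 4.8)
The plan is to mirror the construction of Proposition~\ref{pr.gammak}, with $x_0$ (the attractor of the gradient flow~\eqref{eq.hbb} on all of $\Omega$, by item~1 of~\autoref{A}) playing the role of the saddle $z_k$ and $\Omega$ the role of $\Gamma_{z_k}$. First, I would reduce the problem to enlarging a small smooth neighborhood of $x_0$. Fix a regular value $a$ of $f$ with $f(x_0)<a<\min_{\partial\Omega}f$ (such $a$ exist by Lemma~\ref{le.start} and Sard's theorem), taken close enough to $f(x_0)=\min_{\overline\Omega}f$ that, by the Morse lemma near the non-degenerate minimum $x_0$, the connected component $V_a$ of $\{f<a\}$ containing $x_0$ is diffeomorphic to a ball. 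Then $V_a$ is a $\mathcal C^\infty$ simply connected domain with $\overline{V_a}\subset\Omega$, it is positively stable for~\eqref{eq.hbb} (being a component of a sublevel set of $f$), and $\nabla f\cdot\mathsf n_{V_a}=|\nabla f|>0$ on $\partial V_a\subset\{f=a\}$. Since every trajectory issued from $\Omega$ converges to $x_0\in V_a$ and the open sets $\varphi_{-s}(V_a)$, $s\ge 0$, form an increasing family whose union contains $\Omega$, a compactness argument gives $T>0$ with $\mathsf K\cup\overline{V_a}\subset\varphi_{-T}(V_a)$; equivalently, the compact set $C:=\varphi_{[0,T]}(\mathsf K)\cup\overline{V_a}$ is forward-invariant for~\eqref{eq.hbb}, contained in $\Omega$, and contains $x_0$ and $\mathsf K$.

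Second, I would regularize $C$ into the desired domain $\Omega_{\mathsf K}$. This is, for the attractor $x_0$ in $\Omega$, the exact analogue of~\cite[Proposition~80]{DLLN} (used in Proposition~\ref{pr.gammak} for the attractor $z_k$ of the flow restricted to $\mathsf W_{+}^{z_k}$): starting from a compact forward-invariant set $C$ sitting in the interior of $\Omega$ and containing the unique critical point $x_0$ of $f$ in $\Omega$, one produces a $\mathcal C^\infty$ bounded domain $\Omega_{\mathsf K}$ with $C\subset\Omega_{\mathsf K}$ and $\overline{\Omega_{\mathsf K}}\subset\Omega$ which is strictly positively stable, i.e.\ along $\partial\Omega_{\mathsf K}$ the vector field $-\nabla f$ points strictly inward. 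One chooses $\varepsilon>0$ small enough that the $\varepsilon$-neighborhood of $C$ has closure in $\Omega$, flows it forward for a finite time (which, since $f$ decreases strictly off its critical points and $x_0$ is the only critical point in this region, draws the boundary into the interior, except near $\overline{V_a}$ where the sign of $\nabla f\cdot\mathsf n$ is already strictly positive), and smooths the resulting Lipschitz region; the absence of critical points of $f$ in the relevant compact annular region lets one carry out the smoothing while preserving the strict sign of $-\nabla f\cdot\mathsf n$. Simple connectedness of $\Omega_{\mathsf K}$ is kept because $C$ arises from the ball $V_a$ by a flow-out and intersection that does not create topology (or, failing that, one fills in the bounded complementary components, which remain inside $\Omega$). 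Finally, ``$-\nabla f$ strictly inward on $\partial\Omega_{\mathsf K}$'' is exactly $\nabla f\cdot\mathsf n_{\Omega_{\mathsf K}}>0$ on $\partial\Omega_{\mathsf K}$, which is the claim.

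The main obstacle is this second step. The gradient flow~\eqref{eq.hbb} need not be well behaved near $\partial\Omega$: since Lemma~\ref{le.start} only gives $\partial_{\mathsf n_\Omega}f\ge 0$ (not $>0$) on $\partial\Omega$, trajectories issued from outside $\overline\Omega$ may enter $\Omega$, so a naive flow-out of $V_a$ need not stay inside $\Omega$ and cannot be used directly. The care required is therefore to perform the enlargement and smoothing of $C$ entirely within the interior of $\Omega$, using the compactness of $C$, the fact that $C$ is already forward-invariant, and the absence of critical points of $f$ on $\overline{\Omega_{\mathsf K}}\setminus\{x_0\}$ to maintain the strict inward-pointing condition on $\partial\Omega_{\mathsf K}$. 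This is precisely the type of argument carried out in~\cite[Proposition~80]{DLLN} and in the proof of Proposition~\ref{pr.gammak}, which we follow with the obvious modifications.
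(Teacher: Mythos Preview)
Your instinct to mirror Proposition~\ref{pr.gammak} is exactly right, and the paper does precisely this. However, the paper's execution is considerably more direct than your two-step plan. Rather than first building an auxiliary compact forward-invariant set $C$ and then attempting to ``regularize'' it, the paper simply runs the construction of Proposition~\ref{pr.gammak} verbatim with $(x_0,\Omega,\mathsf K)$ in place of $(z_k,\Gamma_{z_k},\mathsf F)$: take $\mathsf C_{x_0}=(f|_\Omega)^{-1}((-\infty,f(x_0)+r))$, define the backward exit time $s_\Omega(x)=\inf\{s\ge0:\gamma_x(s)\notin\Omega\}$ on $\partial\mathsf C_{x_0}$, squeeze a smooth $\beta$ between the upper-semicontinuous $s_{\mathsf O_{\mathsf K}}$ and the lower-semicontinuous $s_\Omega$, and set $\Omega_{\mathsf K}=\mathsf C_{x_0}\cup\{\gamma_x(s):x\in\partial\mathsf C_{x_0},\,s\in[0,\beta(x))\}$. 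The \emph{only} new point is that $s_\Omega$ can now take the value $+\infty$, because backward trajectories from $\partial\mathsf C_{x_0}$ may converge to one of the critical points of $f$ lying on $\partial\Omega$ without ever exiting $\Omega$; the paper checks that $s_\Omega:\partial\mathsf C_{x_0}\to\mathbb R_+\cup\{+\infty\}$ is nevertheless lower semicontinuous, after which every step of the proof of Proposition~\ref{pr.gammak} goes through unchanged.

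Your ``main obstacle'' paragraph is close to this observation but misidentifies the issue slightly (the problem is not that outside trajectories may enter $\Omega$, but that inside backward trajectories may never leave, making $s_\Omega=+\infty$). More importantly, your second step---taking an $\varepsilon$-neighborhood of $C$, flowing it forward, and smoothing---is left vague at exactly the point where care is needed (why does flowing forward produce a set whose boundary has strictly inward-pointing $-\nabla f$? how is the smoothing done while preserving this sign?). The paper's route avoids this entirely: smoothness of $\partial\Omega_{\mathsf K}$ and the strict sign of $\nabla f\cdot\mathsf n_{\Omega_{\mathsf K}}$ come for free from the diffeomorphism $(x,t)\mapsto\gamma_x(t)$ and the smoothness of $\beta$, exactly as in Step~3b of the proof of Proposition~\ref{pr.gammak}. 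So your plan can be made to work, but the cleaner argument is to drop the intermediate set $C$ altogether and reduce the whole proof to the single sentence ``$s_\Omega$ is lower semicontinuous in $[0,+\infty]$''.
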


The proofs of Propositions~\ref{pr.gammak} and~\ref{pr.omegak} are 
tedious, and we therefore postpone them to Section~\ref{sec:gammak},
in the appendix.

\subsubsection{Construction of $\mathsf \Omega_k^{\textsc{M}}$}
We are now in position to construct, for each $k\in \{1,\ldots,n\}$,
the subdomain $\mathsf \Omega_k^{\textsc{M}}$ of $\Omega$ associated
with the saddle point $z_k$ and its neighborhood $\Sigma_{z_k}$ (see~\eqref{Sigma_k}).

\begin{proposition}\label{pr.omegakpoint}
 Let us assume that the assumption \autoref{A} is satisfied and
 consider $k\in \{1,\ldots,n\}$. Then, there exists a Lipschitz
 subdomain $\mathsf \Omega_k^{\textsc{M}}$ of $\Omega$ containing
 $x_0$ and such that:
\begin{enumerate}
\item  It holds $\pa \mathsf \Omega_k^{\textsc{M}} \cap \partial \Omega = \overline{\Gamma_{k,\mbf D}^{\textsc{M}}}$ where $\Gamma_{k,\mbf D}^{\textsc{M}}$ is a $\mathcal C^\infty$ subdomain of $\Gamma_{z_k}$ containing $\overline{\Sigma_{z_k}}$ which satisfies:
\begin{enumerate}
\item  $\nabla f\cdot \mathsf n_{ \Gamma_{k,\mbf D}^{\textsc{M}} }>0  \text{ on } \pa \Gamma_{k,\mbf D}^{\textsc{M}}$ (recall that $ \mathsf n_{ \Gamma_{k,\mbf D}^{\textsc{M}} }\in T\partial \Omega\cap (T\pa \Gamma_{k,\mbf D}^{\textsc{M}})^\bot$ is the unit outward normal to $  \Gamma_{k,\mbf D}^{\textsc{M}}$) and
 \item  \text{ a.e. on }    $\overline{ \Gamma_{k,\mbf D}^{\textsc{M}}}$,
$$\nabla f\cdot  \mathsf n_{ \mathsf \Omega_k^{\textsc{M}}}=0,$$
where, here and in the following, a.e. is with respect  to the surface measure on $\pa  \mathsf \Omega_k^{\textsc{M}}$. 
\end{enumerate}

 \item On  $\Gamma_{k,\mbf N}^{\textsc{M}}:=\pa \mathsf \Omega_k^{\textsc{M}} \cap   \Omega$ it holds a.e.:
 $$ \nabla f \cdot \mathsf n_{\mathsf \Omega_k^{\textsc{M}}} >0.$$

 \item The sets $\Gamma_{k,\mbf D}^{\textsc{M}}$ and $\Gamma_{k,\mbf N}^{\textsc{M}}$ meet  at an angle smaller than
  $\pi$ (see Definition~\ref{de.angle} below). This angle will be actually $\pi/2$ from the construction below.  
  
 \item For all $\delta>0$, $\mathsf \Omega_k^{\textsc{M}}$ can be chosen such that 
\begin{equation}\label{eq.gamma2_close_to_Bzc}
\sup_{x\in \Gamma_{k,\mbf N}^{\textsc{M}}}\mathsf d_{\, \overline \Omega}(x, \partial \Omega \setminus \Gamma_{z_k} )\le\delta,
\end{equation}
where  $\mathsf d_{\, \overline \Omega}$ denotes   the geodesic distance in $\overline \Omega$. 
 \end{enumerate}
 \end{proposition}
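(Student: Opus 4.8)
\textbf{Plan of proof for Proposition~\ref{pr.omegakpoint}.}
The plan is to build $\mathsf \Omega_k^{\textsc{M}}$ by combining the two auxiliary domains produced by Propositions~\ref{pr.gammak} and~\ref{pr.omegak} and then ``thickening'' the boundary piece lying on $\partial\Omega$ into the interior along the normal direction, so that the new boundary is characteristic (i.e. $\partial_{\mathsf n}f=0$) there. First I would fix a compact subset $\mathsf F$ of $\Gamma_{z_k}$ with $\overline{\Sigma_{z_k}}\subset \mathring{\mathsf F}$ and apply Proposition~\ref{pr.gammak} to obtain a smooth simply connected $\Gamma_{\mathsf F}\subset \partial\Omega$ with $\overline{\Sigma_{z_k}}\subset \Gamma_{\mathsf F}$, $\overline{\Gamma_{\mathsf F}}\subset\Gamma_{z_k}$, and $\nabla f\cdot \mathsf n_{\Gamma_{\mathsf F}}>0$ on $\partial\Gamma_{\mathsf F}$; this $\Gamma_{\mathsf F}$ will be $\Gamma_{k,\mbf D}^{\textsc{M}}$, and property 1(a) is then immediate, while property 1(b) ($\nabla f\cdot \mathsf n_{\mathsf \Omega_k^{\textsc{M}}}=0$ on $\Gamma_{k,\mbf D}^{\textsc{M}}$) holds because $\overline{\Gamma_{k,\mbf D}^{\textsc{M}}}\subset \Gamma_{z_k}\subset \mathsf W_+^{z_k}\cap\partial\Omega$ is characteristic (this is one of the ``simple consequences of Assumption~\autoref{A}'' recorded after Lemma~\ref{le.start}; there $\mathsf n_{\mathsf \Omega_k^{\textsc{M}}}$ coincides with $\mathsf n_\Omega$). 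For the condition (4) I would also impose at this stage that $\Gamma_{\mathsf F}$ is contained in a $\delta$-neighborhood of $\overline{\Sigma_{z_k}}$ inside $\Gamma_{z_k}$; this is possible by taking $\mathsf F$ close to $\overline{\Sigma_{z_k}}$ and shrinking.

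Next I would construct the ``lateral'' Neumann boundary. Using the fact that $\partial\Omega$ is a smooth compact hypersurface, there is a tubular neighborhood $\mathcal U$ of $\partial\Omega$ in $\overline\Omega$ with normal coordinates $(y,s)$, $y\in\partial\Omega$, $s\in[0,s_0]$, $s$ being the geodesic distance to $\partial\Omega$. Over $\Gamma_{k,\mbf D}^{\textsc{M}}$, I would take the ``cylinder'' $\{(y,s):\ y\in\Gamma_{k,\mbf D}^{\textsc{M}},\ 0\le s< \eta(y)\}$ for a suitable smooth function $\eta>0$ vanishing in the right way near $\partial\Gamma_{k,\mbf D}^{\textsc{M}}$ so that the resulting region glues to an interior piece with the required regularity; the lateral boundary $\{s=\eta(y)\}$ together with $\{y\in\partial\Gamma_{k,\mbf D}^{\textsc{M}}\}$ forms (part of) $\Gamma_{k,\mbf N}^{\textsc{M}}$. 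On this cylinder, the outward normal along the ``vertical'' walls $\{y\in\partial\Gamma_{k,\mbf D}^{\textsc{M}}\}$ is (to leading order in $s$) $\mathsf n_{\Gamma_{k,\mbf D}^{\textsc{M}}}(y)\in T\partial\Omega$, along which $\nabla f\cdot \mathsf n>0$ by 1(a); choosing $s_0$ small and $\eta$ small, this inequality persists on the whole wall by continuity. The ``top'' $\{s=\eta(y)\}$ has outward normal close to $-\partial_s$, i.e. pointing toward $\partial\Omega$; here $\nabla f\cdot(-\partial_s)=\partial_{\mathsf n_\Omega}f\ge 0$ by Lemma~\ref{le.start}, and since $\overline{\Gamma_{k,\mbf D}^{\textsc{M}}}$ contains only the critical point $z_k$ of $f$ restricted to $\partial\Omega$ (and $z_k$ is interior to it), one has $\partial_{\mathsf n_\Omega}f>0$ there after shrinking to stay away from $z_k$ — but $z_k$ itself lies in the interior of $\Gamma_{k,\mbf D}^{\textsc{M}}$, not on $\Gamma_{k,\mbf N}^{\textsc{M}}$, so strict positivity on $\Gamma_{k,\mbf N}^{\textsc{M}}$ is fine. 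To close off the domain away from $\partial\Omega$, I would intersect with the domain $\Omega_{\mathsf K}$ from Proposition~\ref{pr.omegak}, where $\mathsf K$ is a large compact subset of $\Omega$ chosen to contain $x_0$ and the ``base'' of the cylinder; on $\partial\Omega_{\mathsf K}\cap\Omega$ one has $\nabla f\cdot\mathsf n_{\Omega_{\mathsf K}}>0$, contributing the remaining part of $\Gamma_{k,\mbf N}^{\textsc{M}}$, and $x_0\in\mathsf \Omega_k^{\textsc{M}}$ by construction — giving properties 2 and 3 (the containment of $x_0$), and property~4 from the choice of $\eta$ and $\mathsf F$ small.

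Finally, for the corner condition (3), I would arrange the gluing so that $\Gamma_{k,\mbf D}^{\textsc{M}}$ (a flat-in-$s$ piece, $s=0$) meets the lateral wall $\{y\in\partial\Gamma_{k,\mbf D}^{\textsc{M}}\}$ (vertical in $s$) orthogonally, which is automatic in the normal coordinates $(y,s)$ since one hypersurface is $\{s=0\}$ and the other is $\{y\in\partial\Gamma_{k,\mbf D}^{\textsc{M}}\}$, and these are orthogonal for the product-type metric near $\partial\Omega$; similarly I would smooth all other intersections (between $\Omega_{\mathsf K}$'s boundary and the cylinder's top/walls) to meet at angle $\pi/2$, or round them off, using that away from $\Gamma_{k,\mbf D}^{\textsc{M}}$ the boundary may be taken $\mathcal C^\infty$, so the only genuine corners are along $\partial\Gamma_{k,\mbf D}^{\textsc{M}}$ where $\Gamma_{k,\mbf D}^{\textsc{M}}$ meets $\Gamma_{k,\mbf N}^{\textsc{M}}$, and these are right angles. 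The main obstacle is a matter of careful geometric bookkeeping rather than of any single hard estimate: one must choose the cut-off function $\eta$ (and the tube radius $s_0$, the compact set $\mathsf K$, and the compact set $\mathsf F$) simultaneously small enough that (i) the tube stays inside $\mathcal U$ and inside $\Omega$, (ii) the sign conditions $\nabla f\cdot \mathsf n>0$ survive on every portion of $\Gamma_{k,\mbf N}^{\textsc{M}}$, (iii) the resulting set is a bona fide Lipschitz domain with the stated right-angle corners, and (iv) the smallness parameter $\delta$ in \eqref{eq.gamma2_close_to_Bzc} is met; doing all of these compatibly, and verifying that the only critical point of $f$ in $\overline{\mathsf \Omega_k^{\textsc{M}}}$ on the boundary is $z_k$ (which sits in the interior of the Dirichlet part), is where the real work lies. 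I would present this as a sequence of lemmas: first the tube construction, then the sign-preservation estimates with explicit smallness thresholds, then the Lipschitz/corner verification, and finally assembling the pieces and recording that properties 1–4 hold.
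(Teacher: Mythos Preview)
Your overall strategy is close to the paper's: build a cylinder over $\Gamma_{k,\mbf D}^{\textsc{M}}$ in normal coordinates and combine it with the interior domain $\Omega_{\mathsf K}$ from Proposition~\ref{pr.omegak}. However, your treatment of the ``top'' $\{s=\eta(y)\}$ contains a genuine error. You claim that the outward normal there is close to $-\partial_s$ and then invoke $\partial_{\mathsf n_\Omega}f>0$ on $\Gamma_{k,\mbf D}^{\textsc{M}}$ away from $z_k$. Both points fail: for a region $\{0\le s<\eta(y)\}$ the outward normal at the top points in the $+\partial_s$ direction (away from $\partial\Omega$), and more seriously, $\partial_{\mathsf n_\Omega}f\equiv 0$ on \emph{all} of $\Gamma_{z_k}$ (this is exactly the characteristic boundary condition recorded after Lemma~\ref{le.start}), not only at $z_k$. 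In fact, near $z_k$ one has $\partial_s^2 f(z_k)=\mu_{z_k}<0$, so $\nabla f\cdot\partial_s<0$ for small $s>0$ and the sign condition on the top is violated. The paper's remedy is precisely to avoid this: it takes a cylinder $\mathsf C_\alpha$ of \emph{constant} height $\alpha$, and chooses $\mathsf K=\mathsf K_{\alpha/2}=\{d_{\overline\Omega}(\cdot,\partial\Omega)\ge\alpha/2\}$ so that $\overline{\Sigma_\alpha^{base}}\subset\Omega_{\mathsf K_{\alpha/2}}$; then $\mathsf\Omega_k^{\textsc{M}}:=\mathsf C_\alpha\cup\Omega_{\mathsf K_{\alpha/2}}$ (a union, not an intersection) has no ``top'' on its boundary at all, and $\Gamma_{k,\mbf N}^{\textsc{M}}$ consists only of the lateral wall $\Sigma_\alpha^{lateral}$ and pieces of $\partial\Omega_{\mathsf K_{\alpha/2}}$, both of which carry the correct sign.

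There is a second, independent error: your reading of item~(4) is backwards. The requirement is that every point of $\Gamma_{k,\mbf N}^{\textsc{M}}$ be within $\delta$ of $\partial\Omega\setminus\Gamma_{z_k}$. Since the lateral wall sits over $\partial\Gamma_{k,\mbf D}^{\textsc{M}}$, this forces $\partial\Gamma_{k,\mbf D}^{\textsc{M}}$ to be close to $\partial\Omega\setminus\Gamma_{z_k}$, i.e.\ $\Gamma_{k,\mbf D}^{\textsc{M}}$ must be \emph{large} in $\Gamma_{z_k}$ (nearly filling it), not contained in a small neighbourhood of $\overline{\Sigma_{z_k}}$ as you propose; with your choice the lateral wall would sit deep inside $\Gamma_{z_k}$ and (4) would fail. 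The paper achieves this by invoking the ``as large as needed'' clause of Proposition~\ref{pr.gammak} and then taking $\alpha$ small. Finally, the Lipschitz verification where $\partial\Omega_{\mathsf K_{\alpha/2}}$ meets $\Sigma_\alpha^{lateral}$ is not a routine smoothing step: the paper handles the transverse case via the regular-value theorem and the tangent case by exploiting that both outward normals satisfy $\nabla f\cdot\mathsf n>0$, so they cannot be opposite (ruling out cusps).
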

 
 \noindent
  Schematic representations of $\mathsf \Omega_k^{\textsc{M}}$, $\Gamma_{k,\mbf D}^{\textsc{M}}$, and $\Gamma_{k,\mbf N}^{\textsc{M}}$ are given in Figure~\ref{fig:domain_dotomega} below. 
 The subscript $\mbf D$ (resp. $\mbf N$) in $\Gamma_{k,\mbf D}^{\textsc{M}}$ (resp. in $\Gamma_{k,\mbf N}^{\textsc{M}}$)  refers to  the fact that   Dirichlet (resp. Neumann) boundary conditions will be applied  on $\Gamma_{k,\mbf D}^{\textsc{M}}$ (resp. on $\Gamma_{k,\mbf N}^{\textsc{M}}$) when defining the Witten Laplacian with mixed  Dirichlet-Neumann boundary conditions   on $\mathsf \Omega_k^{\textsc{M}}$, see Section~\ref{sec.Witten-general-result} below. 
 Let us recall the definition of an angle between two hypersurfaces  used in item 3 of Proposition~\ref{pr.omegakpoint}  (see~\cite{brown-94,jakab-mitrea-mitrea-09}).

 \begin{definition}\label{de.angle}
Let  $\mathsf D$ be a bounded  Lipschitz  domain of $\mathbb R^d$. Let $\Gamma_{\mbf D}$
and $\Gamma_{\mbf N}$ be two open disjoint subsets of $\partial \mathsf D$ such that $\overline{\Gamma_{\mbf D}} \cup \overline{\Gamma_{\mbf N}} = \pa
\mathsf D$. 
The sets $\Gamma_{\mbf D}$ and $\Gamma_{\mbf N}$ meet  at an angle smaller than
  $\pi$ (in~$\mathsf D$)  if  locally around any point
 $\mathsf y \in \overline{\Gamma_{\mbf D}} \cap \overline{\Gamma_{\mbf N}}$,  there exists a local system of coordinates $(y_1,y'',y_d) \in \mathbb R
 \times \mathbb R^{d-2} \times \mathbb R$ on a
 neighborhood $\mathsf V_{\mathsf y}$ of $\mathsf y$, and two Lipschitz functions
 $\varphi_{\mathsf y}:\mathbb R^{d-1} \to \mathbb R$ and $ \psi_{\mathsf y}:\mathbb R^{d-2} \to \mathbb R$ such
  that $\mathsf D  \cap \mathsf  V_{\mathsf y}=\{y_d >   \varphi_{\mathsf y}(y_1,y'')\}$, $\Gamma_{\mbf D} \cap
  \mathsf V_{\mathsf y}=\{y_d=  \varphi_{\mathsf y}(y_1,y'') \text{ and } y_1 >   \psi_{\mathsf y}(y'')\}$ and $\Gamma_{\mbf N} \cap
 \mathsf V_{\mathsf y}=\{y_d=  \varphi_{\mathsf y}(y_1,y'') \text{ and } y_1 <   \psi_{\mathsf y}(y'')\}$ and
$$
 \begin{aligned}
  \partial_{y_1}  \varphi_{\mathsf y} (y_1,y'') \ge \kappa &\text{ on }
  y_1>\psi_{\mathsf y}(y'')\\
 \partial_{y_1} \varphi_{\mathsf y} (y_1,y'')  \le -\kappa &\text{ on }
  y_1<\psi_{\mathsf y}(y''),
 \end{aligned}
$$
for some   $\kappa>0$. 
\end{definition}
From a geometric viewpoint, the fact that $\Gamma_{\mbf D}$ and $\Gamma_{\mbf N}$ meet  at an angle smaller than
  $\pi$ is equivalent to the existence of a
smooth vector field $\theta$ on $\partial \mathsf D$ such that
$\langle \theta , \mathsf n_{\mathsf D} \rangle < 0$ on $\Gamma_{\mbf D}$ and $\langle \theta,
\mathsf n_{\mathsf D}\rangle > 0$ on
$\Gamma_{\mbf N}$.
 Let us now prove Proposition~\ref{pr.omegakpoint}. 
\begin{proof}[Proof of Proposition~\ref{pr.omegakpoint}] Let $k\in \{1,\ldots,n\}$. The domain $\mathsf \Omega_k^{\textsc{M}}$ will be defined as the union of two intersecting subdomains of $\Omega$.    The proof of Proposition~\ref{pr.omegakpoint} is divided into several steps. 
\medskip

\noindent
\textbf{Step 1: Definition of $\mathsf \Omega_k^{\textsc{M}}$.}   
\medskip

\noindent
\textbf{Step 1a: Adapted system of coordinates and preliminary constructions.}
 \medskip
        
        \noindent
\textbf{The set $\Gamma_{k,\mbf D}^{\text{M}}$}. Recall
(see~\eqref{Sigma_k}) that 
$
z\in  \Sigma_{z_k}  \ \text{ and } \ \overline{\Sigma_{z_k} }\subset  \Gamma_{z_k}.$
Using  Proposition~\ref{pr.gammak}, there exists   a $\mathcal
C^\infty$ subdomain $\Gamma_{k,\mbf D}^{\text{M}}$ of $\Gamma_{z_k}$
such that  $\overline{\Sigma_{z_k}}\subset \Gamma_{k,\mbf
  D}^{\text{M}}$, $\overline{\Gamma_{k,\mbf D}^{\text{M}}}\subset
\Gamma_{z_k}$,  which can be as large as needed in $\Gamma_{z_k}$, and
such that
\begin{equation}\label{eq.derivee_normale_positive}
\nabla f\cdot \mathsf n_{ \Gamma_{k,\mbf D}^{\text{M}} }>0  \text{ on } \pa \Gamma_{k,\mbf D}^{\text{M}}.
\end{equation}
  In step 1b below (see indeed~\eqref{eq.onm}), we will check that
  from the definition~\eqref{eq.DEFDOTOMEGA} of $\mathsf
  \Omega_k^{\textsc{M}}$, $\overline{\Gamma_{k,\mbf D}^{\text{M}}}=
  \pa \mathsf \Omega_k^{\textsc{M}}\cap \partial \Omega$, and this
  will therefore prove item 1(a) of Proposition~\ref{pr.omegakpoint}. 
\medskip

        \noindent
\textbf{Systems of coordinates near $\partial \Omega$ and $\pa \Gamma_{k,\mbf D}^{\text{M}}$}.  
In the following we introduce two  systems of  coordinates: one  around $z\in \partial \Omega$ in $\overline \Omega$ (see $(x',x_d)$ in~\eqref{eq.coord1}),
 and one around $z\in \pa \Gamma_{k,\mbf D}^{\text{M}}$ in $\partial \Omega$ (see~$x'$ in~\eqref{eq.pauz} and~\eqref{eq.pauz2}). They will be used  to define $\mathsf \Omega_k^{\textsc{M}}$. 
\medskip


 Recall that, for $\ve>0$ small enough,  for all $x\in \overline \Omega$ such that $\mathsf d_{\, \overline \Omega}(x,\partial \Omega)<\ve$, there exists a unique 
point $\mathsf z(x)\in \partial \Omega$ such that  
 \begin{equation}\label{eq.coordxdd} 
x_d(x):=\mathsf d_{\, \overline \Omega}(x,\partial \Omega)=\mathsf d_{\, \overline \Omega}(x,\mathsf z(x)),
 \end{equation} 
 where we recall  $\mathsf d_{\, \overline \Omega}$ denotes   the
 geodesic distance in $\overline \Omega$. Moreover the function
 $x\mapsto \mathsf d_{\, \overline \Omega}(x,\partial \Omega)$ is
 smooth on the set $\{x\in \overline \Omega, \mathsf d_{\, \overline
   \Omega}(x,\partial \Omega)<\ve\}$. 
 Let $z\in \partial \Omega$ and   $x'$ be a system of coordinates in $\partial \Omega$ centred at $z$. Then, there exists a neighborhood $\mathsf V_z$ of $z$ in $\overline \Omega$ such that the  function  
 \begin{equation}\label{eq.coord1} 
 v\in \mathsf V_z\mapsto  (x'(\mathsf z(v)),x_d(v))\in \mathbb R^{d-1}\times \mathbb R_+
 \end{equation}
   is    a system of coordinates in $\mathsf V_z$ (this is the
   tangential-normal system of coordinates already introduced above in~\eqref{eq.norm-teng}).   For ease of notation, we omitted to write the dependency on $z$ when writing  $(x',x_d)$, and we write with a slight abuse of notation, $x'(v)$ instead of $x'(\mathsf z(v))$.  
Let us assume, up to choosing $\mathsf V_z$ smaller that for   $\ve_z>0$ small enough, $\mathsf V_z$ is a  cylinder in the $(x',x_d)$-coordinates:  
\begin{equation}\label{eq.uz} 
\mathsf  V_z=\big \{v\in \mathsf V_z, \ \vert x'(v)\vert < \ve_z \, \text{ and } \, x_d(v)\in [0,\ve_z)\big \}.  
\end{equation}

Let us now be more precise on $x'$ when $z\in \overline{\Gamma_{k,\mbf D}^{\text{M}}}$. If $z\in \Gamma_{k,\mbf D}^{\text{M}}$, we choose $\ve_z>0$ small enough such that 
\begin{equation}\label{eq.pauz}
\partial \Omega \cap \mathsf V_z=\big \{v\in \mathsf V_z, \ \vert x'(v)\vert \le \ve_z \, \text{ and } \, x_d(v)=0\big \}\subset \Gamma_{k,\mbf D}^{\text{M}}.  
\end{equation}
If $z\in \pa{\Gamma_{k,\mbf D}^{\text{M}}}$,   the system  $x'=(x_1,\ldots, x_{d-1})$ in $\partial \Omega$ is chosen such that:
\begin{equation}\label{eq.pauz2}
\Gamma_{k,\mbf D}^{\text{M}}\cap (\partial \Omega \cap  \mathsf V_z)=\{v\in \partial \Omega \cap \mathsf    V_z,\  x_1(v)> 0\} ,
\end{equation}  
and 
\begin{equation}\label{eq.pauz3}
\pa  \Gamma_{k,\mbf D}^{\text{M}}\cap (\partial \Omega \cap\mathsf V_z)=\{v\in \partial \Omega \cap \mathsf V_z, \ x_1(v)= 0\}.
\end{equation}
This implies that for all $z\in \pa \Gamma_{k,\mbf D}^{\text{M}}$, 
\begin{equation}\label{eq.ngammak'}
\mathsf n_{\Gamma_{k,\mbf D}^{\text{M}}}(z)= -\frac{\nabla x_1(z)}{\vert \nabla x_1\vert(z)} \in T_z\partial \Omega.
\end{equation}

\begin{figure}
\begin{center}
  \includegraphics[width=0.5\textwidth]{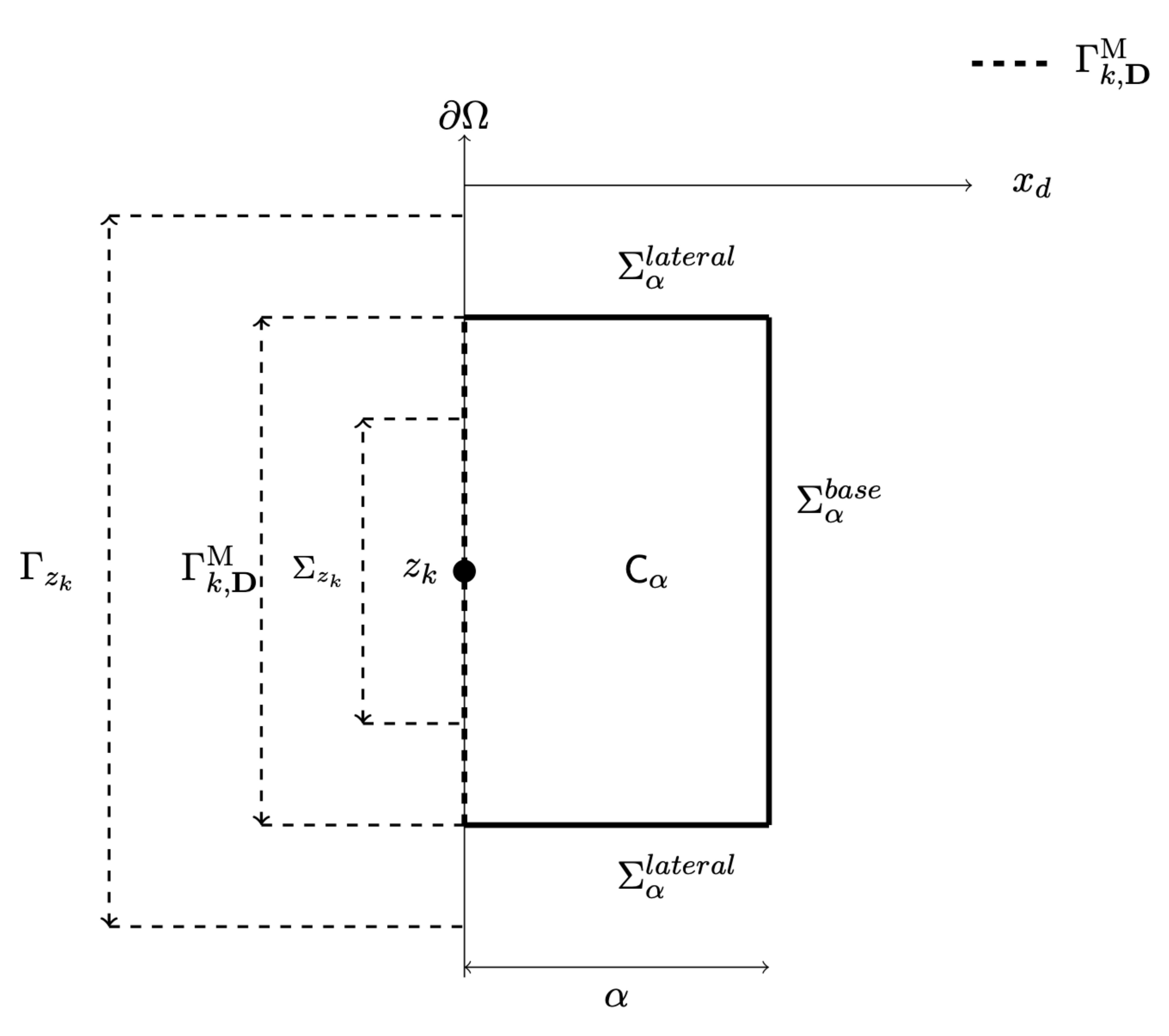}
\caption{The cylinder $\mathsf C_\alpha$.}
\label{fig:representation_Calpha}
\end{center}
\end{figure}

\textbf{Constructions of two subdomains of $\Omega$: $\mathsf C_\alpha$ and $\Omega_{\mathsf K_{\alpha/2}}$.} 
 Define,  for $\alpha>0$ small enough,   the open cylinder 
\begin{equation}\label{eq.Calpha}
\mathsf C_\alpha=\big  \{x\in \overline \Omega, \, \mathsf z(x)\in \Gamma_{k,\mbf D}^{\text{M}} , \, x_d(x)\in (0,\alpha)\big \},
\end{equation}
(see Figure~\ref{fig:representation_Calpha} for a schematic representation of $\mathsf C_\alpha$),
and   the compact set
$$\mathsf K_{\alpha/2}=\big \{v\in \overline \Omega, \ \mathsf d_{\, \overline \Omega}(v,\partial \Omega)\ge  {\alpha} / {2}\big \} \subset \Omega.$$
From Proposition~\ref{pr.omegak}, there exists a $\mathcal C^\infty $ subdomain $\Omega_{\mathsf K_{\alpha/2}}$ of $\Omega$ containing $x_0$ such that $\mathsf K_{\alpha/2}\subset \Omega_{\mathsf K_{\alpha/2}}$, $\overline{\Omega_{\mathsf K_{\alpha/2}}}\subset \Omega$, and 
\begin{equation}\label{eq.n-omeka-alpha}
\nabla f\cdot \mathsf n_{ \Omega_{\mathsf K_{\alpha/2}}}> 0  \text{ on } \partial \Omega_{\mathsf K_{\alpha/2}}.
\end{equation}
A schematic representation of $\Omega_{\mathsf K_{\alpha/2}}$ and
$\mathsf C_\alpha$ is given in Figure~\ref{fig:domain_dotomega}.

\begin{figure}[htbp]
\begin{center}
  \includegraphics[width=0.45\textwidth]{./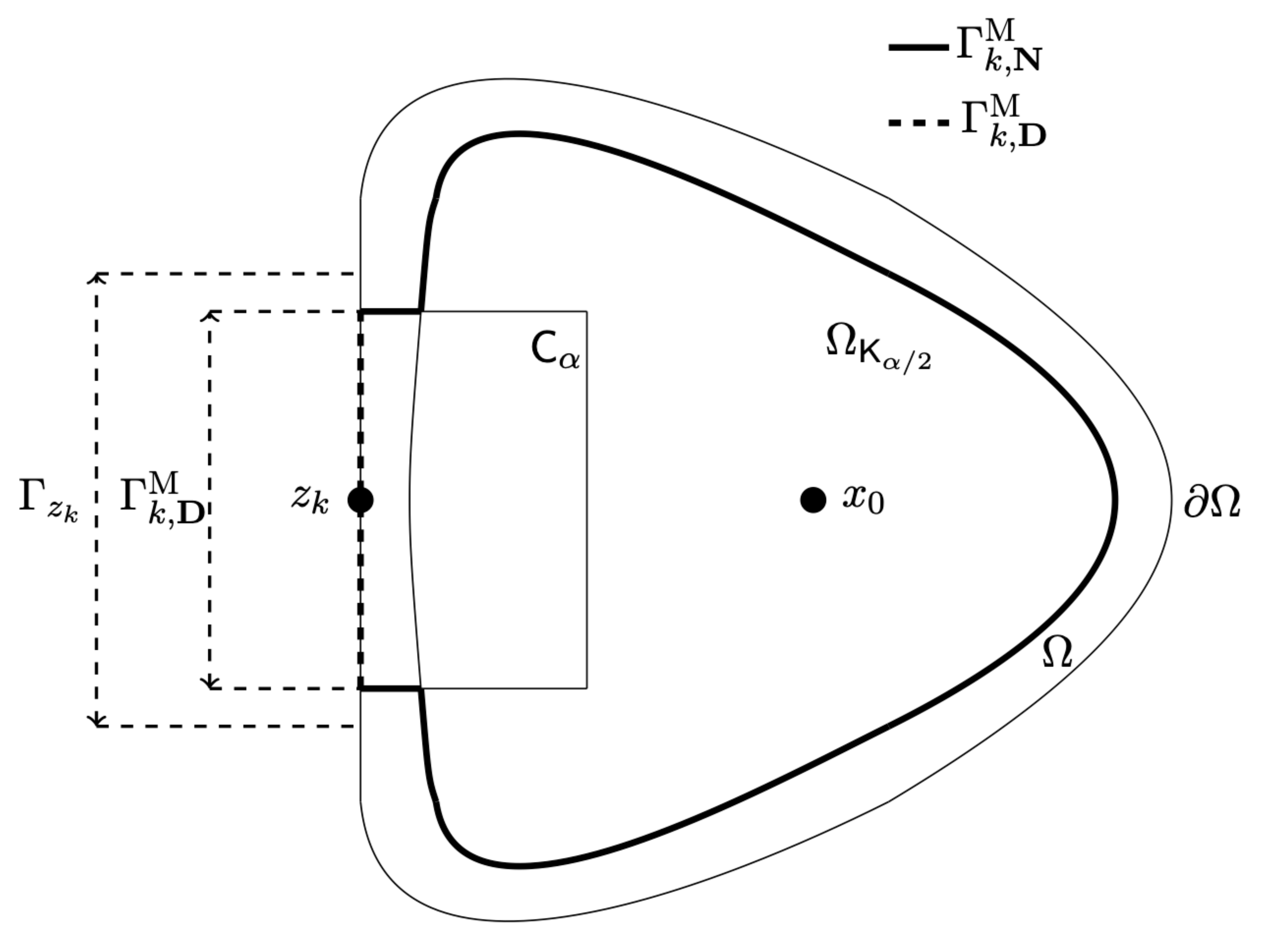}
  \includegraphics[width=0.45\textwidth]{./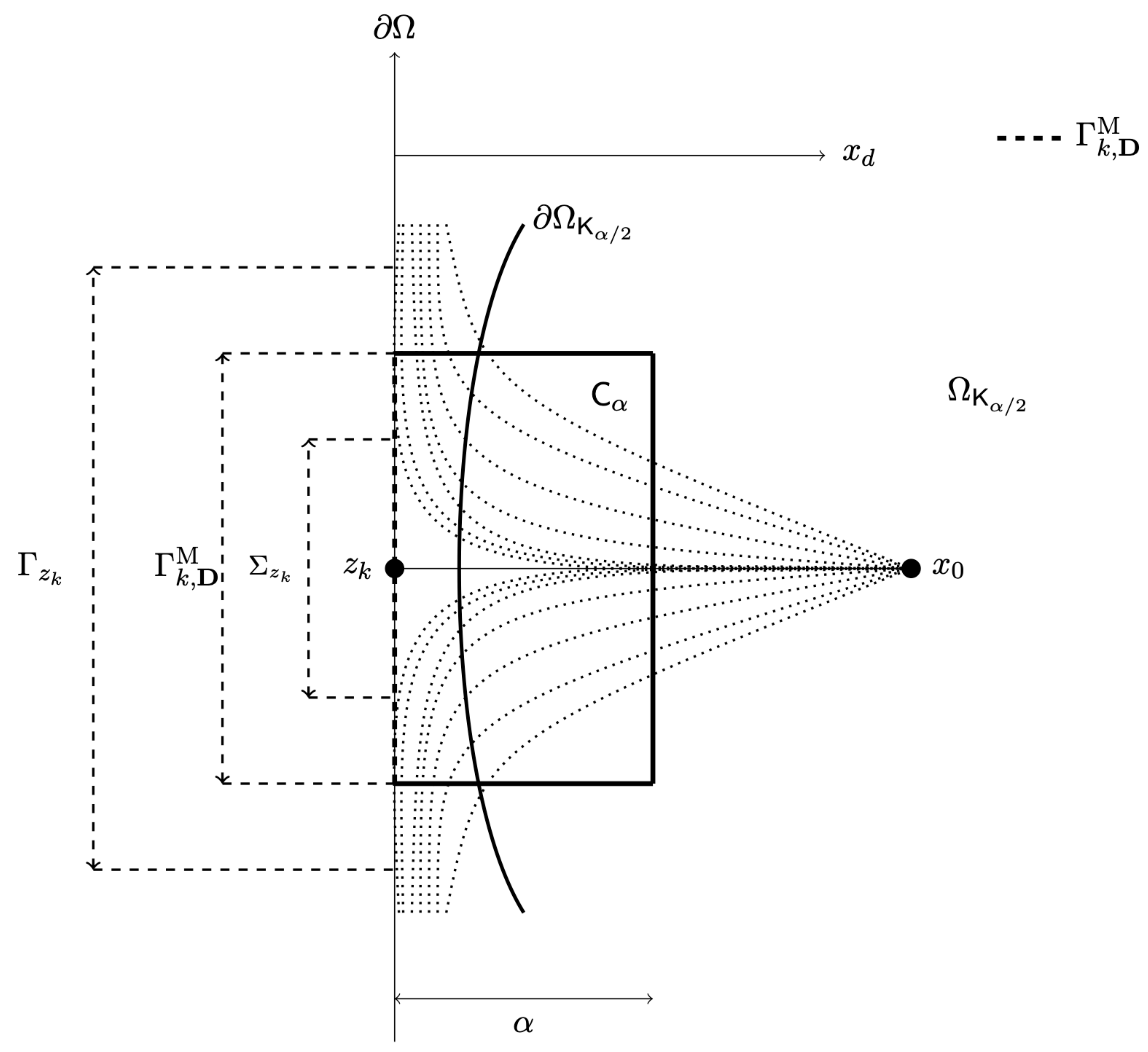}
\caption{Schematic representations of $\mathsf
  \Omega_k^{\textsc{M}}=\mathsf C_\alpha\cup\Omega_{\mathsf
    K_{\alpha/2}} $, $\Gamma_{k,\mbf D}^{\textsc{M}}$, and
  $\Gamma_{k,\mbf N}^{\textsc{M}}$. On the right, a zoom in the
  neighborhood of $\Gamma_{z_k}$, where the dotted lines represent the
  flows of $\varphi_x$ near the saddle point $z_k$ of $f$
  (see~\eqref{eq.hbb} and  item 1 in \autoref{A}).} 

\label{fig:domain_dotomega}

\end{center}

\end{figure}


Moreover it holds (see Figure~\ref{fig:representation_Calpha}):
 \begin{equation}\label{eq.3P}
 \pa  \mathsf C_\alpha =  \overline{\Gamma_{k,\mbf D}^{\text{M}}}\, \, \cup { \Sigma_\alpha^{lateral} }\, \cup  \, \, \overline{ \Sigma_\alpha^{base}},
 \end{equation}
 where 
 \begin{equation}\label{eq.Blat}
 \Sigma_\alpha^{lateral}=\big \{x\in \overline \Omega, \,\mathsf  z(x)\in \pa \Gamma_{k,\mbf D}^{\text{M}} , \, x_d(x)\in (0,\alpha)\big \} \subset  \Omega,
  \end{equation}
 and 
 $$ \Sigma_\alpha^{base}=\big \{x\in \overline \Omega, \, \mathsf z(x)\in   \Gamma_{k,\mbf D}^{\text{M}} , \, x_d(x) =\alpha \big \}  \subset \Omega.$$

 Let us now prove that  there exists  $\alpha_0>0$, such that for all $\alpha\in (0,\alpha_0)$, one has:
 \begin{equation}\label{eq.signBO}
 \nabla f \cdot \mathsf n_{ \mathsf C_\alpha} >0\text{ on }\Sigma_\alpha^{lateral}.
 \end{equation}
 It holds $\Sigma_\alpha^{lateral}=\big \{v\in \Omega, \, x_1(v)=0, \,
 x_d(v)\in (0,\alpha)\big \}$ (from
 ~\eqref{eq.pauz}--\eqref{eq.pauz3},~\eqref{eq.Calpha},and ~\eqref{eq.Blat}), and hence, one has for all $v\in \Sigma_\alpha^{lateral}$: 
 \begin{equation}\label{eq.normalLATERAL}
 \mathsf n_{ \mathsf C_\alpha}(v)=- \frac{\nabla x_1(v)}{\vert \nabla x_1\vert(v)}.
 \end{equation}
Therefore, by a continuity argument, using~\eqref{eq.derivee_normale_positive} and~\eqref{eq.ngammak'}, there exists $ \alpha_0>0$ such that for all $\alpha\in (0,\alpha_0)$ and  for all $v\in \Sigma_\alpha^{lateral}$,
$$ \nabla f(v)\cdot \mathsf n_{ \mathsf C_\alpha}(v)>0.$$
This concludes the proof of~\eqref{eq.signBO}.
\medskip

\noindent
\textbf{Step 1b: definition of  $\mathsf \Omega_k^{\textsc{M}}$ such that  $  \pa \mathsf \Omega_k^{\textsc{M}} \cap \partial \Omega =\overline{\Gamma_{k,\mbf D}^{\text{M}}}$.}
Let us introduce (see Figure~\ref{fig:domain_dotomega})
  \begin{equation}\label{eq.DEFDOTOMEGA}
  \mathsf \Omega_k^{\textsc{M}}:=   \mathsf C_\alpha \cup\Omega_{\mathsf K_{\alpha/2}}  ,
  \end{equation}
  which is included in $\Omega$.  Let us mention  that $ \mathsf \Omega_k^{\textsc{M}}$  depends on  two  parameters:  $\alpha>0$ (through  both~$\mathsf C_\alpha$ and~$\Omega_{\mathsf K_{\alpha/2}}$) and   $\Gamma_{k,\mbf D}^{\text{M}}$ (through $\mathsf C_\alpha$).  
  One obviously has $\overline{\Gamma_{k,\mbf D}^{\text{M}}}\subset  \pa \mathsf \Omega_k^{\textsc{M}}$. 
Let us define  
  \begin{equation}\label{eq.DEFDOTOMEGA2}
  \Gamma_{k,\mbf N}^{\text{M}}=  \pa \mathsf \Omega_k^{\textsc{M}}\setminus \overline{\Gamma_{k,\mbf D}^{\text{M}}},
    \end{equation}
  so that $ \pa \mathsf \Omega_k^{\textsc{M}}$ is the disjoint union of $\overline{\Gamma_{k,\mbf D}^{\text{M}}}$ and $ \Gamma_{k,\mbf N}^{\text{M}}$.  
  By definition,  $\mathsf \Omega_k^{\textsc{M}} $ is the union of two intersecting open connected subsets $\mathsf C_\alpha$ and $\Omega_{\mathsf K_{\alpha/2}}$ of $\Omega$, it is thus open and connected. 
  Notice that   one has:
  \begin{equation}\label{eq.unionboundary1}
  \pa \mathsf \Omega_k^{\textsc{M}}\subset \pa  \mathsf C_\alpha \cup \partial \Omega_{\mathsf K_{\alpha/2}}
  \end{equation}
   and,   since  $\overline{ \Sigma_\alpha^{base}}\subset \mathsf K_{\alpha/2} \subset    \Omega_{\mathsf K_{\alpha/2}} \subset  \mathsf  \Omega_{k}^{\textsc{M}}$ (see~\eqref{eq.3P}), one has: 
   \begin{equation}\label{eq.unionboundary2}
  \partial \Omega_{\mathsf K_{\alpha/2}}\cap \overline{ \Sigma_\alpha^{base}}=\emptyset \text{ and }  \pa \mathsf \Omega_k^{\textsc{M}}\cap \overline{ \Sigma_\alpha^{base}}=\emptyset.
   \end{equation}
In addition, from the fact that  $\partial \Omega_{\mathsf K_{\alpha/2}} \subset \Omega$  and $\overline{\Gamma_{k,\mbf D}^{\text{M}}}\subset \partial \Omega$, it 
holds:
   \begin{equation}\label{eq.unionboundary3}
\partial \Omega_{\mathsf K_{\alpha/2}}\cap \overline{\Gamma_{k,\mbf D}^{\text{M}}}=\emptyset.
 \end{equation}
 Thus,  from~\eqref{eq.unionboundary1},~\eqref{eq.unionboundary2}, and~\eqref{eq.unionboundary3} together with the definition of $\Gamma_{k,\mbf N}^{\text{M}}$, it holds: $\Gamma_{k,\mbf N}^{\text{M}} \subset (\pa \mathsf C_\alpha  \cup  \partial \Omega_{\mathsf K_{\alpha/2}} )\setminus (  \overline{\Gamma_{k,\mbf D}^{\text{M}}}\cup  \overline{ \Sigma_\alpha^{base}}) = \pa \mathsf C_\alpha   \setminus  (  \overline{\Gamma_{k,\mbf D}^{\text{M}}}\cup  \overline{ \Sigma_\alpha^{base}})  \cup  \partial \Omega_{\mathsf K_{\alpha/2}}$ and thus, from~\eqref{eq.3P}, 
   \begin{equation}\label{eq.GK2}
    \Gamma_{k,\mbf N}^{\text{M}}  \subset \Sigma_\alpha^{lateral} \cup \partial \Omega_{\mathsf K_{\alpha/2}} \subset \{v\in  \Omega, \ \mathsf d_{\, \overline \Omega}(v,\partial \Omega)<  {\alpha} \big \},
    \end{equation}
    where the last inclusion follows from the fact that $\partial \Omega_{\mathsf K_{\alpha/2}}\subset  \{v\in  \Omega, \ \mathsf d_{\, \overline \Omega}(v,\partial \Omega)\le   \alpha / {2}\big \}$ and~\eqref{eq.Blat}. 
   In particular, this implies that, since $\overline{\Gamma_{k,\mbf D}^{\text{M}}}\subset \partial \Omega$, 
      \begin{equation}\label{eq.onm}
       \pa \mathsf \Omega_k^{\textsc{M}} \cap   \Omega =\Gamma_{k,\mbf N}^{\text{M}} \text{ and } \pa \mathsf \Omega_k^{\textsc{M}} \cap \partial \Omega = \overline{\Gamma_{k,\mbf D}^{\text{M}}}.
        \end{equation}

        \medskip
        
        \noindent
        \textbf{Step 2: Proofs of items 3 and 4 in  Proposition~\ref{pr.omegakpoint}.}
       \medskip
        
        \noindent
        \textbf{Step 2a.}  Let us   check that $\Gamma_{k,\mbf
          D}^{\textsc{M}} $ and $\Gamma_{k,\mbf N}^{\textsc{M}}$ meet
        at an angle strictly smaller than $\pi$ in $\mathsf
        \Omega_k^{\textsc{M}}$   (in the sense of  Definition~\ref{de.angle}). To this end, let us prove that  
        \begin{equation}\label{eq.anglepi}
        \overline{\Gamma_{k,\mbf D}^{\textsc{M}} }\cap \overline{\Gamma_{k,\mbf N}^{\textsc{M}}}=\overline{\Gamma_{k,\mbf D}^{\textsc{M}} }\cap \overline{\Sigma_\alpha^{lateral}}.
        \end{equation} 
        Notice that~\eqref{eq.anglepi} implies  that $\pa \mathsf
        \Omega_k^{\textsc{M}}$ is Lipschitz  near
        $\overline{\Gamma_{k,\mbf D}^{\textsc{M}} }\cap
        \overline{\Gamma_{k,\mbf N}^{\textsc{M}}}$   as the union of
        the closures of two disjoint open transverse $\mathcal
        C^\infty$ hypersurfaces $\Gamma_{k,\mbf D}^{\textsc{M}}$ and
        $\Sigma_\alpha^{lateral}$ (this will be used in Step 3b below). Furthermore,~\eqref{eq.anglepi}
        implies that $\Gamma_{k,\mbf D}^{\textsc{M}} $ and
        $\Gamma_{k,\mbf N}^{\textsc{M}}$ meet at an angle  $\pi/2$
        (see Figure \ref{fig:angle}), which thus yields item 3 in Proposition~\ref{pr.omegakpoint}.

\begin{figure}[htbp]
\begin{center}
  \includegraphics[width=0.5\textwidth]{./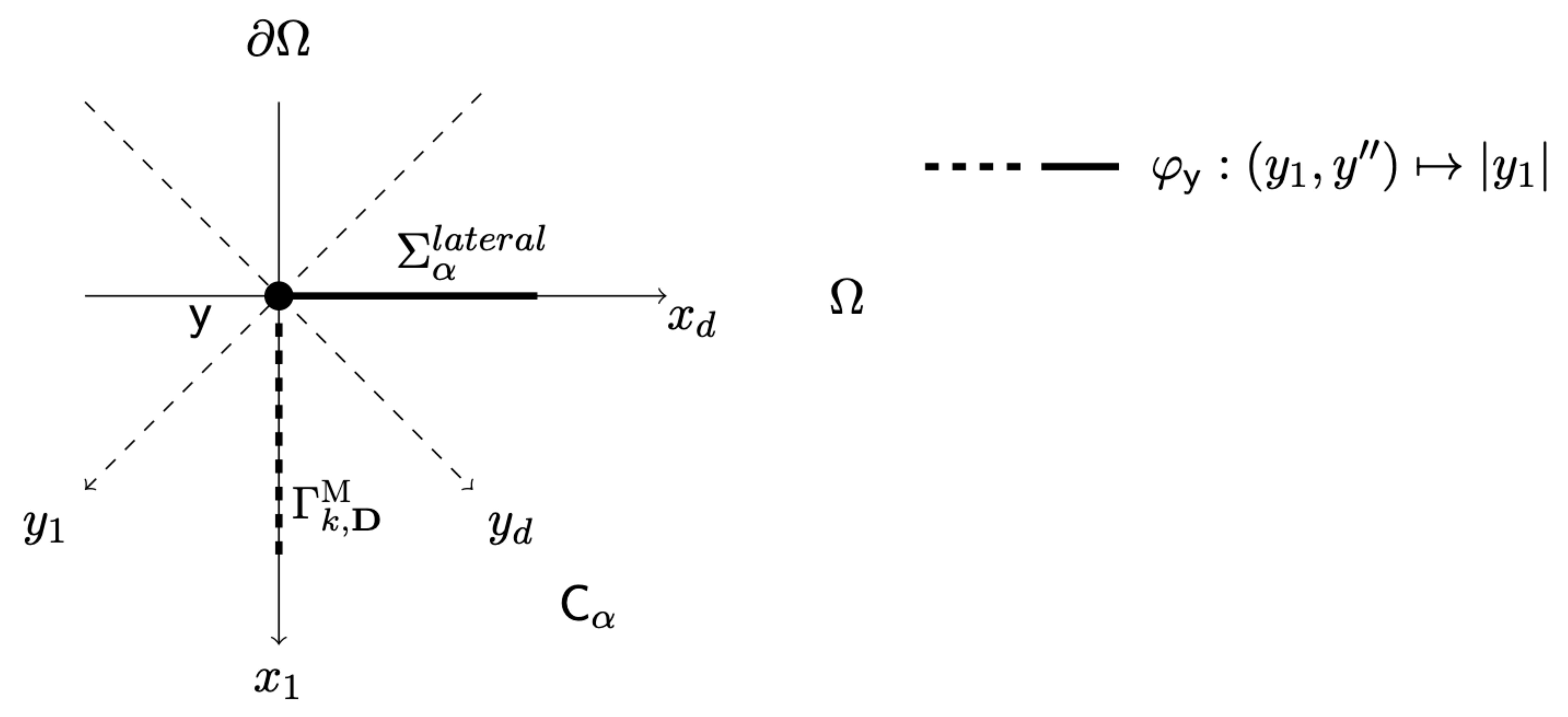}
\caption{The sets $\Gamma_{k,\mbf D}^{\textsc{M}}$ and $\Sigma_\alpha^{lateral}$ meet at an angle
   $\pi/2$ in $\mathsf C_\alpha$ (see Definition~\ref{de.angle},~\eqref{eq.pauz2}, \eqref{eq.pauz3}, and~\eqref{eq.3P}). On the figure,  $\mathsf y\in \overline{\Gamma_{k,\mbf D}^{\textsc{M}}}\cap\overline{ \Sigma_\alpha^{lateral}} $ and $\{x_d>0\}=\Omega$, 
  $y_1=\frac{-x_d+x_{1}}{2}$, $y_d=\frac{x_d+x_{1}}{2}$,
  $y''=(x_{2},\dots, x_{d-1})$ (which is, schematically,   the coordinates  perpendicular to  the plane $(x_1,x_d)$ centred at $\mathsf y$) , $ \psi_{\mathsf y}(y'')=0$,  and $  \varphi_{\mathsf y}(y_1,y'')=|y_1|$ in  Definition~\ref{de.angle}.}
 \label{fig:angle}
  \end{center}
\end{figure}

   
     Let us thus prove~\eqref{eq.anglepi}. 
        From~\eqref{eq.GK2} together with~\eqref{eq.unionboundary3}, it holds:
        $\overline{\Gamma_{k,\mbf D}^{\textsc{M}} }\cap \overline{\Gamma_{k,\mbf N}^{\textsc{M}}}\subset \overline{\Gamma_{k,\mbf D}^{\textsc{M}} }\cap \overline{\Sigma_\alpha^{lateral}}$. Now, let us consider $x\in \overline{\Gamma_{k,\mbf D}^{\textsc{M}} }\cap \overline{\Sigma_\alpha^{lateral}}$. Then, there exists a sequence $(x_n)_{n\ge 0}\in \Sigma_\alpha^{lateral}$ such that $x_n\to x$ as $n\to +\infty$.  Let us prove that for all $n$ large enough, $x_n\in \pa \mathsf \Omega_k^{\textsc{M}}$. 
         For $n$ large enough, $x_n$ does not belong to $\overline {\Omega_{\mathsf K_{\alpha/2}}}$ because $\overline{\Omega_{\mathsf K_{\alpha/2}}}\subset \Omega$ and $x_n\to x\in \partial \Omega$.   In addition $x_n \notin \mathsf C_\alpha$ (indeed $x_n\in \pa \mathsf C_\alpha$  since $x_n\in \Sigma_\alpha^{lateral}$).  
          Therefore, for $n$ large enough, 
         $x_n\notin \mathsf \Omega_k^{\textsc{M}}.$
         On the other hand, since $x_n\in \pa \mathsf C_\alpha$, 
         $x_n \in \overline { \mathsf C_\alpha} \subset \overline{\mathsf \Omega_k^{\textsc{M}}}.$
In conclusion, $x_n\in \Omega\cap  \pa{\mathsf \Omega_k^{\textsc{M}}}=\Gamma_{k,\mbf N}^{\textsc{M}}$ (see~\eqref{eq.onm}) and thus, $x\in \overline{\Gamma_{k,\mbf N}^{\textsc{M}}}$. 
This concludes the proof of~\eqref{eq.anglepi}.

       \medskip
        
        \noindent
        \textbf{Step 2b.}  
     Let us now prove item 4 in  Proposition~\ref{pr.omegakpoint}. To this end, let $\delta>0$.  Let us choose $\Gamma_{k,\mbf D}^{\textsc{M}} $  such that the distance between  $  \overline{\Gamma_{k,\mbf D}^{\textsc{M}} }$ and $\partial \Omega \setminus   \Gamma_{z_k}$ is smaller than $\delta/2$ (recall that $\overline{ \Gamma_{k,\mbf D}^{\textsc{M}} }\subset  \Gamma_{z_k}$ can be chosen as large as needed in $\Gamma_{z_k}$, see  Step 1a  above), i.e. 
   \begin{equation}\label{eq.ddgamma1}
 \mathsf d_{\, \overline \Omega}(\overline{\Gamma_{k,\mbf D}^{\textsc{M}} }, \partial \Omega \setminus \Gamma_{z_k} )\le \delta/2.
  \end{equation}

  Let us consider $x\in \Gamma_{k,\mbf N}^{\textsc{M}}$. According to~\eqref{eq.GK2},  $x\in \Sigma_\alpha^{lateral}$ or $x\in \partial \Omega_{\mathsf K_{\alpha/2}}$. If   $x\in \Sigma_\alpha^{lateral}$, then by the triangular inequality, it holds:
     $$\mathsf d_{\, \overline \Omega}(x, \partial \Omega \setminus \Gamma_{z_k} ) \le \mathsf d_{\, \overline \Omega}(x,  \overline{\Gamma_{k,\mbf D}^{\textsc{M}} }) +\mathsf d_{\, \overline \Omega}(\overline{\Gamma_{k,\mbf D}^{\textsc{M}} }, \partial \Omega \setminus \Gamma_{z_k} )\le \alpha+\delta/2,$$
     where we have used that according to~\eqref{eq.Blat},   $\mathsf d_{\, \overline \Omega}(x,  \overline{\Gamma_{k,\mbf D}^{\textsc{M}} })\le \mathsf d_{\, \overline \Omega}(x,  \pa{\Gamma_{k,\mbf D}^{\textsc{M}} })\le \alpha$, for all $x\in \Sigma_\alpha^{lateral}$. If $x\in \partial \Omega_{\mathsf K_{\alpha/2}} $, then $ \mathsf d_{\, \overline \Omega}(x,\partial \Omega)\le \alpha/2<\alpha$. 
Because $x\notin \mathsf C_\alpha$ (since $x\in \pa \mathsf \Omega_k^{\textsc{M}}$ and $\mathsf C_\alpha$ is an open subset of  $\mathsf \Omega_k^{\textsc{M}}$), one has $\mathsf z(x)\in \partial \Omega\setminus \Gamma_{k,\mbf D}^{\textsc{M}} $. Therefore, 
 $$\mathsf d_{\, \overline \Omega}(x, \partial \Omega \setminus \Gamma_{z_k} )\le \mathsf d_{\, \overline \Omega}(x,\mathsf z(x))+\mathsf d_{\, \overline \Omega}(\mathsf z(x),\partial \Omega \setminus \Gamma_{z_k})\le \alpha/2 + \delta/2,$$
 where we have used that either $\mathsf z(x)\in  \partial \Omega\setminus \Gamma_{z_k}$ (in which case $\mathsf d_{\, \overline \Omega}(\mathsf z(x),\partial \Omega \setminus \Gamma_{z_k})=0$) or $\mathsf z(x)\in  \Gamma_{z_k}\setminus \Gamma_{k,\mbf D}^{\textsc{M}} $ (in which case   $\mathsf d_{\, \overline \Omega}(\mathsf z(x),\partial \Omega \setminus \Gamma_{z_k})\le \delta/2$, see~\eqref{eq.ddgamma1} together with the fact that $\mathsf z(x)\notin \Gamma_{k,\mbf D}^{\textsc{M}} $). In conclusion 
      $$\sup_{x\in \Gamma_{k,\mbf N}^{\textsc{M}}}\mathsf d_{\, \overline \Omega}(x, \partial \Omega \setminus \Gamma_{z_k} )\le \alpha+\delta/2.$$
      Choosing $\alpha\le \delta/2$ concludes the proof of item 4 in  Proposition~\ref{pr.omegakpoint}.
%
%

    \medskip

  \noindent
\textbf{Step 3: Proof that $\mathsf \Omega_k^{\textsc{M}}$ is
  Lipschitz, and study of the sign of $\nabla f\cdot \mathsf n _{\mathsf \Omega_k^{\textsc{M}}}$.} 
Let us  first check that $\mathsf \Omega_k^{\textsc{M}}$ is
Lipschitz.  Notice that the union of two Lipchitz (even smooth)
subdomains of $\Omega$ is not necessarily a Lipchitz domain (the
boundary is even not  necessarily a manifold). In our setting, one has:  
$$\partial \Omega_{\mathsf K_{\alpha/2}}\cap \pa \mathsf
C_\alpha=\partial \Omega_{\mathsf K_{\alpha/2}} \cap
\Sigma_\alpha^{lateral} \text{
  (see~\eqref{eq.3P},~\eqref{eq.unionboundary2},
  and~\eqref{eq.unionboundary3})},$$
where   (i) $\Sigma_\alpha^{lateral}$ and $\partial \Omega_{\mathsf K_{\alpha/2}}$ are  smooth, and (ii)  the  normal derivatives $\nabla f(v)\cdot \mathsf n_{\mathsf C_\alpha}(v)$ and $\nabla f(v)\cdot \mathsf n_{ \Omega_{\mathsf K_{\alpha/2}}}(v)$  of $f$  at  $v\in \Sigma_\alpha^{lateral}\cap \partial \Omega_{\mathsf K_{\alpha/2}}$ are positive (so that the two normal vectors cannot be opposite, a situation which could create cusps). These two points will be used to prove that     the boundary of $\mathsf \Omega_k^{\textsc{M}}$ is  Lipschitz.   
One has:
\begin{equation}\label{eq.=pa-domk-union}
\pa \mathsf \Omega_k^{\textsc{M}} \subset \partial \Omega_{\mathsf K_{\alpha/2}} \cup \pa  \mathsf C_\alpha.
\end{equation}
Define the two open subsets of $\pa \mathsf \Omega_k^{\textsc{M}}$
\begin{equation}\label{eq.A1A2}
\mathsf A_1:=\pa \mathsf \Omega_k^{\textsc{M}} \cap (\partial \Omega_{\mathsf K_{\alpha/2}}\setminus \pa \mathsf C_\alpha) , \ \ \, \mathsf A_2:=   \pa \mathsf \Omega_k^{\textsc{M}}\cap  ( \pa \mathsf C_\alpha\setminus\partial \Omega_{\mathsf K_{\alpha/2}}),
\end{equation}
and the closed subset of $\pa \mathsf \Omega_k^{\textsc{M}}$
$\mathsf A_3:= \pa \mathsf \Omega_k^{\textsc{M}} \cap (\partial \Omega_{\mathsf K_{\alpha/2}}\cap \pa \mathsf C_\alpha),$
so that $\pa \mathsf \Omega_k^{\textsc{M}}$ is the disjoint union of
$\mathsf A_1$, $\mathsf A_2$, and $\mathsf A_3$. Let us now prove
that, for $j\in \{1,2,3\}$,  $\pa \mathsf \Omega_k^{\textsc{M}}$ is
Lipschitz in a neighborhood of any point of $\mathsf A_j$,  and let us
also study the sign of $\nabla f\cdot  \mathsf n_{ \mathsf
  \Omega_k^{\textsc{M}}}$ on  $\mathsf A_j$. 
\medskip

\noindent
\textbf{Step 3a: Study of $\mathsf A_1$. }  
First notice that (because $\partial \Omega_{\mathsf K_{\alpha/2}}\subset \Omega$),
\begin{equation}\label{eq.A1}
\mathsf A_1\subset  \partial \Omega_{\mathsf K_{\alpha/2}}\setminus \pa \mathsf C_\alpha \subset \Omega,
\end{equation}
Let $z\in\mathsf A_1$. Then, there exists a neighborhood $\mathsf O_z$ of $z$ in $\mathbb R^d$ such that $\mathsf O_z\cap \overline{\mathsf C_\alpha}=\emptyset$. Indeed, if not,  $z$ would belong to $\overline{\mathsf C_\alpha}=\pa \mathsf C_{\alpha}\cup \mathsf C_{\alpha}$, and $z$ cannot belong to $\pa \mathsf C_{\alpha}$ (by definition of $\mathsf A_1$) and $z$ cannot belong to $  \mathsf C_{\alpha}$ (because $z\in \pa \mathsf \Omega_k^{\textsc{M}}$). Using~\eqref{eq.DEFDOTOMEGA}, it then holds  $\mathsf O_z\cap \overline{\mathsf \Omega_k^{\textsc{M}}}=  \mathsf O_z \cap \overline{\Omega_{\mathsf K_{\alpha/2}}}$ (because $\overline{\mathsf \Omega_k^{\textsc{M}}}=\overline{\mathsf C_\alpha}\cup \overline{\Omega_{\mathsf K_{\alpha/2}}}$). 
Therefore, since in addition $\Omega_{\mathsf K_{\alpha/2}}$ is a smooth domain,   $\mathsf A_1$ is a smooth part of the boundary of $ \mathsf \Omega_k^{\textsc{M}}$  and  $\mathsf n_{ \mathsf \Omega_k^{\textsc{M}}}=\mathsf n_{ \Omega_{\mathsf K_{\alpha/2}}}$ on $\mathsf A_1$. 
Finally, using~\eqref{eq.n-omeka-alpha}, it holds:
 \begin{equation}\label{eq.panf1}
 \partial_{ \mathsf n_{ \mathsf \Omega_k^{\textsc{M}}}} f>0 \text{ on }\mathsf A_1.
 \end{equation}
\textbf{Step 3b: Study of $\mathsf A_2$.}
        It holds $\mathsf A_2  \subset \pa \mathsf C_\alpha\setminus\partial \Omega_{\mathsf K_{\alpha/2}}$. 
 With the same arguments as in Step 3a (see the lines
 after~\eqref{eq.A1}), $\mathsf O_z\cap \overline{\mathsf
   \Omega_k^{\textsc{M}}}=  \mathsf O_z \cap \overline{ \mathsf
   C_\alpha}$ for some neighborhood $\mathsf O_z$ af any point $z \in \mathsf A_2$.
Moreover, from~\eqref{eq.3P}, $\pa  \mathsf C_\alpha$ is
 $\mathcal C^\infty$ except on $\pa{\Gamma_{k,\mbf D}^{\textsc{M}}
 }\cup \pa \Sigma_\alpha^{base}$, where it is Lipschitz since $\Gamma_{k,\mbf D}^{\textsc{M}}
 $ and $\Sigma_\alpha^{lateral}$, and $\Sigma_\alpha^{base}$ and $\Sigma_\alpha^{lateral}$ are transverse (see Step 2a above). Thus,   $\mathsf A_2$ is a Lipschitz part of the boundary of $ \mathsf \Omega_k^{\textsc{M}}$ and  
 \begin{equation}\label{eq.nn=A2}
 \mathsf n_{ \mathsf \Omega_k^{\textsc{M}}}=\mathsf n_{  \mathsf C_\alpha} \ \text{ on $\mathsf A_2\setminus (\pa{\Gamma_{k,\mbf D}^{\textsc{M}} }\cup \pa \Sigma_\alpha^{base})$, i.e.  a.e. on $\mathsf A_2$}.
 \end{equation}   

 Let us now study the sign of  $\nabla f\cdot  \mathsf n_{ \mathsf \Omega_k^{\textsc{M}}}$ on $\mathsf A_2$. 
Recall that $\pa  \mathsf C_\alpha =  \overline{\Gamma_{k,\mbf D}^{\textsc{M}} }\cup { \Sigma_\alpha^{lateral} }\cup  \overline{ \Sigma_\alpha^{base}}$ (see~\eqref{eq.3P}),   $\overline{\Gamma_{k,\mbf D}^{\textsc{M}} } \cap \partial \Omega_{\mathsf K_{\alpha/2}} =\emptyset$ (see indeed~\eqref{eq.unionboundary3}), and $\pa \mathsf \Omega_k^{\textsc{M}} \cap   \overline{ \Sigma_\alpha^{base}}=\emptyset$  (see~\eqref{eq.unionboundary2}). Hence, it  holds:
\begin{align}\label{eq.=A2}
\mathsf A_2=\pa \mathsf \Omega_k^{\textsc{M}}\cap  ( \pa \mathsf C_\alpha \setminus  \partial \Omega_{\mathsf K_{\alpha/2}}) &=\underbrace{(\pa \mathsf \Omega_k^{\textsc{M}} \cap  \overline{\Gamma_{k,\mbf D}^{\textsc{M}} })}_{=\overline{\Gamma_{k,\mbf D}^{\textsc{M}} }} \cup   (\pa \mathsf \Omega_k^{\textsc{M}} \cap   {\Sigma_\alpha^{lateral} }\setminus \partial \Omega_{\mathsf K_{\alpha/2}}).
\end{align}
Let $z\in \mathsf A_2$. If $z\in \Gamma_{k,\mbf D}^{\textsc{M}} $,  then   $  \mathsf n_{  \mathsf C_\alpha}(z) =\mathsf n_\Omega(z)$ and thus, using~\eqref{eq.nn=A2}, it holds: 
 \begin{equation}\label{eq.panf2}
 \nabla f\cdot  \mathsf n_{ \mathsf \Omega_k^{\textsc{M}}}=0\text{ on }    {\Gamma_{k,\mbf D}^{\textsc{M}} },
 \end{equation}
where we also used the fact that  $\Gamma_{k,\mbf D}^{\textsc{M}} \subset \Gamma_{z_k}$ together with  item 2 in \autoref{A}. 
 If $z\in \Sigma_\alpha^{lateral}$, then  from~\eqref{eq.signBO} and~\eqref{eq.nn=A2}, it holds, 
 \begin{equation}\label{eq.panf3}
 \nabla f\cdot  \mathsf n_{ \mathsf \Omega_k^{\textsc{M}}}>0\text{ on }\pa \mathsf \Omega_k^{\textsc{M}} \cap   {\Sigma_\alpha^{lateral} }\setminus \partial \Omega_{\mathsf K_{\alpha/2}}.
  \end{equation}

\noindent
\textbf{Step 3c: Study of $\mathsf A_3$. }
Notice that $\mathsf A_3=\partial \Omega_{\mathsf K_{\alpha/2}}\cap \pa \mathsf C_\alpha$ (because $\partial \Omega_{\mathsf K_{\alpha/2}}\cap \pa \mathsf C_\alpha\subset \pa \mathsf \Omega_k^{\textsc{M}}$). Notice also that since $\partial \Omega_{\mathsf K_{\alpha/2}}\subset \Omega$, 
\begin{equation}\label{eq.A3}
\mathsf A_3 \subset  \Omega
\end{equation}
 Using~\eqref{eq.3P},~\eqref{eq.unionboundary2}, and~\eqref{eq.unionboundary3}, it   holds:
\begin{equation}\label{eq.A3-lateral}
\mathsf A_3= \partial \Omega_{\mathsf K_{\alpha/2}}\cap \pa \mathsf C_\alpha =\partial \Omega_{\mathsf K_{\alpha/2}}\cap \Sigma_\alpha^{lateral}.
\end{equation}  
Thus,  $ \partial \Omega_{\mathsf K_{\alpha/2}}$ intersects $ \pa \mathsf C_\alpha$ where $\pa \mathsf C_\alpha$ is smooth (i.e. on  $\Sigma_\alpha^{lateral}$). 
Let us consider  $v\in \mathsf A_3$. Let us conclude the proof by
considering successively the case when $\mathsf n_{\Omega_{\mathsf
    K_{\alpha/2}}}(v)$ is not collinear to $\mathsf n_{\mathsf
  C_\alpha}(v)$, and the case when $\mathsf n_{\Omega_{\mathsf K_{\alpha/2}}}(v)=\pm \mathsf n_{\mathsf C_\alpha}(v)$.

Let us first  consider \textbf{the case when $\mathsf n_{\Omega_{\mathsf K_{\alpha/2}}}(v)$ is not collinear to $\mathsf n_{\mathsf C_\alpha}(v)$}. By a continuity argument, there exists a neighborhood $\mathsf O_v$ of $v$ in $\Omega$ such that $\mathsf O_v\cap \pa \mathsf C_\alpha=\mathsf O_v\cap \Sigma_\alpha^{lateral}$ (so  that $\mathsf n_{\mathsf C_\alpha}$ is defined everywhere  and continuous on $\mathsf O_v\cap \pa \mathsf C_\alpha$)  and such that $\mathsf n_{\Omega_{\mathsf K_{\alpha/2}}}$ is not collinear to  $\mathsf n_{\mathsf C_\alpha}$ on $\mathsf O_v$. Consequently,   $ \partial \Omega_{\mathsf K_{\alpha/2}}$ and  $ \pa \mathsf C_\alpha$ are transverse on   $\mathsf O_v$ (or equivalently, the natural immersion map $\mathsf i:  \partial \Omega_{\mathsf K_{\alpha/2}}\to \mathbb R^d$ is transverse to  $ \pa \mathsf C_\alpha$   on   $\mathsf O_v$). Thus,   $\mathsf O_v\cap  \pa \mathsf \Omega_k^{\textsc{M}}$ is Lipschitz.
In addition, as a consequence  of the inverse image of a regular value Theorem~\cite[Theorem (5.12)]{brocker1982introduction} and its proof (see also~\cite{thom1954quelques,milnor1997topology,guillemin2010differential}) 
applied here to the  smooth function $\mathsf i$, one has, up to choosing  $\mathsf O_v$ smaller, $\mathsf O_v \cap \mathsf i^{-1}( \pa \mathsf C_\alpha)= \mathsf O_v\cap  (\partial \Omega_{\mathsf K_{\alpha/2}}\cap \pa \mathsf C_\alpha)$ (because $\mathsf i^{-1}( \pa \mathsf C_\alpha)=  \partial \Omega_{\mathsf K_{\alpha/2}}\cap \pa \mathsf C_\alpha$)  is a   $1$-codimensional  smooth submanifold of $\partial \Omega_{\mathsf K_{\alpha/2}}$ (i.e. a   $2$-codimensional  smooth submanifold of $\mathbb R^d$ included in $\pa \mathsf \Omega_k^{\textsc{M}}$).   
Therefore, for all $v\in \mathsf A_3$ such that  $\mathsf n_{\Omega_{\mathsf K_{\alpha/2}}}(v)$ is not collinear to $\mathsf n_{\mathsf C_\alpha}(v)$, there exists a neighborhood $\mathsf O_v$ of $v$ in $\Omega$ such that  
     \begin{equation}\label{eq.point1A3}
     \text{$\mathsf O_v\cap  \mathsf A_3$ is of measure $0$  for the surface measure on $\mathsf O_v\cap \pa \mathsf \Omega_k^{\textsc{M}}$}.
        \end{equation}

Let us finally consider \textbf{the case when $\mathsf n_{\Omega_{\mathsf K_{\alpha/2}}}(v)=\pm \mathsf n_{\mathsf C_\alpha}(v)$}. Using~\eqref{eq.n-omeka-alpha} and~\eqref{eq.signBO}, 
$\mathsf n_{\Omega_{\mathsf K_{\alpha/2}}}(v)=  +\mathsf n_{\mathsf C_\alpha}(v)$. Moreover, from~\eqref{eq.Calpha},~\eqref{eq.Blat}, and~\eqref{eq.A3-lateral} there exists a neighborhood $\mathsf O_v$ of $v$ in $\Omega$ such that $ \mathsf O_v\cap \pa \mathsf C_\alpha= \mathsf O_v\cap \Sigma_\alpha^{lateral}$ and thus (see~\eqref{eq.pauz2} and~\eqref{eq.pauz3}), 
$$
  \mathsf O_v\cap \mathsf C_\alpha=\mathsf O_v\cap\big \{w\in \overline \Omega, \, x_1(w)>0, \, x_d(w)\in (0,\alpha)\big \} 
$$
and (see~\eqref{eq.Blat})
  \begin{equation}\label{eq.normalCalpha}
  \mathsf O_v\cap \pa \mathsf C_\alpha=\mathsf O_v\cap\big \{w\in \overline \Omega, \, x_1(w)=0, \, x_d(w)\in (0,\alpha)\big \}\  (=\mathsf O_v\cap  \Sigma_\alpha^{lateral}).
          \end{equation}
          In the following, with a slight abuse of notation, we will denote by $x=(x_1,\tilde x)$ both a point   in $\mathsf O_v$ and its coordinates in the local basis. 
          
  In addition, since $\mathsf n_{\Omega_{\mathsf K_{\alpha/2}}}(v)=  +\mathsf n_{\mathsf C_\alpha}(v)$  and   $\partial \Omega_{\mathsf K_\alpha}$ is smooth, up to choosing $\mathsf O_v$ smaller, there exists a smooth function $\Psi: \mathbb R^{d-1} \to \mathbb R$ such that  $\Psi(\tilde x(v))=x_1(v)=0$ and 
\begin{equation}\label{eq.==p}
\mathsf O_v\cap \partial \Omega_{\mathsf K_{\alpha/2}}=\{( \Psi(\tilde x), \tilde x), \, x=(x_1,\tilde x)\in   \mathsf O_v\}
    \end{equation}
     is the graph\footnote{The
     fact that $\mathsf O_v\cap \partial \Omega_{\mathsf
       K_{\alpha/2}}$ is the graph of a function of $\tilde x$ is a
     consequence of the implicit function theorem, since $T_v\partial \Omega_{\mathsf K_\alpha}= \{\nabla {x_1}(v)\}^\bot$ ($\mathsf n_{\Omega_{\mathsf K_{\alpha/2}}}(v)=  +\mathsf n_{\mathsf C_\alpha}(v)$ and~\eqref{eq.normalCalpha}).  Indeed, in a neighborhood of $y_0:=(x_1(v), \tilde x(v))=(0,\tilde x(v))$ in $\mathbb R^d$,  $\partial \Omega_{\mathsf K_\alpha}$ is the set of points $(x_1,\tilde x)$ such that $\phi(x_1,\tilde x)=0$ where $\phi:\mathbb R^d\to \mathbb R$ is smooth.   In particular,  $\nabla \phi(y_0)\neq 0$ is collinear to $\mathsf n_{\Omega_{\mathsf K_{\alpha/2}}}(v)$ and  $\nabla_{\mbf T}\phi(y_0)=0$, where $\nabla_{\mbf T}$ is the tangential gradient of $\phi$  along $\partial \Omega_{\mathsf K_{\alpha/2}}$). Since  $T_v\partial \Omega_{\mathsf K_\alpha}= \{\nabla {x_1}(v)\}^\bot$ and $\nabla {x_1}(v)\perp \nabla {\tilde x_q}(v)$ for $q=2,\ldots,d$ (we choose normal coordinates systems), one has $\nabla_{\tilde x}\phi(y_0)=\nabla_{\mbf T}\phi(y_0)=0$ and thus,  $\partial_{x_1}\phi(y_0)\neq 0$. Equation~\eqref{eq.==p} then follows from the implicit function theorem.}  of $\Psi$ in the $(x_1,\tilde x)$ coordinates, where
     we set $\tilde x:=(x_2,\ldots, x_{d-1},x_d)$.
Moreover, one has  
$$\mathsf O_v\cap   \Omega_{\mathsf K_{\alpha/2}}=\{x=(x_1,\tilde x)\in   \mathsf O_v \text{ such that }  \, x_1>\Psi(\tilde x)\}.$$
Therefore, from~\eqref{eq.DEFDOTOMEGA}, 
$ \mathsf O_v\cap \mathsf \Omega_k^{\textsc{M}} =\{x=(x_1,\tilde x)\in   \mathsf O_v \text{ such that }  \, x_1>\min(\Psi(\tilde x),0)\}
$ and thus, 
 $$ \mathsf O_v\cap \pa \mathsf \Omega_k^{\textsc{M}} =\{x=(x_1,\tilde x)\in   \mathsf O_v \text{ such that }  \, x_1=\min(\Psi(\tilde x),0)\}
$$
is Lipschitz (indeed $\Upsilon: \tilde x\mapsto \min(\Psi(\tilde x),0)$ is a Lipschitz function). In addition,  for a.e. $x=(x_1,\tilde x)\in \mathsf O_v\cap \pa \mathsf \Omega_k^{\textsc{M}}$: $\mathsf n_{\mathsf \Omega_k^{\textsc{M}}}(x)\in \{ \mathsf n_{  \mathsf C_\alpha }(x), \mathsf n_{  \Omega_{\mathsf K_\alpha} }(x)\}$. Indeed, in the $(x_1,\tilde x)$-coordinates, one has for a.e. $x=(x_1,\tilde x)\in \mathsf O_v\cap \pa \mathsf \Omega_k^{\textsc{M}}$, 
$$T_x \pa \mathsf \Omega_k^{\textsc{M}}=\Big\{\big (\tilde p\cdot \nabla \Upsilon (\tilde x),  \tilde p\big ), \ \tilde p \in \mathbb R^{d-1}\Big\} \text{ and } \mathsf n_{\mathsf \Omega_k^{\textsc{M}}}(x)=\frac{(-1, \nabla \Upsilon (\tilde x))}{\sqrt{1+ \vert \nabla\Upsilon (\tilde x)\vert ^2  }}.$$
\noindent
Because for a.e. $\tilde x$, $\nabla \Upsilon (\tilde x)\in \{0,\nabla \Psi(\tilde x)\}$, it holds  for a.e. $x=(x_1,\tilde x)\in \mathsf O_v\cap \pa \mathsf \Omega_k^{\textsc{M}}$, $\mathsf n_{\mathsf \Omega_k^{\textsc{M}}}(x)\in \{ \mathsf n_{  \mathsf C_\alpha }(x), \mathsf n_{  \Omega_{\mathsf K_\alpha} }(x)\}$. 
Moreover, using~\eqref{eq.n-omeka-alpha} and~\eqref{eq.signBO}, it holds  for a.e. $x=(x_1,\tilde x)\in \mathsf O_v\cap \pa \mathsf \Omega_k^{\textsc{M}}$:
 \begin{equation}\label{eq.point1A3bis}
\nabla f(x)\cdot \mathsf n_{\mathsf \Omega_k^{\textsc{M}}}(x)>0.
\end{equation}
%
%
%
%
%
%

From~\eqref{eq.point1A3} and~\eqref{eq.point1A3bis}, we thus conclude
that for any point $v \in \mathsf A_3$, there exists a neighborhood~$\mathsf O_v$ of $v$ in $\Omega$ such that, for the surface measure on $\mathsf O_v\cap   \pa \mathsf \Omega_k^{\textsc{M}} $,   
either $\mathsf O_v \cap \mathsf A_3$ is of measure $0$  or 
$\nabla f\cdot  \mathsf n_{ \mathsf \Omega_k^{\textsc{M}}}>0$ 
  a.e. on  $\mathsf O_v \cap \mathsf A_3$.
This implies that, for the surface measure on $\pa \mathsf \Omega_k^{\textsc{M}} $,  
 \begin{equation}\label{eq.panf4}
\nabla f\cdot  \mathsf n_{ \mathsf \Omega_k^{\textsc{M}}}>0 \text{
  a.e. on } \mathsf A_3.
  \end{equation}

 In conclusion, $\mathsf \Omega_k^{\textsc{M}}$ is a Lipschitz subdomain  of $\Omega$. Furthermore, we have proved that: 
 $$\nabla f\cdot  \mathsf n_{ \mathsf \Omega_k^{\textsc{M}}}=0\text{ a.e. on } \overline{\Gamma_{k,\mbf D}^{\textsc{M}} }=\pa \mathsf \Omega_k^{\textsc{M}}\cap \partial \Omega \ \text{ (see~\eqref{eq.panf2} and~\eqref{eq.onm})}.$$
In addition, since (see~\eqref{eq.onm},~\eqref{eq.A1},~\eqref {eq.=A2}, and~\eqref{eq.A3})
$$\Gamma_{k,\mbf N}^{\textsc{M}}=\pa \mathsf \Omega_k^{\textsc{M}}\cap \Omega= \mathsf A_1\,  \cup\, ( \mathsf A_2\ \cap \  \Omega) \, \cup  \, \mathsf A_3,$$
and $\mathsf A_2\ \cap \  \Omega=\pa \mathsf \Omega_k^{\textsc{M}}
\cap   {\Sigma_\alpha^{lateral} }\setminus \partial \Omega_{\mathsf
  K_{\alpha/2}} $, one deduces from~\eqref{eq.panf1},~\eqref{eq.panf3}, and~\eqref{eq.panf4}, that 
 $$\nabla f\cdot  \mathsf n_{ \mathsf \Omega_k^{\textsc{M}}}>0\text{ a.e. on } \Gamma_{k,\mbf N}^{\textsc{M}}=\pa \mathsf \Omega_k^{\textsc{M}}\cap \Omega. $$
 This concludes the proof of Proposition~\ref{pr.omegakpoint}. 
 \end{proof}
 \noindent
 

\subsection{Witten Laplacians with mixed  Dirichlet-Neumann boundary conditions associated with $(z_k)_{k=1,\ldots,n}$}
\label{sec.Witten-TN}

In this section, we define a Witten Laplacian with mixed Dirichlet-Neumann boundary conditions associated with each saddle point $z_k$ of $f$ using the domain $\mathsf \Omega_k^{\textsc{M}}$ constructed in the previous section. The idea is to define a Witten Laplacian in $\Lambda^qL^2(\mathsf \Omega_k^{\textsc{M}})$  with  
   Dirichlet   boundary conditions  on $\Gamma_{k,\mbf D}^{\textsc{M}}$ (where $ \nabla f\cdot \mathsf n_{  \mathsf \Omega_k^{\textsc{M}} }  =0$) and Neumann  boundary conditions on $\Gamma_{k,\mbf N}^{\textsc{M}}$ (where $ \nabla f\cdot \mathsf n_{  \mathsf \Omega_k^{\textsc{M}} }  >0$), see Proposition \ref{pr.omegakpoint}. Since   $x_0\in  \mathsf \Omega_k^{\textsc{M}}$ is the only minimum of $f$ in $\mathsf \Omega_k^{\textsc{M}}$ and $z_k\in \pa  \mathsf \Omega_k^{\textsc{M}}$ is the only saddle point of $f$ in $\overline{ \mathsf \Omega_k^{\textsc{M}}}$, we expect,  in view of  Theorem~\ref{thm.main1} and the results of~\cite{le-peutrec-10}, that   such Witten Laplacians have only one   eigenvalue smaller than $\mathsf ch$ when $q=0$ and $q=1$.   
Thanks to Witten's complex   structure,  
this eigenvalue, already introduced as   $\lambda(\mathsf
\Omega_k^{\textsc{M}})$ at the beginning of Section~\ref{sec.zk-QM},
will be the same for $q=0$ and $q=1$. The quasi-mode $\mathsf
v_k^{(1)} $ of   $\Delta_{f,h}^{\mathsf{Di},(1)}(\Omega)$ associated
with $z_k$ will then be defined by multiplying   by a cut-off function the principal $1$-eigenform $\mathsf u_k^{(1)}$ of this Witten Laplacian with mixed Dirichlet-Neumann boundary conditions.
\medskip

We first give the  definition of Witten Laplacians with mixed
   Dirichlet-Neumann boundary conditions on
   Lipschitz domains in  Section~\ref{sec.Witten-general-result}.  We
   then study the spectral properties of these Witten Laplacians and
   derive some estimates on the principal eigenvalues and eigenforms in
   Sections~\ref{sec.specM},~\ref{sec.equivM} and~\ref{sec.Agmon1}

\subsubsection{Witten Laplacians with mixed
 Dirichlet-Neumann boundary conditions on Lipschitz domains}
 \label{sec.Witten-general-result}
 
In this section, in order to ease the notation, we drop the subscript $k$ in
$(\mathsf \Omega^{\textsc{M}}_{k}, \Gamma^{\textsc{M}}_{k,\mbf D},
\Gamma^{\textsc{M}}_{k,\mbf N})$, since the results will then be applied to each of
this triplet, for $k \in \{1, \ldots,n\}$.
 Let thus $\mathsf \Omega^{\textsc{M}}$ be a Lipschitz subdomain of
 $\Omega$.  Let $\Gamma^{\textsc{M}}_{\mbf D}$ and
 $\Gamma^{\textsc{M}}_{\mbf N}$ be two disjoint open subsets of
 $\partial \mathsf \Omega^{\textsc{M}}$ such that
 $\overline{\Gamma^{\textsc{M}}_{\mbf D}} \cup
 \overline{\Gamma^{\textsc{M}}_{\mbf N}} = \pa \mathsf
 \Omega^{\textsc{M}}$. 


This section is organized as follows. We  first recall the definition
of weak traces for forms $w\in  \Lambda^q H_{\mathsf d} ( \mathsf
\Omega^{\textsc{M}} )\cap \Lambda^q H_{\mathsf d^*} ( \mathsf
\Omega^{\textsc{M}} )$
where for $q\in\{0,\ldots,d\}$, 
\begin{equation}
\label{eq.H-d}
\Lambda^q H_{\mathsf d} ( \mathsf \Omega^{\textsc{M}} ):= \left\{w\in \Lambda^q L^2 ( \mathsf \Omega^{\textsc{M}} ),\ \mathsf dw\in\Lambda^{q+1} L^2 ( \mathsf \Omega^{\textsc{M}} )\right\}
\end{equation}
and  
\begin{equation}
\label{eq.H-d*}
\Lambda^q H_{\mathsf d^*} ( \mathsf \Omega^{\textsc{M}} )\ :=\  \left\{w\in \Lambda^q L^2 ( \mathsf \Omega^{\textsc{M}} ),\ \mathsf d^*w\in\Lambda^{q-1} L^2 ( \mathsf \Omega^{\textsc{M}} )\right\}
\end{equation}
are equipped  with their natural graph norms. Let us recall the
convention $ \Lambda^{-1} L^2=\Lambda^{d+1} L^2=\{0\}$.
  Secondly, we  state trace estimates and subelliptic estimates for forms $w\in \Lambda^q H_{\mathsf d} ( \mathsf \Omega^{\textsc{M}} )\cap \Lambda^q H_{\mathsf d^*} ( \mathsf \Omega^{\textsc{M}} )$ such that $\mbf t w=0$ on $\Gamma^{\textsc{M}}_{\mbf D}$ and $\mbf n w=0$ on $\Gamma^{\textsc{M}}_{\mbf N}$. 
Indeed (see \cite{brown-94,jakab-mitrea-mitrea-09}),  a trace in $\Lambda^q L^2(\partial \mathsf
\Omega^{\textsc{M}})$ does not exist in general for such forms except if  $\Gamma^{\textsc{M}}_{\mbf D}$ and $\Gamma^{\textsc{M}}_{\mbf N}$ meet at an angle strictly
smaller than~$\pi$ (measured in $\mathsf \Omega^{\textsc{M}}$), in the
sense of
Definition~\ref{de.angle}. This explains the role of item~$3$ in
Proposition~\ref{pr.omegakpoint}. Finally, we introduce the Witten
Laplacians of interest, together with an associated Green
formula. This formula will be crucial   to study the spectral
properties of these operators in the next section. 
\medskip

\noindent
\textbf{Weak definitions of traces for elements in $\Lambda^q H_{\mathsf d} ( \mathsf \Omega^{\textsc{M}} )$ or in $\Lambda^q H_{\mathsf d^*} ( \mathsf \Omega^{\textsc{M}} )$.}
Let us recall that  for a differential form $u$ in $\Lambda^qL^2 (\pa\mathsf \Omega^{\textsc{M}} )$, the tangential and normal components are defined as follows:
\begin{equation}
\label{eq.decomp-bdy}
u= \mathbf t u+\mathbf n u\quad\text{with}\quad
\mathbf t u= \mathbf{i}_{\mathsf n_{\mathsf \Omega^{\textsc{M}}}}(\mathsf n_{\mathsf \Omega^{\textsc{M}}}^{\flat} \wedge u)\quad\text{and}\quad 
 \mathbf n u= \mathsf n_{\mathsf \Omega^{\textsc{M}}}^{\flat}\wedge(\mathbf{i}_{\mathsf n_{\mathsf \Omega^{\textsc{M}}}} u),
\end{equation}
where the superscript $\flat$ stands for the usual musical
isomorphism ($\mathsf n_{\mathsf \Omega^{\textsc{M}}}^\flat$ is the 1-form associated with $\mathsf n_{\mathsf \Omega^{\textsc{M}}}$,  $\mathsf n_{\mathsf \Omega^{\textsc{M}}}$ being is the unit outward normal to $\mathsf \Omega^{\textsc{M}}$). Notice that   $\mathbf t u$  is orthogonal to $\mbf n u$ in $\Lambda^qL^2 (\pa\mathsf \Omega^{\textsc{M}} )$. Let us
recall that the mapping 
\begin{equation}
\label{eq.trace-usual}
w\in \Lambda^qH^1(\mathsf \Omega^{\textsc{M}} )\mapsto w|_{\pa \mathsf \Omega^{\textsc{M}}}\in \Lambda^qH^{1/2} (\pa\mathsf \Omega^{\textsc{M}} )
\end{equation}
 is well defined, continuous, and surjective. We would like here to
 recall the procedure to extend the notion of traces to elements in
 the subspaces of $\Lambda^qH^1(\mathsf \Omega^{\textsc{M}} )$: $\Lambda^q H_{\mathsf d} ( \mathsf \Omega^{\textsc{M}} )$ and $\Lambda^q H_{\mathsf d^*} ( \mathsf \Omega^{\textsc{M}} )$. This is achieved   using a duality argument and  the standard Green formula  which reads for  differential forms $(u,v)\in \Lambda^{q}H^1 ( \mathsf \Omega^{\textsc{M}} ) \times  \Lambda^{q
+1}H^1 (\mathsf \Omega^{\textsc{M}} )$:
\begin{align}
\nonumber
\langle \mathsf du, v\rangle_{ L^2 ( \mathsf \Omega^{\textsc{M}} )}- \langle u, \mathsf d^*v\rangle_{ L^2 ( \mathsf \Omega^{\textsc{M}} )}&= \int_{\partial \mathsf \Omega^{\textsc{M}}}  \langle  \mathsf n_{\mathsf \Omega^{\textsc{M}}}^\flat \wedge u,v\rangle_{ T^*_{\sigma}\mathsf \Omega^{\textsc{M}}} d\sigma= \int_{\partial \mathsf \Omega^{\textsc{M}}}  \langle  \mathsf n_{\mathsf \Omega^{\textsc{M}}}^\flat \wedge u, \mathbf n v\rangle_{ T^*_{\sigma}\mathsf \Omega^{\textsc{M}}} d\sigma\\
\label{eq:usual_Green}
&=\int_{\partial \mathsf \Omega^{\textsc{M}}} \langle  u,\mathbf{i}_{\mathsf n_{\mathsf \Omega^{\textsc{M}}}} v\rangle_{ T^*_{\sigma}\mathsf \Omega^{\textsc{M}}} d\sigma=\! \int_{\partial \mathsf \Omega^{\textsc{M}}} \!\!  \langle \mathbf t  u,\mathbf{i}_{\mathsf n_{\mathsf \Omega^{\textsc{M}}}} v\rangle_{ T^*_{\sigma}\mathsf \Omega^{\textsc{M}}} d\sigma,
\end{align}
where we used the fact that the adjoint of  $\mathsf n_{\mathsf \Omega^{\textsc{M}}}^\flat\wedge$ in $\Lambda^qL^2 ( \pa \mathsf \Omega^{\textsc{M}} )$ is $\mathbf i_{\mathsf n_{\mathsf \Omega^{\textsc{M}}}}$. Let us now consider   $w \in \Lambda^q
H_{\mathsf d} ( \mathsf \Omega^{\textsc{M}} )$. Then,  $\mathsf
n_{\mathsf \Omega^{\textsc{M}}}^\flat \wedge w$ is defined as an
element in $ \Lambda^{q+1} H^{-\frac12} (\pa\mathsf
\Omega^{\textsc{M}} )$ by: $\forall \phi \in\Lambda^{q+1}H^{\frac12}
(\pa\mathsf \Omega^{\textsc{M}} )$
\begin{equation}
\label{eq.n-wedge-u}
\langle \mathsf n_{\mathsf \Omega^{\textsc{M}}}^\flat \wedge w ,  \phi \rangle_{H^{-\frac 12} (\pa\mathsf \Omega^{\textsc{M}} ),H^{\frac 12} (\pa\mathsf \Omega^{\textsc{M}} )} = 
\langle \mathsf d w,  \Phi \rangle_{ L^2 ( \mathsf \Omega^{\textsc{M}} )}-\langle w, \mathsf d^*\Phi\rangle_{ L^2 ( \mathsf \Omega^{\textsc{M}} )},
\end{equation}
where $\Phi$ is any form in $\Lambda^{q+1}H^{1} ( \mathsf \Omega^{\textsc{M}} )$ whose trace
in $\Lambda^{q+1}H^{\frac12} (\pa\mathsf \Omega^{\textsc{M}} )$ is $\phi$. Recall that this definition is independent of the  chosen extension $\Phi$ of $\phi$ (this follows from~\eqref{eq:usual_Green} and the density of $\Lambda^q\mathcal C^\infty\big (\, \overline{\mathsf \Omega^{\textsc{M}}}\, \big )$
 in   $\Lambda^q H_{\mathsf d} ( \mathsf \Omega^{\textsc{M}} )$, see
 for example
 \cite[Proposition~3.1]{jakab-mitrea-mitrea-09}). Similarly, for any
 $w \in \Lambda^q H_{\mathsf d^*} ( \mathsf \Omega^{\textsc{M}} )$,
 $\mathbf{i}_{\mathsf n_{\mathsf \Omega^{\textsc{M}}}} w\in
 \Lambda^{q-1}H^{-\frac12} (\pa\mathsf \Omega^{\textsc{M}} )$
is defined by: $\forall \phi \in \Lambda^{q-1} H^{\frac12} (\pa\mathsf \Omega^{\textsc{M}} )$, 
\begin{equation}
\label{eq.i-n-u} 
\langle \mathbf{i}_{\mathsf n_{\mathsf \Omega^{\textsc{M}}}} w , \phi \rangle_{H^{-\frac12} (\pa\mathsf \Omega^{\textsc{M}} ),H^{\frac12} (\pa\mathsf \Omega^{\textsc{M}} )} =
\langle w, \mathsf d\Phi\rangle_{ L^2 ( \mathsf \Omega^{\textsc{M}} )}-\langle \mathsf d^*w, \Phi\rangle_{ L^2 ( \mathsf \Omega^{\textsc{M}} )},
\end{equation}
where $\Phi$ is any extension of $\phi$ in $\Lambda^{q-1}H^{1} ( \mathsf \Omega^{\textsc{M}})$. 

Let us now recover   the decomposition \eqref{eq.decomp-bdy} for forms
$w\in \Lambda^q H_{\mathsf d} ( \mathsf \Omega^{\textsc{M}} )\cap
\Lambda^q H_{\mathsf d^*} ( \mathsf \Omega^{\textsc{M}} )$ such that,
on a subset $\Gamma$ of $\pa \mathsf \Omega^{\textsc{M}} $, the
tangential trace or the normal trace are defined in a weak sense.
Let $w\in   \Lambda^q H_{\mathsf d} ( \mathsf \Omega^{\textsc{M}} )$. 
 If $ \mathsf n_{\mathsf \Omega^{\textsc{M}}}^\flat \wedge w \in
 \Lambda^{q+1} L^2 ( \Gamma)$, we define   $\mbf tw|_\Gamma$, the tangential trace of $w$  on $\Gamma$, by
\begin{equation}
\label{eq.tan-gen}\mathbf t w|_{\Gamma}  :=  \mathbf{i}_{\mathsf n_{\mathsf \Omega^{\textsc{M}} }}
(\mathsf n_{\mathsf \Omega^{\textsc{M}} }^{\flat} \wedge w) \in \Lambda^{q} L^2(\Gamma),\text{ so that } \|\mathbf t w\|_{ L^2(\Gamma)} = \|\mathsf n_{\mathsf \Omega^{\textsc{M}} }^{\flat} \wedge w\|_{ L^2(\Gamma)}.
\end{equation}  
In particular $\mbf tw|_\Gamma=0$ if $ \mathsf n_{\mathsf \Omega^{\textsc{M}}}^\flat \wedge w|_\Gamma=0$. 
Let us now consider $w  \in \Lambda^q H_{\mathsf d^*} ( \mathsf \Omega^{\textsc{M}} )$.   
 When  $ \mathbf{i}_{\mathsf n_{\mathsf \Omega^{\textsc{M}}}} w\in
 \Lambda^{q-1} L^2 ( \Gamma )$, we define   $\mbf nw|_\Gamma$, the
 normal trace of $w$  on $\Gamma$, by 
\begin{equation}
\label{eq.norm-gen}\mathbf n u|_{\Gamma}  :=  \mathsf n_{\mathsf \Omega^{\textsc{M}} }^{\flat} \wedge
(\mathbf{i}_{\mathsf n_{\mathsf \Omega^{\textsc{M}} }}u) \in \Lambda^{q} L^2(\Gamma)\ , \text{
 so that } \|\mathbf n u\|_{ L^2(\Gamma)}
=\|\mathbf{i}_{\mathsf n_{\mathsf \Omega^{\textsc{M}} }}u\|_{ L^2(\Gamma)}.
\end{equation}
In particular,  $\mathbf n u|_{\Gamma} =0$  if $ \mathbf{i}_{\mathsf n_{\mathsf \Omega^{\textsc{M}}}} w|_\Gamma=0$. 
Lastly, if $w\in \Lambda^qH_{\mathsf d}(\mathsf \Omega^{\textsc{M}})\cap \Lambda^qH_{\mathsf d^*}(\mathsf \Omega^{\textsc{M}})$ is such that $\mathsf n_{\mathsf \Omega^{\textsc{M}} }^{\flat}\wedge w|_{\Gamma}\in \Lambda^{q+1}L^2(\Gamma)$ and $\mathbf{i}_{\mathsf n_{\mathsf \Omega^{\textsc{M}} }}w\in \Lambda^{q-1}L^2(\Gamma)$ then $w$ admits a trace $w|_{\Gamma}$  in $\Lambda^qL^2(\Gamma)$  defined by  (see~\eqref{eq.tan-gen} and~\eqref{eq.norm-gen}),
\begin{equation}
\label{eq.trace-gen}
w|_{\Gamma} := \mathbf tw|_{\Gamma}+
\mathbf n w|_{\Gamma}. 
\end{equation}
 In addition, one has  for such $w$:
$$
\|w|_{\Gamma}\|^2_{ L^2(\Gamma)} = 
\| \mathbf t w|_{\Gamma}\|^2_{ L^2(\Gamma)}
+
\|\mathbf n w|_{\Gamma}\|^2_{ L^2(\Gamma)}
=
\|\mathsf n_{\mathsf \Omega^{\textsc{M}} }^{\flat} \wedge w\|_{ L^2(\Gamma)}^2
+
\|\mathbf{i}_{\mathsf n_{\mathsf \Omega^{\textsc{M}} }}w\|_{ L^2(\Gamma)}^2
.
$$ 
Let us mention that all the above definitions coincide 
with the usual ones when $w$ belongs to $\Lambda^q H^1 ( \mathsf \Omega^{\textsc{M}} )$. In particular, \eqref{eq.trace-gen}   can be seen as an extension of  \eqref{eq.decomp-bdy}.
\medskip
\medskip

\noindent
\textbf{Trace estimates  for forms $ \Lambda^q H_{\mathsf d} ( \mathsf \Omega^{\textsc{M}} )\cap \Lambda^q H_{\mathsf d^*} ( \mathsf \Omega^{\textsc{M}} )$ satisfying  mixed Dirichlet-Neumann  boundary conditions  and when $\mathsf \Omega^{\textsc{M}}$ is not smooth.}
Let $\Gamma$ be any open Lipschitz subset of $\partial \mathsf \Omega^{\textsc{M}}$. According to \cite[Proposition~3.1]{jakab-mitrea-mitrea-09},
the space
$$
 \big \{w\in\Lambda^q\mathcal C^\infty\big (\overline{\mathsf \Omega^{\textsc{M}}}\big ),\ 
w= 0 \text{ in a neighborhood of } \pa\mathsf \Omega^{\textsc{M}}\setminus \Gamma \big \}
$$
is dense  in
$$
\Lambda^q H_{\mathsf d,\Gamma} ( \mathsf \Omega^{\textsc{M}} ) := \big \{w\in\Lambda^q H_{\mathsf d} ( \mathsf \Omega^{\textsc{M}} ),\ 
\supp(\mathsf n_{\mathsf \Omega^{\textsc{M}}}^{\flat} \wedge w)\subset \overline\Gamma \big \}
$$
and in 
$$
\Lambda^q H_{\mathsf d^*,\Gamma} ( \mathsf \Omega^{\textsc{M}} ) := \big \{w\in\Lambda^q H_{\mathsf d^*} ( \mathsf \Omega^{\textsc{M}} ),\ 
\supp(\mathbf{i}_{\mathsf n_{\mathsf \Omega^{\textsc{M}}}}  w)\subset \overline\Gamma \big \}.
$$ 
We are now in position to state the following result which is a consequence  of \cite[Theorems~1.1 and 1.2]{jakab-mitrea-mitrea-09} (see also \cite[Theorems~4.1 and~4.2]{goldshtein-mitrea-mitrea-11}).
\begin{proposition}
\label{pr.QTN}
Let us assume that $\mathsf \Omega ^{\textsc{M}}\subset \mathbb R^d$ is a Lipschitz domain. Let $\Gamma^{\textsc{M}}_{\mbf D}$
and $\Gamma^{\textsc{M}}_{\mbf N}$ be two disjoint Lipschitz   open subsets of $\partial \mathsf \Omega ^{\textsc{M}}$ such that $\overline{\Gamma^{\textsc{M}}_{\mbf D}} \cup \overline{\Gamma^{\textsc{M}}_{\mbf N}} = \pa
\mathsf \Omega ^{\textsc{M}}$ and such that $\Gamma^{\textsc{M}}_{\mbf D}$ and $\Gamma^{\textsc{M}}_{\mbf N}$ meet at an angle
strictly smaller than $\pi$ (in the sense of Definition~\ref{de.angle}). Then, the following results hold:

\noindent
{(i)} Let $w$ be a differential form such that (see \eqref{eq.H-d},~\eqref{eq.H-d*},~\eqref{eq.tan-gen}, and \eqref{eq.norm-gen})
 $$
w\in \Lambda^qH_{\mathsf d}(\mathsf \Omega^{\textsc{M}})\cap \Lambda^qH_{\mathsf d^*}(\mathsf \Omega^{\textsc{M}}), \  
\mathbf t w|_{\Gamma^{\textsc{M}}_{\mbf D}}=0 \text{ and } 
\mathbf n w|_{\Gamma^{\textsc{M}}_{\mbf N}}=0.$$
Then $w$ satisfies 
$$w\in\Lambda^{q} H^{\frac12} ( \mathsf \Omega ^{\textsc{M}} )\quad\text{and}\quad
\mathbf{i}_{\mathsf n_{\mathsf \Omega ^{\textsc{M}}}}u,\ \mathsf n_{\mathsf \Omega ^{\textsc{M}}}^{\flat} \wedge u \in
\Lambda L^2 (\pa\mathsf \Omega ^{\textsc{M}} )
$$
as well as the subelliptic estimate:
\begin{equation}\label{eq:subelliptic}
\|w\|_{ H^{\frac12} ( \mathsf \Omega ^{\textsc{M}} )}+
\|w|_{\pa\mathsf \Omega ^{\textsc{M}}}\|_{ L^{2} (\pa\mathsf \Omega ^{\textsc{M}} )}\
\leq\ C\left(\|w\|_{ L^{2} ( \mathsf \Omega ^{\textsc{M}} )}
+\|\mathsf d w\|_{ L^{2} ( \mathsf \Omega ^{\textsc{M}} )}+\|\mathsf d^*w\|_{ L^{2} ( \mathsf \Omega ^{\textsc{M}} )} \right),
\end{equation}
where $w|_{\pa\mathsf \Omega ^{\textsc{M}}}$ is defined by \eqref{eq.trace-gen} and $C>0$ is independent of $w$.

\medskip
\noindent
(ii) Assume that $f:\overline{\mathsf \Omega ^{\textsc{M}}}\to \mathbb R$ is a $\mathcal C^\infty$ function. The unbounded operators $\mathsf d_{f,h,\mbf T}^{(q)} ( \mathsf \Omega ^{\textsc{M}} )$ and $\delta_{f,h,\mbf N}^{(q)} ( \mathsf \Omega ^{\textsc{M}} )$  on $\Lambda^{q} L^{2} ( \mathsf \Omega ^{\textsc{M}} )$
defined by
$$
\mathsf d_{f,h,\mbf T}^{(q)}  ( \mathsf \Omega ^{\textsc{M}} ) = \mathsf d_{f,h}^{(q)}$$
with domain 
$$\mathcal D\big (\mathsf d_{f,h,\mbf T}^{(q)}  ( \mathsf \Omega ^{\textsc{M}} )\big )=\left\{w\in \Lambda^{q}L^2 ( \mathsf \Omega ^{\textsc{M}} ),\ \mathsf  d_{f,h}w \in \Lambda^{q+1}L^2(\mathsf \Omega ^{\textsc{M}}),\ \mathbf{t}w|_{\Gamma^{\textsc{M}}_{\mbf D}}=0 \right\},
$$
and
$$
\delta_{f,h,\mbf N}^{(q)} ( \mathsf \Omega ^{\textsc{M}} ) =   \mathsf d_{f,h}^{(q)} \, ^*$$
with domain 
$$ \mathcal D\big (\delta_{f,h,\mbf N}^{(p)} ( \mathsf \Omega ^{\textsc{M}} )\big )=\left\{w\in \Lambda^{q} L^2 ( \mathsf \Omega ^{\textsc{M}} ),\ \mathsf d^*_{f,h}w\in \Lambda^{q-1}L^2( \mathsf \Omega ^{\textsc{M}} ),\ \mathbf{n}w|_{\Gamma^{\textsc{M}}_{\mbf N}}=0 \right\} ,
$$
are closed, densely defined, and adjoint one of each other in $\Lambda^{q}L^2 ( \mathsf \Omega ^{\textsc{M}} )$.
\end{proposition}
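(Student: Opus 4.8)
The plan is to establish Proposition~\ref{pr.QTN} as a direct adaptation of the corresponding statements in~\cite{jakab-mitrea-mitrea-09} and~\cite{goldshtein-mitrea-mitrea-11}, checking carefully that their hypotheses are met in our setting. For part (i), the key observation is that the subelliptic $H^{1/2}$-regularity of forms satisfying mixed tangential--normal boundary conditions is exactly the content of~\cite[Theorems~1.1 and~1.2]{jakab-mitrea-mitrea-09}: the only hypotheses there are that $\mathsf \Omega^{\textsc{M}}$ is Lipschitz and that the Dirichlet and Neumann parts of the boundary meet at an angle strictly smaller than $\pi$, which is precisely Definition~\ref{de.angle}. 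First I would recall the weak definitions~\eqref{eq.tan-gen}--\eqref{eq.trace-gen} so that the statement ``$\mathbf t w|_{\Gamma^{\textsc{M}}_{\mbf D}}=0$ and $\mathbf n w|_{\Gamma^{\textsc{M}}_{\mbf N}}=0$'' makes sense for $w\in \Lambda^qH_{\mathsf d}(\mathsf \Omega^{\textsc{M}})\cap \Lambda^qH_{\mathsf d^*}(\mathsf \Omega^{\textsc{M}})$, then invoke the density of smooth forms vanishing near $\pa\mathsf \Omega^{\textsc{M}}\setminus\Gamma$ in $\Lambda^q H_{\mathsf d,\Gamma}(\mathsf \Omega^{\textsc{M}})$ and $\Lambda^q H_{\mathsf d^*,\Gamma}(\mathsf \Omega^{\textsc{M}})$ (stated just above the proposition, from~\cite[Proposition~3.1]{jakab-mitrea-mitrea-09}) to reduce the estimate~\eqref{eq:subelliptic} to smooth forms, and finally quote the cited theorems to conclude; the existence of the $L^2$-traces $\mathbf{i}_{\mathsf n_{\mathsf \Omega^{\textsc{M}}}}w$ and $\mathsf n_{\mathsf \Omega^{\textsc{M}}}^\flat\wedge w$ on all of $\pa\mathsf \Omega^{\textsc{M}}$ then follows from the $H^{1/2}$-regularity together with the usual trace theorem~\eqref{eq.trace-usual}.

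For part (ii), I would first note that the Witten-twisted operators $\mathsf d_{f,h}^{(q)}=e^{-f/h}h\,\mathsf d^{(q)}e^{f/h}$ differ from the plain $\mathsf d^{(q)}$ by conjugation by the smooth, bounded-with-bounded-inverse multiplication operator $e^{\pm f/h}$ on the bounded set $\overline{\mathsf \Omega^{\textsc{M}}}$, so all closedness and density statements for $\mathsf d_{f,h,\mbf T}^{(q)}(\mathsf \Omega^{\textsc{M}})$ and $\delta_{f,h,\mbf N}^{(q)}(\mathsf \Omega^{\textsc{M}})$ reduce to the corresponding statements for the untwisted de~Rham operators, which are part of the Hodge-theoretic framework of~\cite{jakab-mitrea-mitrea-09,goldshtein-mitrea-mitrea-11}. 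Concretely: $\mathsf d_{f,h,\mbf T}^{(q)}(\mathsf \Omega^{\textsc{M}})$ is closed because its domain is the preimage of a closed subspace under a continuous map (the restriction of the weak trace to $\Gamma^{\textsc{M}}_{\mbf D}$ is continuous on $\Lambda^qH_{\mathsf d}$ with the graph norm, by~\eqref{eq.n-wedge-u}); it is densely defined by the density statement above; and the fact that $\delta_{f,h,\mbf N}^{(q)}(\mathsf \Omega^{\textsc{M}})$ is its adjoint is an integration-by-parts argument using the generalized Green formula~\eqref{eq:usual_Green}, valid once the traces involved are known to exist — which is guaranteed on the dense subspaces of smooth forms and then extended by continuity using the graph-norm bounds. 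The weighting $e^{-2f/h}$ appearing implicitly in $\delta_{f,h,\mbf N}$ (i.e. the formal adjoint is taken in $L^2$, not a weighted $L^2$, here) is handled by the same conjugation trick, so no genuinely new analysis is needed.

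The main obstacle I anticipate is bookkeeping rather than conceptual: one must be scrupulous that the hypotheses of~\cite{jakab-mitrea-mitrea-09} (Lipschitz domain, ``creased'' boundary decomposition, the precise functional-analytic setup for mixed boundary conditions on forms) match verbatim the objects $(\mathsf \Omega^{\textsc{M}},\Gamma^{\textsc{M}}_{\mbf D},\Gamma^{\textsc{M}}_{\mbf N})$ produced by Proposition~\ref{pr.omegakpoint}, in particular that the angle condition of Definition~\ref{de.angle} coincides with their creased-domain hypothesis and that the pieces $\Gamma^{\textsc{M}}_{\mbf D},\Gamma^{\textsc{M}}_{\mbf N}$ are themselves Lipschitz with $\overline{\Gamma^{\textsc{M}}_{\mbf D}}\cup\overline{\Gamma^{\textsc{M}}_{\mbf N}}=\pa\mathsf \Omega^{\textsc{M}}$. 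Once these identifications are spelled out, the proof is essentially a citation assembled with the conjugation-by-$e^{f/h}$ reduction and the density/Green-formula arguments sketched above; I would therefore present it compactly, referring to~\cite{jakab-mitrea-mitrea-09,goldshtein-mitrea-mitrea-11} for the hard harmonic-analytic estimates and giving in detail only the (elementary) reduction from the Witten operators to the de~Rham operators.
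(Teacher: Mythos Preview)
Your proposal is correct and matches the paper's approach: the paper does not give an independent proof of Proposition~\ref{pr.QTN} but simply states it as a consequence of \cite[Theorems~1.1 and~1.2]{jakab-mitrea-mitrea-09} (see also \cite[Theorems~4.1 and~4.2]{goldshtein-mitrea-mitrea-11}), exactly as you propose. Your additional remarks on reducing the Witten-twisted operators to the untwisted case via conjugation by $e^{\pm f/h}$, and on deriving closedness/adjointness from the density statement and the Green formula, are more explicit than what the paper writes but are precisely the standard verifications implicit in the citation.
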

On can check that (see~\cite[Equation (130]{DLLN})
\begin{equation}\label{eq.dom-dT} 
\left\{
\begin{aligned}
&\overline{ \Im  \mathsf d_{f,h,\mbf T}}\subset \Ker  \mathsf d_{f,h,\mbf T} \quad\text{and}\quad  \mathsf d_{f,h,\mbf T}^{2}=0,\\
&\overline{ \Im  \delta_{f,h,\mbf N}}\subset \Ker  \delta_{f,h,\mbf N} \quad\text{and}\quad
\delta_{f,h,\mbf N}^{2}=0.
\end{aligned}
\right.
\end{equation}

\noindent
\textbf{Witten Laplacian with mixed Dirichlet-Neumann boundary
  conditions on $\pa \mathsf \Omega ^{\textsc{M}}$.} We are now in
position to define the   Witten Laplacians with mixed Dirichlet-Neumann boundary conditions on~$\pa \mathsf \Omega ^{\textsc{M}}$ (see also~\cite[p. 89]{DLLN}). 
\begin{proposition}
\label{pr.DeltaM}
Let us assume that $\mathsf \Omega ^{\textsc{M}}$, $\Gamma^{\textsc{M}}_{\mbf D}$, and   $\Gamma^{\textsc{M}}_{\mbf N}$ satisfy the assumptions of Proposition~\ref{pr.QTN}. Let    $q=0,\ldots,d$. Let us define on $ \Lambda L^{2} (
\mathsf \Omega ^{\textsc{M}} )$ the operator 
\begin{equation}
\label{eq.DeltaTN}
\Delta_{f,h}^{\textsc{M},(q)} ( \mathsf \Omega ^{\textsc{M}} ) := \mathsf d_{f,h,\mbf T}^{(q-1)} ( \mathsf \Omega ^{\textsc{M}} )\circ \delta_{f,h,\mbf N}^{(q)} ( \mathsf \Omega ^{\textsc{M}} )+\delta_{f,h,\mbf N}^{(q+1)} ( \mathsf \Omega ^{\textsc{M}} )\circ \mathsf d_{f,h,\mbf T}^{(q)} ( \mathsf \Omega ^{\textsc{M}} ),
\end{equation}
in the sense of composition of unbounded operators, see  Proposition~\ref{pr.QTN} for the definitions of $\mathsf d_{f,h,\mbf T} ( \mathsf \Omega ^{\textsc{M}} )$ and $\delta_{f,h,\mbf N} ( \mathsf \Omega ^{\textsc{M}} )$. This operator  is a densely defined nonnegative self-adjoint  operator  
and its domain is given by  
\begin{equation}
\label{eq.domDeltaTN}
\begin{aligned}
\mathcal D\big  (\Delta_{f,h}^{\textsc{M},(q)} ( \mathsf \Omega ^{\textsc{M}} )\big ) =\Big\{&w \in \Lambda^qL^{2}
( \mathsf \Omega ^{\textsc{M}} ), \, 
\mathsf d_{f,h} w,\  \mathsf  d^*_{f,h}w,\  \mathsf d^*_{f,h}\mathsf d_{f,h} w,\  \mathsf  d_{f,h}\mathsf d^*_{f,h} w
\in  \Lambda  L^{2} ( \mathsf \Omega ^{\textsc{M}} ), \\
& \mathbf{t}w|_{\Gamma^{\textsc{M}}_{\mbf D}}=0,\ 
\mathbf{t}\mathsf d^*_{f,h}w|_{\Gamma^{\textsc{M}}_{\mbf D}}=0,\ 
\mathbf{n}w|_{\Gamma^{\textsc{M}}_{\mbf N}}=0,\ 
\mathbf{n}\mathsf d_{f,h}w|_{\Gamma^{\textsc{M}}_{\mbf N}}=0
\Big\}.
\end{aligned}
\end{equation}
  In addition, the domain $\mathcal D\big(  Q_{f,h}^{\textsc{M},(q)} ( \mathsf \Omega ^{\textsc{M}} ) \big)$ of the closed quadratic form $Q_{f,h}^{\textsc{M},(q)} ( \mathsf \Omega ^{\textsc{M}} )$
associated with  $\Delta_{f,h}^{\textsc{M},(q)} ( \mathsf \Omega ^{\textsc{M}} )$
is given by
\begin{align*}
 \mathcal D\big( Q^{\textsc{M},(q)}_{f,h} ( \mathsf \Omega
  ^{\textsc{M}} )\big )&=  \mathcal  D \big(\mathsf d_{f,h,\mbf T}^{(q)} ( \mathsf \Omega ^{\textsc{M}} )\big)\cap
\mathcal D\big(\delta_{f,h,\mbf N}^{(q)} ( \mathsf \Omega ^{\textsc{M}} )\big)\\
&=\left\{w\in \Lambda^qH_{\mathsf d}(\mathsf \Omega^{\textsc{M}})\cap \Lambda^qH_{\mathsf d^*}(\mathsf \Omega^{\textsc{M}}),\, 
\mathbf t w|_{\Gamma^{\textsc{M}}_{\mbf D}}=0 \text{ and } 
\mathbf n w|_{\Gamma^{\textsc{M}}_{\mbf N}}=0\right\}
\end{align*}
\label{page.qm}
and for any $u,w\in\mathcal   D\big (  Q^{\textsc{M},(q)}_{f,h} ( \mathsf \Omega ^{\textsc{M}} )\big )$,  $$ Q_{f,h}^{\textsc{M},(q)} ( \mathsf \Omega ^{\textsc{M}} )(u,w)=\langle \mathsf d_{f,h,\mbf T} u,\mathsf d_{f,h,\mbf T}w \rangle_{L^2(\mathsf \Omega ^{\textsc{M}})} +\langle \delta_{f,h,\mbf N}u,\delta_{f,h,\mbf N}w \rangle_{L^2(\mathsf \Omega ^{\textsc{M}})}.$$ 
\end{proposition}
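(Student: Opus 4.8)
\textbf{Proof strategy for Proposition~\ref{pr.DeltaM}.}
The plan is to recognize this statement as the standard construction, via von Neumann's theory, of the self-adjoint Hodge--Witten Laplacian attached to a pair of closed, mutually adjoint operators realizing mixed boundary conditions, and to follow the blueprint used in~\cite[p.~89]{DLLN} and~\cite{helffer-nier-06, GSchw}. The starting point is Proposition~\ref{pr.QTN}(ii), which already provides the two closed, densely defined operators $\mathsf d_{f,h,\mbf T}^{(q)}(\mathsf \Omega^{\textsc{M}})$ and $\delta_{f,h,\mbf N}^{(q)}(\mathsf \Omega^{\textsc{M}})$ on $\Lambda^q L^2(\mathsf \Omega^{\textsc{M}})$, with the key fact that $\delta_{f,h,\mbf N}^{(q)}$ is the adjoint of $\mathsf d_{f,h,\mbf T}^{(q-1)}$ (and conversely). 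First I would introduce the densely defined operator
$$
D_{f,h} := \bigoplus_{q=0}^d \big( \mathsf d_{f,h,\mbf T}^{(q)}(\mathsf \Omega^{\textsc{M}}) + \delta_{f,h,\mbf N}^{(q)}(\mathsf \Omega^{\textsc{M}}) \big)
$$
on the full exterior algebra $\bigoplus_q \Lambda^q L^2(\mathsf \Omega^{\textsc{M}})$; by Proposition~\ref{pr.QTN}(ii) it is closed and, crucially, self-adjoint, because its formal adjoint pairs $\mathsf d_{f,h,\mbf T}^{(q)}$ with $\delta_{f,h,\mbf N}^{(q+1)}$ exactly as in~\eqref{eq.DeltaTN}. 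Then $\Delta_{f,h}^{\textsc{M},(q)}(\mathsf \Omega^{\textsc{M}})$ is the degree-$q$ component of $D_{f,h}^2 = D_{f,h}^* D_{f,h}$, which is automatically nonnegative and self-adjoint as the square of a self-adjoint operator; the cross terms $\mathsf d_{f,h,\mbf T} \mathsf d_{f,h,\mbf T}$ and $\delta_{f,h,\mbf N}\delta_{f,h,\mbf N}$ vanish by~\eqref{eq.dom-dT}, leaving precisely the two surviving terms in~\eqref{eq.DeltaTN}.

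Next I would identify the form domain. The closed quadratic form associated with $D_{f,h}^2$ is $w \mapsto \|D_{f,h} w\|^2$ with domain $\mathcal D(D_{f,h})$; restricting to degree $q$ gives $\mathcal D\big(Q_{f,h}^{\textsc{M},(q)}\big) = \mathcal D\big(\mathsf d_{f,h,\mbf T}^{(q)}\big) \cap \mathcal D\big(\delta_{f,h,\mbf N}^{(q)}\big)$ and the stated bilinear expression. That this intersection equals $\{w \in \Lambda^q H_{\mathsf d}(\mathsf \Omega^{\textsc{M}}) \cap \Lambda^q H_{\mathsf d^*}(\mathsf \Omega^{\textsc{M}}) : \mathbf t w|_{\Gamma^{\textsc{M}}_{\mbf D}} = 0,\ \mathbf n w|_{\Gamma^{\textsc{M}}_{\mbf N}} = 0\}$ is immediate from the domain descriptions in Proposition~\ref{pr.QTN}(ii), once one observes that the weak traces $\mathbf t w|_{\Gamma^{\textsc{M}}_{\mbf D}}$ and $\mathbf n w|_{\Gamma^{\textsc{M}}_{\mbf N}}$ are well defined in $L^2$ for elements of $\Lambda^q H_{\mathsf d}(\mathsf \Omega^{\textsc{M}}) \cap \Lambda^q H_{\mathsf d^*}(\mathsf \Omega^{\textsc{M}})$ meeting these conditions --- this is exactly where the angle condition of Definition~\ref{de.angle} (item~3 of Proposition~\ref{pr.omegakpoint}) and the subelliptic estimate~\eqref{eq:subelliptic} of Proposition~\ref{pr.QTN}(i) enter, guaranteeing $w \in \Lambda^q H^{1/2}(\mathsf \Omega^{\textsc{M}})$ with an $L^2$ boundary trace.

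Finally I would compute the operator domain~\eqref{eq.domDeltaTN}. By the general relation between a nonnegative self-adjoint operator and its form, $w \in \mathcal D\big(\Delta_{f,h}^{\textsc{M},(q)}\big)$ iff $w \in \mathcal D\big(Q_{f,h}^{\textsc{M},(q)}\big)$ and there is $g \in \Lambda^q L^2(\mathsf \Omega^{\textsc{M}})$ with $Q_{f,h}^{\textsc{M},(q)}(u,w) = \langle u, g\rangle$ for all $u$ in the form domain. Unfolding this with the Green formula~\eqref{eq:usual_Green} (extended to weak traces via~\eqref{eq.n-wedge-u}--\eqref{eq.i-n-u}) forces first $\mathsf d_{f,h} w, \mathsf d^*_{f,h} w \in \Lambda L^2$, then $\mathsf d^*_{f,h}\mathsf d_{f,h} w, \mathsf d_{f,h}\mathsf d^*_{f,h} w \in \Lambda L^2$, and yields the two extra (natural) boundary conditions $\mathbf t \mathsf d^*_{f,h} w|_{\Gamma^{\textsc{M}}_{\mbf D}} = 0$ and $\mathbf n \mathsf d_{f,h} w|_{\Gamma^{\textsc{M}}_{\mbf N}} = 0$, which come out precisely because the boundary integrand in~\eqref{eq:usual_Green} must vanish against arbitrary tangential/normal test data on the two pieces of $\partial \mathsf \Omega^{\textsc{M}}$. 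Assembling these gives~\eqref{eq.domDeltaTN}. The main obstacle is the last part: justifying the boundary integrations by parts rigorously when $\partial \mathsf \Omega^{\textsc{M}}$ is merely Lipschitz and the traces are defined only weakly, which is why the density results of~\cite[Proposition~3.1]{jakab-mitrea-mitrea-09} for $\Lambda^q H_{\mathsf d,\Gamma}$ and $\Lambda^q H_{\mathsf d^*,\Gamma}$, together with the $H^{1/2}$-regularity from Proposition~\ref{pr.QTN}(i), are essential --- one approximates by smooth forms supported away from the ``corner'' $\overline{\Gamma^{\textsc{M}}_{\mbf D}} \cap \overline{\Gamma^{\textsc{M}}_{\mbf N}}$, applies the classical Green formula, and passes to the limit. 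Since these are the arguments of~\cite{DLLN, jakab-mitrea-mitrea-09}, I would present the structure and refer to those works for the routine verifications.
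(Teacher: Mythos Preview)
Your proposal is correct and matches the approach the paper has in mind: the paper does not give a proof of this proposition but simply refers to \cite[p.~89]{DLLN}, and your sketch reproduces exactly that construction --- building $\Delta_{f,h}^{\textsc{M},(q)}$ as the square of the self-adjoint Dirac-type operator $D_{f,h}=\mathsf d_{f,h,\mbf T}+\delta_{f,h,\mbf N}$ obtained from the mutually adjoint closed operators of Proposition~\ref{pr.QTN}(ii), reading off the form domain, and then extracting the natural boundary conditions in~\eqref{eq.domDeltaTN} via the weak Green formula together with the trace regularity and density results of~\cite{jakab-mitrea-mitrea-09}.
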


Let us mention an important consequence of Proposition~\ref{pr.QTN}. For $w\in \mathcal D\big  (\Delta_{f,h}^{M,(q)} ( \mathsf \Omega ^{\textsc{M}} )\big )$, the traces $\mathbf{t}\mathsf d^*_{f,h}w$ and $\mathbf{n} \mathsf  d_{f,h} w$ are a
priori defined in  $ \Lambda H^{-\frac12} ( \pa \mathsf \Omega ^{\textsc{M}} )$ but actually belong
to~$ \Lambda L^{2} (\pa  \mathsf \Omega ^{\textsc{M}} )$. Indeed,
 $\mathbf{n}\mathsf d_{f,h}u|_{\Gamma^{\textsc{M}}_{\mbf N}}=0$ by
 definition of $\mathcal D\big(\Delta_{f,h}^{\textsc{M},(q)} ( \mathsf
 \Omega ^{\textsc{M}} )\big )$ and $\mathbf t \mathsf
 d_{f,h}w|_{\Gamma^{\textsc{M}}_{\mbf D}}=0$ using~\eqref{eq.dom-dT}. Therefore,
 $\mathsf d_{f,h}w$ is in $\mathcal D\big(  Q^{\textsc{M},(q+1)}_{f,h} ( \mathsf \Omega ^{\textsc{M}}  )  \big )$ and therefore has
 a trace in $ \Lambda L^{2} (\partial  \mathsf \Omega ^{\textsc{M}} )$ according to Proposition~\ref{pr.QTN}. This argument also holds for $\mathsf d^{*}_{f,h}w\in \mathcal D\big ( Q^{\textsc{M},(q-1)}_{f,h} ( \mathsf \Omega ^{\textsc{M}} )\big  )$.

We end up this section with a Green formula which will be frequently
used in the sequel (see \cite[Lemma 2.10]{peutrec2020bar}).
\begin{lemma}
\label{le.GreenWeak}
 Let us assume that $\mathsf \Omega ^{\textsc{M}}$, $\Gamma^{\textsc{M}}_{\mbf D}$, and   $\Gamma^{\textsc{M}}_{\mbf N}$ satisfy the assumptions of Proposition~\ref{pr.QTN}. Let    $q=0,\ldots,d$.    
Let $\varphi$ be a real-valued Lipschitz
function on $\overline{\mathsf \Omega^{\textsc{M}}}$. Then, for any
$w\in \mathcal D\big (   Q^{\textsc{M},(q)}_{f,h} ( \mathsf
\Omega^{\textsc{M}} )\big)$, one has: 
\begin{equation}
\label{eq.compfor}
\begin{aligned}
 {Q}^{\textsc{M},(q)}_{f,h} ( \mathsf \Omega^{\textsc{M}} )( w,e^{\frac 2h \varphi} w)&=
 h^{2}\big  \| \mathsf d (e^{\frac{\varphi}{h}} w)\big \|^{2}_{ L^{2} ( \mathsf \Omega^{\textsc{M}} )}+
h^{2} \big \|  \mathsf d^{*} (e^{\frac{\varphi}{h}} w) \big \|^{2}_{ L^{2} ( \mathsf \Omega^{\textsc{M}} )}\\
&\quad + \big \langle
(|\nabla f|^{2}-|\nabla \varphi|^{2}+h\mathcal{L}_{\nabla
  f}+h\mathcal{L}_{\nabla
  f}^{*})e^{\frac{\varphi}{h}} w, e^{\frac{\varphi}{h}}
 w\big  \rangle_{ L^{2} ( \mathsf \Omega^{\textsc{M}} )}\\
&\quad +h \left( \int_{\Gamma^{\textsc{M}}_{\mbf N}} - \int_{\Gamma^{\textsc{M}}_{\mbf D}}\right) 
\langle w,w
\rangle_{ T_{\sigma}^{*}\mathsf \Omega^{\textsc{M}}}\;e^{\frac{2 }{h}\varphi }\partial_{\mathsf n_{  \mathsf \Omega^{\textsc{M}} } }
f\, d\sigma,
\end{aligned}
\end{equation}
where we recall that $\mathcal{L}$ stands for the Lie derivative.
 Moreover, when $  w\in
\mathcal D\big (   \Delta_{f,h}^{\textsc{M} ,(q)} ( \mathsf \Omega^{\textsc{M}} )\big)$, the left-hand
 side of \eqref{eq.compfor} equals $\langle
  e^{\frac 2h \varphi}\Delta_{f,h}^{\textsc{M} ,(q)}  ( \mathsf \Omega^{\textsc{M}} ) w,w\rangle_{L^{2} ( \mathsf \Omega^{\textsc{M}} )}$.   
\end{lemma}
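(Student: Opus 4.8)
\textbf{Plan of proof for Lemma~\ref{le.GreenWeak}.} The statement is a weighted Green formula for the mixed Dirichlet-Neumann Witten Laplacian, and the natural strategy is to reduce it, by density, to the smooth setting where the classical Green formula of Lemma~\ref{le.Green} (or rather its analogue on $\mathsf \Omega^{\textsc{M}}$) applies, and then to push it through the weak traces introduced in Proposition~\ref{pr.QTN}. First I would record the algebraic identity underlying the computation: for the conjugated operator one has, in the interior, $e^{\frac{\varphi}{h}}\mathsf d_{f,h}w = \mathsf d_{f-\varphi,h}(e^{\frac\varphi h}w)$ is not quite what we want, so instead I would work directly from the decomposition~\eqref{dec.delta} applied on $\mathsf \Omega^{\textsc{M}}$, which expresses $\Delta^{(q)}_{f,h}$ as $h^2\Delta^{(q)}_{\mbf H} + |\nabla f|^2 + h(\mathcal L_{\nabla f}+\mathcal L_{\nabla f}^*)$, combined with the elementary conjugation identity $e^{\frac\varphi h}\mathsf d_{f,h}w$, $e^{\frac\varphi h}\mathsf d_{f,h}^*w$ in terms of $\mathsf d$, $\mathsf d^*$ acting on $e^{\frac\varphi h}w$ and zeroth-order terms involving $\nabla f$ and $\nabla\varphi$; this is exactly the computation already carried out in~\cite[Lemma 2.10]{peutrec2020bar}, and I would cite it for the pointwise/interior part, so that the only genuine issue is the boundary term.

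The core of the argument is therefore the boundary contribution. For smooth forms $w\in\Lambda^q\mathcal C^\infty(\overline{\mathsf \Omega^{\textsc{M}}})$ with $\mbf t w|_{\Gamma^{\textsc{M}}_{\mbf D}}=0$ and $\mbf n w|_{\Gamma^{\textsc{M}}_{\mbf N}}=0$, the usual integration by parts (applying the classical Green formula~\eqref{eq:usual_Green} twice, once for $\mathsf d$ and once for $\mathsf d^*$, to $e^{\frac{2\varphi}h}w$ paired against $w$ through the weight $e^{-\frac2h f}$) produces exactly a boundary integral of $\langle w,w\rangle_{T^*_\sigma\mathsf \Omega^{\textsc{M}}}\,e^{\frac2h\varphi}\,\partial_{\mathsf n_{\mathsf \Omega^{\textsc{M}}}}f\,d\sigma$ over $\partial\mathsf \Omega^{\textsc{M}}$; the boundary conditions kill the cross terms $\langle \mbf t w,\cdot\rangle$ on $\Gamma^{\textsc{M}}_{\mbf D}$ and $\langle\mbf n w,\cdot\rangle$ on $\Gamma^{\textsc{M}}_{\mbf N}$, so that what survives splits precisely as $\int_{\Gamma^{\textsc{M}}_{\mbf N}} - \int_{\Gamma^{\textsc{M}}_{\mbf D}}$ (the sign coming from whether the normal component or the tangential component is the one paired with $\partial_{\mathsf n}f\,w$). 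I would set this computation up carefully with the orthogonality of $\mbf t$ and $\mbf n$ on $\Lambda^qL^2(\partial\mathsf \Omega^{\textsc{M}})$, so that $\langle w,w\rangle_{T^*_\sigma\mathsf \Omega^{\textsc{M}}} = |\mbf t w|^2 + |\mbf n w|^2$ appears with the correct sign on each piece.

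Having established~\eqref{eq.compfor} for smooth $w$ vanishing near the ``wrong'' part of the boundary, I would then pass to the limit. The density statements recalled before Proposition~\ref{pr.QTN} (from~\cite[Proposition 3.1]{jakab-mitrea-mitrea-09}) give that forms in $\Lambda^q\mathcal C^\infty(\overline{\mathsf \Omega^{\textsc{M}}})$ vanishing near $\partial\mathsf \Omega^{\textsc{M}}\setminus\Gamma^{\textsc{M}}_{\mbf D}$ are dense in $\Lambda^qH_{\mathsf d,\Gamma^{\textsc{M}}_{\mbf D}}(\mathsf \Omega^{\textsc{M}})$ and similarly for $\mathsf d^*$ and $\Gamma^{\textsc{M}}_{\mbf N}$; intersecting and using a partition-of-unity argument near the interface $\overline{\Gamma^{\textsc{M}}_{\mbf D}}\cap\overline{\Gamma^{\textsc{M}}_{\mbf N}}$ yields density of such smooth forms in $\mathcal D\big(Q^{\textsc{M},(q)}_{f,h}(\mathsf \Omega^{\textsc{M}})\big)$ for the graph norm $\|\cdot\|_{L^2} + \|\mathsf d_{f,h}\cdot\|_{L^2} + \|\mathsf d^*_{f,h}\cdot\|_{L^2}$. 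The interior terms in~\eqref{eq.compfor} are manifestly continuous for this norm (the $h^2\|\mathsf d(e^{\frac\varphi h}w)\|^2 + h^2\|\mathsf d^*(e^{\frac\varphi h}w)\|^2$ part is controlled by $Q^{\textsc{M},(q)}_{f,h}$ plus lower-order terms via the interior identity, and the zeroth-order term is obviously continuous). The one delicate point — and this is the main obstacle — is the continuity of the boundary term $w\mapsto \big(\int_{\Gamma^{\textsc{M}}_{\mbf N}}-\int_{\Gamma^{\textsc{M}}_{\mbf D}}\big)\langle w,w\rangle e^{\frac2h\varphi}\partial_{\mathsf n}f\,d\sigma$ for the graph norm: this is exactly where the subelliptic estimate~\eqref{eq:subelliptic} of Proposition~\ref{pr.QTN}(i) is needed, since it bounds $\|w|_{\partial\mathsf \Omega^{\textsc{M}}}\|_{L^2(\partial\mathsf \Omega^{\textsc{M}})}$ by the graph norm, which in turn requires that $\Gamma^{\textsc{M}}_{\mbf D}$ and $\Gamma^{\textsc{M}}_{\mbf N}$ meet at an angle strictly smaller than $\pi$. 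With the boundary term thus shown to be continuous, all three pieces pass to the limit and~\eqref{eq.compfor} holds on all of $\mathcal D\big(Q^{\textsc{M},(q)}_{f,h}(\mathsf \Omega^{\textsc{M}})\big)$. Finally, when $w\in\mathcal D\big(\Delta^{\textsc{M},(q)}_{f,h}(\mathsf \Omega^{\textsc{M}})\big)$, the quadratic form identity $Q^{\textsc{M},(q)}_{f,h}(\mathsf \Omega^{\textsc{M}})(u,w) = \langle u, \Delta^{\textsc{M},(q)}_{f,h}(\mathsf \Omega^{\textsc{M}})w\rangle_{L^2}$ from Proposition~\ref{pr.DeltaM}, applied with $u = e^{\frac2h\varphi}w$ (which lies in the form domain since $\varphi$ is Lipschitz and bounded, so multiplication by $e^{\frac2h\varphi}$ preserves $\mathcal D\big(Q^{\textsc{M},(q)}_{f,h}(\mathsf \Omega^{\textsc{M}})\big)$ and the boundary conditions), identifies the left-hand side of~\eqref{eq.compfor} with $\langle e^{\frac2h\varphi}\Delta^{\textsc{M},(q)}_{f,h}(\mathsf \Omega^{\textsc{M}})w, w\rangle_{L^2(\mathsf \Omega^{\textsc{M}})}$, completing the proof.
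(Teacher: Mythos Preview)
Your proposal is correct and follows the same approach as the paper: the paper does not give a detailed proof but simply refers to \cite[Lemma~2.10]{peutrec2020bar}, which is exactly the reference you invoke for the interior identity, and your sketch of the density-plus-subelliptic-trace argument for the boundary term is the standard way to justify that formula in the mixed Lipschitz setting.

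One small caveat worth flagging: the density step you describe (approximating elements of $\mathcal D\big(Q^{\textsc{M},(q)}_{f,h}\big)$ by smooth forms satisfying \emph{both} $\mbf t w|_{\Gamma^{\textsc{M}}_{\mbf D}}=0$ and $\mbf n w|_{\Gamma^{\textsc{M}}_{\mbf N}}=0$ simultaneously) does not follow immediately from intersecting the two separate density results of \cite[Proposition~3.1]{jakab-mitrea-mitrea-09}; it requires an additional localization argument near the interface $\overline{\Gamma^{\textsc{M}}_{\mbf D}}\cap\overline{\Gamma^{\textsc{M}}_{\mbf N}}$, which is precisely where the angle condition and the subelliptic estimate~\eqref{eq:subelliptic} are used. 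You acknowledge this, but in a fully written proof you would want to spell out that this step is handled in the cited reference rather than claim it follows from the quoted density statements alone.
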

In the following, we will use this Lemma several times with $(\mathsf
\Omega^{\textsc{M}}, \Gamma^{\textsc{M}}_{\mbf D},
\Gamma^{\textsc{M}}_{\mbf N}) = (\mathsf \Omega_k^{\textsc{M}},
\Gamma_{k,\mbf D}^{\textsc{M}}, \Gamma_{k,\mbf N}^{\textsc{M}})$ (for
$k \in \{1, \ldots,n\}$), in which case $\partial_{\mathsf n_{ \mathsf \Omega^{\textsc{M}}}}f=0$ on
$\Gamma^{\textsc{M}}_{\mbf D}$ and $\partial_{\mathsf n_{ \mathsf \Omega^{\textsc{M}}}}f>0$ on
$\Gamma^{\textsc{M}}_{\mbf N}$ (see items 1(b) and 2 in Proposition~\ref{pr.omegakpoint}).

\subsubsection{Spectral properties of $\Delta_{f,h}^{\textsc{M}, (q)}(  \mathsf \Omega_k^{\textsc{M}} )$}
\label{sec.specM}

 In view of Proposition~\ref{pr.omegakpoint}, the results of Section~\ref{sec.Witten-general-result} can be applied, for any
 $k\in\{1,\ldots,n\}$, to $(\mathsf \Omega^{\textsc{M}}, \Gamma^{\textsc{M}}_{\mbf D}, \Gamma^{\textsc{M}}_{\mbf N}) = (\mathsf \Omega_k^{\textsc{M}}, \Gamma_{k,\mbf D}^{\textsc{M}}, \Gamma_{k,\mbf N}^{\textsc{M}})$.
The main result of this section concerns the spectrum of the operator
$\Delta_{f,h}^{\textsc{M}, (q)} (  \mathsf \Omega_k^{\textsc{M}} )$,
defined in Proposition~\ref{pr.DeltaM}.
\begin{proposition}
\label{pr.DeltaTN} 
Let us assume that assumption \autoref{A} is satisfied.  Let
$k\in\{1,\ldots,n\}$ and $ \mathsf \Omega_k^{\textsc{M}} $ be the domain introduced in Proposition~\ref{pr.omegakpoint}. For $q\in\{0,\ldots,d\}$, let $\Delta_{f,h}^{\textsc{M}, (q)}(  \mathsf \Omega_k^{\textsc{M}} )$ be the unbounded
 nonnegative self-adjoint  operator   on 
$ \Lambda^qL^{2} (  \mathsf \Omega_k^{\textsc{M}} )$ defined by \eqref{eq.DeltaTN}--\eqref{eq.domDeltaTN} with $(\mathsf \Omega^{\textsc{M}}, \Gamma^{\textsc{M}}_{\mbf D}, \Gamma^{\textsc{M}}_{\mbf N}) = (\mathsf \Omega_k^{\textsc{M}}, \Gamma_{k,\mbf D}^{\textsc{M}}, \Gamma_{k,\mbf N}^{\textsc{M}})$.
Then, the following holds true:
\begin{enumerate}
\item[(i)] The operator $\Delta_{f,h}^{\textsc{M},(q)} (   \mathsf \Omega_k^{\textsc{M}}  )$ has compact resolvent.
\item[(ii)] For any eigenvalue $\lambda$ of $\Delta_{f,h}^{\textsc{M}, (q)} (  \mathsf \Omega_k^{\textsc{M}} )$
and associated eigenform $w_{}^{(q)}\in \mathcal D\big (\Delta_{f,h}^{\textsc{M}, (q)} (  \mathsf \Omega_k^{\textsc{M}} )\big)$, one has
$
\mathsf d_{f,h}w_{}^{(q)}\in\mathcal  D\big (\Delta_{f,h}^{\textsc{M},(q+1)}(   \mathsf \Omega_k^{\textsc{M}}  ) \big)
$ and
$
\mathsf d^{*}_{f,h}w_{}^{(q)}\in \mathcal  D\big  (\Delta_{f,h}^{\textsc{M},(q-1)}(   \mathsf \Omega_k^{\textsc{M}}  ) \big)
$,
with
$$
\mathsf d_{f,h}\Delta_{f,h}^{\textsc{M}, (q)}(   \mathsf
\Omega_k^{\textsc{M}}  )  w_{}^{(q)}= 
\Delta_{f,h}^{\textsc{M}, (q+1)}(   \mathsf \Omega_k^{\textsc{M}}  )\mathsf d_{f,h}w_{}^{(q)}
= \lambda \mathsf d_{f,h}w_{}^{(q)}$$
and
$$
\mathsf d^*_{f,h}\Delta_{f,h}^{\textsc{M},(q)} (  \mathsf \Omega_k^{\textsc{M}} )w_{}^{(q)}= 
\Delta_{f,h}^{\textsc{M},(p-1)} (  \mathsf \Omega_k^{\textsc{M}} )\mathsf d^*_{f,h}w_{}^{(q)}
= \lambda\mathsf d^*_{f,h}w_{}^{(q)}.
$$
If in addition $\lambda\neq 0$, either $\mathsf d_{f,h}w_{}^{(q)}$ or $\mathsf d^*_{f,h}w_{}^{(q)}$ is non-zero.
\item[(iii)] There exist $\mathsf c>0$ and $h_0>0$ such that for any $q\in\{0,\dots,d\}$ and $h\in (0,h_0)$. 
$$
\dim\Ran \ \pi_{[0,\mathsf ch ]}\big (\Delta_{f,h}^{\textsc{M},(q)} (  \mathsf \Omega_k^{\textsc{M}} )\big )
=\left\{
  \begin{aligned}[c]
 1 \ &\text{if}\ q\in\{0,1\},\\
0 \ &\text{if} \ q\in\{2,\ldots,d\},
  \end{aligned}
\right.$$ 
In addition, for all $h\in (0,h_0)$,     there exists $\lambda(\mathsf \Omega_k^{\textsc{M}} )\ge 0$ such that for $q\in \{0,1\}$,
$$
\Sp \big(\Delta_{f,h}^{\textsc{M} ,(q)} (  \mathsf \Omega_k^{\textsc{M}} )\big)\cap [0,\mathsf ch ]
=
\{ \lambda(\mathsf \Omega_k^{\textsc{M}} )\}.
$$
\end{enumerate}
Finally, $\lambda(\mathsf \Omega_k^{\textsc{M}} )$ is non-zero  and  is exponentially small when $h\to 0$. 
\end{proposition}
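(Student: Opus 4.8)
The plan is to establish Proposition~\ref{pr.DeltaTN} by adapting the strategy used for the tangential Dirichlet Witten Laplacian (Theorem~\ref{thm.main1} and its proof via model problems on $\mathbb R^d_-$), together with Witten complex arguments in the spirit of~\cite{le-peutrec-10}. First I would prove item~(i): compactness of the resolvent follows from the subelliptic estimate~\eqref{eq:subelliptic} of Proposition~\ref{pr.QTN}, which shows that the form domain $\mathcal D\big(Q_{f,h}^{\textsc{M},(q)}(\mathsf \Omega_k^{\textsc{M}})\big)$ embeds continuously into $\Lambda^qH^{1/2}(\mathsf \Omega_k^{\textsc{M}})$, hence compactly into $\Lambda^qL^2(\mathsf \Omega_k^{\textsc{M}})$ by Rellich's theorem (here the fact that $\Gamma_{k,\mbf D}^{\textsc{M}}$ and $\Gamma_{k,\mbf N}^{\textsc{M}}$ meet at an angle $\pi/2<\pi$, granted by item~3 of Proposition~\ref{pr.omegakpoint}, is crucial). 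Item~(ii) is the intertwining structure: from~\eqref{eq.dom-dT} one has $\mathsf d_{f,h,\mbf T}^2=0$ and $\delta_{f,h,\mbf N}^2=0$, and the commutation of $\mathsf d_{f,h}$ (resp.\ $\mathsf d_{f,h}^*$) with the spectral projectors follows exactly as in Proposition~\ref{pr.defDfhD}/Proposition~\ref{pr.pr1-d}, using that $\mathsf d_{f,h}w\in\mathcal D\big(Q^{\textsc{M},(q+1)}\big)$ when $w\in\mathcal D\big(\Delta^{\textsc{M},(q)}\big)$ (which is where the boundary conditions $\mathbf n\mathsf d_{f,h}w|_{\Gamma_{\mbf N}}=0$ and $\mathbf t\mathsf d_{f,h}w|_{\Gamma_{\mbf D}}=0$ — the latter from $\mathsf d_{f,h,\mbf T}^2=0$ — are used); the last assertion is the elementary Hodge-type observation that if $\Delta_{f,h}^{\textsc{M},(q)}w=\lambda w$ with $\lambda\neq 0$ and both $\mathsf d_{f,h}w=0$ and $\mathsf d_{f,h}^*w=0$, then $\lambda\|w\|^2=Q^{\textsc{M},(q)}(w,w)=0$, a contradiction.

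For item~(iii), the counting of small eigenvalues, the approach is to run the IMS localization argument as in the proof of Theorem~\ref{thm.main1} (Section~\ref{sec.numb-small}), but now on $\mathsf \Omega_k^{\textsc{M}}$ with its mixed boundary. The candidate critical points are: $x_0\in\mathsf \Omega_k^{\textsc{M}}$ (index $0$), $z_k\in\Gamma_{k,\mbf D}^{\textsc{M}}\subset\partial\mathsf \Omega_k^{\textsc{M}}$ with $\partial_{\mathsf n}f=0$ nearby and $\mu_{z_k}<0$ (a generalized critical point of index $1$ on the Dirichlet part, handled by Proposition~\ref{pr.prop-loca1-case1} exactly as in the proof of Theorem~\ref{thm.main1}, since near $z_k$ the boundary is the smooth hypersurface $\Gamma_{k,\mbf D}^{\textsc{M}}$ with $\partial_{\mathsf n}f=0$), and no critical point on $\Gamma_{k,\mbf N}^{\textsc{M}}$ since $\nabla f\cdot\mathsf n_{\mathsf \Omega_k^{\textsc{M}}}>0$ a.e.\ there (so the Green formula Lemma~\ref{le.GreenWeak} with $\varphi=0$ gives, away from $x_0$ and $z_k$, a lower bound $Q^{\textsc{M},(q)}(\chi_x w)\ge ch\|\chi_x w\|^2$ from the $|\nabla f|^2$ term and the favorable sign of the boundary term on $\Gamma_{k,\mbf N}^{\textsc{M}}$). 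This yields $\dim\Ran\,\pi_{[0,\mathsf ch]}\big(\Delta_{f,h}^{\textsc{M},(0)}\big)=1$ (the minimum $x_0$), $\dim\Ran\,\pi_{[0,\mathsf ch]}\big(\Delta_{f,h}^{\textsc{M},(1)}\big)=1$ (the saddle $z_k$), and $0$ for $q\ge 2$; the existence of a small eigenvalue for $q=0$ follows from the usual variational upper bound using a cutoff of $e^{-f/h}$ supported near $x_0$ (as in the proof of Corollary~\ref{thm-pc}), and the equality of the $q=0$ and $q=1$ small eigenvalues is forced by item~(ii) (Witten complex: the nonzero eigenvalue on $0$-forms transfers to $1$-forms via $\mathsf d_{f,h}$, and there are no other small eigenvalues on either level).

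Finally, the claim that $\lambda(\mathsf \Omega_k^{\textsc{M}})$ is exponentially small and nonzero: smallness is the upper bound just mentioned (test function $\chi e^{-f/h}$, giving $\lambda(\mathsf \Omega_k^{\textsc{M}})\le Ce^{-\delta/h}$ for some $\delta>0$, exactly as in the proof of Corollary~\ref{thm-pc}); nonzeroness is the key point and I expect it to be the main obstacle. If $\lambda(\mathsf \Omega_k^{\textsc{M}})=0$, then there would be a nonzero $0$-form $u$ with $\mathsf d_{f,h}u=0$, i.e.\ $u=\mathrm{cst}\cdot e^{-f/h}$, satisfying the Neumann condition $\mathbf t\mathsf d_{f,h}^*u|_{\Gamma_{k,\mbf D}^{\textsc{M}}}=0$ (automatic since $u$ is a function) and lying in $\mathcal D\big(\Delta_{f,h}^{\textsc{M},(0)}\big)$; but the Dirichlet condition $\mathbf t u|_{\Gamma_{k,\mbf D}^{\textsc{M}}}=0$ forces $e^{-f/h}=0$ on $\Gamma_{k,\mbf D}^{\textsc{M}}$, which is impossible. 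Equivalently, and more robustly, apply Lemma~\ref{le.GreenWeak} with $\varphi=0$ to the would-be kernel element $w$: the boundary term is $h\int_{\Gamma_{k,\mbf N}^{\textsc{M}}}\langle w,w\rangle\,\partial_{\mathsf n}f\,d\sigma\ge 0$, and vanishing of the whole right-hand side would force $\mathsf d(e^{0}w)=\mathsf d^*(w)=0$ and $w\equiv\mathrm{cst}\cdot e^{-f/h}$ with $w|_{\Gamma_{k,\mbf D}^{\textsc{M}}}=0$, again impossible; hence $\lambda(\mathsf \Omega_k^{\textsc{M}})>0$. The delicate part is making rigorous that the Dirichlet trace $\mathbf t w|_{\Gamma_{k,\mbf D}^{\textsc{M}}}=0$ genuinely rules out $w$ proportional to $e^{-f/h}$ given that the trace is only defined in the weak $L^2$ sense on the Lipschitz domain — this is where Proposition~\ref{pr.QTN}(i), asserting $w\in H^{1/2}$ with a genuine $L^2$ trace, is needed so that the pointwise argument applies.
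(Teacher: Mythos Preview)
Your proposal is correct and follows essentially the same route as the paper: item~(i) via the subelliptic estimate of Proposition~\ref{pr.QTN} and Rellich compactness, item~(ii) via the complex structure~\eqref{eq.dom-dT} and the identity $\lambda\|w\|^2=Q^{\textsc{M},(q)}(w,w)$, and item~(iii) via the IMS localization of Section~\ref{sec.numb-small} around the two critical points $x_0$ and $z_k$, combined with Lemma~\ref{le.GreenWeak} (whose boundary term on $\Gamma_{k,\mbf N}^{\textsc{M}}$ has the right sign) and the Witten complex to match the $q=0$ and $q=1$ small eigenvalues. The paper dispatches the nonzeroness of $\lambda(\mathsf\Omega_k^{\textsc{M}})$ in one line (``$e^{-f/h}$ does not belong to the domain of $\Delta_{f,h}^{\textsc{M},(0)}$''), so your worry about the weak trace is slightly overcautious: for $q=0$ the candidate kernel element is a smooth function, and its tangential trace on $\Gamma_{k,\mbf D}^{\textsc{M}}$ is just its restriction, which is nowhere zero.
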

\begin{proof}
Item  $(i)$  is a consequence of  the compactness
of the embedding $\Lambda^q H^{\frac12} ( \mathsf \Omega_k^{\textsc{M}}  )\hookrightarrow
 \Lambda^qL^{2} (  \mathsf \Omega_k^{\textsc{M}}  )$ and of the
 continuous inclusion $\mathcal D\big(\Delta_{f,h}^{\textsc{M} ,(q)} (
 \mathsf \Omega_k^{\textsc{M}} )\big) \hookrightarrow
 \Lambda^qH^{\frac12} ( \mathsf \Omega_k^{\textsc{M}}  )$ (see Proposition~\ref{pr.QTN}). 
Item  $(ii)$ is  a straightforward  consequence
of the characterization of the domain of $\Delta_{f,h}^{\textsc{M} ,(q)} (  \mathsf \Omega_k^{\textsc{M}} )$
together with~\eqref{eq.dom-dT}. Moreover, if
$\lambda\neq 0$, then
\begin{align*}
0 \neq  \lambda\Vert w_{}^{(q)}\Vert^2_{L^{2} ( \mathsf \Omega_k^{\textsc{M}})}&= \langle \Delta_{f,h}^{\textsc{M} ,(q)}  ( \mathsf \Omega_k^{\textsc{M}})w_{}^{(q)},w_{}^{(q)}\rangle_{L^{2} (  \mathsf \Omega_k^{\textsc{M}})}\\
&= \langle \mathsf d_{f,h} w^{(q)} ,\mathsf d_{f,h}w_{}^{(q)} \rangle_{L^{2} (  \mathsf \Omega_k^{\textsc{M}} )} +\langle \mathsf d^{*}_{f,h}w_{}^{(q)},\mathsf d^{*}_{f,h}w_{}^{(q)} \rangle_{L^{2} (  \mathsf \Omega_k^{\textsc{M}} )} 
\end{align*}
which implies that  either $\mathsf d_{f,h}w_{}^{(q)}$ or $\mathsf d^*_{f,h}w_{}^{(q)}$ is non-zero.
Let us now prove item $(iii)$ in  Proposition~\ref{pr.DeltaTN}. It is
a consequence of Lemma~\ref{le.GreenWeak} (with $\varphi=0$) and the
fact that the normal derivative of $f$ on $\Gamma_{k,\mbf
  N}^{\textsc{M}}$ is non negative (see item 2 in
Proposition~\ref{pr.omegakpoint})  together with arguments already used in Section~\ref{sec.numb-small}. Let us be more precise on this. The function $f$ is $\mathcal C^\infty$ on $\overline {\mathsf \Omega_k^{\textsc{M}}}$ and its critical points in~$\overline {\mathsf \Omega_k^{\textsc{M}}}$ are exactly $x_0$ and $z_k$. 
Moreover, for $\ve >0$ small enough, 
\begin{equation}
\label{eq.=vec}
\overline {\mathsf \Omega_k^{\textsc{M}}}\cap \{x\in \overline \Omega, x_d(x) \in [0,\ve]\}=  \overline {\mathsf C_\alpha }\cap \{x\in \overline \Omega, x_d(x)\in [0,\ve]\} =\overline {\mathsf C_\ve }.
\end{equation}
In particular, $ \mathsf \Omega_k^{\textsc{M}}$ is smooth near $z_k$ and 
\begin{equation}\label{eq.=normaCalpha}
\text{$\mathsf n_{{\mathsf \Omega_k^{\textsc{M}}}}=\mathsf n_{ \mathsf C_\alpha}=\mathsf n_{ \Omega}$ on $\Gamma_{k,\mbf D}^{\textsc{M}}$.}
\end{equation} 
From assumption \autoref{A},  it thus holds    $\partial_{\mathsf n_{ \mathsf \Omega_k^{\textsc{M}}}}f=0$ on $\Gamma_{k,\mbf D}^{\textsc{M}}$. 
Therefore, we can consider  $\mathsf V_{x_0}$ and  $\mathsf V_{z_k}$  two neighborhoods of respectively $x_0$ and $z_k$ in $\overline {\mathsf \Omega_k^{\textsc{M}}}$  such that  
 \begin{itemize}
\item   $\overline{\mathsf V_{x_0}}\subset {\mathsf \Omega_k^{\textsc{M}}}$ and $x_0$ is the only critical point of $f$ in $\overline{\mathsf V_{x_0}}$,
\item $\overline {\mathsf V_{z_k}} \cap \overline{\Gamma_{k,\mbf N}^{\textsc{M}}}=\emptyset$, $ \partial_{\mathsf n_{ \mathsf \Omega_k^{\textsc{M}}}}f=0$ on $\pa\overline {\mathsf \Omega_k^{\textsc{M}}}  \cap\overline {\mathsf V_{z_k}}$ 
and, $z_k$ is the only critical point of  $f$ in $\overline {\mathsf V_{z_k}}$, 
\item $\overline{\mathsf V_{x_0}}\cap \overline {\mathsf V_{z_k}}=\emptyset$. 
\end{itemize}
For  $y\in \{x_0,z_k\}$, let $\chi_y: \overline {\mathsf \Omega_k^{\textsc{M}}}\to [0,1]$ be a  $\mathcal C^\infty$ supported in $\mathsf V_y$ and such that  $\chi_y=1$ in a neighborhood of $y$ in $\overline {\mathsf \Omega_k^{\textsc{M}}}$. 
Then, one defines: 
$$\tilde \chi :=\sqrt{ 1-  \chi_{x_0}^2-\chi_{z_k}^2},$$
so that on $\overline {\mathsf \Omega_k^{\textsc{M}}}$, $\tilde \chi ^2+\chi_{x_0}^2+\chi_{z_k}^2=1$.  
Let $w\in \mathcal D(Q_{f,h}^{\textsc{M},(q)}(\mathsf \Omega_k^{\textsc{M}}))$.   
The IMS formula~\cite{CFKS,helffer-nier-06}  yields:
  \begin{align}
\nonumber
Q_{f,h}^{\textsc{M},(q)}(\mathsf \Omega_k^{\textsc{M}})(   w ) &=  Q_{f,h}^{\textsc{M},(q)}(\tilde \chi  w)    - h^2\,  \Vert w\, \nabla  \tilde \chi     \Vert_{L^2( \mathsf \Omega_k^{\textsc{M}} )}^2 + \sum_{y\in  \{x_0,z_k\}} Q_{f,h}^{\textsc{M},(q)}(\chi_y w)-      h^2\,\big \Vert w\,  \nabla \chi_y   \big \Vert_{L^2( \mathsf \Omega_k^{\textsc{M}}  )}^2.
\end{align}
This formula easily follows from  Lemma~\ref{le.GreenWeak} (with $\varphi=0$) and the fact that $\tilde \chi ^2+\chi_{x_0}^2+\chi_{z_k}^2=1$ and $\chi w\in \mathcal D(Q_{f,h}^{\textsc{M},(q)}(\mathsf \Omega_k^{\textsc{M}}))$ for any smooth function $\chi :\overline {\mathsf \Omega_k^{\textsc{M}}}\to \mathbb R$. 

In the following $C>0$ and $c>0$ are constants independent of $h$ and $w$, and which can change from one occurrence to another. 
Since   $\vert \nabla f\vert ^2\ge  c$ on the support of $\tilde \chi$ in  $\overline {\mathsf \Omega_k^{\textsc{M}}}$, one deduces  from   Lemma~\ref{le.GreenWeak} (applied to $\tilde \chi  w$ with  $\varphi=0$), and  the fact that $\partial_{\mathsf n_{ \mathsf \Omega_k^{\textsc{M}}}}f>0$ on $\Gamma_{k,\mbf N}^{\textsc{M}}$ that  for $h$ small enough 
$$Q_{f,h}^{\textsc{M},(q)}(\mathsf \Omega_k^{\textsc{M}})(\tilde \chi  w) \ge c \Vert w\,    \tilde \chi     \Vert_{L^2( \mathsf \Omega_k^{\textsc{M}} )}^2.$$
Then, using  the previous IMS formula, it holds for $h$ small enough,
\begin{equation}\label{eq.CIMSM}
Q_{f,h}^{\textsc{M},(q)}(\mathsf \Omega_k^{\textsc{M}})(   w )\ge c \Vert w\,    \tilde \chi     \Vert_{L^2( \mathsf \Omega_k^{\textsc{M}} )}^2 +  \sum_{y\in  \{x_0,z_k\}} Q_{f,h}^{\textsc{M},(q)}(\mathsf \Omega_k^{\textsc{M}})(\chi_y w) -C h^2 \Vert w \Vert_{L^2( \mathsf \Omega_k^{\textsc{M}} )}^2.
\end{equation}

Let us assume that  $q\ge 2$. Then,   by  the same analysis as in item~2 in Step~1.b and item~2 in Step~3 in Section~\ref{sec.numb-small},  one has (up to choosing $\mathsf V_{x_0}$ and $\mathsf V_{z_k}$ smaller), for all $y \in \{x_0,z_k\}$ and $h$ small enough
$ Q_{f,h}^{\textsc{M},(q)}(\mathsf \Omega_k^{\textsc{M}})(\chi_y w)\ge Ch   \Vert \chi_yw \Vert_{L^2( \mathsf \Omega_k^{\textsc{M}} )}^2.$
Hence, using \eqref{eq.CIMSM}, it follows that, when $q\ge 2$, 
$$Q_{f,h}^{\textsc{M},(q)}(\mathsf \Omega_k^{\textsc{M}})(  w)\ge Ch   \Vert w \Vert_{L^2( \mathsf \Omega_k^{\textsc{M}} )}^2.$$
This proves the first statement in item $(iii)$ in  Proposition~\ref{pr.DeltaTN} when $q\ge 2$. 

Let us now consider $q\in\{0,1\}$.  By the same analysis as in
item~2 in Step~1.b and  item~1 in Step~3  in
Section~\ref{sec.numb-small}, one has that (up to choosing $\mathsf V_{x_0}$ and $\mathsf V_{z_k}$ smaller) for $h$ small enough
$ Q_{f,h}^{\textsc{M},(0)}(\mathsf \Omega_k^{\textsc{M}})(\chi_{z_k} w)\ge Ch   \Vert \chi_{z_k} w \Vert_{L^2( \mathsf \Omega_k^{\textsc{M}} )}^2$
  and
  $  Q_{f,h}^{\textsc{M},(1)}(\mathsf \Omega_k^{\textsc{M}})(\chi_{x_0} w)\ge Ch   \Vert \chi_{x_0} w \Vert_{L^2( \mathsf \Omega_k^{\textsc{M}} )}^2 .$
Let us now assume that 
$$Q_{f,h}^{\textsc{M},(q)}(\mathsf \Omega_k^{\textsc{M}})(   w ) \le \mathsf ch\Vert w \Vert_{L^2( \mathsf \Omega_k^{\textsc{M}} )}^2,$$
for some $\mathsf c>0$. Using the same arguments than those used in Step 4 in Section~\ref{sec.numb-small} (up to choosing $\mathsf V_{x_0}$ and $\mathsf V_{z_k}$ smaller), one obtains that, if $q=0$ (resp. $q=1$),  $w$ is at a distance $(\sqrt {\mathsf c} +o(1))\Vert w \Vert_{L^2( \mathsf \Omega_k^{\textsc{M}} )}$ of the one dimensional vector space   spanned by   $\Phi_h^{x_0}= \chi_{x_0}\Psi_h^{x_0}\, /\,  \Vert \chi_{x_0} \Psi_h^{x_0}  \Vert_{L^2(\mathsf \Omega_k^{\textsc{M}})}$, see~\eqref{eq.span=12},~\eqref{eq.omega-z2}, and~\eqref{eq.PHIY} (resp. spanned by $\Phi_h^{z_k}=\chi_{z_k}\Psi_h^{z_k}\, /\,  \Vert \chi_{z_k} \Psi_h^{z_k}  \Vert_{L^2(\mathsf \Omega_k^{\textsc{M}})}$, see~\eqref{eq.span=11},~\eqref{eq.norme-case1-2}, and~\eqref{eq.PHIY}). Hence,  for $\mathsf c>0$ small enough and $h$ small enough
$$\dim\Ran \ \pi_{[0,\mathsf ch ]}\big (\Delta_{f,h}^{\textsc{M},(q)}
(  \mathsf \Omega_k^{\textsc{M}} )\big )\le 1.$$ 

Besides, using
Proposition~\ref{pr.DeltaM} , $\Phi_h^{x_0}\in  \mathcal
D(Q_{f,h}^{\textsc{M},(0)}(\mathsf \Omega_k^{\textsc{M}}))$ because
the function $\Phi_h^{x_0}$ is smooth  and is supported  in $\overline
{\mathsf V_{x_0}}\subset \mathsf \Omega_k^{\textsc{M}}$. It also holds
$\Phi_h^{z_k}\in  \mathcal D(Q_{f,h}^{\textsc{M},(1)}(\mathsf
\Omega_k^{\textsc{M}}))$. Indeed, the $1$-form $\Phi_h^{z_k}$ is
smooth,  supported in $ \overline {\mathsf V_{z_k}} \subset
\overline{\mathsf \Omega_k^{\textsc{M}}}$  and $\overline {\mathsf
  V_{z_k}} \cap \overline{\Gamma_{k,\mbf N}^{\textsc{M}}}=\emptyset$,
and therefore: $\mbf t \Phi_h^{z_k}= 0$ on $\Gamma_{k,\mbf
  D}^{\textsc{M}}$ and $\Phi_h^{z_k}=0$ on $\Gamma_{k,\mbf
  N}^{\textsc{M}}$. Using the Min-Max principle,
Equations~\eqref{eq.omega-z2} (with $y=x_0$)
and~\eqref{eq.norme-case1-2} (with $y=z_k$), one deduces that
$\Delta_{f,h}^{\textsc{M},(q)}(  \mathsf \Omega_k^{\textsc{M}} )$
admits at least one eigenvalue $\lambda^{\textsc{M}, (q)}$ of order
$O(h^{2})$ when $h\to 0$. This shows that $\dim\Ran \ \pi_{[0,\mathsf ch ]}\big (\Delta_{f,h}^{\textsc{M},(q)}
(  \mathsf \Omega_k^{\textsc{M}} )\big )= 1$ if $q \in \{0,1\}$.

Using the complex property (see $(ii)$
in Proposition~\ref{pr.DeltaTN}), it holds $\lambda^{\textsc{M},
  (0)}=\lambda^{\textsc{M}, (1)}=:\lambda( \mathsf \Omega_k^{\textsc{M}})$ for $h$ small enough. In addition,
$\lambda^{\textsc{M}, (0)}>0$ because $e^{-\frac 1hf}$ does not belong
to the domain of $\Delta_{f,h}^{\textsc{M},(0)}(  \mathsf
\Omega_k^{\textsc{M}} )$. Finally, the fact that $\lambda^{\textsc{M}, (0)}$ is exponentially
small when $h\to 0$  follows by standard arguments, using the test
function $\chi_{x_0} e^{-\frac 1hf}$ in the Min-Max principle for
$\Delta_{f,h}^{\textsc{M},(0)}(  \mathsf \Omega_k^{\textsc{M}} )$ (see
the end of the proof of Corollary~\ref{thm-pc} for a similar reasoning). The proof of Proposition~\ref{pr.DeltaTN} is complete.  \end{proof}

\subsubsection{Asymptotic equivalents of $\lambda(\mathsf
  \Omega_k^{\textsc{M}} )$ and of $\int_{\Gamma_{k,\mbf
      D}^{\textsc{M}}}\mathsf u^{(1)}_k \cdot \mathsf n_{  \mathsf
    \Omega_k^{\textsc{M}}  }\  e^{-\frac 1hf}$}\label{sec.equivM}

Let us now provide asymptotic results on the principal eigenvalue and
eigenform of $ \Delta_{f,h}^{\textsc{M} ,(1)} ( \mathsf \Omega_k^{\textsc{M}} ) $.
\begin{proposition}
\label{pr.VP-DeltaTN} 
Let us assume that assumption \autoref{A} is satisfied.  Let
$k\in\{1,\ldots,n\}$ and $ \mathsf \Omega_k^{\textsc{M}} $ be the
domain introduced in Proposition~\ref{pr.omegakpoint}. For $q\in\{0,1\}$, let $\Delta_{f,h}^{\textsc{M}, (q)}(  \mathsf \Omega_k^{\textsc{M}} )$ be the unbounded
 nonnegative self-adjoint  operator   on 
$ \Lambda^qL^{2} (  \mathsf \Omega_k^{\textsc{M}} )$ defined by \eqref{eq.DeltaTN}--\eqref{eq.domDeltaTN} with $(\mathsf \Omega^{\textsc{M}}, \Gamma^{\textsc{M}}_{\mbf D}, \Gamma^{\textsc{M}}_{\mbf N}) = (\mathsf \Omega_k^{\textsc{M}}, \Gamma_{k,\mbf D}^{\textsc{M}}, \Gamma_{k,\mbf N}^{\textsc{M}})$. 

Let $\lambda(\mathsf \Omega_k^{\textsc{M}} )$ be the principal
eigenvalue  of  $\Delta_{f,h}^{\textsc{M}, (q)}(  \mathsf
\Omega_k^{\textsc{M}} )$ (as introduced in item $(iii)$ of Proposition~\ref{pr.DeltaTN}). Then, it holds in the limit $h\to 0$:
\begin{align}
\label{eq.DL-blambda1}
 \lambda(\mathsf \Omega_k^{\textsc{M}} )=    \mathsf A_{x_0,z_k} h \, e^{-\frac 2h (f(z_k)-f(x_0))} (1+O(\sqrt h))
 \end{align}
 with 
\begin{align}
\label{eq.ax0zk}
 \mathsf A_{x_0,z_k}:=\frac{ 2\vert \mu_{z_k}\vert \big( {\rm det \ Hess } f   (x_0)   \big)^{\frac12}              }{  \pi \big \vert   {\rm det  \,  Hess }f (z_k) \big \vert   ^{\frac 12}}
  \end{align}
  where  $\mu_{z_k}$ is the negative eigenvalue of  ${\rm Hess }f (z_k)$.

Let  $\mathsf u^{(1)}_k$ be a   $L^2(\mathsf
\Omega_k^{\textsc{M}})$-normalized  eigenform of $
\Delta_{f,h}^{\textsc{M} ,(1)} ( \mathsf \Omega_k^{\textsc{M}} ) $
associated with the eigenvalue $\lambda( \mathsf
\Omega_k^{\textsc{M}})$. The $1$-form $\mathsf u^{(1)}_k$ is  unique
up to a multiplication by~$\pm 1$. This multiplicative factor can be
chosen such that:  in the limit $h\to 0$,
\begin{align}\label{eq.DL-boun1}
 \int_{\Gamma_{k,\mbf D}^{\textsc{M}}}\mathsf u^{(1)}_k \cdot \mathsf n_{  \mathsf \Omega_k^{\textsc{M}}  }\  e^{-\frac 1hf}=-\mathsf b_k h^{\mathsf m} \, e^{-\frac 1h f(z_k)} (1+O(\sqrt h)),\
\end{align}
where  
\begin{align}\label{eq.kappa_0}
\mathsf b_k:=\sqrt{\mathsf A_{x_0,z_k}\kappa_{x_0}}, 
\ \ \text {and } \ \ \kappa_{x_0}:=\frac{\pi^{\frac d2}}{ \sqrt{   \det \Hess f(x_0)}},\  \mathsf m= {\frac d4}-\frac 12.
\end{align}
 \end{proposition}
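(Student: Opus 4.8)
The strategy is to reduce both assertions~\eqref{eq.DL-blambda1} and~\eqref{eq.DL-boun1} to precise computations near the two critical points $x_0$ and $z_k$, using the Witten complex structure of Proposition~\ref{pr.DeltaTN} to transfer information between $0$-forms and $1$-forms. First I would construct an accurate quasi-mode for the principal $0$-eigenform: near $x_0$ it should behave like $\chi_{x_0}e^{-\frac1h f}$, and the Min–Max principle gives an exponentially small upper bound on $\lambda(\mathsf \Omega_k^{\textsc M})$, as already recorded at the end of the proof of Proposition~\ref{pr.DeltaTN}. To get the \emph{sharp} equivalent, I would argue \`a la~\cite{DoNe2}: write $\lambda(\mathsf \Omega_k^{\textsc M})$ via the variational characterization
$$
\lambda(\mathsf \Omega_k^{\textsc M}) = \inf_{u}\frac{Q_{f,h}^{\textsc M,(0)}(\mathsf \Omega_k^{\textsc M})(u)}{\|u\|_{L^2(\mathsf \Omega_k^{\textsc M})}^2},
$$
and evaluate the numerator on the quasi-mode $\chi_{x_0}e^{-\frac1h f}$ (suitably cut off so that the Dirichlet condition on $\Gamma_{k,\mbf D}^{\textsc M}$ is respected while the Neumann condition on $\Gamma_{k,\mbf N}^{\textsc M}$ is automatic). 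Since $\partial_{\mathsf n}f>0$ on $\Gamma_{k,\mbf N}^{\textsc M}$ and $=0$ on $\Gamma_{k,\mbf D}^{\textsc M}$, the boundary term in the Green formula of Lemma~\ref{le.GreenWeak} has a favorable sign, and the standard Laplace-type analysis localizes the Dirichlet energy to a neighborhood of the saddle $z_k$, producing the factor $e^{-\frac2h(f(z_k)-f(x_0))}$; the prefactor $\mathsf A_{x_0,z_k}$ in~\eqref{eq.ax0zk} then comes out from the Gaussian integral at $z_k$ (involving $|\mu_{z_k}|/\pi$ and $|\det\Hess f(z_k)|^{-1/2}$) against the normalization at $x_0$ (involving $(\det\Hess f(x_0))^{1/2}$). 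The matching lower bound requires showing this quasi-mode is close, in the appropriate weighted sense, to the true eigenfunction; this is where I expect to invoke the $\mathsf ch$-spectral gap of Proposition~\ref{pr.DeltaTN}(iii) together with Agmon-type exponential decay estimates (the subject of the forthcoming Section~\ref{sec.Agmon1}) to control the error.

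Next, for~\eqref{eq.DL-boun1} I would use the fact that $\mathsf d_{f,h}\mathsf u^{(0)}_k$ is, by the intertwining relations in Proposition~\ref{pr.DeltaTN}(ii), proportional to the principal $1$-eigenform $\mathsf u^{(1)}_k$, with proportionality constant governed by $\lambda(\mathsf \Omega_k^{\textsc M})$; more precisely, normalizing so that $\|\mathsf u^{(0)}_k\|_{L^2}=\|\mathsf u^{(1)}_k\|_{L^2}=1$, one has
$$
\mathsf u^{(1)}_k = \frac{1}{\sqrt{\lambda(\mathsf \Omega_k^{\textsc M})}}\,\mathsf d_{f,h}\mathsf u^{(0)}_k
$$
up to sign, since $\|\mathsf d_{f,h}\mathsf u^{(0)}_k\|^2 = \langle \Delta^{\textsc M,(0)}_{f,h}\mathsf u^{(0)}_k,\mathsf u^{(0)}_k\rangle = \lambda(\mathsf \Omega_k^{\textsc M})$. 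Then the boundary integral $\int_{\Gamma_{k,\mbf D}^{\textsc M}}\mathsf u^{(1)}_k\cdot\mathsf n\, e^{-\frac1h f}$ becomes, using $\mathsf d_{f,h}=e^{-f/h}h\,\mathsf d\,e^{f/h}$ and that $\mathsf u^{(0)}_k$ vanishes on $\Gamma_{k,\mbf D}^{\textsc M}$, an expression of the form $\frac{h}{\sqrt{\lambda(\mathsf \Omega_k^{\textsc M})}}\int_{\Gamma_{k,\mbf D}^{\textsc M}}\partial_{\mathsf n}\mathsf u^{(0)}_k\, e^{-\frac2h f}$ up to lower-order terms; a Green/integration-by-parts identity plus the Laplace expansion of $\int_{\mathsf \Omega_k^{\textsc M}}|\mathsf u^{(0)}_k|^2 e^{-\frac2h f}$ near $x_0$ (giving the constant $\kappa_{x_0}$ of~\eqref{eq.kappa_0}) converts this into $-\mathsf b_k h^{\mathsf m} e^{-\frac1h f(z_k)}(1+O(\sqrt h))$ with $\mathsf b_k=\sqrt{\mathsf A_{x_0,z_k}\kappa_{x_0}}$, the exponent $\mathsf m = d/4-1/2$ arising from combining the $h$-powers in $\sqrt{\lambda}\sim h^{1/2}e^{-\frac1h(f(z_k)-f(x_0))}$, the normalization $\sqrt{\kappa_{x_0}}\,h^{d/4}e^{-\frac1h f(x_0)}$, and the surface Laplace integral at $z_k$. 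The sign is fixed by orienting $\mathsf u^{(0)}_k>0$ and using the Hopf-lemma-type positivity of the normal derivative.

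The main obstacle, I expect, is establishing the \emph{sharp} two-sided estimate for $\lambda(\mathsf \Omega_k^{\textsc M})$ — the upper bound via a trial function is routine, but the lower bound needs quantitative control of how much the true eigenfunction differs from the quasi-mode on the Dirichlet boundary $\Gamma_{k,\mbf D}^{\textsc M}$, which requires Agmon estimates adapted to the mixed Dirichlet–Neumann setting and to the fact that $z_k$ is simultaneously a critical point of $f$ and a boundary point where $\partial_{\mathsf n}f=0$. This is precisely the geometric configuration that, as emphasized in Section~\ref{sec.strategy1}, obstructs the use of off-the-shelf WKB constructions; instead one substitutes the asymptotic-equivalent-of-$\lambda$ argument from~\cite{DoNe2}. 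A secondary technical point is handling the Lipschitz (non-smooth) corners where $\Gamma_{k,\mbf D}^{\textsc M}$ meets $\Gamma_{k,\mbf N}^{\textsc M}$: here the subelliptic estimate~\eqref{eq:subelliptic} of Proposition~\ref{pr.QTN} guarantees that $\mathsf u^{(1)}_k$ genuinely has an $L^2$ trace on $\Gamma_{k,\mbf D}^{\textsc M}$, so that the boundary integral in~\eqref{eq.DL-boun1} is well-defined, and the angle-$<\pi$ condition (item~3 of Proposition~\ref{pr.omegakpoint}) is what makes this available.
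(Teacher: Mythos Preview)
Your overall architecture is sound — using the complex relation $\mathsf u^{(1)}_k=\mathsf d_{f,h}\mathsf u^{(0)}_k/\sqrt{\lambda(\mathsf\Omega_k^{\textsc M})}$ to reduce~\eqref{eq.DL-boun1} to the $0$-form problem, and then an integration by parts to reach the boundary, is exactly how the paper proceeds. But there is a genuine gap in how you propose to obtain the \emph{sharp} equivalent~\eqref{eq.DL-blambda1}.

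The trial function $\chi_{x_0}e^{-f/h}$, even ``suitably cut off'' to vanish on $\Gamma_{k,\mbf D}^{\textsc M}$, is too crude. With a generic cut-off $\chi$ one gets an upper bound $\lambda\le Q(\chi e^{-f/h})/\|\chi e^{-f/h}\|^2$, but the lower bound does \emph{not} follow from the spectral gap plus Agmon estimates as you suggest. The Agmon estimates of Section~\ref{sec.Agmon1} concern the decay of $\mathsf u^{(1)}_k$ away from $z_k$; they do not by themselves give a lower bound on $\lambda$. What is actually needed is control of $\|\Delta^{\textsc M,(0)}_{f,h}\varphi\|_{L^2}$, not just of $Q(\varphi)=\langle\varphi,\Delta\varphi\rangle$: if one can show $\|\Delta\varphi\|^2=O(h^2)\,Q(\varphi)$, then the resolvent identity
$(1-\pi_{[0,\mathsf ch]})\varphi=-\tfrac{1}{2\pi i}\int_{C}z^{-1}(z-\Delta)^{-1}\Delta\varphi\,dz$
gives $\|(1-\pi)\varphi\|\le Ch^{-1}\|\Delta\varphi\|=O(1)\sqrt{Q(\varphi)}$, and hence $\lambda=Q(\varphi)(1+O(h))$ — both bounds at once. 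For a generic cut-off one typically only has $\|\Delta\varphi\|^2\sim Q(\varphi)$, which is useless here.

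The paper therefore uses a specific BEGK-type quasi-mode (cf.~\cite{BEGK},~\cite[Section~4.2]{DoNe2}): one sets $\psi^{\textsc M,(0)}_k(x',x_d)=\big(\int_0^{x_d}\chi(t)e^{-|\mu_{z_k}|t^2/h}dt\big)\big/\big(\int_0^{2\varepsilon}\chi(t)e^{-|\mu_{z_k}|t^2/h}dt\big)$ in the normal coordinate $x_d$ near $\Gamma_{k,\mbf D}^{\textsc M}$, extended by $1$ in the interior, and takes $\varphi^{\textsc M,(0)}_k=\psi^{\textsc M,(0)}_k e^{-f/h}/\|\cdot\|$. This choice is engineered so that the leading $O(|x|)$ contributions to $\Delta_{f,h}\varphi$ cancel (because $\partial_{x_d}f=-|\mu_{z_k}|x_d+O(|x|^2)$ matches the exponent in $\psi$), which is precisely what yields $\|\Delta\varphi\|^2=O(h^2)\,Q(\varphi)$. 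Laplace's method on $Q(\varphi)$ then produces~$\mathsf A_{x_0,z_k}$. For the boundary term, the paper does not compute $\partial_{\mathsf n}\mathsf u^{(0)}_k$ directly; instead it writes $\lambda=\sqrt\lambda\,\langle\mathsf d\psi^{\textsc M,(0)}_k,\,he^{-f/h}\mathsf u^{(1)}_k\rangle/\mathsf Z_{z_k}(1+o(1))$, integrates by parts against $(1-\psi^{\textsc M,(0)}_k)$, and uses $\mathbf n\mathsf u^{(1)}_k=0$ on $\Gamma_{k,\mbf N}^{\textsc M}$ together with the smallness of $\mathsf d_{f,h}^*\mathsf u^{(1)}_k$ to isolate $\int_{\Gamma_{k,\mbf D}^{\textsc M}}\mathsf u^{(1)}_k\cdot\mathsf n\,e^{-f/h}$.
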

\begin{proof}
The proof of Proposition~\ref{pr.VP-DeltaTN} is divided into three steps.

\medskip

\noindent
\textbf{Step 1: Construction of the   quasi-mode $\varphi^{\textsc{M},(0)}_k$ for
  $\Delta_{f,h}^{\textsc{M}, (0)}(  \mathsf \Omega_k^{\textsc{M}} )$.}
Let   $\ve >0$ be small enough such that   $\overline{\Omega_{\mathsf K_{\alpha/2}}} \subset \{x_d> 5\ve\}$. Then it holds (see~\eqref{eq.=vec} and~\eqref{eq.Calpha}) 
\begin{equation}\label{eq.veve}
\overline {\mathsf \Omega_k^{\textsc{M}}}\cap \{x\in \overline \Omega, x_d(x) \in [0,4\ve]\}=  \overline {\mathsf C_\alpha }\cap \{x\in \overline \Omega, x_d(x)\in [0,4\ve]\} =  \overline {\mathsf C_{4\ve} }.
\end{equation} 
Notice that since $x_0\in \Omega_{\mathsf K_{\alpha/2}}$, it then holds 
\begin{equation}\label{eq.x_0ve}
x_0\in   {\mathsf \Omega_k^{\textsc{M}}}\cap \{x\in \overline \Omega, x_d(x) >4\ve\}.
\end{equation} 
Since $z_k$ belongs to the open set $\Gamma_{k,\mbf D}^{\textsc{M}}$,
one can consider  $r>0$ small enough such that $\overline{\mathsf B_{\partial \Omega}(z_k,r)}\subset \Gamma_{k,\mbf D}^{\textsc{M}}$ (where $\mathsf B_{\partial \Omega}(z_k,r)$ is the open ball of radius $r>0$ centred at $z_k$ in $\partial \Omega$). 
Define 
\begin{equation}\label{eq.Ve}
\mathsf V_{4\ve}^r(z_k) :=\{x\in \overline \Omega, \, \mathsf z(x)\in
\overline{\mathsf B_{\partial \Omega}(z_k,r)} \text{ and } x_d\in [0,4\ve]\}\subset \overline{\mathsf C_{4\ve}}.
\end{equation} 
A schematic representation of $\mathsf V_{4\ve}^r(z_k)$ is given in
Figure~\ref{fig:v-ve}.

\begin{figure}
\begin{center}
  \includegraphics[width=0.5\textwidth]{./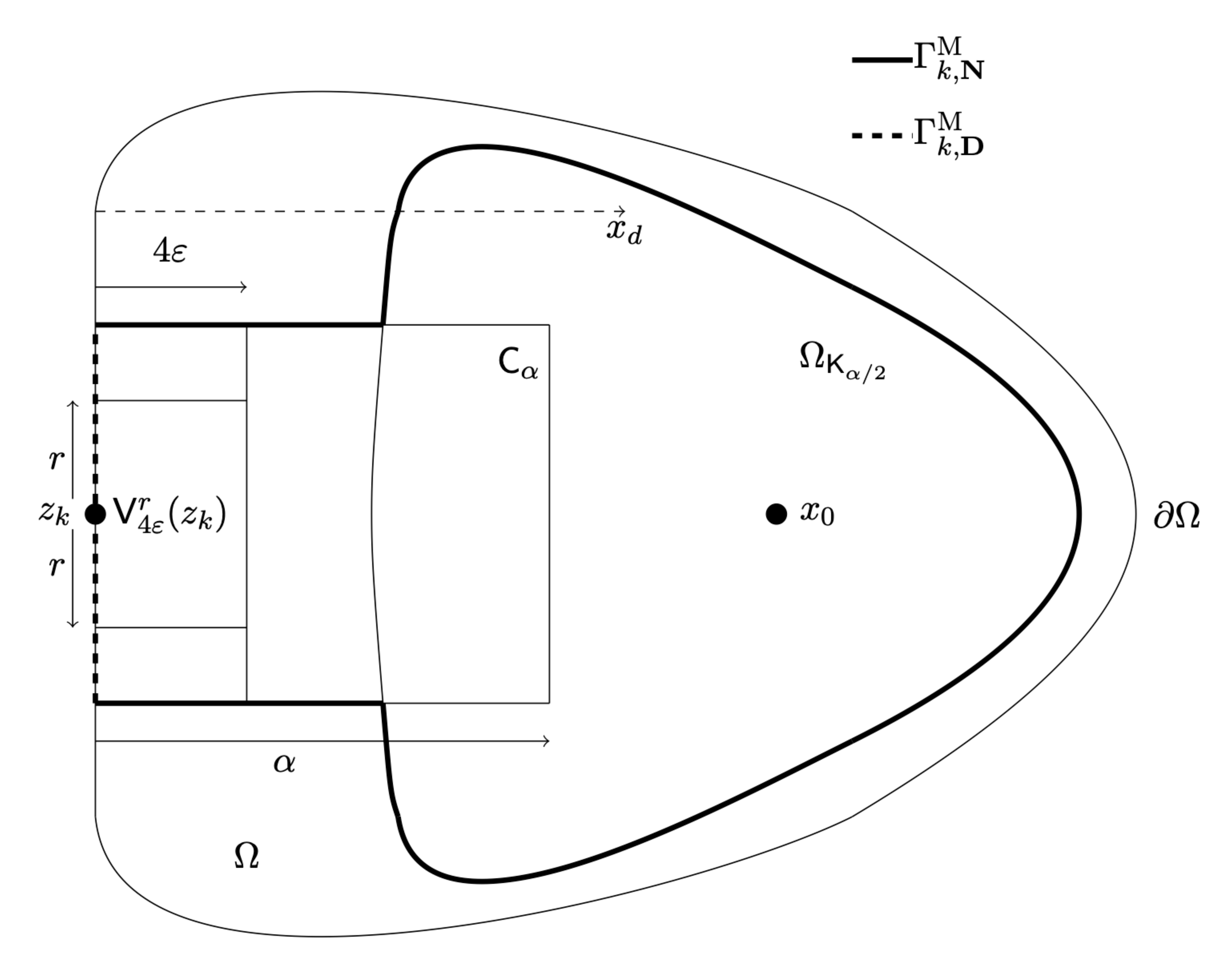}
\caption{A schematic representation of $\mathsf V_{4\ve}^r(z_k)$.}

\label{fig:v-ve}

\end{center}

\end{figure}

For each $z\in \overline{ \Gamma_{k,\mbf D} ^{\textsc{M}}}$, recall
that $x'=(x_1,\ldots,x_{d-1})$ is a system of coordinates    defined
in a neighborhood of   $z$  in $\partial \Omega$ such that
\eqref{eq.pauz}, \eqref{eq.pauz2}, and  \eqref{eq.pauz3} hold. Recall
that    $x\mapsto (\mathsf z(x),x_d(x))$  introduced
in~\eqref{eq.norm-teng} and~\eqref{eq.coord1} defines a $\mathcal C^\infty$ diffeomorphism on  $\{x\in\overline\Omega, x_d(x)\in [0,3\ve)\}$.  To ease the notation, from now on, we simply write $\{ x_d \in \mathsf O\}$ for the set $\{x\in \overline \Omega, x_d(x) \in \mathsf O\}$ for $\mathsf O\subset \mathbb R_+$.  
Recall that by~\eqref{g_y} and since $\mathsf n_{\mathsf M}(z_k)$ is an eigenvector of $\Hess f(z_k)$ for the eigenvalue $\mu_{z_k}<0$,
  up to choosing  $\ve>0$   and  $r>0$ smaller,
\begin{equation}\label{eq.DLzk}
\forall x=(x',x_d)\in \mathsf V_{4\ve}^r(z_k), \ f(x)\ =\ f(0)+ \frac 12 x' \cdot \text{Hess} f|_{\partial \Omega} (z_k)x'  - \frac{\vert \mu_{z_k}\vert }{2} x_d^2 + O(\vert x\vert^3).
\end{equation} 
Since $\text{Hess} f|_{\partial \Omega} (z_k)$ is  positive-definite, we may assume in the following that $r>0$ and $\ve>0$ are  small enough such that 
\begin{equation}\label{eq.min-zK}
\{z_k\}=\argmin_{ {  \mathsf V_{4\ve}^r(z_k)}} (f+  \vert \mu_{z_k} \vert x_d^2).
\end{equation} 
Moreover, because $\overline{ \Gamma_{k,\mbf D} ^{\textsc{M}}}\subset \Gamma_{z_k}$ and $\Gamma_{z_k}\subset \mathsf W_+^{z_k}$ (see \eqref{eq.incluWW}), one has 
$\{z_k\}=\argmin_{\overline{ \Gamma_{k,\mbf D} ^{\textsc{M}}}}f.$
With a slight abuse of notation, we still denote by $f$ the function $f$ in the $(\mathsf z,x_d)$ variable. 
Since $f(\mathsf z,x_d)= f(\mathsf z,0)+o_{\ve}(1)$ uniformly on $x=(\mathsf z,x_d)\in \overline{\mathsf C_{4\ve}}$ as $\ve \to 0$, it thus holds if  in addition $\mathsf z\in  \overline{ \Gamma_{k,\mbf D} ^{\textsc{M}}}\setminus \overline{\mathsf B_{\partial \Omega}(z_k,r)}$, $f(\mathsf z,x_d)\ge f(z_k)+c-o_{\ve}(1)$ for some $c>0$ independant of $x_d\in [0,4\ve]$. This implies that up to choosing $\ve >0$ smaller, it holds for some $c>0$, 
\begin{equation}\label{eq.min>>zK}
f> f(z_k)+c/2\ \text{on}\ \overline{\mathsf C_{4\ve}}\setminus \mathsf V_{4\ve}^r(z_k)=\{x=(\mathsf z,x_d), \mathsf z\in \overline{ \Gamma_{k,\mbf D} ^{\textsc{M}}}\setminus \overline{\mathsf B_{\partial \Omega}(z_k,r)}, x_d\in [0,4\ve]\}.
\end{equation} 

Let us consider   $\chi\in\mathcal  C^\infty(\mathbb R_+,[0,1])$    such that 
$
\text{supp } \chi\subset  [0,  \ve] \, \text{ and } \,   \chi=1 \text{ on }  [0,\ve/2].
$
 Inspired by~\cite{BEGK} (see also~\cite[Section 4.2]{DoNe2} and~\cite{di2017small}),   we build a    quasi-mode for $\Delta_{f,h}^{\textsc{M}, (0)}(  \mathsf \Omega_k^{\textsc{M}} )$  using the function $\phi_k^{\textsc{M},(0)}$  defined on $\overline{ \mathsf C_\alpha} \cap  \{x_d \in [0,2\ve]\}= \overline {\mathsf C_{2\ve}   }$ (see~\eqref{eq.Calpha})   by:
 \begin{equation}\label{eq.phi=def}
\forall x = (\mathsf z,x_d) \in   \overline {\mathsf C_{2\ve}   }, \  \  \phi^{\textsc{M},(0)}_k(\mathsf z,x_d) :=\frac{  \int_0^{x_d}\chi(t)e^{-\frac 1h\vert \mu_{z_k}\vert  \, t^2}  dt}{\int_0^{2\ve}\chi(t)\, e^{-\frac 1h\vert \mu_{z_k}\vert  \, t^2}  dt}.
\end{equation}
Notice that the function $\phi^{\textsc{M},(0)}_k$   only depends on the variable $x_d$.
Moreover, one has:
$$
    \phi^{\textsc{M},(0)}_{k}\in   \mathcal C^\infty\big ( \overline {\mathsf C_{2\ve}   }\big ) \text{ and } 
\forall x = (\mathsf z,x_d) \in  \overline {\mathsf C_{2\ve}   },\, \phi^{\textsc{M},(0)}_k(x)=1 \text{  if  } x_d \in [\ve,2\ve].
$$
Let us set for $x = (\mathsf z,x_d) \in \overline {\mathsf C_{2\ve}
}$: 
\begin{equation}\label{eq.defPsi}
\psi^{\textsc{M},(0)}_k(x)= \phi^{\textsc{M},(0)}_k(\mathsf z,x_d).
\end{equation}
We extend $\psi^{\textsc{M},(0)}_k$ from   $ \overline {\mathsf C_{2\ve} } =\overline {\mathsf \Omega_k^{\textsc{M}}}\cap \{  x_d \in [0,2\ve]\}$ (see~\eqref{eq.veve})  to $ \overline {\mathsf \Omega_k^{\textsc{M}}}$ by setting 
$\psi^{\textsc{M},(0)}_k=1 \text{ on } \overline {\mathsf \Omega_k^{\textsc{M}}}\cap \{  x_d> 2\ve\}=  \overline {\mathsf \Omega_k^{\textsc{M}}}\setminus \overline {\mathsf C_{2\ve} } .$
 One then has $\psi^{\textsc{M},(0)}_k\in  \mathcal C^\infty\big ( \overline {\mathsf \Omega_k^{\textsc{M}}} \big )$. 
 Notice that from~\eqref{eq.x_0ve}, 
 \begin{equation}\label{eq.psi=1x_0}
\psi^{\textsc{M},(0)}_k= 1 \text{ in a neighborhood of $x_0$ in $\mathsf \Omega_k^{\textsc{M}}$}.
\end{equation}
Then, define on $ \overline{\mathsf \Omega_k^{\textsc{M}}}$:
\begin{equation}\label{eq.QM-M0}
\varphi^{\textsc{M},(0)}_k=\frac{\psi^{\textsc{M},(0)}_k\, e^{-\frac 1hf} }{   \Vert \psi^{\textsc{M},(0)}_k\, e^{-\frac 1hf} \Vert_{L^2( \mathsf \Omega_k^{\textsc{M}})}   } \in  \mathcal C^\infty\big ( \overline {\mathsf \Omega_k^{\textsc{M}}} \big ).
\end{equation}

Let us check that $\varphi^{\textsc{M},(0)}_k$ belongs to the domain
of $\Delta_{f,h}^{\textsc{M}, (0)}(  \mathsf \Omega_k^{\textsc{M}} )$,
defined in~\eqref{eq.domDeltaTN}. Because it is smooth on the bounded set $\overline{\mathsf \Omega_k^{\textsc{M}}}$, one just has to check that it satisfies the boundary conditions on $\pa  \mathsf \Omega_k^{\textsc{M}} $. By definition of $\phi^{\textsc{M},(0)}_k$ above, $\varphi^{\textsc{M},(0)}_k(x)=0$ for all $x=(\mathsf z,x_d) \in  \overline{ \mathsf C_{2\ve}} \cap  \{x_d =0\} = \overline{\Gamma_{k,\mbf D}^{\textsc{M}}}$ (see~\eqref{eq.Calpha}). Let us now check that $\pa _{\mathsf n_{ \mathsf \Omega_k^{\textsc{M}} }}(e^{\frac fh} \varphi^{\textsc{M},(0)}_k)(x)=0$ for a.e.   $x\in \Gamma_{k,\mbf N}^{\textsc{M}}$, i.e. that $\pa _{\mathsf n_{ \mathsf \Omega_k^{\textsc{M}} }}\psi^{\textsc{M},(0)}_k(x)=0$ for a.e.   $x\in \Gamma_{k,\mbf N}^{\textsc{M}}$. Recall that $\Gamma_{k,\mbf N}^{\textsc{M}}=\pa \mathsf \Omega_k^{\textsc{M}} \cap \Omega$ (see~\eqref{eq.onm}). Let us first consider the case $x\in  \Gamma_{k,\mbf N}^{\textsc{M}}\cap \{x_d\in (0,3\ve)\}$. 
From~\eqref{eq.veve} and~\eqref{eq.Blat}, it holds:
\begin{equation}\label{eq.=Sve}
 \pa \mathsf \Omega_k^{\textsc{M}}  \cap \{x_d\in (0,3\ve)\} =\Sigma_{3\ve}^{lateral}.
\end{equation}
Because  $\overline{\Omega_{\mathsf K_{\alpha/2}}} \subset \{x_d> 5\ve\}$ and  $\Sigma_{3\ve}^{lateral} \subset \pa \mathsf C_\alpha$, one deduces that (see~\eqref{eq.=pa-domk-union} and~\eqref{eq.A1A2}),
$$\Sigma_{3\ve}^{lateral}= \Gamma_{k,\mbf N}^{\textsc{M}}\cap \{x_d\in (0,3\ve)\} \subset  \mathsf A_2.$$
Then using~\eqref{eq.nn=A2} and~\eqref{eq.normalLATERAL}, $\mathsf n_{ \mathsf \Omega_k^{\textsc{M}}}=-  {\nabla x_1}/ {\vert \nabla x_1\vert}$ on $\Sigma_{3\ve}^{lateral}$.  Since $\nabla \psi^{\textsc{M},(0)}_k$ is collinear to $\nabla x_d$ on $\overline{  \mathsf C_{3\ve} }$ which is, in view of~\eqref{g_y}, orthogonal to $\nabla x_1$, it holds:
$$\pa _{\mathsf n_{ \mathsf \Omega_k^{\textsc{M}} }}\psi^{\textsc{M},(0)}_k(x)=0 \text{ for  } x \in \Gamma_{k,\mbf N}^{\textsc{M}} \cap \Sigma_{3\ve}^{lateral}.$$
Let us now consider  the case $x\in  \Gamma_{k,\mbf N}^{\textsc{M}}\cap \{x_d\ge 3\ve\}$. 
Because $\psi^{\textsc{M},(0)}_k=1$ on $ \overline{\mathsf \Omega_k^{\textsc{M}} }\setminus \overline{\mathsf C_{\ve}}= \overline{\mathsf \Omega_k^{\textsc{M}} }\cap\{x_d>\ve\}$ (therefore $\vert \nabla \psi^{\textsc{M},(0)}_k\vert (x)=0$ on this set)   and $\Gamma_{k,\mbf N}^{\textsc{M}}\cap\{x_d\ge 3\ve\}= \pa \mathsf \Omega_k^{\textsc{M}} \cap\{x_d\ge 3\ve\}$, it holds, 
$$\pa _{\mathsf n_{ \mathsf \Omega_k^{\textsc{M}} }}\psi^{\textsc{M},(0)}_k(x)=0 \text{ for a.e.  } x\in \Gamma_{k,\mbf N}^{\textsc{M}}\cap\{x_d\ge 3\ve\}.$$
In conclusion, one has
\begin{equation}\label{eq.psi-DM}
\varphi^{\textsc{M},(0)}_k \in \mathcal D(  \Delta_{f,h}^{\textsc{M}, (0)}(  \mathsf \Omega_k^{\textsc{M}} ) ) .
\end{equation}

\medskip
\noindent
\textbf{Step 2: Asymptotic estimates of $\langle \varphi^{\textsc{M},(0)}_k,\Delta_{f,h}^{\textsc{M}, (0)}(  \mathsf \Omega_k^{\textsc{M}} ) \varphi^{\textsc{M},(0)}_k \rangle_{L^2( \mathsf \Omega_k^{\textsc{M}})}$ and $\Vert \Delta_{f,h}^{\textsc{M}, (0)}(  \mathsf \Omega_k^{\textsc{M}} )\varphi^{\textsc{M},(0)}_k \Vert_{L^2( \mathsf \Omega_k^{\textsc{M}})}^2$ as $h\to 0$. }
Let us first deal with $\mathsf Z_{z_k}:=\Vert \psi^{\textsc{M},(0)}_k\, e^{-\frac 1hf} \Vert_{L^2( \mathsf \Omega_k^{\textsc{M}})} $. Because $|\psi^{\textsc{M},(0)}_k|\le 1$, $\{x_0\}= \argmin_{\overline{\mathsf \Omega_k^{\textsc{M}}}} f$ (which follows from the fact that $x_0\in \mathsf \Omega_k^{\textsc{M}} \subset \Omega$ and Lemma~\ref{le.start})  and $\psi^{\textsc{M},(0)}_k=1$ near $x_0$ in  $\mathsf \Omega_k^{\textsc{M}}$ (see~\eqref{eq.psi=1x_0})
it holds, using Laplace's method, in the limit $h\to 0$:
\begin{equation}\label{lap.x0K}
\mathsf Z_{z_k}=  \sqrt{\kappa_{x_0}}\, h^{\frac d4}\, e^{-\frac 1h f(x_0)} \big(1+O(h)\big),
\end{equation}
where $\kappa_{x_0}$ is defined in~\eqref{eq.kappa_0}.
Let us now consider the term $\langle \varphi^{\textsc{M},(0)}_k,\Delta_{f,h}^{\textsc{M}, (0)}(  \mathsf \Omega_k^{\textsc{M}} ) \varphi^{\textsc{M},(0)}_k \rangle_{L^2( \mathsf \Omega_k^{\textsc{M}})}$. One has (see~\eqref{eq.psi-DM} and  Proposition~\ref{pr.DeltaM}):
\begin{align*}
\langle \varphi^{\textsc{M},(0)}_k,\Delta_{f,h}^{\textsc{M}, (0)}(  \mathsf \Omega_k^{\textsc{M}} ) \varphi^{\textsc{M},(0)}_k \rangle_{L^2( \mathsf \Omega_k^{\textsc{M}})}=    \int_{\mathsf \Omega_k^{\textsc{M}}} \vert \mathsf d_{f,h} \varphi^{\textsc{M},(0)}_k \vert^2    =  \frac{h^2 \int_{\mathsf C_{2\ve}} \vert \nabla  \psi^{\textsc{M},(0)}_k  \vert^2 e^{-\frac 2h f}   }{\mathsf Z_{z_k}^2}
\end{align*}
where we also used the fact that $\psi^{\textsc{M},(0)}_k=1$ on $
\overline{\mathsf \Omega_k^{\textsc{M}} }\setminus \overline{\mathsf
  C_{2\ve}}$. For $\eta\in [0,2\ve]$, set  $  \Gamma  (\eta)=\{ x
\in\overline{ \mathsf C_{2\ve}}, x_d(x)=\eta\}$. Note that  $ \Gamma
(0)=\Gamma^{\textsc{M}}_{k,\mbf D}$ and that for any $\eta\in
[0,2\ve]$, $\Gamma  (\eta)$ is naturally parametrized by
$\Gamma^{\textsc{M}}_{k,\mbf D}$ through the mapping  $\mathsf z\in
\Gamma^{\textsc{M}}_{k,\mbf D}\mapsto (\mathsf z,\eta)$ with Jacobian
determinant $\mathsf j(\mathsf z,\eta)$ with  $\mathsf j(\mathsf
z,0)=1$. One has using \eqref{eq.defPsi},~\eqref{eq.phi=def}, the co-area formula~\cite{evans-gariepy-18} ($dx=d\sigma_{\Gamma (\eta)}  |\nabla x_d|^{-1} d\eta$), and the fact that 
 $\vert \nabla x_d\vert=1$: 
\begin{align*}
 {h^2 \int_{\mathsf C_{2\ve}} \vert\nabla  \psi^{\textsc{M},(0)}_k  \vert^2 e^{-\frac 2h f}   } &= \frac{ \int_{\eta=0}^{2\ve} \chi^2(\eta)  \int_{\Gamma (\eta) }     |\nabla x_d|^2  \, e^{-\frac 2h (f  +\vert \mu_{z_k}\vert  \eta^2)}  d\sigma _{\Gamma (\eta)} |\nabla x_d|^{-1} d\eta  }{ \Big(\int_0^{2\ve}\chi(t)\, e^{-\frac 1h\vert \mu_{z_k}\vert  \, t^2}  dt\Big)^2  }\\
 &=\frac{ \int_{\eta=0}^{2\ve} \chi^2(\eta) \int_{\mathsf z \in  \Gamma_{k,\mbf D} ^{\textsc{M}}  }      e^{-\frac 2h (f(\mathsf z,\eta) +\vert \mu_{z_k}\vert  \eta^2)}  \mathsf j(\mathsf z,\eta) d\sigma _{\Gamma^{\textsc{M}}_{k,\mbf D}}  d\eta   }{ \Big(\int_0^{2\ve}\chi(t)\, e^{-\frac 1h\vert \mu_{z_k}\vert  \, t^2}  dt\Big)^2  }.
\end{align*}
A  straightforward computation  implies that there exists $c>0$ such that in
 the limit $h\to 0$,
\begin{align}\label{eq.chiUU2}
\mathsf N_{z_k}:=\int_0^{2\ve}\chi(t)\, e^{-\frac 1h\vert \mu_{z_k}\vert  \, t^2}  dt=\frac{\sqrt{\pi h} }{2\sqrt{\vert \mu_{z_k}\vert}} \big(1+O(e^{-\frac ch})\big).
\end{align}
Using~\eqref{eq.min>>zK} and~\eqref{eq.Ve}, one has for $h$ small enough:
\begin{align*}
\int_{\eta=0}^{2\ve} \chi^2(\eta) \int_{\mathsf z \in  \Gamma_{k,\mbf D} ^{\textsc{M}}  }  \!\!  \!\!  e^{-\frac 2h (f(\mathsf z,\eta) +\vert \mu_{z_k}\vert  \eta^2)}   \mathsf j(\mathsf z,\eta)d\sigma _{\Gamma^{\textsc{M}}_{k,\mbf D}}  d\eta&= \int_{\eta=0}^{2\ve} \chi^2(\eta) \int_{|\mathsf z |\le r }      \!\!  \!\!   e^{-\frac 2h (f(\mathsf z,\eta) +\vert \mu_{z_k}\vert  \eta^2)}  \mathsf j(\mathsf z,\eta) d\sigma _{\Gamma^{\textsc{M}}_{k,\mbf D}}  d\eta  \\
&\quad+O(e^{-\frac 2h (f(z_k)+c)}),
\end{align*}
for some $c>0$ independent of $h$. 
Using in addition~\eqref{eq.DLzk},   the same computations as in the proof of the   step 1.b of \cite[Proposition 24]{DoNe2}  imply that in the limit $h\to 0$:
$$\int_{\eta=0}^{2\ve} \chi^2(\eta) \int_{|\mathsf z |\le r }      \!\!  \!\!   e^{-\frac 2h (f(\mathsf z,\eta) +\vert \mu_{z_k}\vert  \eta^2)} \mathsf j(\mathsf z,\eta)  d\sigma _{\Gamma^{\textsc{M}}_{k,\mbf D}}  d\eta  =\frac{ (\pi h)^{\frac{d}{2} }  e^{-\frac 2h f(z_k)}}{2\sqrt{ \mu_1 \cdots \mu_{d-1} \vert \mu_{z_k}\vert}}
 \big(1+ O ( \sqrt h) \big),
$$
where the $O ( \sqrt h)$ is optimal in general. In conclusion, using also \eqref{lap.x0K}, one has as $h\to 0$:
\begin{equation}\label{eq.Ax0ps}
 \langle \varphi^{\textsc{M},(0)}_k,\Delta_{f,h}^{\textsc{M}, (0)}(  \mathsf \Omega_k^{\textsc{M}} ) \varphi^{\textsc{M},(0)}_k \rangle_{L^2( \mathsf \Omega_k^{\textsc{M}})}= \mathsf A_{x_0,z_k}h \ e^{-\frac 2h (f(z_k)-f(x_0))} (1+O(\sqrt h)).
 \end{equation}

Let us now consider the term $\Vert \Delta_{f,h}^{\textsc{M}, (0)}(  \mathsf \Omega_k^{\textsc{M}} )\varphi^{\textsc{M},(0)}_k \Vert_{L^2( \mathsf \Omega_k^{\textsc{M}})}^2$. Using~\eqref{eq:unitary_p0} and the definition of $\psi^{\textsc{M},(0)}_k$, it holds on $\mathsf \Omega_k^{\textsc{M}}$:
\begin{align} 
\label{eq.=LW}
\Delta_{f,h} ^{(0)}\varphi^{\textsc{M},(0)}_k&= \frac{2h e^{-\frac 1hf} }{\Vert \psi^{\textsc{M},(0)}_k\, e^{-\frac 1hf} \Vert_{L^2( \mathsf \Omega_k^{\textsc{M}})}}\Big (\,  \frac h2 \Delta_{\mbf H}^{(0)}+\nabla f\cdot \nabla \,  \Big)\psi^{\textsc{M},(0)}_k \ \text{ is supported in $\overline{\mathsf C_{2\ve}}$.} 
\end{align} 
By \eqref{eq.defPsi}, \eqref{eq.chiUU2}, and~\eqref{eq.phi=def},  for $h$ small enough,  $ \Vert \Delta_{\mbf H}^{(0)} \psi^{\textsc{M},(0)}_k\Vert_{L^\infty(\mathsf C_{2\ve})}$ and $\Vert \nabla\psi^{\textsc{M},(0)}_k \Vert_{L^\infty(\mathsf C_{2\ve})} $  are $O(h^\nu)$ for some $\nu\in \mathbb R$. 
Then, using~\eqref{eq.min>>zK} and \eqref{lap.x0K},  one has for $h$ small enough (see~\eqref{eq.Ve}):
\begin{align}
\nonumber 
\Vert \Delta_{f,h}^{\textsc{M}, (0)}(  \mathsf \Omega_k^{\textsc{M}} )\varphi^{\textsc{M},(0)}_k \Vert_{L^2( \mathsf \Omega_k^{\textsc{M}})}^2& =\Vert \Delta_{f,h}^{\textsc{M}, (0)}( \mathsf \Omega_k^{\textsc{M}} )\varphi^{\textsc{M},(0)}_k \Vert_{L^2( \mathsf C_{2\ve})}^2\\
\label{eq.NeG}
&= \Vert \Delta_{f,h}^{\textsc{M}, (0)}( \mathsf \Omega_k^{\textsc{M}} )\varphi^{\textsc{M},(0)}_k\Vert_{L^2(\mathsf V_{2\ve}^r(z_k))}^2    +O(e^{-\frac 2h (f(z_k)-f(x_0)+c)}),
\end{align}
for some $c>0$ independent of $h$. Let us recall that $\mbf g$ denotes
the  metric tensor in the $(x',x_d)$ coordinates (see \eqref{g_y}).
In the following, with a slight abuse of  notation, we also denote by
$\mbf g$ the matrix $(\mbf G, 0;0, 1)$.    
 In the $(x',x_d)$-coordinates, $\Delta_{\mbf H}^{(0)}$ writes 
 $$\Delta_{\mbf H}^{(0)} \phi^{\textsc{M},(0)}_k=-\frac{1}{\sqrt{\vert \mbf g\vert}}\sum_{i,j=1}^d \partial_{x_i}\big( \sqrt{\vert  \mbf g\vert} \,  \mbf g^{i,j} \partial_{x_j}\phi^{\textsc{M},(0)}_k  \big),$$
  where~$\vert  \mbf g\vert$ denotes the determinant of  $ \mbf g$  and $\mbf g^{i,j} $ the $(i,j)$ entry of $ \mbf g^{-1}$. 
%
%
Then, from  \eqref{eq.=LW} and \eqref{eq.phi=def}, one has on $\mathsf V_{2\ve}^r(z_k)$,
\begin{align*}
\Delta_{f,h} \varphi^{\textsc{M},(0)}_k&=\frac{2h\, e^{-\frac f h}}{\mathsf Z_{z_k} }\Big[ -\frac{h}{2\sqrt{\vert \mbf g\vert}}\sum_{i,j=1}^d \partial_{x_i}\big( \sqrt{\vert  \mbf g\vert} \,  \mbf g^{i,j} \partial_{x_j}\phi^{\textsc{M},(0)}_k  \big)   +  \, \sum_{i,j=1}^d  \mbf g^{i,j} \partial_{x_i}f\partial_{x_j}\phi^{\textsc{M},(0)}_k \,\Big]\\
&=  \frac{2h \, e^{-\frac 1h(f + \vert \mu_{z_k}\vert  x_d^2)}}{\mathsf Z_{z_k}\mathsf N_{z_k}}
\Big[ -\frac{h}{2\sqrt{\vert \mbf g\vert}}\sum_{i=1}^d \partial_{x_i}\big( \sqrt{\vert  \mbf g\vert} \,  \mbf g^{i,d}    \big)\chi(x_d)  +   \mbf g^{d,d}\chi (x_d)\vert \mu_{z_k}\vert  x_d  
 \\
 &\qquad \qquad \qquad \qquad \qquad +  \chi(x_d) \sum_{i=1}^d  \mbf g^{i,d} \partial_{x_i}f - \frac h2\,\chi'(x_d) \,  \mbf g^{d,d} \Big]  \\
 &=\frac{2h\,   e^{-\frac 1h(f + \vert \mu_{z_k}\vert  x_d^2)}}{\mathsf Z_{z_k}\mathsf N_{z_k}}
\big[ O(h) +  O(\vert x\vert ^2)  \big],
\end{align*}
\noindent
where   $\mathsf N_{z_k}$ is defined   in   \eqref{eq.chiUU2}, and where in the last inequality we have used that $\mbf g^{i,d}=0$ for $i=1,\ldots,d-1$ (see \eqref{g_y}), and $\partial_{x_d}f(x',x_d)= -\vert \mu_{z_k}\vert  x_d+O(\vert x\vert^2)$ (see \eqref{eq.DLzk}). 
Notice the cancelation of the $O(x)$ terms in the previous computations due to the precise form of the quasi-mode $\varphi^{\textsc{M},(0)}_k$. 
Thus, by \eqref{eq.min-zK}, \eqref{lap.x0K}, \eqref{eq.DLzk}, \eqref{eq.chiUU2}, and~\eqref{eq.Ax0ps}, one deduces using Laplace's method that as $h\to 0$,
\begin{align*}
\Vert \Delta_{f,h}^{\textsc{M}, (0)}( \mathsf \Omega_k^{\textsc{M}} )\varphi^{\textsc{M},(0)}_k\Vert_{L^2(\mathsf V_{\ve}^r(z_k))}^2&= \frac{ h^2  h^{\frac{d}{2} } }{  h^{\frac{d}{2} } h}\, O(h^2)       \,    e^{-\frac 2h (f(z_k)-f(x_0))} \\
&=O(h^2) \vert \langle \varphi^{\textsc{M},(0)}_k,\Delta_{f,h}^{\textsc{M}, (0)}(  \mathsf \Omega_k^{\textsc{M}} ) \varphi^{\textsc{M},(0)}_k \rangle_{L^2( \mathsf \Omega_k^{\textsc{M}})}\vert.
\end{align*}
Consequently, one deduces using \eqref{eq.NeG}, that 
\begin{equation}\label{eq.Ax0ps2}
\Vert \Delta_{f,h}^{\textsc{M}, (0)}( \mathsf \Omega_k^{\textsc{M}} )\varphi^{\textsc{M},(0)}_k\Vert_{L^2(\mathsf \Omega_k^{\textsc{M}})}=O(h) \sqrt{\vert \langle \varphi^{\textsc{M},(0)}_k,\Delta_{f,h}^{\textsc{M}, (0)}(  \mathsf \Omega_k^{\textsc{M}} ) \varphi^{\textsc{M},(0)}_k \rangle_{L^2( \mathsf \Omega_k^{\textsc{M}})}\vert}.
\end{equation}

\noindent
\textbf{Step 3:  End of the proof of
  Proposition~\ref{pr.VP-DeltaTN}.} 
Let us introduce the constant $\mathsf c>0$ from item~$(iii)$ in
Proposition~\ref{pr.DeltaTN}.  Because $\varphi^{\textsc{M},(0)}_k \in
\mathcal D(  \Delta_{f,h}^{\textsc{M}, (0)}(  \mathsf
\Omega_k^{\textsc{M}} ) )$, see indeed~\eqref{eq.psi-DM},  and since
$\lambda(\mathsf \Omega_k^{\textsc{M}} )$ is exponentially small when
$h\to 0$ (actually $o(h)$ as $h\to 0$ would be enough),
using   the fact that (see the proof of \cite[Proposition~27]{DoNe2})
$$(1-\pi_{[0,\mathsf ch ]}\big (\Delta_{f,h}^{\textsc{M},(0)} (  \mathsf \Omega_k^{\textsc{M}} )\big ))\,  \varphi^{\textsc{M},(0)}_k =-\frac{1}{2\pi i} \int_{C(\mathsf ch/2)} z^{-1}(z-   \Delta_{f,h}^{\textsc{M},(0)}  )^{-1}\Delta_{f,h}^{\textsc{M},(0)}  \varphi^{\textsc{M},(0)}_k  dz,$$
where $C(\mathsf ch/2)\subset \mathbb C$ is the circle of radius $\mathsf ch/2$ centered at $0$, 
  it holds for $h$ small enough
$$\Vert(1-\pi_{[0,\mathsf ch ]}\big (\Delta_{f,h}^{\textsc{M},(0)} (
\mathsf \Omega_k^{\textsc{M}} )\big ))\,  \varphi^{\textsc{M},(0)}_k
\Vert_{L^2(\mathsf \Omega_k^{\textsc{M}})} \le Ch^{-1}\Vert
\Delta_{f,h}^{\textsc{M}, (0)}( \mathsf \Omega_k^{\textsc{M}}
)\varphi^{\textsc{M},(0)}_k\Vert_{L^2(\mathsf
  \Omega_k^{\textsc{M}})}.$$
Therefore, using~\eqref{eq.Ax0ps2}, it holds:
\begin{equation}\label{eq.le-k}
\Vert(1-\pi_{[0,\mathsf ch ]}\big (\Delta_{f,h}^{\textsc{M},(0)} (
\mathsf \Omega_k^{\textsc{M}} )\big ))\,  \varphi^{\textsc{M},(0)}_k
\Vert_{L^2(\mathsf \Omega_k^{\textsc{M}})} \le C \sqrt{\vert \langle \varphi^{\textsc{M},(0)}_k,\Delta_{f,h}^{\textsc{M}, (0)}(  \mathsf \Omega_k^{\textsc{M}} ) \varphi^{\textsc{M},(0)}_k \rangle_{L^2( \mathsf \Omega_k^{\textsc{M}})}\vert}.
\end{equation}
In particular, using~\eqref{eq.Ax0ps} and the fact that $\Vert
\varphi^{\textsc{M},(0)}_k \Vert_{L^2(\mathsf \Omega_k^{\textsc{M}})}
=1$, there exists $c>0$ (because $f(z_k)>f(x_0)$ see
Lemma~\ref{le.start}) such that, for $h>0$ small enough, 
 \begin{align}
 \label{eq.norme-piO}
\Vert\pi_{[0,\mathsf ch ]}\big (\Delta_{f,h}^{\textsc{M},(0)} (  \mathsf \Omega_k^{\textsc{M}} )\big ) \varphi^{\textsc{M},(0)}_k \Vert_{L^2(\mathsf \Omega_k^{\textsc{M}})} =1+O(e^{-\frac ch}),
 \end{align}
and the following function is therefore well defined :
 \begin{align}
 \label{eq.QMK0=}
 \mathsf u^{(0)}_k= \frac{  \pi_{[0,\mathsf ch ]}\big (\Delta_{f,h}^{\textsc{M},(0)} (  \mathsf \Omega_k^{\textsc{M}} )\big ) \varphi^{\textsc{M},(0)}_k}{ \Vert\pi_{[0,\mathsf ch ]}\big (\Delta_{f,h}^{\textsc{M},(0)} (  \mathsf \Omega_k^{\textsc{M}} )\big ) \varphi^{\textsc{M},(0)}_k \Vert_{L^2(\mathsf \Omega_k^{\textsc{M}})}  }.
 \end{align}
One has
 \begin{align}
 \label{eq.lambdak-O=}
 \lambda(\mathsf \Omega_k^{\textsc{M}})&= \langle    \mathsf u^{(0)}_k   , \Delta_{f,h}^{\textsc{M},(0)} (  \mathsf \Omega_k^{\textsc{M}} )    \mathsf u^{(0)}_k   \big \rangle_{ L^2( \mathsf \Omega_k^{\textsc{M}})}\\
 \nonumber
 &= \langle   \pi_{[0,\mathsf ch ]}\big (\Delta_{f,h}^{\textsc{M},(0)} (  \mathsf \Omega_k^{\textsc{M}} )\big )  \varphi^{\textsc{M},(0)}_k   , \Delta_{f,h}^{\textsc{M},(0)} (  \mathsf \Omega_k^{\textsc{M}} )    \varphi^{\textsc{M},(0)}_k   \big \rangle_{ L^2( \mathsf \Omega_k^{\textsc{M}})} (1+O(e^{-\frac ch})),
 \end{align}
since the orthogonal projector $\pi_{[0,\mathsf ch ]}\big (\Delta_{f,h}^{\textsc{M},(0)} (  \mathsf \Omega_k^{\textsc{M}} )\big ) $ and $\Delta_{f,h}^{\textsc{M},(0)} (  \mathsf \Omega_k^{\textsc{M}} )$ commute  on $\mathcal D(\Delta_{f,h}^{\textsc{M},(0)} (  \mathsf \Omega_k^{\textsc{M}} ))$ and $\varphi^{\textsc{M},(0)}_k \in \mathcal D(  \Delta_{f,h}^{\textsc{M}, (0)}(  \mathsf \Omega_k^{\textsc{M}} ) )$. 
In addition, one has using \eqref{eq.le-k} and \eqref{eq.Ax0ps2},
 \begin{align*}
&\langle   \pi_{[0,\mathsf ch ]}\big (\Delta_{f,h}^{\textsc{M},(0)} (  \mathsf \Omega_k^{\textsc{M}} )\big )  \varphi^{\textsc{M},(0)}_k   , \Delta_{f,h}^{\textsc{M},(0)} (  \mathsf \Omega_k^{\textsc{M}} )    \varphi^{\textsc{M},(0)}_k   \big \rangle_{ L^2( \mathsf \Omega_k^{\textsc{M}})} \\
&=  \langle    \varphi^{\textsc{M},(0)}_k   , \Delta_{f,h}^{\textsc{M},(0)} (  \mathsf \Omega_k^{\textsc{M}} )     \varphi^{\textsc{M},(0)}_k   \big \rangle_{ L^2( \mathsf \Omega_k^{\textsc{M}})} - \langle    (1-\pi_{[0,\mathsf ch ]}\big (\Delta_{f,h}^{\textsc{M},(0)} (  \mathsf \Omega_k^{\textsc{M}} )\big ) )   \varphi^{\textsc{M},(0)}_k   ,  \Delta_{f,h}^{\textsc{M},(0)} (  \mathsf \Omega_k^{\textsc{M}} )     \varphi^{\textsc{M},(0)}_k  \big \rangle_{ L^2( \mathsf \Omega_k^{\textsc{M}})}\\
&=\langle    \varphi^{\textsc{M},(0)}_k   , \Delta_{f,h}^{\textsc{M},(0)} (  \mathsf \Omega_k^{\textsc{M}} )     \varphi^{\textsc{M},(0)}_k   \big \rangle_{ L^2( \mathsf \Omega_k^{\textsc{M}})}  (1+O(h)),\\
&=  \mathsf A_{x_0,z_k} h \ e^{-\frac 2h (f(z_k)-f(x_0))} (1+O(\sqrt h)).
\end{align*}
where we used~\eqref{eq.Ax0ps}. This proves 
~\eqref{eq.DL-blambda1}.
It remains to prove Equation~\eqref{eq.DL-boun1}. 

Let  $\mathsf u^{(1)}_k$ be a $L^2(\mathsf
\Omega_k^{\textsc{M}})$-normalized  eigenform of $
\Delta_{f,h}^{\textsc{M} ,(1)} ( \mathsf \Omega_k^{\textsc{M}} ) $
associated with the eigenvalue $\lambda( \mathsf
\Omega_k^{\textsc{M}})$. In view of item $(ii)$ and $(iii)$ in
Proposition~\ref{pr.DeltaTN} it holds ($\mathsf u^{(0)}_k$ is indeed a
$L^2(\mathsf
\Omega_k^{\textsc{M}})$-normalized principal eigenform of $\Delta_{f,h}^{\textsc{M},(0)}
(  \mathsf \Omega_k^{\textsc{M}} )$, see~\eqref{eq.QMK0=}), 
$\mathsf u^{(1)}_k=\pm \mathsf d_{f,h}   \mathsf u^{(0)}_k   /  \Vert \mathsf d_{f,h}    \mathsf u^{(0)}_k \Vert_{L^2(  \mathsf \Omega_k^{\textsc{M}} )}$.
Let us choose 
\begin{equation}\label{eq.u1k}
\mathsf u^{(1)}_k=\frac{ \mathsf d_{f,h}    \mathsf u^{(0)}_k  } { \mathsf N_k^{(1)}} \text{ with } \mathsf N_k^{(1)}=\big \Vert \mathsf d_{f,h}    \mathsf u^{(0)}_k\big \Vert_{L^2(   \mathsf \Omega_k^{\textsc{M}})} .
\end{equation}
From~\eqref{eq.lambdak-O=}, one has, $\lambda(\mathsf
\Omega_k^{\textsc{M}})= \langle    \mathsf d_{f,h} \mathsf u^{(0)}_k
, \mathsf d_{f,h}     \mathsf u^{(0)}_k   \big \rangle_{ L^2( \mathsf
  \Omega_k^{\textsc{M}})}=(\mathsf N_k^{(1)})^2$, and thus, using \eqref{eq.QMK0=} and the fact that $\mathsf d_{f,h} \pi_{[0,\mathsf ch ]}\big (\Delta_{f,h}^{\textsc{M},(0)} (  \mathsf \Omega_k^{\textsc{M}} )\big )= \pi_{[0,\mathsf ch ]}\big (\Delta_{f,h}^{\textsc{M},(1)} (  \mathsf \Omega_k^{\textsc{M}} )\big ) \mathsf d_{f,h}$ (see item $(ii)$ in Proposition~\ref{pr.DeltaTN}),
 \begin{align*}
 \lambda(\mathsf \Omega_k^{\textsc{M}})&=\frac{\mathsf N_k^{(1)} \,\langle    \mathsf d_{f,h} \varphi^{\textsc{M},(0)}_k   ,   \mathsf u^{(1)}_k  \big \rangle_{ L^2( \mathsf \Omega_k^{\textsc{M}})}}{   \Vert\pi_{[0,\mathsf ch ]}\big (\Delta_{f,h}^{\textsc{M},(0)} (  \mathsf \Omega_k^{\textsc{M}} )\big ) \varphi^{\textsc{M},(0)}_k \Vert_{L^2(\mathsf \Omega_k^{\textsc{M}})} } \\
 &=\frac{\mathsf N_k^{(1)} \,\langle    \mathsf d  \psi^{\textsc{M},(0)}_k   ,  h e^{-\frac 1hf}   \mathsf u^{(1)}_k  \big \rangle_{ L^2( \mathsf \Omega_k^{\textsc{M}})}}{ \Vert\pi_{[0,\mathsf ch ]}\big (\Delta_{f,h}^{\textsc{M},(0)} (  \mathsf \Omega_k^{\textsc{M}} )\big ) \varphi^{\textsc{M},(0)}_k \Vert_{L^2(\mathsf \Omega_k^{\textsc{M}})}  \mathsf Z_{z_k} },
 \end{align*}
 where we also used~\eqref{eq.QM-M0} at the last line. Therefore, because $\mathsf N_k^{(1)}=\sqrt { \lambda(\mathsf \Omega_k^{\textsc{M}})}$, it follows from~\eqref{eq.DL-blambda1},~\eqref{lap.x0K} and~\eqref{eq.norme-piO}, that, as  $h\to 0$, it holds:
\begin{equation}\label{eq.P11}
\langle    \mathsf d  \psi^{\textsc{M},(0)}_k   ,  h e^{-\frac 1hf}   \mathsf u^{(1)}_k  \big \rangle_{ L^2( \mathsf \Omega_k^{\textsc{M}})}=\sqrt{\mathsf A_{x_0,z_k}\, h\, \kappa_{x_0}\, h^{\frac d2}}\, e^{-\frac 1h f(z_k)} (1+O(\sqrt h)).
\end{equation}
Besides,  using the fact that $\mathsf
u^{(1)}_k \in \mathcal D( \Delta_{f,h}^{\textsc{M},(1)} (  \mathsf
\Omega_k^{\textsc{M}} ))$ (see item $(ii)$ in
Proposition~\ref{pr.DeltaTN}) and the Green formula \eqref{eq.i-n-u}, one deduces that
\begin{align}
\nonumber
&\langle    \mathsf d  \psi^{\textsc{M},(0)}_k   ,  h e^{-\frac 1hf}   \mathsf u^{(1)}_k  \big \rangle_{ L^2( \mathsf \Omega_k^{\textsc{M}})}= -\langle    \mathsf d (1- \psi^{\textsc{M},(0)}_k)   ,  h e^{-\frac 1hf}   \mathsf u^{(1)}_k  \big \rangle_{ L^2( \mathsf \Omega_k^{\textsc{M}})}\\
\label{eq.P12}
&= \langle     ( 1- \psi^{\textsc{M},(0)}_k ) e^{-\frac 1hf} ,  \mathsf d_{f,h}^*    \mathsf u^{(1)}_k  \big \rangle_{ L^2( \mathsf \Omega_k^{\textsc{M}})}  -h\int_{\pa \mathsf \Omega_k^{\textsc{M}}} (1- \psi^{\textsc{M},(0)}_k)  \mathsf u^{(1)}_k \cdot \mathsf n_{  \mathsf \Omega_k^{\textsc{M}}  }  e^{-\frac 1hf},
\end{align}
(where here and in the following, we use the notation $ \mathsf
u^{(1)}_k \cdot \mathsf n_{  \mathsf \Omega_k^{\textsc{M}}
}=\mathbf{i}_{\mathsf n_{  \mathsf \Omega_k^{\textsc{M}}
}}  \mathsf
u^{(1)}_k$) and
$$\mathsf u^{(1)}_k
\cdot \mathsf n_{  \mathsf \Omega_k^{\textsc{M}}  }= 0 \text{ on } \Gamma_{k,\mbf N}^{\textsc{M}}.$$
Moreover, $\psi^{\textsc{M},(0)}_k=0$ on $\Gamma_{k,\mbf
  D}^{\textsc{M}}$. 
Thus,
\begin{equation}\label{eq.P13}
\int_{\pa \mathsf \Omega_k^{\textsc{M}}} (1- \psi^{\textsc{M},(0)}_k)  \mathsf u^{(1)}_k \cdot \mathsf n_{  \mathsf \Omega_k^{\textsc{M}}  } \, e^{-\frac 1hf}=\int_{\Gamma_{k,\mbf D}^{\textsc{M}}}  \mathsf u^{(1)}_k \cdot \mathsf n_{  \mathsf \Omega_k^{\textsc{M}}  } \, e^{-\frac 1hf}.
\end{equation}
Let us now deal with the term $\langle     ( 1- \psi^{\textsc{M},(0)}_k ) e^{-\frac 1hf} ,  \mathsf d_{f,h}^*    \mathsf u^{(1)}_k  \big \rangle_{ L^2( \mathsf \Omega_k^{\textsc{M}})}$. It holds,
\begin{align*}
\big \vert  \langle     ( 1- \psi^{\textsc{M},(0)}_k ) e^{-\frac 1hf} ,  \mathsf d_{f,h}^*    \mathsf u^{(1)}_k  \big \rangle_{ L^2( \mathsf \Omega_k^{\textsc{M}})}\big \vert &\le\big  \Vert ( 1- \psi^{\textsc{M},(0)}_k ) e^{-\frac 1hf} \big \Vert_{ L^2( \mathsf \Omega_k^{\textsc{M}})} \sqrt{ \lambda(\mathsf \Omega_k^{\textsc{M}}) }\\
&\le Ce^{-\frac 1h \min_{\text{supp}(1- \psi^{\textsc{M},(0)}_k )} f} \sqrt{ \lambda(\mathsf \Omega_k^{\textsc{M}}) }\\
&\le Ce^{-\frac 1h ( f(x_0)+\delta)} \sqrt{ \lambda(\mathsf \Omega_k^{\textsc{M}}) },\\
&\le Ce^{-\frac 1h (f(z_k)+\delta)},
\end{align*}
  where we used   the fact that, from~\eqref{eq.psi=1x_0} and since $x_0$ is the global minimum of $f$ in $\overline \Omega$ (see Lemma~\ref{le.start}), $\min_{\text{supp}(1- \psi^{\textsc{M},(0)}_k )} f\ge  f(x_0)+\delta$, for some $\delta>0$. Notice that we also used~\eqref{eq.DL-blambda1}  at the last line of the previous computation. 
Equation~\eqref{eq.DL-boun1} then follows from the previous inequality together with~\eqref{eq.P11}, \eqref{eq.P12}, and \eqref{eq.P13}.
 This concludes the proof of  Proposition~\ref{pr.VP-DeltaTN}. 
\end{proof}

\subsubsection{Agmon estimates on $\mathsf u^{(1)}_k$}\label{sec.Agmon1}

The aim of this section is to prove  that $\mathsf u^{(1)}_k$ (the
principal eigenform of $ \Delta_{f,h}^{\textsc{M} ,(1)} ( \mathsf \Omega_k^{\textsc{M}} ) $) decays
exponentially fast away from $z_k$ (see Proposition~\ref{pr.Agmon} below): these are so-called  Agmon
estimates.

Recall the  Definition~\ref{de.da} of the Agmon distance. These are basic properties of the Agmon distance    which follows from~\cite[Appendice 2]{helffer-sjostrand-85}, see also~\cite[Lemma 3.2]{LeNi}:
\begin{proposition}
\label{pr.p-agmon}
Let us assume that $f:\overline \Omega\to \mathbb R$ is a $\mathcal C^\infty$ function. Then, the Agmon pseudo-distance $(x,y)\in \overline \Omega \times\overline \Omega \mapsto \mathsf d_a(x,y)$ (see Definition~\ref{de.da}) is symmetric and satisfies the triangular inequality. In addition, it is a distance if $f$ has a finite number of critical points in $\overline \Omega$. 
Moreover, for any fixed $y \in \overline \Omega$,  $x \in \overline \Omega\mapsto
\mathsf d_a\left(x,y\right)$ is Lipschitz (therefore, its gradient is well
defined almost everywhere). For all subset $U$ of $\overline \Omega$ and for almost every $x \in \Omega$,
\begin{equation} \label{eq.eqq}
\vert \nabla_x \mathsf d_a\left(x,U\right) \vert  \leq \vert \nabla f (x) \vert.  
\end{equation} 
Moreover,  for all $x,y \in \overline \Omega$, we have
\begin{equation} \label{eq.ineq}
\vert f(x)-f(y)\vert \leq \mathsf d_a\left(x,y\right).  
\end{equation} 
\end{proposition}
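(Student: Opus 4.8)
The statement to be proved is Proposition~\ref{pr.p-agmon}, which collects the basic metric properties of the Agmon pseudo-distance $\mathsf d_a$ associated with $f \in \mathcal C^\infty(\overline\Omega)$. My plan is to mimic the classical arguments of~\cite[Appendice 2]{helffer-sjostrand-85}, adapting them to the bounded domain~$\overline\Omega$. The starting point is the obvious observations: $\mathsf d_a(x,y)\ge 0$, $\mathsf d_a(x,x)=0$, symmetry is immediate by reversing the parametrization $t\mapsto 1-t$ of a curve, and the triangular inequality follows by concatenating a $\mathcal C^1$ curve from $x$ to $y$ with one from $y$ to $z$ and smoothing the join (or, more cleanly, by approximating the concatenation within the class $\mathcal C^1(x,z)$ since the length functional $\int_0^1 |\nabla f|(\gamma)\,|\gamma'|\,dt$ is invariant under $\mathcal C^1$-reparametrization and insensitive to corners in the limit).

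Next, to show that $\mathsf d_a$ is a genuine distance when $f$ has finitely many critical points in $\overline\Omega$, I would argue as follows. Suppose $x\neq y$ and $\mathsf d_a(x,y)=0$; take a minimizing sequence of curves $\gamma_n$ with $\int_0^1|\nabla f|(\gamma_n)|\gamma_n'|\,dt\to 0$. Away from the finitely many critical points, $|\nabla f|$ is bounded below on compact sets, so the curves must spend almost all their Euclidean length near the critical set. A compactness/covering argument — pick small disjoint balls $\mathsf B(c_i,r)$ around the critical points $c_i$, on whose complement $|\nabla f|\ge \delta_r>0$ — forces the Euclidean length of $\gamma_n$ outside $\bigcup_i \mathsf B(c_i,r)$ to tend to $0$, so $\gamma_n$ joins $x$ to $y$ while staying within distance $r$ of the critical set except on a piece of length $o(1)$; letting $r\to 0$ after $n\to\infty$ yields a contradiction with $x\neq y$ (one obtains a connected ``limit set'' joining $x$ and $y$ but contained in the finite set of critical points). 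This is the step I expect to require the most care, because curves are only $\mathcal C^1$ and the minimizing sequence need not converge; the cleanest route is to bound, for each fixed $r$, the quantity $\mathsf d_a(x,y)$ from below by $\delta_r$ times (Euclidean distance from $x$ to $\bigcup_i\mathsf B(c_i,r)$) whenever $x\notin \bigcup_i \overline{\mathsf B(c_i,r)}$, and handle the case $x$ near a critical point by a further localization.

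For the Lipschitz regularity of $x\mapsto \mathsf d_a(x,y)$, I would use the triangular inequality together with the trivial upper bound $\mathsf d_a(x,x')\le \big(\sup_{\overline\Omega}|\nabla f|\big)\,|x-x'|$ obtained by taking $\gamma$ the straight segment (which lies in $\overline\Omega$ after intersecting with a convex neighborhood, or by using geodesic segments in $\overline\Omega$); this gives $|\mathsf d_a(x,y)-\mathsf d_a(x',y)|\le C|x-x'|$, hence Lipschitz continuity and, by Rademacher, differentiability almost everywhere. The eikonal-type inequality~\eqref{eq.eqq} is then standard: for a.e.\ $x$ and any unit vector $v$, comparing $\mathsf d_a(x+tv,U)$ with $\mathsf d_a(x,U)$ via a short segment gives $|\mathsf d_a(x+tv,U)-\mathsf d_a(x,U)|\le \int_0^{|t|}|\nabla f|(x+sv)\,ds$, and dividing by $t$ and letting $t\to 0$ yields $|\partial_v \mathsf d_a(x,U)|\le |\nabla f|(x)$, hence $|\nabla_x \mathsf d_a(x,U)|\le |\nabla f|(x)$. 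Finally, inequality~\eqref{eq.ineq} follows from the elementary pointwise estimate along any curve $\gamma\in\mathcal C^1(x,y)$: $|f(x)-f(y)| = \big|\int_0^1 \nabla f(\gamma(t))\cdot\gamma'(t)\,dt\big| \le \int_0^1 |\nabla f|(\gamma(t))\,|\gamma'(t)|\,dt$, and taking the infimum over $\gamma$ gives $|f(x)-f(y)|\le \mathsf d_a(x,y)$. These last three points are routine; the only genuine obstacle is the positivity argument for the distance property, which is where the finiteness of the critical set is essential.
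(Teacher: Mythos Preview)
The paper does not actually prove Proposition~\ref{pr.p-agmon}: it simply states that these are ``basic properties of the Agmon distance which follow from~\cite[Appendice 2]{helffer-sjostrand-85}, see also~\cite[Lemma 3.2]{LeNi}'' and moves on. Your proposal reconstructs precisely that classical argument --- symmetry and triangle inequality from reparametrization and concatenation, the eikonal bound~\eqref{eq.eqq} from short segments and the infimum definition, the estimate~\eqref{eq.ineq} from $|f(x)-f(y)|\le\int_0^1|\nabla f(\gamma)||\gamma'|\,dt$, and positivity from the finiteness of the critical set --- so your approach coincides with what the cited references do, and is correct.

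One minor remark on the positivity step (which you rightly flag as the delicate one): your covering argument can be streamlined. If $x\neq y$ and at least one of them, say $x$, is not critical, pick $r>0$ small with $y\notin \mathsf B(x,r)$ and $|\nabla f|\ge\delta>0$ on $\overline{\mathsf B(x,r)}\cap\overline\Omega$; every curve from $x$ to $y$ must exit $\mathsf B(x,r)$, contributing at least $\delta r$ to the length. If both $x$ and $y$ are critical, take $r$ so small that the balls $\mathsf B(c_i,r)$ around the finitely many critical points are pairwise disjoint; any curve from $x$ to $y$ must cross the gap between two of these balls, where $|\nabla f|\ge\delta_r>0$, and this gap has Euclidean width at least $\min_{i\neq j}|c_i-c_j|-2r>0$. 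This avoids the limiting procedure $r\to 0$ after $n\to\infty$ that you sketch.
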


The main result of this section if the following:
\begin{proposition}
\label{pr.Agmon}
Let us assume that assumption \autoref{A} is satisfied.  Let
$k\in\{1,\ldots,n\}$ and $ \mathsf \Omega_k^{\textsc{M}} $ be the
subdomain of $\Omega$ introduced in Proposition~\ref{pr.omegakpoint}. Let $\Delta_{f,h}^{\textsc{M}, (1)}(  \mathsf \Omega_k^{\textsc{M}} )$ be the unbounded
 nonnegative self-adjoint  operator   on 
$ \Lambda^1L^{2} (  \mathsf \Omega_k^{\textsc{M}} )$ defined by \eqref{eq.DeltaTN}--\eqref{eq.domDeltaTN} with $(\mathsf \Omega^{\textsc{M}}, \Gamma^{\textsc{M}}_{\mbf D}, \Gamma^{\textsc{M}}_{\mbf N}) = (\mathsf \Omega_k^{\textsc{M}}, \Gamma_{k,\mbf D}^{\textsc{M}}, \Gamma_{k,\mbf N}^{\textsc{M}})$.  
Let  $\mathsf u^{(1)}_k$ be a $L^2(\mathsf
\Omega_k^{\textsc{M}})$-normalized  eigenform of $
\Delta_{f,h}^{\textsc{M} ,(1)} ( \mathsf \Omega_k^{\textsc{M}} ) $
associated with the eigenvalue $\lambda( \mathsf
\Omega_k^{\textsc{M}})$, as introduced in Proposition~\ref{pr.DeltaTN}. 
Then, for any $\delta>0$, there exists $h_\delta>0$  such that it holds for $h\in (0,h_\delta)$:
$$
\big\|e^{\frac{\Psi_k}{ h}} \mathsf u^{(1)}_k\big\|_{  L^{2} ( \mathsf \Omega_k^{\textsc{M}}  )}
+\big\|\mathsf d \big (e^{\frac{ \Psi_k}{ h}} \mathsf u^{(1)}_k\big) \big\|_{  L^{2} ( \mathsf \Omega_k^{\textsc{M}} )}+\big\|\mathsf d^*\big (e^{\frac{ \Psi_k}{ h}} \mathsf u^{(1)}_k\big) \big\|_{  L^{2} ( \mathsf \Omega_k^{\textsc{M}} )} 
\le  e^{\frac \delta h}, 
$$
where  $\Psi_k(x):=\mathsf d_a(x,z_k)$.
\end{proposition}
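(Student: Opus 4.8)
The strategy is the standard Agmon--Lithner argument adapted to the mixed Dirichlet--Neumann setting, as in~\cite[Section 3]{helffer-sjostrand-85} and~\cite[Section 3]{helffer-nier-06}. The starting point is the Green formula from Lemma~\ref{le.GreenWeak} applied on $\mathsf \Omega^{\textsc{M}}_k$ with a well-chosen Lipschitz weight $\varphi$ and the eigenform $w = \mathsf u_k^{(1)}$ associated with the exponentially small eigenvalue $\lambda(\mathsf \Omega_k^{\textsc{M}})$. Since $\partial_{\mathsf n_{\mathsf \Omega_k^{\textsc{M}}}}f = 0$ on $\Gamma_{k,\mbf D}^{\textsc{M}}$ and $\partial_{\mathsf n_{\mathsf \Omega_k^{\textsc{M}}}}f > 0$ on $\Gamma_{k,\mbf N}^{\textsc{M}}$ (items~1(b) and~2 of Proposition~\ref{pr.omegakpoint}), the boundary term $h(\int_{\Gamma_{k,\mbf N}^{\textsc{M}}} - \int_{\Gamma_{k,\mbf D}^{\textsc{M}}})\langle w,w\rangle e^{\frac2h\varphi}\partial_{\mathsf n_{\mathsf \Omega_k^{\textsc{M}}}}f\,d\sigma$ is nonnegative and can simply be dropped from the lower bound. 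This is exactly the reason the domains $\mathsf \Omega_k^{\textsc{M}}$ were built with this sign structure, and it is what makes the argument work despite $z_k$ lying on the boundary.

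\textbf{Key steps.} First, I would fix $\delta > 0$ and choose the weight $\varphi = \varphi_h$ to be a Lipschitz function equal to $(1-\eta)\Psi_k$ away from a small ball $\mathsf B(z_k,\rho)$ and suitably flattened near $z_k$, where $\Psi_k(x) = \mathsf d_a(x,z_k)$; here $\eta,\rho$ are small parameters tending to $0$ after $h$. By~\eqref{eq.eqq} in Proposition~\ref{pr.p-agmon}, $|\nabla \varphi|^2 \le (1-\eta)^2|\nabla f|^2$ almost everywhere outside $\mathsf B(z_k,\rho)$, so that $|\nabla f|^2 - |\nabla \varphi|^2 \ge c(\eta)|\nabla f|^2 \ge c'(\eta,\rho) > 0$ on $\mathsf \Omega_k^{\textsc{M}}\setminus \mathsf B(z_k,\rho)$ (using that $z_k$ and $x_0$ are the only critical points of $f$ in $\overline{\mathsf \Omega_k^{\textsc{M}}}$, and arranging $\varphi$ so that the bad region near $x_0$ is also controlled — note $f(x_0) < f(z_k)$, so $x_0$ is harmless after multiplying through by $e^{2\Psi_k/h}$). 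Second, I would insert this into Lemma~\ref{le.GreenWeak}: since $w \in \mathcal D(Q^{\textsc{M},(1)}_{f,h}(\mathsf \Omega_k^{\textsc{M}}))$ and $e^{\frac2h\varphi}w$ is an admissible test form (multiplication by a Lipschitz function preserves the domain of the quadratic form, as noted after Lemma~\ref{le.GreenWeak}), the left-hand side equals $\lambda(\mathsf \Omega_k^{\textsc{M}})\|e^{\frac\varphi h}w\|^2$, which is exponentially small times $\|e^{\frac\varphi h}w\|^2$. Absorbing the $h\mathcal L_{\nabla f} + h\mathcal L_{\nabla f}^*$ term (a zeroth-order operator, bounded uniformly, times $h$) into the positive potential term for $h$ small, and dropping the nonnegative boundary term, I get
\begin{equation*}
c''(\eta,\rho)\,\|e^{\frac\varphi h}w\|^2_{L^2(\mathsf \Omega_k^{\textsc{M}}\setminus \mathsf B(z_k,\rho))} \le C_{\eta,\rho}\,\|e^{\frac\varphi h}w\|^2_{L^2(\mathsf B(z_k,\rho))} + o(1)\|e^{\frac\varphi h}w\|^2.
\end{equation*}
On $\mathsf B(z_k,\rho)$, $\varphi$ is bounded by $C\rho$, so $\|e^{\frac\varphi h}w\|_{L^2(\mathsf B(z_k,\rho))} \le e^{C\rho/h}\|w\|_{L^2} = e^{C\rho/h}$. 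Rearranging yields $\|e^{\frac\varphi h}w\|_{L^2(\mathsf \Omega_k^{\textsc{M}})} \le e^{C\rho/h}$, and since $\varphi \ge (1-\eta)\Psi_k - C\rho$ globally, choosing $\eta$ and $\rho$ small enough depending on $\delta$ gives $\|e^{\frac{\Psi_k}{h}}w\|_{L^2} \le e^{\delta/(2h)}$ for $h$ small.

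\textbf{The derivative bounds and the main obstacle.} To control $\|\mathsf d(e^{\frac{\Psi_k}{h}}w)\|$ and $\|\mathsf d^*(e^{\frac{\Psi_k}{h}}w)\|$ I would use once more the Green identity: with $\varphi$ as above, $h^2\|\mathsf d(e^{\frac\varphi h}w)\|^2 + h^2\|\mathsf d^*(e^{\frac\varphi h}w)\|^2$ is bounded by the left-hand side of~\eqref{eq.compfor} (which is $\lambda(\mathsf \Omega_k^{\textsc{M}})\|e^{\frac\varphi h}w\|^2$, exponentially small) plus $|\langle(|\nabla f|^2 - |\nabla\varphi|^2 + h\mathcal L_{\nabla f} + h\mathcal L_{\nabla f}^*)e^{\frac\varphi h}w, e^{\frac\varphi h}w\rangle|$, which is $O(1)\|e^{\frac\varphi h}w\|^2 \le O(1)e^{\delta/(2h)\cdot 2}$ by the $L^2$ bound already obtained; dividing by $h^2$ and using $\Psi_k \le \varphi + C\rho + \eta\,\mathrm{diam}$, one gets the two derivative bounds with $e^{\delta/h}$ after adjusting constants. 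The main obstacle is the careful construction of the Lipschitz weight $\varphi$ near the two critical points: near $z_k$ one must flatten $\Psi_k$ on a ball so that the gradient defect $|\nabla f|^2 - |\nabla\varphi|^2$ does not degenerate too badly (this is where the classical trick $\varphi = \Psi_k(1-\eta)$ outside, $\min(\Psi_k,\text{const})$-type near $z_k$ is used), and near $x_0$ one must check that although $|\nabla f|$ vanishes there, the contribution is harmless because $\Psi_k(x_0) = \mathsf d_a(x_0,z_k) \ge f(z_k) - f(x_0) > 0$ by~\eqref{eq.ineq}, so the weight is already large and the region around $x_0$ is of the "already-decayed" type rather than a new source. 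Verifying that all these localizations are compatible with the mixed boundary conditions — i.e. that $e^{\frac\varphi h}w$ stays in the form domain and the boundary term in~\eqref{eq.compfor} retains its favorable sign — is the delicate bookkeeping, but it is exactly the content of Lemma~\ref{le.GreenWeak} combined with Proposition~\ref{pr.omegakpoint}, so no new analytic input is required beyond what is already established in the excerpt.
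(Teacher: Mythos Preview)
Your direct Agmon--Lithner argument has a genuine gap at the interior critical point $x_0$. The basic weighted inequality you derive from Lemma~\ref{le.GreenWeak}---bound the positive potential $|\nabla f|^2-|\nabla\varphi|^2$ from below away from the critical points, and treat neighborhoods of the critical points as source terms---only yields decay with respect to $\mathsf d_a(\cdot,\{z_k\}\cup\{x_0\})$, not $\mathsf d_a(\cdot,z_k)$. The paper says this explicitly just after~\eqref{eq.Agmon-est}: the standard argument gives the wrong Agmon weight, and ``proving Proposition~\ref{pr.Agmon} requires a finer analysis''. Your claim that $x_0$ is ``harmless'' because ``the weight is already large'' there is backwards: in the inequality
\[
c\,\big\|e^{\varphi/h}w\big\|^2_{L^2(\text{far from crit.\ pts.})}\ \le\ C\sum_{\text{crit.\ pts.}}\big\|e^{\varphi/h}w\big\|^2_{L^2(\text{near})},
\]
a large weight near $x_0$ makes that term a \emph{large} source, not a small one. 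To close the loop you would need $\|w\|_{L^2(\mathsf B(x_0,\rho'))}\le e^{-\Psi_k(x_0)/h+\delta/h}$ a priori, and nothing in your argument supplies this (IMS plus the local spectral gap for $1$-forms at the minimum $x_0$ gives only $\|w\|^2_{L^2(\mathsf B(x_0,\rho'))}=O(h)$, not exponential smallness).

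The paper circumvents this via a different route, following~\cite[Section~2.2]{HelSjII} and~\cite[Section~6.c]{DiSj}. Step~1 excises a small Agmon ball around $z_k$ to form $\mathsf D_{k,\eta}=\mathsf\Omega_k^{\textsc{M}}\setminus\overline{\mathsf B_a(z_k,\eta)}$ and defines an auxiliary mixed Laplacian $\Delta_{f,h}^{\mathcal M,(1)}(\mathsf D_{k,\eta})$ with full Dirichlet conditions on the new boundary $\partial\mathsf B_a(z_k,\eta)$. Since $z_k$ was the only generator of a small $1$-form eigenvalue, this operator now has a spectral gap $\ge ch$; the point $x_0$ causes no trouble because it is index~$0$. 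Step~2 then turns this gap into a weighted resolvent estimate $(\Delta_{f,h}^{\mathcal M,(1)}(\mathsf D_{k,\eta})-\lambda(\mathsf\Omega_k^{\textsc{M}}))^{-1}=\widehat O(e^{-\mathsf d_a(x,y)/h})$. Step~3 writes $(1-\chi_\eta)\mathsf u_k^{(1)}$ as this resolvent applied to the commutator $[\Delta_{f,h}^{(1)},1-\chi_\eta]\mathsf u_k^{(1)}$, which is supported in the annulus around $z_k$; the resolvent decay then propagates the single-point Agmon weight $\mathsf d_a(\cdot,z_k)$ everywhere, automatically handling $x_0$. This is the missing ingredient in your sketch.
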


\begin{proof} 
 Using Lemma~\ref{le.GreenWeak} on $\mathsf \Omega_k^{\textsc{M}}$ with $w=\mathsf u^{(1)}_k$  and since $\nabla f \cdot \mathsf n_{\mathsf \Omega_k^{\textsc{M}} }> 0$ a.e.  $\Gamma_{k, \mbf N}^{\text{M}}$ (see item 2 in Proposition~\ref{pr.omegakpoint}),
 it holds, 
 \begin{align}
 \nonumber
  \lambda( \mathsf \Omega_k^{\textsc{M}}) \Vert  e^{\frac{\varphi}{h}} \mathsf u^{(1)}_k \Vert_{ L^{2} ( \mathsf \Omega_k^{\textsc{M}} )}^2&\ge 
  h^{2}\big  \| \mathsf d (e^{\frac{\varphi}{h}} \mathsf u^{(1)}_k)\big \|^{2}_{ L^{2} ( \mathsf \Omega_k^{\textsc{M}} )}+
 h^{2} \big \| \mathsf d^{*} (e^{\frac{\varphi}{h}} \mathsf u^{(1)}_k) \big \|^{2}_{ L^{2} ( \mathsf \Omega_k^{\textsc{M}} )}\\
 \label{eq.Agmon-est}
&\quad + \big \langle
(|\nabla f|^{2}-|\nabla \varphi|^{2}+h\mathcal{L}_{\nabla
  f}+h\mathcal{L}_{\nabla
  f}^{*})e^{\frac{\varphi}{h}} \mathsf u^{(1)}_k, e^{\frac{\varphi}{h}}
 \mathsf u^{(1)}_k\big  \rangle_{ L^{2} ( \mathsf \Omega_k^{\textsc{M}} )}.
\end{align}
Using \eqref{eq.Agmon-est} and   \eqref{eq.eqq}, it is then standard to get the estimate of Proposition \ref{pr.Agmon} with $\mathsf d_a(\cdot ,\{z_k\}\cup \{x_0\})$ instead of $\mathsf d_a(\cdot,z_k)$ using the same arguments as those used in the boundaryless case \cite[Proposition 3.3.1]{helffer-88}.  Proving  Proposition \ref{pr.Agmon} requires a finer analysis. To this end,  we follow the analysis  of \cite[Section 2.2]{HelSjII} and \cite[Section 6.c]{DiSj}.
The proof is divided into two steps. 
 \medskip
  
  \noindent
  \textbf{Step 1:  A Witten  Laplacian on $1$-forms with a spectrum bounded from below by~$ch$.} 
Roughly speaking, recall that in view of the proof of item $(iii)$ in
Proposition \ref{pr.DeltaTN},     $z_k$ is the only point which
``creates'' a small eigenvalue for $ \Delta_{f,h}^{\textsc{M} ,(1)} (
\mathsf \Omega_k^{\textsc{M}} ) $, namely $\lambda(\mathsf
\Omega_k^{\textsc{M}} )$. Thus, if
  we ``remove''  $z_k$ from~$\overline{\mathsf \Omega_k^{\textsc{M}}}
  $, the spectrum of the Witten Laplacian $ \Delta_{f,h}^{\textsc{M} ,(1)}$ will be bounded from below by $ch$. To do so, we proceed as follows. 
 Let us take $\eta>0$ small enough such that  $ \overline{ \mathsf B_{a}(z_k,3\eta) }\cap \Omega \subset \mathsf \Omega_k^{\textsc{M}} $  and 
 $z_k$ is the only critical point of $f$ in $  \overline{ \mathsf B_{a}(z_k,3\eta) }$,
 where $\mathsf B_{a}(x,r)$ denotes the open ball of center $x$ and radius $r$ for the Agmon distance $d_{a}$ (which is indeed a distance since $f$ is a Morse function). Define  
$$\mathsf D_{k,\eta} := \mathsf \Omega_k^{\textsc{M}} \setminus   \overline{ \mathsf B_{a}(z_k,\eta)}.$$
  We have  $\partial \mathsf D_{k,\eta} = \overline{\Sigma_{k, \mbf N}  }\, \cup\,  \Sigma_{k,\mbf{D}}  \, \cup  \, \overline{\Sigma_{k,\mbf{FD}}  }$,
     where 
   $$\Sigma_{k, \mbf N}  := \Gamma_{k,\mbf N}^{\textsc{M}}, \ \, \,  \Sigma_{k,\mbf{D}}   :=  \Gamma_{k,\mbf D}^{\textsc{M}}\setminus \overline{ \mathsf B_{a}(z_k,\eta)\cap \partial \mathsf \Omega_k^{\textsc{M}}},\text{ and }\,  \ \Sigma_{k,\mbf{FD}}   :=  {\partial \mathsf B_{a}(z_k,\eta)\cap   \mathsf \Omega_k^{\textsc{M}}}.$$
We refer to Figure \ref{fig:zjr} for a schematic representation of
$\mathsf D_{k,\eta} $ and its boundary.    We use the
subscript~$\mbf{FD}$ because we will consider a Witten Laplacian  with
full Dirichlet boundary conditions on $\Sigma_{k,\mbf{FD}}$.

\begin{figure}
\begin{center}
  \includegraphics[width=0.6\textwidth]{./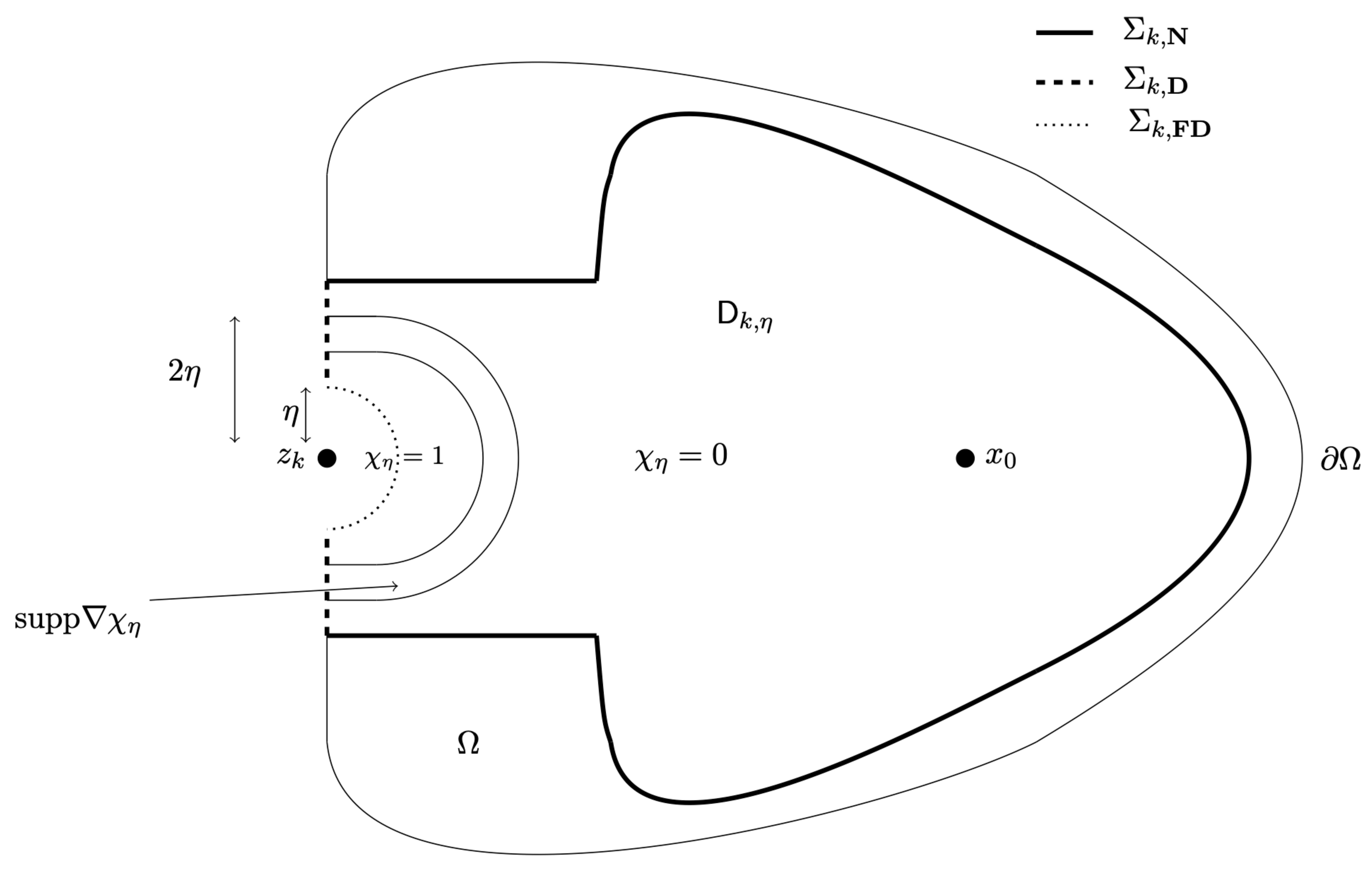}

\caption{A schematic representation of $\mathsf D_{k,\eta} $.}

\label{fig:zjr}

\end{center}

\end{figure}
   
Following the procedure of Section \ref{sec.Witten-general-result}, we
can consider     the Friedrichs extension $\Delta_{f,h}^{
  \mathcal M,(1)} (    \mathsf D_{k,\eta}  )$ (which has different
boundary conditions from the mixed Laplacian $\Delta_{f,h}^{
  \textsc{M},(1)}$ introduced in Proposition~\ref{pr.DeltaM}, hence
the different notation) of the closed quadratic form
 $$ Q_{f,h}^{  \mathcal M,(1)} (    \mathsf D_{k,\eta}  )(u,w)=\langle \mathsf d_{f,h} u,\mathsf d_{f,h}w \rangle_{L^2(    \mathsf D_{k,\eta}  )} +\langle \mathsf d^*_{f,h}u,\mathsf d^*_{f,h}w \rangle_{L^2(    \mathsf D_{k,\eta}  )},$$
for all $u,w\in\mathcal   D\big (  Q_{f,h}^{  \mathcal M,(1)} (    \mathsf D_{k,\eta}  )\big )$, where 
\begin{align*}
\mathcal D\big (  Q_{f,h}^{  \mathcal M,(1)} (    \mathsf D_{k,\eta}  )\big )&:=\big \{w\in \Lambda^1L^2(    \mathsf D_{k,\eta}  ),   \mathsf d_{f,h}w \text{ and } \mathsf d^*_{f,h} w\in  \Lambda L^2(    \mathsf D_{k,\eta}  ) \text{ with }\\
&\quad \quad   \mathbf n w|_{\Sigma_{k,\mbf N}  }=0 , \mathbf t w|_{\Sigma_{k,\mbf D}  }=0, \text{ and } 
  w|_{\Sigma_{k,\mbf{FD}}  }=0    \big\}.
  \end{align*} 
Let $w\in \mathcal D\big (  Q_{f,h}^{  \mathcal M,(1)} (    \mathsf D_{k,\eta}  )\big )$ and $\varphi$ be  a real-valued Lipschitz
function on $\overline{\mathsf  D_{k,\eta}}$. 
 Since $w|_{\Sigma_{k,\mbf{FD}}  }=0 $ and $\nabla f \cdot \mathsf n_{\mathsf \Omega_k^{\textsc{M}} }=0$ on   $\Gamma_{k, \mbf D}^{\text{M}}\supset \Sigma_{k, \mbf D}$, one  has using the same arguments as those used to prove Lemma~\ref{le.GreenWeak},  
\begin{align}
\nonumber
 Q_{f,h}^{  \mathcal M,(1)} (    \mathsf D_{k,\eta}  )( w,e^{\frac 2h \varphi} w)&=
 h^{2}\big  \| \mathsf d (e^{\frac{\varphi}{h}} w)\big \|^{2}_{ L^{2} (  \mathsf D_{k,\eta})}+
h^{2} \big \|  \mathsf d^{*} (e^{\frac{\varphi}{h}} w) \big \|^{2}_{ L^{2} (  \mathsf D_{k,\eta})}\\
\nonumber
&\quad + \big \langle
(|\nabla f|^{2}-|\nabla \varphi|^{2}+h\mathcal{L}_{\nabla
  f}+h\mathcal{L}_{\nabla
  f}^{*})e^{\frac{\varphi}{h}} w, e^{\frac{\varphi}{h}}
 w\big  \rangle_{ L^{2} (  \mathsf D_{k,\eta} )}\\
\nonumber
&\quad +h \int_{\Sigma_{k,\mbf N}}
\langle w,w
\rangle_{ T_{\sigma}^{*}  \mathsf D_{k,\eta} }\;e^{\frac{2 }{h}\varphi }\partial_{\mathsf n_{  \mathsf \Omega_k^{\textsc{M}} } }f\, d\sigma\\
\nonumber
&\ge 
h^{2}\big  \| \mathsf d (e^{\frac{\varphi}{h}} w)\big \|^{2}_{ L^{2} (  \mathsf D_{k,\eta})}+
h^{2} \big \|  \mathsf d^{*} (e^{\frac{\varphi}{h}} w) \big \|^{2}_{ L^{2} (  \mathsf D_{k,\eta})}\\
 \label{eq.Spec2}
&\quad + \big \langle
(|\nabla f|^{2}-|\nabla \varphi|^{2}+h\mathcal{L}_{\nabla
  f}+h\mathcal{L}_{\nabla
  f}^{*})e^{\frac{\varphi}{h}} w, e^{\frac{\varphi}{h}}
 w\big  \rangle_{ L^{2} (  \mathsf D_{k,\eta} )},
\end{align} 
where we have used that $\partial_{\mathsf n_{  \mathsf \Omega_k^{\textsc{M}} } }f\ge 0$ a.e. on $\Gamma_{k,\mbf N}^{\textsc{M}}=\Sigma_{k,\mbf N}$. 
Thus, using \eqref{eq.Spec2} and the same analysis as the one made to prove item $(iii)$ in Proposition \ref{pr.DeltaTN},  there exists $c>0$ such that for $h$ small enough:
\begin{equation}\label{eq.Spec1}
\sigma \big (   \Delta_{f,h}^{  \mathcal M,(1)} (    \mathsf D_{k,\eta}  )\big)\ge ch.
\end{equation} 

  \noindent
  \textbf{Step 2:  Resolvent estimates.} 
   When $\mathsf D$ is a subdomain of $\Omega$,  and $w\in \Lambda^1L^2( \mathsf D  )$ is  such that  $ \mathsf d w$ and  $\mathsf d^*  w$ belong to   $\Lambda L^2(   \mathsf D)$, we define  
 \begin{equation}\label{eq.norm1}
 \Vert w\Vert _{\mathrm W^1(\mathsf D)}^2:= \Vert w\Vert _{L^2(\mathsf D)}^2+\Vert \mathsf dw\Vert _{L^2(\mathsf D)}^2+\Vert \mathsf d^*w\Vert _{L^2(\mathsf D)}^2.
 \end{equation}
 Notice that by \eqref{eq.Agmon-est} (with $\varphi=0$), it holds 
  \begin{equation}\label{eq.estimeW1}
  \Vert \mathsf u^{(1)}_k \Vert _{\mathrm W^1( \mathsf \Omega_k^{\textsc{M}})}\le Ch^{-1/2}.
  \end{equation}
  By  \eqref{eq.Spec1} and since $\lambda(\mathsf \Omega_k^{\textsc{M}} )$ is exponentially small as $h$ goes to $0$ (see \eqref{eq.DL-blambda1}),  the distance of  $\lambda(\mathsf \Omega_k^{\textsc{M}} )$ to $\sigma \big (   \Delta_{f,h}^{  \mathcal M,(1)} (    \mathsf D_{k,\eta}  )\big)$ is bounded from below by $ch/2$, as $h\to 0$.   
Then, adopting  the notation of
\cite[p.~56]{DiSj},  and using  \eqref{eq.eqq}  and  \eqref{eq.Spec2},
we obtain using resolvent estimates as  in the proof of
\cite[Proposition 6.5]{DiSj} (with, in our context,  $K(h)=
\{\lambda(\mathsf \Omega_k^{\textsc{M}} ) \}$),
  \begin{equation}\label{eq.estimeRESO} 
   \big (\Delta_{f,h}^{  \mathcal M,(1)} (    \mathsf D_{k,\eta}  )- \lambda(\mathsf \Omega_k^{\textsc{M}} ) \big )^{-1}(x,y)= \widehat O( e^{-\frac 1h \mathsf d_a(x,y)}\big ) \text{ for all } x,y \in   \mathsf D_{k,\eta}.
     \end{equation}
The  $\widehat O$ in \eqref{eq.estimeRESO} means that for any $x,y\in    \mathsf D_{k,\eta}$ and $\epsilon>0$, there exist neighborhoods $V_x$ and $V_y$ in  $\mathsf D_{k,\eta}$  of $x$ and $y$ respectively such that for $h$ small enough,
$$\big \Vert \big (\Delta_{f,h}^{  \mathcal M,(1)} (    \mathsf D_{k,\eta}  )- \lambda(\mathsf \Omega_k^{\textsc{M}} ) \big )^{-1}w\Vert_{\mathrm W^1(V_x)}\le  e^{-\frac 1h ( \mathsf d_a(x,y)-\epsilon)}\Vert w\Vert_{L^2(V_y)},$$
for all $w\in \Lambda^1L^2(  \mathsf D_{k,\eta})$ supported in  $V_y$. 
We are now in position to prove Proposition \ref{pr.Agmon}. 

\medskip  \noindent
  \textbf{Step 3:  Proof of the Agmon estimate.} 
 Let $\chi_\eta$  be a smooth cut-off function supported  in $\mathsf
 B_{a}(z_k,2\eta)$  which equals $1$ on $\mathsf B_{a}(z_k,3\eta/2)$ and such that  $
\nabla \chi_\eta \cdot \ft n_{\mathsf \Omega_k^{\textsc{M}}}  =0$. 
We   claim that
\begin{equation}\label{eq.domMM}
\text{$(1-\chi_\eta)\mathsf u^{(1)}_k\in \mathcal D( \Delta_{f,h}^{  \mathcal M,(1)} (    \mathsf D_{k,\eta}  ))$ and $ \Delta_{f,h}^{  \mathcal M,(1)} (    \mathsf D_{k,\eta}  )((1-\chi_\eta)\mathsf u^{(1)}_k) = \Delta_{f,h}^{(1)} ((1-\chi_\eta)\mathsf u^{(1)}_k)$}.
\end{equation}

To prove~\eqref{eq.domMM}, we use the integration
by parts formula~\cite[Equation (120)]{DLLN} on $\mathsf D_{k,\eta}$ with,
using the notation there,
$u=(1-\chi_\eta) \mathsf u^{(1)}_k$ and an arbitrary $v\in \mathcal D\big (
Q_{f,h}^{  \mathcal M,(1)} (    \mathsf D_{k,\eta}  )\big )$ and we
observe that all the boundary terms vanish. To do
so, we check that $u=(1-\chi_\eta) \mathsf u^{(1)}_k$ satisfies the
required regularity, and that the boundary terms are zero. This
shows that $Q_{f,h}^{  \mathcal M,(1)} (    \mathsf D_{k,\eta}
)(u,v) = \langle \Delta_{f,h}^{(1)}(u),v \rangle_{L^2(    \mathsf D_{k,\eta}  )}$ is bounded by $ C(u) \|v\|_{L^2(\mathsf D_{k,\eta})} $. Thus $u \in
\mathcal D( \Delta_{f,h}^{  \mathcal M,(1)} (    \mathsf D_{k,\eta}
))$, and $\Delta_{f,h}^{  \mathcal M,(1)} (    \mathsf D_{k,\eta}  ) u
= \Delta_{f,h}^{(1)}u $.

Let us give some more details on the regularity and trace of
$u=(1-\chi_\eta) \mathsf u^{(1)}_k$.  It is easy to check that $u \in \mathcal D\big (
Q_{f,h}^{  \mathcal M,(1)} (    \mathsf D_{k,\eta}  )\big
)$. Moreover, $\mathbf
n\mathsf d_{f,h}u=0$ on $\Sigma_{k, \mbf N}$ and   $\mathsf
d^*_{f,h}u=0$ on $\Sigma_{k, \mbf {FD}}\cup \Sigma_{k, \mbf {D}}$ are
consequences of the fact that $\mathsf u^{(1)}_k\in \mathcal
D(\Delta_{f,h}^{\text{M}, (1)}(\mathsf \Omega_k^{\textsc{M}}))$ and $u=0$ in  a neighborhood of $\overline{\Sigma_{k, \mbf
    {FD}}}$ in $\overline{\mathsf D_{k,\eta} }$. In particular,  $\mathsf d^*_{f,h}u=0 \text{ on }  \Sigma_{k, \mbf {D}}$,
since,   $\mathsf d_{f,h}^*u=-\nabla \chi_\eta\cdot \mathsf
u^{(1)}_k=0$ on $\Sigma_{k, \mbf D}$ (because $\mathsf d_{f,h}^*\mathsf u^{(1)}_k=0$ and $\mathbf  t\mathsf
u^{(1)}_k=0$ on $\Gamma_{k,
  \mbf D}^{\text{M}}\supset \Sigma_{k, \mbf D}$).  This yields
$Q_{f,h}^{  \mathcal M,(1)} (    \mathsf D_{k,\eta}
)(u,v) = \langle \Delta_{f,h}^{(1)}u,v \rangle_{L^2(    \mathsf
  D_{k,\eta}  )}$, using~\cite[Equation (120)]{DLLN},  since $v\in \mathcal D\big (
Q_{f,h}^{  \mathcal M,(1)} (    \mathsf D_{k,\eta}  )\big )$, $\mathbf
n\mathsf d_{f,h}u=0$ on $\Sigma_{k, \mbf N}$ and   $\mathsf
d^*_{f,h}u=0$ on $\Sigma_{k, \mbf {FD}}\cup \Sigma_{k, \mbf {D}}$, and
thus concludes the proof of~\ref{eq.domMM}.

\medskip

  We have, using  \eqref{eq.domMM}, and    since $\mathsf D_{k,\eta}\subset  \mathsf \Omega_k^{\textsc{M}}$, 
  $$\big (\Delta_{f,h}^{\text{M}, (1)} (    \mathsf D_{k,\eta}  )  - \lambda(\mathsf \Omega_k^{\textsc{M}} ) \big ) ( (1-\chi_\eta)\mathsf u^{(1)}_k)= \big [ \Delta_{f,h}^{   (1)} ,(1-\chi_\eta)\big] \mathsf u^{(1)}_k$$
  is supported in  $\overline{\mathsf B_{a}(z_k,2\eta)}\setminus \mathsf
  B_{a}(z_k,3\eta/2)$ (we used here the commutator brackets notation). 
   Using~\eqref{eq.estimeW1} and~\eqref{eq.estimeRESO}, and the fact
   that $\big [ \Delta_{f,h}^{ (1)}  ,(1-\chi_\eta)\big] $ is a
   bounded linear operator from  $\Lambda^1\mathrm{W}^1(\mathsf
   D_{k,\eta})$  to $\Lambda^1L^2(\mathsf D_{k,\eta})$, for all $x\in
   \mathsf D_{k,\eta}$ and $\epsilon>0$, there exists a
     neighborhood $V_x$ of $x$ in $\mathsf D_{k,\eta}$ such that for
     $h$ small enough: 
 $$\Vert (1-\chi_\eta)\mathsf u^{(1)}_k \Vert_{\mathrm W^1( V_x)} \le e^{\frac \epsilon h}  e^{-\frac 1h( \mathsf d_a(x,z_k)-2\eta)} \, \Vert \mathsf u^{(1)}_k \Vert _{\mathrm W^1(\mathsf D_{k,\eta})  }\le  e^{\frac \epsilon h} e^{-\frac 1h( \mathsf d_a(x,z_k)-3\eta)}.$$
Proposition \ref{pr.Agmon} is a consequence of the previous estimate, a compactness argument,  and the fact that  $ \mathsf u^{(1)}_k=\chi_\eta\mathsf u^{(1)}_k+(1-\chi_\eta)\mathsf u^{(1)}_k$ and  $\Vert e^{\frac 1h \mathsf d_a(x,z_k) } \chi_\eta \mathsf u^{(1)}_k  \Vert _{\mathrm W^1( \mathsf \Omega_k^{\textsc{M}})}\le  e^{\frac {3\eta}{h}}$ (by   \eqref{eq.estimeW1} and the continuity of the Agmon distance $\mathsf d_a(\cdot ,z_k)$).     
 \end{proof}

\subsection{Quasi-modes associated with $(z_k)_{k=1,\ldots,n}$}\label{sec:vk1}
 The principal eigenform $\mathsf u^{(1)}_k$  of
 $\Delta_{f,h}^{\textsc{M} ,(1)} ( \mathsf \Omega_k^{\textsc{M}} )$
 introduced in Proposition~\ref{pr.VP-DeltaTN} (see~\eqref{eq.u1k}) will be used as a quasi-mode for $\Delta_{f,h}^{\mathsf{Di} ,(1)} ( \Omega  ) $.  To do so, we   multiply it by a smooth cut-off function $\chi_k^{\textsc{M}} $ whose gradient is supported as close as needed to $\Gamma_{k,\mbf N}^{\textsc{M}}$ and so that $\chi_k^{\textsc{M}} \mathsf u^{(1)}_k$  belongs to the form domain of  $\Delta_{f,h}^{\mathsf{Di} ,(1)} ( \Omega  ) $, namely $\Lambda^1H^1_{\mbf T}(\Omega) $ (as required by item 1 in Proposition \ref{ESTIME}, see also \eqref{eq:unitary}). More precisely, we have the following result.

\begin{proposition}\label{pr.AgmonQM}
Let us assume that the assumptions of Proposition~\ref{pr.Agmon} hold. 
Let  $\mathsf u^{(1)}_k$ be defined by~\eqref{eq.u1k}. 
 Let $\beta>0$ and  $\chi_k^{\textsc{M}} (\beta): \overline{  \mathsf \Omega_k^{\textsc{M}}}  \to [0,1]$ be a $\mathcal C^\infty$ function such that  
 \begin{equation}\label{eq.supp-close1}
 \chi_k^{\textsc{M}} (\beta)=1 \text{ on  } \big \{x\in  \overline{ \mathsf \Omega_k^{\textsc{M}}}, \  \mathsf d_{\overline \Omega}(x, \overline{\Gamma_{k,\mbf N}^{\textsc{M}}})>2\beta\big \},
\end{equation}
 and,
 \begin{equation}\label{eq.supp-close2}
 \chi_k^{\textsc{M}} (\beta)=0 \text{ on  }\big  \{x\in   \overline{ \mathsf \Omega_k^{\textsc{M}}},  \ \mathsf d_{\overline \Omega}(x, \overline{\Gamma_{k,\mbf N}^{\textsc{M}}})\le \beta\big \},
\end{equation}
where  we recall that $\mathsf d_{\, \overline \Omega}$ denotes   the geodesic distance in $\overline \Omega$. We extend $ \chi_k^{\textsc{M}} (\beta)$ by $0$ on $\overline{\Omega}\setminus  \overline{ \mathsf \Omega_k^{\textsc{M}}}$,  and thus $\chi_k^{\textsc{M}} (\beta)\in \mathcal C^\infty(\overline \Omega)$    (see Figure~\ref{fig:domain_chik} for a schematic representation of the support of $ \chi_k^{\textsc{M}}$). 
Then, one defines   
 \begin{equation}\label{eq.QM1}
 \mathsf v_k^{(1)}:=\frac{ \chi_k^{\textsc{M}} (\beta)    \mathsf u_k^{(1)}  }{\Vert \chi_k^{\textsc{M}} (\beta)\mathsf u_k^{(1)}\Vert  _{ L^2 (\Omega)  }},
 \end{equation}
for any $\beta \in (0,\beta_0)$ with $\beta_0>0$ small enough so that
$\chi_k^{\textsc{M}} (\beta) \neq 0$.
For ease of notation, we do not   refer to $\beta$ when writing $\mathsf v_k^{(1)}$.
Then
$$\mathsf v_k^{(1)}\in \Lambda^1H^1_{\mbf T}(\Omega) \cap \Lambda^1\mathcal C^\infty_c (\overline \Omega).$$
Finally, for any $\delta>0$, there exists $h_\delta>0$  such that  for
all $h\in (0,h_\delta)$ and $\beta \in (0,\beta_0)$: 
$$
\big  \| \mathsf d_{f,h} \mathsf v_k^{(1)} \big \|^{2}_{ L^{2} ( \Omega)}+
  \big \|  \mathsf d^{*}_{f,h} \mathsf v_k^{(1)} \big \|^{2}_{ L^{2} (\Omega)}\le C\lambda(\mathsf \Omega_k^{\textsc{M}})+  e^{\frac \delta h} e^{-\frac 2h \inf_{{\rm supp} \nabla \chi_k^{\textsc{M}} (\beta)} \mathsf d_a(\cdot,z_k)},
$$
where $C>0$ is independent of $h$, $\beta$, and $\delta$.
\end{proposition}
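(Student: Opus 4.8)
The strategy is to estimate $\mathsf d_{f,h} \mathsf v_k^{(1)}$ and $\mathsf d_{f,h}^* \mathsf v_k^{(1)}$ by relating them to the corresponding quantities for $\mathsf u_k^{(1)}$ (which are controlled by $\lambda(\mathsf \Omega_k^{\textsc{M}})$ since $\mathsf u_k^{(1)}$ is the principal eigenform of $\Delta_{f,h}^{\textsc{M},(1)}(\mathsf \Omega_k^{\textsc{M}})$), plus commutator terms supported on $\mathrm{supp}\,\nabla \chi_k^{\textsc{M}}(\beta)$ which are handled by the Agmon estimate of Proposition~\ref{pr.Agmon}. First I would check that $\mathsf v_k^{(1)}\in \Lambda^1H^1_{\mbf T}(\Omega)\cap \Lambda^1\mathcal C^\infty_c(\overline\Omega)$: by construction $\chi_k^{\textsc{M}}(\beta)$ vanishes in a neighborhood of $\overline{\Gamma_{k,\mbf N}^{\textsc{M}}}=\pa \mathsf \Omega_k^{\textsc{M}}\cap \Omega$ (see item~2 in Proposition~\ref{pr.omegakpoint} and~\eqref{eq.onm}), so the extension by $0$ is smooth with compact support in $\overline\Omega$; moreover on $\Gamma_{k,\mbf D}^{\textsc{M}}\supset \Sigma_{z_k}$ the tangential trace of $\mathsf u_k^{(1)}$ vanishes (recall $\mathsf u_k^{(1)}\in\mathcal D(\Delta_{f,h}^{\textsc{M},(1)}(\mathsf \Omega_k^{\textsc{M}}))$, so $\mbf t\mathsf u_k^{(1)}=0$ on $\Gamma_{k,\mbf D}^{\textsc{M}}$), and elsewhere on $\pa\Omega$ the form $\mathsf v_k^{(1)}$ vanishes entirely; hence $\mbf t\mathsf v_k^{(1)}=0$ on $\pa\Omega$, which gives membership in $\Lambda^1H^1_{\mbf T}(\Omega)$.

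Next, for the quadratic form estimate, write $\mathsf v_k^{(1)} = \chi_k^{\textsc{M}}(\beta)\,\mathsf u_k^{(1)}/\mathsf n_k$ with $\mathsf n_k := \Vert \chi_k^{\textsc{M}}(\beta)\mathsf u_k^{(1)}\Vert_{L^2(\Omega)}$. The first step is to note that $\mathsf n_k$ is bounded below: by the Agmon estimate $\mathsf u_k^{(1)}$ concentrates near $z_k$, and $\chi_k^{\textsc{M}}(\beta)=1$ on $\{\mathsf d_{\overline\Omega}(\cdot,\overline{\Gamma_{k,\mbf N}^{\textsc{M}}})>2\beta\}$, a neighborhood of $z_k$ (since $z_k\in\Gamma_{k,\mbf D}^{\textsc{M}}$ is at positive distance from $\overline{\Gamma_{k,\mbf N}^{\textsc{M}}}$ for $\beta$ small, by item~1(a) and the disjointness of $\Gamma_{k,\mbf D}^{\textsc{M}}$ and $\Gamma_{k,\mbf N}^{\textsc{M}}$); so $\mathsf n_k = 1 + o(1)$ as $h\to 0$, and in particular $\mathsf n_k\ge 1/2$ for $h$ small. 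Then I would use the Leibniz rule:
$$
\mathsf d_{f,h}(\chi_k^{\textsc{M}}\mathsf u_k^{(1)}) = \chi_k^{\textsc{M}}\,\mathsf d_{f,h}\mathsf u_k^{(1)} + h\,(\mathsf d\chi_k^{\textsc{M}})\wedge \mathsf u_k^{(1)},
$$
and similarly $\mathsf d_{f,h}^*(\chi_k^{\textsc{M}}\mathsf u_k^{(1)}) = \chi_k^{\textsc{M}}\,\mathsf d_{f,h}^*\mathsf u_k^{(1)} - h\,\mbf i_{\nabla\chi_k^{\textsc{M}}}\mathsf u_k^{(1)}$. Since $0\le\chi_k^{\textsc{M}}\le 1$, the first terms satisfy $\Vert\chi_k^{\textsc{M}}\mathsf d_{f,h}\mathsf u_k^{(1)}\Vert^2_{L^2}+\Vert\chi_k^{\textsc{M}}\mathsf d_{f,h}^*\mathsf u_k^{(1)}\Vert^2_{L^2}\le \Vert\mathsf d_{f,h}\mathsf u_k^{(1)}\Vert^2_{L^2(\mathsf \Omega_k^{\textsc{M}})}+\Vert\mathsf d_{f,h}^*\mathsf u_k^{(1)}\Vert^2_{L^2(\mathsf \Omega_k^{\textsc{M}})} = Q_{f,h}^{\textsc{M},(1)}(\mathsf \Omega_k^{\textsc{M}})(\mathsf u_k^{(1)}) = \lambda(\mathsf \Omega_k^{\textsc{M}})$, using that $\mathsf u_k^{(1)}$ is $L^2$-normalized and is the eigenform for $\lambda(\mathsf \Omega_k^{\textsc{M}})$.

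The remaining (cross and commutator) terms are supported on $\mathrm{supp}\,\nabla\chi_k^{\textsc{M}}(\beta)\subset \{\beta<\mathsf d_{\overline\Omega}(\cdot,\overline{\Gamma_{k,\mbf N}^{\textsc{M}}})\le 2\beta\}$, on which I would bound them by $Ch^2\Vert\nabla\chi_k^{\textsc{M}}\Vert_\infty^2\Vert\mathsf u_k^{(1)}\Vert^2_{L^2(\mathrm{supp}\,\nabla\chi_k^{\textsc{M}})}$ together with the cross-term $2\langle\chi_k^{\textsc{M}}\mathsf d_{f,h}\mathsf u_k^{(1)},h(\mathsf d\chi_k^{\textsc{M}})\wedge\mathsf u_k^{(1)}\rangle$ (and its $\mathsf d^*$ analogue), the latter controlled by Cauchy--Schwarz by $\varepsilon\lambda(\mathsf \Omega_k^{\textsc{M}}) + \varepsilon^{-1}h^2\Vert\nabla\chi_k^{\textsc{M}}\Vert_\infty^2\Vert\mathsf u_k^{(1)}\Vert^2_{L^2(\mathrm{supp}\,\nabla\chi_k^{\textsc{M}})}$. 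Now I invoke Proposition~\ref{pr.Agmon}: for any $\delta'>0$ and $h$ small, $\Vert e^{\Psi_k/h}\mathsf u_k^{(1)}\Vert_{L^2(\mathsf \Omega_k^{\textsc{M}})}\le e^{\delta'/h}$ with $\Psi_k(x)=\mathsf d_a(x,z_k)$, hence
$$
\Vert\mathsf u_k^{(1)}\Vert^2_{L^2(\mathrm{supp}\,\nabla\chi_k^{\textsc{M}})}\le e^{-\frac 2h \inf_{\mathrm{supp}\,\nabla\chi_k^{\textsc{M}}}\Psi_k}\,\Vert e^{\Psi_k/h}\mathsf u_k^{(1)}\Vert^2_{L^2(\mathsf \Omega_k^{\textsc{M}})}\le e^{\frac{2\delta'}{h}}e^{-\frac 2h \inf_{\mathrm{supp}\,\nabla\chi_k^{\textsc{M}}(\beta)}\mathsf d_a(\cdot,z_k)}.
$$
Absorbing the polynomial factors $h^2\Vert\nabla\chi_k^{\textsc{M}}\Vert_\infty^2$ into $e^{\delta/h}$ (choosing $\delta'=\delta/3$, say) and dividing by $\mathsf n_k^2\ge 1/4$, I obtain the claimed bound with the constant $C$ independent of $h$, $\beta$, $\delta$. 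The one point demanding care is the $\beta$-uniformity: $\Vert\nabla\chi_k^{\textsc{M}}(\beta)\Vert_\infty$ typically grows like $\beta^{-1}$ as $\beta\to 0$, but since the exponentially small Agmon factor $e^{-\frac 2h\inf\mathsf d_a(\cdot,z_k)}$ dominates any fixed inverse power of $\beta$ once $h$ is small (and the statement allows the threshold $h_\delta$ to depend on $\delta$ but asks for uniformity in $\beta\in(0,\beta_0)$), I would handle this by noting that for each fixed $\beta$, $h^2\beta^{-2}\le e^{\delta/h}$ for $h$ small; to get genuine $\beta$-uniformity one observes $\inf_{\mathrm{supp}\,\nabla\chi_k^{\textsc{M}}(\beta)}\mathsf d_a(\cdot,z_k)$ is itself bounded below uniformly in $\beta\in(0,\beta_0)$ (it is $\ge \mathsf d_a$ from $z_k$ to the complement of a fixed neighborhood), so the exponential gain is uniform and the polynomial loss, being of the form $h^2\beta^{-2}$, is swallowed by tightening $\delta$ — this $\beta$-bookkeeping is the main (if modest) obstacle, the rest being routine Leibniz-rule and Cauchy--Schwarz estimates.
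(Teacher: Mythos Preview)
Your proof is correct and follows essentially the same route as the paper: Leibniz rule to split $\mathsf d_{f,h}(\chi_k^{\textsc M}\mathsf u_k^{(1)})$ and $\mathsf d_{f,h}^*(\chi_k^{\textsc M}\mathsf u_k^{(1)})$ into the eigenform contribution (bounded by $\lambda(\mathsf\Omega_k^{\textsc M})$) and a commutator term on $\mathrm{supp}\,\nabla\chi_k^{\textsc M}$, the latter controlled by the Agmon estimate of Proposition~\ref{pr.Agmon}. Two minor remarks. First, the paper uses the triangle inequality directly on $\|\mathsf d_{f,h}(\chi_k^{\textsc M}\mathsf u_k^{(1)})\|_{L^2}$ rather than expanding the square and handling cross terms by Cauchy--Schwarz; this is slightly cleaner and avoids introducing the auxiliary $\varepsilon$. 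Second, for the smoothness claim $\mathsf v_k^{(1)}\in\Lambda^1\mathcal C^\infty_c(\overline\Omega)$ you implicitly use that $\mathsf u_k^{(1)}$ is smooth on $\mathrm{supp}\,\chi_k^{\textsc M}(\beta)$; the paper makes this explicit via elliptic regularity for $\Delta_{f,h}^{(1)}$ up to the smooth Dirichlet portion $\Gamma_{k,\mbf D}^{\textsc M}$ of the boundary (recall $\mathsf u_k^{(1)}$ is a priori only in $H^{1/2}$ by Proposition~\ref{pr.QTN}, so some argument is needed). Your careful discussion of the $\beta$-uniformity is in fact more thorough than the paper's, which absorbs the factor $h\|\nabla\chi_k^{\textsc M}(\beta)\|_\infty$ into $e^{\delta/h}$ without comment; in the application (Corollary~\ref{co.AgmonQM}) $\beta$ is anyway fixed as a function of $\delta$, so this point is not load-bearing.
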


\begin{figure}
\begin{center}
  \includegraphics[width=0.7\textwidth]{./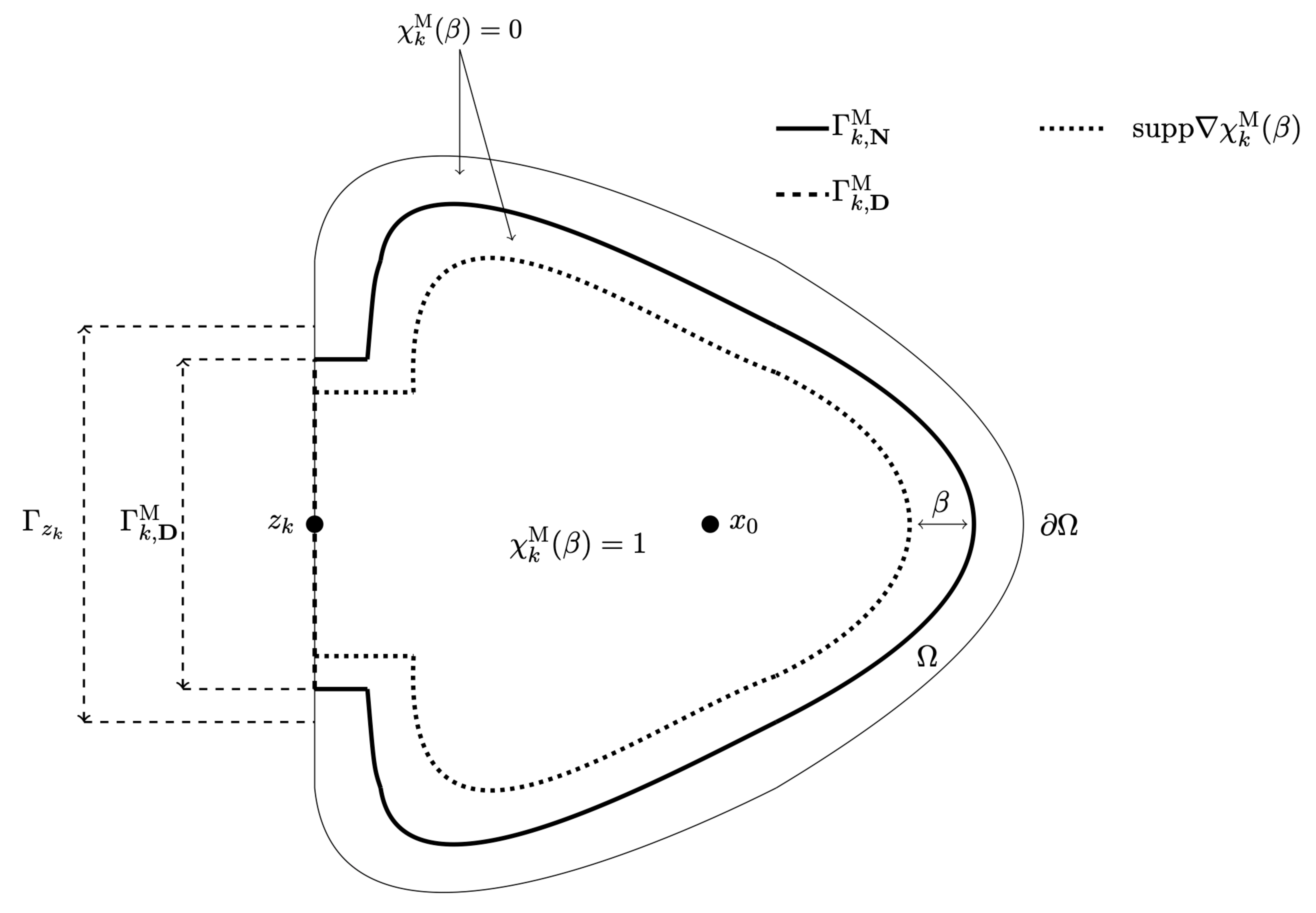}

\caption{Schematic representation of  the  cut-off function $\chi_k^{\textsc{M}}(\beta)$, see Proposition~\ref{pr.AgmonQM}. The support of $\nabla \chi_k^{\textsc{M}}(\beta)$  is as close as needed to $\overline{\Gamma_{k,\mbf N}^{\textsc{M}}}$, and $\overline{\Gamma_{k,\mbf N}^{\textsc{M}}}$ can be as closed as needed to $\partial \Omega\setminus \Gamma_{z_k}$.
 } 

\label{fig:domain_chik}

\end{center}

\end{figure}

Notice that since $\overline{ \Sigma_{z_k}}$ is included in the open subset $\Gamma_{k,\mbf D}^{\textsc{M}}$ of $\pa \mathsf \Omega_k^{\textsc{M}}= \Gamma_{k,\mbf D}^{\textsc{M}}\cup \overline{\Gamma_{k,\mbf N}^{\textsc{M}}}$ (see item 1 in Proposition~\ref{pr.omegakpoint}), from~\eqref{eq.supp-close1},  for $\beta>0$ small enough, 
\begin{equation}\label{eq.chi=1Gzk}
\chi_k^{\textsc{M}} (\beta)=1 \text{ in a neighborhood of } \overline{\Sigma_{z_k} }\text{ in } \overline{ \mathsf \Omega_k^{\textsc{M}}},
\end{equation}
or equivalently, in  a neighborhood of $ \overline{\Sigma_{z_k}} $ in   $\overline{ \Omega}$, by~\eqref{eq.=vec}-\eqref{eq.Calpha}). In addition, one also has that,  since $x_0\in  { \mathsf \Omega_k^{\textsc{M}}}$, for $\beta>0$ small enough,  
\begin{equation}\label{eq.chi=1Gx0}
\chi_k^{\textsc{M}} (\beta)=1 \text{ in a neighborhood of } x_0\text{ in }  { \mathsf \Omega_k^{\textsc{M}}} \text{ (or equivalently, in $\Omega$).}
\end{equation}   In the following, we assume that   $\beta>0$ is small enough such that \eqref{eq.chi=1Gzk} and \eqref{eq.chi=1Gx0}  hold.

\begin{proof}
From   \eqref{eq.domDeltaTN}, $\mathsf u_k^{(1)}, \mathsf d \mathsf u_k^{(1)},  \mathsf d^*  \mathsf u_k^{(1)}\in \Lambda^1L^{2}( \mathsf \Omega ^{\textsc{M}}_k )$  and  $\mathbf{t}\ft u_k^{(1)}|_{\Gamma^{\textsc{M}}_{k,\mbf D}}=0$. Since $ \chi_k^{\textsc{M}} (\beta)=0$ on $\overline{\Omega}\setminus  \overline{ \mathsf \Omega_k^{\textsc{M}}}$,
$$\text{$\mathsf v_k^{(1)}, \mathsf d \mathsf v_k^{(1)},  \mathsf d^*  \mathsf v_k^{(1)}\in \Lambda^1L^{2}(   \Omega  )$}.$$
Since $ \chi_k^{\textsc{M}} (\beta)=0$ on $\partial {\Omega}\setminus  \pa{ \mathsf \Omega_k^{\textsc{M}}}$ and  $\mathbf{t}\ft u_k^{(1)}|_{\text{int } (\partial {\Omega}\cap  \pa{ \mathsf \Omega_k^{\textsc{M}}} )}=0$ (because $\text{int }(\pa \mathsf \Omega_k^{\textsc{M}} \cap \partial \Omega )=  {\Gamma_{k,\mbf D}^{\textsc{M}}}$, see item~1 in Proposition \ref{pr.omegakpoint}), it holds:   $\mathbf{t}\ft v_k^{(1)}=0$ on $\partial \Omega$. 
Then, by  \cite[Lemma 73]{DLLN},
$$\mathsf v_k^{(1)}\in \Lambda^1H^1_{\mbf T}(\Omega).$$
In addition,  since $\Delta^{(1)}_{f,h}\mathsf u_k^{(1)}= \lambda(\mathsf \Omega_k^{\textsc{M}}) \mathsf u_k^{(1)} \in \Lambda^1L^2 ( \mathsf \Omega_{k}^{\textsc{M}} )$ with,  on the smooth open subset $\Gamma_{k,\mbf D}^{\textsc{M}}$ of $\mathsf \Omega_{k}^{\textsc{M}}$,  
$\mathbf{t}\ft u_k^{(1)}=0$  and   $\mathbf{t} \mathsf d_{f,h}^*\ft u_k^{(1)}=0$,
it holds, by elliptic regularity, $\mathsf u_k^{(1)}\in\Lambda^1\mathcal C^\infty (\mathsf \Omega_{k}^{\textsc{M}} \cup \Gamma_{k,\mbf D}^{\textsc{M}})$. Therefore,
 $$\mathsf v_k^{(1)}\in\Lambda^1\mathcal C^\infty_c (\overline \Omega).$$

 Let us now compute the energy of $\mathsf v_k^{(1)}$ in $\Omega$. 
Let us first deal with $\Vert \chi_k^{\textsc{M}} (\beta)\mathsf u_k^{(1)}\Vert  _{ L^2 (\Omega)  }$. First of all, $\Vert \chi_k^{\textsc{M}} (\beta)\mathsf u_k^{(1)}\Vert  _{ L^2 (\Omega)  }=\Vert \chi_k^{\textsc{M}} (\beta)\mathsf u_k^{(1)}\Vert  _{ L^2 (\mathsf \Omega ^{\textsc{M}}_k)  }\le 1$ (we have used that $\Vert  \mathsf u_k^{(1)}\Vert  _{ L^2 (\Omega)  }= 1$, $\chi_k^{\textsc{M}} (\beta)=0$ on $\overline{\Omega}\setminus  \overline{ \mathsf \Omega_k^{\textsc{M}}}$, and $\chi_k^{\textsc{M}}\in [0,1]$). 
On the other hand, 
It holds,
 $$
 \Vert \chi_k^{\textsc{M}} (\beta)\mathsf u_k^{(1)}\Vert  _{ L^2 (\mathsf \Omega_k^{\textsc{M}} ) } \ge 1- \big \Vert [1-\chi_k^{\textsc{M}} (\beta)]\mathsf u_k^{(1)}\big \Vert  _{ L^2 (\mathsf \Omega_k^{\textsc{M}} ) } 
  $$
  and
  $$ \big \Vert [1-\chi_k^{\textsc{M}} (\beta)]\mathsf u_k^{(1)}\big
  \Vert  _{ L^2 (\mathsf \Omega_k^{\textsc{M}} ) }  = \big \Vert
  [1-\chi_k^{\textsc{M}} (\beta)]e^{-\frac{\Psi_k}{h}}\, \mathsf
  u_k^{(1)}e^{\frac{\Psi_k}{h}}\big \Vert  _{ L^2 (\mathsf
    \Omega_k^{\textsc{M}} ) }, $$
where we introduced the function $\Psi_k(x)=\mathsf d_a(x,z_k)$.
Furthermore,  $\chi_k^{\textsc{M}} (\beta)=1$ in a  neighborhood of $z_k$ in $\overline \Omega$ (by~\eqref{eq.chi=1Gzk} together with the fact that  $z_k\in \Sigma_{z_k}$). Thus,  there exits $c>0$ such that 
$$\inf_{\text{supp}(1-\chi_k^{\textsc{M}} (\beta))} \Psi_k>c.$$
Then, using Proposition~\ref{pr.Agmon}, one deduces that $\big \Vert [1-\chi_k^{\textsc{M}} (\beta)]\mathsf u_k^{(1)}\big \Vert  _{ L^2 (\mathsf \Omega_k^{\textsc{M}} ) }=O(e^{-\frac ch})$,  for some $c>0$ and as $h\to 0$. Consequently, 
\begin{align}\label{eq.normevk1}
\Vert \chi_k^{\textsc{M}} (\beta)\mathsf u_k^{(1)}\Vert  _{ L^2 (\Omega)   }=1+O(e^{-\frac ch}).
\end{align}
In addition one has, using again Proposition~\ref{pr.Agmon}, 
\begin{align*}
\big  \| \mathsf d_{f,h} (\chi_k^{\textsc{M}} (\beta)\mathsf u_k^{(1)}) \big \|_{ L^{2} ( \Omega)}&\le   \| \chi_k^{\textsc{M}} (\beta) \mathsf d_{f,h}  \mathsf u_k^{(1)} \big \|_{ L^{2} ( \mathsf \Omega_k^{\textsc{M}})} + h \big \| \nabla \chi_k^{\textsc{M}} (\beta)\wedge \mathsf u_k^{(1)} \big \|_{ L^{2} ( \mathsf \Omega_k^{\textsc{M}})}\\
&\le  \sqrt {\lambda(\mathsf \Omega_k^{\textsc{M}})}+ e^{\frac \delta h} e^{-\frac 1h \inf_{{\rm supp} \nabla \chi_k^{\textsc{M}} (\beta)} \Psi_k}.
\end{align*}
The same inequality holds for $\big  \| \mathsf d^*_{f,h} (\chi_k^{\textsc{M}} (\beta)\mathsf u_k^{(1)}) \big \|_{ L^{2} ( \Omega)}$ because 
$$\big  \| \mathsf d^*_{f,h} (\chi_k^{\textsc{M}} (\beta)\mathsf u_k^{(1)}) \big \|_{ L^{2} ( \Omega)}\le   \| \chi_k^{\textsc{M}} (\beta) \mathsf d^*_{f,h}  \mathsf u_k^{(1)} \big \|_{ L^{2} ( \mathsf \Omega_k^{\textsc{M}})} + h \big \| \nabla \chi_k^{\textsc{M}} (\beta)\cdot\mathsf u_k^{(1)} \big \|_{ L^{2} ( \mathsf \Omega_k^{\textsc{M}})}.$$   The proof of Proposition~\ref{pr.AgmonQM} is complete using  \eqref{eq.normevk1} and \eqref{eq.QM1}.\end{proof}


 \noindent
According to~\eqref{eq.gamma2_close_to_Bzc},~\eqref{eq.supp-close1},
and~\eqref{eq.supp-close2},   for any $\gamma>0$,  one can choose 
$\mathsf \Omega_k^{\textsc{M}}$ in Proposition \ref{pr.omegakpoint} and
$\beta >0$ small enough in Proposition~\ref{pr.AgmonQM} (see  Figure~\ref{fig:domain_chik}) such that:
$$
\sup_{x\in {\rm supp} \nabla \chi_k^{\textsc{M}} (\beta)}\mathsf d_{\, \overline \Omega}(x, \partial \Omega \setminus \Gamma_{z_k} )\le \gamma.
$$ 
Hence, for any $\delta>0$,  one can choose $\beta >0$  and $\mathsf \Omega_k^{\textsc{M}}$   such that:
\begin{equation}\label{eq.distance3}
\inf_{ {\rm supp} \nabla \chi_k^{\textsc{M}} (\beta)} \mathsf d_a(\cdot,z_k)\ge \inf_{ \partial \Omega \setminus \Gamma_{z_k}} \mathsf d_a(\cdot,z_k)-\delta/4
\end{equation}
Then, once \eqref{eq.distance3} is satisfied, one can
use~\eqref{eq.DL-blambda1} and  Proposition~\ref{pr.AgmonQM} with such
$\beta>0$ and $\mathsf \Omega_k^{\textsc{M}}$ fixed as a function $\delta$,  to obtain the following result.

\begin{corollary}\label{co.AgmonQM}
\label{co.qfh-QfhM}
Let us assume that the assumptions of Proposition~\ref{pr.Agmon} hold.
 For any $\delta>0$,
there exists a domain $\mathsf \Omega_k^{\textsc{M}}$, $\beta>0$, and   $h_\delta>0$  such that  for $h\in (0,h_\delta)$:
$$
\big  \| \mathsf d_{f,h} \mathsf v_k^{(1)} \big \|^{2}_{ L^{2} ( \Omega)}+
  \big \|  \mathsf d^{*}_{f,h} \mathsf v_k^{(1)} \big \|^{2}_{ L^{2} (\Omega)}\le   Ch   e^{-\frac 2h(f(z_k)-f(x_0))}+ e^{\frac{ \delta}{h}}e^{-\frac 2h \inf_{ \partial \Omega \setminus \Gamma_{z_k}} \mathsf d_a(\cdot,z_k)},
$$
    where $C>0$ is independent of $h>0$ and $\delta>0$.\end{corollary}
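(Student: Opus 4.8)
The plan is to combine the two already-established ingredients: the energy estimate on the quasi-mode $\mathsf v_k^{(1)}$ from Proposition~\ref{pr.AgmonQM}, and the asymptotics of $\lambda(\mathsf \Omega_k^{\textsc{M}})$ from Proposition~\ref{pr.VP-DeltaTN}, while choosing the geometric parameters $\mathsf \Omega_k^{\textsc{M}}$ and $\beta$ so that the support of $\nabla\chi_k^{\textsc{M}}(\beta)$ is pushed into a region where the Agmon distance to $z_k$ is essentially $\inf_{\partial\Omega\setminus\Gamma_{z_k}}\mathsf d_a(\cdot,z_k)$. Concretely, fix $\delta>0$. First I would invoke the flexibility in Proposition~\ref{pr.omegakpoint}(4) (namely~\eqref{eq.gamma2_close_to_Bzc}) together with~\eqref{eq.supp-close1}--\eqref{eq.supp-close2}: for $\gamma>0$ arbitrarily small one can choose $\mathsf \Omega_k^{\textsc{M}}$ so that $\sup_{x\in\Gamma_{k,\mbf N}^{\textsc{M}}}\mathsf d_{\overline\Omega}(x,\partial\Omega\setminus\Gamma_{z_k})\le\gamma$, and then $\beta>0$ small so that $\operatorname{supp}\nabla\chi_k^{\textsc{M}}(\beta)$ lies in a $2\beta$-neighborhood of $\overline{\Gamma_{k,\mbf N}^{\textsc{M}}}$ in the geodesic distance. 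Since the Agmon distance $x\mapsto\mathsf d_a(x,z_k)$ is Lipschitz (Proposition~\ref{pr.p-agmon}), the continuity of $\mathsf d_a(\cdot,z_k)$ and the triangle inequality give, for $\beta,\gamma$ small enough, exactly~\eqref{eq.distance3}: $\inf_{\operatorname{supp}\nabla\chi_k^{\textsc{M}}(\beta)}\mathsf d_a(\cdot,z_k)\ge\inf_{\partial\Omega\setminus\Gamma_{z_k}}\mathsf d_a(\cdot,z_k)-\delta/4$.

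Once this choice of $(\mathsf \Omega_k^{\textsc{M}},\beta)$ is frozen as a function of $\delta$, I would plug~\eqref{eq.distance3} into the conclusion of Proposition~\ref{pr.AgmonQM}. That proposition bounds $\|\mathsf d_{f,h}\mathsf v_k^{(1)}\|_{L^2(\Omega)}^2+\|\mathsf d^*_{f,h}\mathsf v_k^{(1)}\|_{L^2(\Omega)}^2$ by $C\lambda(\mathsf \Omega_k^{\textsc{M}})+e^{\delta'/h}e^{-\frac2h\inf_{\operatorname{supp}\nabla\chi_k^{\textsc{M}}(\beta)}\mathsf d_a(\cdot,z_k)}$, and by~\eqref{eq.distance3} the exponent in the second term is bounded above by $-\frac2h\inf_{\partial\Omega\setminus\Gamma_{z_k}}\mathsf d_a(\cdot,z_k)+\delta/(2h)$; absorbing the $e^{\delta'/h}$ prefactor (with $\delta'$ chosen, say, $\le\delta/2$, which is allowed since $\delta'$ in Proposition~\ref{pr.AgmonQM} is an arbitrary fixed positive number and the statement there is uniform over $\beta$) yields the term $e^{\delta/h}e^{-\frac2h\inf_{\partial\Omega\setminus\Gamma_{z_k}}\mathsf d_a(\cdot,z_k)}$ of the corollary. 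For the first term, Proposition~\ref{pr.VP-DeltaTN}, Equation~\eqref{eq.DL-blambda1}, gives $\lambda(\mathsf \Omega_k^{\textsc{M}})=\mathsf A_{x_0,z_k}h\,e^{-\frac2h(f(z_k)-f(x_0))}(1+O(\sqrt h))$, which is $\le Ch\,e^{-\frac2h(f(z_k)-f(x_0))}$ for $h$ small; since $\mathsf A_{x_0,z_k}$ does not depend on $\delta$, the constant $C$ can be taken independent of $\delta$. Adding the two contributions gives exactly the claimed bound, for all $h\in(0,h_\delta)$ with $h_\delta$ the minimum of the thresholds coming from the two propositions (both of which depend on the now-fixed $\mathsf \Omega_k^{\textsc{M}}$ and $\beta$, hence on $\delta$).

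The only subtlety, and the step I expect to require the most care, is the bookkeeping of which constants and thresholds depend on $\delta$ and which do not: Propositions~\ref{pr.Agmon},~\ref{pr.VP-DeltaTN}, and~\ref{pr.AgmonQM} are all stated for a \emph{fixed} choice of $\mathsf \Omega_k^{\textsc{M}}$ (and, for the last one, $\beta$), so one must be careful that the reparametrization ``$(\mathsf \Omega_k^{\textsc{M}},\beta)$ chosen as a function of $\delta$'' is legitimate — i.e. that each of these propositions holds \emph{for every} admissible domain $\mathsf \Omega_k^{\textsc{M}}$ produced by Proposition~\ref{pr.omegakpoint} and every admissible $\beta$, with its constants ($C$, $\mathsf A_{x_0,z_k}$) independent of the choice and only its threshold $h_\bullet$ possibly depending on it. Proposition~\ref{pr.AgmonQM} already states its bound ``for all $\beta\in(0,\beta_0)$'' with $C$ independent of $\beta$ and $\delta$, and $\mathsf A_{x_0,z_k}$ is manifestly geometric, so this is fine; I would just make this uniformity explicit when writing the argument. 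Modulo that, the proof is a two-line combination and no further computation is needed.

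\begin{proof}
Let $\delta>0$. By~\eqref{eq.gamma2_close_to_Bzc} in
Proposition~\ref{pr.omegakpoint} together
with~\eqref{eq.supp-close1}--\eqref{eq.supp-close2}, one may choose a
domain $\mathsf \Omega_k^{\textsc{M}}$ and $\beta\in(0,\beta_0)$ small enough so
that~\eqref{eq.distance3} holds, i.e.
$$
\inf_{\operatorname{supp}\nabla\chi_k^{\textsc{M}}(\beta)}\mathsf d_a(\cdot,z_k)
\ \ge\ \inf_{\partial\Omega\setminus\Gamma_{z_k}}\mathsf d_a(\cdot,z_k)-\frac\delta4,
$$
using that $x\mapsto\mathsf d_a(x,z_k)$ is continuous
(Proposition~\ref{pr.p-agmon}). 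Fix such $\mathsf \Omega_k^{\textsc{M}}$ and
$\beta$ for the remainder of the proof.

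Applying Proposition~\ref{pr.AgmonQM} with this $\beta$ and with $\delta$
replaced by $\delta/2$ there, and then using~\eqref{eq.distance3}, we obtain for
$h$ small enough:
$$
\big\|\mathsf d_{f,h}\mathsf v_k^{(1)}\big\|^2_{L^2(\Omega)}
+\big\|\mathsf d^*_{f,h}\mathsf v_k^{(1)}\big\|^2_{L^2(\Omega)}
\ \le\ C\,\lambda(\mathsf \Omega_k^{\textsc{M}})
+e^{\frac{\delta}{2h}}\,e^{-\frac2h\inf_{\operatorname{supp}\nabla\chi_k^{\textsc{M}}(\beta)}\mathsf d_a(\cdot,z_k)},
$$
with $C>0$ independent of $h$, $\beta$, and $\delta$. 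By~\eqref{eq.distance3}, the
second term is bounded by
$e^{\frac{\delta}{2h}}e^{\frac{\delta}{2h}}e^{-\frac2h\inf_{\partial\Omega\setminus\Gamma_{z_k}}\mathsf d_a(\cdot,z_k)}
=e^{\frac{\delta}{h}}e^{-\frac2h\inf_{\partial\Omega\setminus\Gamma_{z_k}}\mathsf d_a(\cdot,z_k)}$.
For the first term, Proposition~\ref{pr.VP-DeltaTN} and~\eqref{eq.DL-blambda1} give
$\lambda(\mathsf \Omega_k^{\textsc{M}})=\mathsf A_{x_0,z_k}h\,e^{-\frac2h(f(z_k)-f(x_0))}(1+O(\sqrt h))$,
hence $C\lambda(\mathsf \Omega_k^{\textsc{M}})\le C'h\,e^{-\frac2h(f(z_k)-f(x_0))}$ for $h$
small, with $C'$ independent of $\delta$ since $\mathsf A_{x_0,z_k}$ is. Adding the
two bounds yields, for all $h\in(0,h_\delta)$ with $h_\delta>0$ depending only on
the now-fixed $\mathsf \Omega_k^{\textsc{M}}$ and $\beta$,
$$
\big\|\mathsf d_{f,h}\mathsf v_k^{(1)}\big\|^2_{L^2(\Omega)}
+\big\|\mathsf d^*_{f,h}\mathsf v_k^{(1)}\big\|^2_{L^2(\Omega)}
\ \le\ Ch\,e^{-\frac2h(f(z_k)-f(x_0))}
+e^{\frac{\delta}{h}}\,e^{-\frac2h\inf_{\partial\Omega\setminus\Gamma_{z_k}}\mathsf d_a(\cdot,z_k)},
$$
with $C>0$ independent of $h$ and $\delta$. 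This is the claimed estimate.
\end{proof}
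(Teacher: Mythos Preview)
Your proof is correct and follows essentially the same approach as the paper: the paper derives~\eqref{eq.distance3} from~\eqref{eq.gamma2_close_to_Bzc} and~\eqref{eq.supp-close1}--\eqref{eq.supp-close2} in the paragraph immediately preceding the corollary, and then simply invokes Proposition~\ref{pr.AgmonQM} together with~\eqref{eq.DL-blambda1}, exactly as you do. Your more explicit bookkeeping of which constants depend on $\delta$ (and the choice of $\delta/2$ in Proposition~\ref{pr.AgmonQM} so the exponents combine cleanly) is a welcome clarification of what the paper leaves implicit.
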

 \noindent
    By Corollary \ref{co.qfh-QfhM}, and   because $\inf_{z\in \partial \Omega \setminus \Gamma_{z_k}} \mathsf d_a(z,z_k)>0$ and $f(z_k)>f(x_0)$, there exists $c>0$ such that for $h$ small enough:
$$
\big  \| \mathsf d_{f,h} \mathsf v_k^{(1)} \big \|^{2}_{ L^{2} ( \Omega)}+
  \big \|  \mathsf d^{*}_{f,h} \mathsf v_k^{(1)} \big \|^{2}_{ L^{2} (\Omega)}\le C e^{-\frac ch}.
$$
This implies that $ \mathsf v_k^{(1)} $ is a  quasi-mode 
associated with the spectrum in $[0,ch]$ of
$\Delta_{f,h}^{\textsc{M} ,(1)} ( \mathsf \Omega_k^{\textsc{M}} )$
because, using in addition Lemma~\ref{quadra} and the fact that, by Proposition~\ref{pr.AgmonQM}, $\mathsf v_k^{(1)}\in \Lambda^1H^1_{\mbf T}(\Omega)=\mathcal D(Q_{f,h}^{\mathsf{Di} ,(1)} ( \Omega  ) )$, it holds:
\begin{equation}\label{eq.QM-est-expo-petit}
 \big \Vert \big [1-\pi_{[0,\mathsf ch]} ( \Delta_{f,h}^{\mathsf{Di} ,(1)} ( \Omega  ))\big]   \mathsf v_k^{(1)} \big \|^{2}_{ L^{2} ( \Omega)}   \le C e^{-\frac ch}.
  \end{equation}

%
%
%
%
%
%
%

\section{Proofs of the main results}\label{sec.mainresu}
In this section, we  give the proofs of the main results stated in Section~\ref{ssec.mainresu}.

\subsection{Proofs of Theorem~\ref{thm1}, Proposition~\ref{pr.LP-N}, Theorem~\ref{thm2}, and Corollary~\ref{co.ek}}
\label{sec.proof0} 

The quasi-modes for $\mathsf L^{\mathsf{Di},(0)}_{f,h}(\Omega)$ and $\mathsf L^{\mathsf{Di},(1)}_{f,h}(\Omega)$  are defined as follows.

\begin{definition}\label{de.QM-pi}
Let us assume that \autoref{A} is satisfied.  Then, one defines for $k\in\{1,\ldots,n\}$ 
 (see~\eqref{eq.u1k} and \eqref{eq.QM1}):
$$
 \mathsf f_k^{(1)}:= e^{\frac 1h f}   \mathsf v_k^{(1)} \in \Lambda^1H^1_{w,\mbf T}(\Omega).$$
For $r\in (0,\min_{\partial \Omega}f- f(x_0))$, consider $\chi_r\in \mathcal C^\infty_c(\Omega)$ such that $\chi_r=1$ on $\{f<\min_{\partial \Omega}f-r\}$. Then, one defines:
$$
 \mathsf u^{(0)}:= \frac{\chi_r}{\Vert \chi_r\Vert_{L^2_w(  \Omega)}}
   \in \mathcal C^\infty_c (  \Omega).
$$
\end{definition}
\noindent
For ease of notation, we do not refer to $r>0$ in the notation of $
\mathsf u^{(0)}$. Recall that the family $\{\mathsf f_1^{(1)} ,
\ldots, \mathsf f_n^{(1)}\} $ depends on the parameter $\delta >0$
introduced in Corollary~\ref{co.AgmonQM}.
Let us now check that there exist $r>0$ and $\delta >0$ such that  the family of quasi-modes $\{\mathsf f_1^{(1)} , \ldots, \mathsf f_n^{(1)}\}  \cup \{ \mathsf u^{(0)} \}$ introduced in Definition~\ref{de.QM-pi} satisfies the assumptions of Propositions~\ref{ESTIME} and~\ref{ESTIME2}.  As explained at the end of this section, Theorem~\ref{thm1}, Proposition~\ref{pr.LP-N}, Theorem~\ref{thm2}, and Corollary~\ref{co.ek} are then   consequences of    the results of  Propositions~\ref{ESTIME} and~\ref{ESTIME2}.

Let us start with the following lemma. 
\begin{lemma}\label{de1}  
Let us assume that~\autoref{A} is satisfied. Let  $\{\mathsf f_1^{(1)} , \ldots, \mathsf f_n^{(1)}\} $  and  $\mathsf u^{(0)}$ be  as  introduced in Definition~\ref{de.QM-pi}. 
Then, item 1 in Proposition~\ref{ESTIME} is satisfied as well as item 2(b). Furthermore,   there exists $C>0$ such that for all $h$ small enough:
\begin{equation}\label{eq.QM0-}
 \big \Vert u_h - \mathsf u^{(0)}\big \Vert_{ L_w^2(\Omega)} \le C  \, h^{-\frac d4 +\frac 12} e^{-\frac 1h (f(z_1)-f(x_0)-r)}.
\end{equation}
 Finally,~\eqref{eq.moyenn-uh} is satisfied.
  \end{lemma}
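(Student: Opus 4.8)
The statement to prove is Lemma~\ref{de1}, which concerns the quasi-modes $\{\mathsf f_1^{(1)},\ldots,\mathsf f_n^{(1)}\}$ and $\mathsf u^{(0)}$ from Definition~\ref{de.QM-pi}. There are four assertions to establish: (i) item~1 of Proposition~\ref{ESTIME} (membership in the right function spaces and $L^2_w$-normalization); (ii) item~2(b) of Proposition~\ref{ESTIME} (an exponentially small bound on $\|\nabla\mathsf u^{(0)}\|_{L^2_w}^2$); (iii) the quantitative closeness estimate~\eqref{eq.QM0-} between $u_h$ and $\mathsf u^{(0)}$; and (iv) the asymptotic~\eqref{eq.moyenn-uh} for $\int_\Omega u_h\,e^{-\frac2h f}$. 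I would treat these in the order (i), (ii), (iii), (iv).

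For (i): the normalization $\|\mathsf u^{(0)}\|_{L^2_w}=1$ and $\mathsf u^{(0)}\in\mathcal C^\infty_c(\Omega)\subset H^1_w(\Omega)$ are immediate from the definition. For the $1$-forms, Proposition~\ref{pr.AgmonQM} already gives $\mathsf v_k^{(1)}\in\Lambda^1H^1_{\mbf T}(\Omega)\cap\Lambda^1\mathcal C^\infty_c(\overline\Omega)$, and multiplying by $e^{\frac1h f}$ (which is smooth and bounded on $\overline\Omega$ together with all derivatives) gives $\mathsf f_k^{(1)}=e^{\frac1h f}\mathsf v_k^{(1)}\in\Lambda^1H^1_{w,\mbf T}(\Omega)$; the $L^2_w$-normalization of $\mathsf f_k^{(1)}$ follows from the $L^2(\Omega)$-normalization of $\mathsf v_k^{(1)}$ since $\|\mathsf f_k^{(1)}\|_{L^2_w}^2=\int_\Omega|\mathsf v_k^{(1)}|^2=1$. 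For (ii): $\nabla\mathsf u^{(0)}=\nabla\chi_r/\|\chi_r\|_{L^2_w}$, and $\nabla\chi_r$ is supported in $\{\min_{\partial\Omega}f-r\le f\}\cap\Omega$, while $\|\chi_r\|_{L^2_w}^2$ is bounded below (up to a power of $h$) by $c\,h^{d/2}e^{-\frac2h f(x_0)}$ by Laplace's method near $x_0$ (as in the proof of Corollary~\ref{thm-pc}); hence $\|\nabla\mathsf u^{(0)}\|_{L^2_w}^2\le C_r e^{-\frac2h(\min_{\partial\Omega}f-r-f(x_0))}=C_re^{-\frac2h(f(z_1)-f(x_0)-r)}$, using $f(z_1)=\min_{\partial\Omega}f$, which is exactly item~2(b) (the parameter $r$ being at our disposal).

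For (iii): this is the main point. The strategy is to write $\mathsf u^{(0)}=\pi_h^{(0)}\mathsf u^{(0)}+(1-\pi_h^{(0)})\mathsf u^{(0)}$, where $\pi_h^{(0)}$ is the spectral projector onto $\mathrm{Span}\,u_h$ (by Corollary~\ref{thm-pc}). By Lemma~\ref{quadra} applied to $\mathsf L^{\mathsf{Di},(0)}_{f,h}(\Omega)$ with $b=\mathsf c$ (the spectral gap constant from Corollary~\ref{thm-pc}, recalling $\lambda_h$ is exponentially small), and using $Q_{f,h}^{\mathsf{Di},(0)}(\Omega)(\mathsf u^{(0)})=\frac h2\|\nabla\mathsf u^{(0)}\|_{L^2_w}^2$ together with the bound from (ii), we get $\|(1-\pi_h^{(0)})\mathsf u^{(0)}\|_{L^2_w}^2\le \frac{h}{2\mathsf c}\|\nabla\mathsf u^{(0)}\|_{L^2_w}^2\le C h\, e^{-\frac2h(f(z_1)-f(x_0)-r)}$, hence $\|(1-\pi_h^{(0)})\mathsf u^{(0)}\|_{L^2_w}\le Ch^{1/2}e^{-\frac1h(f(z_1)-f(x_0)-r)}$. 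This also forces $\pi_h^{(0)}\mathsf u^{(0)}=\theta_h u_h$ with $|\theta_h|=1+O(h^{1/2}e^{-\frac1h(\cdots)})$, and one checks $\theta_h>0$ for $h$ small because both $u_h$ and $\mathsf u^{(0)}$ are nonnegative with mass concentrating near $x_0$ — more precisely $\langle u_h,\mathsf u^{(0)}\rangle_{L^2_w}=\langle u_h,\pi_h^{(0)}\mathsf u^{(0)}\rangle_{L^2_w}=\theta_h>0$ since $u_h>0$ and $\mathsf u^{(0)}\ge0$ is not identically zero. Then $\|u_h-\mathsf u^{(0)}\|_{L^2_w}\le\|u_h-\theta_h u_h\|_{L^2_w}+\|\theta_h u_h-\mathsf u^{(0)}\|_{L^2_w}=|1-\theta_h|+\|(1-\pi_h^{(0)})\mathsf u^{(0)}\|_{L^2_w}$, and both terms are $O(h^{1/2}e^{-\frac1h(f(z_1)-f(x_0)-r)})$, giving~\eqref{eq.QM0-} (the power $h^{-d/4+1/2}$ in the statement is harmless since $h^{1/2}\le h^{-d/4+1/2}$ for $d\ge0$, though one may as well state the sharper $h^{1/2}$ bound).

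For (iv): by Cauchy--Schwarz and~\eqref{eq.QM0-}, $\big|\int_\Omega u_h e^{-\frac2h f}-\int_\Omega\mathsf u^{(0)}e^{-\frac2h f}\big|\le\|u_h-\mathsf u^{(0)}\|_{L^2_w}\,\|e^{-\frac2h f}\|_{L^1(\Omega)}^{1/2}\cdot(\text{vol})^{1/2}$ — more carefully, $\big|\int(u_h-\mathsf u^{(0)})e^{-\frac2h f}\big|\le\|u_h-\mathsf u^{(0)}\|_{L^2_w}\|\mathbf 1\|_{L^2_w}$ and $\|\mathbf 1\|_{L^2_w}^2=\int_\Omega e^{-\frac2h f}\sim C h^{d/2}e^{-\frac2h f(x_0)}$. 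Meanwhile $\int_\Omega\mathsf u^{(0)}e^{-\frac2h f}=\|\chi_r\|_{L^2_w}^{-1}\int_\Omega\chi_r e^{-\frac2h f}$, and Laplace's method near $x_0$ (where $\chi_r\equiv1$, using $\mathrm{Hess}f(x_0)>0$) gives both $\int_\Omega\chi_r e^{-\frac2h f}=(\pi h)^{d/2}(\det\mathrm{Hess}f(x_0))^{-1/2}e^{-\frac2h f(x_0)}(1+O(h))$ and $\|\chi_r\|_{L^2_w}^2=(\pi h/2)^{d/2}\cdots$ — the precise constants combine to yield $\int_\Omega\mathsf u^{(0)}e^{-\frac2h f}=(\pi h)^{d/4}(\det\mathrm{Hess}f(x_0))^{-1/4}e^{-\frac1h\min_{\overline\Omega}f}(1+O(h))$, and one checks the error from the $u_h-\mathsf u^{(0)}$ difference is of relative order $h^{1/2}e^{-\frac1h(f(z_1)-f(x_0)-r)}\cdot h^{d/4}\cdot e^{\frac1h(f(x_0)-\cdots)}$, which is $O(h^\infty)$-negligible relative to the main term provided $r<f(z_1)-f(x_0)$; absorbing it into $(1+O(h))$ gives~\eqref{eq.moyenn-uh}. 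The main obstacle is item (iii), specifically pinning down the sign/size of $\theta_h$ cleanly; the positivity of $u_h$ on $\Omega$ and nonnegativity of $\mathsf u^{(0)}$ make this routine, but it must be stated carefully, and one must keep track of the interplay between the parameter $r$ (which we shrink) and the exponential rates so that the error terms land where claimed.
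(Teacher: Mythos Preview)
Your proof is correct and follows essentially the same approach as the paper: direct verification of item~1, Laplace's method near $x_0$ for $\|\chi_r\|_{L^2_w}$, the spectral projector $\pi_h^{(0)}$ combined with Lemma~\ref{quadra} for the $L^2_w$-distance estimate, positivity of $u_h$ and $\mathsf u^{(0)}$ to identify the sign of $\theta_h$, and Cauchy--Schwarz for~\eqref{eq.moyenn-uh}.

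One correction on the $h$-power bookkeeping in step~(iii): you acknowledged that $\|\chi_r\|_{L^2_w}^2\sim c\,h^{d/2}e^{-\frac2h f(x_0)}$, but then dropped this factor when writing $\|\nabla\mathsf u^{(0)}\|_{L^2_w}^2\le C_r\,e^{-\frac2h(f(z_1)-f(x_0)-r)}$. For a \emph{fixed} $r$ the sharp bound is actually $\|\nabla\mathsf u^{(0)}\|_{L^2_w}^2\le C\,h^{-d/2}\,e^{-\frac2h(f(z_1)-f(x_0)-r)}$, coming from $\|\nabla\chi_r\|_{L^2_w}^2\le C\,e^{-\frac2h(f(z_1)-r)}$ divided by $\|\chi_r\|_{L^2_w}^2\sim c\,h^{d/2}e^{-\frac2h f(x_0)}$. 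Carrying this through Lemma~\ref{quadra} gives $\|(1-\pi_h^{(0)})\mathsf u^{(0)}\|_{L^2_w}\le C\,h^{-d/4+1/2}\,e^{-\frac1h(f(z_1)-f(x_0)-r)}$, which is exactly the exponent in~\eqref{eq.QM0-}. So the $h^{-d/4+1/2}$ in the statement is not ``harmless slack'' but the actual outcome of the computation; your claimed sharper bound $h^{1/2}$ is not what the argument delivers. (For item~2(b) as stated in Proposition~\ref{ESTIME} this is fine, since the polynomial loss can be absorbed into an arbitrarily small shift of $r$.)
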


\begin{proof}   Item 1   in Proposition~\ref{ESTIME}  is satisfied by Definition~\ref{de.QM-pi}. 
First of all, because  $\{x_0\}= \argmin_{\overline \Omega} f$ (see Lemma~\ref{le.start})  and  since $\chi_r=1$ near $x_0$ in  $\Omega$ (see Definition~\ref{de.QM-pi}), 
it holds, using Laplace's method, in the limit $h\to 0$:
\begin{equation}\label{lap.x0K--2}
\Vert \chi_r\Vert_{L^2_w(\Omega)} ^2=  \kappa_{x_0} h^{\frac d2} e^{-\frac 2h f(x_0)} \big(1+O(h)\big),
\end{equation}
where $\kappa_{x_0}$ is defined in~\eqref{eq.kappa_0}.
Recall  that from  Corollary~\ref{thm-pc} (see also \eqref{eq.proj} for the definition of~$ \pi_h^{(0)}$),  the $L^2_w(\Omega)$-orthogonal projector  $ \pi_h^{(0)}$ associated with $\mathsf L_{f,h}^{\mathsf{Di},(0)}(\Omega)$  has rank $1$. 
Because $\mathsf u^{(0)}\in \mathcal D(Q_{f,h}^{\mathsf{Di},(0)}(\Omega))$ (see Proposition~\ref{Lp} and Section~\ref{sec.diri}), it holds  thanks to Lemma~\ref{quadra}:
$$
\big \Vert(1- \pi_h^{(0)}) \mathsf u^{(0)} \big \Vert_{L^2_w(\Omega)}^2 \le\frac{1}{\mathsf c}Q_{f,h}^{\mathsf{Di},(0)}(\Omega)(\mathsf u^{(0)})= \frac {h}{2\mathsf c} \Vert \nabla \mathsf u^{(0)} \Vert_{L^2_w(\Omega)}^2  .
$$
Using~\eqref{lap.x0K--2} and because $\chi_r\in \mathcal C^\infty_c(\Omega)$ such that $\chi_r=1$ on $\{f<\min_{\partial \Omega}f-r\}$, one has for $h$ small enough: 
$$
\frac h2 \Vert \nabla \mathsf u^{(0)} \Vert_{L^2_w(\Omega)}^2  = \frac h2\frac{\Vert \nabla \chi_r\Vert_{L^2_w(\Omega)}^2}{\Vert \chi_r\Vert_{L^2_w(\Omega)}^2}
\le C \Vert \nabla \chi_r \Vert_{L^\infty(\Omega)}^2 \, h^{-\frac d2+  1}\, e^{-\frac{2}{h} (\min_{\partial \Omega}f -f(x_0)-r)}.
$$
Hence, because $f(z_1)=\min_{\partial \Omega}f$ (see Lemma~\ref{le.start} and~\eqref{eq.n0}), $\mathsf u^{(0)}$ satisfies item 2(b) in Proposition~\ref{ESTIME}. 
In addition, one has:
$$\big \Vert(1- \pi_h^{(0)}) \mathsf u^{(0)} \big \Vert_{L^2_w(\Omega)}^2  \le  C \Vert \nabla \chi_r \Vert_{L^\infty(\Omega)}^2 \, h^{-\frac d2+  1}\, e^{-\frac{2}{h} (f(z_1) -f(x_0)-r)}.
$$
Choosing $r>0$ small enough, it hence holds for $h$ small enough:
$$  \Vert    \pi_h^{(0)}\mathsf u^{(0)}  \Vert_{L^2_w(\Omega)}=1+O(e^{-\frac ch}) \neq 0,$$
and then (using in addition the fact that $u_{h}>0$, $\mathsf u^{(0)}\ge 0$ on $\Omega$, and $\mathsf u^{(0)}\neq 0$), 
\begin{align*}
u_{h}=\frac{\pi_h^{(0)}\mathsf u^{(0)} }{  \Vert    \pi_h^{(0)}\mathsf u^{(0)}  \Vert_{L^2_w(\Omega)} }= \frac{\mathsf u^{(0)}}{  \Vert    \pi_h^{(0)}\mathsf u^{(0)}  \Vert_{L^2_w(\Omega)}}  +  \frac{( \pi_h^{(0)} -1)\mathsf u^{(0)}  }{  \Vert    \pi_h^{(0)}\mathsf u^{(0)}  \Vert_{L^2_w(\Omega)}}  \text{ in } L^2_w(\Omega).
\end{align*}
Equation  \eqref{eq.QM0-} is a direct consequence of the three last equations.
Moreover,   the latter equation implies
$$ \int_\Omega u_h\, e^{-\frac 2h f}= (1+O(e^{-\frac ch}) ) \Big[\int_\Omega  \mathsf u^{(0)} \, e^{-\frac 2h f}+  e_h\Big],$$
where 
$$\vert e_h\vert \le \big \Vert(1- \pi_h^{(0)}) \mathsf u^{(0)} \big \Vert_{L^2_w(\Omega)}\big \Vert1\big \Vert_{L^2_w(\Omega)}\le C e^{-\frac{1}{h} (f(x_0) +c_r)}, $$
where $c_r=f(z_1)-f(x_0)-r>0$ (since  $r\in (0,f(z_1)-f(x_0))$).  On the other hand, from~\eqref{lap.x0K--2} together with the fact that $\int_\Omega  \chi \, e^{-\frac 2h f}$ has the same asymptotic equivalent as $\int_\Omega  \chi^2 \, e^{-\frac 2h f}$ when $h\to 0$, 
$$\int_\Omega  \mathsf u^{(0)} \, e^{-\frac 2h f}=h^{d/4}\sqrt{\kappa_{x_0}}e^{-\frac 1h f(x_0)} \big(1+O(h)\big).
$$
This proves~\eqref{eq.moyenn-uh} 
 and concludes the proof of Lemma~\ref{de1}. 
\end{proof}
  
Let us now check that $\{\mathsf f_1^{(1)} , \ldots, \mathsf f_n^{(1)}\} $ satisfies item 2(a) and item 3 in Proposition~\ref{ESTIME}. 
\begin{lemma}\label{de2}  
Assume that~\autoref{A} is satisfied. Let    $\{\mathsf f_1^{(1)} , \ldots, \mathsf f_n^{(1)}\} $ be the family of $1$-forms    introduced in Definition~\ref{de.QM-pi}. Let $k\in \{1,\ldots,n\}$. Then, 
for any $\delta>0$,
there exists  $h_\delta>0$  such that for $h\in (0,h_\delta)$:
\begin{equation}\label{eq.estimePrecise-co1}
\big \|    \big(1-\pi_h^{(1)}\big) \mathsf f_k^{(1)}   \big \|_{H^1_{w}(\Omega)}^2 \le   C  h^{-2}e^{-\frac 2h(f(z_k)-f(x_0))}+  e^{\frac{ \delta }{h}} e^{-\frac 2h( \inf_{ \partial \Omega \setminus \Gamma_{z_k}} \mathsf d_a(\cdot,z_k))} ,
\end{equation}
and for all  $\ell \in \{1,\ldots,n\}$, $\ell \neq k$,  
\begin{equation}\label{eq.estimePrecise-co2}
\big\vert \big\lp\mathsf f_k^{(1)}, \mathsf f_\ell^{(1)}\big\rp_{L^2_w(\Omega)}\big\vert  \le   e^{\frac{\delta}{ h}} e^{-\frac 1h  \mathsf d_a(z_k,z_\ell)}.
\end{equation}
In particular,  choosing $\delta>0$ small enough,      $\{\mathsf f_1^{(1)} , \ldots, \mathsf f_n^{(1)}\} $  satisfies  items 2(a) and 3 in Proposition~\ref{ESTIME}, and if   \eqref{hypo1}  and \eqref{hypo2}  hold, then  $\{\mathsf f_1^{(1)} , \ldots, \mathsf f_n^{(1)}\} $  satisfies  items 1 and 2  in Proposition~\ref{ESTIME2}. 
  \end{lemma}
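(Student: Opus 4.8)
The plan is to estimate the three quantities $\|(1-\pi_h^{(1)})\mathsf f_k^{(1)}\|_{H^1_w(\Omega)}^2$ and $|\langle \mathsf f_k^{(1)},\mathsf f_\ell^{(1)}\rangle_{L^2_w(\Omega)}|$ by transferring everything to the Witten side through the unitary conjugation $\phi\mapsto e^{-f/h}\phi$ of~\eqref{eq:unitary}, so that $\mathsf f_k^{(1)}=e^{f/h}\mathsf v_k^{(1)}$ corresponds to the $1$-form $\mathsf v_k^{(1)}$ built in Proposition~\ref{pr.AgmonQM}. First I would establish~\eqref{eq.estimePrecise-co1}. Since $\mathsf v_k^{(1)}\in \Lambda^1H^1_{\mbf T}(\Omega)=\mathcal D(Q_{f,h}^{\mathsf{Di},(1)}(\Omega))$, Lemma~\ref{quadra} applied to $\Delta_{f,h}^{\mathsf{Di},(1)}(\Omega)$ gives that $\|(1-\pi_{[0,\mathsf ch]}(\Delta_{f,h}^{\mathsf{Di},(1)}(\Omega)))\mathsf v_k^{(1)}\|_{L^2}^2$ is controlled by $(\mathsf ch)^{-1}Q_{f,h}^{\mathsf{Di},(1)}(\Omega)(\mathsf v_k^{(1)})$, and Corollary~\ref{co.qfh-QfhM} bounds this energy by $Ch\,e^{-\frac2h(f(z_k)-f(x_0))}+e^{\delta/h}e^{-\frac2h\inf_{\partial\Omega\setminus\Gamma_{z_k}}\mathsf d_a(\cdot,z_k)}$. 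To pass from the $L^2_w$ bound to the $H^1_w$ bound one uses that on $\Ran\pi_h^{(1)}$ the operator $\mathsf L^{\mathsf{Di},(1)}_{f,h}(\Omega)$ is bounded (spectrum in $[0,\mathsf c]$), together with elliptic/subelliptic a priori estimates for the Witten Laplacian relating the $H^1_w$ norm of $(1-\pi_h^{(1)})\mathsf f_k^{(1)}$ to its $L^2_w$ norm plus the quadratic form — this produces the stated loss of a power $h^{-2}$ in the prefactor, exactly as in the analogous step of~\cite{DLLN}. One also has to check $\pi_h^{(1)}\mathsf f_k^{(1)}\neq0$, which follows since $\|\mathsf v_k^{(1)}\|_{L^2}=1$ and the orthogonal complement part is exponentially smaller.

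Next I would treat~\eqref{eq.estimePrecise-co2}. Writing $\langle \mathsf f_k^{(1)},\mathsf f_\ell^{(1)}\rangle_{L^2_w(\Omega)}=\langle \mathsf v_k^{(1)},\mathsf v_\ell^{(1)}\rangle_{L^2(\Omega)}$, I would use that $\mathsf v_k^{(1)}$ is, up to an $e^{-c/h}$ error, supported near $z_k$ — more precisely $\mathsf v_k^{(1)}=\chi_k^{\textsc{M}}(\beta)\mathsf u_k^{(1)}/\|\cdot\|$ and the Agmon estimate of Proposition~\ref{pr.Agmon} gives $\mathsf u_k^{(1)}=O(e^{-(\Psi_k-\epsilon)/h})$ pointwise (in $\mathrm W^1$-norm locally) where $\Psi_k(x)=\mathsf d_a(x,z_k)$. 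Likewise $\mathsf v_\ell^{(1)}=O(e^{-(\Psi_\ell-\epsilon)/h})$ with $\Psi_\ell(x)=\mathsf d_a(x,z_\ell)$. Cauchy–Schwarz on the overlap region then yields a bound $e^{\epsilon/h}e^{-\frac1h\inf_{x}(\mathsf d_a(x,z_k)+\mathsf d_a(x,z_\ell))}$; by the triangular inequality for the Agmon distance (Proposition~\ref{pr.p-agmon}) the exponent infimum equals $\mathsf d_a(z_k,z_\ell)$, giving~\eqref{eq.estimePrecise-co2} after absorbing $\epsilon$ into $\delta$. A subtlety is that the two eigenforms live a priori on different subdomains $\mathsf\Omega_k^{\textsc{M}}$ and $\mathsf\Omega_\ell^{\textsc{M}}$, but both $\mathsf v_k^{(1)}$ and $\mathsf v_\ell^{(1)}$ have been extended by $0$ to all of $\overline\Omega$ (Proposition~\ref{pr.AgmonQM}), so the scalar product over $\Omega$ makes sense and the Agmon bounds are valid on their respective supports.

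Finally I would verify the last sentence of the statement: for $\delta$ small enough and $r$ small enough (recall $\mathsf f_k^{(1)}$ depends on $\delta$ through the choice of $\mathsf\Omega_k^{\textsc{M}}$ and $\beta$ in Corollary~\ref{co.AgmonQM}), the bounds~\eqref{eq.estimePrecise-co1}–\eqref{eq.estimePrecise-co2} give items 2(a) and 3 of Proposition~\ref{ESTIME}, using that $f(z_k)>f(x_0)$, that $\inf_{\partial\Omega\setminus\Gamma_{z_k}}\mathsf d_a(\cdot,z_k)>0$ (since $f$ is Morse so $\mathsf d_a$ is a genuine distance), and that $\mathsf d_a(z_k,z_\ell)>0$ for $k\neq\ell$; this yields the existence of $\ve_1,\ve_0>0$. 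For the stronger requirements of Proposition~\ref{ESTIME2}, one additionally invokes~\eqref{hypo1}: since $\inf_{\partial\Omega\setminus\Gamma_{z_k}}\mathsf d_a(\cdot,z_k)>\max[f(z_n)-f(z_k),f(z_k)-f(z_1)]$, the Agmon term in~\eqref{eq.estimePrecise-co1} dominates with strict margin, giving item 1 of Proposition~\ref{ESTIME2}; and using $|f(z_k)-f(z_\ell)|\le\mathsf d_a(z_k,z_\ell)$ (inequality~\eqref{eq.ineq}) together with~\eqref{hypo1} one gets that $\mathsf d_a(z_k,z_\ell)-|f(z_k)-f(z_\ell)|$ is strictly positive, yielding item 2 of Proposition~\ref{ESTIME2}. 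The main obstacle I anticipate is the passage from the $L^2_w$ spectral-projection estimate to the $H^1_w$ estimate with the correct polynomial prefactor $h^{-2}$: this requires carefully combining the subelliptic estimate for $\Delta_{f,h}^{\mathsf{Di},(1)}$, the boundedness of the operator on the low-lying spectral subspace, and the resolvent/Cauchy-integral representation of $1-\pi_h^{(1)}$, rather than any genuinely new idea — it is the same bookkeeping as in~\cite[Section 6]{DLLN} but must be redone here since the boundary carries critical points.
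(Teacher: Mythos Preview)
Your plan is correct and matches the paper's proof closely. Two clarifications on the execution.

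For the $L^2\to H^1$ upgrade you flag as the main obstacle, the paper's argument is more direct than you anticipate: no resolvent/Cauchy-integral representation is used. Setting $\mathsf v_{k,\pi}^{(1)}=\big(1-\pi_{[0,\mathsf ch]}(\Delta^{\mathsf{Di},(1)}_{f,h}(\Omega))\big)\mathsf v_k^{(1)}$, the commutation relation~\eqref{eq.complexeMM1} gives $\mathsf d_{f,h}\mathsf v_{k,\pi}^{(1)}=\big(1-\pi_{[0,\mathsf ch]}(\Delta^{\mathsf{Di},(2)}_{f,h}(\Omega))\big)\mathsf d_{f,h}\mathsf v_k^{(1)}$, whence $h\|\mathsf d\,\mathsf v_{k,\pi}^{(1)}\|_{L^2}\le\|\mathsf d_{f,h}\mathsf v_k^{(1)}\|_{L^2}+C\|\mathsf v_{k,\pi}^{(1)}\|_{L^2}$, and similarly for $\mathsf d^*$. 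Then the standard Gaffney inequality on the smooth domain $\Omega$ (an elliptic, not subelliptic, estimate) bounds $\|\mathsf v_{k,\pi}^{(1)}\|_{H^1}^2$ by $\|\mathsf v_{k,\pi}^{(1)}\|_{L^2}^2+\|\mathsf d\,\mathsf v_{k,\pi}^{(1)}\|_{L^2}^2+\|\mathsf d^*\mathsf v_{k,\pi}^{(1)}\|_{L^2}^2$. The factor $h^{-2}$ comes simply from the $h^{-1}$ in each of these two steps.

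For the verification of Proposition~\ref{ESTIME2}: in item~1 you only treat the Agmon term of~\eqref{eq.estimePrecise-co1}, but the first term $Ch^{-2}e^{-\frac2h(f(z_k)-f(x_0))}$ also needs handling, and this is where~\eqref{hypo2} enters, via $f(z_k)-f(x_0)\ge f(z_1)-f(x_0)>f(z_n)-f(z_1)\ge f(z_n)-f(z_k)$. In item~2, the inequality~\eqref{eq.ineq} alone only gives the non-strict bound $\mathsf d_a(z_k,z_\ell)\ge|f(z_k)-f(z_\ell)|$; the paper instead observes that $z_\ell\in\partial\Omega\setminus\Gamma_{z_k}$ (since $z_\ell\notin\mathsf W_+^{z_k}$ while $\Gamma_{z_k}\subset\mathsf W_+^{z_k}$ by~\eqref{eq.incluWW}), so $\mathsf d_a(z_k,z_\ell)\ge\inf_{\partial\Omega\setminus\Gamma_{z_k}}\mathsf d_a(\cdot,z_k)$, and then~\eqref{hypo1} furnishes the strict margin directly.
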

\begin{proof}
 Using Lemma~\ref{quadra} and Proposition~\ref{pr.defDfhD},
 \begin{align*}
  \big \Vert \big[1-\pi_{[0,\mathsf ch]} ( \Delta_{f,h}^{\mathsf{Di} ,(1)} ( \Omega  )) \big]   \mathsf v_k^{(1)} \big \|^{2}_{ L^{2} ( \Omega)} \le \frac{ \big  \| \mathsf d_{f,h} \mathsf v_k^{(1)} \big \|^{2}_{ L^{2} ( \Omega)}+
   \big \|  \mathsf d^{*}_{f,h} \mathsf v_k^{(1)} \big \|^{2}_{ L^{2} (\Omega)}}{\mathsf ch}.
 \end{align*}
Therefore, using~Corollary~\ref{co.qfh-QfhM},  for any $\delta>0$,
there exists  $h_\delta>0$  such that for $h\in (0,h_\delta)$:
$$
 \big \Vert \big[1-\pi_{[0,\mathsf ch]} ( \Delta_{f,h}^{\mathsf{Di} ,(1)} ( \Omega  )) \big]   \mathsf v_k^{(1)} \big \|^{2}_{ L^{2} ( \Omega)}  \le    C   e^{-\frac 2h(f(z_k)-f(x_0))}+e^{\frac{ \delta }{h}} e^{-\frac 2h( \inf_{ \partial \Omega \setminus \Gamma_{z_k}} \mathsf d_a(\cdot,z_k))}.
$$ 
  Let us prove that this inequality also holds in $\Lambda^1H^1(\Omega)$. 
Set $\mathsf v_{k,\pi}^{(1)}=\big[1-\pi_{[0,\mathsf ch]} ( \Delta_{f,h}^{\mathsf{Di} ,(1)} ( \Omega  )) \big]   \mathsf v_k^{(1)}$. It holds using Proposition~\ref{pr.defDfhD}, 
$$
\big[1-\pi_{[0,\mathsf ch]} ( \Delta_{f,h}^{\mathsf{Di} ,(2)} ( \Omega  )) \big]\mathsf d_{f,h}  \mathsf v_k^{(1)} = \mathsf d_{f,h}\mathsf v_{k,\pi}^{(1)}=h\mathsf d \mathsf v_{k,\pi}^{(1)} +\nabla f\wedge  \mathsf v_{k,\pi}^{(1)}.
$$
Therefore,
 $$h \Vert \mathsf d \mathsf v_{k,\pi}^{(1)}  \|_{ L^{2} ( \Omega)} \le  \Vert \mathsf d_{f,h} \mathsf v_k^{(1)}  \|_{ L^{2} ( \Omega)} +  C\Vert   \mathsf v_{k,\pi}^{(1)}  \|_{ L^{2} ( \Omega)}. $$
Similarly, one has $h \Vert \mathsf d^* \mathsf v_{k,\pi}^{(1)}  \|_{ L^{2} ( \Omega)} \le  \Vert \mathsf d^*_{f,h} \mathsf v_k^{(1)}  \|_{ L^{2} ( \Omega)} +  C\Vert   \mathsf v_{k,\pi}^{(1)}  \|_{ L^{2} ( \Omega)}$. 
 Hence, using also the standard Gaffney inequality in $\Omega$  (see \cite{GSchw}), since  $\mathsf v_{k,\pi}^{(1)}\in \Lambda^1H^1_{\mbf T}(\Omega)$ (by Proposition \ref{pr.AgmonQM}), it holds:
\begin{align} 
\nonumber
C_{\text{Gaffney}}\big  \|   \mathsf v_{k,\pi}^{(1)} \big \|^{2}_{ H^1 ( \Omega)}&\le \big  \|   \mathsf v_{k,\pi}^{(1)} \big \|^{2}_{ L^{2} ( \Omega)} + \big  \| \mathsf d  \mathsf v_{k,\pi}^{(1)} \big \|^{2}_{ L^{2} ( \Omega)}+
  \big \|  \mathsf d^{*}  \mathsf v_{k,\pi}^{(1)} \big \|^{2}_{ L^{2} (\Omega)}\\
  \label{eq.est-QQ}
  & \le Ch^{-2}\big (\Vert \mathsf d_{f,h} \mathsf v_k^{(1)}  \|_{ L^{2} ( \Omega)}^2 +   \Vert \mathsf d^*_{f,h} \mathsf v_k^{(1)}  \|_{ L^{2} ( \Omega)}^2 +  \Vert   \mathsf v_{k,\pi}^{(1)}  \|_{ L^{2} ( \Omega)}^2\big ).
   \end{align}
   This implies that for $h\in (0,h_\delta)$: 
   \begin{equation}\label{eq.est-vk1}
   \big  \|   \mathsf v_{k,\pi}^{(1)} \big \|^{2}_{ H^1 ( \Omega)}\le C  h^{-2}  \big[  e^{-\frac 2h(f(z_k)-f(x_0))}+ e^{\frac{ \delta }{h}}e^{-\frac 2h( \inf_{ \partial \Omega \setminus \Gamma_{z_k}} \mathsf d_a(\cdot,z_k))}\big].
   \end{equation}
   From~\eqref{eq:unitary}, it holds:
$$\pi_{[0,\mathsf ch]} ( \Delta_{f,h}^{\mathsf{Di} ,(1)} ( \Omega  ))=
e^{-\frac 1hf}\pi_h^{(1)}e^{\frac 1 h f} .$$
Therefore,   by definition of $ \mathsf f_k^{(1)}$ (see Definition~\ref{de.QM-pi}) and from~\eqref{eq.est-vk1},   
$$
 \big \Vert (1-\pi_h^{(1)} )   \mathsf f_k^{(1)} \big \|^{2}_{ H^{1}_w ( \Omega)} \le     Ch^{-2}e^{-\frac 2h(f(z_k)-f(x_0))}+ e^{\frac{  \delta }{h}}  e^{-\frac 2h( \inf_{\partial \Omega \setminus \Gamma_{z_k}} \mathsf d_a(\cdot,z_k))} .
$$
 This proves Equation~\eqref{eq.estimePrecise-co1}. 

Let us now prove \eqref{eq.estimePrecise-co2}. One has (see Definition~\ref{de.QM-pi}) using the triangular inequality for $\mathsf d_a$, 
 $$\vert \big\lp\mathsf f_k^{(1)}, \mathsf f_\ell^{(1)}\big\rp_{L^2_w(\Omega)}|=|  \big\lp\mathsf v_k^{(1)}, \mathsf v_\ell^{(1)}\big\rp_{L^2(\Omega)}|\le e^{-\frac 1h \mathsf d_a(z_\ell,z_k)} \Vert    \mathsf v_k^{(1)}e^{\frac 1h \mathsf d_a(\cdot ,z_k)} \Vert_{L^2(\Omega)}\,  \Vert  \mathsf v_\ell^{(1)}e^{\frac 1h \mathsf d_a(\cdot ,z_\ell)} \Vert_{L^2(\Omega)}.$$
 Equation  \eqref{eq.estimePrecise-co2} is thus a consequence of  the previous inequality together with Proposition~\ref{pr.Agmon} and~\eqref{eq.normevk1}  (see also~\eqref{eq.QM1}).

Since $f(z_k)-f(x_0)>0$, $\mathsf d_a(z_k,z_\ell)>0$,  and  $ \inf_{z\in \partial \Omega \setminus \Gamma_{z_k}} \mathsf d_a(z_k,z)>0$ (because $\mathsf d_a$ is a distance),  $\{\mathsf f_1^{(1)} , \ldots, \mathsf f_n^{(1)}\} $  satisfies  items 2(a) and 3 in Proposition~\ref{ESTIME} hold   choosing $\delta>0$ small enough in~\eqref{eq.estimePrecise-co1} and~\eqref{eq.estimePrecise-co2}. Finally,   since $z_\ell \in \partial \Omega \setminus \Gamma_{z_k}$ (because $z_\ell\notin \mathsf W_+^{z_k}$ and $ \Gamma_{z_k}\subset   \mathsf W_+^{z_k} $, see~\eqref{eq.incluWW})
 $$\mathsf d_a(z_k,z_\ell)\ge \inf_{z\in \partial \Omega \setminus \Gamma_{z_k}} \mathsf d_a(z_k,z).
 $$
In addition,  $f(z_k)-f(x_0)\ge f(z_1)-f(x_0)$ and if $k>\ell$, $f(z_k)-f(z_1)\ge f(z_k)-f(z_\ell)$. 
Thus, if~\eqref{hypo1}  and~\eqref{hypo2}  hold, then  $\{\mathsf f_1^{(1)} , \ldots, \mathsf f_n^{(1)}\} $  satisfies  items 1 and 2  in Proposition~\ref{ESTIME2}. 
\end{proof}


\begin{lemma}\label{de3-boundary+interaction}  
Let us assume that~\autoref{A} is satisfied. Let     $\{\mathsf f_1^{(1)} , \ldots, \mathsf f_n^{(1)}\} $ be the family of $1$-forms    introduced in Definition~\ref{de.QM-pi}. Let $k,\ell\in \{1,\ldots,n\}^2$. 
Then, it holds,
$$\int_{\Sigma_{z_\ell}}    \mathsf f_k^{(1)} \cdot \mathsf n_\Omega  \   e^{- \frac{2}{h} f}  d\sigma =\begin{cases}    0   &   \text{ if } k\neq \ell,\\
   -\mathsf  b_k \ h^{\mathsf m }  \     e^{-\frac{1}{h} f(z_k)}  \    (  1  +     O(\sqrt h )    )   &  \text{ if } k=\ell,
   \end{cases} 
$$ 
  where $\mathsf b_k$ and $\mathsf m$  are defined in~\eqref{eq.kappa_0}.
  Let  $\mathsf u^{(0)}$ be as     introduced in Definition~\ref{de.QM-pi}. Then, for all $k \in \{1,\ldots,n\}$, there exists $c>0$ such that as $h\to 0$: 
  \begin{align*}
  \big \lp       \nabla\mathsf u^{(0)}   ,   \mathsf f_k^{(1)}\big\rp_{L^2_w(\Omega)} =
  \begin{cases}    \mathsf K_kh^{\mathsf p}\, e^{-\frac 1h( f(z_1)-f(x_0))} (1+O(\sqrt h))   &   \text{ if } k\in \{1,\ldots,n_0\},\\
   O( e^{-\frac 1h( f(z_1)-f(x_0) +c)}   )    &  \text{ if } k>n_0,
   \end{cases} 
\end{align*}
where $\mathsf K_k$ and $\mathsf p$ are defined in~\eqref{eq.constantK} below. 
In particular,    $\{\mathsf f_1^{(1)} , \ldots, \mathsf f_n^{(1)}\} $ and $\mathsf u^{(0)}$  satisfy  item  4 in Proposition~\ref{ESTIME}.
 If moreover \eqref{hypo1}  and \eqref{hypo2}  hold, one has as $h\to 0$,
 $$  \big \lp       \nabla\mathsf u^{(0)}   ,   \mathsf f_k^{(1)}\big\rp_{L^2_w(\Omega)} =\mathsf K_kh^{\mathsf p}\, e^{-\frac 1h( f(z_k)-f(x_0))} (1+O(\sqrt h)) .$$ 
Hence, if \eqref{hypo1}  and \eqref{hypo2}  hold,   $\{\mathsf f_1^{(1)} , \ldots, \mathsf f_n^{(1)}\} $ and $\mathsf u^{(0)}$  satisfy  items 3 and 4  in Proposition~\ref{ESTIME2}. 
  \end{lemma}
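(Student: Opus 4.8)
The statement of Lemma~\ref{de3-boundary+interaction} has two parts: an estimate of the boundary integral $\int_{\Sigma_{z_\ell}} \mathsf f_k^{(1)}\cdot\mathsf n_\Omega\, e^{-\frac2h f}\,d\sigma$, and an estimate of the interaction term $\lp\nabla\mathsf u^{(0)},\mathsf f_k^{(1)}\rp_{L^2_w(\Omega)}$. I would treat them separately.

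For the boundary integral, the key observation is that $\mathsf f_k^{(1)}=e^{\frac1h f}\mathsf v_k^{(1)}$ where $\mathsf v_k^{(1)}=\chi_k^{\textsc{M}}(\beta)\,\mathsf u_k^{(1)}/\|\chi_k^{\textsc{M}}(\beta)\mathsf u_k^{(1)}\|_{L^2(\Omega)}$ (Definition~\ref{de.QM-pi} and~\eqref{eq.QM1}), so $\int_{\Sigma_{z_\ell}}\mathsf f_k^{(1)}\cdot\mathsf n_\Omega\, e^{-\frac2h f}\,d\sigma = \frac{1}{\|\chi_k^{\textsc{M}}(\beta)\mathsf u_k^{(1)}\|_{L^2(\Omega)}}\int_{\Sigma_{z_\ell}}\chi_k^{\textsc{M}}(\beta)\,\mathsf u_k^{(1)}\cdot\mathsf n_\Omega\, e^{-\frac1h f}\,d\sigma$. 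First I would deal with the case $k\neq\ell$: since $\Sigma_{z_\ell}\subset\Gamma_{z_\ell}$ and, by the disjointness of the $\Gamma_{z_j}$'s noted after Lemma~\ref{le.start}, $\Sigma_{z_\ell}$ is disjoint from $\overline{\mathsf\Omega_k^{\textsc{M}}}$ for $k\ne\ell$ (recall $\pa\mathsf\Omega_k^{\textsc{M}}\cap\partial\Omega=\overline{\Gamma_{k,\mbf D}^{\textsc{M}}}\subset\Gamma_{z_k}$ by item~1 of Proposition~\ref{pr.omegakpoint}), while $\mathsf v_k^{(1)}$ is supported in $\overline{\mathsf\Omega_k^{\textsc{M}}}$; hence the integral is exactly $0$. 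For $k=\ell$, I would use~\eqref{eq.chi=1Gzk}, which gives $\chi_k^{\textsc{M}}(\beta)=1$ in a neighborhood of $\overline{\Sigma_{z_k}}$, so $\int_{\Sigma_{z_k}}\chi_k^{\textsc{M}}(\beta)\,\mathsf u_k^{(1)}\cdot\mathsf n_\Omega\, e^{-\frac1h f} = \int_{\Sigma_{z_k}}\mathsf u_k^{(1)}\cdot\mathsf n_{\mathsf\Omega_k^{\textsc{M}}}\, e^{-\frac1h f}$ (using $\mathsf n_{\mathsf\Omega_k^{\textsc{M}}}=\mathsf n_\Omega$ on $\Gamma_{k,\mbf D}^{\textsc{M}}$, cf.~\eqref{eq.=normaCalpha}). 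Then I would compare this with the integral over all of $\Gamma_{k,\mbf D}^{\textsc{M}}$ appearing in~\eqref{eq.DL-boun1} of Proposition~\ref{pr.VP-DeltaTN}: the difference is the integral over $\Gamma_{k,\mbf D}^{\textsc{M}}\setminus\Sigma_{z_k}$, which by the Agmon estimate of Proposition~\ref{pr.Agmon} (the weight $e^{-\frac1h f}$ combined with $\|e^{\Psi_k/h}\mathsf u_k^{(1)}\|\le e^{\delta/h}$, and the fact that on $\Gamma_{k,\mbf D}^{\textsc{M}}\setminus\Sigma_{z_k}$ one has $\mathsf d_a(\cdot,z_k)\ge c>0$) is exponentially smaller than $e^{-\frac1h f(z_k)}h^{\mathsf m}$; combined with~\eqref{eq.normevk1} for the normalization factor, this yields $-\mathsf b_k h^{\mathsf m}e^{-\frac1h f(z_k)}(1+O(\sqrt h))$.

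For the interaction term, I would write $\lp\nabla\mathsf u^{(0)},\mathsf f_k^{(1)}\rp_{L^2_w(\Omega)} = \lp\nabla\mathsf u^{(0)}, e^{\frac1h f}\mathsf v_k^{(1)}\rp_{L^2_w(\Omega)} = \frac{1}{\|\chi_r\|_{L^2_w}}\frac{1}{\|\chi_k^{\textsc{M}}(\beta)\mathsf u_k^{(1)}\|_{L^2(\Omega)}}\lp\nabla\chi_r, e^{-\frac1h f}\chi_k^{\textsc{M}}(\beta)\mathsf u_k^{(1)}\rp_{L^2(\Omega)}$. The support of $\nabla\chi_r$ lies in $\{f\ge\min_{\partial\Omega}f - r\}\cap\Omega$, away from $x_0$. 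The idea is to relate this to the quasi-mode $\varphi_k^{\textsc{M},(0)}$ and $\mathsf u_k^{(0)}$ from Proposition~\ref{pr.VP-DeltaTN}: since $\mathsf u_k^{(1)} = \mathsf d_{f,h}\mathsf u_k^{(0)}/\mathsf N_k^{(1)}$ (see~\eqref{eq.u1k}) and $\chi_k^{\textsc{M}}(\beta)=1$ near $x_0$ (by~\eqref{eq.chi=1Gx0}) and $\chi_r$ has a gradient localized where $\mathsf u_k^{(0)}$ is essentially of order $e^{-\frac1h f(z_k)}h^{\text{something}}$ (via Proposition~\ref{pr.VP-DeltaTN} and the structure of $\varphi_k^{\textsc{M},(0)}$, which interpolates between the $x_0$-well and $z_k$ along $x_d$). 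I would integrate by parts to move $\nabla\chi_r\cdot$ onto $\mathsf u_k^{(1)}$, use that $\mathsf d_{f,h}^*\mathsf u_k^{(1)}=\lambda(\mathsf\Omega_k^{\textsc{M}})\mathsf u_k^{(0)}$ is exponentially small, and carry out a Laplace's method computation for the leading boundary-type term, picking up the value at $z_k$; comparison with the normalizations $\|\chi_r\|_{L^2_w}=\sqrt{\kappa_{x_0}}h^{d/4}e^{-\frac1h f(x_0)}(1+O(h))$ from~\eqref{lap.x0K--2} and~\eqref{eq.normevk1}, together with $\mathsf N_k^{(1)}=\sqrt{\lambda(\mathsf\Omega_k^{\textsc{M}})}$ and~\eqref{eq.DL-blambda1}, gives the claimed $\mathsf K_k h^{\mathsf p}e^{-\frac1h(f(z_k)-f(x_0))}(1+O(\sqrt h))$. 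Since $f(z_k)=f(z_1)$ for $k\le n_0$ and $f(z_k)>f(z_1)$ for $k>n_0$, this splits into the two displayed cases, with the $k>n_0$ case being the estimate $O(e^{-\frac1h(f(z_1)-f(x_0)+c)})$ with $c=f(z_k)-f(z_1)>0$. The constants $\mathsf K_k$ and $\mathsf p$ would be read off and recorded in~\eqref{eq.constantK}; I would check that $\mathsf K_k\ne 0$ and that $\mathsf p=\frac d4-\frac12$ (half-integer powers from the Gaussian integral along $x_d$ and over $\pa\Omega$ near $z_k$ together with the $x_0$ well).

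The main obstacle is the interaction-term computation: one must carefully track which part of $\mathsf u_k^{(1)}$ contributes on $\text{supp}\,\nabla\chi_r$, and the cleanest route is probably to avoid computing $\mathsf u_k^{(1)}$ directly on that support and instead exploit the complex structure $\mathsf u_k^{(1)}=\mathsf d_{f,h}\mathsf u_k^{(0)}/\mathsf N_k^{(1)}$ together with an integration by parts that transfers everything to a boundary-localized integral near $z_k$, mirroring the proof of~\eqref{eq.DL-boun1}. Concretely, I expect $\lp\nabla\chi_r, e^{-\frac1h f}\chi_k^{\textsc{M}}\mathsf u_k^{(1)}\rp = \frac1{\mathsf N_k^{(1)}}\lp\nabla\chi_r, e^{-\frac1h f}\mathsf d_{f,h}\mathsf u_k^{(0)}\rp$ up to exponentially small Agmon errors, and then $\lp\mathsf d(\chi_r e^{-f/h}\text{-type}), \dots\rp$ Green-formula manipulations reduce this to a factor times $\int_{\Gamma_{k,\mbf D}^{\textsc{M}}}\mathsf u_k^{(1)}\cdot\mathsf n_{\mathsf\Omega_k^{\textsc{M}}}e^{-f/h}$, which is already known from Proposition~\ref{pr.VP-DeltaTN}. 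Matching powers of $h$ and the sign then yields $\mathsf K_k$ and $\mathsf p$, and checking $\mathsf K_k\ne 0$ is the final verification needed so that items~4 of Proposition~\ref{ESTIME} and items~3--4 of Proposition~\ref{ESTIME2} apply.
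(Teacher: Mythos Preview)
Your approach matches the paper's: the boundary integral is handled via the support of $\chi_k^{\textsc{M}}$ (zero on $\partial\Omega\setminus\overline{\Gamma_{k,\mbf D}^{\textsc{M}}}$, hence on $\Sigma_{z_\ell}$ for $\ell\neq k$) and the Agmon-controlled comparison between $\Sigma_{z_k}$ and $\Gamma_{k,\mbf D}^{\textsc{M}}$, while the interaction term is obtained by the integration by parts $\nabla\chi_r=-\nabla(1-\chi_r)$ producing the boundary term $-\int_{\partial\Omega}e^{-f/h}\mathsf v_k^{(1)}\cdot\mathsf n_\Omega$ (reduced to~\eqref{eq.DL-boun1}) plus an error controlled by $\|\mathsf d^*_{f,h}\mathsf v_k^{(1)}\|$ via Corollary~\ref{co.qfh-QfhM}, with~\eqref{hypo1}--\eqref{hypo2} entering precisely to make this error $o(e^{-(f(z_k)-f(x_0))/h})$ for $k>n_0$. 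Two small slips to fix: $\mathsf p=-\tfrac12$ (you wrote $\tfrac d4-\tfrac12$, which is $\mathsf m$), and $\mathsf d^*_{f,h}\mathsf u_k^{(1)}=\sqrt{\lambda(\mathsf\Omega_k^{\textsc{M}})}\,\mathsf u_k^{(0)}$, not $\lambda(\mathsf\Omega_k^{\textsc{M}})\,\mathsf u_k^{(0)}$.
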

  
  \begin{proof} Recall the definitions of $\mathsf u^{(0)}$
    and  $\{\mathsf f_1^{(1)} , \ldots, \mathsf f_n^{(1)}\} $  in  Definition~\ref{de.QM-pi}. 
  The proof is divided into several steps. 
  \medskip
  
  \noindent
  \textbf{Step 1.} Let us first compute $ \int_{\Sigma_{z_k}}      \mathsf u_k^{(1)}   \cdot \mathsf n_\Omega  \   e^{- \frac{1}{h} f}   $.   One has since $\overline{\Sigma_{z_k}} \subset \Gamma_{k,\mbf D}^{     \textsc{M} }$  (see item 1 in Proposition \ref{pr.omegakpoint}), 
\begin{align*}
 \int_{\Sigma_{z_k}}      \mathsf u_k^{(1)}   \cdot \mathsf n_\Omega  \   e^{- \frac{1}{h} f}  =   \int_{\Gamma_{k,\mbf D}^{     \textsc{M} } }      \mathsf u_k^{(1)}   \cdot \mathsf n_\Omega  \   e^{- \frac{1}{h} f}  d\sigma - \int_{\Gamma_{k,\mbf D}^{     \textsc{M} }  \setminus \Sigma_{z_k} }     \mathsf u_k^{(1)}   \cdot \mathsf n_\Omega  \   e^{- \frac{1}{h} f}   .  
    \end{align*}
    It holds, using the trace estimate~\eqref{eq:subelliptic}  and Proposition~\ref{pr.Agmon}, for any $\delta>0$, there exists  $h_\delta>0$  such that  for $h\in (0,h_\delta)$:
    $$\Big \vert  \int_{\Gamma_{k,\mbf D}^{     \textsc{M} }  \setminus \Sigma_{z_k} }     \mathsf u_k^{(1)}   \cdot \mathsf n_\Omega  \   e^{- \frac{1}{h} f} \Big \vert  \le    e^{-\frac 1h \inf_{\Gamma_{k,\mbf D}^{     \textsc{M} }  \setminus \Sigma_{z_k}}(\mathsf d_a(\cdot, z_k)+f(z_k))}   e^{\frac{ \delta }{h}}.
    $$
   Notice that we have used that $f\ge f(z_k)$ on $\Gamma_{k,\mbf D}^{     \textsc{M} }$ (because $\Gamma_{k,\mbf D}^{     \textsc{M} }\subset \Gamma_{z_k} \subset \mathsf W_+^{z_k}$, see  Lemma~\ref{eq.incluWW}). 
    Thus, since $\inf_{\Gamma_{k,\mbf D}^{     \textsc{M} }  \setminus \Sigma_{z_k}}\mathsf d_a(\cdot, z_k)>0$,   for $\delta>0$ small enough, one has for $h$ small enough
        \begin{align}\label{eq.ici2}
    \Big \vert  \int_{\Gamma_{k,\mbf D}^{     \textsc{M} }  \setminus \Sigma_{z_k} }     \mathsf u_k^{(1)}   \cdot \mathsf n_\Omega  \   e^{- \frac{1}{h} f}   \Big \vert \le    e^{-\frac 1h (f(z_k)+c)}.
\end{align}
 Using~\eqref{eq.DL-boun1}, it then holds as $h\to 0$:
    \begin{align*}
 \int_{\Sigma_{z_k}}\mathsf u^{(1)}_k \cdot \mathsf n_{  \mathsf \Omega_k^{\textsc{M}}  }\  e^{-\frac 1hf}=-\mathsf b_k h^{\mathsf m} \, e^{-\frac 1h f(z_k)} (1+O(\sqrt h)),\
\end{align*}
 where $\mathsf b_k$ and $\mathsf m$  are defined in~\eqref{eq.kappa_0}.
  \medskip
  
  \noindent
  \textbf{Step 2.} Let us deal with the terms  $ \int_{\Sigma_{z_\ell}}     \mathsf f_k^{(1)} \cdot \mathsf n_\Omega  \,  e^{- \frac{1}{h} f}   $. 
 One has that 
  $\chi_k^{\textsc{M}} (\beta)=0 \text{ on }\partial \Omega\setminus \overline{\Gamma_{k,\mbf D}^{     \textsc{M} }}.$
Indeed, $\chi_k^{\textsc{M}} (\beta)$ is supported in $\overline{\mathsf \Omega_k^{\textsc{M}}}$ (see~\eqref{eq.supp-close1}, and~\eqref{eq.supp-close2}) and $\overline{ \mathsf \Omega_k^{\textsc{M}}}\cap \partial \Omega = \pa \mathsf \Omega_k^{\textsc{M}}\cap \partial \Omega= \overline{\Gamma_{k,\mbf D}^{\textsc{M}} }$ (see   item 1 Proposition~\ref{pr.omegakpoint}). In particular, because $\overline{\Gamma_{k,\mbf D}^{\textsc{M}} }\subset \Gamma_{z_k}$, and $\Gamma_{z_k}\cap \Sigma_\ell \subset  \Gamma_{z_k}\cap \Gamma_{z_\ell}=\emptyset$ when $k\neq \ell$ (see the line after  the proof of Lemma~\ref{le.start} and~\eqref{Sigma_k}), $\chi_k^{\textsc{M}} (\beta)=0$ on $\Sigma_{z_\ell}$ when $k\neq \ell$.
Then, one has  using~\eqref{eq.QM1},~\eqref{eq.chi=1Gzk} and~\eqref{eq.normevk1},
\begin{align*}
\int_{\Sigma_{z_\ell}}    \mathsf f_k^{(1)} \cdot \mathsf n_\Omega  \   e^{- \frac{2}{h} f}  &= \int_{\Sigma_{z_\ell}}    \mathsf v_k^{(1)} \cdot \mathsf n_\Omega  \   e^{- \frac{1}{h} f}  d\sigma\\
&= \frac{ 1}{\Vert \chi_k^{\textsc{M}} (\beta)\mathsf u_k^{(1)}\Vert  _{ L^2 (\Omega) }} \int_{\Sigma_{z_\ell}}     \chi_k^{\textsc{M}} (\beta)    \mathsf u_k^{(1)}   \cdot \mathsf n_\Omega  \   e^{- \frac{1}{h} f}   \\
&=(1+O(e^{-\frac ch}))\times 
 \begin{cases}    0   &   \text{ if } k\neq \ell,\\
  \int_{\Sigma_{z_k}}      \mathsf u_k^{(1)}   \cdot \mathsf n_\Omega  \   e^{- \frac{1}{h} f}      &  \text{ if } k=\ell,
   \end{cases} \\
&= \begin{cases}    0   &   \text{ if } k\neq \ell,\\
   -\mathsf  b_k \ h^{\mathsf m }  \     e^{-\frac{1}{h} f(z_k)}  \    (  1  +     O(\sqrt h )    )   &  \text{ if } k=\ell.
   \end{cases} 
  \end{align*}
  This proves the first statement in Lemma~\ref{de3-boundary+interaction}. In particular,   $\{\mathsf f_1^{(1)} , \ldots, \mathsf f_n^{(1)}\} $ satisfies  item  4(b) in Proposition~\ref{ESTIME} and  item  4  in Proposition~\ref{ESTIME2}. 
     \medskip
  
  \noindent
  \textbf{Step 3.} Let us finally deal with the terms  $
\big \lp       \nabla\mathsf u^{(0)}   ,   \mathsf f_k^{(1)}\big\rp_{L^2_w(\Omega)}$. 
 One has, since $\chi_r=0$ on $\partial \Omega$, from~\eqref{eq.QM1}, 
\begin{align*}
  \big \lp       \nabla \mathsf u^{(0)}   ,   \mathsf f_k^{(1)}\big\rp_{L^2_w(\Omega)}&= \frac{1}{\Vert \chi_r\Vert_{L^2_w(\Omega)}} \big \lp       \nabla  \chi_r  ,   e^{-\frac 1hf} \mathsf v_k^{(1)}\big\rp_{L^2(\Omega)}\\
  &=-\frac{1}{\Vert \chi_r\Vert_{L^2_w(\Omega)}} \big \lp       \nabla(1-  \chi_r)  ,   e^{-\frac 1hf} \mathsf v_k^{(1)}\big\rp_{L^2(\Omega)}\\
  &= \frac{1}{\Vert \chi_r\Vert_{L^2_w(\Omega)}} \Big[  h^{-1} \big \lp       (1-  \chi_r)  ,   e^{-\frac 1hf}\mathsf d^*_{f,h} \mathsf v_k^{(1)}\big\rp_{L^2(\Omega)}   -\int_{\partial \Omega} e^{-\frac 1hf} \mathsf v_k^{(1)} \cdot \mathsf n_\Omega   \Big]\\
  &= \frac{1}{\Vert \chi_r\Vert_{L^2_w(\Omega)}} \Big[ h^{-1}  \big \lp   (1-  \chi_r)   e^{-\frac 1hf} ,  \mathsf d^*_{f,h} \mathsf v_k^{(1)}\big\rp_{L^2(\Omega)}   -\int_{\partial \Omega \cap{\rm supp} \chi_k^{\textsc{M}} (\beta) } e^{-\frac 1hf} \mathsf v_k^{(1)} \cdot \mathsf n_\Omega   \Big].
  \end{align*}

   Let us first deal with the boundary term in the previous equality.  
 Because $\chi_k^{\textsc{M}} (\beta)=0$ on $\partial \Omega\setminus \overline{\Gamma_{k,\mbf D}^{     \textsc{M} }}$, from~\eqref{eq.QM1} and~\eqref{eq.normevk1},   it holds:
 \begin{align*}
 \int_{\partial \Omega \cap{\rm supp} \chi_k^{\textsc{M}} (\beta) } e^{-\frac 1hf} \mathsf v_k^{(1)} \cdot \mathsf n_\Omega   &=    (1+O(e^{-\frac ch}))\int_{ \Gamma_{k,\mbf D}^{     \textsc{M} }}  \chi_k^{\textsc{M}} (\beta) e^{-\frac 1hf} \mathsf u_k^{(1)} \cdot \mathsf n_\Omega \\
  &= (1+O(e^{-\frac ch}))\Big[ \int_{ \Gamma_{k,\mbf D}^{     \textsc{M} }} (\chi_k^{\textsc{M}} (\beta)-1)  e^{-\frac 1hf} \mathsf u_k^{(1)} \cdot \mathsf n_\Omega + \int_{ \Gamma_{k,\mbf D}^{     \textsc{M} }} e^{-\frac 1hf} \mathsf u_k^{(1)} \cdot \mathsf n_\Omega\Big].
  \end{align*}
  It holds, from~\eqref{eq.chi=1Gzk},
  $$\Big \vert\int_{ \Gamma_{k,\mbf D}^{     \textsc{M} }}
  (\chi_k^{\textsc{M}} (\beta)-1)  e^{-\frac 1hf} \mathsf u_k^{(1)}
  \cdot \mathsf n_\Omega\Big \vert = \Big\vert\int_{ \Gamma_{k,\mbf D}^{     \textsc{M} }\setminus \Sigma_{z_k}} (\chi_k^{\textsc{M}} (\beta)-1)  e^{-\frac 1hf} \mathsf u_k^{(1)} \cdot \mathsf n_\Omega\Big \vert\le C \Big \vert \int_{ \Gamma_{k,\mbf D}^{     \textsc{M} }\setminus \Sigma_{z_k}}  e^{-\frac 1hf} \mathsf u_k^{(1)} \cdot \mathsf n_\Omega\Big \vert .
  $$
Thus from~\eqref{eq.ici2} and~\eqref{eq.DL-boun1}, it then holds as $h\to 0$:
   $$
   \int_{\partial \Omega \cap{\rm supp} \chi_k^{\textsc{M}} (\beta) } e^{-\frac 1hf} \mathsf v_k^{(1)} \cdot \mathsf n_\Omega  =-\mathsf b_k h^{\mathsf m} \, e^{-\frac 1h f(z_k)} (1+O(\sqrt h)).
$$
 Hence, as $h\to 0$, one has using~\eqref{lap.x0K--2} (see also \eqref{eq.kappa_0}):
    \begin{align*}
  -\frac{1}{\Vert \chi_r\Vert_{L^2_w(\Omega)}} \int_{\partial \Omega \cap{\rm supp} \chi_k^{\textsc{M}} (\beta) } e^{-\frac 1hf} \mathsf v_k^{(1)} \cdot \mathsf n_\Omega  = \mathsf K_kh^{\mathsf p}\, e^{-\frac 1h( f(z_k)-f(x_0))} (1+O(\sqrt h)),\
\end{align*}
with 
 \begin{align}\label{eq.constantK}
\mathsf K_k= \frac{\mathsf b_k}{\sqrt{\kappa_{x_0}}}=\sqrt{\mathsf A_{x_0,z_k}}\ \text{and}\ \mathsf p=\mathsf m-\frac d4=  -\frac 12, \text{ where $\mathsf A_{x_0,z_k}$ is defined in \eqref{eq.ax0zk}.}
\end{align} 

   Let us now  deal with the error term $ \big \lp   (1-  \chi_r)   e^{-\frac 1hf} ,  \mathsf d^*_{f,h} \mathsf v_k^{(1)}\big\rp_{L^2(\Omega)} $. 
 Using~Proposition~\ref{pr.AgmonQM} and~Corollary~\ref{co.qfh-QfhM}, for any $\delta>0$, there exists $h_\delta>0$  such that   for $h\in (0,h_\delta)$:
 \begin{align*}
\big \vert \big \lp(1-  \chi_r),e^{-\frac 1hf}\mathsf d^*_{f,h} \mathsf v_k^{(1)}\big\rp_{L^2(\Omega)}\big \vert &\le C e^{-\frac 1h \min_{ {\rm supp}   (1-\chi_r) }f }\Vert\mathsf d^*_{f,h} \mathsf v_k^{(1)} \Vert_{L^2(\Omega)} \\
&\le Ce^{-\frac 1h ( f(z_1)-r)}  \big[   e^{-\frac 1h(f(z_k)-f(x_0))}+e^{\frac{ \delta }{h}}   e^{-\frac 1h( \inf_{z\in \partial \Omega \setminus \Gamma_{z_k}} \mathsf d_a(z_k,z))}\big].
  \end{align*}
      Therefore, using~\eqref{lap.x0K--2}, 
\begin{align*}
   \frac{  \big \vert \big \lp   (1-  \chi_r)   e^{-\frac 1hf} ,  \mathsf d^*_{f,h} \mathsf v_k^{(1)}\big\rp_{L^2(\Omega)} \big \vert }{\Vert \chi_r\Vert_{L^2_w(\Omega)}} & \le     C\big[ h^{-d/4}  e^{-\frac 1h(f(z_k)-f(x_0))} e^{-\frac 1h(f(z_1)-f(x_0)-r)}\\
   &\qquad +  e^{\frac{  \delta }{h}} e^{-\frac 1h( \inf_{z\in \partial \Omega \setminus \Gamma_{z_k}} \mathsf d_a(z_k,z))}e^{-\frac 1h ( f(z_1)-f(x_0)-r)}\big]\\
  & \le  e^{-\frac1h \mathsf  E_k(r,\delta) }, 
  \end{align*}
  where, for $r>0$ and $\delta>0$ small enough, one can choose
  $\mathsf E_k(r,\delta)>f(z_1)-f(x_0)$. Moreover, if   \eqref{hypo1}
  and \eqref{hypo2}  hold, then $\inf_{z\in \partial \Omega \setminus
    \Gamma_{z_k}} \mathsf d_a(z_k,z)>f(z_k)-f(z_1)$ and
  $f(z_1)-f(x_0)>f(z_k)-f(z_1)$. Thus, for $r>0$ and $\delta>0$ small
  enough, one can choose $\mathsf E_k(r,\delta)>f(z_k)-f(x_0)$. 
The proof of Lemma~\ref{de3-boundary+interaction}  is complete.   
  \end{proof}
  \noindent
  
In this section, we proved (see Lemmata~\ref{de1},~\ref{de2}, and~\ref{de3-boundary+interaction}) that the  quasi-modes $\{\mathsf f_1^{(1)} , \ldots, \mathsf f_n^{(1)}\}  \cup \{ \mathsf u^{(0)} \}$ satisfy   all   the assumptions of 
Propositions~\ref{ESTIME} and~\ref{ESTIME2}. We can now conclude the proofs of   Theorem~\ref{thm1}, Proposition~\ref{pr.LP-N}, Theorem~\ref{thm2}, and Corollary~\ref{co.ek}, using the results of Propositions~\ref{ESTIME} and~\ref{ESTIME2}.  
Theorem~\ref{thm1} is a consequence of   Propositions~\ref{ESTIME}
and~\ref{ESTIME2} together with  the formulas~\eqref{eq.kappa_0}
and~\eqref{eq.constantK} for  the constants $\mathsf b_k$,  $\mathsf
m$,  $\mathsf K_k$, and~$\mathsf p$.    Proposition~\ref{pr.LP-N} is a
consequence of Lemma~\ref{de1} and
Proposition~\ref{ESTIME} (notice
that using Lemma~\ref{le.start}, Proposition~\ref{pr.LP-N} is also  a
consequence of the results of~\cite{DoNe2}: we thus here provide a new
proof using $1$-forms). Theorem~\ref{thm2} is a consequence of
Theorem~\ref{thm1}  and Proposition~\ref{pr.LP-N} together
with~\eqref{eq.dens}. Corollary~\ref{co.ek} is a consequence of
Theorem~\ref{thm1}, Proposition~\ref{pr.LP-N},
and~\eqref{eq.expk}. It remains to prove Theorem~\ref{thm3}.  




\subsection{Generalization to deterministic initial conditions: proof of Theorem~\ref{thm3}}
\label{sec.proof}

 The proof of Theorem~\ref{thm3} relies on so-called leveling results  (see Corollary \ref{co.Px} below) which only requires that   $f:\overline \Omega\to \mathbb R$ is a $\mathcal C^\infty$    function which satisfies item 1 in \autoref{A}.  For $F\in\mathcal  C^{\infty}(\partial \Omega,\mathbb R)$, let us define 
\begin{equation}\label{eq.wh}
\forall x\in \overline
\Omega, \ 
w_h(x)=\mathbb
E_{x}   [ F (X_{\tau} )],
\end{equation}
where $\tau$ is defined by~\eqref{eq:tau}.

\subsubsection{{Leveling result} on $w_h$}
For any  {closed} subset $\mathsf F \subset \mathbb R^d$, one denotes by  
$$\tau_{\mathsf F}=\inf \{ t\geq 0\,  |\,  X_t \in  \mathsf F
\}$$ the first time the process~\eqref{eq.langevin} hits $\mathsf F$
(in particular, $\tau=\tau_{\Omega^c}$).  
Let  $x_0$ be a local minimum of $f$ in $\Omega$. Let  us recall that  ${\mathsf B(x_0,h)}$ is the open ball  centred at $x_0$ of radius $h$.      
Let us assume that $h$ is small enough so that $ \overline{\mathsf B}(x_0,h)\subset \Omega$, where $ \overline{\mathsf B}(x_0,h)$ is the closure of ${\mathsf B}(x_0,h)$. 
The function 
$$\mathsf p_{x_0}: x\mapsto \mathbb P_{x}[ \tau_{\Omega^c}<\tau_{ \overline{\mathsf B}(x_0,h)}]$$ is called the  \textit{committor  function} (or the \textit{equilibrium potential})  between  $\Omega^c$ and $ \overline{\mathsf B}(x_0,h)$.  
We have the following precise leveling result on $\mathsf p_{x_0}:\overline \Omega\to \mathbb R$. 
\begin{proposition}\label{pr.upper_boundeCF}
Let us assume that $f:\overline \Omega\to \mathbb R$ is a $\mathcal C^\infty$ Morse  function which satisfies item 1 in \autoref{A}. 
  Let $\mathsf K$ be a compact subset of $\{f<\min_{\partial \Omega}f\}$. Then, there exist $C_{\mathsf K}>0$ and $h_0>0$   such that for all $h\in (0,h_0)$ and $x\in \mathsf K$: 
\begin{equation}\label{eq.res1}
\mathsf p_{x_0} (x)\le  C_{\mathsf K}\, h^{-d} e^{-\frac 2h(\min_{\partial \Omega}f-f(x))}.
\end{equation} 
\end{proposition}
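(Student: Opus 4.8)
The proof of Proposition~\ref{pr.upper_boundeCF} is a potential-theoretic estimate on the committor function $\mathsf p_{x_0}$. The starting point is that $\mathsf p_{x_0}$ solves the Dirichlet problem $\mathsf L^{(0)}_{f,h}\mathsf p_{x_0}=0$ in $\Omega \setminus \overline{\mathsf B}(x_0,h)$, with $\mathsf p_{x_0}=1$ on $\Omega^c$ and $\mathsf p_{x_0}=0$ on $\partial \mathsf B(x_0,h)$; by the maximum principle $0 \le \mathsf p_{x_0}\le 1$ everywhere. The quantity one controls is the Dirichlet energy (capacity)
\[
\mathrm{cap}_h := \frac h2 \int_{\Omega \setminus \overline{\mathsf B}(x_0,h)} |\nabla \mathsf p_{x_0}|^2 \, e^{-\frac 2h f}\, dx,
\]
which, by the variational characterization of capacity, is bounded above by $\frac h2 \int |\nabla g|^2 e^{-\frac 2h f}$ for \emph{any} admissible test function $g$ (equal to $1$ near $\partial \Omega$ and $0$ near $x_0$). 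First I would choose such a $g$ supported in a thin tube around a well-chosen path, or more simply a radial-type cutoff in the level sets of $f$, so that $\int |\nabla g|^2 e^{-\frac 2h f}$ concentrates near the relevant saddle values; a clean choice gives $\mathrm{cap}_h = O\!\left(h^{?}\, e^{-\frac 2h \min_{\partial \Omega} f}\right)$ up to polynomial corrections. This is the standard Eyring--Kramers capacity upper bound and is exactly the type of estimate already used in the literature cited in the excerpt (e.g.\ \cite{BEGK}).

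**Turning the energy bound into a pointwise bound.** The second ingredient is a Harnack-type / Agmon-type argument to pass from the integral bound on $|\nabla \mathsf p_{x_0}|^2 e^{-\frac 2h f}$ to a pointwise bound on $\mathsf p_{x_0}(x)$ for $x$ in the compact set $\mathsf K \subset \{f < \min_{\partial \Omega} f\}$. The key observation is that $\mathsf p_{x_0}$ is a nonnegative solution of $\mathsf L^{(0)}_{f,h}\mathsf p_{x_0}=0$ on the open set $\{f < \min_{\partial\Omega} f\}\setminus \overline{\mathsf B}(x_0,h)$, which is connected (this uses item~1 in \autoref{A} via Lemma~\ref{le.start}: $\{f<\min_{\partial\Omega}f\}$ is connected). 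On this set one can use a chain of Harnack inequalities for the elliptic operator $-\frac h2 \Delta + \nabla f \cdot \nabla$ to compare the value $\mathsf p_{x_0}(x)$ to $\mathsf p_{x_0}(y)$ for $y$ near $\partial\{f<\min_{\partial\Omega}f\}$, with a constant that degenerates at most like $e^{C/h}$ — but the exponential decay $e^{-\frac 2h(\min_{\partial\Omega}f - f(x))}$ that we are after comes from the fact that $\mathsf p_{x_0}$ has small $L^2_w$-norm: indeed $\int_{\{f<\min_{\partial\Omega}f - r\}} \mathsf p_{x_0}^2 e^{-\frac 2h f}$ can be bounded using the energy estimate together with a Poincaré-type inequality on this sublevel set, giving a gain $e^{-\frac 2h r}$ for each $r>0$. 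Optimizing over sublevel sets, i.e.\ running the argument with $r = \min_{\partial\Omega}f - f(x)$, yields the claimed factor $e^{-\frac 2h(\min_{\partial\Omega}f - f(x))}$, with the polynomial loss $h^{-d}$ absorbing the Harnack constants and the Laplace-method normalizations.

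**Main obstacle.** The hard part is the second step: converting the global energy (capacity) estimate into the \emph{uniform} pointwise bound \eqref{eq.res1} on compacts of $\{f<\min_{\partial\Omega}f\}$ with the \emph{sharp} exponential rate $\min_{\partial\Omega}f - f(x)$ depending on the base point. A crude maximum principle only gives $\mathsf p_{x_0}\le 1$; a naive Harnack chain loses an uncontrolled $e^{C/h}$. The right way is to combine the energy bound with a weighted Poincaré inequality on sublevel sets $\{f \le a\}$ for $a < \min_{\partial\Omega}f$ (whose constants are polynomial in $h$ because there is no critical value of $f$ in the relevant range except $f(x_0)$, which only affects normalization), and then to use that $\mathsf p_{x_0}$ vanishes near $x_0$ to kill the would-be constant mode. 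I would structure this as: (i) capacity upper bound via an explicit test function; (ii) $L^2_w$ smallness of $\mathsf p_{x_0}$ on $\{f \le a\}$ via Poincaré plus the capacity bound, with rate $e^{-\frac 2h(\min_{\partial\Omega}f - a)}$; (iii) elliptic regularity / mean-value inequality to upgrade $L^2_w$ smallness to pointwise smallness on compacts of $\{f\le a\}$, losing only powers of $h$; (iv) take $a$ slightly above $f(x)$ and let the slack go to $0$. Steps (i), (iii), (iv) are routine; step (ii) is where the geometric hypothesis (item~1 of \autoref{A}, and Lemma~\ref{le.start}) genuinely enters and is the crux of the argument.
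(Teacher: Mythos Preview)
Your overall plan is potential-theoretic, like the paper's, but the crucial second step does not deliver the exponent in~\eqref{eq.res1}. Suppose you carry out (i)--(ii) as you describe: the capacity upper bound gives $\int_{\Omega\setminus\overline{\mathsf B}(x_0,h)}|\nabla\mathsf p_{x_0}|^{2}e^{-\frac 2h f}\le Ch^{-1}e^{-\frac 2h\min_{\partial\Omega}f}$, and a Poincar\'e--Friedrichs inequality on $\{f<a\}$ (granting your claim that its constant is polynomial in $h$, which itself is not obvious and would require a capacity lower bound of the same flavour as the paper's Step~2) yields $\int_{\{f<a\}}\mathsf p_{x_0}^{2}e^{-\frac 2h f}\le Ch^{-k}e^{-\frac 2h\min_{\partial\Omega}f}$. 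But then step~(iii) fails to ``lose only powers of $h$'': since the mean-value/Harnack estimate for $\mathsf L^{(0)}_{f,h}$ is only $h$-uniform on balls of radius $O(h)$, you get
\[
\mathsf p_{x_0}(x)^{2}\ \le\ Ch^{-d}\int_{\mathsf B(x,h)}\mathsf p_{x_0}^{2}\ \le\ Ch^{-d}\,e^{\frac 2h(f(x)+O(h))}\int_{\mathsf B(x,h)}\mathsf p_{x_0}^{2}e^{-\frac 2h f}\ \le\ Ch^{-?}\,e^{-\frac 2h(\min_{\partial\Omega}f-f(x))},
\]
hence $\mathsf p_{x_0}(x)\le Ch^{-?}e^{-\frac 1h(\min_{\partial\Omega}f-f(x))}$. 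The square root in the $L^{2}\to L^{\infty}$ passage halves the exponent; this is not a technicality but an intrinsic limitation of controlling a pointwise value through the Dirichlet energy of the committor itself.

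The paper avoids this entirely by never passing through an $L^{2}$ norm of $\mathsf p_{x_0}$. Following \cite{BEGK,landim2017dirichlet}, it represents $\mathsf p_{x_0}(x)$ via the Green function of $\mathsf L^{\mathsf{Di},(0)}_{f,h}(\Omega\setminus\overline{\mathsf B}(x_0,h))$, applies a Harnack inequality on a ball $\mathsf B(x,\rho)$ with $\rho=O(h)$ (so the Harnack constant is $h$-independent), and obtains the pointwise capacity ratio bound
\[
\mathsf p_{x_0}(x)\ \le\ C\,\frac{\mathrm{cap}(\mathsf B(x,\rho),\Omega^{c})}{\mathrm{cap}(\mathsf B(x,\rho),\mathsf B(x_0,h)\cup\Omega^{c})}.
\]
Both capacities are then estimated directly: the numerator from above by a cutoff test function near $\partial\{f<\min_{\partial\Omega}f\}$ (giving $\lesssim e^{-\frac 2h\min_{\partial\Omega}f}$), and the denominator from below by a one-dimensional tube argument along a path from $x$ down to $x_0$ (giving $\gtrsim h^{d}e^{-\frac 2h f(x)}$). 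The ratio yields~\eqref{eq.res1} with the correct factor $e^{-\frac 2h}$. The missing idea in your sketch is precisely this representation of the committor as a ratio of \emph{two} capacities centred at the point $x$, rather than the single global capacity $\mathrm{cap}(\mathsf B(x_0,h),\Omega^{c})$.
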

\noindent
We refer to Figure~\ref{fig:3}  for a schematic representation of $\{f<\min_{\partial \Omega}f\}$    and $\mathsf B(x_0,h)$ (recall that since item 1 in \autoref{A} holds, $f$ satisfies item 1 in Lemma \ref{le.start}). 
  \begin{figure}[h!]
\begin{center}
\begin{tikzpicture}[scale=1]
\tikzstyle{vertex}=[draw,circle,fill=black,minimum size=5pt,inner sep=0pt]
\tikzstyle{ball}=[circle, dashed, minimum size=1.3cm, draw]
\tikzstyle{point}=[circle, fill, minimum size=.01cm, draw]
\draw [rounded corners=10pt] (1,0.5) -- (-0.25,2.5) -- (1,5) -- (4.3,4.2) -- (4.8,2.75) -- (4.4,1) -- (4,0) -- (1.8,0) --cycle;
\draw [thick, densely dashed,rounded corners=10pt] (1.5,0.7) -- (.2,1.5) -- (0.5,2.5) -- (0.09,3.5) -- (3,3.75) -- (3.5,3) -- (3.5,2) -- (3,1) --cycle; 
\draw [thick, densely dashed]   (8.3 ,3.2) -- (9.3 ,3.2);
\draw  (11.3 ,3.2) node[]{$\{f= \min_{\partial \Omega}f\}$};
 \draw (1.7,1.46) node[]{\footnotesize{$ \{f<\min_{\partial \Omega}f\}$}};
     \draw  (3.8,1) node[]{$\Omega$};
    \draw  (5.2,3) node[]{$\partial \Omega$};
 \draw [thick]  (1.7 ,2.5) circle (0.78);
 \draw [<->] (1.7 ,2.4) -- (1.7 ,1.82);
 \draw (1.99 ,2.2)node[]{ $\small{h}$};
  \draw (1.7 ,2.8)node[]{$x_0$};
\draw (1.7 ,2.5) node[vertex,label=north west: { }](v){};
\draw (0.38,1.45) node[vertex,label=south west: {$z_1$}](v){};
\draw (0.17,3.4) node[vertex,label=north west: {$z_2$}](v){};
\end{tikzpicture}
 
 \caption{Schematic representation  of   $\{f<\min_{\partial \Omega}f\}$ and $
 \mathsf B(x_0,h)$.  On  the figure $\pa \{f<\min_{\partial \Omega}f\}\cap \partial \Omega=\{z_1,z_2\}$.  }
 \label{fig:3}
 \end{center}
\end{figure}
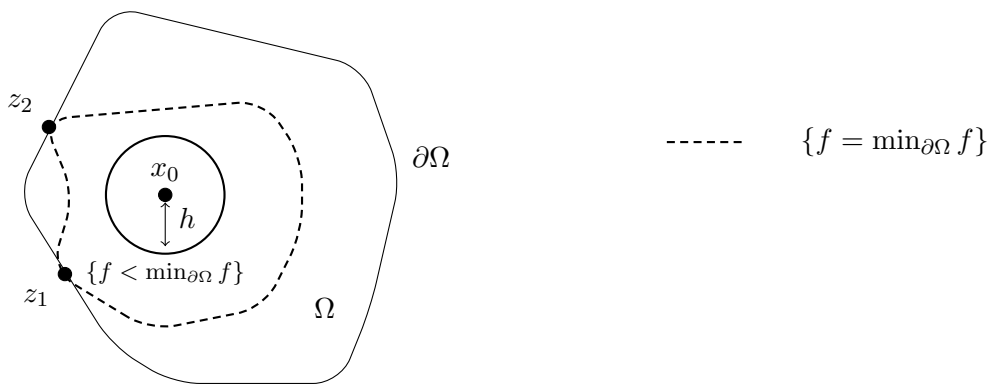

 \begin{proof}
 The proof of this result is inspired by the proofs of~\cite[Lemma 4.6]{BEGK} and~\cite[Proposition~7.9]{landim2017dirichlet}.  Let $x\in \Omega$. If $x\in  \overline{\mathsf B}(x_0,h)$, then $\mathsf p_{x_0}=0$. Let us thus deal with the case when $x\in \Omega\setminus   \overline{\mathsf B}(x_0,h)$. 
  \medskip
 
 \noindent
 \textbf{Step 1.}  First inequality for $  \mathsf p_{x_0}(x)$ using capacities. 
 \medskip
 
 \noindent
 In this step, we prove Equation~\eqref{eq.capa-ineq} below. Let us denote by  $\mathsf d_{\mathbb R^d}$ the standard Euclidean distance in $\mathbb R^d$. Let $G_h$ be the Green function   of  $ \mathsf L^{\mathsf{Di},(0)}_{f,h}(\Omega\setminus  \overline{\mathsf B}(x_0,h))$, see \cite[Equation~(2.3)]{BEGK}. 
 Set for  $x\in \Omega\setminus   \overline{\mathsf B}(x_0,h)$, 
 \begin{equation}\label{eq.cc}
 c=\mathsf d_{\mathbb R^d}(x, \Omega^c\cup     \overline{\mathsf B}(x_0,h)  )/2.
 \end{equation}
Define
$$\text{$\rho=ch>0$ and $R=\rho/3$}.$$
On the one hand, using this couple $(\rho,R)$ in the proof of~\cite[Lemma 4.6]{BEGK},     one deduces  that there exists $C_H>0$  such that for all $x\in\Omega\setminus  \overline{\mathsf B}(x_0,h) $,  $h\in (0,1]$,
\begin{equation}\label{eq.Green}
\sup_{z\in \pa \mathsf B(x,\rho)}G_h(x,z) \le C_H^{\pi \rho/R}\inf_{z\in \pa \mathsf B(x,\rho)}G_h (x,z).
\end{equation}
 Notice that $c$  depends on $h$ and $x$ (which was \textit{a priori} not the case in~\cite[Lemma 4.6]{BEGK}). Let us explain more precisely why~\eqref{eq.Green} remains  valid in our setting.   To get  Equation~\eqref{eq.Green}, one uses $k$~times  the  Harnack inequality \cite{MR1814364} (see also~\cite[Lemma 4.1]{BEGK})     on $k$ balls $\mathsf  B(x_i,\rho)$  where  $x_i\in  \pa\mathsf  B(x,\rho)$  with $\mathsf  B(x_i,R)\cap \mathsf  B(x_{i+1},R)\neq \emptyset$ ($i=1,\ldots,k$, $x_{k+1}=x_1$), and where  $k\le \pi \rho/R$. The constant $C_H$ in~\eqref{eq.Green}  is the one   from the Harnack inequality used on each  $\mathsf  B(x_i,R)$. In addition, this constant $C_H$ depends on $h^{-2} R^2$  and thus can be chosen  independently of $h$  since  for all $x\in\Omega\setminus  \overline{\mathsf B}(x_0,h)$ and $h>0$, 
$$h^{-2} R^2 \le \mathsf d_{\mathbb R^d}^2( x, \Omega^c\cup   \{x_0\})/6^2 \le \mathsf M_0^2/36,$$
where $\mathsf M_0:=\max_{y\in  \overline{\Omega} }\mathsf d_{\mathbb R^d}( y, \Omega^c\cup   \{x_0\})$. 
The condition $h\le 1$ in \eqref{eq.Green} ensures that we can use the Harnack inequality, since for all $x\in \Omega\setminus  \overline{\mathsf B}(x_0,h)$ and all $i=1,\ldots,k$,
$$     {\mathsf  B(x_i,2R)} \subset \Omega\setminus  \overline{\mathsf B}(x_0,h),$$
which follows from the fact that,   if $h\le 1$,  $\rho +2R=5\mathsf d_{\mathbb R^d}(x, \Omega^c\cup     \overline{\mathsf B}(x_0,h)  )h/6<\mathsf d_{\mathbb R^d}(x, \Omega^c\cup     \overline{\mathsf B}(x_0,h) )$. 
Finally notice that we have that 
$$C_H^{\pi \rho/R}=C_H^{\pi /3}.$$

On the other hand, using the arguments of the proof of~\cite[Proposition 7.9]{landim2017dirichlet} with $\mathcal C=   \mathsf  B(x,\rho)$ there, together with~\eqref{eq.Green}, one deduces that  there exists $C>0$ such that for all $x\in \Omega\setminus   \overline{\mathsf B}(x_0,h)$ and  $h$ small enough: 
 \begin{equation}\label{eq.capa-ineq}
  \mathsf p_{x_0}(x)\le  C \frac{\text{cap}(\mathsf  B(x,\rho),  \Omega^c)   }{\text{cap}(\mathsf  B(x,\rho),  \mathsf  B(x_0,h) \cup \Omega^c)},
\end{equation}
where we recall that (see \cite[Section 2]{BEGK}) for two   subsets $\mathsf C$ and $\mathsf  D$ of $ \mathbb R^d$   such that  $ \overline{\mathsf C}\cap\overline{ \mathsf D}=\emptyset$,
\begin{align}
\label{eq.mini}
{\rm cap}({\mathsf C},\mathsf D)&=\frac h2\inf_{\varphi\in H_{\mathsf C, \mathsf D}}  \int_{\mathbb R^d\setminus (\mathsf D\cup \mathsf C)}\big \vert \nabla \varphi (x)\big \vert^2e^{-\frac{2}{h}f(x)}dx,
\end{align}
where
$$
H_{\mathsf C, \mathsf D}=\big \{ \varphi \in H^1(\mathbb R^d),\,  \varphi (x)=1 \text{ for } x\in \mathsf C, \,    \varphi (x)=0 \text{ for } x\in\mathsf  D\}.$$

\medskip

\noindent
 \textbf{Step 2.} Upper bound on $\text{cap}(\mathsf  B(x,\rho),  \Omega^c)  $ and lower bound on $\text{cap}(\mathsf  B(x,\rho),  \mathsf  B(x_0,h) \cup \Omega^c)$. 
 \medskip

  \noindent
Let us first obtain a lower bound on $\text{cap}(\mathsf  B(x,\rho),  \mathsf  B(x_0,h) \cup \Omega^c)$. 
By the variational principle for capacities~\eqref{eq.mini}, it holds:
$$\text{cap}(\mathsf  B(x,\rho),  \mathsf  B(x_0,h) \cup \Omega^c)\ge   \text{cap}(\mathsf  B(x,\rho),  \mathsf  B(x_0,h)).$$
 Let $\mathsf K$ be a compact subset of $\Omega$. 
 Following the proof of~\cite[Lemma 7.10]{landim2017dirichlet} (see also~\cite[Proposition 4.7]{BEGK}) with here $\rho:[0,1]\to \Omega$ a smooth path connecting $x$ to $x_0$ and such that $t\in [0,1]\mapsto f(\rho(t))$ is decreasing, there exists $C>0$ such that for all $x\in \mathsf K$ and $h>0$ (recall $x_0$ is the global minimum of $f$ in $\overline{\Omega}$), 
  \begin{equation}\label{eq.cap1}
  \text{cap}(\mathsf  B(x,\rho),  \mathsf  B(x_0,h))\ge C h^d e^{-\frac 2h f(x)}.
      \end{equation}
Let us now deal with $\text{cap}(\mathsf  B(x,\rho),  \Omega^c)  $. 
Let 
 $\mathsf U_h$ be the  subdomain of $\{f<\min_{\partial \Omega}f\}$ such that $\overline{\mathsf U_h }\subset \{f<\min_{\partial \Omega}f\}  $ and for all $x\in \partial \mathsf U_h$,  
 $\mathsf d_{\mathbb R^d}(x,  {\{f<\min_{\partial \Omega}f\} }^c)=h$. 
   It then follows that for $h$ small enough, 
  \begin{equation}\label{eq.FFj}
   \min_{\{f<\min_{\partial \Omega}f\}   \setminus   {\mathsf U_h }}  f\ge \min_{\partial \Omega}f -\epsilon h,
   \end{equation}
  for some $\epsilon >0$ independent of $h$. Let $\phi_h$ be a smooth  function on $\mathbb R^d$ such that $\phi_h=0$ on $\{f<\min_{\partial \Omega}f\}^c$, $\phi_h=1$ on $ \mathsf U_h $, and for some $C>0$ independent of $h$,
    \begin{equation}\label{eq.nablaphih}
    \Vert \nabla \phi_h\Vert _{L^\infty(\Omega)}\le Ch^{-1}.
    \end{equation}
Assume now that  $\mathsf K$ is a compact subset of $\{f<\min_{\partial \Omega}f\}$. Then,  for $h$ small enough, it holds for all $x\in \mathsf K$:
  $$ {\mathsf  B(x,\rho)}\subset \mathsf U_h,$$
  where we recall   $\rho=ch>0$ where $c$ is defined by \eqref{eq.cc} and satisfies  $c\le \mathsf M_0/2$. 
 Hence, using the variational formula~\eqref{eq.mini}, it holds, for $h$ small enough and all $x\in \mathsf K$, 
 $$\text{cap}(\mathsf  B(x,\rho),  \Omega^c)  \le \frac h2 \int_{\mathbb R^d} \vert\nabla \phi_h \vert^2e^{-\frac 2h f}= \frac h2 \int_{\{f<\min_{\partial \Omega}f\}   \setminus   {\mathsf U_h }} \vert\nabla \phi_h \vert^2e^{-\frac 2h f}.$$
 Using in addition~\eqref{eq.FFj} and~\eqref{eq.nablaphih}, one deduces that  for $h$ small enough and all $x\in \mathsf K$,
 \begin{equation}\label{eq.cap2}
 \text{cap}(\mathsf  B(x,\rho),  \Omega^c)  \le C  e^{-\frac 2h  (\min_{\partial \Omega}f -ch)}    
     \end{equation}
 where $C>0$ is a constant independent of $x\in \mathsf K$ and $h$.  In conclusion, Equations~\eqref{eq.capa-ineq}, \eqref{eq.cap1}, and~\eqref{eq.cap2} imply~\eqref{eq.res1}. This concludes the proof of Proposition~\ref{pr.upper_boundeCF}.
 \end{proof}
 \noindent
The following result   (which is proved as~\cite[Lemma 1]{kamin1979elliptic} or \cite[Lemma 3]{NePC}),  gives a leveling results on $w_h$ in  $\mathsf B(x_0,h)$.

\begin{lemma}\label{le.upper_boundeF}
Let us assume that $f:\overline \Omega\to \mathbb R$ is a $\mathcal C^\infty$ Morse  function. Let $x_0$ be a local minimum of~$f$ in $\Omega$.  
 Then, it holds for $h$ small enough
 $$\sup_{x\in { \mathsf B(x_0,h)}} \vert w_h(x) -w_h(x_0)\vert\le C \sqrt h \, w_h(x_0)$$
\end{lemma}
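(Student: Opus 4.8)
The plan is to use the probabilistic representation $w_h(x) = \mathbb E_x[F(X_\tau)]$ together with a strong Markov argument at the hitting time of $\partial\mathsf B(x_0,h)$, reducing the difference $w_h(x)-w_h(x_0)$ to the committor function $\mathsf p_{x_0}$ studied in Proposition~\ref{pr.upper_boundeCF}, and to a second coupling of two trajectories that both reach $\partial \mathsf B(x_0,h)$ before exiting $\Omega$. Concretely, I would first fix $x \in \mathsf B(x_0,h)$. By the strong Markov property applied at the time $\tau_{\partial\mathsf B(x_0,h)}\wedge\tau$, write
\begin{equation}\label{eq.wh-decomp-lev}
w_h(x) = \mathbb E_x\big[ F(X_\tau)\,\mathbf 1_{\tau < \tau_{\overline{\mathsf B}(x_0,h)}} \big] + \mathbb E_x\big[ w_h(X_{\tau_{\partial\mathsf B(x_0,h)}})\,\mathbf 1_{\tau_{\overline{\mathsf B}(x_0,h)} < \tau} \big].
\end{equation}
The first term is bounded in absolute value by $\|F\|_{L^\infty(\partial\Omega)}\,\mathsf p_{x_0}(x)$, which by Proposition~\ref{pr.upper_boundeCF} (applied with a compact $\mathsf K$ containing a fixed small ball around $x_0$, noting $\min_{\partial\Omega}f - f(x)\ge c>0$ for $x$ near $x_0$ since $x_0$ is the global minimum on $\overline\Omega$ by Lemma~\ref{le.start}) is exponentially small, hence $O(\sqrt h\, w_h(x_0))$ once one knows $w_h(x_0)$ is not itself exponentially small in the same scale — which follows from the same representation applied at $x_0$ and a lower bound on the probability of exiting near a minimal saddle.

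The heart of the matter is to control the oscillation of $w_h$ on the sphere $\partial\mathsf B(x_0,h)$, i.e.\ to show $\sup_{y\in\partial\mathsf B(x_0,h)}|w_h(y)-w_h(x_0)| \le C\sqrt h\, w_h(x_0)$. For this I would follow the Harnack-type argument of~\cite[Lemma 1]{kamin1979elliptic} / \cite[Lemma 3]{NePC}: the function $w_h$ is harmonic for the generator $\mathsf L^{(0)}_{f,h}$ on $\mathsf B(x_0,2h)$ (for $h$ small, this ball is contained in $\Omega$ and $f$ is close to its quadratic model there), so after the change of variables $y = x_0 + h\,\xi$ the rescaled function $\widetilde w(\xi) = w_h(x_0+h\xi)$ solves a uniformly elliptic equation on $\mathsf B(0,2)$ with coefficients that are $O(h)$ perturbations of a fixed operator; the interior gradient estimate for uniformly elliptic equations then yields $|\nabla_\xi \widetilde w(\xi)| \le C\,\mathrm{osc}_{\mathsf B(0,2)}\widetilde w \le C'\, w_h(x_0)$ on $\mathsf B(0,1)$, where the last bound uses the Harnack inequality for the nonnegative (up to shifting $F$ by a constant, or using that $F$ is bounded) harmonic function $w_h$ to compare its values near $x_0$. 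Translating back, $|w_h(y)-w_h(x_0)| \le C h\,\|\nabla_\xi\widetilde w\|_{L^\infty(\mathsf B(0,1))} \cdot h^{-1}\cdot h = \ldots$ — more precisely the factor $\sqrt h$ (rather than $h$) comes from combining the $O(h)$ Taylor remainder of $f$ with the elliptic estimate, exactly as in~\cite{NePC}. Inserting this into~\eqref{eq.wh-decomp-lev} gives $|w_h(x)-w_h(x_0)| \le \|F\|_\infty \mathsf p_{x_0}(x) + \sup_{y\in\partial\mathsf B(x_0,h)}|w_h(y)-w_h(x_0)| \le C\sqrt h\, w_h(x_0)$, as claimed.

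The main obstacle is the quantitative oscillation bound on $\partial\mathsf B(x_0,h)$ with the sharp power $\sqrt h$: one must be careful that the implicit constant in the elliptic/Harnack estimate does not degenerate as $h\to 0$ (it does not, because after rescaling the domain $\mathsf B(0,2)$ and the ellipticity constants are fixed), and that the comparison constant coming from Harnack can be made independent of $h$ and of $F$ (achieved by normalizing, e.g.\ replacing $F$ by $F - \min_{\partial\Omega}F$ so that $w_h \ge 0$, and using that $w_h(x_0) \le \max F$ together with the Harnack chain joining any $y\in\partial\mathsf B(x_0,h)$ to $x_0$ inside $\mathsf B(x_0,2h)$). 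All of this is standard once the setup is in place, and I would cite~\cite[Lemma 1]{kamin1979elliptic} and~\cite[Lemma 3]{NePC} for the detailed estimates rather than reproduce them.
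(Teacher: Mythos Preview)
Your strong Markov decomposition is superfluous: for $x \in \mathsf B(x_0,h)$ one has $\tau_{\overline{\mathsf B}(x_0,h)} = 0$, so the first term in your display vanishes identically and Proposition~\ref{pr.upper_boundeCF} plays no role here. The whole lemma reduces, as you then recognize, to the oscillation estimate on the ball.

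The genuine gap is the rescaling. With $\xi = (x-x_0)/h$, the rescaled equation on $\mathsf B(0,2)$ is $-\tfrac{1}{2}\Delta_\xi\widetilde w + b_h(\xi)\cdot\nabla_\xi\widetilde w = 0$ with $b_h(\xi) = \nabla f(x_0+h\xi) = O(h)$ (since $\nabla f(x_0)=0$). Harnack (for $F\ge 0$) and the interior gradient estimate give $|\nabla_\xi\widetilde w| \le C'\,w_h(x_0)$ on $\mathsf B(0,1)$, but then for $y\in\mathsf B(x_0,h)$, i.e.\ $|\xi|\le 1$, you only get $|w_h(y)-w_h(x_0)| \le C'\,w_h(x_0)$, an $O(1)$ bound. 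The smallness $|b_h|=O(h)$ cannot improve this: in the limit the equation is $\Delta\widetilde w=0$, and a nonnegative harmonic function on $\mathsf B(0,2)$ may well have oscillation on $\mathsf B(0,1)$ comparable to $\widetilde w(0)$. Your remark that $\sqrt h$ ``comes from combining the $O(h)$ Taylor remainder of $f$ with the elliptic estimate'' does not name a mechanism.

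The rescaling used in \cite[Lemma~1]{kamin1979elliptic} and \cite[Lemma~3]{NePC} (which the paper simply cites) is $\xi = (x-x_0)/\sqrt h$. Because $x_0$ is a critical point, $h^{-1/2}\nabla f(x_0+\sqrt h\,\xi) = \text{Hess}\,f(x_0)\,\xi + O(\sqrt h)$, so on any fixed $\xi$-ball the equation $-\tfrac12\Delta_\xi\widetilde w + [\text{Hess}\,f(x_0)\,\xi+O(\sqrt h)]\cdot\nabla_\xi\widetilde w=0$ has coefficients bounded uniformly in $h$; Harnack and the interior gradient estimate then give $\|\nabla_\xi\widetilde w\|_{L^\infty(\mathsf B(0,1))}\le C\,\widetilde w(0)$ with $C$ independent of $h$. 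The point is that $\mathsf B(x_0,h)$ corresponds to $\mathsf B(0,\sqrt h)$ in $\xi$, so $|\xi|\le\sqrt h$ and $|w_h(x)-w_h(x_0)| \le C\sqrt h\,w_h(x_0)$: the factor $\sqrt h$ is the radius of the image ball, not a coefficient estimate.
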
 
The two previous results have the following consequence  on $w_h$.

\begin{corollary}\label{co.Px}
Let us assume that $f:\overline \Omega\to \mathbb R$ is a $\mathcal C^\infty$ Morse  function which satisfies item 1 in \autoref{A}. 
  Let $\mathsf K$ be a compact subset of $\{f<\min_{\partial \Omega}f\}$.
Then, for $h$ small enough  and  uniformly in $x\in \mathsf K$, one has,
  $$w_h(x) =w_h(x_0)(1+O( \sqrt h ))+ O(h^{-d}e^{-\frac 2h (\min_{\partial \Omega }f-\max_{\mathsf K} f)}).$$
\end{corollary}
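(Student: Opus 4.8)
The plan is to prove Corollary~\ref{co.Px} by combining the committor estimate of Proposition~\ref{pr.upper_boundeCF} with the leveling result of Lemma~\ref{le.upper_boundeF} via a strong Markov decomposition. Fix $x \in \mathsf K \subset \{f < \min_{\partial \Omega} f\}$ and take $h$ small enough so that $\overline{\mathsf B}(x_0,h) \subset \Omega$ and $\mathsf K \cap \overline{\mathsf B}(x_0,h) = \emptyset$. First I would decompose the exit event according to whether the process hits $\overline{\mathsf B}(x_0,h)$ before exiting $\Omega$: using the strong Markov property at the stopping time $\tau_{\overline{\mathsf B}(x_0,h)} \wedge \tau$, one writes
\begin{equation}\label{eq.decomp_wh}
w_h(x) = \mathbb E_x\big[ F(X_\tau) \mathbbm 1_{\tau < \tau_{\overline{\mathsf B}(x_0,h)}} \big] + \mathbb E_x\big[ w_h(X_{\tau_{\overline{\mathsf B}(x_0,h)}}) \mathbbm 1_{\tau_{\overline{\mathsf B}(x_0,h)} < \tau} \big].
\end{equation}
The first term is bounded in absolute value by $\|F\|_{L^\infty(\partial \Omega)}\, \mathsf p_{x_0}(x)$, which by Proposition~\ref{pr.upper_boundeCF} is $O\big(h^{-d} e^{-\frac 2h(\min_{\partial \Omega}f - f(x))}\big) = O\big(h^{-d} e^{-\frac 2h(\min_{\partial \Omega}f - \max_{\mathsf K} f)}\big)$, uniformly in $x \in \mathsf K$. (Note $\max_{\mathsf K} f < \min_{\partial \Omega} f$ since $\mathsf K$ is a compact subset of the open sublevel set, so the exponent is genuinely negative.)

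For the second term in~\eqref{eq.decomp_wh}, observe that $X_{\tau_{\overline{\mathsf B}(x_0,h)}} \in \partial \mathsf B(x_0,h) \subset \mathsf B(x_0,2h)$ on the event considered, so Lemma~\ref{le.upper_boundeF} (applied with radius $2h$, or after a trivial adjustment of constants) gives $w_h(X_{\tau_{\overline{\mathsf B}(x_0,h)}}) = w_h(x_0)\big(1 + O(\sqrt h)\big)$ pointwise on that event. Hence the second term equals
\begin{equation}\label{eq.second_term}
w_h(x_0)\big(1 + O(\sqrt h)\big)\, \mathbb P_x\big[ \tau_{\overline{\mathsf B}(x_0,h)} < \tau \big] = w_h(x_0)\big(1 + O(\sqrt h)\big)\big(1 - \mathsf p_{x_0}(x)\big),
\end{equation}
and since $\mathsf p_{x_0}(x)$ is exponentially small, $1 - \mathsf p_{x_0}(x) = 1 + O(h^{-d} e^{-\frac 2h(\min_{\partial \Omega}f - \max_{\mathsf K}f)})$. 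Collecting~\eqref{eq.decomp_wh}, the bound on the first term, and~\eqref{eq.second_term}, and absorbing the exponentially small remainder into the stated error term, yields
\[
w_h(x) = w_h(x_0)\big(1 + O(\sqrt h)\big) + O\big(h^{-d} e^{-\frac 2h(\min_{\partial \Omega}f - \max_{\mathsf K}f)}\big)
\]
uniformly in $x \in \mathsf K$, which is the claim.

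The main subtlety — rather than a genuine obstacle — is making sure the strong Markov decomposition~\eqref{eq.decomp_wh} is applied rigorously: one must check that $\tau_{\overline{\mathsf B}(x_0,h)} \wedge \tau$ is an a.s.-finite stopping time and that $w_h$ is indeed the bounded harmonic function associated with the boundary data $F$, so that $\mathbb E_x[F(X_\tau) \mid \mathcal F_{\tau_{\overline{\mathsf B}(x_0,h)} \wedge \tau}] = w_h(X_{\tau_{\overline{\mathsf B}(x_0,h)} \wedge \tau})$; both are standard for the overdamped Langevin dynamics on the smooth bounded domain $\Omega$ (item~1 of \autoref{A} guarantees $\tau < \infty$ a.s. and that $x_0$ is the unique critical point of $f$ in $\Omega$). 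A second point requiring a word of care is the uniformity in $x$: the error in Lemma~\ref{le.upper_boundeF} is uniform on $\mathsf B(x_0,h)$ and the bound in Proposition~\ref{pr.upper_boundeCF} is uniform on $\mathsf K$, so no issue arises, but one should state explicitly that all $O(\cdot)$ constants depend only on $\mathsf K$, $\|F\|_{L^\infty}$, $f$ and $\Omega$. Everything else is bookkeeping.
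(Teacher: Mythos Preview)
Your proof is correct and follows essentially the same route as the paper: a strong Markov decomposition at $\tau_{\overline{\mathsf B}(x_0,h)}\wedge\tau$, bounding the ``exit-before-ball'' piece via Proposition~\ref{pr.upper_boundeCF} and handling the ``ball-first'' piece via Lemma~\ref{le.upper_boundeF}, then collecting terms. The only cosmetic differences are the order in which you treat the two pieces and your remark about radius $2h$ (unnecessary, since $X_{\tau_{\overline{\mathsf B}(x_0,h)}}\in\partial\mathsf B(x_0,h)\subset\overline{\mathsf B(x_0,h)}$ already).
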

\noindent
Let us mention that a similar  result was proved in~\cite[Section 5.1.4]{DLLN} using~\cite[Theorem 1]{eizenberg-90} when $f$ has no critical point on the boundary of $\Omega$. When this is no longer the case, \cite[Theorem~1]{eizenberg-90} does not apply and we  prove this result using the strong Markov property together with  Proposition~\ref{pr.upper_boundeCF} and Lemma~\ref{le.upper_boundeF}. 
\begin{proof}
 Let $\mathsf K$ be a compact subset of $\{f<\min_{\partial \Omega}f\}$ and $x\in \mathsf K$. Then write:
 \begin{equation}\label{eq.EE-d}
\mathbb E_{x}   [F(X_{\tau_{\Omega^c}})]= \mathbb E_{x}  \big [F(X_{\tau_{\Omega^c}})\, \mathbf{1}_{ \tau_{\mathsf B(x_0,h)}<\tau_{\Omega^c} }\big]+\mathbb E_{x}  \big [F(X_{\tau_{\Omega^c}}) \mathbf{1}_{ \tau_{\mathsf B(x_0,h)}\ge \tau_{\Omega^c} }\big].
\end{equation}
By the strong Markov property,  
$$ \mathbb E_{x}  \big [F(X_{\tau_{\Omega^c}})\mathbf{1}_{ \tau_{\mathsf B(x_0,h)}<\tau_{\Omega^c} }\big]=  \mathbb E_{x}  \big [  \mathbf{1}_{ \tau_{\mathsf B(x_0,h)}<\tau_{\Omega^c} }\mathbb E_{X_{\tau_{\mathsf B(x_0,h)}}}[F(X_{\tau_{\Omega^c}})]\big].$$
Using Proposition~\ref{pr.upper_boundeCF}
 and Lemma~\ref{le.upper_boundeF},   for all $h$ small enough and $x\in \mathsf K$, it holds: 
  \begin{equation}\label{eq.-Exj} 
   \mathbb E_{x}  \big [F(X_{\tau_{\Omega^c}})\, \mathbf{1}_{ \tau_{\mathsf B_{r_h}(x_0)}<\tau_{\Omega^c} }\big]= \big (1+O(e^{-\frac ch})\big )\big (1+O(\sqrt h)\big )w_h(x_0), \text{ uniformly in $x\in \mathsf K$},
   \end{equation}
where $c>0$ is any  constant such that $c<2( \min_{\partial \Omega }f-\max_{\mathsf K} f)$. Let us now deal with the last term in~\eqref{eq.EE-d}.
For $x\in \mathsf K$, it holds:
$$\mathbb E_{x}  \big [F(X_{\tau_{\Omega^c}}) \mathbf{1}_{ \tau_{\mathsf B(x_0,h)}\ge \tau_{\Omega^c} }\big]\le \Vert F\Vert _{L^\infty(\partial \Omega)} \mathbb P_{x}    [\tau_{\mathsf B(x_0,h)}\ge \tau_{\Omega^c}].$$
Using Proposition~\ref{pr.upper_boundeCF}, it thus holds for $h$ small enough:
$$\max_{x\in \mathsf K}\mathbb E_{x}  \big [F(X_{\tau_{\Omega^c}}) \mathbf{1}_{ \tau_{\mathsf B(x_0,h)}\ge \tau_{\Omega^c} }\big]\le Ch^{-d}e^{-\frac 2h (\min_{\partial \Omega }f-\max_{\mathsf K} f)}.$$
Together with~\eqref{eq.-Exj}  and~\eqref{eq.EE-d}, this concludes the proof of Corollary~\ref{co.Px}. \end{proof}

\subsubsection{Proof of Theorem~\ref{thm3}}

We are now in position to prove Theorem~\ref{thm3}.
 \begin{proof}[Proof of Theorem~\ref{thm3}]
  Let us assume that Assumption~\autoref{A} is satisfied. 
 Let us define for $\alpha \in \mathbb R$, 
 $$\mathsf K_{\alpha}:=\{f\le \alpha\}.$$ 
In the following we consider  $f(x_0)<\alpha < \min_{\partial \Omega}f$ so that 
 $$\mathsf K_{\alpha} \text{ is a non empty compact subset of } \{f<\min_{\partial \Omega}f\}.$$
 Write 
 \begin{equation}\label{eq.Ex==}
 \mathbb E_{\nu_h}  [F(X_{\tau_{\Omega}})]={\mathsf Z_h^{-1}} \int_{\Omega \setminus \mathsf K_{\alpha}}w_h\,
 {u}_h\, e^{-\frac{2}{h}f}  + {\mathsf Z_h^{-1}} \int_{\mathsf K_{\alpha}} w_h\,  {u}_h\, e^{-\frac{2}{h}f},
  \end{equation}
  where we have defined $ \mathsf Z_h  :=\int_{\Omega }
 {u}_h\, e^{-\frac{2}{h}f}$. 
 Let us deal with the first term in the right hand side of~\eqref{eq.Ex==}. It holds:
 $${\mathsf Z_h^{-1}} \int_{\Omega \setminus \mathsf K_{\alpha}}w_h\,
 {u}_h\, e^{-\frac{2}{h}f}  \le \Vert F\Vert _{L^\infty(\partial \Omega)}\  {\mathsf Z_h^{-1}} \int_{\Omega \setminus \mathsf K_{\alpha}} 
 {u}_h\, e^{-\frac{2}{h}f}  . $$
 Moreover, using Lemma~\ref{de1}, it holds (see Definition~\ref{de.QM-pi})
 $$\int_{\Omega \setminus \mathsf K_{\alpha}} 
 {u}_h\, e^{-\frac{2}{h}f}=\frac{\int_{\Omega \setminus \mathsf K_{\alpha}} \chi_r e^{-\frac{2}{h}f} }{\Vert \chi_r\Vert_{L^2_w}} +O\big(  h^{-\frac d4 +\frac 12} e^{-\frac 1h (f(z_1)-f(x_0)-r)} \big) \sqrt { \int_{\Omega \setminus \mathsf K_{\alpha}} 
   e^{-\frac{2}{h}f}}.$$
Recall that when  $h\to 0$  (see Proposition~\ref{pr.LP-N}, \eqref{lap.x0K--2}, and~\eqref{eq.kappa_0}), 
$$\mathsf Z_h=\sqrt{\kappa_0}\, h^{d/4}\, e^{-\frac 1h f(x_0)}(1+O(h)) \ \text{and} \ \Vert \chi_r\Vert_{L^2_w}=\sqrt{\kappa_0}\, h^{d/4}\, e^{-\frac 1h f(x_0)}(1+O(h)).$$
Then, since $f\ge \alpha$ on $\Omega \setminus \mathsf K_{\alpha}$, there exists $\beta >0$ such that:
 $${\mathsf Z_h^{-1}}\int_{\Omega \setminus \mathsf K_{\alpha}} 
 {u}_h\, e^{-\frac{2}{h}f}\le Ch^{-\beta} \big [ e^{-\frac 2h (\alpha -f(x_0))}+ e^{-\frac 1h (f(z_1)+\alpha -2f(x_0)-r)}  \big]. $$
 Therefore, because $\min_{\partial \Omega }f=f(z_1)\ge \alpha$, for any $r>0$, it holds for $h$ small enough:
\begin{equation}\label{eq.oubl}
 {\mathsf Z_h^{-1}}\int_{\Omega \setminus \mathsf K_{\alpha}} 
 {u}_h\, e^{-\frac{2}{h}f}\le e^{-\frac 2h (\alpha -f(x_0)-r)}.
 \end{equation}
In conclusion, the first term in the right hand side of~\eqref{eq.Ex==} satisfies the following upper bound: for any $r>0$, it holds for $h$ small enough:
\begin{equation}\label{eq.Est1-EE2}
{\mathsf Z_h^{-1}} \int_{\Omega \setminus \mathsf K_{\alpha}}w_h\,
 {u}_h\, e^{-\frac{2}{h}f}  \le   e^{-\frac 2h (\alpha -f(x_0)-r)}.
 \end{equation}

  Let us now deal with the second term in the right hand side of~\eqref{eq.Ex==}. Using Corollary~\ref{co.Px} with $\mathsf K=\mathsf K_{\alpha}$ there, it holds:
 $${\mathsf Z_h^{-1}} \int_{\mathsf K_{\alpha}} w_h\,  {u}_h\, e^{-\frac{2}{h}f} = \Big[w_h(x_0)(1+O( \sqrt h ))+ O(h^{-d}e^{-\frac 2h (\min_{\partial \Omega }f-\max_{\mathsf K_{\alpha}} f)}) \Big] \times \frac{\int_{\mathsf K_{\alpha}}    {u}_h\, e^{-\frac{2}{h}f} }{ \int_{\Omega}   {u}_h\, e^{-\frac{2}{h}f} }.$$
In addition, by \eqref{eq.oubl},
 $$\frac{\int_{\mathsf K_{\alpha}}    {u}_h\, e^{-\frac{2}{h}f} }{ \int_{\Omega}   {u}_h\, e^{-\frac{2}{h}f} }=1-  {\mathsf Z_h^{-1}}\int_{\Omega \setminus \mathsf K_{\alpha}} 
 {u}_h\, e^{-\frac{2}{h}f}=1+O(e^{-\frac ch}).$$
 Thus, the second term in the right hand side of~\eqref{eq.Ex==} satisfies the equivalent in the limit $h\to 0$:
\begin{equation}\label{eq.Est1-EE3}
{\mathsf Z_h^{-1}} \int_{\mathsf K_{\alpha}} w_h\,  {u}_h\, e^{-\frac{2}{h}f} = \Big[w_h(x_0)(1+O( \sqrt h ))+ O(h^{-d}e^{-\frac 2h (\min_{\partial \Omega }f-\alpha)}) \Big] (1+O(e^{-\frac ch})).
 \end{equation} 

Choosing $r>0$ small enough,   Equations~\eqref{eq.px1}
 and~\eqref{eq.px2}  when $x=x_0$ are then  consequences of~\eqref{eq.Est1-EE2},~\eqref{eq.Est1-EE3},~\eqref{eq.Ex==} together with~\eqref{eq.puh1} and~\eqref{eq.puh2}    in Theorem~\ref{thm2}, and the fact that $f(x_0)<\alpha < \min_{\partial \Omega}f$. To obtain~\eqref{eq.px1}
 and~\eqref{eq.px2} uniformly on all $x$ in 
 any compact subset $\mathsf K$ of $\{f<\min_{\partial \Omega}f\}$, one uses 
  in addition  Corollary~\ref{co.Px} with $\mathsf K_\alpha$ where
  $\mathsf K$  is such that $\mathsf K\subset  \mathsf K_\alpha$ (with
  $f(x_0)<\alpha<\min_{\partial \Omega} f$).   Using the procedure of
  step 2 of the proof of~\cite[Proposition 14]{DLLN-saddle2} and since the domain of attraction $\mathcal A(\{f<\min_{\partial \Omega}f\})$ of $\{f<\min_{\partial \Omega}f\}$ for the dynamics 
 \eqref{eq.hbb} (see \cite[Section 1.2.2]{DLLN-saddle2} for the definition of $\mathcal A(\{f<\min_{\partial \Omega}f\})$) is equal to $\Omega$ (by item 1 of \autoref{A}),~\eqref{eq.px1}
 and~\eqref{eq.px2} extends to all $x\in \mathsf K$, $\mathsf K$ a compact set of $\Omega$.

 It   remains to prove~\eqref{eq.px3}.
 To this end,   assume that~\eqref{hypo1} and~\eqref{hypo2} are satisfied.  Assume in addition there exists $\ell_0\in \{n_0+1,\ldots,n\}$ such
that  (see~\eqref{eq:hypo2_bis})
$$
2( f(z_{\ell_0})-f(z_1))<f(z_1)-f(x_0).
$$
Let $k_0\in \{ n_0+1,\ldots,\ell_0\}$ and $ \alpha_* \in \mathbb R$ be such
that  $f(x_0) <  \alpha_*< 2 f(z_1)-f(z_{k_0})$. Notice that we can
assume without loss of generality (up to increasing $\alpha_*$ if $\alpha_*$ is
smaller than $f(x_0)+f(z_{k_0})-f(z_1)$, see~\eqref{eq:hypo2_bis}) that
\begin{equation}\label{eq:hyp_alpha}
f(x_0)+f(z_{k_0})-f(z_1)<\alpha_*< 2 f(z_1)-f(z_{k_0}).
\end{equation}
Let us consider $x\in \mathsf K_{\alpha^*}$ and $k\in \{ n_0+1,\ldots,k_0\}$. Thanks
to~\eqref{eq:hyp_alpha} and the fact that $f(z_k)\le f(z_{k_0})$ 
$$\min_{\partial \Omega}f-\alpha_*=f(z_1)-\alpha_*>f(z_{k_0})-f(z_1)\ge f(z_k)-f(z_1),$$
and 
$$ \alpha_*-f(x_0)>f(z_{k_0})-f(z_1)\ge f(z_k)-f(z_1).$$
Choosing $r>0$ small enough in~\eqref{eq.Est1-EE2}, and using~\eqref{eq.Est1-EE3} and~\eqref{eq.Ex==} together with~\eqref{eq.puh3}    in Theorem~\ref{thm2}, one then deduces Equation~\eqref{eq.px3} when $x=x_0$. Using Corollary~\ref{co.Px}, one proves that~\eqref{eq.px3}  holds uniformly on 
 $x\in \mathsf K_{\alpha^*}$.  The proof of Theorem~\ref{thm3} is complete. \end{proof}
\bigskip

\noindent
\textbf{Acknowledgements:} This work was partially supported by the
ANR-19-CE40-0010 (QuAMProcs) and the  ANR-17-CE40-0030 (EFI). BN is  supported by  the grant
IA20Nectoux from the Projet I-SITE Clermont CAP 20-25. 
 TL has received funding from the European ResearchCouncil (ERC) under
the European Union’s Horizon 2020 research and innovation programme
(grant agreement No 810367), project EMC2. Part of this project was carried out as TL was a visiting professor at Imperial College of London, with a visiting professorship grant from the Leverhulme
Trust. The Department of Mathematics at ICL and the Leverhulme Trust are warmly
thanked for their support.

\appendix

\section{Proofs of some technical results and additional comments}

\subsection{Proof of Lemma~\ref{le.start}}\label{sec:le.start}

\begin{proof}[Proof of Lemma~\ref{le.start}] The proof is divided into two steps: 
\medskip

\noindent
\textbf{Step 1: Proof of item 1 in Lemma~\ref{le.start}.}
Let us prove that $\partial_{\mathsf n_\Omega}f\ge 0$ on $\partial \Omega$. Let us assume that there exists $z\in \partial \Omega$ such that $\partial_{\mathsf n_\Omega}f(z)<0$. Then, there exists $s_z>0$ such that  $\varphi_z(t) \notin \overline \Omega$  for all $t\in (0,s_z]$. Let $\ve >0$ be such that $\overline{\mathsf B(\varphi_z(s_z),\ve)} \subset \mathbb R^d\setminus \overline \Omega$. Since $(s,x)\mapsto \varphi_x(s)$ is continuous, there exists $\alpha>0$ such that if $|s-s_z|\le \alpha$ and $|x-z|\le \alpha$, then $ \varphi_x(s)\in \mathsf B(\varphi_z(s_z),\ve)$. Therefore, for all $x\in \Omega$ such that $|x-z|\le \alpha$, $ \varphi_x(s)\notin \Omega$, which contradicts item 1 in \autoref{A}. Therefore $\partial_{\mathsf n_\Omega}f\ge 0$ on $\partial \Omega$. 

The fact that $x_0$ is the only critical point of  the function $f$ in $\Omega$ is also a direct consequence of  item~1 in \autoref{A}. 
In addition,  there  is no local minimum $x\in \partial \Omega$ of $f$ in $\overline \Omega$. Indeed, assume the existence of such a point $x\in \partial \Omega$. Necessarily $\partial_{\mathsf n_\Omega}f(x)\le 0$, and by the previous discussion, $\partial_{\mathsf n_\Omega}f(x)= 0$. Then, $x$ is a critical point of $f$ and a  local minimum   of $f$  in $\mathbb R^d$. Since Hess$f(x)>0$, $x$ is (positively) asymptotically stable for the flow~\eqref{eq.hbb}. This contradicts   item~1 in \autoref{A}.

 Let us now prove that  $f(x_0)=\min_{\overline \Omega}f<\min_{\partial \Omega}f$. For $\beta >0$, set $\mathsf V_\beta=f|_{\Omega}^{-1}((-\infty,f(x_0)+ \beta))$. Since Hess$f(x_0)>0$, for $\beta>0$ small enough,
$\mathsf V_\beta$ is a nonempty open neighborhood of $x_0$ in $\Omega$ such that  
  $x_0$ is the unique global minimum  of $f$ on  $ \overline{\mathsf V_\beta}$.
Let $x\in \Omega\setminus  {\mathsf V_\beta}$. Let $t_x:=\inf\{t\ge 0, \varphi_x(t_x)\in  \overline{\mathsf V_\beta}\}$. 
By item 1 in \autoref{A}, $t_x<+\infty$. In addition, by continuity of $t\mapsto \varphi_x(t)$,   $\varphi_x(t_x)\in \pa \mathsf V_\beta\subset \{f=f(x_0)+\beta\}$. Thus,  
$$f(x)   =f(\varphi_x({t_x})) + \int_0^{t_x} |\nabla f(\varphi_x(s))|^2ds  \ge  f(x_0)+\beta.$$
Let us now consider  $x\in \partial \Omega$. Let $x_n\in \Omega$ be such that  $x_n\to x$ as $n\to +\infty$. Since for $n$ large enough $f(x_n)\ge f(x_0)+\beta$, it follows that $f(x)\ge f(x_0)+\beta$. In conclusion 
 $$f(x_0)=\min_{\overline \Omega}f<\min_{\partial \Omega}f.$$
 
 It remains to prove that 
 $\{f< \min_{\partial \Omega}f\}$ is connected and $\pa \{f< \min_{\partial \Omega}f\}\cap \partial \Omega =\argmin_{\partial \Omega}f$.   The fact that $\{f< \min_{\partial \Omega}f\}$ is connected follows from the facts that $\{f< \min_{\partial \Omega}f\}$ is actually an open subset of $\Omega$ and  that there is only one local minimum of $f$ in $\Omega$ (namely $x_0$). Let us now prove that $\pa \{f< \min_{\partial \Omega}f\}\cap \partial \Omega =\argmin_{\partial \Omega}f$. It is clear that $\pa \{f< \min_{\partial \Omega}f\}\cap \partial \Omega \subset \argmin_{\partial \Omega}f$. Let $z\in \argmin_{\partial \Omega}f$. If  $z\notin  \pa \{f< \min_{\partial \Omega}f\}$, then there exists $\ve >0$ such that for all $x\in \overline{\mathsf B(z,\ve)}\cap \overline{\Omega}$, $f(x)\ge \min_{\partial \Omega}f=f(z)$. Thus, $z$ is a local minimum of $f$ in $\overline \Omega$, which is not possible. Consequently $z\in \pa \{f< \min_{\partial \Omega}f\}$. Therefore,  $\pa \{f< \min_{\partial \Omega}f\}\cap \partial \Omega =\argmin_{\partial \Omega}f$. This ends the proof of item 1 in Lemma~\ref{le.start}. 
 \medskip

\noindent
\textbf{Step 2:  Proof of item 2 in Lemma~\ref{le.start}.}
Let $z\in \partial \Omega$ be such that $\vert \nabla f\vert (z)=0$. 
Let $(e_1, , \ldots,e_d)$ be an orthonormal basis such that (i) $z + {\rm
Span}(e_1, \ldots, e_{d-1}) = T_z \partial \Omega$ and (ii) $e_d = \mathsf n_\Omega(z)$.
  Let us introduce the affine change of variables: $\varphi : (y_1,\ldots,y_d) \mapsto z + \sum_{i=1}^d y_i e_i$. The Hessian of $f$ at point $z$
is unitarily equivalent to the matrix with $(i,j)$-component
 $\frac{\partial^2 f}{\partial y_i \partial y_j}(0)$ for $1 \le i,j
 \le d$ (where with a slight abuse of notation, $f(y)$ refers to $f
(\varphi(y)))$. Let us prove that 
 $\frac{\partial^2 f}{\partial y_i
 \partial y_d}(0)=0$ for all $1 \le i \le d-1$. For any $i \in \{1, \ldots,  d-1\}$,
 let $t \in (-1,1) \mapsto \gamma(t) \in \partial \Omega$ be a
 curve such that $\gamma(0)=z$ and $\gamma'(0)=e_i$. By item 2 in
 \autoref{A}, one has 
 $\frac{d}{dt} \nabla f(\gamma(t)) \cdot \mathsf n_{\Omega}(\gamma(t)) \big|_{t=0} = 0$ (since $\nabla f(\gamma(t)) \cdot
\mathsf n_{\Omega}(\gamma(t))=0$ on a neighborhood of $t=0$). This writes:
$\frac{\partial^2 f}{\partial y_i \partial y_d}(0)=0$. This implies that
 $(0,\ldots,0,1)^T$ is an eigenvector of $\left(\frac{\partial^2
 f}{\partial y_i \partial y_j}(0)\right)_{1 \le i,j \le d}$ associated
 with the eigenvalue $\frac{\partial^2 f}{\partial y_d^2}(0)$. Since $f$
 is a Morse function in $\overline{\Omega}$, one has $\frac{\partial^2
f}{\partial y_d^2}(0)\neq 0$. Finally, item 1 in  \autoref{A} then
 implies that necessarily $\frac{\partial^2 f}{\partial y_d^2}(0) < 0$.
 This proves that $\mathsf n_\Omega(z)$ is an eigenvector of the Hessian of $f$ at   $z$ associated with a negative eigenvalue. \end{proof}

\subsection{On WKB-approximation for $ \mathsf v_k^{(1)}$}\label{sec:WKB}

As explained in Section~\ref{sec.zk-QM}, the quasi-modes $\mathsf f_k^{(1)}$
for $\mathsf L_{f,h}^{\mathsf{Di},(1)}(\Omega)$ are built using the
principal $1$-eigenform~$ \mathsf v_k^{(1)} $ of a Witten Laplacian on $\mathsf \Omega_k^{\textsc{M}}$  with
mixed Dirichlet-Neumann boundary conditions. Since $\vert \nabla f(z_k)\vert =0$, the constructions of the domain $\mathsf \Omega_k^{\textsc{M}}$ and  of the quasi-mode $\mathsf v_k^{(1)} $  are  very different from the ones done in~\cite{DLLN} and require to overcome a major technical issue.

Indeed,
we do not
have    a 
satisfactory
WKB-approximation of~$ \mathsf v_k^{(1)} $ near
$\Sigma_{z_k}$ in $\overline{\mathsf \Omega_k^{\textsc{M}}}$.  
An accurate WKB-approximation, constructed in \cite{helffer-nier-06},  was used  in~\cite{DLLN} (see also
\cite{DLLN-saddle1}
for similar computations)  to estimate
(in the limit $h \to 0$), the quantities
\begin{equation}\label{eq.int-bordvk}
\int_{\Sigma_{z_k}} \mathsf v_k^{(1)} \cdot \mathsf n_{\Omega} e^{-\frac 1hf}, k=1,\ldots,n,
\end{equation}
which were in turn used to compute asymptotically  $\int_{\Sigma_{z_k}}\partial_{\mathsf n_\Omega}u_h e^{-\frac 1hf}$, since (see Corollary~\ref{thm-pc},~\eqref{eq.nu-in} and~\eqref{eq:unitary})
$$\int_{\Sigma_{z_k}}\partial_{\mathsf n_\Omega}u_h e^{-\frac 1hf}\sim
\sum_{k=1}^n  \int_\Omega \nabla u_h\cdot \mathsf v_k^{(1)}  \,
e^{-\frac 1hf}\  \int_{\Sigma_{z_k}}  \mathsf v_k^{(1)} \cdot \mathsf
n_{\Omega} \, e^{-\frac 1hf}.$$

In our context (see~\cite[Section 1.4]{DoNe2} for more details),  
the only possible candidate is the WKB ansatz constructed in 
\cite[Section 2]{helffer-sjostrand-85} 
on $\mathsf B(z_k,\rho)$ (for
some $\rho>0$).
However, 
only
its first term $\omega_0e^{-\frac 1h  \mathsf d_a(.,z_k)}|_\Omega$
belongs to  the form domain of
$\Delta_{f,h}^{\mathsf{Di},(1)}(\Omega)$ in general, i.e. satisfies $\mbf t
a_0=0$ on $\partial \Omega\cap  \mathsf B(z_k,\rho)$.  Thus, only this first term
can be used
to approximate $\mathsf v_k^{(1)} $ with the help of Lemma~\ref{quadra}, but this approximation
is not accurate enough.
Let us briefly explain why, by showing that
for a smooth function
$\xi $ 
supported in $\mathsf B(z_k,\rho/2)$   which equals~$1$ in    $\mathsf
B(z_k,\rho/3)$, the $1$-form
$$\mathsf u_{{wkb,0}}=  \frac{\xi \omega_0e^{-\frac 1h  \mathsf d_a(.,z_k)}|_\Omega}{\Vert \xi \omega_0e^{-\frac 1h  \mathsf d_a(.,z_k)}|_\Omega\Vert_{L^{2}(\Omega)}} $$
  is in general not close enough   to $\mathsf v_k^{(1)} $ in $\Lambda^1H^1(\Omega)$ (recall that we are looking for an equivalent of~\eqref{eq.int-bordvk}).
 Using \eqref{eq.incluWW}, by construction of $\omega_0$  in~\cite[Th\' eor\`eme 2.5]{helffer-sjostrand-85}   and using an integration by parts, it holds, for $h$ small enough:
\begin{align*}
&
\Vert \xi \omega_0e^{-\frac 1h  \mathsf d_a(.,z_k)}|_\Omega\Vert^{2}_{L^{2}(\Omega)}\Big(
\Vert \mathsf d_{f,h}\mathsf u_{{wkb,0}}\Vert^2_{L^2(\Omega)}+\Vert \mathsf d^*_{f,h}\mathsf u_{{wkb,0}}\Vert^2_{L^2(\Omega)}\Big)\\
&  = \langle \Delta^{(1)}_{f,h}\mathsf u_{{wkb,0}},\mathsf u_{{wkb,0}}\rangle_{L^2(\Omega)}- h\int_{\partial \Omega\cap  \mathsf B(z_k,\rho)} \mbf t \mathsf d_{f,h}^* \mathsf u_{{wkb,0}}\,  \mathsf u_{{wkb,0}}\cdot \mathsf n_\Omega\\
&=O(h^2)  \Vert e^{-\frac 2h  \mathsf d_a(.,z_k)} \xi \omega_0\Delta_{\mbf H}\omega_0 \Vert^2_{L^2(\Omega)}   +O(e^{-\frac ch})\\
&\quad -h\int_{\partial \Omega\cap  \mathsf B(z_k,\rho)}     \xi^2 e^{-\frac 2h  \mathsf d_a(.,z_k)}    \mbf t   \big[h\mathsf d^*\omega_0+\underbrace{\mbf i_{   \nabla(\mathsf d_a(\cdot,z_k)+f)}(\omega_0)}_{=0} \big] \,  \omega_0\cdot \mathsf n_\Omega.
\end{align*}
Therefore, using Laplace's method and~\cite[Th\' eor\`eme 2.5]{helffer-sjostrand-85}, one cannot expect   in general a better estimate
when $h\to0$ than
$$\Vert \mathsf d_{f,h}\mathsf u_{{wkb,0}}\Vert^2_{L^2(\Omega)}+\Vert \mathsf d^*_{f,h}\mathsf u_{{wkb,0}}\Vert^2_{L^2(\Omega)}\le Ch^{3/2},$$
which only implies, using Lemma~\ref{quadra}, that   $\mathsf u_{{wkb,0}}$ is at a distance of the order $O(h^{\frac14})$
from $ \mathsf v_k^{(1)}
$ in $\Lambda^1L^2(\Omega)$.
In view of the computations made in the proof of~\cite[Proposition 90]{DLLN} (which is very similar to the proof of 
Lemma~\ref{de2}, see in particular \eqref{eq.est-QQ}), 
this is not sufficient to prove that the distance between 
$\mathsf u_{{wkb,0}}$  and $ \mathsf v_k^{(1)}$ converges to $0$
 in $\Lambda^1H^1(\Omega)$ as $h\to 0$.
 One would indeed 
 at least  need  that $ \Vert
\mathsf d_{f,h}\mathsf u_{{wkb,0}}\Vert^2_{L^2(\Omega)}+\Vert
\mathsf d^*_{f,h}\mathsf u_{{wkb,0}}\Vert^2_{L^2(\Omega)}=o(h^{3})$ as
$h\to 0$.  

\subsection{Proofs of Propositions~\ref{pr.gammak} and~\ref{pr.omegak}}\label{sec:gammak}

\begin{proof}[Proof of Proposition~\ref{pr.gammak}]
Let $k\in \{1,\ldots,n\}$. 
The proof of Proposition~\ref{pr.gammak} is divided into several steps. 
\medskip

\noindent
\textbf{Step 1: Properties of $\Gamma_{z_k}$ and preliminary definitions.} 
Let us recall that since $\Gamma_{z_k}\subset \mathsf W_+^{z_k}$ (see~\eqref{eq.incluWW} in \autoref{A}), it holds 
$$\text{  for all }  x\in  \Gamma_{z_k}, \ \ \nabla f(x)=\nabla_{\mbf T}f(x) \in T_x\partial \Omega.$$
Moreover, for all $y\in \Gamma_{z_k}$,  since $\varphi_y(s)\in \Gamma_{z_k}$ for all $s\ge 0$  (see~\eqref{eq.hbb}), it holds: $\lim_{s\to +\infty}\varphi_y(s)=z_k$. 
Let $r>0$ and define:
$$\mathsf C_{z_k} =(f|_{\Gamma_{z_k}})^{-1}\big((-\infty, f(z_k)+r)\big).$$  For $r>0$ small, $\vert \nabla f\vert \neq 0$ on $\pa \mathsf C_{z_k}$ and thus, ${\mathsf C_{z_k}}$ is a smooth open neighborhood  of $z_k$ and   $\nabla f \cdot \mathsf n_{\mathsf C_{z_k}}>0$ on $\pa \mathsf C_{z_k}$. For $y\in \Gamma_{z_k}\setminus \overline{  \mathsf C_{z_k} }$, let:
\begin{equation}\label{eq.sinf}
t_{\mathsf C_{z_k}}(y)=\inf\{\, s\ge 0, \, \varphi_{y}(s)\in \overline{\mathsf C_{z_k}}\,  \},
  \end{equation} 
  which is finite   since $\lim_{s\to +\infty}\varphi_y(s)=z_k$. 
By continuity of $s\ge 0\mapsto \varphi_y(s)$, for $y\in \Gamma_{z_k}\setminus \overline{  \mathsf C_{z_k} }$,    $\varphi_y(t_{\mathsf C_{z_k}}(y))\in \pa \mathsf C_{z_k}$ and for all $s> t_{\mathsf C_{z_k}}(y)$, $\varphi_y(s)\in \mathsf C_{z_k}$. Moreover, $t_{\mathsf C_{z_k}}(y)$ is defined by
  $$\int_0^{t_{\mathsf C_{z_k}}(y)} \vert \nabla f(\varphi_y(s))\vert^2ds= f(y)-(f(z_k)+r),$$
  and thus since $\vert \nabla f\vert \neq 0$ on $\pa \mathsf C_{z_k}$, by the implicit functions theorem,  
\begin{equation}\label{eq.sinfregularite}
y\in \Gamma_{z_k} \setminus \overline{  \mathsf C_{z_k} } \mapsto t_{\mathsf C_{z_k}}(y) \text{ is } \mathcal C^\infty. 
  \end{equation}
  
For all $x\in \pa \mathsf C_{z_k}$ and $s\in \mathbb R$, let $\gamma_x(s):=\varphi_x(-s)$  (see~\eqref{eq.hbb}) which  satisfy for all $s\in \mathbb R$
\begin{equation}\label{eq.hbb2}
  \frac{d}{ds}\gamma_x(s)=\nabla f(\gamma_x(s)) \text{ with } \gamma_x(0)=x.
  \end{equation} 
   Let us define 
for all $x\in \pa \mathsf C_{z_k}$,
 \begin{equation}\label{eq.des+}
  s_{\Gamma_{z_k}}(x):=\inf\{s\ge 0, \, \gamma_x(s)  \notin  {\Gamma_{z_k}} \}.
  \end{equation}
Let us prove that
\begin{equation}\label{s+}
   s_{\Gamma_{z_k}}: \pa \mathsf C_{z_k}\to \mathbb R_+ \text{  is lower semicontinuous. } 
\end{equation}
Let us first prove that  for all $x\in \pa \mathsf C_{z_k}$, $   s_{\Gamma_{z_k}}(x)<+\infty$. It it is not the case, there exists $y\in  \pa \mathsf C_{z_k}$ such that $\gamma_y(s)\in \Gamma_{z_k}$ for all $s\ge 0$. Thus, the curve $\gamma_y$ converges to a critical point of $f$ in  $\overline{\Gamma_{z_k}}$, the only one being $z_k$ (by~\eqref{eq.incluWW}), which is impossible because $\gamma_y(s)\notin \mathsf C_{z_k}$ for all $s\ge 0$. Let us now prove that $ s_{\Gamma_{z_k}}$ is lower semicontinuous. To this end, let $(x_n)_{n\ge 0}$ be a sequence  in $ \pa { \mathsf C_{z_k}}$ converging to $x_{\infty}\in  \pa { \mathsf C_{z_k}}$  and  a limit  $s_*$ of a subsequence of $( s_{\Gamma_{z_k}}(x_n))_{n\ge 0}$.  If $s_*=+\infty$, then, $ s_{\Gamma_{z_k}}(x_{\infty})\le s_*$. Let us then consider the case when $s_*<+\infty$. Up to extracting  a subsequence, we assume  that  $ s_{\Gamma_{z_k}}(x_n)\to s_*$ when $n\to \infty$. Notice that for all $n\ge 0$, since $ s_{\Gamma_{z_k}}(x_n)<+\infty$ and $s\mapsto \gamma_{x_n}(s)$ is continuous, 
$\gamma_{x_n}( s_{\Gamma_{z_k}}(x_n))\in \pa \Gamma_{z_k}$. In addition,  since $s_*<+\infty$, 
$\pa \Gamma_{z_k}$ is a closed set, and by continuity of $(x,t)\mapsto \gamma_x(t)$, it holds $\gamma_{x_n}( s_{\Gamma_{z_k}}(x_n))\to \gamma_{x_{\infty}}(s_*)\in \pa \Gamma_{z_k}$   when $n\to \infty$.  This implies that $ s_{\Gamma_{z_k}}(x_{\infty})\le s_*$ by definition of $   s_{\Gamma_{z_k}}$. This implies that $ s_{\Gamma_{z_k}}$ is lower semicontinuous and concludes the proof of~\eqref{s+}.  

Finally, since $\Gamma_{z_k}$ is open, one can   consider an open subset $\mathsf O_{\mathsf F}$ of $\Gamma_{z_k}$ such that 
$$\overline{ \mathsf C_{z_k} }\, \cup \, \mathsf F\subset \mathsf O_{\mathsf F} \ \text{ and } \ \overline{\mathsf O_{\mathsf F}}\subset \Gamma_{z_k}.$$ 
\textbf{Step 2: Construction of  a set  $\mathsf V_{\mathsf F}\subset \Gamma_{z_k}$ containing $\mathsf O_{\mathsf F}$ which is stable for~\eqref{eq.hbb}. }
Define for all $x\in \pa \mathsf C_{z_k}$, $ s_{\mathsf O_{\mathsf F}}(x):=\sup \{s\ge 0, \, \gamma_x(s) \in\overline{\mathsf O_{\mathsf F}}\}$. Let us prove that
\begin{equation}\label{vfgammaf}
 s_{\mathsf O_{\mathsf F}} <+\infty \text{ and } s_{\mathsf O_{\mathsf F}}< s_{\Gamma_{z_k}}.
  \end{equation}
To prove the first statement in~\eqref{vfgammaf}, we argue by contradiction:
 assume that there exists $x\in \pa \mathsf C_{z_k}$ such that  $s_{\mathsf O_{\mathsf F}}(x)=+\infty$. Then, there exists a sequence $s_n\in (\mathbb R_+)^{\mathbb N}$ such that $s_n\to +\infty$ when $n \to +\infty$ and for all $n$, $\gamma_x(s_n)\in \overline{\mathsf O_{\mathsf F}}$.  Thus,    $\gamma_x(s_n)$ converges when $n\to +\infty$ to a critical point of $f$ in $\overline{\mathsf O_{\mathsf F}}$, the only one being $z_k$, which is not possible because  $\gamma_x(s_n)\notin  { \mathsf C_{z_k}}$ for all $n$.
This proves  the first statement in~\eqref{vfgammaf}. To prove the second statement in~\eqref{vfgammaf},  let us consider $x\in \pa \mathsf C_{z_k}$. Notice that since $s_{\mathsf O_{\mathsf F}}(x)$ is finite and the trajectories of~\eqref{eq.hbb2} are continuous in time, $\gamma_x(   s_{\mathsf O_{\mathsf F}}(x) ) \in 
 \overline{\mathsf O_{\mathsf F}} \subset \Gamma_{z_k}$.  Since $\Gamma_{z_k}$ is stable for the dynamics~\eqref{eq.hbb}, $\gamma_x(s)=\varphi_{\gamma_x(   s_{\mathsf O_{\mathsf F}}(x) )} ( s_{\mathsf O_{\mathsf F}}(x)-s) \in \Gamma_{z_k}$ for all   $s\in [0,s_{\mathsf O_{\mathsf F}}(x)]$ (see~\eqref{eq.hbb} and~\eqref{eq.hbb2}).
Moreover, since $\Gamma_{z_k}$ is open, and the trajectories of~\eqref{eq.hbb2} are continuous, there exists $\ve_x>0$ such that: 
 \begin{equation}\label{inclutube}
  \big \{ \gamma_x(s), \, x\in \pa \mathsf C_{z_k} \text{ and } s\in [0,s_{\mathsf O_{\mathsf F}}(x)+\ve_x]\big \} \subset \Gamma_{z_k}.
  \end{equation}
  Therefore, $ s_{\Gamma_{z_k}}(x)\ge s_{\mathsf O_{\mathsf
      F}}(x)+\ve_x$. This concludes the proof of~\eqref{vfgammaf}.  

 Let us now define: 
 \begin{equation}\label{VF}
  \mathsf V_{\mathsf F}:= \mathsf C_{z_k} \,  \cup  \,  \big \{ \gamma_x(s), \, x\in \pa \mathsf C_{z_k} \text{ and } s\in [0,s_{\mathsf O_{\mathsf F}}(x)]\big \}.
    \end{equation}
By construction, the  set  $\mathsf V_{\mathsf F}$ 
 is    stable for the dynamics~\eqref{eq.hbb}. From~\eqref{inclutube} and since $\overline{\mathsf C_{z_k}}\subset  \Gamma_{z_k}$, one has $\mathsf V_{\mathsf F}\subset \Gamma_{z_k}$.  We now claim that
\begin{equation}\label{ofvf0}
  s_{\mathsf O_{\mathsf F}}: \pa \mathsf C_{z_k}\to \mathbb R_+ \text{  is upper semicontinuous and } 
   { \mathsf V_{\mathsf F}}\text{ is a closed set}. 
\end{equation} 
Let us first prove the first statement in~\eqref{ofvf0}.  
To this end, let $(x_n)_{n\ge 0}$ be a sequence  in $ \pa { \mathsf C_{z_k}}$ converging to $x_{\infty}\in  \pa { \mathsf C_{z_k}}$  and $s_*$ a limit of a subsequence of $(s_{\mathsf O_{\mathsf F}}(x_n))_{n\ge 0}$.
  Up to extracting   a subsequence, we assume  that  $s_{\mathsf O_{\mathsf F}}(x_n)\to s_*$ when $n\to \infty$.
Notice that for all $n\ge 0$, 
$\gamma_{x_n}(s_{\mathsf O_{\mathsf F}}(x_n))\in \overline{\mathsf O_{\mathsf F}}$. 
   Let us prove that  $s_*$ is finite. Assume that it is not the case,  i.e. that $s_{\mathsf O_{\mathsf F}}(x_n)\to +\infty$. From~\eqref{vfgammaf}, for all $t\in [0,s_{\mathsf O_{\mathsf F}}(x_n)]$, $\gamma_{x_n}(t)\in \Gamma_{z_k}$. Let $T>0$ and consider $N\ge 1$ such that $s_{\mathsf O_{\mathsf F}}(x_n)\ge T$ for all $n\ge N$. Then, for all $t\in [0,T]$ and $n\ge N$, $\gamma_{x_n}(t)\in \overline{\Gamma_{z_k}}$. Passing to the limit, one obtains that
$\gamma_{x}(t)\in \overline{\Gamma_{z_k}}$ for all $t\in [0,T]$. Since $T>0$ is arbitrary, one deduces that $\gamma_{x}(t)\in \overline{\Gamma_{z_k}}$ for all $t>0$. This is not possible because, as already explained,  the limit points of the curve $\gamma_{x}$ are outside  $ \overline{\Gamma_{z_k}}$. Thus $s_*$ is finite. 
Since 
$\overline{\mathsf O_{\mathsf F}}$ is a closed set, by continuity of $(x,t)\mapsto \gamma_x(t)$, it holds $\gamma_{x_n}(s_{\mathsf O_{\mathsf F}}(x_n))\to \gamma_{x_{\infty}}(s_*)\in \overline{\mathsf O_{\mathsf F}}$  when $n\to \infty$. Therefore, $s_{\mathsf O_{\mathsf F}}(x_{\infty})\ge s_*$, and thus, $s_{\mathsf O_{\mathsf F}}(x_{\infty})\ge \limsup_{n\to +\infty }s_{\mathsf V_{\mathsf F}}(x_n)$. 
This proves that $s_{\mathsf O_{\mathsf F}}$ is upper
semicontinuous. This proves the first statement in~\eqref{ofvf0}. 

 Let us now prove the second  statement in~\eqref{ofvf0}.  To prove that $\mathsf V_{\mathsf F}$ is a closed set it is sufficient to show that $A= \big \{ \gamma_x(s), \, x\in \pa \mathsf C_{z_k} \text{ and } s\in [0,s_{\mathsf O_{\mathsf F}}(x)]\big \}$ is a closed set. To this end,  let $(y_n)_{n\ge 0}$ be a sequence  in $A$  converging to $y_*$. Let us show that $y_*\in A$. Write  $y_n=\gamma_{x_n}(s_n)$ where $x_n \in  \pa { \mathsf C_{z_k}}$ and $0\le s_n\le s_{\mathsf O_{\mathsf F}}(x_n)$. By compactness and up to extracting a subsequence, let  $x_{\infty}\in  \pa { \mathsf C_{z_k}}$ such that $x_n\to x_{\infty}$ when $n\to +\infty$. Since $s_{\mathsf O_{\mathsf F}}<+\infty$ is upper semicontinuous on the  compact set $\pa \mathsf C_{z_k}$, $s_{\mathsf O_{\mathsf F}}$ is bounded on $\pa \mathsf C_{z_k}$. Therefore, $(s_n)_{n\ge 0}$ and  $(s_{\mathsf O_{\mathsf F}}(x_n))_{n\ge 0}$ are bounded. Denote by $s_{*}$  a limit of a subsequence of $(s_n)_{n\ge 0}$. Then, it holds 
$$s_*\le  \limsup_{n\to +\infty }s_n \le \limsup_{n\to +\infty }s_{\mathsf O_{\mathsf F}}(x_n) \le s_{\mathsf O_{\mathsf F}}(x_{\infty}),$$
where the last inequality follows from the fact that $s_{\mathsf
  O_{\mathsf F}}$ is upper semicontinuous.  Since $y_n=
\gamma_{x_n}(s_n)\to \gamma_{x_{\infty}}(s_*)=y_*$ when $n\to
+\infty$, and $s_*\le s_{\mathsf O_{\mathsf F}}(x_{\infty})$, this
implies that $y_*\in  \big \{ \gamma_x(s), \, x\in \pa \mathsf C_{z_k}
\text{ and } s\in [0,s_{\mathsf O_{\mathsf F}}(x)]\big \}$. The set
$A$ is therefore  closed and thus,  so is  $\mathsf V_{\mathsf F}$.

Finally, let us prove that,
\begin{equation}\label{ofvf}
{\mathsf O_{\mathsf F}}\subset \mathsf V_{\mathsf F}.
\end{equation}  
To prove~\eqref{ofvf}, we consider $y\in \mathsf O_{\mathsf F}$.   
The curve  $s\in [0,t_{\mathsf C_{z_k}}(y)] \mapsto \gamma_{\varphi_y(t_{\mathsf C_{z_k}}(y))}(s)= \varphi_y(t_{\mathsf C_{z_k}}(y)-s)$  passes through $y\in \mathsf O_{\mathsf F}$ at time $s=t_{\mathsf C_{z_k}}(y)$ (see~\eqref{eq.hbb},~\eqref{eq.hbb2}, and~\eqref{eq.sinf}). Thus, by definition of $s_{\mathsf O_{\mathsf F}}$, it holds $t_{\mathsf C_{z_k}}(y)\le s_{\mathsf O_{\mathsf F}}(\varphi_y(t_{\mathsf C_{z_k}}(y)) )$  and thus, by definition of   $\mathsf V_{\mathsf F}$, 
$y=\gamma_{\varphi_y(t_{\mathsf C_{z_k}}(y))}(t_{\mathsf C_{z_k}}(y))\in \{ \gamma_{\varphi_y(t_{\mathsf C_{z_k}}(y)]}(s), s \in [0,s_{\mathsf O_{\mathsf F}}(x))\}= \mathsf V_{\mathsf F}$. 
 This proves~\eqref{ofvf} and in particular $\mathsf F\subset\mathsf V_{\mathsf F}$. 
 \medskip
 
 \noindent
The interior of $\mathsf V_{\mathsf F}$ might be  a good candidate to be $\Gamma_{\mathsf F}$ but this set is not necessarily  smooth or does not satisfy~\eqref{eq.ultrastable}. This is due to the fact that the function $s_{\mathsf O_{\mathsf F}}$ is not necessarily   smooth. For this reason, 
we approximate $s_{\mathsf O_{\mathsf F}}$    from above by a smooth function: this is made in 
 the next step, see~\eqref{eq.<>s2}. 
\medskip

\noindent
\textbf{Step 3: Construction of $\Gamma_{\mathsf F}$.} 
\medskip

\noindent
\textbf{Step 3a: Approximation of $s_{\mathsf O_{\mathsf F}}$ from above by a smooth function and definition of~$\Gamma_{\mathsf F}$.}
Since $s_{\mathsf O_{\mathsf F}}$ is upper semicontinuous (see~\eqref{ofvf0}), from~\cite[Theorem 3]{tong1952some}, there exists a decreasing sequence of continuous functions $\tilde s_n:\pa \mathsf C_{z_k} \to \mathbb R$, $n\ge 1$, such that for all $n\ge 1$, $\tilde  s_n\ge s_{\mathsf O_{\mathsf F}}$  and for all $x\in \pa \mathsf C_{z_k}$,  $\tilde s_n(x)\to s_{\mathsf O_{\mathsf F}}(x)$ when $n\to +\infty$.    
Let us prove that  there exists $n_0\ge 1$ such that:
\begin{equation}\label{eq.<>s}
\text{for all } x\in \pa \mathsf C_{z_k}, \ s_{\mathsf O_{\mathsf F}}(x)\le  \tilde s_{n_0}(x)< s_{\Gamma_{z_k}}(x).
\end{equation}
We just have to prove the second   inequality in~\eqref{eq.<>s}. For that purpose, we argue by contradiction: assume that  
\begin{equation}\label{geabs}
\text{for all $n\ge 1$, there exists 
$x_n\in \pa \mathsf C_{z_k}$ such that}\, \tilde s_{n}(x_n)\ge s_{\Gamma_{z_k}}(x_n).
\end{equation}
By compactness and up to extracting a subsequence, let  $x_{\infty}\in  \pa { \mathsf C_{z_k}}$ such that $x_n\to x_{\infty}$ when $n\to +\infty$. Let $\ve >0$. There exists $N_0\ge 1$ such that  for all $n\ge N_0$, $\tilde s_{n}(x_{\infty})-s_{\mathsf O_{\mathsf F}}(x_{\infty}) \le \ve/2$. 
 One then has for all $n\ge N_0$, using that $\tilde s_{n}\le \tilde s_{N_0}$,
\begin{align*}
  \tilde s_{n}(x_n)- s_{\mathsf O_{\mathsf F}}(x_{\infty})&=(\tilde s_{n}(x_n)-\tilde s_{N_0}(x_{\infty}))+(\tilde s_{N_0}(x_{\infty})- s_{\mathsf O_{\mathsf F}}(x_{\infty}))\le \tilde s_{N_0}(x_n)-\tilde s_{N_0}(x_{\infty}) +  \ve/2.
 \end{align*}
 Moreover, because $\tilde s_{N_0}$ is a continuous function, there exists $N_1\ge 1$ such that for all $n\ge N_1$, $\vert \tilde s_{N_0}(x_n)-\tilde s_{N_0}(x_{\infty})\vert \le \ve/2$. Thus, for $n\ge \max(N_0,N_1)$,
 $   \tilde s_{n}(x_n)- s_{\mathsf O_{\mathsf F}}(x_{\infty})\le \ve$, i.e.  $\limsup_{n\to +\infty} \tilde s_{n}(x_n)- s_{\mathsf O_{\mathsf F}}(x_{\infty})\le 0$. Now, since $s_{\Gamma_{z_k}}$ is lower semicontinuous (see~\eqref{s+}) and from~\eqref{geabs}, it holds: $s_{\mathsf O_{\mathsf F}}(x_{\infty})\ge \limsup_{n\to +\infty}  \tilde s_{n}(x_n) \ge  \liminf_{n\to +\infty} s_{\Gamma_{z_k}}(x_n)\ge s_{\Gamma_{z_k}}(x_\infty)$. This contradicts the second statement in~\eqref{vfgammaf}, and thus concludes the proof of~\eqref{eq.<>s}. \\
Since the function $s_{\Gamma_{z_k}}-\tilde s_{n_0}$ is lower semicontinuous and $\pa \mathsf C_{z_k}$ is compact, $s_{\Gamma_{z_k}}-\tilde s_{n_0}$ attains its infimum on $\pa \mathsf C_{z_k}$ and since $ s_{\Gamma_{z_k}}>\tilde s_{n_0}$ (see~\eqref{eq.<>s}), this minimum is positive. Let us then consider $0<\ve<\min_{\pa \mathsf C_{z_k}} (s_{\Gamma_{z_k}}-\tilde s_{n_0})$ so that 
\begin{equation}\label{eq.<>s2}
 \tilde s_{n_0} +\ve < s_{\Gamma_{z_k}} \ \text{ on }  \pa \mathsf C_{z_k}. 
\end{equation} 
 Since 
 $ \pa \mathsf C_{z_k}$ is compact and $ \tilde s_{n_0} +\ve$ is continuous on  $ \pa \mathsf C_{z_k}$, there exists $\beta \in \mathcal C^\infty( \pa \mathsf C_{z_k},\mathbb R)$ such that $ \tilde s_{n_0} +\ve /2\le \beta \le \tilde s_{n_0} + {3\ve}/{4}$, so that in view of \eqref{eq.<>s} and~\eqref{eq.<>s2}, 
\begin{equation}\label{eq.betas} 
 s_{\mathsf O_{\mathsf F}} < \beta <s_{\Gamma_{z_k}} \ \text{ on }  \pa \mathsf C_{z_k}.
 \end{equation}
 We now define 
\begin{equation}\label{F}
 \Gamma_{\mathsf F}:= \mathsf C_{z_k} \,  \cup  \,  \big \{ \gamma_x(s), \, x\in \pa \mathsf C_{z_k} \text{ and } s\in [0,\beta(x))\big \} .
  \end{equation}

\noindent  
\textbf{Step 3b: Properties of $ \Gamma_{\mathsf F}$.} Let us finally prove that $ \Gamma_{\mathsf F}$ satisfies all the properties listed in Proposition~\ref{pr.gammak}. First notice that   by construction, $\overline{ \Gamma_{\mathsf F}}$ is included in $\Gamma_{z_k}$, this indeed  follows from~\eqref{F} together with the second inequality in~\eqref{eq.betas}. Moreover, $ \Gamma_{\mathsf F}$ contains $\mathsf V_{\mathsf F}$ (since $s_{\mathsf O_{\mathsf F}}<\beta$, see~\eqref{eq.betas} and~\eqref{VF}) and thus,  $\mathsf F\subset  \Gamma_{\mathsf F}$. Furthermore, by construction, $\Gamma_{\mathsf F}$ is simply connected. 
Let us now prove that $ \Gamma_{\mathsf F}$ is open and satisfies~\eqref{eq.ultrastable}.
\begin{enumerate}
\item Proof of the fact that $ \Gamma_{\mathsf F}$ is open. To this end, let us first show that $ \Gamma_{\mathsf F}\setminus \overline{ \mathsf C_{z_k}}$ is open.  
Let us denote by $\mathsf d_{\partial \Omega}$ the geodesic distance in $\partial \Omega$. Let $y_1\in  \Gamma_{\mathsf F}\setminus \overline{ \mathsf C_{z_k}}$
and write $y_1=\gamma_{x_1}(s_1)$ where $x_1\in \pa \mathsf C_{z_k}$ and $s_1\in (0,\beta(x_1))$. 
Then, there exists $t_1\in (s_1, \beta(x_1))$ such that   $\gamma_{y_1}(-t_1)\in \mathsf C_{z_k}$.   
 Since  the mapping     $x\mapsto \beta(x)$  is  continuous and  $t_1<\beta(x_1)$, there exists $\ve_1>0$ such that for all $x\in \pa \mathsf C_{z_k}$,    
\begin{equation}\label{eq.fitsve}
\mathsf d_{\partial \Omega}\left(x,x_1\right)\leq \ve_1 \text{ implies } t_1<\beta(x)
\end{equation}
Let  $\ve_0=\mathsf d_{\partial \Omega}(\gamma_{y_1}(-t_1),\partial \mathsf C_{z_k}) >0$.
Since the mapping $y\mapsto \gamma_y(-t_1)$ is continuous, there exists $\ve_2>0$ such that if $\mathsf d_{\partial \Omega}\left(y,y_1\right)\leq \ve_2$ then 
$$\mathsf d_{\partial \Omega}(\gamma_y(-t_1),\gamma_{y_1}(-t_1))\leq \ve_0/2,$$ 
and thus $\gamma_y(-t_1)\in \mathsf C_{z_k}$.
Let $y\in \Gamma_{z_k}$.  
Write $y=\gamma_x(t_{\mathsf C_{z_k}}(y))$ where $x= \varphi_{y}(t_{\mathsf C_{z_k}}(y))\in \pa \mathsf C_{z_k}$, see~\eqref{eq.sinf}. Since when  $\mathsf d_{\partial \Omega}\left(y,y_1\right)\leq \ve_2$ one has $\varphi_y(t_1)=\gamma_y(-t_1)\in \mathsf C_{z_k}$, it holds:  
\begin{equation}\label{eq.fitsve2}
\text{for all } y\in \Gamma_{z_k},\  \mathsf d_{\partial \Omega}\left(y,y_1\right)\leq \ve_2 \Rightarrow t_{\mathsf C_{z_k}}(y)<t_1.
\end{equation}
  Since $y\in \Gamma_{z_k}\setminus \overline{\mathsf C_{z_k}}\mapsto \varphi_{y}(t_{\mathsf C_{z_k}}(y))\in \pa \mathsf C_{z_k}$ is smooth (see indeed~\eqref{eq.sinfregularite}), there exists $\ve_3>0$ such that if $\mathsf d_{\partial \Omega}\left(y,y_1\right)\le \ve_3$, then  $y\in \Gamma_{z_k}\setminus \overline{\mathsf C_{z_k}} $ and $\mathsf d_{\partial \Omega}(x,x_1)\le \ve_1$ where $x=\varphi_{y}(t_{\mathsf C_{z_k}}(y))$ and $x_1=\varphi_{y_1}(t_{\mathsf C_{z_k}}(y_1))$. In conclusion, from~\eqref{eq.fitsve} and~\eqref{eq.fitsve2}, if $\mathsf d_{\partial \Omega}\left(y,y_1\right)\le \min(\ve_2,\ve_3)$, then $t_{\mathsf C_{z_k}}(y)<t_1<\beta(x)$, where $x=\varphi_{y}(t_{\mathsf C_{z_k}}(y))$ and $y=\gamma_x(t_{\mathsf C_{z_k}}(y))$. Thus, from~\eqref{F}, if 
  $\mathsf d_{\partial \Omega}\left(y,y_1\right)\le \min(\ve_2,\ve_3)$, then 
  $y\in  \Gamma_{\mathsf F}\setminus \overline{ \mathsf C_{z_k}}$. The set  $\Gamma_{\mathsf F}\setminus \overline{ \mathsf C_{z_k}}$ is therefore open. In addition, since $\overline{\mathsf C_{z_k}}\subset \mathsf O_{\mathsf F}\subset \Gamma_{\mathsf F}$ and $ \mathsf O_{\mathsf F}$ is open, $\Gamma_{\mathsf F}=(\Gamma_{\mathsf F}\setminus \overline {\mathsf C_{z_k}})\cup  \overline{\mathsf C_{z_k}}\subset{\rm int \, }(\Gamma_{\mathsf F})$. Therefore the set $\Gamma_{\mathsf F}$ is open.

  \medskip

\noindent
Moreover, using the same arguments as those used to prove the second statement of~\eqref{ofvf0}, 
it holds:
\begin{equation}\label{closureF}
 \overline{ \Gamma_{\mathsf F}}= \mathsf C_{z_k} \,  \cup  \,  \big \{ \gamma_x(s), \, x\in \pa \mathsf C_{z_k} \text{ and } s\in [0,\beta(x)]\big \},
 \end{equation}
Consequently, since  $\beta< s_{\Gamma_{z_k}}$ and the trajectories of~\eqref{eq.hbb2} are continuous (see also~\eqref{eq.des+}), one has:
$$\overline{ \Gamma_{\mathsf F}}\subset \Gamma_{z_k}.$$ 
In addition,   from~\eqref{closureF} and~\eqref{F}, one has: 
\begin{equation}\label{boundaryF}
\pa \Gamma_{\mathsf F}=\big \{  \gamma_x(\beta(x)), \, x\in \pa \mathsf C_{z_k}  \big \}.  
 \end{equation}

\item Proof of the fact that the set $\Gamma_{\mathsf F}$ satisfies~\eqref{eq.ultrastable}, i.e.  that 
$\nabla f\cdot \mathsf n_{ \Gamma_{\mathsf F} }>0$  on $ \pa \Gamma_{\mathsf F}$, 
 where we recall that $\mathsf n_{ \Gamma_{\mathsf F}}\in T\partial \Omega$ is the unit outward normal to $ \Gamma_{\mathsf F}$.  Notice that by construction,  the set $ \Gamma_{\mathsf F}$  is a stable set for the dynamics~\eqref{eq.hbb} and thus $\nabla f\cdot \mathsf n_{ \Gamma_{\mathsf F} }\ge0$  on $ \pa \Gamma_{\mathsf F}$. Let us prove that this inequality is actually a strict inequality. Let us define the function
 $$\Upsilon: y\in \Gamma_{z_k}\setminus \overline{\mathsf C_{z_k}}\mapsto (x,t)\in \pa \mathsf C_{z_k}\times \mathbb R_+^* \text{ such that } \gamma_x(t)=y.$$
 Notice that if $\Upsilon(y)=(x,t)$, then $t=t_{\mathsf C_{z_k}}(y)$ (see~\eqref{eq.sinf}) and $x=\varphi_{y}(t_{\mathsf C_{z_k}}(y))$. The mapping $\Upsilon$ is a $\mathcal C^\infty$ diffeomorphism from  $\Gamma_{z_k}\setminus \overline{\mathsf C_{z_k}}$ into its range. Let us denote by $F:=\Upsilon^{-1}$ its inverse function, i.e. 
 \begin{equation}\label{F(x,t)}
F(x,t)=\gamma_x(t).  
 \end{equation}
Thus,  for all $x\in \pa \mathsf C_{z_k}$, $ (\text{Jac\,} F) (x,\beta(x))$ is a bijection bewteen $T_x\pa \mathsf C_{z_k} \times \mathbb R$ and $ T_{\gamma_x(\beta(x))} \Gamma_{z_k}$. For all $x\in \pa \mathsf C_{z_k}$ and  $v = (v_1,v_2)\in T_x\pa \mathsf C_{z_k} \times \mathbb R$, one has:
\begin{equation}\label{jac1}
 (\text{Jac\,} F) (x,\beta(x))v=  (\partial_x F)(x,\beta(x))   v_1+ ( \partial_t     F) (x,\beta(x))    \times v_2 \in T_{\gamma_x(\beta(x))} \Gamma_{z_k},
 \end{equation}
   \noindent
 where $ (\partial_t  F)(x,\beta(x))  =   \nabla f(\gamma_x(\beta(x)) $ (see~\eqref{F(x,t)} and~\eqref{eq.hbb2}). Using  the chain rule, one has:
\begin{align}
\nonumber
\text{Jac}_x \big ( F(x,\beta(x))\big )  v_1&= (\partial_xF)(x,\beta(x))  v_1 +      ( \partial_t F)(x,\beta(x)))   \big(  \nabla\beta(x)    \cdot v_1\big )\\
\label{jac2}
&= (\pa _xF)(x,\beta(x))  v_1  + \nabla f(\gamma_x(\beta(x))     \big(\nabla\beta(x)   \cdot v_1\big),
\end{align}
where $\text{Jac}_x \big ( F(x,\beta(x))\big )  v_1\in T_{\gamma_x(\beta(x))} \Gamma_{\mathsf F} $  and $\nabla\beta(x)\in T_x\pa \mathsf C_{z_k}$. 
To prove~\eqref{eq.ultrastable} we argue by contradiction:  assume that  there exists $x\in \pa \mathsf C_{z
_k}$ such that $\nabla f(\gamma_x(\beta(x)) )\cdot \mathsf n_{ \Gamma_{\mathsf F} }(\gamma_x(\beta(x)))=0$ (see~\eqref{boundaryF}) which is equivalent to $\nabla f(\gamma_x(\beta(x)) )\in T_{\gamma_x(\beta(x))} \pa \Gamma_{\mathsf F}$.  This implies, in view of~\eqref{jac1} and~\eqref{jac2} that $\text{Ran}\, (\text{Jac\,} F) (x,\beta(x))\subset T_{\gamma_x(\beta(x))} \pa \Gamma_{\mathsf F}$, which contradicts the fact that $F$ is a diffeomorphism. This concludes the proof of~\eqref{eq.ultrastable}.
\end{enumerate}
  The proof of Proposition~\ref{pr.gammak} is complete. \end{proof}

\begin{proof}[Proof of Proposition~\ref{pr.omegak}]
For all $y\in \Omega$,  recall that $\varphi_y(s)\in \Omega$ for all $s\ge 0$ (see~\eqref{eq.hbb}) and $\lim_{s\to +\infty}\varphi_y(s)=x_0$. 
Define for  $r>0$:
 $$\mathsf C_{x_0} =(f|_{\Omega})^{-1}\big((-\infty, f(x_0)+r)\big),$$
For $r>0$ small enough,    $\overline{\mathsf C_{x_0}} \subset \Omega$ and $\vert \nabla f\vert \neq 0$ on $\pa \mathsf C_{x_0}$. Thus,   ${\mathsf C_{x_0}}$ is a smooth open neighborhood  of $x_0$ and   $\nabla f \cdot \mathsf n_{\mathsf C_{x_0}}>0$ on $\pa \mathsf C_{x_0}$. 
 For all $x\in \pa \mathsf C_{x_0}$ and $s\in \mathbb R$, let $\gamma_x(s):=\varphi_x(-s)$  (see~\eqref{eq.hbb}) which  satisfy for all $s\in \mathbb R$
$$
   \frac{d}{dt}\gamma_x(s)=\nabla f(\varphi_x(s)) \text{ with } \gamma_x(0)=x.
$$
    Let us define 
for all $x\in \pa \mathsf C_{x_0}$,
$$
   s_{\Omega}(x):=\inf\{s\ge 0, \, \gamma_x(s)  \notin\Omega \}.
$$
The proof of Proposition~\ref{pr.omegak} follows exactly the same lines as the proof of Proposition~\ref{pr.gammak} if one shows that $ s_\Omega$ is lower semicontinuous. The difference here, comparing  $   s_{\Gamma_{z_k}}$ (see~\eqref{eq.des+}) and  $  s_{\Omega}$, is that $  s_{\Omega}$ can be infinite due to the existence of critical points  of $f$  on $\partial \Omega$. Let us thus prove that $ s_\Omega: \pa \mathsf C_{x_0}\to \mathbb R_+\cup \{+\infty\}$ is lower semicontinuous. To this end, let $(x_n)_{n\ge 0}$ be a sequence  in $ \pa { \mathsf C_{x_0}}$ converging to $x_\infty\in  \pa { \mathsf C_{x_0}}$  and   $s_*\in  {\mathbb R_+}\cup \{+\infty\}$ a limit of  a subsequence of $( s_\Omega(x_n))_{n\ge 0}$. For ease of notation, up to extracting a subsequence, we assume that $ s_\Omega(x_n)\to s_*$ when $n\to +\infty$. If $s_*=+\infty$ then, $s^*\ge s_\Omega(x_\infty)$. Let us now consider the case when $s^*<+\infty$. 
In particular, $ s_\Omega(x_n)$ is finite for $n$ large enough. 
In this case,  $s_\Omega(x_\infty)\le s_*$ by the same proof as  the one made to show~\eqref{s+}. 
 In conclusion $s_\Omega$  is lower semicontinuous. Then, the same arguments as those used to prove Proposition~\ref{pr.gammak} allows us to conclude the proof of  Proposition~\ref{pr.omegak}. 
\end{proof}


\end{document}